\documentclass[11pt,reqno]{amsart}
\usepackage[a4paper,margin=1.1in]{geometry}
\usepackage[T1]{fontenc}
\usepackage{lmodern,microtype,amsmath,amssymb,amsthm,mathtools,mathrsfs}
\microtypesetup{expansion=false}
\usepackage{enumitem,needspace,booktabs,longtable,array}
\usepackage{titlesec}
\usepackage{indentfirst}
\usepackage{xcolor}
\usepackage[all,cmtip]{xy}
\usepackage[colorlinks=true,linkcolor={red!50!black},citecolor={blue!50!black},urlcolor={blue!80!black}]{hyperref}
\hypersetup{pdftitle={Ramified quivers with potentials, monodromy characters, and cluster bases},pdfauthor={Jiarui Fei}}

\setlist[enumerate]{label=\textup{(\roman*)},leftmargin=25pt,itemsep=3pt,topsep=5pt}
\setlist[itemize]{leftmargin=18pt,itemsep=2pt,topsep=4pt}
\numberwithin{equation}{section}

\newtheorem{theorem}{Theorem}[section]
\newtheorem{proposition}[theorem]{Proposition}
\newtheorem{lemma}[theorem]{Lemma}
\newtheorem{corollary}[theorem]{Corollary}
\theoremstyle{definition}
\newtheorem{definition}[theorem]{Definition}
\newtheorem{hypothesis}[theorem]{Hypothesis}
\newtheorem{example}[theorem]{Example}
\theoremstyle{remark}
\newtheorem{remark}[theorem]{Remark}
\newtheorem*{maintheoremA}{Theorem A}
\newtheorem*{maintheoremB}{Theorem B}
\newtheorem*{maintheoremC}{Theorem C}
\newtheorem*{maintheoremD}{Theorem D}
\newtheorem*{maintheoremE}{Theorem E}

\newcommand{\op}[1]{\operatorname{#1}}
\newcommand{\kk}{\Bbbk}
\newcommand{\OO}{\mathcal O}
\newcommand{\RR}{\mathscr R}
\renewcommand{\AA}{\mathscr A}
\newcommand{\TT}{\mathscr T}
\newcommand{\PP}{\mathscr P}
\newcommand{\m}{\mathfrak m}

\newcommand{\rad}{\operatorname{rad}}
\newcommand{\Tr}{\operatorname{Tr}}
\newcommand{\Cyc}{\operatorname{Cyc}}
\newcommand{\Def}{\operatorname{Def}}
\newcommand{\Hom}{\operatorname{Hom}}

\newcommand{\PHom}{\operatorname{PHom}}
\newcommand{\PHOM}{\mathscr{P}\!\Hom}
\newcommand{\End}{\operatorname{End}}
\newcommand{\Aut}{\operatorname{Aut}}
\newcommand{\im}{\operatorname{im}}
\newcommand{\coker}{\operatorname{coker}}
\newcommand{\rank}{\operatorname{rank}}
\newcommand{\diag}{\operatorname{diag}}
\newcommand{\id}{\operatorname{id}}
\newcommand{\Spec}{\operatorname{Spec}}
\newcommand{\Frac}{\operatorname{Frac}}
\newcommand{\GL}{\operatorname{GL}}
\newcommand{\Gr}{\operatorname{Gr}}
\newcommand{\cyc}{\mathrm{cyc}}
\newcommand{\red}{\mathrm{red}}
\newcommand{\cl}[1]{\overline{#1}}
\newcommand{\C}{\mathbb C}
\newcommand{\Res}{\operatorname{Res}}
\newcommand{\chis}{\chi_\sigma}
\newcommand{\Gal}{\operatorname{Gal}}
\newcommand{\add}{\operatorname{add}}
\newcommand{\Newt}{\operatorname{Newt}}
\newcommand{\Z}{\mathbb Z}
\newcommand{\Q}{\mathbb Q}
\newcommand{\Good}{\mathscr G}
\newcommand{\Cand}{\mathscr C}
\newcommand{\Pot}{\operatorname{Pot}}
\newcommand{\lcm}{\operatorname{lcm}}
\newcommand{\Mid}{\operatorname{Mid}}
\newcommand{\Kram}{\mathbb K}
\newcommand{\Oram}{\mathbb O}
\newcommand{\Palg}{\mathcal P}
\newcommand{\CT}[2]{#1\langle\!\langle#2\rangle\!\rangle}

\title[Ramified QPs, monodromy, and cluster bases]{Ramified quivers with potentials,\\monodromy characters, and cluster bases I}
\author{Jiarui Fei}
\address{School of Mathematical Sciences, Shanghai Jiao Tong University, China}
\email{jiarui@sjtu.edu.cn}
\thanks{The author was supported in part by the National Natural Science Foundation of China (Nos.~12131015 and 12571038).}
\date{}
\subjclass[2020]{16G20, 13F60, 16S38, 16H05}
\keywords{order in a ramified Jacobian algebra, quiver with potential, decorated representation, monodromy character, cluster basis, middle cluster algebra, upper cluster algebra}

\titleformat{\section}{\normalfont\large\bfseries}{\thesection}{0.7em}{}
\titleformat{\subsection}{\normalfont\normalsize\bfseries}{\thesubsection}{0.65em}{}
\titlespacing{\section}{0pt}{3.0ex plus 0.8ex minus 0.2ex}{1.2ex}
\titlespacing{\subsection}{0pt}{2.1ex plus 0.6ex minus 0.2ex}{0.65ex}
\makeatletter
\def\@dotsep{4.5}
\def\@pnumwidth{2.2em}
\def\@tocrmarg{3.0em}
\renewcommand{\l@section}[2]{{\fontsize{10}{10.5}\selectfont\bfseries\@dottedtocline{1}{0em}{2.2em}{#1}{#2}}}
\renewcommand{\l@subsection}[2]{{\fontsize{10}{10.5}\selectfont\@dottedtocline{2}{2.2em}{3.2em}{#1}{#2}}}
\makeatother
\begin{document}
\begin{abstract}
We construct categorical models and cluster characters for
skew-symmetrizable cluster algebras admitting good ramified realizations.
Goodness is equivalent to nondegenerate realizability. When a reddening
sequence is given, it is enough to check sign coherence of the arrow
types along that sequence.
Triangular extensions with arbitrary connecting bimodule types preserve goodness 
when both component matrices admit reddening sequences. Consequently,
every matrix obtained from one vertex by triangular extensions and
mutations admits a good Jacobi-finite nondegenerate realization for every
compatible positive integral symmetrizer.

For a good Jacobi-finite nondegenerate realization, the monodromy
$F$-polynomials satisfy mutation for all finite decorated
representations, without a rigidity hypothesis. A full initial order
determines valued universal presentations and their cluster characters.
Under the seed reachability condition, these characters form a basis
of the middle cluster algebra for arbitrary geometric coefficients,
with frozen variables inverted, and contain all cluster monomials.
The same family is an upper-cluster-algebra basis when the middle and
upper algebras coincide. For a fixed Jacobian algebra and
$\delta$-vector, the Newton polytope of the generic monodromy
$F$-polynomial is independent of the order.
\end{abstract}
\maketitle
\begingroup\small\tableofcontents\endgroup

\section*{Introduction} 
\addcontentsline{toc}{section}{Introduction}
Quivers with potentials provide a representation-theoretic framework
for mutation of skew-symmetric cluster algebras, while cluster
characters express Laurent functions through the geometry of quiver
Grassmannians. Extending these constructions to skew-symmetrizable
matrices requires more than replacing arrow spaces by bimodules.

There are two distinct difficulties. The exchange matrix records only
the dimensions of the arrow bimodules, whereas their isomorphism types
govern quadratic cancellation under mutation. Moreover, after splitting
the vertex fields, geometric Euler characteristics of representation
Grassmannians need not give the coefficients required by cluster
mutation. We retain the actual bimodule structures throughout mutation
and use traces of tame monodromy for the coefficients.

Our principal results proceed from realizations to characters and
then to bases. Theorem A characterizes nondegenerate realizability;
Theorem C gives mutation-compatible monodromy characters and the
generality theorem for presentations; and Theorem D constructs bases
for arbitrary geometric coefficients under seed reachability.
Theorem B supplies the integral-order foundation, while Theorem E
identifies a broad class of exchange matrices to which the theory applies.

\subsection{Representation-theoretic background}
For ordinary quivers, Caldero--Chapoton characters express cluster
variables through Euler characteristics of quiver Grassmannians,
the varieties of subrepresentations of prescribed dimension
\cite{CC}. Palu extended this construction to triangulated
$2$-Calabi--Yau categories \cite{Palu}. Derksen, Weyman and Zelevinsky
introduced mutation of quivers with potentials and decorated
representations and proved the corresponding transformations of
$g$-vectors and $F$-polynomials \cite{DWZ1,DWZ2}. General presentations
and generic cluster characters extend the theory from rigid objects
to families and bases \cite{DF,Plamondon,GLFS}.

For skew-symmetrizable matrices, Rupel's quantum character treats
acyclic valued quivers over finite fields \cite{Rupel}. Demonet uses
a finite-group action on a Frobenius category with $2$-Calabi--Yau
stable category and identifies the character variables in each orbit
\cite[Definition 3.48]{Demonet}; Wu gives the analogous construction
for relative cluster characters on Higgs categories \cite[Section 6]{Wu}.
Their characters are defined on all objects, while the cluster models
use group-stable rigid collections and admissible orbit mutations.
Huang--Li construct unfoldings of acyclic sign-skew-symmetric matrices
by locally finite quivers which may have infinitely many vertices
\cite{HuangLi}.

Geiss, Leclerc and Schr\"oer introduced the algebras
$H(C,D,\Omega)$ and their locally free representation theory
\cite{GLS1}. They subsequently constructed a power-series order
$\widehat H$ whose special fibre is $H(C,D,\Omega)$ and whose
generic fibre is a hereditary species $\widetilde H$ \cite[Section 4]{GLS2}.
Reduction and localization match the isomorphism classes of rigid
locally free modules over $H$ and $\widehat H$ with those of rigid
$\widetilde H$-modules, and identify their support-tilting exchange
graphs \cite[Theorem 1.1]{GLS2}. Indecomposable rigid locally free
modules are parametrized by real Schur roots
\cite[Theorem 1.2]{GLS2}. The acyclic, zero-potential case of our
integral construction recovers this order for the standard GLS
integral arrow bimodules. Their locally free Caldero--Chapoton formulas appear in
\cite{GLS5}. Mou proves reflection formulas for these functions
\cite{MouReflections}, and Mou--Su construct generic bases in every
affine type \cite{MouSu}. In the orbifold construction of
Labardini-Fragoso--Mou, the $F$-polynomial mutation formula requires
both the source module and its mutation to be locally free. Their
Lemma~4.13 expresses this condition through the local image and kernel,
and Section~4.9 gives a locally free module whose mutation is not
locally free \cite{LabardiniMou}. In our construction the vertex algebras are fields,
so all the local kernels, images and quotients are vector spaces. The
character formula therefore applies to every finite decorated
representation of a ramified Jacobian algebra satisfying the stated
local mutation hypotheses.

Mutation of bimodule potentials is available in Nguefack's tensor-algebra
theory, in the strongly primitive species of Labardini-Fragoso--Zelevinsky,
and in the orbifold species of Geuenich--Labardini-Fragoso
\cite{Nguefack,LZ,GL}. Compared with these constructions, our
contributions address three further issues: recognizing nondegenerate
realizability from simple bimodule types, obtaining $F$-polynomial
mutation for all finite decorated representations under the local QP
hypotheses, and constructing bases under the seed reachability condition.
The one-vertex calculation in Section~\ref{intro:subsec:monodromy}
explains why tame monodromy is needed for the character formula.
For bases, we use Qin's principal-coefficient criterion \cite{Qin},
the theta algebra of Gross--Hacking--Keel--Kontsevich \cite{GHKK},
and the coefficient-specialization theorem of
Cheung--Magee--Mandel--Muller \cite{CMMM}.

\subsection{Vertex fields and the mutation problem}
Fix $n$ vertices, positive integers $d_i$, and compatible roots of $t$. Put
\[
\begin{gathered}
 \OO=\C[[t]],\qquad K=\C((t)),\qquad \varpi_i=t^{1/d_i},\\
 \OO_i=\C[[\varpi_i]],\qquad K_i=\C((\varpi_i)),\qquad
 \RR=\prod_i\OO_i,\qquad R=\prod_iK_i.
\end{gathered}
\]
Here $\varpi_i$ is a uniformizer of $\OO_i$, that is, a generator
of its maximal ideal. We use script letters for integral lattices and
orders. Unless a different scalar field is indicated, $\hom$ means
$\dim_K\Hom$. Section~\ref{alg:sec:setup} gives the integral
algebra conventions.

A \emph{modulation} is a finite-dimensional $R$-bimodule $A$. If $e_i$ is the identity
of the $i$th factor of $R$, then $A_{ij}=e_iAe_j$ is the arrow space
from $i$ to $j$, with its left $K_i$-action and right $K_j$-action.
We use right modules and multiply paths from left to right.
Isomorphisms of modulations fix $R$ pointwise. With $D=\diag(d_i)$,
the exchange matrix is
\[ b_{ij}=\frac{\dim_K A_{ij}-\dim_K A_{ji}}{d_j},\qquad BD=-(BD)^{\mathsf T}. \]
The modulation is \emph{two-acyclic} if there are no loops and no
arrows in both directions between a pair of vertices.

For an ordinary two-acyclic quiver, \emph{mutation at $k$} creates one
composite arrow for each path $i\to k\to j$, reverses the arrows
incident with $k$, and removes a maximal collection of opposite arrow
pairs. A \emph{source} has no incoming arrows and a \emph{sink} has no
outgoing arrows; mutation at either only reverses the incident arrows.
The matrix rule is
\begin{equation}\label{alg:eq:FZmutation}
 b'_{ij}=\begin{cases}
 -b_{ij},&i=k\text{ or }j=k,\\
 b_{ij}+[b_{ik}]_+[b_{kj}]_+-[-b_{ik}]_+[-b_{kj}]_+,&i,j\ne k.
 \end{cases}
\end{equation}
where $[a]_+=\max(a,0)$, applied componentwise to vectors.
For a \emph{mutation sequence} $w=(k_1,\ldots,k_r)$, we apply the
mutations in the listed order and write
$\mu_w=\mu_{k_r}\circ\cdots\circ\mu_{k_1}$.

For species, forming composite arrows means taking tensor products,
and cancellation depends on the simple bimodule types. Let
$L=K(t^{1/2})$, with $\tau(t^{1/2})=-t^{1/2}$, and consider
\[ \vcenter{\hbox{$\displaystyle \xymatrix@C=38pt{1\ar[r]^a&2\ar[r]^b&3}$}} \qquad (K_1,K_2,K_3)=(L,K,L). \]
Take $A_{12}={}_LL_K$ and $A_{23}={}_KL_L$. Mutation at vertex $2$
creates the composite
\[ L\otimes_KL\simeq L_{\id}\oplus L_\tau. \]
Here $L_{\id}$ has the usual scalar actions, whereas the right action
on $L_\tau$ is twisted by $\tau$. Replacing this composite by
$L_{\id}^{\oplus2}$ preserves the exchange matrix but changes the next
mutation. Thus we must follow the arrow bimodules themselves, rather
than choose new ones from each mutated matrix.

\subsection{Good modulations and integral realizations}
A \emph{potential} $S$ is a possibly infinite sum of cyclic paths,
taken modulo the arrow-adic closure of the span of differences of
cyclic rotations. The pair $(A,S)$ is a \emph{quiver with potential}
(QP). Its \emph{completed Jacobian algebra} is
\[ P=\mathcal P(A,S)=R\langle\!\langle A\rangle\!\rangle/J(S), \]
where $J(S)$ is the closed two-sided ideal generated by the cyclic
derivatives of $S$. A \emph{right-equivalence} is a continuous
isomorphism of completed tensor algebras that fixes $R$, preserves the
projection to tensor degree zero, and carries one potential to a
cyclically equivalent potential. The QP is \emph{Jacobi-finite} if
$\dim_KP<\infty$.

A \emph{ramified realization} of $(B,d)$ is a QP over these vertex
fields whose modulation gives $B$. Its \emph{premutation} at $k$
replaces passages $ab$ through $k$ by composites $[ab]$, reverses the
incident bimodules by duality, and adds the canonical cubic term
$\sum_{a,b}[ab]b^*a^*$, written after splitting the vertex fields.
Splitting off the quadratic arrow pairs gives the \emph{reduced part}
$\mu_k(A,S)$, whose potential has no quadratic terms.
Section~\ref{alg:sec:premutation} gives the tensor construction and
proves independence of the reduced right-equivalence class.

The simple $K_i$--$K_j$ bimodules are indexed by
$\Z/\gcd(d_i,d_j)$. We record their multiplicities with a positive sign
for $i\to j$ and a negative sign for the dual types in the reverse
direction. Tensor products give an explicit mutation rule for these
signed multiplicities (Section~\ref{sec:good-modulations}). A vector
is \emph{sign-coherent} if its entries are all nonnegative or all
nonpositive. A modulation is \emph{good} if these vectors remain
sign-coherent after every finite mutation sequence. A potential is
\emph{nondegenerate} if every sequence of reduced QP mutations remains
two-acyclic. After splitting the vertex fields, these mutations act on
entire vertex orbits of the split quiver.

\begin{maintheoremA}
For a two-acyclic ramified modulation, goodness is equivalent to
existence of a nondegenerate potential. Equivalently,
for every finite mutation sequence, nondegeneracy along that sequence
holds on a nonempty Zariski-open subset determined by finitely many
potential coefficients. The intersection of these subsets over all
finite sequences is nonempty.
A Jacobi-finite member exists in this locus exactly when the same good
modulation admits a Jacobi-finite potential after splitting.
\end{maintheoremA}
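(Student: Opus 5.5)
We prove the two implications separately and then handle the genericity and Jacobi-finite refinements, imitating the Derksen--Weyman--Zelevinsky strategy after splitting the vertex fields.

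\emph{Necessity (nondegenerate implies good).} Suppose $(A,S)$ is nondegenerate and fix a mutation sequence $w$. Reduced mutation tracks the arrow bimodules themselves, and the signed multiplicity vectors of simple $K_i$--$K_j$ types obey the explicit tensor-product rule of Section~\ref{sec:good-modulations}, up to the cancellation of quadratic pairs. Splitting off a quadratic term $a\,a'$ with $a\colon i\to j$ and $a'\colon j\to i$ cancels one simple type against its dual. Hence the reduced modulation is two-acyclic exactly when, for each pair $(i,j)$, no simple type survives in one direction while another type survives in the opposite direction. The one-vertex example $L\otimes_K L\simeq L_{\id}\oplus L_\tau$ shows the types must be tracked separately rather than only their dimensions. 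Thus two-acyclicity along every sequence forces the signed vector to have entries of a single sign after every mutation, which is goodness. The point to check is that reduction cancels only type-against-dual-type. This holds because a quadratic term in $S$ is a bimodule pairing $A_{ij}\otimes_{K_j}A_{ji}\to K_i$, and such a pairing is nonzero only on dual simple summands.

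\emph{Sufficiency (good implies nondegenerate).} Fix a good modulation. For each finite sequence $w$, we show that the set of potentials whose reduced mutations along $w$ stay two-acyclic is Zariski-open and nonempty in finitely many coefficients. This follows the DWZ argument. First, truncate potentials at a degree $N(w)$ that is large enough for reductions along $w$, so that the quadratic part of each reduced mutation is a polynomial in finitely many coefficients. Second, goodness says that after each step the composite types in $A_{ij}\oplus A_{ji}^\ast$ that could cancel come in dual pairs. Their cancellation is governed by a bimodule pairing whose matrix entries are polynomials in the coefficients, so nondegeneracy at that step is the nonvanishing of a determinant. To show this determinant is not identically zero, we exhibit one potential making it nonzero; the cubic terms $[ab]b^\ast a^\ast$ together with a suitable generic cycle supply such a witness. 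Openness at the next step then follows by induction along $w$.

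\emph{Intersection.} There are countably many sequences, giving countably many nonempty Zariski-open subsets of an affine space over $\C$, which is uncountable. Their intersection is therefore nonempty, and this intersection is the nondegenerate locus. We also need the witnesses to be compatible with $R$-bimodule structure. After splitting, a potential corresponds to a $\Gal$-invariant potential on the split quiver, and the invariant potentials form a linear subspace. The open sets must be shown nonempty inside this subspace, which requires the witness construction above to be done equivariantly. I expect this step to be the main obstacle: genericity inside an invariant subspace is not automatic. I would first try averaging over the Galois group of a witness on the split quiver, and fall back on checking the required determinant directly on the simple-type decomposition if averaging kills it.

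\emph{Jacobi-finiteness.} Jacobi-finiteness is preserved by mutation, and after splitting $P$ becomes finite-dimensional exactly when the split Jacobian algebra is. Conversely, suppose a Jacobi-finite potential $S_0$ exists after splitting, made invariant by averaging. The potentials $S_0+\text{(terms of degree}>N)$ remain Jacobi-finite for large $N$, by the standard DWZ truncation lemma. The nondegenerate locus meets this affine family because each defining open set can be chosen to depend only on coefficients of bounded degree, or because $S_0+\lambda S_1$ is generic in $\lambda$. This gives a Jacobi-finite nondegenerate member.
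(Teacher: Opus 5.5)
Your necessity argument is correct and matches the paper's. Quadratic reduction cancels a simple type only against its dual and preserves $u_s-v_s$ (Lemma~\ref{gm:lem:obstruction}). So two-acyclic reductions force maximal typewise cancellation, and the net vectors are those given by \eqref{gm:eq:typemutation}.

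The gap is in sufficiency, at exactly the step you flag and leave unresolved: showing the nondegeneracy opens are nonempty \emph{inside the ramified potential space}. Averaging a split witness over $\Gal(E/K)$ does not settle it. Averaging is the $K$-linear trace map onto $\Pot(A)$, and a linear map need not keep a point inside a Zariski-open set; a witness $\lambda\otimes W_0$ with $\lambda\in E$ and $\Tr_{E/K}(\lambda)=0$ averages to the zero potential.

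The missing idea is that there is no invariant-subspace problem at all. The Galois action is semilinear, and degreewise $E\otimes_K\Pot_r(A)$ is the \emph{entire} cyclic potential space of the split quiver. So $\Pot(A)$ is a $K$-form of the split space, not a constant-coefficient space of sheet-invariant potentials. Take a nonzero polynomial $g$ on a finite split jet space and write it in $K$-coordinates of $\Pot(A)$; its Galois norm $\prod_\sigma g^\sigma$ is then a nonzero polynomial over $K$. This lets you apply \cite[Proposition 7.3 and Corollary 7.4]{DWZ1} to the ordinary split quiver along the lifted orbit sequence. Goodness is then used only afterwards, to turn maximal splitwise cancellation into two-acyclicity over $R$.

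This also removes a second weakness: your cubic-term witness controls only the first mutation. Later quadratic pairings are rational functions of the initial coefficients. Showing they do not vanish identically needs DWZ's involutivity step: intersect the inductive open on the mutated quiver with the one-step open for the inverse mutation, then mutate back. Induction alone gives openness, not nonemptiness.

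The Jacobi-finite paragraph fails for the same reason and one more. First, averaging a split Jacobi-finite potential need not give a Jacobi-finite one. Second, the family $S_0+(\text{terms of degree}>N)$ freezes all low-degree coefficients, and those are exactly what the nondegeneracy conditions for short sequences depend on. If the cubic part of $S_0$ gives a pairing of non-maximal rank at the first mutation, no member of the family is nondegenerate. Bounded-degree dependence of each open does not help, because the bound grows with the sequence. A segment from a ramified Jacobi-finite $S_0$ to a nondegenerate $S_1$ would work, but producing such an $S_0$ is the same descent problem.

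The fix is that Jacobi-finiteness is one more nonempty open condition on a finite split jet (Proposition~\ref{alg:prop:finitenessopen}). Add the Galois norm of its defining polynomial to the countable list.

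Finally, the intersection takes place in a countable product of finite-dimensional $K$-spaces, not in an affine space over $\C$. Choose the coordinates in $K$ algebraically independent over the countable subfield generated by the coefficients of all the defining polynomials; this avoids every zero set simultaneously.
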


Theorem~\ref{gm:thm:realization} establishes this equivalence on the
full potential space. A \emph{reddening sequence} is one for which
mutation of $(B\mid-I_n)$ ends with nonnegative coefficient rows.
Theorem~\ref{gm:thm:reddening} makes goodness a one-sequence test:
the signed simple-bimodule multiplicities must remain sign-coherent
along that sequence, without assuming a finite mutation class.
Passing the test also gives a Jacobi-finite nondegenerate potential
(Corollary~\ref{gm:cor:reddening-finite}); no separate Jacobi-finite
witness is required.

Triangular extensions give recursive constructions and exact counts
(Theorem~\ref{gm:thm:triangular-extension}); in particular, every
two-acyclic modulation of an acyclic matrix is good. For finite
mutation classes without a reddening sequence,
Proposition~\ref{gm:thm:finitealgorithm} provides an algorithm.
Theorem~\ref{gm:thm:integral} gives coefficientwise $t$-adic density
of nondegenerate potentials on any prescribed arrow lattice free over
both vertex rings, including Jacobi-finite choices in the reddening case.

Valued universal presentations require a full initial order,
additional data beyond the ramified realization. A \emph{full order}
in $P$ is a unital $\OO$-subalgebra, finite over $\OO$ and spanning
$P$ over $K$. Fix an \emph{integral arrow lattice} $\AA\subset A$,
finite over $\OO$, spanning $A$, stable under the vertex orders, and
free over both vertex rings in each component. Put
\[ \TT=\RR\langle\!\langle\AA\rangle\!\rangle,\qquad \PP(S)=\TT/(\TT\cap J(S))\subseteq P. \]
The intersection is taken in the arrow-completed tensor algebra over
$K$. Since quadratic splitting need not split the integral lattice,
we transport the image order into the reduced Jacobian algebra.

\begin{maintheoremB}
Fix an integral arrow lattice $\AA\subset A$ as above, and suppose
$(A,S)$ is Jacobi-finite. The contracted algebra $\PP(S)$ is finite
and free over $\OO$, and the natural map
$\PP(S)\otimes_{\OO}K\to P$ is an isomorphism. Thus $\PP(S)$ is a full
$\OO$-order in $P$. For every $i,j$, the component $e_i\PP(S)e_j$ is
free as a left $\OO_i$-module and as a right $\OO_j$-module.

If two quadratic splittings over $K$ induce isomorphisms
$r_i:P\to P^{(i)}$, there is a right-equivalence between the reduced
QPs that induces $r_2r_1^{-1}$ and carries $r_1(\PP(S))$ onto
$r_2(\PP(S))$.
Jacobi-finiteness is preserved by premutation and reduced mutation.
\end{maintheoremB}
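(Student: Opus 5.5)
The plan is to prove the three assertions of Theorem B separately, in the order stated, with the order statement doing most of the work.

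\emph{Finiteness and freeness.} Since $(A,S)$ is Jacobi-finite, $P$ is finite-dimensional over $K$. Hence the image of the arrow ideal is nilpotent in $P$: some power $\m^N$ of the arrow-adic ideal $\m$ of $R\langle\!\langle A\rangle\!\rangle$ lies in $J(S)$. I would prove this from $\m^{N}+J(S)=\m^{N+1}+J(S)$ for large $N$, using finite-dimensionality and Nakayama in the arrow-adic topology, and then pass to closure. The image of $\TT$ in $P$ is therefore the image of the truncation $\TT/(\TT\cap\m^N)$. This truncation is a finite direct sum of tensor powers $\AA\otimes_{\RR}\cdots\otimes_{\RR}\AA$, and each summand is finitely generated over $\OO$ because $\AA$ is finite over $\OO$. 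So $\PP(S)$ is a finitely generated $\OO$-module. It embeds in $P$, a $K$-vector space, so it is torsion-free, and torsion-free finitely generated modules over the DVR $\OO$ are free. The same argument gives $e_i\PP(S)e_j$ finite and torsion-free over $\OO_i$ on the left and over $\OO_j$ on the right, and $\OO_i,\OO_j$ are again DVRs, which gives the one-sided freeness.

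\emph{Base change.} For $\PP(S)\otimes_\OO K\to P$, injectivity follows from flatness of $K$: the tensor product is just the $K$-span inside $P$. For surjectivity, the $K$-span of $\AA$ is $A$, so the $K$-span of the image of $\TT$ contains the image of every path of length at most $N$. These images span $P$ by nilpotency.

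\emph{Comparison of two quadratic splittings.} Any two reduced parts obtained from quadratic splittings over $K$ are related by a right-equivalence. This is the independence statement proved in Section~\ref{alg:sec:premutation}, following the splitting theorem of DWZ. The point to check is that a right-equivalence $\varphi$ inducing $r_2r_1^{-1}$ can be chosen. Given any right-equivalence, compose it with an automorphism of the reduced QP so that the induced map on Jacobian algebras is exactly $r_2r_1^{-1}$; this uses that $r_i$ comes from the splitting data, namely the composite of an inclusion and a right-equivalence. Once $\varphi$ induces $r_2r_1^{-1}$, the image orders correspond automatically, because $r_2r_1^{-1}(r_1\PP(S))=r_2\PP(S)$. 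So the real content is the existence of a lift of the prescribed isomorphism. I expect this to be the main obstacle, since the DWZ uniqueness gives a right-equivalence but not a prescribed induced map.

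\emph{Jacobi-finiteness under mutation.} For premutation and reduced mutation I would follow DWZ Corollary 6.6 after splitting the vertex fields. The split quiver is an ordinary QP, possibly with Galois-twisted identifications, and premutation at $k$ corresponds to mutation at the orbit of $k$. Jacobi-finiteness is stable under base change to the split fields, so the DWZ argument transfers: the Jacobian algebra of $\widetilde\mu_k$ is finite-dimensional since its quotient by $e_k$ and the $e_k$-parts are controlled by $P$. Finally, reduction does not change the Jacobian algebra.
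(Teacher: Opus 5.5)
Your arguments for the order statement match the paper's proof: nilpotence of the arrow ideal in $P$, $\PP(S)$ as a torsion-free quotient of the finite module $\TT/\m^N$, freeness over the DVRs $\OO$, $\OO_i$, $\OO_j$, and the $K$-span computed in degrees below $N$. Your argument for Jacobi-finiteness under mutation also matches: the isomorphism of the subalgebras $\mathcal P_{\widehat k,\widehat k}$ from \cite[Proposition 6.1]{DWZ1}, plus the fact that without a loop at $k$ the whole algebra is controlled by $\overline e_k\mathcal P\overline e_k$ and finitely many incident arrows. Doing this after splitting is harmless, since the sheets over $k$ carry no arrows among themselves and their premutations commute.

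The genuine gap is the comparison of splittings, at exactly the step you flag. Your proposed repair is to take some right-equivalence $\theta\colon(A_1,S_1)\to(A_2,S_2)$ and compose it with an automorphism of the reduced QP that corrects the induced map. This needs the automorphism $\bar\theta^{-1}r_2r_1^{-1}$ of $P^{(1)}$ to be induced by a right-equivalence of $(A_1,S_1)$ with itself. Jacobian-algebra automorphisms do not lift in general. For the oriented triangle with $S=abc$, the rescaling $a\mapsto\lambda a$ is an automorphism of the radical-square-zero Jacobian algebra. Every lift to the completed tensor algebra has linear part $a\mapsto\lambda a$, because the Jacobian ideal lies in $\m^2$. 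Such a lift sends $S$ to $\lambda abc$ plus higher terms, which is not cyclically equivalent to $S$ when $\lambda\ne1$. Saying that $r_i$ ``comes from the splitting data'' does not explain why your particular discrepancy lifts. Appealing to independence of the reduced part is circular, since the version with a prescribed induced map is exactly what must be proved.

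The missing idea is to use the given right-equivalence $\Phi=F_2F_1^{-1}$ of the full split QPs, rather than an abstract right-equivalence of the reduced parts.

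Put $\psi=p_2\Phi i_1$.
\begin{itemize}
\item The quadratic part forces $\Phi^{(1)}(C_1)=C_2$. Hence $\psi$ has invertible linear part and is an isomorphism of the reduced completed tensor algebras.
\item By construction $\psi$ induces exactly $r_2r_1^{-1}$, so $\psi(J(S_1))=J(S_2)$.
\item $\psi$ is not a right-equivalence. However, the trivial arrows lie in the stabilized Jacobian ideal, so $p_2\Phi(C_1)\subseteq J(S_2)$. Projecting $\Phi(S_1+T_1)\sim_{\cyc}S_2+T_2$ then gives $S_2-\psi(S_1)\sim_{\cyc}p_2\Phi(T_1)\in J(S_2)^2$.
\end{itemize}

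The key lemma is Lemma~\ref{alg:lem:square}, the species form of \cite[Proposition 4.10]{DWZ1}. It gives a unitriangular $\eta$ with $\eta\psi(S_1)\sim_{\cyc}S_2$ and $\eta(b)-b\in J(S_2)$ for every arrow $b$. One builds $\eta$ by successive substitutions $b\mapsto b+h_n(b)$ with $h_n(b)\in J(S_2)\cap\m^{n+1}$. This uses that every cyclic class in $\overline{IJ}$ is a first variation with corrections in $I$, which is a closed-image argument. The depths grow, so the product converges.

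Since $\eta$ is the identity modulo $J(S_2)$, it induces the identity on $P^{(2)}$. Thus $\eta\psi$ is a right-equivalence inducing $r_2r_1^{-1}$, and your remark $r_2r_1^{-1}(r_1\PP(S))=r_2\PP(S)$ finishes the argument.

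Your route could be repaired the same way, because $\theta^{-1}\psi$ moves $S_1$ only by an element of $J(S_1)^2$. That repair again requires this correction lemma, which is the real content of the assertion.
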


The three assertions are proved in
Theorem~\ref{alg:thm:contraction},
Proposition~\ref{alg:thm:nonsplit}, and
Corollary~\ref{alg:cor:mutation-finiteness}, respectively.
The contraction theorem also gives covariance under integral
right-equivalence. The later character construction allows any full
initial order containing $\RR$, not only a contracted order.

The \emph{special fibre} $\PP^{\mathrm{sp}}(S)=\PP(S)/t\PP(S)$
has vertex rings
$H_i=\OO_i/t\OO_i\simeq\C[\varepsilon_i]/(\varepsilon_i^{d_i})$,
the truncated polynomial rings of the GLS construction \cite{GLS1}.
Theorem B gives the component dimension identity
$\dim_\C e_i\PP^{\mathrm{sp}}(S)e_j =\dim_K e_i\mathcal P(A,S)e_j$.
For the standard GLS arrow lattices in the acyclic zero-potential
case, this recovers $\widehat H$, its hereditary generic fibre
$\widetilde H$, and the special fibre $H(C,D,\Omega)$
\cite{GLS2}. In general the special fibre need not be a GLS algebra,
nor the Jacobian algebra obtained by reducing cyclic derivatives
modulo $t$; see Section~\ref{alg:subsec:special-fibre}.

\subsection{Monodromy characters and mutation}
\label{intro:subsec:monodromy}
Even at a single ramified vertex, geometric Euler characteristic does
not give the required $F$-polynomial. Let $E=K(t^{1/2})$. The variety
of one-dimensional $E$-quotients of $E^2$, regarded over $K$, is
$\Res_{E/K}\mathbb P^1$. Here $\Res$ denotes \emph{restriction of
scalars}; over an algebraic closure $\overline K$, the variety becomes
$\mathbb P^1\times\mathbb P^1$. Its geometric Euler characteristic
is four, whereas a tame generator interchanges the two factors and
has alternating cohomological trace two. Thus Euler characteristic
would give $1+4y+y^2$ for $E^2$, while monodromy gives the polynomial
$(1+y)^2$ required by mutation.

Fix the \emph{tame generator} by
$\sigma(t^{1/m})=\exp(2\pi\sqrt{-1}/m)t^{1/m}$. For a variety $X$
over $K$, write
\[ \chi_\sigma(X)=\sum_j(-1)^j\Tr\bigl(\sigma\mid H_c^j(X_{\overline K},\Q_\ell)\bigr), \]
where $\ell$ is a prime and $H_c^j$ denotes compactly supported
$\ell$-adic \'etale cohomology. The \emph{sheets} above vertex $i$ are
the factors of $K_i\otimes_K\overline K\simeq\overline K^{d_i}$.
Writing $\Gr_e(V)$ for the Grassmannian of $e$-dimensional subspaces,
we obtain, for $0\le e\le r$,
\begin{equation}\label{intro:eq:normalization}
 \chi_\sigma\!\left(\Res_{K_i/K}\Gr_e(K_i^r)\right)=\binom re,
 \qquad
 \chi\!\left((\Gr_e(\overline K^r))^{d_i}\right)=\binom re^{d_i}.
\end{equation}
Geometric Euler characteristic counts independent choices on the
sheets. The cyclic trace counts the choices fixed by their permutation.

A \emph{decorated representation} $\mathcal M=(M,V)$ consists of a
finite right $P$-module $M$ and a finite right $R$-module $V$ with
no arrow actions; $(0,K_i)$ is the \emph{negative simple} at $i$.
At a mutation vertex $k$, the incoming action map
$\alpha:M_{\mathrm{in}}\to M_k$ enters Theorem C. The outgoing and
second-derivative maps $\beta,\gamma$, their relations, and the
kernel--image mutation formulas are given in Section~\ref{sec:local}.

Put $M_i=Me_i$ and $\mathbf r_M=(\dim_{K_i}M_i)_i$, and let
$\Gr^R_{\mathbf a}(M)$ be the \emph{representation Grassmannian} of
submodules of rank vector $\mathbf a$. We use the \emph{quotient-rank
convention}: the exponent records the ranks of the quotient, so
\[ F_M(\mathbf y)=\sum_{\mathbf e}\chi_\sigma\!\left(\Gr^R_{\mathbf r_M-\mathbf e}(M)\right)\mathbf y^{\mathbf e}. \]
The condition of \emph{balanced rank} requires quotient dimension $e_i$
on each sheet above $i$, not merely total dimension $d_ie_i$.
Formula~\eqref{intro:eq:normalization} is the basic normalization.
The polynomial uses the representation over $K$ and its monodromy,
not the geometric Euler characteristic of its special fibre.

Rank and weight vectors are rows; $\mathbf e_i$ denotes the standard
coordinate vector, distinct from the idempotent $e_i$. Put
$P(a)=\bigoplus_i(e_iP)^{a_i}$ for $a\geq0$. The \emph{presentation
weight} (or \emph{$\delta$-vector}) of $P_-\to P_+$ is
$\delta=[P_+]-[P_-]$. For presentations without a common projective
summand, the two multiplicity vectors are $[-\delta]_+$ and
$[\delta]_+$. A direct summand
$P(v)\xrightarrow{0}0$ contributes decoration of rank $v$.
Decoration and minimal presentations are treated in
Section~\ref{sec:rep-invariants}.

The two parts of the next theorem have different scopes: the first
concerns a single mutation under its local hypotheses; the second
uses a good global realization to treat general presentations.

\begin{maintheoremC}
\textup{(i) Character mutation.} For mutation of finite decorated representations
$\mathcal M\mapsto\mathcal M'$ at $k$ between ramified QPs that are
two-acyclic over $R$, set
$h_k=-\dim_{K_k}\coker\alpha$ and
$h'_k=-\dim_{K_k}\coker\alpha'$. Then
\[
F_{M'}(\mathbf y')
   =\frac{(1+(y_k')^{-1})^{h_k}}{(1+y_k')^{h'_k}}F_M(\mathbf Y),
 \qquad
 Y_i=
 \begin{cases}
 (y_k')^{-1},& i=k,\\
 y_i'(y_k')^{[b_{ik}]_+}(1+y_k')^{-b_{ik}},& i\ne k.
 \end{cases}
\]
\smallskip\noindent
\textup{(ii) General presentations.} For a chosen Jacobi-finite
nondegenerate realization of a good modulation, mutation takes
the decorated cokernels of general projective presentations to objects
with general presentations of tropical weight $\mu_k\delta$. On these families
$h_k=-[\delta_k]_+$ and $h'_k=-[-\delta_k]_+$, so the prefactor becomes
$(y_k')^{[\delta_k]_+}(1+y_k')^{-\delta_k}$.
\end{maintheoremC}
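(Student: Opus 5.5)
For part (i) I would follow the Derksen--Weyman--Zelevinsky strategy for the $F$-polynomial mutation formula, adapted to the vertex fields and to the monodromy trace $\chis$. Write the local data at $k$ as $M_{\mathrm{in}}\xrightarrow{\alpha}M_k\xrightarrow{\beta}M_{\mathrm{out}}$ with $\gamma:M_{\mathrm{out}}\to M_{\mathrm{in}}$. Stratify $\Gr^R_{\mathbf r_M-\mathbf e}(M)$ by the data away from $k$: a subrepresentation $N$ is determined by its components $N_i$ for $i\neq k$ together with $N_k$, and $N_k$ must lie between $\alpha(N_{\mathrm{in}})$ and $\beta^{-1}(N_{\mathrm{out}})$. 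Thus the Grassmannian fibres over the variety of admissible collections $(N_i)_{i\ne k}$. The fibre is a Grassmannian of $K_k$-subspaces of $\beta^{-1}(N_{\mathrm{out}})/\alpha(N_{\mathrm{in}})$, and it is a restriction of scalars $\Res_{K_k/K}\Gr$. Do the same for $M'$, where the mutated maps $\alpha',\beta'$ are built from $\ker$, $\im$ and $\coker$ of $\alpha,\beta,\gamma$ as in Section~\ref{sec:local}. Since $K_k$ is a field, all these spaces are $K_k$-vector spaces. The key combinatorial point is that the two base varieties, for $M$ and for $M'$, are related by an explicit piecewise bijection. On each stratum, the fibre dimensions are governed by the ranks of $\alpha(N_{\mathrm{in}})$ and $\beta^{-1}(N_{\mathrm{out}})$ and their primed analogues. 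The relation involves the shift $[b_{ik}]_+$ and the exponent $-b_{ik}$.

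The trace $\chis$ is additive over locally closed stratifications, because it is defined on compactly supported cohomology. It is multiplicative for Zariski-locally trivial fibrations whose fibres have a stratification defined over $K$ with $\sigma$ acting through the Galois action on the sheets. On such a fibration the fibre contributes $\binom re$ by \eqref{intro:eq:normalization}. I would prove a slightly stronger form of \eqref{intro:eq:normalization}: the fibre's class is a sum of affine cells permuted by $\sigma$, so that only the $\sigma$-fixed cells contribute. Summing the binomial fibre contributions over $e$ produces factors $(1+y_k)^r$. Matching the generating functions stratum by stratum then gives the stated formula. The substitution $Y_i$ accounts for the change of sheets above $i$, since $b_{ik}$ is measured in $K_k$-dimensions over $d_k$. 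The exponents $h_k,h'_k$ record the dimensions of the cokernels, which appear as the free directions in $M_k$ and $M'_k$ respectively. Balanced rank is preserved because the bijection respects the sheet decomposition $K_i\otimes_K\overline K$.

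\textbf{Main obstacle.} The hardest step is justifying this multiplicativity of $\chis$ on the nontrivial strata. There the base is not a product and the incidence conditions mix sheets. My first approach is to pass to $\overline K$ and compute the Lefschetz-type trace of $\sigma$ on the $\sigma$-fixed locus. Tame $\sigma$ has finite order on each finite extension, and the trace equals the Euler characteristic of the fixed locus. The fixed locus of a sheet-permuting action on a product Grassmannian is a single diagonal Grassmannian, so the counting reduces to the split, DWZ-type computation.

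For part (ii), take a general presentation $P([-\delta]_+)\to P([\delta]_+)$ with decorated cokernel $\mathcal M$. Use nondegeneracy and Jacobi-finiteness, which by Theorem B are preserved under mutation, to identify presentations on both sides through the mutation of presentations. Then show that a general presentation maps to a general one of weight $\mu_k\delta$, using openness of the generic conditions and the fact that mutation is a bijection on the relevant presentation spaces. For general presentations, $\alpha$ is of maximal rank in the appropriate direction. So $\dim\coker\alpha=[\delta_k]_+$, and dually $\dim\coker\alpha'=[-\delta_k]_+$. Substituting these values into (i) simplifies the prefactor to $(y_k')^{[\delta_k]_+}(1+y_k')^{-\delta_k}$.
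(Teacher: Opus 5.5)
Your outline for (i) has the right shape. You fibre the balanced Grassmannian over the outside kernels, let $N_k$ run over the interval between $L=\alpha N_{\mathrm{in}}$ and $U=\beta^{-1}N_{\mathrm{out}}$, and use that a Weil-restricted Grassmannian has trace $\binom me$. No ``piecewise bijection'' of bases is needed: both bases are the balanced representation Grassmannian of $M\widehat e\simeq M'\widehat e$ over $P_{\widehat k,\widehat k}$, whose stability conditions already include $\beta\alpha N_{\mathrm{in}}\subseteq N_{\mathrm{out}}$ and $\gamma N_{\mathrm{out}}\subseteq N_{\mathrm{in}}$.

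The genuine gap is the step you flag as the main obstacle, and your proposed resolution fails. Reducing to the Euler characteristic of a ``diagonal'' fixed locus computed after splitting treats $\sigma$ as a pure permutation of sheets. But $\sigma$ also acts semilinearly on coordinates, and this is visible in the answer. In Example~\ref{run:ex:one-arrow}, the geometric Grassmannian at quotient rank $(2,1)$ consists of two reduced balanced points for every $m$, yet its trace is $2$ or $0$ according to the parity of $m$. So nothing computed over $\overline K$ alone can supply the coefficients.

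What works is to stratify the common base over $\overline K$ by the tuple $(\dim L_a,\dim U_a)_a$, where $a$ runs over the sheets above $k$. The tame generator permutes these strata. A nonconstant tuple lies in a nontrivial orbit, whose compactly supported class splits into pieces cyclically permuted with none fixed. Such an orbit therefore contributes trace zero, for $M$ and $M'$ simultaneously (Lemma~\ref{lem:trace-strata}). Constant tuples give strata that descend to $K$, and on these $L\subseteq U$ are $K_k$-subbundles. There your locally-trivial, cell-by-cell argument does give $y_k^{q_-}(1+y_k)^{q_+-q_-}$ times the trace of the stratum.

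The matching you leave implicit is the identity
$q'_-=u-h_k-q_+$ and $q'_+=w-h'_k-q_-$, with $u=\sum_{i\ne k}[b_{ik}]_+e_i$ and $w=\sum_{i\ne k}[-b_{ik}]_+e_i$ (Lemma~\ref{lem:interval}). It follows from $\ker\alpha'=\im\beta$, $\im\beta'=\ker\alpha$, $\beta'\alpha'=-\gamma$ and the local dimension formulas. This identity is exactly what turns $(y_k')^{u}(1+y_k')^{w-u}$ times the source fibre, together with the prefactor, into the target fibre.

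For (ii), mutation is not a map between $X_\delta$ and $X_{\mu_k\delta}$, let alone a bijection. It is defined only on isomorphism classes, and the two presentation spaces, over different Jacobian algebras, generally have different dimensions. For instance, at $\delta=\mathbf e_k$ the source is a point, while $X_{\mu_k\mathbf e_k}$ is not whenever $k$ has outgoing arrows. Generality must be proved in the orbit-saturated sense of Theorem~\ref{thm:generic-species-generality}, and this needs two ingredients you do not supply.
\begin{itemize}
\item The mutated decorated objects must again have minimal presentations with disjoint supports. Only then do they lie in $X_{\mu_k\delta}$, and the $h$-identities remain available at the next step. The paper obtains this from the presentation--component correspondence and the component inequality forcing $v_i\dim M_i=0$.
\item The union of their orbits must be dense. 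The paper gets this from component mutation on the split quiver, descended from the simultaneous mutation at the sheets above $k$. Once disjointness is known, Lemma~\ref{lem:dominance} together with the invariance of $\op{e}$ in Theorem~\ref{thm:E-mutation} also gives it.
\end{itemize}

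Your justification of the $h$-values is also incorrect, because $\alpha_k$ need not have maximal rank at a general presentation. Take one arrow $a\colon i\to k$, two arrows $b_1,b_2\colon k\to j$, two arrows $c_1,c_2\colon j\to i$, the potential $S=ab_1c_1+ab_2c_2$, and $M=P_k$. Then $\dim M_{\mathrm{in}}=3>1=\dim M_k$, but $\alpha_k=0$, since $c_\beta a=\partial_{b_\beta}S$.

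The correct argument holds for every presentation with disjoint supports, general or not. Lemma~\ref{lem:completed-betti} gives $\dim\coker\alpha_k=\beta_{0,k}$ and $\dim(\ker\alpha_k/\im\gamma_k)=\beta_{1,k}$. The mutation construction gives $\coker\alpha'_k\simeq\ker\alpha_k/\im\gamma_k\oplus V_k$. Finally, disjoint supports give $\beta_0=[\delta]_+$ and $\beta_1+v=[-\delta]_+$, which yields $h_k=-[\delta_k]_+$ and $h'_k=-[-\delta_k]_+$.
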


To prove character mutation, we fix the quotient away from the
mutation vertex and stratify its possible kernels after splitting.
Nonconstant sheet-rank tuples contribute zero monodromy trace;
relative Grassmannians on the constant-rank strata supply the binomial
factors. For general presentations, descent of invariant open sets
allows iteration and identifies the exponents with the parts of
$\delta$.

The coefficients are integral by the trace theorem of \cite{Nicaise}.
The \emph{Newton polytope} is the convex hull of exponents with nonzero
coefficient. Unique extremal kernels make its vertices integral and
their coefficients one. Theorem~\ref{newton:thm:order-independence}
then shows that, for fixed $(A,S,\delta)$, the monodromy $F$-polynomials selected by full initial orders
have the same Newton polytope: arithmetic variation changes only
nonvertex coefficients, possibly including boundary coefficients.
Opposite duality gives the injective formulation.

\subsection{The basis associated with an initial order}
Fix a good Jacobi-finite nondegenerate realization $(A,S)$ and a
full initial order $\PP_0\supseteq\RR$. For each presentation weight
$\delta$, projective Hom modules determine an integral lattice
$\PHOM(\delta)$ in the affine presentation space
\[ X_\delta=\PHom(\delta)=\Hom_P\!\left(P([-\delta]_+),P([\delta]_+)\right). \]
An integral basis $f_1,\ldots,f_q$ gives the universal presentation
$f_{\mathrm{univ}}=\sum_j z_jf_j$ and the function field
$K(X_\delta)=K(z_1,\ldots,z_q)$. The lattice determines the
\emph{Gauss valuation}: on polynomials, take the least $t$-adic
valuation of the coefficients, then extend to fractions. This
valuation does not depend on the chosen integral basis.

The decorated cokernel is formed over the fixed valued field
$L_\delta=\overline{\C(z_1,\ldots,z_q)}((t))$,
with its vertex extensions, throughout mutations from $\delta$.
Write $F_{\delta;\PP_0}$ for its monodromy $F$-polynomial.
Section~\ref{subsec:valued-families} proves that the represented
objects remain general under mutation. The choice of integral basis
is immaterial, but the initial order can affect the resulting character.

For an $n\times m$ extended exchange matrix $\widetilde B$ whose
principal submatrix is $B$, and an extended weight
$\widetilde\delta=(\delta,\delta_{\rm fr})$, define
\[ \mathsf G^{\PP_0}_{\widetilde\delta} =x^{-\widetilde\delta} F_{\delta;\PP_0}(x^{-\widetilde b_i})_{i=1}^n. \]
Here $\widetilde b_i$ is the $i$th row of $\widetilde B$ and
$x^u=\prod_i x_i^{u_i}$. The first $n$ variables are mutable, while
the others are frozen and inverted. The cluster algebra
$\mathcal A(\widetilde B)$ is generated by cluster variables over
the Laurent coefficient ring. The upper cluster algebra
$\mathcal U(\widetilde B)$ is the intersection of all seed Laurent
rings in their common field. The middle cluster algebra
$\Mid(\widetilde B)$ is spanned by the specialized polynomial
theta functions of \cite{GHKK}; these depend on the exchange
pattern. Section~\ref{sec:cluster-application} defines the
coefficient specialization and compares this span with the
monodromy characters.

The \emph{seed reachability condition} means that a mutation sequence in
the principal-coefficient pattern gives terminal mutable cluster
variables with initial $g$-vectors $-\mathbf e_i$, up to permutation.
It does not restrict the coefficient extension subsequently chosen.
Our convention and Qin's are related by
$\widehat B=-\widetilde B^{\mathsf T}$ and $g=-\widetilde\delta$.

\begin{maintheoremD}
Fix a good modulation, a Jacobi-finite nondegenerate potential, and a
full initial order $\PP_0\supseteq\RR$. Assume the seed reachability condition. For every geometric coefficient extension $\widetilde B$,
the family $\{\mathsf G^{\PP_0}_{\widetilde\delta}:\widetilde\delta\in\Z^m\}$
is a $\Z$-basis of $\Mid(\widetilde B)$ and contains all cluster
monomials. The members with zero initial frozen weight form a basis
over the Laurent coefficient ring. There are inclusions
\[ \mathcal A(\widetilde B)\subseteq\Mid(\widetilde B)\subseteq\mathcal U(\widetilde B). \]
The same family is a basis of the upper cluster algebra whenever the
middle and upper algebras coincide. This holds, in particular, if
$\rank\widetilde B=n$ or if
$\mathcal A(\widetilde B)=\mathcal U(\widetilde B)$.
\end{maintheoremD}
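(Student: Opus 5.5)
The plan is to reduce Theorem D to Qin's principal-coefficient basis criterion \cite{Qin} and then transport the result to arbitrary geometric coefficients using the coefficient-specialization theorem of \cite{CMMM}.

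\textbf{Principal coefficients.} First work with $\widetilde B=(B\mid -I_n)$, translated through $\widehat B=-\widetilde B^{\mathsf T}$ and $g=-\widetilde\delta$. Three properties of the family $\mathsf G^{\PP_0}_{\widetilde\delta}$ have to be checked.
\begin{enumerate}
\item \emph{Pointedness.} Each $\mathsf G^{\PP_0}_{\widetilde\delta}$ is pointed at degree $-\widetilde\delta$ with respect to Qin's dominance order. Its $F$-polynomial has constant term one and top term $\mathbf y^{\mathbf r_M}$, both with coefficient one, because the zero and full quotients are unique extremal kernels; the Newton-polytope discussion supplies this.
\item \emph{Mutation compatibility.} By Theorem C(ii) and the descent of invariant open sets from Section~\ref{subsec:valued-families}, the valued universal presentation of weight $\delta$ mutates to a general presentation of weight $\mu_k\delta$. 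The prefactor $(y_k')^{[\delta_k]_+}(1+y_k')^{-\delta_k}$, combined with the $g$-vector change, is exactly the Fomin--Zelevinsky separation-formula transformation. Hence $\mathsf G^{\PP_0}_{\widetilde\delta}$ written in seed $t$ equals the character $\mathsf G_{\mu\widetilde\delta}$ computed in the mutated realization. In particular it is Laurent in every seed, so it lies in $\mathcal U$, and it is pointed in every seed with the tropically mutated degree.
\item \emph{Cluster monomials.} A cluster monomial corresponds to a reachable rigid presentation. Its general member is the mutation of a negative-simple decoration, whose character is the initial monomial, so it belongs to the family.
\end{enumerate}
Once seed reachability is assumed, Qin's theorem says that a family pointed at all $g$-vectors and compatible with mutation in this way, i.e.\ a family of "tropical points" parametrized bijectively, is a basis of $\Mid$ and coincides degreewise with the theta basis up to unitriangular change. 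I would actually verify the hypotheses in Qin's form: the family is parametrized bijectively by $\Z^n$ through $\delta$, and it is compatibly pointed in the initial seed and in the terminal seed reached by the reddening sequence. Conclude that $\{\mathsf G_{\widetilde\delta}\}$ is a $\Z$-basis of $\Mid(B\mid -I_n)$.

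\textbf{Arbitrary coefficients.} For general $\widetilde B$, write $\mathsf G_{\widetilde\delta}$ as the specialization of the principal-coefficient element of weight $\delta$, multiplied by the frozen monomial $x^{-\delta_{\rm fr}}$. This is immediate from the defining formula $x^{-\widetilde\delta}F_{\delta;\PP_0}(x^{-\widetilde b_i})$, since $F$ depends only on $\delta$. By \cite{CMMM}, the specialized theta functions times frozen Laurent monomials form a basis of $\Mid(\widetilde B)$, and the specialization respects the unitriangular relation with the principal-coefficient theta basis. Hence the $\mathsf G$ form a $\Z$-basis, and those with $\delta_{\rm fr}=0$ form a basis over the Laurent coefficient ring.

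\textbf{Inclusions and the upper algebra.} The inclusions $\mathcal A\subseteq\Mid\subseteq\mathcal U$ follow from the statements that cluster monomials lie in the family and theta functions are universally Laurent \cite{GHKK}. When $\Mid=\mathcal U$ the basis statement transfers directly. That equality holds when $\rank\widetilde B=n$, by the full-rank/convexity result of GHKK under reachability, and trivially when $\mathcal A=\mathcal U$.

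\textbf{Main obstacle.} I expect the hardest step to be verifying Qin's compatibility hypothesis precisely. The issue is that the representation attached to $\delta$ after mutation must be the general one for the \emph{same} initial order $\PP_0$, not for an order reselected in the mutated QP. The character depends on $\PP_0$ only through its nonvertex coefficients, and the degree tracking must stay unitriangular. I would handle this using the valued-field stability from Section~\ref{subsec:valued-families} together with the order-independence of Newton polytopes.
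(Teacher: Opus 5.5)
Your proposal is correct and follows essentially the paper's route. Fixed-field transport of the valued universal presentation gives Laurentness and compatible pointedness, Qin's criterion gives a basis with principal (full-rank) coefficients, and comparison with the GHKK theta basis together with the CMMM independence of specialized theta functions gives the middle-algebra basis for arbitrary coefficients, while cluster monomials come from reachable $E$-rigid objects via Proposition~\ref{prop:rigid-weight}. There are two minor differences: the paper gets the full-rank upper-basis case by applying Qin's criterion directly to $\widetilde B$ rather than citing a GHKK equality $\Mid=\mathcal U$, and the Newton-polytope order-independence you invoke for your ``main obstacle'' is unnecessary, since pointedness at each seed only uses that the transported polynomial $F_{\mathcal M_{\delta,w}}$ has constant term one.
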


For principal coefficients, fixed-field transport gives pointed
Laurent characters, and Qin's criterion \cite[Theorem 4.3.1]{Qin}
gives an upper-algebra basis. The theta basis and its specialization
independence \cite[Theorem 3.10]{CMMM} then make the map from the
base-changed principal algebra into the target Laurent ring injective.
Its image is the middle algebra. Thus the order selects a basis of
an algebra determined by the exchange pattern.

The reachable $E$-rigid representations realize cluster monomials;
here $E$-rigidity means vanishing of the presentation invariant
$\op{E}(\mathcal M)$ (Section~\ref{sec:local}).
Corollary~\ref{cor:class-P-basis} applies the theorem to realizations
obtained by extensions and mutations. In particular, principal
coefficients give bases of their upper cluster algebras.

\subsection{Applications and geometric orders}
The construction extends beyond globally unfoldable patterns.
Example~\ref{ex:nonunfoldable-basis} treats the four-vertex matrix of
\cite[Example 14.4]{LZ} with parameters $(2,3)$. It excludes both
finite global unfoldings and locally finite unfoldings with infinite
orbits, hence also the finite-group orbit constructions of
\cite{Demonet,Wu}; nevertheless, a reddening sequence supplies a
good Jacobi-finite nondegenerate realization and its bases.
The checks use the same exchange convention. This unfolding
obstruction differs from the failure of a good modulation in
Example~\ref{ex:four-vertex-type-obstruction};
Remark~\ref{ex:four-vertex-duality} explains their duality.

The realizations are closed under a larger construction. A
\emph{triangular extension} joins two disjoint exchange matrices by
arrows all pointing from one part to the other. When one part is a
single vertex, this is a \emph{sink--source extension}. Write
$\mathfrak P$ for the class generated from the one-vertex zero matrix
by triangular extensions and mutations.

\begin{maintheoremE}
Every matrix in $\mathfrak P$ admits a good Jacobi-finite nondegenerate
ramified realization for each compatible positive integral symmetrizer.
For every such realization, every full initial order, and every
geometric coefficient extension, the monodromy characters form a
basis of the middle cluster algebra containing all cluster monomials.
\end{maintheoremE}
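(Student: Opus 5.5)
We prove Theorem E by induction on how a matrix is built inside $\mathfrak P$. We then feed the resulting realizations into Theorem D, using reddening sequences throughout as the link.

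\emph{Step 1: reddening sequences propagate through $\mathfrak P$.} The one-vertex zero matrix has the reddening sequence $(1)$. Next, suppose $B_1$ and $B_2$ admit reddening sequences $w_1$ and $w_2$, and $B$ is a triangular extension with arrows from the $B_1$-part to the $B_2$-part. We claim that the concatenation $w_2w_1$ (first the sink side, then the source side) is reddening for $B$. This is the standard argument: along $w_2$ the $c$-vectors supported on the second block behave as for $B_2$, while the connecting block only shifts the first-block rows. The proof reduces to sign coherence of $c$-vectors together with the tropical duality between $c$- and $g$-vectors. Existence of a reddening sequence is also mutation invariant, since a reddening sequence for $\mu_k B$ is obtained by prefixing $k$. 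Hence every matrix in $\mathfrak P$ admits a reddening sequence. The same induction gives seed reachability. After reddening, a maximal green or reddening sequence lands at the seed whose $g$-vectors are $-\mathbf e_{\pi(i)}$, because the $C$-matrix becomes $-P_\pi$ and the $G$-matrix is its inverse transpose. This is exactly the seed reachability condition in our $\widetilde\delta$-convention.

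\emph{Step 2: goodness.} Fix a compatible symmetrizer $D$. We build a modulation along the same recursion. For the one-vertex case there is nothing to check. For a triangular extension with arbitrary connecting bimodule types, Theorem~\ref{gm:thm:triangular-extension} (the triangular-extension result, whose hypothesis is that both components admit reddening sequences, as in the abstract) gives that the extended modulation is good whenever the components are. Goodness is defined by all finite mutation sequences, so it is preserved under mutation of the modulation. Any matrix $\mu_w B$ with $B$ built by extensions is therefore realized by the mutated modulation. One subtlety arises here. Mutation is defined on reduced QPs, so to realize $\mu_w B$ we first realize $B$ with a nondegenerate potential (Theorem A) and then mutate the QP along $w$. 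Nondegeneracy keeps everything two-acyclic, so the result realizes $\mu_w B$ with the same $D$. Once the realization exists, Corollary~\ref{gm:cor:reddening-finite} upgrades it to a Jacobi-finite nondegenerate potential, since the reddening sequence from Step~1 is available. Jacobi-finiteness is preserved by reduced mutation (Theorem B), so it survives passing to $\mu_w B$.

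\emph{Step 3: bases.} For any such realization, full initial order $\PP_0\supseteq\RR$, and geometric extension $\widetilde B$, all hypotheses of Theorem D hold, using seed reachability from Step~1. Theorem D then gives the $\Z$-basis of $\Mid(\widetilde B)$ containing all cluster monomials.

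The main obstacle is the phrase ``for every such realization'' in Theorem E. It applies to arbitrary good Jacobi-finite nondegenerate realizations, not just those constructed here. That is harmless, because Theorem D needs only goodness, nondegeneracy, Jacobi-finiteness, and seed reachability, and the last is a property of $B$ alone. The real technical point is the concatenation claim in Step~1, in particular checking that the connecting block does not spoil the final sign pattern. Here we would adapt the known argument for maximal green sequences of triangular extensions, applied to $(B\mid -I_n)$ with the block-triangular shape of the $C$-matrix tracked explicitly.
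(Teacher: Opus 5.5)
There is one concrete step that fails: your justification of mutation invariance, ``a reddening sequence for $\mu_kB$ is obtained by prefixing $k$.'' Take $B=\bigl(\begin{smallmatrix}0&1\\-1&0\end{smallmatrix}\bigr)$. For this matrix the two sink mutations $(2,1)$ form a reddening sequence. For $\mu_1B=-B$, the sequence $(1,2)$ is already reddening. So in your prefixed sequence $(1,2,1)$, the last step mutates a red vertex. Starting from $(-B\mid-I_2)$, it ends with coefficient rows $(-1,0)$ and $(1,1)$, and the first of these is green.

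The reason is that the prefixed sequence stops at the terminal seed of $\mathbf i$. That seed is one mutation away from the terminal seed a reddening sequence of $\mu_kB$ must reach. The correct statement is Muller's theorem: $(k,\mathbf i,k')$ is reddening for $\mu_kB$, where $k'$ is the terminal index whose coefficient row is $\mathbf e_k$. In the example this gives $(1,2,1,1)$, which reduces to $(1,2)$. Its proof uses that a reddening sequence ends at the initial seed up to a permutation. The skew-symmetrizable version follows the same way from sign coherence and the chamber structure of the cluster scattering diagram. The paper asserts this invariance in the proof of Corollary~\ref{gm:cor:class-P}.

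The error matters for your induction. The recursion runs over the whole construction tree of $\mathfrak P$, where triangular extensions are applied to components that are themselves mutated members of $\mathfrak P$. Theorem~\ref{gm:thm:triangular-extension} needs reddening sequences for exactly those components. Corollary~\ref{gm:cor:reddening-finite} and seed reachability at the terminal matrix also need one. So you must either cite Muller's invariance or use the corrected sequence $(k,\mathbf i,k')$.

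With that repaired, your outline coincides with the paper's proof, which combines Corollaries~\ref{gm:cor:class-P}, \ref{gm:cor:reddening-finite} and~\ref{cor:class-P-basis}. There are two small differences.
\begin{itemize}
\item You plan to re-derive two results that the paper cites. The concatenation $w_2w_1$ with the sink side first is \cite[Theorem 4.5 and Remark 4.6]{CaoLi}. The implication from reddening to seed reachability is \cite[Corollary 2.3.4]{Qin}.
\item Your detour through potentials in Step~2 is unnecessary. Type mutation and Proposition~\ref{gm:prop:bijections} carry good modulations along mutations with no potential involved. The Jacobi-finite nondegenerate potential can then be chosen once, on the terminal good modulation, by Corollary~\ref{gm:cor:reddening-finite}.
\end{itemize}
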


Theorem~\ref{gm:thm:triangular-extension} strengthens this: when
both components admit reddening sequences, the extension is good
exactly when both restrictions are good; the connecting types are
independent and counted by \eqref{gm:eq:triangular-count}.
Corollaries~\ref{gm:cor:class-P},~\ref{gm:cor:reddening-finite},
and~\ref{cor:class-P-basis} yield Theorem E, including the Cartan
string-diagram cases \cite{Cao}.

Multiplicativity for individual representations does not by itself
imply factorization for universal presentations initialized at
different weights. Theorem~\ref{fac:thm:criterion} characterizes
when the direct-sum morphism preserves the order-selected Gauss
valuations. For the integral tensor algebra of the degree-$(1,4)$
one-arrow species this gives the rank-two power basis of \cite{SZ}:
\[ F_{r(2,-1)}=((1+y_1)^2+y_1^2y_2)^r, \qquad \op{CM}(B)\sqcup\{Z^r:r\ge1\},\quad Z=\frac{(x_2+1)^2+x_1^4}{x_1^2x_2}. \]
Here $\op{CM}(B)$ is the set of coefficient-free cluster monomials,
including $1$. The remaining basis elements multiply along the
exceptional ray. Section~\ref{sec:nonzero} gives complementary
calculations with nonzero potentials and nonzero $\gamma$-maps,
including a nonrigid surface potential.

The orders in these examples also have a geometric interpretation.
For a ramified divisor $j:D\hookrightarrow\mathbb P^1_{\OO}$ given by
$z^d=t$, the algebra
$\End(\OO_{\mathbb P^1_{\OO}}\oplus j_*\OO_D)^{\mathrm{op}}$ is the
integral tensor algebra of the one-arrow species. Disjoint ramified divisors give star-shaped
species. Theorem~\ref{thm:divisor-order} proves that all full orders in
these $K$-algebras give the same cluster characters. For $d=4$, elementary
transformations along $D$ determine the element $Z$ above, and their
independent direct sums give its powers
(Example~\ref{cor:divisor-basis}).

\subsection{Organization}
Part~\ref{part:ramified} develops the realizations and presentations.
Sections~2--3 prove the integral-order and QP-mutation results,
including Theorem B. Section~4 gives the reddening and triangular-extension
tests; Section~5 constructs Jacobi-finite nondegenerate potentials
and integral approximations. Sections~6--7 establish decorated
mutation and fixed-field generality for presentations
(Corollary~\ref{thm:fixed-field-generality}).

Part~\ref{part:characters} develops monodromy characters and bases.
Section~\ref{sec:monodromy} proves Theorem C; Section~\ref{sec:cluster-application}
uses Newton polytopes and fixed-field transport for Theorem D.
Sections~\ref{sec:factorization}--\ref{sec:order-sensitivity}
cover arithmetic factorization, geometric orders, nonzero-potential
examples, and order-dependent transport. The last construction is
not required for the basis theorem. The auxiliary-choice comparison
is at the end of Section~\ref{sec:weights}; the appendices treat
decorated Auslander--Reiten translation and tropical Hom formulas.

\part{Ramified Realizations and General Presentations}
\label{part:ramified}\label{part:representations}

\section{Completed Jacobian algebras and integral orders}\label{alg:sec:setup}
We construct integral orders by mapping the integral tensor algebra
into the completed Jacobian algebra over $K$. When the latter is
finite-dimensional, only finitely many arrow degrees survive.
Theorem~\ref{alg:thm:contraction} uses this observation to prove that
the image is a full order, free over both vertex rings in each
component. Proposition~\ref{alg:prop:finite-algorithm} makes the
construction computable in a finite quotient. We begin with the
vertex rings and their arrow bimodules.

\subsection{Vertex rings and arrow bimodules}

Throughout this section, $\kk$ is a field of characteristic zero. Put
$\OO=\kk[[t]]$ and $K=\kk((t))$. The realization and monodromy
arguments will use $\kk=\C$.
For the finite vertex set $I$ and positive integers $d_i$, set
\[ \OO_i=\OO[\varpi_i]/(\varpi_i^{d_i}-t),\quad \RR=\prod_{i\in I}\OO_i,\qquad R=\RR\otimes_{\OO}K,\qquad K_i=\Frac(\OO_i). \]
The map $\OO_i\to\kk[[\varpi_i]]$ defined by $t\mapsto\varpi_i^{d_i}$
is an isomorphism. Thus $\OO_i$ is a \emph{discrete valuation ring},
with \emph{uniformizer} $\varpi_i$, a generator of its maximal ideal.
Its fraction field is $K_i=\kk((\varpi_i))$, a finite separable
extension of $K$.

Write $e_i$ for the identity of the $i$th factor of $\RR$, also viewed
as an idempotent of $R$. A bimodule is \emph{$\OO$-central} if its two
$\OO$-actions agree. A \emph{ramified arrow datum} is an $\OO$-central
$\RR$-bimodule $\AA$, finite over $\OO$, such that $e_i\AA e_j$ is
free as a left $\OO_i$-module and as a right $\OO_j$-module for every
$i,j$. Put $A=\AA\otimes_\OO K$. An arrow from $i$ to $j$ belongs
to $e_i\AA e_j$, and paths are multiplied from left to right. We assume
$e_i\AA e_i=0$ unless stated otherwise. Vertex parameters have arrow
degree zero. The two freeness conditions do not require freeness over
$\OO_i\otimes_\OO\OO_j$.

For a bimodule $M$ over an algebra $C$, write
\[ T_C(M)=\bigoplus_{r\geq0}M^{\otimes_Cr},\qquad C\langle\!\langle M\rangle\!\rangle=\prod_{r\geq0}M^{\otimes_Cr},\qquad M^{\otimes_C0}=C. \]
The first is its \emph{tensor algebra}; the second is its completion
by tensor degree, with the continuous multiplication induced by
concatenation. Section~\ref{alg:subsec:automorphisms} develops this
completion for integral arrow data.

For every positive integer $p$, put $\Oram_p=\kk[[t^{1/p}]]$ and $\Kram_p=\Frac(\Oram_p)$.
For positive integers $p,q$, write
\[ g=\gcd(p,q),\qquad a=p/g,\qquad b=q/g. \]
Write $x=t^{1/p}$, $y=t^{1/q}$, and $u=t^{1/g}$ for the compatible
roots. The standard Geiss--Leclerc--Schr\"oer (GLS) \emph{integral arrow
bimodule} is
\begin{equation}\label{alg:eq:GLSorder}
 M_{p,q}=\Oram_p\otimes_{\Oram_g}\Oram_q,
 \qquad u\longmapsto x^a,\quad u\longmapsto y^b.
\end{equation}
It is an $\Oram_p$--$\Oram_q$-bimodule and an $\OO$-lattice in
its generic fibre $\Kram_p\otimes_{\Kram_g}\Kram_q$.
This is the standard untwisted arrow bimodule of
\cite[Section 4.2]{GLS2} in integral form. Other simple types and
their sums will be needed for general modulations.

\begin{proposition}\label{alg:prop:GLSbasis}
There are isomorphisms
$M_{p,q}\simeq \OO[x,y]/(x^p-t,\ y^b-x^a) \simeq\kk[[s^a,s^b]]\subseteq\kk[[s]]$ with $s^{\op{lcm}(p,q)}=t$.
A left $\Oram_p$-basis is $1,y,\ldots,y^{b-1}$, and a right $\Oram_q$-basis is $1,x,\ldots,x^{a-1}$. The $\OO$-rank is $pq/g$.
\end{proposition}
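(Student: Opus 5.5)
The plan is to compute the tensor product directly from the presentation of $\Oram_q$ over $\Oram_g$. With $u=t^{1/g}$ and $y=t^{1/q}$ we have $y^b=u$, so $\Oram_q=\Oram_g[y]/(y^b-u)$, and $\Oram_q$ is free over $\Oram_g$ with basis $1,y,\ldots,y^{b-1}$. Since $u\mapsto x^a$ in $\Oram_p$, base change gives
\[ M_{p,q}=\Oram_p\otimes_{\Oram_g}\Oram_q\simeq\Oram_p[y]/(y^b-x^a). \]
Hence $M_{p,q}$ is left $\Oram_p$-free with basis $1,y,\ldots,y^{b-1}$. Writing $\Oram_p=\OO[x]/(x^p-t)$ then yields the first isomorphism $M_{p,q}\simeq\OO[x,y]/(x^p-t,\ y^b-x^a)$. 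By symmetry, starting from $\Oram_p=\Oram_g[x]/(x^a-u)$, the module is right $\Oram_q$-free with basis $1,x,\ldots,x^{a-1}$. The $\OO$-rank is then $b\cdot p=pq/g$.

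For the second isomorphism, put $N=\lcm(p,q)=gab$ and $s=t^{1/N}$. Define a map $\OO[x,y]\to\kk[[s]]$ by $t\mapsto s^N$, $x\mapsto s^{N/p}=s^b$, and $y\mapsto s^{N/q}=s^a$. This map kills $x^p-t$ and $y^b-x^a=s^{ab}-s^{ab}$. Because $M_{p,q}$ is finite over $\OO$, and hence $t$-adically complete, the map extends to $M_{p,q}\to\kk[[s]]$ with image $\kk[[s^a,s^b]]$; this is the closure of the monomials $s^{ia+jb}$, and $t=s^{gab}$ already lies in it.

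The main point is injectivity. Write an element uniquely as $\sum_{i<p,\,j<b}c_{ij}(t)\,x^iy^j$ with $c_{ij}\in\OO$. Its image is $\sum c_{ij}(s^N)\,s^{ib+ja}$. The exponents $ib+ja$ for $0\le i<p$ and $0\le j<b$ have to be pairwise distinct modulo $N$. Suppose $ib+ja\equiv i'b+j'a \pmod{gab}$. Reducing modulo $b$ gives $a(j-j')\equiv0\pmod b$, and since $\gcd(a,b)=1$ this forces $j=j'$. Then $b(i-i')\equiv0\pmod{gab}$ gives $i\equiv i'\pmod{ga}$, that is, modulo $p$, so $i=i'$. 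There are exactly $pb=N$ such residues, so the images of the basis elements are independent over $\kk[[s^N]]$. This gives injectivity, and it also shows that the image is the full $\OO$-span, which equals $\kk[[s^a,s^b]]$. The only real obstacle is this residue count; everything else is formal base change.
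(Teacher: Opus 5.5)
Your proof is correct and follows the paper's route. The left basis comes from the monic relation $y^b-x^a$; your base change from $\Oram_q=\Oram_g[y]/(y^b-u)$ is exactly this division argument. The right basis follows by symmetry, and the identification with $\kk[[s^a,s^b]]$ comes from the parametrization $x=s^b$, $y=s^a$ using $\gcd(a,b)=1$; your residue count modulo $N=gab$ makes explicit the injectivity that the paper leaves implicit.

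Two small remarks. No completeness is needed to define the map, since it is already defined on the quotient $\OO[x,y]/(x^p-t,\,y^b-x^a)$. Also, the equality of the image $\bigoplus_{i,j}\kk[[s^N]]\,s^{ib+ja}$ with $\kk[[s^a,s^b]]$ deserves its one-line check: $\{Ib+ja:I\ge0,\ 0\le j<b\}$ is the numerical semigroup generated by $a$ and $b$.
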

\begin{proof}
The polynomial $y^b-x^a$ is monic in $y$, so division gives the left basis. The symmetric presentation gives the right basis. Since $\gcd(a,b)=1$, the parametrization $x=s^b,y=s^a$ identifies the complete plane-curve ring with $\kk[[s^a,s^b]]$. The latter need not be normal.
\end{proof}

The \emph{normalization}, or integral closure, of this curve ring is
$\kk[[s]]$. The integral arrow bimodule in \eqref{alg:eq:GLSorder} uses
$\kk[[s^a,s^b]]$, which need not be integrally closed.

Tensor products are taken over the indicated intermediate vertex ring.
Choosing bases successively over those rings shows that the product is
free over the vertex rings at its two ends.

Define the $\OO$-linear form
$\lambda_p\!\left(\sum_{r=0}^{p-1}c_r(t)x^r\right)=c_{p-1}(t)$.
Thus $\lambda_p$ extracts the coefficient of $x^{p-1}$. On the generic fibre it can equivalently be written as
$\lambda_p(z)=\op{Tr}_{\Kram_p/K}\!\left(\frac{z}{p x^{p-1}}\right)$.
For $p>1$, this is not the ordinary field trace. The pairing $(v,w)\mapsto\lambda_p(vw)$ is perfect over $\OO$, with dual bases
$1,x,\ldots,x^{p-1}$ and
$x^{p-1},\ldots,x,1$.

For $\Oram_g\subseteq\Oram_p$, define the relative form by extracting the coefficient of $x^{a-1}$ in the $\Oram_g$-basis. On $M_{p,q}$, the left evaluation extracts $y^{b-1}$ and the right evaluation extracts $x^{a-1}$. Composing with $\lambda_p$ or $\lambda_q$ gives the same $\OO$-linear form: both extract the coefficient of
$u^{g-1}x^{a-1}y^{b-1}$.
Such a linear form, whose multiplication pairing is perfect, is called a \emph{Frobenius form}. The relative forms above identify
the reversed standard module with both one-sided duals. Evaluation is
$\phi\otimes u\mapsto\phi(u)$; coevaluation sends $1$ to
$\sum_a u_a\otimes u_a^\vee$ for a basis and its dual. These
basis-independent maps commute with tensor products and coefficient
extension; they are the GLS adjunctions in integral coordinates
\cite[Section 5.1]{GLS1}.

These adjunctions use one-sided projectivity; they do not require the
integral vertex algebra $\RR$ to be separable. The \emph{enveloping algebra} $R\otimes_KR^{\mathrm{op}}$ acts on $K$-central $R$-bimodules: $(a\otimes b)$ acts by $m\mapsto amb$.

\begin{example}\label{run:ex:path}
Let $L=K(s)$, $s^2=t$, with $\tau(s)=-s$, and put
$\OO_L=\kk[[s]]$. Take the quiver
\[ \vcenter{\hbox{$\displaystyle \xymatrix@C=38pt{1\ar[r]^a&2\ar[r]^b&3}$}} \qquad (K_1,K_2,K_3)=(L,K,L), \]
with
$A_{12}={}_LL_K$ and $A_{23}={}_KL_L$.
Both arrows are standard GLS modules. The matrix is
\[
B_0=\begin{pmatrix}0&2&0\\-1&0&1\\0&-2&0\end{pmatrix},
 \qquad d=(2,1,2).
\]
Its finite path algebra contains the bimodule of length-two paths
\begin{equation}\label{run:eq:composite}
 A_{12}\otimes_K A_{23}=L\otimes_KL
       \simeq L_{\mathrm{id}}\oplus L_\tau,
 \qquad x\otimes y\longmapsto(xy,x\tau(y)).
\end{equation}
Here $L_\tau$ has its right action twisted by $\tau$. The two bimodules on the right are nonisomorphic. The length-two
bimodule becomes the composite arrow bimodule in
Example~\ref{run:ex:path-mutation}.

The corresponding integral composite is
\[ \mathscr D=\OO_L\otimes_\OO\OO_L =\OO[x,y]/(x^2-t,y^2-t). \]
The two idempotents after inverting $t$ involve
$\frac12(1\pm xy/t)$; they do not belong to $\mathscr D$.
Indeed $\mathscr D/t\mathscr D=\kk[x,y]/(x^2,y^2)$ is local, so
it cannot be the direct sum of the two component orders. Thus the
decomposition \eqref{run:eq:composite} over $K$ does not split this
integral tensor product.
\end{example}

\subsection{Completed tensor algebras and cyclic calculus}
\label{alg:subsec:automorphisms}\label{alg:subsec:potentials}
\label{alg:subsec:cyclic-calculus}\label{alg:subsec:separable-calculus}
\label{alg:subsec:cyclic-data}
As above, $R$ and $A$ denote the vertex algebra and arrow bimodule over
$K$; script letters denote their integral forms. We use the same symbol
$S$ for a cyclic class and a representative when the calculation is
independent of that choice. Paths are written in the reverse order to
those of Derksen--Weyman--Zelevinsky (DWZ).

Write $\AA^{(r)}=\AA^{\otimes_{\RR}r}$, with $\AA^{(0)}=\RR$. Set
\[ \TT=\prod_{r\ge0}\AA^{(r)},\qquad T_K=\prod_{r\ge0}\bigl(\AA^{(r)}\otimes_{\OO}K\bigr). \]
The positive-degree parts are the \emph{arrow ideals}, denoted by
$\m=\prod_{r\ge1}\AA^{(r)}$ and $\m_K$. In the
\emph{arrow-adic topology}, the powers $\m^N$ are neighbourhoods of zero; all closures below use this topology, rather than the $t$-adic one. Each tensor degree is finite free over $\OO$. This follows by successively using freeness over the intermediate vertex rings.

There is an injection $\TT[1/t]\hookrightarrow T_K$, but it need not be onto. An element of the first algebra has a common bound on its coefficient
denominators; an element of the second need not. The two algebras have
the same truncation at every fixed arrow length.

We will use the following facts over both the integral vertex algebra and
$R$. Let $T=\CT{C}{M}$, with arrow ideal $\m$, where $C$
is a fixed coefficient algebra. An \emph{augmented homomorphism} fixes $C$ and
preserves the augmentation $T\to C$, the projection to tensor degree zero.
For a finite-dimensional $R$-bimodule $M$ over $K$, write
$M^\vee=\Hom_K(M,K)$ with $(a\phi b)(m)=\phi(bma)$; this is its
dual with source and target exchanged. A choice of Frobenius forms on the vertex algebras identifies it with the one-sided duals in the tensor--Hom adjunctions.

\begin{proposition}\label{alg:prop:tensorautomorphism}
A continuous augmented homomorphism
$\varphi:\CT{C}{M}\to\CT{C}{M'}$ is determined by its
restriction
$\varphi|_M=\varphi^{(1)}+\varphi^{(\ge2)},\ \varphi^{(1)}:M\to M',\ \varphi^{(\ge2)}:M\to(\m')^2$,
where both maps are $C$-bimodule maps. Conversely, any such pair extends
uniquely. The homomorphism is an isomorphism if and only if
$\varphi^{(1)}$ is a bimodule isomorphism.
\end{proposition}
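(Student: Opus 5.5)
The plan is to follow the standard argument for completed tensor algebras: work degree by degree, and pass to limits using continuity and completeness in the arrow-adic topology. Throughout, $\m$ and $\m'$ are the arrow ideals.

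\emph{Uniqueness.} Since $\varphi$ fixes $C$ and is multiplicative, on $M^{\otimes_C r}$ it is given by $m_1\otimes\cdots\otimes m_r\mapsto\varphi(m_1)\cdots\varphi(m_r)$. This is determined by $\varphi|_M$. Finite sums of such tensors are dense in $\CT{C}{M}$, so continuity determines $\varphi$ everywhere. Augmentation forces $\varphi(M)\subseteq\m'$. Decomposing by tensor degree gives $\varphi|_M=\varphi^{(1)}+\varphi^{(\ge2)}$, and both components are $C$-bimodule maps because $\varphi$ is $C$-linear on both sides.

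\emph{Existence.} Given the pair, put $\psi=\varphi^{(1)}+\varphi^{(\ge2)}:M\to\m'$. The map $M^{\times r}\to\m'^{\,r}$, $(m_1,\ldots,m_r)\mapsto\psi(m_1)\cdots\psi(m_r)$, is $C$-balanced, so it induces a map $M^{\otimes_Cr}\to(\m')^r$. For an element $\sum_r x_r$, the series $\sum_r\psi^{\otimes r}(x_r)$ converges in $\CT{C}{M'}$, since the $r$-th term lies in $(\m')^r$. The resulting map is multiplicative on homogeneous pieces, hence multiplicative in general by continuity of multiplication. It is continuous because it sends $\m^N$ into $(\m')^N$, and it is augmented by construction.

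\emph{Isomorphism criterion.} If $\varphi$ is an isomorphism, its inverse is again continuous and augmented; continuity of the inverse should be checked, or included in the meaning of isomorphism. Both $\varphi$ and its inverse preserve the arrow ideals, so they induce mutually inverse maps $\m/\m^2\to\m'/(\m')^2$. These maps are exactly $\varphi^{(1)}$ and the linear part of the inverse, so $\varphi^{(1)}$ is a bimodule isomorphism.

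Conversely, suppose $\varphi^{(1)}$ is invertible. First compose with the automorphism induced by $(\varphi^{(1)})^{-1}$ to reduce to the case $M=M'$ and $\varphi^{(1)}=\id$. Then $\varphi$ preserves the filtration $\m^N$ and induces the identity on each graded piece $\m^N/\m^{N+1}$. The graded pieces of this filtration are $M^{\otimes N}$, because the completed algebra is a product over tensor degrees. Surjectivity follows by successive approximation: given $y$, solve for $x$ degree by degree, correcting the error term in $\m^{N+1}$ at each step, and take the limit using completeness. Injectivity follows because a nonzero $x$ with lowest-degree component $x_N$ satisfies $\varphi(x)\equiv x_N\not\equiv0 \pmod{\m^{N+1}}$. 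The inverse preserves the filtration, so it is continuous and augmented.

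The main obstacle is the surjectivity step. In the integral setting one must make sure the successive approximation takes place inside $\prod_r\AA^{(r)}$ and does not introduce $t$-denominators. This holds because each correction is obtained by applying $\varphi^{-1}$ modulo higher degree, which here is the identity on graded pieces, so no division by $t$ ever occurs. The argument therefore applies verbatim to both $\RR$ and $R$.
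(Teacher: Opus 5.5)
Your proof is correct and follows the same outline as the paper: density and continuity for uniqueness, substitution for existence, the induced map on $\m/\m^2$ for necessity, and reduction to the unitriangular case followed by successive approximation in the arrow filtration for sufficiency. The one difference is minor. You approximate preimages of individual elements and then deduce that the set-theoretic inverse is a continuous augmented homomorphism, whereas the paper approximates the inverse's restriction to $M$ directly by bimodule corrections. Both arguments stay integral.
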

\begin{proof}
Substitution converges in every finite arrow truncation, and bimodule
linearity preserves the balancing relations. Density of ordinary tensors
gives uniqueness. An isomorphism induces an isomorphism on $\m/\m^2$.
Conversely, factor out the invertible linear part. If $\varphi(m)=m+h(m)$
with $h(M)\subseteq\m^2$, subtract the homogeneous degree-$n$ error
of an inverse modulo $\m^n$ to obtain one modulo $\m^{n+1}$.
These are bimodule corrections. Their limits give two-sided inverse
continuous maps.
\end{proof}

\begin{definition}\label{alg:def:unitriangular}
An augmented automorphism is \emph{unitriangular} if its linear part is
the identity. It has depth at least $d\ge1$ if
$\varphi(m)-m\in\m^{d+1}$ for every $m\in M$.
\end{definition}
For such an automorphism,
\begin{equation}\label{alg:eq:unitriangular-estimate}
 (\varphi-\id)(\m^r)\subseteq\m^{r+d}.
\end{equation}
Indeed, a changed monomial of length $r$ has at least one factor replaced
by a term of length at least $d+1$. It follows that a sequence of
compositions of unitriangular automorphisms converges whenever the depths
of the successive factors tend to infinity. The inverse compositions
converge as well. The filtration is by arrow degree. For example, $a\mapsto a+tb$,
with $b$ an arrow, changes the linear part and is not unitriangular.

We write the \emph{cyclic quotient} of a complete algebra as
$\Cyc(T)=T/\overline{[T,T]}$.
Here $[T,T]$ is the coefficient-linear span of the commutators
$xy-yx$, not the ideal they generate. Two elements are \emph{cyclically equivalent},
written $f\sim_{\cyc}g$, when they define the same cyclic class.
For an ordinary quiver, every nonclosed path is a commutator with an
idempotent, and closed paths represent the same class when one is obtained
from the other by cyclic rotation. Thus the positive-degree part of the cyclic quotient is the
product, over the arrow degrees, of the spaces spanned by cyclic words
modulo rotations.

A quiver with potential, or QP, over $R$ is a pair $(A,S)$,
where $A$ is a finite loop-free $R$-bimodule and $S$ is a cyclic
class with no terms of arrow degree zero or one. It is \emph{reduced} if
its quadratic part vanishes. Statements involving a representative of a
potential are understood modulo cyclic equivalence.

For a split quiver with vertex set $Q_0$, arrow set $Q_1$, and
$R=K^{Q_0}$, define the cyclic derivative in the arrow $a$ by
\begin{equation}\label{alg:eq:cyclic-derivative}
 \partial_a(a_1\cdots a_n)
 =\sum_{r:a_r=a}a_{r+1}\cdots a_n a_1\cdots a_{r-1}.
\end{equation}
Extend continuously and linearly. The formula vanishes on commutators,
so it depends only on the cyclic class. The closed two-sided ideal and
the Jacobian algebra are
\[ J(S)=\overline{\sum_{a\in Q_1}T(\partial_aS)T},\qquad \mathcal P(A,S)=R\langle\!\langle A\rangle\!\rangle/J(S). \]
A QP is Jacobi-finite if this algebra is finite-dimensional over
$K$. The ideal is generated by the complete derivatives: different
homogeneous terms in one derivative are not imposed as separate relations.

For a path $p$, put
$\Delta_a p=\sum_{p=uav}u\otimes v$, and set
$(u\otimes v)\mathbin{\#}g=vgu$. Finite truncations give the identities
\[
\begin{aligned}
 \partial_a(fg)&=(\Delta_a f)\mathbin{\#}g
                  +(\Delta_a g)\mathbin{\#}f,\\
 \partial_a\varphi(S)&=
 \sum_{b\in Q_1}(\Delta_a\varphi(b))\mathbin{\#}
                        \varphi(\partial_bS).
\end{aligned}
\]
For the first identity, group the occurrences of $a$ according to the
factor containing them. For the second, each occurrence lies in one
$\varphi(b)$; cutting there gives the derivative multiplied by the
remaining path factors. This proves both formulas for polynomials,
and continuity extends them to series. These are the DWZ cyclic
Leibniz and chain rules in our convention for multiplying arrows \cite{DWZ1}.

If $h:A\to\m$ is a bimodule map and $D_h$ the derivation extending
it and vanishing on $R$, then
\begin{equation}\label{alg:eq:firstvariation-basic}
 D_hS\sim_{\cyc}\sum_a h(a)\partial_aS.
\end{equation}
For a homogeneous potential of degree $r$, setting $h(a)=a$ gives
$\sum_a a\partial_aS\sim_{\cyc}rS$. Counting only occurrences from a
specified direct arrow summand gives the corresponding partial Euler
identity. All of these formulas concern derivatives in the arrow
directions; vertex parameters are coefficients.

For the vertex algebra $R=\prod_iK_i$, choose a finite Galois extension
$E/K$ splitting every $K_i$. Then
$R\otimes_KE$ is a product of copies of $E$, and
$E\otimes_KT_K$ is the completed path algebra on the corresponding
primitive idempotents and any bases of its arrow spaces. Tensor degree by
tensor degree, finite scalar extension commutes with this completion.
The Galois action is semilinear, meaning $g(\lambda v)=g(\lambda)g(v)$.
The extended potential is invariant under this action. The ideal
defined by \eqref{alg:eq:cyclic-derivative} is therefore Galois-stable;
we define
$J(S)$ over $K$ by descending that ideal. Equivalently, an element
belongs to $J(S)$ if its extension to $E$ belongs to the split ideal.

The chain rule for a change of arrow basis and its inverse identifies
the two derivative ideals. A common splitting extension proves
independence of $E$. Exactness of finite Galois invariants in characteristic
zero descends the cyclic identities as well. This gives the separable
species calculus of \cite{Nguefack} in the conventions used here.

A \emph{right-equivalence} $(A,S)\to(A',S')$ is an augmented continuous
$R$-algebra isomorphism carrying $S$ to $S'$ cyclically. The chain
rule and its inverse give
\[ \varphi(J(S))=J(\varphi(S)),\qquad \mathcal P(A,S)\simeq\mathcal P(A',S'). \]
An integral right-equivalence is a continuous augmented $\RR$-algebra isomorphism between the integral completed tensor algebras whose extension to the completed algebras over $K$ is a right-equivalence. The order
construction uses a potential over $K$ and an integral arrow lattice;
no integral cyclic-derivative calculus is needed. Any later use of
integral derivatives includes its compatibility hypothesis.

We define the {\em potential deformation space} by
\[ \Def(A,S)=\Cyc(\mathcal P(A,S))/\Cyc(R). \]
When the arrow bimodule is understood we write $\Def(S)$. A QP is
\emph{rigid} if this space is zero. For a loop-free quiver, the cyclic
image of $J(S)$ consists precisely of cyclic first variations
\eqref{alg:eq:firstvariation-basic}: rotate the path factors surrounding each derivative to the position of the varied arrow. The required correction has positive
length, since its two vertex idempotents are distinct. The closure issue
for formal series is proved in Lemma~\ref{alg:lem:closed-context}. This
identifies $\Def(S)$ with the quotient of potential directions by
infinitesimal changes of arrows.

\subsection{Finite contraction}\label{alg:sec:contraction}\label{alg:subsec:flat-definition}

Let $S$ be a specified potential on $T_K$, let $J_K=J(S)$, and put $\Palg_K=T_K/J_K$. The integral tensor algebra maps to $\Palg_K$. We define its \emph{contracted order} by
\[ \PP(S)=\im(\TT\longrightarrow \Palg_K) =\TT/(\TT\cap J_K). \]
A \emph{full lattice} in a finite $K$-space is a finite $\OO$-submodule
spanning it over $K$. A full order is a unital subalgebra which is a full
lattice. Over the discrete valuation ring (DVR) $\OO$, flatness is
equivalent to torsion-freeness; the contraction theorem below
proves that this image is indeed a full order.

The \emph{Jacobson radical} of an algebra $P$ is the intersection of
its maximal right ideals. We write it as $\rad P$. For a
finite-dimensional algebra it is nilpotent. For a quotient of a completed
tensor algebra over the semisimple vertex algebra $R$ by an ideal in
the arrow ideal, it is the image of that arrow ideal.

\begin{lemma}\label{alg:lem:toplayer}
Let $T=\CT{R}{M}$, with $R$ finite-dimensional semisimple, and let $J\subseteq\m$ be a closed ideal. If
$\m^n\subseteq J+\m^{n+1}$,
then $\m^n\subseteq J$. If $T/J$ is finite-dimensional, such an $n$ exists.
\end{lemma}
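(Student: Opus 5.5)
The first assertion is a Nakayama-type statement for the arrow-adic filtration, and I plan to prove it by iteration followed by a limit that uses closedness of $J$.

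Assume $\m^n\subseteq J+\m^{n+1}$. Multiplying on the right by $\m^{r}$ and using $J\m^r\subseteq J$ gives
\[ \m^{n+r}=\m^n\m^r\subseteq J+\m^{n+r+1}\qquad (r\ge0). \]
Here one must check $\m^n\m^r=\m^{n+r}$ in the completed setting. I take $\m^N$ to mean the closed ideal $\prod_{s\ge N}M^{\otimes s}$. Each homogeneous piece $M^{\otimes s}$ with $s\ge n+r$ is spanned by products of an element of $M^{\otimes n}$ with one of $M^{\otimes(s-n)}$. Since $M$ is finite-dimensional, the relevant products are finite sums, and the inclusion $\prod_{s\ge n+r}M^{\otimes s}\subseteq J+\m^{n+r+1}$ can be proved degree by degree. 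This yields, for every $x\in\m^n$, a sequence $j_0,j_1,\ldots\in J$ with
\[ x-(j_0+\cdots+j_r)\in\m^{n+r+1}, \]
where each $j_r$ is obtained from the hypothesis applied to the remainder. The partial sums converge arrow-adically to $x$. Because $J$ is closed, $x\in J$.

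The one subtle point is that the correction $j_r$ must remain small, so that the sum converges. Taken literally, the hypothesis only gives $j_r\in J$ without controlling its degree. Convergence of the partial sums is not needed, however: $x-\sum_{i\le r}j_i\to0$ already says that $x$ lies in the closure of $J$, which is $J$. This is the step I expect to be the main obstacle, and it is resolved as stated, since one only needs $x\in\overline{J}=J$.

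For existence, suppose $T/J$ is finite-dimensional. The images of $\m^N$ in $T/J$ form a decreasing chain of subspaces, so the chain stabilizes: there is an $n$ with $(\m^n+J)/J=(\m^{n+1}+J)/J$. This says exactly that $\m^n\subseteq J+\m^{n+1}$. Note that $R$ semisimple and finite-dimensional ensures each $M^{\otimes s}$ is finite-dimensional, so these are genuine finite-dimensional subspaces.
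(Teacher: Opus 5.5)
Your proof is correct. The first assertion is argued essentially as in the paper: iterate to get $\m^n\subseteq J+\m^{N}$ for every $N$, then use closedness, since $\bigcap_N(J+\m^N)=\overline J=J$.

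The convergence worry you raise is not a real obstacle. Put $s_r=j_0+\cdots+j_r$. The statement $x-s_r\in\m^{n+r+1}$ \emph{is} the statement that $s_r\to x$. It also forces $j_r=(x-s_{r-1})-(x-s_r)\in\m^{n+r}$ automatically. Likewise, to get $\m^{n+r}\subseteq J+\m^{n+r+1}$ you only need the lowest homogeneous component of an element of $\m^{n+r}$. That component is a finite sum of products $ab$ with $a\in M^{\otimes n}$ and $b\in M^{\otimes r}$, and the remaining components already lie in $\m^{n+r+1}$. No degree-by-degree argument over all degrees is needed.

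For existence you take a genuinely different route. The paper shows that $1-xy$ is invertible for $y\in\m$. Hence the image of $\m$ lies in the Jacobson radical of $T/J$, which is nilpotent because $T/J$ is finite-dimensional. You instead note that the images $(\m^N+J)/J$ form a descending chain of subspaces of the finite-dimensional space $T/J$, so the chain stabilizes.

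Your argument is pure linear algebra. It also gives the explicit bound $n\le\dim_K(T/J)$, since each strict step lowers the dimension; this is the same kind of bound the paper extracts from nilpotence in Proposition~\ref{alg:prop:finite-algorithm}. The paper's argument instead yields the structural fact that the arrow ideal maps into the radical of every quotient. The paper relies on that fact later, when it identifies $\rad\mathcal P$ with the image of the arrow ideal and uses projective covers.

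One small correction: finite-dimensionality of the pieces $M^{\otimes s}$ plays no role in the stabilization step. All that matters is that the chain lives in $T/J$.
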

\begin{proof}
Multiplying this inclusion by $\m$ gives $\m^{n+1}\subseteq J+\m^{n+2}$. Substitution yields $\m^n\subseteq J+\m^{n+2}$, and iteration gives the same inclusion with any higher power. Closedness of $J$ proves the first assertion. In the completed tensor algebra, $1-xy$ is invertible whenever $y\in\m$, by geometric-series inversion. Thus the image of $\m$ lies in the Jacobson radical of every quotient. In a finite-dimensional quotient this radical is nilpotent.
\end{proof}

\begin{theorem}\label{alg:thm:contraction}
Suppose $\Palg_K$ is finite-dimensional. Then $\PP(S)$ is finite and free over $\OO$, and
$\PP(S)\otimes_{\OO}K\simeq \Palg_K$.
For every $i,j$, the bimodule $e_i\PP(S)e_j$ is free as a left $\OO_i$-module and as a right $\OO_j$-module. In particular,
\begin{equation}\label{alg:eq:component-ranks}
 d_i\rank_{\OO_i}(e_i\PP(S)e_j)
 =\dim_K e_i\Palg_Ke_j
 =d_j\rank_{\OO_j}(e_i\PP(S)e_j).
\end{equation}
Integral right-equivalences carry contracted ideals to contracted ideals
and induce the prescribed isomorphisms of orders.
\end{theorem}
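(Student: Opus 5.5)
Since $\Palg_K$ is finite-dimensional, Lemma~\ref{alg:lem:toplayer} (applied over $R$ to the closed ideal $J_K$) gives an $N$ with $\m_K^N\subseteq J_K$. The plan is to reduce everything to the finite truncation $\TT/\m^N=\bigoplus_{r<N}\AA^{(r)}$. Because $\m^N\subseteq\m_K^N\subseteq J_K$, the map $\TT\to\Palg_K$ factors through $\TT/\m^N$. Each $\AA^{(r)}$ is finite free over $\OO$ by the successive freeness remark, so this truncation is a finite free $\OO$-module. Hence $\PP(S)$ is the image of a finitely generated $\OO$-module, and so it is finite over $\OO$. It is also a submodule of the $K$-vector space $\Palg_K$, hence torsion-free, and therefore free, since $\OO$ is a DVR.

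For spanning, note that $T_K$ and $\TT[1/t]$ have the same truncation modulo arrow length $N$. Thus every element of $\Palg_K=T_K/J_K$ is represented by a finite sum $\sum_{r<N}$ of elements of $\AA^{(r)}\otimes K$, and hence by an element of $t^{-c}\TT$ for some $c$. Consequently $K\cdot\PP(S)=\Palg_K$. The natural map $\PP(S)\otimes_\OO K\to\Palg_K$ is injective because localization is exact and $\PP(S)\subseteq\Palg_K$. It is surjective by spanning. So $\PP(S)$ is a full order; unitality comes from the image of $1$.

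For the componentwise statement, the idempotents $e_i$ lie in $\RR\subseteq\TT$, so $\PP(S)=\bigoplus e_i\PP(S)e_j$ as $\OO$-modules, and each piece is a finite $\OO_i$--$\OO_j$-bimodule. The piece $e_i\PP(S)e_j$ is a finite left $\OO_i$-module and is torsion-free, because it sits inside the $K_i$-vector space $e_i\Palg_K e_j$. Since $\OO_i$ is a DVR, it is free; the right side is handled the same way. The rank formula \eqref{alg:eq:component-ranks} then follows by comparing $\OO$-ranks. We have $\rank_\OO=d_i\rank_{\OO_i}$, since $\OO_i$ is free of rank $d_i$ over $\OO$, and $\rank_\OO e_i\PP(S)e_j=\dim_K e_i\Palg_Ke_j$ by the isomorphism after tensoring with $K$.

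For the last assertion, let $\psi:\TT\to\TT'$ be an integral right-equivalence and write $\psi_K$ for its extension. By definition $\psi_K$ is a right-equivalence, so the chain rule statement gives $\psi_K(J(S))=J(\psi_K S)=J(S')$. Since $\psi$ is an isomorphism of the integral algebras, $\psi(\TT\cap J(S))=\TT'\cap J(S')$. This yields the induced isomorphism $\PP(S)\to\PP(S')$, compatible with $\Palg_K\simeq\Palg'_K$.

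The only delicate point is the spanning/extension step. It requires that $\psi$ really extends continuously to $T_K$ and that the truncation argument is compatible with the two different completions $\TT[1/t]\subsetneq T_K$. I expect this to be the main obstacle, and I would handle it by working modulo $\m^N$ throughout: on $\TT/\m^N$, the difference between the two completions disappears.
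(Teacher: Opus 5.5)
Your proof is correct and matches the paper's argument essentially step for step. Both use truncation at $\m^N$ via Lemma~\ref{alg:lem:toplayer}, torsion-freeness from the embedding into $\Palg_K$, and clearing denominators in degrees below $N$ for the $K$-span. Both then argue componentwise torsion-freeness and use the chain rule together with $\psi(\TT)=\TT'$ for the right-equivalence assertion.

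The only cosmetic difference is that you get $\OO_i$-torsion-freeness directly from the $K_i$-vector space structure of $e_i\Palg_Ke_j$, whereas the paper reduces $\varpi_i^rv=0$ to $t^qv=0$. The extension issue you flag is already built into the paper's definition of an integral right-equivalence.
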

\begin{proof}
By Lemma~\ref{alg:lem:toplayer}, $\m_K^N\subseteq J_K$ for some $N$. Hence $\m^N\subseteq\TT\cap J_K$, and $\PP(S)$ is a quotient of the finite free module $\TT/\m^N$. It embeds into $\Palg_K$, so it has no $t$-torsion. A finite torsion-free module over a DVR is free \cite[Tag 0AUW]{Stacks}. Its $K$-span is $\Palg_K$: every class is represented in the finitely many degrees below $N$, where a common coefficient denominator can be cleared.

Let $v\in e_i\PP(S)e_j$. If $\varpi_i^rv=0$, multiplying by a further power of $\varpi_i$ gives $t^qv=0$. Thus $e_i\PP(S)e_j$ is torsion-free over $\OO_i$; the same argument applies over $\OO_j$ on the right. Finiteness then gives freeness over both vertex rings. Equation~\eqref{alg:eq:component-ranks} follows by computing the $\OO$-rank. Finally, an integral right-equivalence $\phi$ extends to the completed algebras over $K$. The cyclic chain rule gives $\phi(J_K)=J'_K$, and $\phi(\TT)=\TT'$ then gives $\phi(\TT\cap J_K)=\TT'\cap J'_K$.
\end{proof}

\subsection{The special fibre of the contracted order}\label{alg:subsec:special-fibre}
Let $\varepsilon_i$ be the residue of $\varpi_i$. Put
$H_i=\OO_i/t\OO_i\simeq\kk[\varepsilon_i]/(\varepsilon_i^{d_i})$,
$H=\RR/t\RR$, $A_0=\AA/t\AA$, and $T_0=H\langle\!\langle A_0\rangle\!\rangle
\simeq\TT/t\TT$. For $\mathscr I=\TT\cap J(S)$ define
\begin{equation}\label{alg:eq:special-flat-definition}
 \PP^{\mathrm{sp}}(S)=\PP(S)/t\PP(S)=T_0/I_0(S),\qquad
 I_0(S)=(\mathscr I+t\TT)/t\TT.
\end{equation}
Thus
\[ \mathcal P(A,S)\ \longleftarrow\ \PP(S) \ \longrightarrow\ \PP^{\mathrm{sp}}(S), \]
with inclusion on the left and reduction on the right. The augmentation embeds the distinguished vertex algebra $H$ in the special fibre.

\begin{corollary}\label{alg:cor:special-fibre}
Under Theorem~\ref{alg:thm:contraction}, $\PP^{\mathrm{sp}}(S)$ is finite-dimensional.
Each $e_i\PP^{\mathrm{sp}}(S)e_j$ is free as a left $H_i$-module and as a right $H_j$-module, with
\[ \dim_\kk e_i\PP^{\mathrm{sp}}(S)e_j=\dim_K e_i\mathcal P(A,S)e_j. \]
The isomorphisms of orders induce isomorphisms of the special fibres
fixing $H$.
\end{corollary}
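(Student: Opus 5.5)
The plan is to deduce everything from Theorem~\ref{alg:thm:contraction} by reducing modulo $t$. Write $\PP=\PP(S)$. By that theorem, $\PP$ is finite free over $\OO$ of rank $\dim_K\Palg_K$. Hence $\PP/t\PP$ is a $\kk$-vector space of that dimension, and in particular finite-dimensional. The identification $\PP^{\mathrm{sp}}(S)=T_0/I_0(S)$ in \eqref{alg:eq:special-flat-definition} is formal. We have $\PP=\TT/\mathscr I$, so $\PP/t\PP=\TT/(\mathscr I+t\TT)$. The remaining input is $\TT/t\TT\simeq T_0$, which holds because each tensor degree $\AA^{(r)}$ is finite free over $\OO$. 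Since the product over $r$ commutes with reduction modulo $t$, and $\AA^{(r)}/t\AA^{(r)}\simeq A_0^{\otimes_H r}$ by right exactness of tensor products, this gives the isomorphism.

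Next I handle the componentwise statements. The idempotents $e_i$ lie in $\RR$, so reduction commutes with cutting by idempotents: $e_i\PP^{\mathrm{sp}}e_j=e_i\PP e_j/t\,e_i\PP e_j$. By Theorem~\ref{alg:thm:contraction}, $e_i\PP e_j$ is free as a left $\OO_i$-module, say of rank $r$. Then its reduction is free of rank $r$ over $\OO_i/t\OO_i=H_i$, because $t\,\OO_i^r$ corresponds to $(t\OO_i)^r$ and $\OO_i^r/t\OO_i^r\simeq H_i^r$. The right $H_j$-statement follows in the same way. For the dimension identity, note that $\dim_\kk H_i=d_i$. Therefore $\dim_\kk e_i\PP^{\mathrm{sp}}e_j=d_i\rank_{\OO_i}(e_i\PP e_j)$, and this equals $\dim_K e_i\Palg_Ke_j$ by \eqref{alg:eq:component-ranks}.

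Finally, consider an integral right-equivalence. It induces an $\RR$-algebra isomorphism of orders $\PP\to\PP'$ fixing $\RR$, as in the theorem. An $\OO$-linear isomorphism carries $t\PP$ onto $t\PP'$, so it descends to an isomorphism $\PP^{\mathrm{sp}}\to\PP'^{\mathrm{sp}}$ that fixes $H=\RR/t\RR$. We also need $H$ to embed in the special fibre. The composite $\RR\to\PP\to\Palg_K$ is injective, and $\RR$ is a direct summand of $\TT$ with complement the arrow ideal $\m$. Moreover, the augmentation $\PP\to\RR$ is well defined because $J_K\subseteq\m_K$. So $\RR$ is an $\OO$-direct summand of $\PP$, and therefore $H\to\PP^{\mathrm{sp}}$ is injective.

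I expect no serious obstacle. The only point needing care is the compatibility $\TT/t\TT\simeq T_0$ for the completed product, and as noted above this follows from degreewise finite freeness.
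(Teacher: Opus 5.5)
Your proposal is correct and follows the paper's proof. You reduce the free bases of $e_i\PP(S)e_j$ from Theorem~\ref{alg:thm:contraction} modulo $t$, read off the dimensions from \eqref{alg:eq:component-ranks}, and note that an isomorphism of orders fixing $\RR$ carries $t\PP(S)$ onto $t\PP(S')$; your extra checks ($\TT/t\TT\simeq T_0$, and the augmentation splitting $\RR$ off $\PP(S)$ so that $H$ embeds) fill in points the paper only states in the setup before the corollary.
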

\begin{proof}
Reduce the bases of $e_i\PP(S)e_j$ from Theorem~\ref{alg:thm:contraction}
modulo $t$. Their dimensions over $\kk$ are the original $\OO$-ranks;
an integral isomorphism commutes with this reduction.
\end{proof}

For an acyclic arrow datum built from the standard GLS integral arrow
bimodules \eqref{alg:eq:GLSorder} (with the prescribed multiplicities)
and $S=0$, the contracted order is the power-series order
$\widehat H$ of \cite[Section 4.2 and Proposition 4.1]{GLS2}, up to
our arrow convention: its generic fibre is the hereditary species
$\widetilde H$ and its special fibre is the GLS algebra
$H(C,D,\Omega)$. Indeed, the acyclic tensor algebra is finite over
$\OO$, so contraction does not change it. With nonzero potential or
other integral arrow bimodules, no such identification is implicit.

Suppose $S$ is integral and an integral cyclic calculus supplies a
closed derivative ideal $J_{\OO}(S)$ whose extension to $K$ generates
$J(S)$ after arrow completion. If its image modulo $t$ is $J_H(S_0)$,
where $S_0=S\bmod t$, then $J_{\OO}(S)\subseteq\mathscr I$ gives
\begin{equation}\label{alg:eq:ordinary-to-flat}
 q_0:T_0/J_H(S_0)\twoheadrightarrow\PP^{\mathrm{sp}}(S),\qquad
 \ker q_0=I_0(S)/J_H(S_0).
\end{equation}
Example~\ref{alg:ex:completion} shows that this map need not be an
isomorphism. The definition \eqref{alg:eq:special-flat-definition}
does not require an integral cyclic calculus.

Under the finiteness hypothesis of Theorem~\ref{alg:thm:contraction},
the contracted order also has the following \emph{universal property}.
Every augmented quotient $q:\TT\twoheadrightarrow C$ which kills
$J_{\OO}(S)$, is $t$-torsion-free, and has nilpotent arrow ideal
factors uniquely through $\PP(S)$. Indeed, nilpotence extends $q$ to
$T_K\to C[1/t]$ using only finitely many arrow degrees. This extension
kills $J(S)$, so torsion-freeness makes $q$ kill $\TT\cap J(S)$.
The contracted order itself has these properties by
Theorem~\ref{alg:thm:contraction}.

\subsection{Contraction in a finite quotient}
Put $T_{\le n}=\TT/\m^{n+1}$. Choose integral multiples of cyclic derivatives spanning the derivative space over $K$. In this finite algebra, take all products of these
derivatives with paths on either side. Let $N_n\subseteq T_{\le n}$ be their $\OO$-span. Multiplying a derivative by a power of $t$ leaves its $K$-span unchanged. The \emph{$t$-saturation} of a submodule $N$ is
$N:t^\infty=\{x:t^a x\in N\text{ for some }a\ge0\}$. Thus
\begin{equation}\label{alg:eq:finite-saturation}
 L_n=T_{\le n}\cap(N_n\otimes_{\OO}K)=N_n:t^\infty
\end{equation}
is well defined independently of the chosen integral derivative generators.

\begin{proposition}\label{alg:prop:finite-algorithm}
If the degree-$n$ layer vanishes in $T_{\le n,K}/N_{n,K}$, then
$\TT\cap J_K=\pi_n^{-1}(L_n)$,
where $\pi_n:\TT\to T_{\le n}$. Such an integer $n$ exists whenever $\Palg_K$ is finite. As a sufficient numerical stopping condition, one may use
$\dim_K(T_{\le n,K}/N_{n,K})\le n$.
\end{proposition}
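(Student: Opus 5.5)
The plan is to reduce everything to Lemma~\ref{alg:lem:toplayer} over $K$, then pass to the integral statement by intersecting with $\TT$. Write $\rho:T_K\to T_{\le n,K}=T_K/\m_K^{n+1}$ for truncation. The first step is to identify $\rho(J_K)$ with $N_{n,K}=N_n\otimes_\OO K$.

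The ideal $J_K$ is the closure of $\sum_a T_K(\partial_aS)T_K$. Since $\m_K^{n+1}$ is an open subgroup, for any subgroup $X$ we have $\overline X+\m_K^{n+1}=X+\m_K^{n+1}$. Hence $\rho(J_K)$ is the image of the algebraic ideal. Modulo $\m_K^{n+1}$, that image is spanned by products of paths with the truncated derivatives. These are exactly the chosen generators up to powers of $t$, so $\rho(J_K)=N_{n,K}$. The chain-rule identification of derivative ideals under a change of arrow basis, together with Galois descent, lets me work in split coordinates for this step.

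Second, I use the hypothesis. Vanishing of the degree-$n$ layer in $T_{\le n,K}/N_{n,K}$ means $\m_K^n\subseteq N_{n,K}+\m_K^{n+1}$ inside $T_K$, that is, $\m_K^n\subseteq J_K+\m_K^{n+1}$. Lemma~\ref{alg:lem:toplayer} then gives $\m_K^n\subseteq J_K$, and in particular $\m_K^{n+1}\subseteq J_K$. Consequently $J_K=\rho^{-1}(N_{n,K})$.

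Intersecting with $\TT$ gives
\[ \TT\cap J_K=\{x\in\TT:\pi_n(x)\in T_{\le n}\cap N_{n,K}\}=\pi_n^{-1}(L_n). \]
Here I use that $T_{\le n}$ is finite free over $\OO$, so it embeds in $T_{\le n,K}$. The same freeness shows $T_{\le n}\cap N_{n,K}=N_n:t^\infty$, as in \eqref{alg:eq:finite-saturation}.

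For existence, suppose $\Palg_K$ is finite. Lemma~\ref{alg:lem:toplayer} provides $N$ with $\m_K^N\subseteq J_K$. Taking $n=N$, the first step shows that the degree-$n$ layer of $T_{\le n,K}$ lies in $\rho(J_K)=N_{n,K}$, so it vanishes in the quotient.

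The numerical stopping criterion is the step needing the most care. Filter $Q=T_{\le n,K}/N_{n,K}$ by the images of $\m_K^r$ for $0\le r\le n$. The degree-zero layer is nonzero: $N_{n,K}\subseteq\m_K$ because $S$ has no terms of degree $\le1$, so every derivative has positive degree, and $R\ne0$. If every layer $1\le r\le n$ were also nonzero, then $\dim_KQ\ge n+1$. So $\dim_KQ\le n$ forces some layer $r\le n$ to vanish, that is, $\m^r\subseteq N_{n,K}+\m^{r+1}$ in $T_{\le n,K}$.

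I then repeat the argument of Lemma~\ref{alg:lem:toplayer} inside the finite algebra. Multiplying by $\m$ and using that $N_{n,K}$ is a two-sided ideal of $T_{\le n,K}$ gives $\m^r\subseteq N_{n,K}+\m^{s}$ for all $s$. In particular $\m^n\subseteq N_{n,K}+\m^{n+1}=N_{n,K}$ in $T_{\le n,K}$, so the degree-$n$ layer vanishes and the first part applies.

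The main obstacle is checking that $N_{n,K}$ really is an ideal of $T_{\le n,K}$ whose image coincides with that of $J_K$. This is where the two-sided products with paths and the open-subgroup closure argument are essential.
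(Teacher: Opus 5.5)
Your proposal is correct and follows the paper's argument. You translate the vanishing of the degree-$n$ layer into $\m_K^n\subseteq J_K+\m_K^{n+1}$, apply Lemma~\ref{alg:lem:toplayer}, and intersect with $T_{\le n}$. Your layer count for the numerical criterion is the same nilpotency-from-dimension argument: the paper applies the lemma directly at $q=\dim_K(T_{\le n,K}/N_{n,K})$ instead of returning to degree $n$, and it leaves implicit the identification $\rho(J_K)=N_{n,K}$ that you justify with the open-subgroup closure argument.
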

\begin{proof}
The first condition says $\m_K^n\subseteq J_K+\m_K^{n+1}$. Lemma~\ref{alg:lem:toplayer} gives $\m_K^n\subseteq J_K$, so
membership in $J_K$ is determined by this finite truncation. Intersecting with $T_{\le n}$ gives \eqref{alg:eq:finite-saturation}. If the truncated quotient has dimension $q\le n$, its nilpotent arrow ideal has $q$-th power zero. Hence $\m_K^q\subseteq J_K+\m_K^{n+1}\subseteq J_K+\m_K^{q+1}$, and the same lemma applies.
\end{proof}

We first form the submodule generated by all these products, and then take its saturation.
Normalizing individual derivative vectors need not give the same ideal.

\begin{example}[Completion and contraction]\label{alg:ex:completion}
Let $Q$ be the oriented triangle over $\OO$,
\[ \vcenter{\hbox{$\displaystyle \xymatrix@C=32pt@R=18pt{&2\ar[dr]^b&\\1\ar[ur]^a&&3\ar[ll]_c}$}},\qquad U=abc, \]
and consider $S_0=\tfrac12U^2$ and $S_1=\tfrac12U^2+tU$.
Their completed Jacobian ideals over $K$ are
\[ J_K(S_0)=\m_K^5,\qquad J_K(S_1)=\m_K^2. \]
Indeed, a derivative of $S_1$ is $bc(U+t)$, and $U+t$ is invertible
in the completed algebra over $K$.

The integral derivative quotient for $S_1$ is nevertheless free of
rank $15$ over $\OO$. If $p_{i,r}$ is the unique path of length $r$
starting at $i$, the relations $p_{i,5}=-t p_{i,2}$ give the basis
$p_{i,r}$, $0\le r\le4$. Reducing longer paths is determined by their
length, and formal path sums converge coefficientwise in this basis.
In particular, the class of $bc$ is nonzero and has no $t$-torsion, whereas
\[ bc\in\TT\cap J_K(S_1). \]
Consequently $\TT\cap J_K(S_1)\ne J_{\OO}(S_1):t^\infty$:
removing $t$-torsion does not replace completion over $K$.

Although $S_0\equiv S_1\pmod t$, their contracted special fibres are
\[ \PP^{\mathrm{sp}}(S_0)=\kk Q/\m^5,\qquad \PP^{\mathrm{sp}}(S_1)=\kk Q/\m^2, \]
of dimensions $15$ and $6$, respectively. For $S_1$, the map
\eqref{alg:eq:ordinary-to-flat} is the proper quotient
$\kk Q/\m^5\twoheadrightarrow\kk Q/\m^2$.
\end{example}

\subsection{Scalar extension}

\begin{proposition}\label{alg:thm:basechange}
Assume that $\mathcal P(A,S)$ is finite-dimensional. Let $\OO\to\OO'$ be a finite extension of complete characteristic-zero DVRs, with fraction fields $K\subseteq K'$. Extend scalars in the vertex algebra and arrow modules. Then
$\PP(S)\otimes_{\OO}\OO'\simeq\PP(S_{\OO'})$.
Reduced image orders and their prescribed comparison maps also commute with this base change.
\end{proposition}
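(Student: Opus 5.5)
We reduce the base-change statement to the finite-truncation description of Proposition~\ref{alg:prop:finite-algorithm}. Since $\OO'$ is finite and free over $\OO$, every construction that is $\OO$-linear in fixed arrow degrees commutes with $-\otimes_{\OO}\OO'$. Extending scalars gives $\RR'=\RR\otimes_\OO\OO'$ and $\AA'=\AA\otimes_\OO\OO'$, and tensor powers over the vertex rings commute with this flat base change. Hence $\TT'/\m'^{n+1}\simeq T_{\le n}\otimes_\OO\OO'$ for every $n$. The potential $S$ and its cyclic derivatives are simply the images of those over $K$. We do not need $R\otimes_K K'$ to remain a product of fields: the cyclic calculus was defined by Galois descent from a splitting field, and a common finite Galois extension containing $E$ and $K'$ shows that $J(S_{\OO'})$ is the $K'$-span of $J(S)$ in each truncation. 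Consequently $\Palg_{K'}=\Palg_K\otimes_KK'$ is finite-dimensional, and the same $n$ works on both sides.

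First, choose $n$ so that the degree-$n$ layer vanishes in $T_{\le n,K}/N_{n,K}$. This condition is stable under the field extension $K\subseteq K'$, since $N_{n,K'}=N_{n,K}\otimes_KK'$. Proposition~\ref{alg:prop:finite-algorithm} then gives
\[
\PP(S)=T_{\le n}/L_n,\qquad \PP(S_{\OO'})=T'_{\le n}/L'_n .
\]
The generators $N_n$ base change to generators of $N'_n$, so $N'_n=N_n\otimes_\OO\OO'$. The key claim is that $t$-saturation commutes with this base change:
\[
(N_n\otimes_\OO\OO'):t'^\infty=(N_n:t^\infty)\otimes_\OO\OO'.
\]
For this, note that $L_n=T_{\le n}\cap N_{n,K}$ is the kernel of $T_{\le n}\to T_{\le n,K}/N_{n,K}$. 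Tensoring this exact sequence with the flat module $\OO'$ gives
\[
L_n\otimes\OO'=T'_{\le n}\cap\bigl(N_{n,K}\otimes_KK'\bigr)=T'_{\le n}\cap N'_{n,K'}=L'_n .
\]
Here we use $\OO'\otimes_\OO K=K'$, which holds because the extension is finite. Flatness then yields
\[
\PP(S)\otimes_\OO\OO'=(T_{\le n}/L_n)\otimes_\OO\OO'\simeq T'_{\le n}/L'_n=\PP(S_{\OO'}).
\]
The resulting isomorphism is the natural one induced by $\TT\to\TT'$.

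For the reduced image orders and the comparison maps, we transport along the quadratic-splitting isomorphisms $r:P\to P^{\mathrm{red}}$ over $K$. The splitting right-equivalence and the reduced QP base change to $K'$ and remain a splitting, because the construction is by convergent substitutions with coefficients in $K$. The image order $r(\PP(S))$ therefore tensors to $r_{K'}(\PP(S_{\OO'}))$ by the first part. The comparison maps $r_2r_1^{-1}$ are $K$-linear algebra maps, and their extensions are the corresponding maps over $K'$.

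The main obstacle is verifying that the cyclic-derivative ideal and the splitting data over $K$ really extend to those defined intrinsically over $K'$. This matters most when $K_i\otimes_KK'$ splits or ramifies further. We would handle it through the descent characterization, in which membership is tested after extension to a splitting field, using a common splitting field for $K'$ and all the $K_i$.
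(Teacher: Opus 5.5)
Your argument is correct and is essentially the paper's proof. Both fix one truncation degree that vanishes in both Jacobian algebras and apply flatness of the finite free extension $\OO\to\OO'$ to the exact sequence $0\to L_n\to T_{\le n}\to\Palg_K$. You phrase this through the kernel $L_n$ of Proposition~\ref{alg:prop:finite-algorithm}; the paper phrases it through injectivity of $\PP(S)\hookrightarrow\Palg_K$ together with surjectivity of the truncation onto its image. Both obtain the reduction assertion by base-changing the splitting over $K$. Your common-splitting-field argument for $\Palg_{K'}=\Palg_K\otimes_KK'$ makes explicit a compatibility that the paper's proof uses without comment.
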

\begin{proof}
Choose one arrow length at which the images of the arrow ideals in both
Jacobian algebras vanish. All maps then factor through finite tensor truncations. Flatness preserves the injection of $\PP(S)$ into the Jacobian algebra over $K$ and the surjection of the integral truncation onto its image. Their scalar extensions therefore have exactly the image defining the right-hand side. A splitting over $K$ carries the images of the same integral generators to their scalar extensions, proving the reduction assertion.
\end{proof}

The rings $\OO_i\otimes_{\OO}\OO'$ need not be normal or DVRs. The original free bases give freeness over the base-changed vertex
rings. Normalizing those rings would change the order.

\section{Splitting, integral orders, and QP mutation}\label{alg:sec:reduction}\label{alg:sec:nonsplit}
Quadratic reduction is performed over $K$, but the integral order must
be carried through the resulting Jacobian-algebra isomorphism. We first
make this comparison precise, then construct premutation and prove
that it preserves Jacobi-finiteness.

\subsection{Splitting and cancellation}\label{alg:subsec:generic-splitting}\label{alg:subsec:closed-context}
An abstract isomorphism between two reduced Jacobian algebras is not
enough: it must induce the prescribed identification of the image
orders. Theorem~\ref{alg:thm:cancel} provides such a right-equivalence,
and Proposition~\ref{alg:thm:nonsplit} applies it to the orders.
The linear splitting takes place over the semisimple algebra
$R\otimes_KR^{\mathrm{op}}$.

For a quadratic potential $S^{(2)}$, let $A_{\mathrm{triv}}$ be the
$R$-bimodule spanned by all its cyclic derivatives. A QP $(C,T)$ is
\emph{trivial} when $T$ is quadratic and its derivative image is all
of $C$; equivalently, $\mathcal P(C,T)=R$. Set
$A_{\red}=A/A_{\mathrm{triv}}$. We call these the trivial and reduced arrow bimodules, as in \cite{DWZ1}.

\begin{theorem}\label{alg:thm:generic-splitting}
Over the separable vertex algebra $R$, every QP is right-equivalent to
\[ (A_{\red},S_{\red})\oplus(A_{\mathrm{triv}},S_{\mathrm{triv}}), \]
where $S_{\red}$ is reduced and the second summand is trivial. Projection
to the reduced tensor algebra induces an isomorphism of Jacobian algebras.
If the Jacobian algebra is finite, the comparison of any two splittings can be
chosen to induce their prescribed isomorphism of Jacobian algebras.
\end{theorem}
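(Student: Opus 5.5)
We follow the Derksen--Weyman--Zelevinsky splitting theorem \cite{DWZ1}, replacing the vertex field $K^{Q_0}$ by the semisimple separable algebra $R$. Linear algebra over the enveloping algebra $R\otimes_KR^{\mathrm{op}}$ is used throughout, and each correction in the iteration is taken to be an $R$-bimodule map.

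\textbf{Step 1: linear normal form.} Since $R\otimes_KR^{\mathrm{op}}$ is semisimple, $A_{\mathrm{triv}}\subseteq A$ has an $R$-bimodule complement $A'$, so $A=A'\oplus A_{\mathrm{triv}}$. The quadratic part $S^{(2)}$ corresponds to a pairing of $A$ with $A^\vee$. We show that this pairing has radical exactly the subbimodule annihilated by all cyclic derivatives, namely $A'$. After a linear change of the complement, $S^{(2)}$ becomes a nondegenerate quadratic potential supported on $A_{\mathrm{triv}}$. Its derivative map $A_{\mathrm{triv}}\to A_{\mathrm{triv}}$ is an isomorphism, so $(A_{\mathrm{triv}},S^{(2)})$ is trivial, with $\mathcal P=R$. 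To check this we may pass to a splitting field $E$, where the statement is the DWZ linear lemma, and then descend by Galois invariance. Separability in characteristic zero makes invariants exact.

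\textbf{Step 2: iterative elimination.} Write $S=S^{(2)}+S'$ with $S'\in\m^3$ cyclically. We construct unitriangular automorphisms $\varphi_d$ of depth at least $d$ inductively. Each one removes the terms of $S$ of degree $d+2$ that involve some arrow of $A_{\mathrm{triv}}$. The first variation formula \eqref{alg:eq:firstvariation-basic} reduces this to a linear problem: a term of the form $\sum h(a)\partial_a S^{(2)}$ can be cancelled by choosing $h:A_{\mathrm{triv}}\to\m^{d+1}$. The problem is solvable as a bimodule map because $\partial S^{(2)}$ is invertible on $A_{\mathrm{triv}}$. The depths tend to infinity, so by \eqref{alg:eq:unitriangular-estimate} the composition converges and is a right-equivalence by Proposition~\ref{alg:prop:tensorautomorphism}. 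The limit potential has the form $S_{\red}+S_{\mathrm{triv}}$, where $S_{\red}$ lies in the completed tensor algebra of $A_{\red}\cong A'$ and has no quadratic part.

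\textbf{Step 3: Jacobian algebras.} In the split form, the derivatives in the trivial directions are exactly the trivial arrows, up to an invertible linear map. The Jacobian ideal therefore contains $A_{\mathrm{triv}}$, and the quotient by it is $R\langle\!\langle A_{\red}\rangle\!\rangle/J(S_{\red})$. Hence projection induces the isomorphism of Jacobian algebras; closedness is handled by Lemma~\ref{alg:lem:closed-context}.

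\textbf{Step 4: comparing two splittings.} This is the main obstacle. Two reduced parts are right-equivalent by the DWZ-type uniqueness argument, but we need a right-equivalence that induces the prescribed isomorphism $r_2r_1^{-1}$ of Jacobian algebras. The plan is as follows.
\begin{enumerate}
\item Compose the two splitting right-equivalences to get an automorphism $\psi$ of the full QP, and project it to a map $\bar\psi$ between the reduced tensor algebras, namely the reduced part of $\psi$ after killing trivial arrows.
\item Show that $\bar\psi$ is augmented, has invertible linear part, and carries $S_{\red}^{(1)}$ to $S_{\red}^{(2)}$ up to corrections in the Jacobian ideal.
\item Absorb these corrections by a further convergent unitriangular automorphism. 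This automorphism acts as the identity modulo $J$, so it does not change the induced map on Jacobian algebras.
\end{enumerate}
Jacobi-finiteness is used in the last step, through Lemma~\ref{alg:lem:toplayer}. It gives $\m^N\subseteq J$, so the identity-modulo-$J$ corrections are controlled in finitely many degrees and converge. Without finiteness one would only obtain an abstract isomorphism.
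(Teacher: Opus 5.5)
Your architecture is the paper's: a linear normal form checked over a splitting field and descended, iterative unitriangular elimination of mixed terms, projection to the reduced tensor algebra, and, for the comparison, projecting the composite right-equivalence to $\psi=p_2\Phi i_1$ and correcting by an automorphism that is the identity modulo the Jacobian ideal (Theorem~\ref{alg:thm:cancel} with Lemma~\ref{alg:lem:square}). Your degree-by-degree elimination is an acceptable variant of the paper's all-orders step, which uses the partial Euler identity and division by the number of paired occurrences. It only needs the remark that every mixed cyclic term is a first variation of $S^{(2)}$, which you get by rotating a trivial arrow to one end.

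The gap is in Step 4(ii)--(iii). A discrepancy that merely lies in the Jacobian ideal cannot be absorbed by an automorphism that is the identity modulo $J$. On the oriented triangle, $V=abc$ and $V'=0$ satisfy $V-V'=a\,\partial_aV\in J(V)$. Yet $J(V')=0\ne J(V)$, and the two Jacobian algebras are not even isomorphic.

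What the correction lemma needs, and what you must prove, is two things:
\begin{enumerate}
\item $\psi(J(S_1))=J(S_2)$. This follows from $\pi_2\psi=\beta\pi_1$.
\item $S_2-\psi(S_1)\sim_{\cyc}p_2\Phi(T_1)\in J(S_2)^2$.
\end{enumerate}
The second uses that $T_1$ is quadratic in the trivial arrows and that $p_2\Phi(C_1)\subseteq J(S_2)$. The inclusion holds because $C_1$ is spanned by derivatives of $T_1$, because $\Phi$ carries $J(S_1+T_1)$ onto $J(S_2+T_2)$ by the chain rule, and because $p_2$ maps the latter onto $J(S_2)$. Only with this $J^2$ statement does the DWZ-type iteration of Lemma~\ref{alg:lem:square} produce $\eta$ with $\eta(b)-b\in J(S_2)$ and $\eta\psi(S_1)\sim_{\cyc}S_2$.

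Your account of where Jacobi-finiteness enters also does not match the mechanism. In the paper, convergence of that correction comes from the depth filtration $J\cap\m^{n+1}$ in Lemma~\ref{alg:lem:square}, not from $\m^N\subseteq J$.
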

\begin{proof}
Over a split vertex algebra, the quadratic terms between each unordered
pair of distinct vertices form a matrix pairing the two opposite arrow
spaces. Choose bases in the two spaces so that the pairing is
nondegenerate on a pair of complementary subspaces and vanishes on
the remaining subspaces. The nondegenerately paired subspaces form
the derivative image. The quadratic potential belongs to their tensor square and gives
a perfect pairing there.

For a separable $R$, its enveloping algebra
$R\otimes_KR^{\mathrm{op}}$ is semisimple. The derivative image therefore
has a bimodule complement. The preceding matrix statement can be checked
after a finite splitting field and descends: the quadratic potential is
supported on its derivative image, and its restricted duality is perfect.
The arrows in this image can be grouped into opposite pairs by the original
vertex idempotents. In particular a compatible coevaluation normalization
exists over $R$.

Choose a complement $A_0$ to the paired arrows $C$, and write
$S=Q+V+U$, where $Q$ is the perfect quadratic part, $V$ uses
only $A_0$, and every term of $U$ contains a paired arrow.
If $U$ has order $r\geq3$, decompose it by the number $q$ of
paired occurrences. The partial Euler identity, followed by the inverse
quadratic pairing, writes each part as a first variation $D_hQ$,
with $h(C)\subseteq\m^{r-1}$ and $h(A_0)=0$. Division by $q$
is allowed in characteristic zero. Substituting $c\mapsto c-h(c)$
cancels the mixed part to first order. The new mixed remainder has order
at least $2r-2$; pure terms are incorporated into $V$.
Iterating gives a convergent product of unitriangular automorphisms by
\eqref{alg:eq:unitriangular-estimate}.
The cyclic identities follow from Section~\ref{alg:subsec:cyclic-calculus};
a linear correction over a splitting field can be averaged before each
substitution to make it $R$-linear.
The paired derivatives of the resulting quadratic potential generate
exactly the paired-arrow ideal. Killing that ideal leaves $J(S_{\red})$,
which proves the assertion about Jacobian algebras. The assertion about the induced map of Jacobian algebras is proved in Theorem~\ref{alg:thm:cancel} below.
\end{proof}

The existence assertion of the splitting theorem
\cite{DWZ1,Nguefack} does not require Jacobi-finiteness.
The cancellation argument uses the following closed-image lemma.

\begin{lemma}\label{alg:lem:closed-image}
Give each finite-dimensional vector space over a field the discrete topology. Let $X$ be a closed linear subspace of a countable product of such spaces. Every continuous linear map from $X$ to another such product has closed image.
\end{lemma}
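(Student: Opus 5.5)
The plan is to reduce to the classical fact that linearly compact vector spaces have closed images under continuous linear maps. Write the ambient product as $\prod_{r\ge0}V_r$ with each $V_r$ finite-dimensional and discrete. The product topology is then linearly compact: every family of closed affine subspaces with the finite intersection property has nonempty intersection. A closed subspace $X$ inherits this property. The goal is to show that the image $f(X)$ in the target product $W=\prod_s W_s$ is again linearly compact, and then to check that a linearly compact subspace of a Hausdorff linearly topologized space is closed.

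Rather than citing linear compactness abstractly, I would give a direct inverse-limit argument. Let $\pi_N:W\to W_{\le N}=\prod_{s\le N}W_s$ be the truncation, and let $y\in\overline{f(X)}$. For each $N$ set
\[ X_N=\{x\in X:\ \pi_N f(x)=\pi_N y\}. \]
Since $W_{\le N}$ is finite-dimensional and discrete, closure implies $\pi_N y\in\pi_N f(X)$, so each $X_N$ is a nonempty closed affine subspace of $X$. These sets decrease in $N$. It remains to show that $\bigcap_N X_N\neq\emptyset$; any point $x$ in the intersection satisfies $f(x)=y$, because the truncations $\pi_N$ separate points of $W$.

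The main step is exactly this nonemptiness of a decreasing intersection of nonempty closed affine subspaces in a closed subspace of $\prod V_r$. I would prove it by Mittag-Leffler. Let $p_M:X\to\prod_{r\le M}V_r$ be the projection. For fixed $M$, the images $p_M(X_N)$ are nonempty affine subspaces of a finite-dimensional space, decreasing in $N$, so by dimension they stabilize to some nonempty $Z_M$. One checks that $p_M(Z_{M+1})=Z_M$, using stabilization at a common large $N$. Choose compatible points $z_M\in Z_M$ inductively. These define a point $x$ of $\prod V_r$, which lies in $X$ because $X$ is closed. The same closedness argument, applied to $X_N$ for each $N$, shows that $x\in X_N$: every truncation of $x$ lies in $p_M(X_N)$.

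Continuity of $f$ is used only to know that each $X_N$ is closed. Indeed $\pi_N\circ f$ is continuous into a discrete space, so its fibres are open and closed.
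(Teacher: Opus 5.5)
Your argument is correct and is essentially the paper's proof. Your fibres $X_N$ over the truncations of $y$ are the paper's $E_n$, and you conclude the same way: stabilize their finite-coordinate images, choose compatible points, and use closedness of $X$ and of each $X_N$. Your proof that $p_M(Z_{M+1})=Z_M$, via a common stabilizing index $N$, is slightly more direct than the paper's fibrewise descending-chain remark, but the route is the same.
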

\begin{proof}
Suppose $y$ lies in the closure of the image. The solutions of the first
$n$ equations in the target coordinates form a nonempty closed affine subspace
$E_n$ of $X$, with $E_{n+1}\subseteq E_n$. For each fixed finite
set of source coordinates, the images of the $E_n$ form a descending
sequence of nonempty finite-dimensional affine spaces and eventually
stabilize. The stabilized images project onto one another: this follows
by applying the same descending-chain argument in each finite fibre.
Choose compatible points successively in these stabilized images. Their
limit belongs to every $E_n$, since each $E_n$ is closed, and maps to
$y$. Thus $y$ lies in the image.
\end{proof}

\begin{lemma}\label{alg:lem:closed-context}
Let $T=R\langle\!\langle A\rangle\!\rangle$ over a split vertex
algebra, let $I$ be a closed two-sided ideal, and let $J$ be the
closed ideal generated by finitely many elements $r_a$. The cyclic
image of the closed product $\overline{IJ}$ equals the cyclic image of
$\sum_a I r_a$. In particular, for $r_a=\partial_aS$, every cyclic
class in $\overline{IJ(S)}$ is a first variation of $S$ with
corrections belonging to $I$.
\end{lemma}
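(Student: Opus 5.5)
The plan is to prove the two inclusions separately. The inclusion of the cyclic image of $\sum_a I r_a$ into that of $\overline{IJ}$ is trivial, since $r_a\in J$. For the reverse inclusion, I would first handle the non-closed product and then pass to closures using Lemma~\ref{alg:lem:closed-image}.

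\emph{Finite sums.} Let $J_0=\sum_a T r_a T$ be the algebraic two-sided ideal, so $J=\overline{J_0}$. A typical element of $I J_0$ is a finite sum of terms $x\,u\,r_a\,v$ with $x\in I$ and $u,v\in T$. Cyclic rotation gives
\[ x\,u\,r_a\,v\sim_{\cyc} (v\,x\,u)\,r_a, \]
and $v x u\in I$ because $I$ is a two-sided ideal. Since $\overline{[T,T]}$ is closed and rotation of a product of two series is a limit of rotations of truncations, this rotation holds for formal series as well. Hence the cyclic image of $IJ_0$ lies in the cyclic image of $\sum_a I r_a$.

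\emph{Closures.} Over the split vertex algebra, $T=\prod_{r\ge0}A^{\otimes r}$ is a countable product of finite-dimensional spaces. The closure $\overline{[T,T]}$ is the product of its degree components, because commutators of homogeneous elements are homogeneous. So $\Cyc(T)$ is again such a product, namely the product of the spaces of cyclic words modulo rotation, and the cyclic projection $T\to\Cyc(T)$ is continuous. The subspace $I^{\oplus n}\subseteq T^{\oplus n}$, with $n$ the number of generators $r_a$, is closed. The map
\[ \Phi:I^{\oplus n}\to\Cyc(T),\qquad (x_a)\mapsto\Bigl[\sum_a x_a r_a\Bigr] \]
is continuous and linear, so Lemma~\ref{alg:lem:closed-image} shows that its image is closed. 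Multiplication is continuous, so $IJ_0$ is dense in $\overline{IJ}$, and the cyclic image of $\overline{IJ}$ lies in the closure of $\Phi(I^{\oplus n})$. That closure is $\Phi(I^{\oplus n})$ itself, which gives equality.

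\emph{First variations.} For $r_a=\partial_aS$, the class $\sum_a x_a\partial_aS$ only sees, for each arrow $a$ from $i$ to $j$, the component $e_i x_a e_j$. All other components are nonclosed paths, hence commutators with idempotents. Each component $e_i x_a e_j$ lies in $I$ because $I$ is an ideal. Define the bimodule map $h$ by $h(a)=e_i x_a e_j$, extended linearly over the arrow basis. Then \eqref{alg:eq:firstvariation-basic} identifies the class with $D_hS$, where $h$ takes values in $I$. If $I\not\subseteq\m$, the degree-zero part of $h$ is a linear change of arrows, which is still a first variation.

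The main obstacle is the closure step: one must check that $\Cyc(T)$ really carries the product topology and that $\Phi$ lands in a space covered by Lemma~\ref{alg:lem:closed-image}. Both reduce to the degreewise description of $\overline{[T,T]}$.
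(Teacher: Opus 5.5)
Your proposal is correct and follows the paper's proof step for step. The shared steps are the rotation $x\,u\,r_a\,v\sim_{\cyc}(vxu)\,r_a$, density of the algebraic products in $\overline{IJ}$, closedness of the image of $\bigoplus_a I\to\Cyc(T)$ via Lemma~\ref{alg:lem:closed-image}, and the use of idempotents to select the component $e_ix_ae_j$ parallel to each arrow. Two small remarks: the rotation is an exact commutator and needs no limiting argument, and the closing remark about $I\not\subseteq\m$ is unnecessary, since for loop-free $A$ and an arrow $a:i\to j$ with $i\ne j$ the component $e_ix_ae_j$ already lies in $\m$.
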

\begin{proof}
An algebraic product $g u r_a v$, with $g\in I$, is cyclically
equivalent to $vgu r_a$, and $vgu\in I$. Finite sums of these products are dense in $\overline{IJ}$. The linear map
$\bigoplus_a I\to\Cyc(T)$, $(h_a)\mapsto[\sum_a h_a r_a]$, has
closed image by Lemma~\ref{alg:lem:closed-image}; both spaces have the arrow
filtration with finite-dimensional quotients. Therefore taking the closure
adds no cyclic classes. Idempotents select the component of $h_a$
parallel to the arrow $a$, as required for a first variation.
Over a separable vertex algebra the same assertion, with its bimodule
conditions, follows by finite splitting and averaging the linear
correction maps.
\end{proof}

\begin{lemma}\label{alg:lem:square}
Let $A$ be a finite-dimensional loop-free $R$-bimodule, put
$T=\CT{R}{A}$, and let $V,V'$ be reduced potentials on $A$.
Suppose that $T/J(V)$ is finite-dimensional. If $V'-V$ is cyclically equivalent to an element of $J(V)^2$, then $J(V')=J(V)$. There is an $R$-unitriangular automorphism $\eta$ with
\[ \eta(V)\sim_{\cyc}V',\qquad \eta(b)-b\in J(V)\quad(b\in A). \]
Thus $\eta$ induces the identity on the Jacobian algebra.
\end{lemma}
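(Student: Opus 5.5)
We first prove $J(V')=J(V)$. Write $V'=V+W$ with $W\sim_{\cyc}\sum_s g_s h_s$, where $g_s,h_s\in J=J(V)$; by Lemma~\ref{alg:lem:closed-context} (after splitting the vertex fields and averaging) we may take $W$ to be a cyclic class in $\overline{J\cdot J}$. The Leibniz rule gives $\partial_a W=\sum_s[(\Delta_a g_s)\#h_s+(\Delta_a h_s)\#g_s]$. Each summand contains a factor in $J$, so $\partial_aW\in J$. Hence $\partial_aV'\in J$ and $J(V')\subseteq J(V)$. For the reverse inclusion we use finiteness. By Lemma~\ref{alg:lem:toplayer}, $\m^N\subseteq J$ for some $N$. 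Because $V$ is reduced, $J\subseteq\m^2$, so $J^2\subseteq\m\cdot J$, and the relevant part of $\partial_aW$ lies in $\m J$. Then $J(V)\subseteq J(V')+\m J(V)$, and Nakayama-type iteration ($J\subseteq J(V')+\m^rJ$ for all $r$, with $\m^rJ\subseteq\m^N\cdots$) together with closedness gives $J(V)\subseteq J(V')$. The same iteration gives $\m^{N'}\subseteq J(V')$ as well.

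Next we build $\eta$ by successive approximation, following the cancellation argument of Theorem~\ref{alg:thm:generic-splitting}. Put $V_0=V'$ and suppose $V_k-V\sim_{\cyc}W_k\in\overline{J\cdot J}$ with $W_k\in\m^{r_k}$. Use Lemma~\ref{alg:lem:closed-context} with $I=J$ to write $W_k\sim_{\cyc}\sum_a h_k(a)\partial_aV$, where $h_k(a)\in J\cap\m^{r_k-2}$ is parallel to $a$. Make $h_k$ an $R$-bimodule map by splitting and averaging, as before. Set $\eta_k(b)=b-h_k(b)$. The first-variation formula \eqref{alg:eq:firstvariation-basic} and a second-order Taylor estimate then give
\[ \eta_k(V_k)\sim_{\cyc} V+W'_{k},\qquad W'_k\in \overline{J\cdot J}\cap\m^{r_{k+1}},\]
with $r_{k+1}>r_k$. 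The error is quadratic in $h_k$ plus the variation of $W_k$. Both are in $\overline{J\cdot J}$ because $h_k$ takes values in $J$ and $J$ is $\eta_k$-stable: indeed $\eta_k(x)-x\in J$ for all $x$, since $\eta_k-\id$ is a sum of terms with a factor $h_k(b)\in J$. The depths $r_k-3$ tend to infinity, so the composite $\eta^{-1}=\cdots\eta_1\eta_0$ converges by \eqref{alg:eq:unitriangular-estimate}. It is unitriangular, it carries $V'$ to $V$ cyclically, and each factor moves arrows by elements of $J$. Consequently $\eta(b)-b\in J$ (closed) for the inverse as well. Because $\eta-\id$ maps into $J$, $\eta$ induces the identity on $T/J$.

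The main obstacle is the order gain $r_{k+1}>r_k$. The term $D_{h_k}W_k$ only gains $r_k-2$ degrees, which suffices if $r_k\ge3$. Second-order terms of $\eta_k(V)$ lie in $\m^{2r_k-2}\cap J^2$-type classes. Keeping them inside $\overline{J\cdot J}$, rather than merely in $J$, needs the observation that the second-order variation of $V$ is a sum of products with two $h_k$ factors. If degree gain fails when $r_k=2$, use reducedness: $J\subseteq\m^2$ forces $W_k\in\m^4$ from the start.
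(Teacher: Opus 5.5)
Your proof of $J(V')=J(V)$ is the paper's argument. Your iteration also differs harmlessly from the paper's: you correct against the fixed derivatives $\partial_aV$ and absorb $(\eta_k-\id)(W_k)$ into the new error, whereas the paper corrects against $\partial_bW_n$ and maintains $J(W_n)=J$ by the chain rule.

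The gap is the claim that Lemma~\ref{alg:lem:closed-context} with $I=J$ gives corrections $h_k(a)\in J\cap\m^{r_k-2}$. That lemma only gives $h_k(a)\in I=J$ and carries no degree information. Your invariant records two separate facts, $W_k\in\overline{JJ}$ and $W_k\in\m^{r_k}$. An element of $\overline{JJ}$ of high order is in general a sum of low-order products $xy$ with $x,y\in J$ whose leading parts cancel. Truncating $x$ or $h_k(a)$ by degree does not preserve membership in $J$, so you cannot read off a representation whose first factor has order $r_k-2$. What you would need is an Artin--Rees-type property of the first-variation map, and neither you nor the lemma supplies it.

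Finite-dimensionality does not directly rescue this. The inclusion $\m^N\subseteq J$ places a class of order $r\ge 2N$ in the cyclic image of $\overline{(J\cap\m^{r-N})J}$. That is a loss of $N$ rather than $2$, and it yields an order gain only once $r_k\ge 2N$, not from $r_0=4$. Without a degree bound the depths of the $\eta_k$ need not grow, so neither $r_{k+1}>r_k$ nor convergence of $\cdots\eta_1\eta_0$ follows. Your last paragraph analyses the gain only after assuming the bound.

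The repair, which is the paper's bookkeeping, is to build the degree into the ideal. Maintain $V_k-V\sim_{\cyc}W_k$ with $W_k$ in the cyclic image of $\overline{I_kJ}$, where $I_k=J\cap\m^{k+2}$; then $I_0=J$ because $J\subseteq\m^2$. Apply Lemma~\ref{alg:lem:closed-context} to the closed ideal $I=I_k$. This gives $h_k(a)\in I_k$ directly, so $\eta_k$ has depth $k+1$.

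The new error keeps the same form, with $I_{k+1}$ in place of $I_k$:
\begin{itemize}
\item A nonlinear Taylor term of $\eta_k(V)$ has at least two correction factors, and since $V$ is reduced, a further arrow when it has exactly two. It therefore rotates into $(J\cap\m^{k+3})J=I_{k+1}J$.
\item Write $(\eta_k-\id)(xy)=(\eta_k(x)-x)\eta_k(y)+x(\eta_k(y)-y)$ with $x\in I_k$ and $y\in J$. Because $\eta_k-\id$ maps into $J$ and raises order by $k+1$, both terms lie in $I_{k+1}J$ after rotating the second.
\end{itemize}
With this invariant your fixed-$V$ scheme converges and the rest of your argument goes through.

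Two minor slips are harmless. Two corrections of order $r_k-2$ plus one arrow give order $2r_k-3$, not $2r_k-2$. The depth of $\eta_k$ is $r_k-3$, not $r_k-2$.
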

\begin{proof}
For a split vertex algebra, this is \cite[Proposition 4.10]{DWZ1}.
Put $J=J(V)\subseteq\m^2$. The cyclic Leibniz rule gives
$\partial(J^2)\subseteq\overline{\m J+J\m}$, so $J(V')\subseteq J$.
Conversely,
$J\subseteq J(V')+\overline{\m J+J\m}$; iteration in the arrow
filtration and closedness give $J(V')=J$.

We construct the correction while preserving the ideal $J$. For $n\ge1$
put $I_n=J\cap\m^{n+1}$. Start with $W_1=V$ and maintain
\[ J(W_n)=J,\qquad V'-W_n\sim_{\cyc}E_n,\quad E_n\in\overline{I_nJ}. \]
The initial assertion holds because $I_1=J$. By
Lemma~\ref{alg:lem:closed-context}, choose $h_n(b)\in I_n$ such that
\[ V'-W_n\sim_{\cyc}\sum_b h_n(b)\partial_bW_n. \]
The substitution $\theta_n(b)=b+h_n(b)$ is unitriangular of depth
at least $n$ and induces the identity modulo $J$. In particular,
$\theta_n(J)=J$. Set $W_{n+1}=\theta_n(W_n)$. Since $W_n$ is reduced,
every nonlinear Taylor term contains at least two corrections and at
least one further arrow factor. Up to cyclic equivalence, the remainder
therefore lies in
$\overline{\m I_n^2}\subseteq\overline{I_{n+1}J}$.
This proves the induction step, including $J(W_{n+1})=J$ by the
chain rule. The product $\theta_n\cdots\theta_1$ converges in the
arrow filtration to a unitriangular automorphism $\eta$. Its error tends
to zero, and every finite product is the identity modulo the fixed closed
ideal $J$. Thus $\eta(V)\sim_{\cyc}V'$ and $\eta$ induces the identity
on $T/J$.

For general $R$, choose a finite Galois extension splitting its field
factors. At each stage the equation for $h_n$ is linear, and the ideal
and depth conditions $h_n(A)\subseteq I_n$ are Galois-stable. Average
a solution before making the substitution. The averaged solution is an
$R$-bimodule map with the same error and depth conditions. Repeating the
induction descends $\eta$. It is linear corrections that are averaged,
not automorphisms.
\end{proof}

\Needspace{8\baselineskip}
\begin{theorem}\label{alg:thm:cancel}
For $i=1,2$, let $(A_i,S_i)$ be reduced and let $(C_i,T_i)$ be
trivial. Suppose that $\Palg(A_i,S_i)$ is finite-dimensional for
$i=1,2$, and that
\[ \Phi:(A_1\oplus C_1,S_1+T_1)\xrightarrow{\sim}(A_2\oplus C_2,S_2+T_2) \]
is an $R$-right-equivalence. Projection away from the trivial summands
identifies these Jacobian algebras with $\Palg(A_i,S_i)$. Let
$\beta:\Palg(A_1,S_1)\to\Palg(A_2,S_2)$ be the map induced by
$\Phi$ under these identifications. Then a right-equivalence
$(A_1,S_1)\to(A_2,S_2)$ induces $\beta$.
\end{theorem}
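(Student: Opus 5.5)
First construct some reduced right-equivalence, then correct it by an automorphism that induces $\beta$ times the inverse of its induced map. This follows the DWZ strategy (DWZ Theorem 4.6 and Proposition 4.10), with Lemma~\ref{alg:lem:square} as the correction tool.

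\emph{Step 1: a first right-equivalence.} Write $\Phi|_{A_1\oplus C_1}$ in block form with respect to the decompositions. Composing $\Phi$ with unitriangular substitutions in the trivial arrows (as in the proof of Theorem~\ref{alg:thm:generic-splitting}), arrange that $\Phi$ carries the trivial ideal generated by $C_1$ onto the one generated by $C_2$ modulo corrections. Then define $\psi:\CT{R}{A_1}\to\CT{R}{A_2}$ as $\Phi$ followed by projection killing $C_2$. Its linear part is the $(A_1,A_2)$-block of $\Phi^{(1)}$. This block is an isomorphism: $\Phi^{(1)}$ is a bimodule isomorphism and the trivial parts are the derivative images of the quadratic parts, which $\Phi^{(1)}$ matches. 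By Proposition~\ref{alg:prop:tensorautomorphism}, $\psi$ is an isomorphism. Computing $\psi(S_1)$ cyclically, the terms involving $C_2$ contribute elements of $J(S_2)^2$ after using the relations $\partial_c(S_2+T_2)$ to eliminate the trivial arrows. So $\psi(S_1)-S_2$ is cyclically equivalent to an element of $J(S_2)^2$. Here the reduced hypothesis ensures that the trivial arrows are expressed, modulo $J$, by elements of $\m^2$.

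\emph{Step 2: compare induced maps.} Let $\beta_\psi$ be the map that $\psi$ induces between the Jacobian algebras, after Lemma~\ref{alg:lem:square} gives $\eta$ with $\eta\psi(S_1)\sim S_2$ inducing the identity on $\Palg(A_2,S_2)$. The composite $\gamma=\beta\beta_\psi^{-1}$ is an augmented automorphism of $\Palg(A_2,S_2)$. I claim $\gamma$ is induced by a continuous automorphism $\theta$ of $\CT{R}{A_2}$ preserving $J(S_2)$, namely a lift of $\gamma$ on arrows, which exists by Proposition~\ref{alg:prop:tensorautomorphism}. The task is to make $\theta(S_2)\sim S_2$ modulo $J(S_2)^2$. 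This reduces to showing that $\theta(S_2)-S_2$ lies in the cyclic image of $J^2$ plus first variations. Since $\theta(J)=J$, each derivative $\partial_b\theta(S_2)$ lies in $J$ by the chain rule. The Euler identity, degreewise (in characteristic zero), then writes $\theta(S_2)-S_2$ as $\sum h(b)\partial_bS_2$ plus $J^2$-terms, with $h(b)\in J$ after adjustment. Substituting $b\mapsto b-h(b)$ does not change the induced map on the Jacobian algebra. Finite-dimensionality is used via Lemma~\ref{alg:lem:toplayer} to ensure $J\supseteq\m^N$, so all these corrections converge, with Lemma~\ref{alg:lem:closed-context} supplying closedness.

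\emph{Step 3.} Apply Lemma~\ref{alg:lem:square} to $V=S_2$ and $V'=\theta\eta\psi(S_1)$-type potentials to obtain $\eta'$ inducing the identity. The composite $\eta'\theta\eta\psi$ is then a right-equivalence inducing $\gamma\beta_\psi=\beta$. Over nonsplit $R$, all linear corrections are averaged over a Galois splitting field, as in the proof of Lemma~\ref{alg:lem:square}.

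The main obstacle is Step 2: showing that the correcting automorphism $\theta$ can be chosen with $\theta(S_2)-S_2\in J^2$ cyclically. The key input is that $\beta$ already comes from a genuine right-equivalence $\Phi$, so I would instead build $\theta$ directly from $\Phi$ rather than from an arbitrary lift of $\gamma$, tracking the trivial-arrow eliminations so that the induced map is exactly $\beta$ from the start.
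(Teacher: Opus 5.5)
There is a genuine gap, and it sits in Step~2, which is both unnecessary and, as you formulate it, false. You never check the one fact that makes the theorem easy. Let $i_1$ be the inclusion of $\CT{R}{A_1}$ and $p_2$ the projection killing $C_2$. Then $\psi=p_2\Phi i_1$ already induces $\beta$, i.e.\ $\pi_2\psi=\beta\pi_1$. The reason is that the identifications in the statement are induced by $i_1$ and $p_2$:
\begin{itemize}
\item $i_1$ carries $J(S_1)$ into $J(S_1+T_1)$, since $\partial_a(S_1+T_1)=\partial_aS_1$ for $a\in A_1$;
\item $p_2$ carries $J(S_2+T_2)$ into $J(S_2)$, since the $C_2$-derivatives are linear in $C_2$.
\end{itemize}
Hence your $\gamma=\beta\beta_\psi^{-1}$ is the identity, and $\psi(J(S_1))=J(S_2)$. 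The chain rule then gives $J(\psi(S_1))=J(S_2)$. You also have $S_2-\psi(S_1)\sim_{\cyc}p_2\Phi(T_1)\in J(S_2)^2$; this holds because $C_1\subseteq J(S_1+T_1)$ forces $p_2\Phi(C_1)\subseteq J(S_2)$, not because of reducedness. So Lemma~\ref{alg:lem:square} with $V=\psi(S_1)$ and $V'=S_2$ gives an $\eta$ inducing the identity, and $\eta\psi$ is the required right-equivalence. That is exactly the paper's proof. The preliminary unitriangular substitutions in your Step~1 are not needed.

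Step~2, by contrast, claims that an arbitrary augmented automorphism $\gamma$ of $\Palg(A_2,S_2)$ lifts to some $\theta$ for which $\theta(S_2)-S_2$ is a first variation with corrections $h(b)\in J$, plus $J^2$-terms. This fails in general. Take the oriented triangle with $S=abc$. Its Jacobian algebra has radical square zero, so $a\mapsto\lambda a$, $b\mapsto\mu b$, $c\mapsto\nu c$ is an automorphism for all nonzero scalars. Any right-equivalence inducing it has this linear part, because $J\subseteq\m^2$. It therefore sends $S$ to a potential with cubic term $\lambda\mu\nu\,abc$. Cyclic equivalence respects arrow degree, so no such right-equivalence exists unless $\lambda\mu\nu=1$. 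Equivalently, the discrepancy $(\lambda\mu\nu-1)abc$ is a first variation only with linear $h$, and such an $h$ changes the induced map. Your argument can only succeed because $\gamma=1$ here, which is precisely the observation you postpone to your last sentence without carrying it out.
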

\begin{proof}
Let $i_1$ include the first reduced tensor algebra and let $p_2$ kill the second trivial summand. Put $\psi=p_2\Phi i_1$. The derivatives of the quadratic part span $C_i$. Hence the linear part of $\Phi$ sends $C_1$ to $C_2$, and induces an isomorphism of quotient arrow modules. Thus $\psi$ has invertible linear part and is an isomorphism of completed tensor algebras.

Writing $\pi_i$ for the projections to the Jacobian algebras gives $\pi_2\psi=\beta\pi_1$, and hence $\psi(J(S_1))=J(S_2)$. Every trivial arrow belongs to the stabilized Jacobian ideal, so $p_2\Phi(C_1)\subseteq J(S_2)$. Projecting the equality of total potentials therefore gives
\[ S_2-\psi(S_1)\sim_{\cyc}p_2\Phi(T_1)\in J(S_2)^2. \]
By the chain rule,
$J(\psi(S_1))=\psi(J(S_1))=J(S_2)$. Apply
Lemma~\ref{alg:lem:square} with $V=\psi(S_1)$ and $V'=S_2$.
It gives a correction $\eta$ with $\eta\psi(S_1)\sim_{\cyc}S_2$
and inducing the identity on the target Jacobian algebra. Thus
$\eta\psi$ is a right-equivalence inducing the prescribed $\beta$.
\end{proof}

The same comparisons occur in \cite[Propositions 4.9--4.10 and 10.5]{DWZ1}.
The specified map of Jacobian algebras identifies the images of the integral order,
which is the assertion needed below.

\subsection{The image order under reduction}

Suppose $S$ is Jacobi-finite. Theorem~\ref{alg:thm:generic-splitting} gives a splitting over $K$
\[ F_i:(A,S)\longrightarrow(A_i,S_i)\oplus(C_i,T_i) . \]
Let $p_i$ kill $C_i$, put $\rho_i=p_iF_i$, and let $r_i:\Palg(S)\to\Palg(S_i)$ be the induced isomorphism. Define
\begin{equation}\label{alg:eq:reducedorder}
 L_i=r_i(\PP(S))
 =\im\bigl(\TT\xrightarrow{\rho_i}\CT{R}{A_i}\to\Palg(S_i)\bigr).
\end{equation}
A \emph{reduced Jacobian algebra with its integral order} is the triple $(A_i,S_i,L_i)$. Two such triples are equivalent if an $R$-right-equivalence of their potentials induces an isomorphism carrying the specified orders onto each other.

\begin{proposition}\label{alg:thm:nonsplit}
The triple $(A_i,S_i,L_i)$ is independent of the splitting over $K$ up to the preceding equivalence. More precisely, there is a right-equivalence $\Theta_{21}$ whose induced map of Jacobian algebras is
$\cl\Theta_{21}=r_2r_1^{-1}$,
and this map carries $L_1$ onto $L_2$. The maps between orders are integral $\RR$-algebra isomorphisms. They preserve the multiplication of the image order and are covariant under integral right-equivalence of the original ramified QP.
\end{proposition}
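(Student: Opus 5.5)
The plan is to apply Theorem~\ref{alg:thm:cancel} to the composite of the two splittings, and then read off the statement about orders from the definition \eqref{alg:eq:reducedorder}. Put $\Phi=F_2F_1^{-1}$. It is an $R$-right-equivalence
\[ (A_1\oplus C_1,S_1+T_1)\xrightarrow{\sim}(A_2\oplus C_2,S_2+T_2). \]
Projection away from the trivial summands identifies both Jacobian algebras with $\Palg(S_i)$, by Theorem~\ref{alg:thm:generic-splitting}. Under these identifications, the map induced by $\Phi$ is $r_2r_1^{-1}$. To see this, note that $r_i$ is by definition the map induced by $\rho_i=p_iF_i$ on Jacobian algebras. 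Also $F_2=\Phi F_1$ on $\CT{R}{A}$. Passing to Jacobian algebras therefore gives $r_2=\beta r_1$, where $\beta$ is the map induced by $\Phi$.

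Jacobi-finiteness of $S$ and of the $S_i$ holds because all these Jacobian algebras are isomorphic. Theorem~\ref{alg:thm:cancel} then supplies a right-equivalence $\Theta_{21}:(A_1,S_1)\to(A_2,S_2)$ with $\cl\Theta_{21}=\beta=r_2r_1^{-1}$. Since $L_i=r_i(\PP(S))$, we get
\[ \cl\Theta_{21}(L_1)=r_2r_1^{-1}r_1(\PP(S))=L_2 \]
immediately. The restricted map $L_1\to L_2$ is a restriction of an $R$-algebra isomorphism that maps one subring onto the other. It is therefore a ring isomorphism preserving the multiplication of the image orders. It fixes $\RR\subseteq R$, because right-equivalences fix $R$ pointwise and the images of $\RR$ in the $L_i$ are the images of the vertex idempotents and the vertex rings. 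So it is an integral $\RR$-algebra isomorphism.

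For covariance, let $\phi:(\AA,S)\to(\AA',S')$ be an integral right-equivalence, and choose splittings $F$ of $S$ and $F'$ of $S'$. Then $F'\phi F^{-1}$ is again a right-equivalence between split forms. Theorem~\ref{alg:thm:contraction} shows that $\phi$ carries $\TT\cap J(S)$ onto $\TT'\cap J(S')$, so the induced map of Jacobian algebras carries $\PP(S)$ onto $\PP(S')$. Applying the preceding argument to $F'\phi F^{-1}$ gives a right-equivalence of reduced parts inducing $r'\cl\phi r^{-1}$, and this map carries $r(\PP(S))$ onto $r'(\PP(S'))$. Compatibility with composition holds at the level of induced Jacobian maps, since these are determined by the $r_i$.

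The main subtlety is the identification $\beta=r_2r_1^{-1}$. We must check that "projection away from trivial summands", as used in Theorem~\ref{alg:thm:cancel}, agrees with the maps $p_i$ defining $r_i$. Concretely, $p_i$ must induce the same isomorphism $\Palg(S_i+T_i)\to\Palg(S_i)$ as the identification in that theorem. This holds because the trivial arrows lie in $J(S_i+T_i)$, so $p_i$ is the quotient by the ideal they generate, as in the proof of Theorem~\ref{alg:thm:generic-splitting}. Everything else is formal once Theorem~\ref{alg:thm:cancel} is available.
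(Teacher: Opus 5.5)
Your proposal is correct and follows essentially the same route as the paper. Like the paper, you apply Theorem~\ref{alg:thm:cancel} to $F_2F_1^{-1}$, identify its prescribed quotient map with $r_2r_1^{-1}$ so that it carries $L_1=r_1(\PP(S))$ onto $L_2$, and obtain covariance by composing splittings with an integral right-equivalence, using the ideal-preservation statement of Theorem~\ref{alg:thm:contraction}.
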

\begin{proof}
The kernel of the map in \eqref{alg:eq:reducedorder} is $\TT\cap J(S)$. Thus $r_i$ identifies $\PP(S)$ with $L_i$, including its multiplication. Theorem~\ref{alg:thm:contraction} gives finiteness and freeness over the vertex rings. Apply Theorem~\ref{alg:thm:cancel} to $F_2F_1^{-1}$. Its prescribed quotient map is $r_2r_1^{-1}$, proving the comparison. If $G$ is an integral right-equivalence between the original QPs,
compare a splitting of the source with a splitting of the target by
applying Theorem~\ref{alg:thm:cancel} to their composite with $G$.
The induced map carries the source order to the target order, and
therefore also identifies their reduced images.
\end{proof}

The right-equivalence may have poles on arrows, although its induced
map on the orders is integral. On the completed tensor algebras it gives
$\Theta_{21}\bigl(\rho_1(\TT)+J(S_1)\bigr) =\rho_2(\TT)+J(S_2)$.
Both sides are inverse images of the orders under the projections to the Jacobian algebras.

For three reductions, the maps $\beta_{ji}=r_jr_i^{-1}$ satisfy
\begin{equation}\label{alg:eq:reductioncocycle}
 \beta_{ii}=1,\qquad \beta_{ij}=\beta_{ji}^{-1},\qquad
 \beta_{kj}\beta_{ji}=\beta_{ki}.
\end{equation}
Two right-equivalences realizing the same $\beta_{ji}$ differ by one acting trivially on the Jacobian algebra. 
Reducing these order isomorphisms modulo $t$ gives compatible
identifications $L_i/tL_i\simeq \PP^{\mathrm{sp}}(S)$.

Removing trivial QP summands over $K$ in stages, in any grouping, is another splitting over $K$ of the original QP. Consequently successive reduction is equivalent to one-step reduction, with the comparisons \eqref{alg:eq:reductioncocycle}. 

Choose $N$ such that the $N$th power of the arrow ideal over $K$
vanishes in the Jacobian algebra. The images of integral source paths
of length less than $N$ span $L_i$ over $\OO$, so finitely many images
determine a basis of the order and its multiplication constants. If $\ell_1,\ldots,\ell_d$ is a basis of $\PP(S)$, the prescribed comparison $r_2r_1^{-1}$ sends $r_1(\ell_j)$ to $r_2(\ell_j)$. Thus the map on orders is determined by finitely many images, even
when the inducing right-equivalence is a formal series.

\paragraph{\em An integral quadratic pairing without a bimodule splitting.}
The tensor product in Example~\ref{run:ex:path} also gives a quadratic
pairing that splits over $K$ but not over the vertex orders.
\begin{example}\label{alg:ex:norm}\label{alg:subsec:norm}
Return to the tensor order $\mathscr D$ of Example~\ref{run:ex:path},
and put $H_2^{\wedge}=\OO[z]/(z^2-t)$.
The diagonal quotient and its Frobenius adjoint are
\begin{equation}\label{alg:eq:norm}
 q:\mathscr D\to H_2^{\wedge},\quad f(x,y)\mapsto f(z,z),
 \qquad
 j:H_2^{\wedge}\to \mathscr D,\quad h\mapsto(x+y)h.
\end{equation}
Here the source of $j$ has the diagonal action; the formula is well defined because $(x-y)(x+y)=0$. The image of $j$ is a nonzero proper ideal of the local ring $\mathscr D$. It cannot be a direct summand of the regular $\mathscr D$-module, since a summand ideal is generated by an idempotent. The quotient $q$ also has no bimodule section: its kernel is a nonzero
proper ideal of $\mathscr D$. Over $K$, it has the section
\[ s(1)=\tfrac12(1+xy/t),\qquad s(z)=\tfrac12(x+y). \]
Its pole is consistent with the nonsplitting over $\OO$.

These maps arise by premutating at vertex $2$ in
\[ \vcenter{\hbox{$\displaystyle \xymatrix@C=32pt@R=18pt{&2\ar[dr]^b&\\1\ar[ur]^a&&3\ar[ll]_c}$}} \qquad d=(2,1,2). \]
The arrows incident with $2$ are the standard bimodules,
and the arrow from $3$ to $1$ is the diagonal bimodule over the two
quadratic fields. The new composite is $\mathscr D$; the old opposite arrow is diagonal. In the coefficient-extraction Frobenius normalization, the quadratic part is
$xpc+pyc$,
with $p$ the full tensor composite and $c$ the diagonal opposite arrow, and its two derivative maps are \eqref{alg:eq:norm}. For $\alpha,\beta\in\kk$ with $\alpha\ne0$, the element $\theta=\alpha+\beta z$ is a unit. Multiplying the cubic coefficient by $\theta$ multiplies these maps by a unit, so neither map acquires an integral bimodule splitting. These cubic potentials span the cubic potential space over $K$; higher source terms do not change this quadratic part. After splitting the vertex fields, the source consists of two ordinary triangles sharing the scalar vertex. Nonzero triangle coefficients give a finite Jacobian algebra: the six paths of length two within the triangles vanish, the two crossing paths survive, and every path of length three vanishes. This realizes \eqref{alg:eq:norm} inside a Jacobi-finite QP.

\end{example}

\subsection{Premutation and the exchange matrix of the species}\label{alg:sec:mutation-invariants}\label{alg:sec:premutation}\label{alg:sec:families}
\label{alg:subsec:outside-invariant}\label{alg:subsec:reduced-mutation}
\label{alg:subsec:premutation-covariance}\label{alg:subsec:matrix-mutation}

Let $(A,S)$ be reduced. At a vertex $k$ with no incident oriented
two-cycle put 
\[ e=1-e_k,\qquad R_e=eRe,\qquad M=eAe_k,\qquad N=e_kAe,\qquad A^\circ=eAe. \]
Thus $M$ consists of incoming arrows, $N$ of outgoing arrows, and $A^\circ$ of arrows not incident with $k$. The composite module is $C=M\otimes_{K_k}N$. Put $U=M^\vee$ and $V=N^\vee$ for the reversed modules; the tensor--dual adjunction identifies $C^\vee$ with $V\otimes_{K_k}U$.
 The premutated arrow
bimodule is
$\widetilde A=A^\circ\oplus C\oplus U\oplus V$.
Replacing each passage through $k$ by its composite identifies
$e\CT{R}{A}e$ with $\CT{R_e}{A^\circ\oplus C}$; their arrow filtrations
are equivalent. Rotate cyclic representatives to a vertex other than
$k$, and write $w=[S]$ for the potential after this substitution. Put
$\widetilde S=w+\Delta_k$,
where $\Delta_k$ is the coevaluation pairing of composites and reverse
return paths. After splitting the vertex fields this is
$\sum_{a,b}[ab]b^*a^*$.
The separable tensor duality descends this expression to $K$.
Define $\mu_k(A,S)$ to be a reduced part of
$(\widetilde A,\widetilde S)$.

\begin{proposition}\label{alg:prop:generic-premutation-covariance}\label{alg:prop:double-premutation}
Premutation over $R$ respects right-equivalence. Reduced mutation
is an involution whenever the local no-two-cycle condition holds.
\end{proposition}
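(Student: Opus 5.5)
The plan is to follow the DWZ argument \cite[Theorems 5.2 and 5.7]{DWZ1} in this species setting. Linear algebra is done over the semisimple enveloping algebra $R\otimes_KR^{\mathrm{op}}$. Any identity that is easier to check in coordinates will be checked after a finite Galois splitting $E/K$ and then descended, as in Section~\ref{alg:subsec:cyclic-calculus}.

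\emph{Covariance.} Let $\varphi:(A,S)\to(A',S')$ be a right-equivalence between reduced QPs. The linear part $\varphi^{(1)}$ preserves the vertex idempotents, so it decomposes into isomorphisms $A^\circ\to A'^\circ$, $M\to M'$ and $N\to N'$. It therefore induces $C\simeq C'$ and, by duality, $U\simeq U'$ and $V\simeq V'$.

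To build the premutated map, first restrict $\varphi$ to $e\CT{R}{A}e\simeq\CT{R_e}{A^\circ\oplus C}$; this gives $\widetilde\varphi$ on the $A^\circ\oplus C$ part. On the reversed arrows, take the linear part to be the inverse-dual of $\varphi^{(1)}|_M$ and $\varphi^{(1)}|_N$.

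By Proposition~\ref{alg:prop:tensorautomorphism}, $\widetilde\varphi$ carries $w$ to $w'$, but it may not fix $\Delta_k$. I would first reduce to two cases, as DWZ do: $\varphi$ is either linear (a change of arrow bases) or unitriangular.
\begin{itemize}
\item In the linear case, $\Delta_k$ is the basis-independent coevaluation pairing. It is therefore preserved, because evaluation and coevaluation commute with bimodule isomorphisms.
\item In the unitriangular case, write $\varphi(a)=a+\sum\ldots$ for incoming and outgoing arrows. The change in $[ab]$ then produces extra terms in $\widetilde\varphi(\Delta_k)-\Delta_k$ that involve $b^*a^*$ against higher paths. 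I would cancel them by a unitriangular substitution of the reversed arrows $U,V$, defined by the DWZ formula written through evaluation maps.
\end{itemize}
This yields a right-equivalence $(\widetilde A,\widetilde S)\to(\widetilde A',\widetilde S')$. Theorem~\ref{alg:thm:generic-splitting} together with uniqueness of the reduced part (Theorem~\ref{alg:thm:cancel}, or its non-Jacobi-finite existence version) then passes this to the reduced parts.

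\emph{Involution.} Apply premutation twice. The double premutation has arrows
\[ A^\circ\oplus C\oplus C^\vee{}^{\!\vee}\text{-type terms}\oplus M\oplus N\oplus C', \]
where $U^\vee\simeq M$ and $V^\vee\simeq N$ by canonical double duality. Here $C'=V\otimes U\simeq C^\vee$, and the new composites of $U$ and $V$ through $k$ are identified with $C^\vee$.

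The potential is $[w]+\Delta_k(C,U,V)+\Delta_k(C^\vee,M,N)$. Its quadratic part pairs $C$ with $C^\vee$ perfectly, since $[w]$ contains $C$ linearly through the cubic terms of $S$.

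Following DWZ, I would apply the explicit substitution $c\mapsto c+\mathrm{(composite\ of\ }M,N)$ to cancel the pair $C\oplus C^\vee$. This leaves $A^\circ\oplus M\oplus N$ with potential $S$ plus a trivial part, up to right-equivalence. The one new ingredient over a split $R$ is that these substitutions must be $R$-bimodule maps. They are built from canonical evaluation and coevaluation maps, so they are, and averaging over $\Gal(E/K)$ handles any remaining choices.

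Starting instead from $\mu_k(A,S)$, which is a reduced part and not the premutation itself, requires the fact that premutation commutes with adding trivial summands away from $k$. Such a summand contributes unchanged trivial summands, and removing it is covered by the cocycle remark after Proposition~\ref{alg:thm:nonsplit}. The no-two-cycle hypothesis is needed so that $k$ is still a legal mutation vertex after the first step.

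The main obstacle I expect is the unitriangular covariance step. The DWZ correction of the reversed arrows must be written intrinsically so that it is $R$-linear and descends, and the resulting infinite product must converge. I would try defining the correction through the cyclic chain rule and then checking convergence with the depth estimate \eqref{alg:eq:unitriangular-estimate}.
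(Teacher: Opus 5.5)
Your overall route, the DWZ argument over a finite splitting field followed by descent, is the paper's. However, the involution step as you state it does not go through, and the formula that makes covariance work is missing.

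\emph{Involution.} The single translation $c\mapsto c+(\text{path in }MN)$ does not leave $S$ plus a trivial QP, and your reason for the perfect $C$--$C^\vee$ pairing is misplaced. That pairing $\Gamma_{C,C^\vee}$ comes from bracketing the first $\Delta_k$, since the path $VU$ becomes the new composite $C^\vee$. The $C$-linear terms of $[w]$ instead pair $C$ with $A^\circ$, and they are exactly what survives the translation.

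The double premutation is $w(C,A^\circ)+\Gamma_{C,C^\vee}+\Gamma_{MN,C^\vee}$. First change the sign of one restored incident family, so that translating $C$ by $MN$ cancels $\Gamma_{MN,C^\vee}$ and returns $S$, rather than $S$ with its passages through $k$ negated. After the translation you get $S+\Gamma_{C,C^\vee}+R_C$, where every term of $R_C$ contains $C$ and none contains $C^\vee$. A single cubic term $abc$ of $S$ already puts $[ab]c$ into $R_C$.

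A second substitution is therefore required. Cut each term of $R_C$ at one $C$-occurrence to write $R_C\sim_{\cyc}\Gamma_{C,h}$. Then replace each arrow of $C^\vee$ by itself minus the corresponding component of $h$. This is legitimate because $C^\vee$ now occurs only in $\Gamma_{C,C^\vee}$. Only after this step is the result $(A,S)\oplus(C\oplus C^\vee,\Gamma_{C,C^\vee})$.

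When you pass to reduced parts, use the general uniqueness of reduced parts from \cite{DWZ1,Nguefack}. Theorem~\ref{alg:thm:cancel} and the cocycle remark both assume Jacobi-finiteness, which this statement does not.

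\emph{Covariance.} The correction of the reversed arrows that you leave as the ``main obstacle'' is given by a closed formula. It handles an arbitrary right-equivalence $\phi$ at once, so your linear/unitriangular split and your convergence estimate become unnecessary.

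Over the splitting field, factor out the last incident arrow of $\phi(a)$ and the first incident arrow of $\phi(b)$. This gives $\phi(a)=C_\phi a$ and $\phi(b)=bD_\phi$, where $C_\phi$ and $D_\phi$ are matrices over the outside algebra (which contains the composites) with invertible constant terms. In the algebra containing both $a,b$ and $a^*,b^*$, extend $\phi$ by $a^*\mapsto a^*C_\phi^{-1}$ and $b^*\mapsto D_\phi^{-1}b^*$.

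This extension fixes $a^*a$ and $bb^*$, and hence fixes $\Delta_k\sim_{\cyc}(bb^*)(a^*a)$ exactly. It and its inverse preserve the subalgebra generated by the outside arrows, the composites and the reversed arrows. It therefore restricts to a right-equivalence of premutations. The formula is intrinsic in tensors and duals, so it descends to $R$. Nothing needs to converge beyond inverting $C_\phi$ and $D_\phi$.
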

\begin{proof}
Work over a splitting field. Enumerate the incoming arrows by $a_i$
and the outgoing arrows by $b_j$. Factoring the last or first incident
arrow in the image under an augmented automorphism $\phi$ gives
$\phi(a_i)=\sum_j(C_\phi)_{ij}a_j$ and
$\phi(b_i)=\sum_jb_j(D_\phi)_{ji}$, where $C_\phi,D_\phi$ have entries in the
outside path algebra and invertible constant terms. In matrix notation,
these identities are $\phi(a)=C_\phi a$ and $\phi(b)=bD_\phi$. In the algebra with both reverse families
adjoined, extend $\phi$ by
$a^*\longmapsto a^*C_\phi^{-1},\ b^*\longmapsto D_\phi^{-1}b^*$.
The extension fixes the contractions $a^*a$ and $bb^*$. It preserves the
subalgebra generated by the outside arrows, the composites $ab$, and the
reversed arrows, as does its inverse. Its restriction carries the embedded
premutation potential $S+(bb^*)(a^*a)$ to the corresponding potential for
$\phi(S)$. These formulas use tensor products and duality and commute with Galois descent.
Cancellation proves independence of the reduced part.

For double premutation, write $\Gamma_{U,V}$ for the cyclic evaluation
term of the indicated dual arrow families, and put
$w(C,A^\circ)=[S]$. After identifying double
duals, the potential is cyclically equivalent to
\[ w(C,A^\circ)+\Gamma_{C,C^\vee}+\Gamma_{MN,C^\vee}. \]
Change the sign of one restored incident family and translate $C$ by $MN$.
The result is $S+\Gamma_{C,C^\vee}+R_C$, where every term of $R_C$ contains $C$
and none contains $C^\vee$. Cutting at a $C$-occurrence writes
$R_C\sim_{\cyc}\Gamma_{C,h}$. Subtract the corresponding components of $h$ from the arrows in
$C^\vee$. This removes $R_C$ and leaves the original QP together with
a trivial quadratic pair.
This proves involutivity over the vertex algebra
\cite[Sections 5--6]{DWZ1}.
\end{proof}

An $R$-bimodule $A$ is two-acyclic if
$e_iAe_i=0$ and, for distinct vertices $i,j$, at most one of
$e_iAe_j,e_jAe_i$ is nonzero. An $R$-QP is nondegenerate
if every successive reduced mutation is two-acyclic as an
$R$-bimodule. The vertices here are the field factors of $R$. Nondegeneracy is a
condition on a specified potential, whereas goodness in
Definition~\ref{gm:def:good} concerns the underlying modulation;
Theorem~\ref{gm:thm:realization} relates them.

\begin{proposition}\label{prop:intrinsic-matrix}
For a two-acyclic $R$-bimodule $A$, we put
\[ b_{ij}=\frac{\dim_K e_iAe_j-\dim_K e_jAe_i}{d_j}. \]
Then $B\diag(d_i)$ is skew-symmetric. Let
$B'=\mu_kB$ be the matrix defined by \eqref{alg:eq:FZmutation}.
If the reduced QP mutation at $k$ is two-acyclic over $R$, its
exchange matrix is $B'$. Thus a nondegenerate $R$-QP realizes the whole
matrix mutation pattern, without a separate choice of unfoldings.
\end{proposition}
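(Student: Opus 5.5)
First I check skew-symmetrizability of $B\diag(d_i)$. The $(i,j)$ entry is $b_{ij}d_j=\dim_K e_iAe_j-\dim_K e_jAe_i$, which is visibly antisymmetric in $i,j$. Two-acyclicity makes at most one of the two terms nonzero, so the sign of $b_{ij}$ records the direction of the arrows. The integrality of $b_{ij}$ I would get from freeness: $e_iAe_j$ is a right $K_j$-vector space, so $d_j$ divides $\dim_K e_iAe_j$.

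For the mutation rule, I would compute dimensions in the premutation $\widetilde A=A^\circ\oplus C\oplus U\oplus V$ and then subtract what reduction removes.

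\emph{Arrows incident with $k$.} Duality gives $\dim_K e_kUe_i=\dim_K e_iMe_k$, and similarly for $V$. Reduction only cancels arrows between vertices different from $k$, because the potential is reduced and $\Delta_k$ is cubic. Hence $b'_{ik}=-b_{ik}$ and $b'_{kj}=-b_{kj}$.

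\emph{Arrows between $i,j\ne k$.} The composite $e_iCe_j=e_iMe_k\otimes_{K_k}e_kNe_j$ has $K$-dimension $\dim_K(e_iAe_k)\dim_K(e_kAe_j)/d_k$. The quadratic part of $\widetilde S$ pairs $e_iCe_j$ against $e_jA^\circ e_i$. The trivial summand $A_{\mathrm{triv}}$ is spanned by the derivatives of this quadratic part (Theorem~\ref{alg:thm:generic-splitting}). Under the duality $e_iA_{\mathrm{triv}}e_j\simeq(e_jA_{\mathrm{triv}}e_i)^\vee$ it contributes equal $K$-dimensions $c_{ij}=c_{ji}$ in the two opposite directions. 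Therefore the difference
\[
\dim_K e_iA'e_j-\dim_K e_jA'e_i
\]
equals the same difference computed in $\widetilde A$, whatever the rank of the quadratic pairing is.

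\emph{The difference in $\widetilde A$.} It is
\[
\dim_K e_iAe_j-\dim_K e_jAe_i+\frac{\dim_K e_iAe_k\,\dim_K e_kAe_j-\dim_K e_jAe_k\,\dim_K e_kAe_i}{d_k}.
\]
Now substitute $\dim_K e_iAe_k=d_k[b_{ik}]_+$ and $\dim_K e_kAe_j=d_j[b_{kj}]_+$. For the opposite direction, use $\dim_K e_kAe_i=d_i[-b_{ki}]_+$; skew-symmetrizability gives $d_i b_{ki}=-d_k b_{ik}$, so this equals $d_k[-b_{ik}]_+$. Also $\dim_K e_jAe_k=d_k[-b_{jk}]_+=d_j[-b_{kj}]_+$. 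Dividing the whole expression by $d_j$ yields $b_{ij}+[b_{ik}]_+[b_{kj}]_+-[-b_{ik}]_+[-b_{kj}]_+$, which is \eqref{alg:eq:FZmutation}.

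\emph{Reading off $B'$.} Since the reduced mutation is assumed two-acyclic, the exchange matrix of $\mu_k(A,S)$ is defined by the same formula and equals this difference divided by $d_j$. Thus it is $B'$. Independence of the choice of reduced part follows from Proposition~\ref{alg:prop:generic-premutation-covariance}. The final assertion follows by induction along a mutation sequence, using nondegeneracy at each step.

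\emph{Main obstacle.} The delicate point is the claim that reduction removes equal $K$-dimensions in the two directions. I would verify it over a splitting field $E$. There the quadratic part becomes a bilinear pairing between $E$-spaces over the split vertices, and the paired subspaces have equal dimension. Summing over the sheets above $i$ and $j$ then gives the equality over $K$.
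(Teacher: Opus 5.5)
Your proposal is correct and is essentially the paper's argument. Signed $K$-dimensions in the premutation obey the matrix mutation rule: reversal negates the incident entries, and the composites contribute $[b_{ik}]_+[b_{kj}]_+$ and $[-b_{ik}]_+[-b_{kj}]_+$. The trivial summand consists of mutually dual pieces of equal $K$-dimension, so removing it preserves signed dimensions, and two-acyclicity of the reduced result then identifies its matrix with $B'$.

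There is one harmless slip: the intermediate expressions $d_i[-b_{ki}]_+$ and $d_k[-b_{jk}]_+$ should read $d_i[b_{ki}]_+$ and $d_k[b_{jk}]_+$. Your final values $d_k[-b_{ik}]_+$ and $d_j[-b_{kj}]_+$ are right, and they follow directly from the definitions of $b_{ik}$ and $b_{kj}$.
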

\begin{proof}
Both dimensions in the numerator are divisible by $d_j$, using the
right action in the first summand and the left action in the second.
Reversal replaces each incident module by its dual and changes the
sign of the corresponding matrix entry. Away from $k$, the new
positive composite is
$e_iAe_k\otimes_{K_k}e_kAe_j$. Its right $K_j$-rank is
$[b_{ik}]_+[b_{kj}]_+$. The opposite composite contributes
$[-b_{ik}]_+[-b_{kj}]_+$ to the negative part. Consequently the
premutation signed dimensions satisfy the matrix mutation formula.
A trivial quadratic summand consists of two mutually dual arrow
modules. Their $K$-dimensions agree, so removing them leaves every
signed dimension unchanged. Two-acyclicity of the reduced result
then identifies its arrow ranks with the positive and negative
parts of the mutated matrix. Iterate.
\end{proof}

\begin{example}[Mutation depends on the simple arrow types]\label{run:ex:path-mutation}
Equip the quiver in Example~\ref{run:ex:path} with the zero potential.
Mutation at the scalar vertex $2$ makes its length-two bimodule into
an arrow bimodule, and the matrix becomes
\[
B_1=\mu_2B_0=
 \begin{pmatrix}0&-2&2\\1&0&-1\\-2&2&0\end{pmatrix}.
\]
The resulting simple arrow summands are
\[
\vcenter{\hbox{$\displaystyle \xymatrix@C=32pt@R=24pt{
 &2\ar[dl]_{a^*}&\\
 1\ar@/^/[rr]^{L_{\mathrm{id}}}\ar@/_/[rr]_{L_\tau}&&3\ar[ul]_{b^*}
}$}}.
\]
The bimodule on $1\to3$ is
$L_{\mathrm{id}}\oplus L_\tau$, whereas the standard GLS assignment
to $B_1$ uses $L_{\mathrm{id}}^{\oplus2}$. After splitting, the
target potential is the canonical cubic
\[ \widetilde S=\sum_{\alpha,\beta\in\{+,-\}} x_{\alpha\beta}b_\beta^*a_\alpha^*. \]
The actual composite has type vector $(1,1)$. Replacing it by the
standard GLS choice gives $(2,0)$ without changing $B_1$. On a second
mutation at $2$, the new opposite composite still has type vector
$(1,1)$, so the reassigned modulation leaves $(1,-1)$ after cancellation. Since a quadratic pair can
cancel only matching types, both original orientations survive for every
potential on the reassigned modulation. In contrast, double
mutation returns the original QP up to right-equivalence.
\end{example}

The distinction from geometric nondegeneracy is real. For example,
let all three vertex fields be $L=K(s)$, $s^2=t$, and take
arrows $a,b,c$ on
\[ \vcenter{\hbox{$\displaystyle \xymatrix@C=32pt@R=18pt{&2\ar[dr]^b&\\1\ar[ur]^a&&3\ar[ll]_c}$}}. \]
The first two arrows are diagonal and $s_3c=-cs_1$.
The potential $\frac12(abc)^2$ splits into the
ordinary potential of one oriented six-cycle. Its geometric
Jacobian algebra is $\overline KQ/\mathfrak m^5$, of dimension $30$,
and is rigid. Mutating the two sheets above vertex $3$ leaves a
diagonal $1\to2$ module and an antidiagonal $2\to1$ module.
There is no quadratic term to cancel either. In fact
$[s_1,s_1^{-1}abc]=2abc$, so a one-turn cycle has zero cyclic
class. Every potential on this modulation has arrow order at least
six; after bracketing at $3$ its order is at least four. Thus no
choice of coefficients on this modulation can supply the missing
quadratic cancellation. It admits no nondegenerate potential over
$R$. Thus geometric nondegeneracy and Jacobi-finiteness do not imply
nondegeneracy over $R$. The twisted arrow is essential to this example.

Section~\ref{sec:good-realization} uses goodness to choose a single
potential for which the preceding mutation construction is defined
throughout the mutation tree.

\subsection{Finiteness and deformation spaces}
Following DWZ, set $\overline e_k=1-e_k$ and write
$M_{\widehat k,\widehat k}=\overline e_kM\overline e_k$ for an
$R$-bimodule $M$. For an algebra $P$, the space
$P_{\widehat k,\widehat k}$ is a subalgebra with identity $\overline e_k$.
It contains paths whose two ends differ from $k$, including paths that
pass through $k$. The following result compares these subalgebras before
and after mutation.

\begin{proposition}\label{alg:prop:jacobian-subalgebra}
Let $(A,S)$ be reduced and let $k$ have no incident oriented two-cycle.
If $(A',S')=\mu_k(A,S)$, there is an isomorphism
\begin{equation}\label{alg:eq:outside-invariant}
 \mathcal P(A,S)_{\widehat k,\widehat k}
 \simeq\mathcal P(A',S')_{\widehat k,\widehat k}
\end{equation}
fixing $R_{\widehat k,\widehat k}$. The same assertion holds before
removing the trivial quadratic part.
\end{proposition}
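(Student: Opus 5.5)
The plan follows DWZ, Proposition 6.1, over the vertex algebra $R$: prove the statement first for the premutation $(\widetilde A,\widetilde S)$, then pass to the reduced part.

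\textbf{Step 1: reduction.} By Theorem~\ref{alg:thm:generic-splitting}, projection to the reduced tensor algebra induces an isomorphism $\mathcal P(\widetilde A,\widetilde S)\simeq\mathcal P(A',S')$ fixing $R$. Cutting by $\overline e_k$ on both sides commutes with it. So it suffices to prove the unreduced assertion, which is also the second sentence of the statement.

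\textbf{Step 2: identify both corners with a common quotient.} Put $T=\CT{R}{A}$. Replacing each passage through $k$ by its composite identifies $\overline e_kT\overline e_k$ with $T_1=\CT{R_e}{A^\circ\oplus C}$. Under this identification $S$ becomes $w$, and $\overline e_kJ(S)\overline e_k$ becomes the closed ideal $J_1$ of $T_1$ generated by three families:
\begin{itemize}
\item the derivatives $\partial_{A^\circ}w$;
\item the elements $\partial_C w$ written out as $C^\vee$-components;
\item the sandwiched elements $x\,\partial_a S$ and $\partial_b S\,y$, with $a\in M$, $b\in N$, and $x,y$ incident arrows at $k$.
\end{itemize}
Then
\[
\mathcal P(A,S)_{\widehat k,\widehat k}=T_1/J_1 .
\]

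On the other side, $\overline e_k\CT{R}{\widetilde A}\overline e_k$ is the completed tensor algebra over $R_e$ on
\[
A^\circ\oplus C\oplus(V\otimes_{K_k}U),
\]
where the new composites of reversed arrows are $b^*a^*$. The potential $\widetilde S=w+\Delta_k$ involves the $C$-arrows only through $w$ and through $\Delta_k$. Hence the derivatives in the $C$-directions give
\[
\partial_{[ab]}\widetilde S=\partial_{[ab]}w+b^*a^*,
\]
which says that $b^*a^*\equiv-\partial_{[ab]}w$. Eliminating these composites is a substitution, by the argument of Theorem~\ref{alg:thm:cancel}, and leaves $T_1$. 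The remaining relations are the $A^\circ$-derivatives of $w$, which match the first family, and the sandwiched derivatives $\partial_{a^*}\widetilde S$, $\partial_{b^*}\widetilde S$ multiplied by reversed arrows.

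\textbf{Step 3: compare the sandwiched relations.} Over a splitting field, $\partial_{a^*}\widetilde S=\sum_b [ab]b^*$. Multiplying by $a'^*$ on the right and substituting $b^*a'^*=-\partial_{[a'b]}w$ gives $-\sum_b[ab]\,\partial_{[a'b]}w$. By the chain rule for the composite substitution, this equals $-a\,\partial_{a'}S$ read in $T_1$. The relations obtained from the $b^*$-derivatives correspond in the same way to the elements $\partial_bS\,y$. Both sides of \eqref{alg:eq:outside-invariant} are therefore $T_1$ modulo the same closed ideal, and the identification fixes $R_{\widehat k,\widehat k}$. Closures agree because both ideals are generated by finitely many elements; Lemma~\ref{alg:lem:closed-image} handles the convergence.

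\textbf{Step 4: descent.} All maps are given by tensor and duality constructions: composites, the coevaluation term $\Delta_k$, and the chain rule. They are Galois-equivariant, so the isomorphism descends from the splitting field to $R$, as in Proposition~\ref{alg:prop:generic-premutation-covariance}.

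The main obstacle is Step 3: showing that the two-sided closed ideal generated by the sandwiched elements equals the one generated from the $a^*$- and $b^*$-derivatives, including the index bookkeeping in the bimodule setting and the signs fixed by the Frobenius normalization of $\Delta_k$. I would do this computation entirely in split coordinates, matrix by matrix over the incoming and outgoing families, and only descend at the end.
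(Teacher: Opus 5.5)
\textbf{Gap in Step 2.} Your route is the paper's: the composite identification of $\overline e_kT\overline e_k$, elimination of $b^*a^*$ by the composite derivatives, matching of sandwiched reverse-arrow derivatives with $a'\partial_aS$ and $(\partial_bS)b'$, Galois descent, and finally Theorem~\ref{alg:thm:generic-splitting}. However, Step~2 contains a false claim.

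Your second family consists of the composite derivatives $G_{ab}=\partial_{[ab]}w$, equivalently the second cyclic derivatives $\partial_{[ab]}S$. These are not contained in $\overline e_kJ(S)\overline e_k$. The original tensor algebra has no composite arrows to differentiate. The $G_{ab}$ enter $J(S)$ only through $\partial_aS=\sum_b b\,G_{ab}$ and $\partial_bS=\sum_a G_{ab}\,a$. In the corner they therefore appear only through $\sum_b[a'b]G_{ab}$ and $\sum_aG_{ab}[ab']$, which is your third family. If you add the second family, the third becomes redundant and your $J_1$ is the full Jacobian ideal of $(A^\circ\oplus C,w)$ over $R_e$, which is too large.

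Concretely, take $a\colon 1\to k$, $b\colon k\to 2$, $c\colon 2\to 1$ and $S=abc$. Then $G_{ab}=c$. The relations $ab,bc,ca$ are monomial and do not kill $c$, so $\overline e_k\mathcal P(A,S)\overline e_k$ has basis $e_1,e_2,c$. Your $T_1/J_1$ would instead be $R_e$ alone.

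\textbf{Consequence for Step 3.} The extra family has no counterpart on the premutated side. There the composite derivatives $G_{ab}+b^*a^*$ are exactly what the substitution uses up, and they impose $G_{ab}=-b^*a^*$, not $G_{ab}=0$. In the example, $b^*a^*=-c\neq0$. So the conclusion of Step~3, that both corners are $T_1$ modulo the same closed ideal, does not follow from what you matched.

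\textbf{The fix.} Delete the second family. The ideal $\overline e_kJ(S)\overline e_k$ is the closure of the ideal of $T_1$ generated by $\partial_fw$ for $f\in A^\circ$ and by the sandwiched elements $x\,\partial_aS$, $(\partial_bS)\,y$. This holds because any product of an incident derivative with paths whose ends lie outside $k$ factors through an adjacent incident arrow. With this description, your Step~3 computation identifies the two closed ideals exactly as in the paper. The inclusion and the substitution are then mutually inverse continuous maps on the quotients.

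The elimination itself needs no appeal to Theorem~\ref{alg:thm:cancel}. It is simply the substitution $b^*a^*\mapsto -G_{ab}$, which is continuous because $G_{ab}$ has positive arrow degree in $T_1$.
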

\begin{proof}
Work first over a splitting field. Write incoming arrows as
$a:i\to k$, outgoing arrows as $b:k\to j$, and set
$c_{ab}=[ab]$. Let $w=[S]$ and $G_{ab}=\partial_{c_{ab}}w$.
The premutation potential is
$\widetilde S=w+\sum_{a,b}c_{ab}b^*a^*$.
Replacing each passage $ab$ through $k$ by $c_{ab}$ identifies
$\overline e_kT_K\overline e_k$ with the completed tensor algebra on
the arrows not incident with $k$ and the $c_{ab}$. In the premutated
algebra, paths beginning and ending away from $k$ also include the products $b^*a^*$.
The derivatives with respect to the composite arrows impose
\begin{equation}\label{alg:eq:return-elimination}
 b^*a^*=-G_{ab}.
\end{equation}
We will show that inclusion of the algebra on the old paths and the
substitution \eqref{alg:eq:return-elimination} induce inverse maps on
the two subalgebras of the Jacobian algebras.

The ideal $\overline e_kJ(S)\overline e_k$ is the closure of the
ideal generated by derivatives in arrows not incident with $k$ and by
all expressions
$a'\partial_aS$ and $(\partial_bS)b'$. Indeed, a product of an incident derivative with paths on either side,
whose initial and terminal vertices both differ from $k$, has an adjacent incoming or
outgoing arrow of this form. Replacing each passage through $k$ by its
composite gives
\[ \partial_f w,\qquad \sum_b c_{a'b}G_{ab},\qquad \sum_a G_{ab}c_{ab'}, \]
for all eligible indices. We likewise multiply reverse-arrow derivatives
of $\widetilde S$ by paths on either side so that the resulting products
have initial and terminal vertices different from $k$, and take the closure of their
span. After \eqref{alg:eq:return-elimination}, these products give the
same expressions, after renaming the summation indices. The composite derivatives
are killed by the substitution, and derivatives in the unchanged arrows agree. Hence the
two maps carry the complete closed ideals onto one another and are inverse
on the quotients. Projection away from a trivial quadratic pair preserves
the full Jacobian algebra by Theorem~\ref{alg:thm:generic-splitting}.

The maps use only tensor composition, duality and cyclic derivatives, so
they are compatible with the semilinear Galois action. They therefore
descend to $R$. This proves the species version of
\cite[Proposition 6.1]{DWZ1}.
\end{proof}

\begin{lemma}\label{alg:lem:subalgebra-finiteness}
Let $T=R\langle\!\langle A\rangle\!\rangle$, let $J\subseteq\m$
be a closed ideal, and suppose $e_kAe_k=0$. The quotient $T/J$ is
finite-dimensional if and only if $\overline e_k(T/J)\overline e_k$ is
finite-dimensional.
\end{lemma}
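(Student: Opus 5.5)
One direction is immediate: $\overline e_k(T/J)\overline e_k$ is a subspace of $T/J$, so finiteness of $T/J$ gives finiteness of the corner. The real content is the converse, and the plan is to reduce it to nilpotence of the arrow ideal through Lemma~\ref{alg:lem:toplayer}.

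Assume $\overline e_k(T/J)\overline e_k$ is finite-dimensional and write $P=T/J$. I would decompose
\[ P=\overline e_kP\overline e_k\oplus e_kP\overline e_k\oplus\overline e_kPe_k\oplus e_kPe_k \]
and bound each of the other three pieces by the corner. Since $e_kAe_k=0$, every path of positive length starting at $k$ begins with an outgoing arrow $b\in N=e_kA\overline e_k$. Hence $e_kP\overline e_k$ is spanned by images of $N\cdot\overline e_kT\overline e_k$ plus degree-zero terms, and equals $e_kR\overline e_k=0$ in degree zero. This gives a surjection
\[ N\otimes_{R_e}\overline e_kP\overline e_k\twoheadrightarrow e_kP\overline e_k, \]
induced by multiplication. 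To check the surjection, note that multiplication $T\to P$ is continuous and the image of a convergent series $\sum_b b\,x_b$ is $\sum_b b\,\overline{x_b}$, with finitely many $b$ in a basis of $N$ over $K$. The same argument with incoming arrows $M=\overline e_kAe_k$ handles $\overline e_kPe_k$. For $e_kPe_k$, any positive-length path from $k$ to $k$ has the form $b\,p\,a$ with $p$ a path between vertices other than $k$. This gives a surjection
\[ K_k\oplus N\otimes\overline e_kP\overline e_k\otimes M\twoheadrightarrow e_kPe_k. \]
Since $A$ is finite-dimensional, all three pieces are finite-dimensional, and so is $P$.

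The one subtle point is the surjectivity claim in the completed setting. An element of $e_kT\overline e_k$ is an infinite series, and I must justify writing it as $\sum_b b\,x_b$ with $x_b\in\overline e_kT\overline e_k$. This works because the completed tensor algebra is a product over degrees. Each degree-$r$ component of $e_kT\overline e_k$ with $r\ge1$ is $N\otimes(\overline e_kA^{\otimes(r-1)}\overline e_k)$, using $e_kAe_k=0$ so that no two consecutive factors sit at $k$. Since $N$ is finite-dimensional, tensoring with it commutes with the product over degrees. Over a nonsplit $R$ I would carry out the same argument with the bimodule tensor products over $K_k$ and $R_e$ directly, or after finite Galois splitting. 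In either case the decomposition is by idempotents and is unaffected, so no descent issue arises.
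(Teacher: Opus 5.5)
Your proposal is correct and follows essentially the paper's argument: splitting off the first outgoing and last incoming arrow at $k$ bounds $e_kP\overline e_k$, $\overline e_kPe_k$ and $e_kPe_k$ by finite sums built from the corner $\overline e_kP\overline e_k$, and formal series are handled by grouping them according to those incident arrows. Your degreewise identification $e_kT\overline e_k\simeq N\otimes_{R_e}\overline e_kT\overline e_k$ is a more explicit form of the paper's grouping, and working with bimodule tensors directly over $R$ replaces its splitting-and-descent step; the announced reduction to nilpotence via Lemma~\ref{alg:lem:toplayer} is never used and can be dropped.
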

\begin{proof}
One implication follows because $\overline e_k(T/J)\overline e_k$ is a subspace of $T/J$. Every positive path beginning at
$k$ starts with an arrow leaving $k$, and every positive path ending
there ends with an arrow entering $k$. After removing these first and last arrows,
the remaining path has both ends outside $k$. There are only finitely
many incident arrow basis elements after splitting the vertex fields.
Put $P=T/J$. The spaces $e_kP\overline e_k$ and
$\overline e_kPe_k$ are images of finite direct sums of
$\overline e_kP\overline e_k$. The space $e_kPe_k$ is spanned by
$K_k$ and the subspaces $b(\overline e_kP\overline e_k)a$, as $a,b$
range over the finitely many incident arrows. These factorizations
also apply to formal series, by grouping them according to their first
and last incident arrows. Finite scalar extension detects finite
dimensionality, proving the assertion over $R$.
\end{proof}

\begin{corollary}\label{alg:cor:mutation-finiteness}
Under the hypotheses of Proposition~\ref{alg:prop:jacobian-subalgebra}, let
$(\widetilde A,\widetilde S)$ be the premutation and
$(A',S')$ a reduced part. Then
\[ \begin{split} \dim_K\mathcal P(A,S)<\infty &\ \Longleftrightarrow\ \dim_K\mathcal P(\widetilde A,\widetilde S)<\infty\\ &\ \Longleftrightarrow\ \dim_K\mathcal P(A',S')<\infty. \end{split} \]
\end{corollary}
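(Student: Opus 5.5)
The plan is to combine the two preceding results. Proposition~\ref{alg:prop:jacobian-subalgebra} identifies the corner algebras $\overline e_k(\cdot)\overline e_k$ of all three Jacobian algebras. Lemma~\ref{alg:lem:subalgebra-finiteness} turns finite dimensionality of that corner into finite dimensionality of the whole algebra.

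For the first equivalence, apply Proposition~\ref{alg:prop:jacobian-subalgebra} in its form ``before removing the trivial quadratic part''. This gives $\mathcal P(A,S)_{\widehat k,\widehat k}\simeq\mathcal P(\widetilde A,\widetilde S)_{\widehat k,\widehat k}$. Both $A$ and $\widetilde A$ have $e_k(\cdot)e_k=0$. For $A$ this holds because $A$ is loop-free. For $\widetilde A=A^\circ\oplus C\oplus U\oplus V$ it holds because $A^\circ$ and $C$ live in $eAe$, $U=M^\vee$ lies in $e_k\widetilde Ae$, and $V=N^\vee$ lies in $e\widetilde Ae_k$.

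Lemma~\ref{alg:lem:subalgebra-finiteness} applies with $J=J(S)$ and $J=J(\widetilde S)$. These ideals lie in the arrow ideal since the potentials have no terms of degree $\le1$, so their derivatives have positive arrow degree. Each Jacobian algebra is then finite-dimensional if and only if its $\widehat k,\widehat k$ corner is. Since the two corners are isomorphic, $\dim_K\mathcal P(A,S)<\infty$ if and only if $\dim_K\mathcal P(\widetilde A,\widetilde S)<\infty$.

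For the second equivalence, use Theorem~\ref{alg:thm:generic-splitting}. The premutation is right-equivalent to $(A',S')\oplus(A_{\mathrm{triv}},S_{\mathrm{triv}})$, and projection to the reduced tensor algebra induces an isomorphism $\mathcal P(\widetilde A,\widetilde S)\simeq\mathcal P(A',S')$. Right-equivalence preserves Jacobian algebras by the chain rule, so the dimensions agree. Alternatively, one can invoke the main statement of Proposition~\ref{alg:prop:jacobian-subalgebra} for $(A',S')$ directly and again apply Lemma~\ref{alg:lem:subalgebra-finiteness}. This requires $e_kA'e_k=0$, which holds because $A'$ is a quotient of $\widetilde A$ by trivial pairs.

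The only point needing care is the existence assertion of the splitting theorem without assuming Jacobi-finiteness. The paper notes this holds, and it is needed for the direction where finiteness is deduced for $(A,S)$ from $(A',S')$. No other obstacle is expected; the corner-algebra identification carries all the content.
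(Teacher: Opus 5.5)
Your proposal is correct and follows the paper's own argument. The paper applies Lemma~\ref{alg:lem:subalgebra-finiteness} to the corner isomorphism \eqref{alg:eq:outside-invariant} before reduction, then notes that removing the trivial quadratic part leaves the Jacobian algebra unchanged (Theorem~\ref{alg:thm:generic-splitting}); your checks that $e_k\widetilde Ae_k=0$ and that the derivative ideals lie in the arrow ideal are exactly the details it leaves implicit.
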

\begin{proof}
Apply Lemma~\ref{alg:lem:subalgebra-finiteness} to the isomorphic
subalgebras in \eqref{alg:eq:outside-invariant} before reduction.
Removing the trivial quadratic part leaves the Jacobian algebra unchanged.
\end{proof}
The dimensions $\dim_K e_iPe_j$ with $i,j\ne k$ agree by
\eqref{alg:eq:outside-invariant}; the dimension of the whole Jacobian algebra can change.

\begin{lemma}\label{alg:lem:outside-trace}
For a loop-free QP with finite Jacobian algebra $\mathcal P=\mathcal P(A,S)$,
inclusion of $\mathcal P_{\widehat k,\widehat k}$ induces an isomorphism
\[ \Cyc(\mathcal P_{\widehat k,\widehat k})/ \Cyc(R_{\widehat k,\widehat k}) \xrightarrow{\sim}\Def(A,S). \]
\end{lemma}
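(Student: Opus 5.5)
We have to show two things about the map
\[ \Cyc(\mathcal P_{\widehat k,\widehat k})/\Cyc(R_{\widehat k,\widehat k})\to\Def(A,S)=\Cyc(\mathcal P)/\Cyc(R), \]
namely that it is surjective and that it is injective. We do both with the standard trace argument for a corner algebra, using the idempotents $e_k$ and $\overline e_k$.

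\textbf{Reduction to a split vertex algebra.} First we may extend scalars to a finite Galois splitting field. Cyclic quotients commute with this extension, since each truncation is finite-dimensional and $\mathcal P$ is finite. Finite Galois invariants are exact in characteristic zero, so bijectivity over $K$ follows from bijectivity after extension. Over the splitting field, $e_k$ becomes a sum of primitive idempotents. Our inputs are then the factorizations from Lemma~\ref{alg:lem:subalgebra-finiteness}: every path in $e_k\mathcal P e_k$ of positive length has the form $b\,x\,a$, with $b$ an outgoing arrow, $x\in\mathcal P_{\widehat k,\widehat k}$, and $a$ an incoming arrow. Loop-freeness gives $e_kAe_k=0$, so no arrow is a loop at $k$.

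\textbf{Surjectivity.} Since $\Cyc(\mathcal P)=\mathcal P/[\mathcal P,\mathcal P]$, every class is represented by an element of $\bigoplus_i e_i\mathcal P e_i$. Off-diagonal parts $e_iue_j$ with $i\neq j$ are commutators $[e_i,e_iue_j]$. The components at vertices other than $k$ already lie in $\mathcal P_{\widehat k,\widehat k}$. A component in $e_k\mathcal P e_k$ is a scalar in $K_k$, which dies modulo $\Cyc(R)$, plus a sum of terms $bxa$. Each such term satisfies
\[ bxa\sim_{\cyc}xab, \]
and $xab$ lies in $\mathcal P_{\widehat k,\widehat k}$. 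Finiteness of $\mathcal P$ makes all these sums finite.

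\textbf{Injectivity.} This is the main step. We build an inverse map. Let $\epsilon=\overline e_k$. Define
\[ \Psi:\mathcal P\to\Cyc(\mathcal P_{\widehat k,\widehat k})/\Cyc(R_{\widehat k,\widehat k}) \]
as follows: send $\epsilon u\epsilon$ to its own class; send off-diagonal blocks to zero; and send $e_kue_k$ to the class of $\sum_a a\,(\partial\text{-cut of }u)$. Concretely, write $e_kue_k=\lambda+\sum_{a,b}b\,x_{ab}\,a$ and set $\Psi(e_kue_k)=\sum x_{ab}ab$, with $\lambda$ discarded. A more intrinsic description is $\Psi(e_kue_k)=\sum_a\tilde a\,u\,a$, where $a$ runs over a basis of incoming arrows and $\tilde a$ over a dual basis of outgoing cuts. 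In other words, $\Psi$ uses a $K_k$-dual basis decomposition of $\rad(e_k\mathcal P)=\sum_b b\,\mathcal P$; this is a Morita-type trace.

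The key check is that $\Psi$ vanishes on commutators $[u,v]$. Splitting $u$ and $v$ into idempotent blocks, the only nontrivial cases are these:
\begin{enumerate}
\item $u\in\epsilon\mathcal P e_k$ and $v\in e_k\mathcal P\epsilon$. Here $\Psi(vu)=\Psi(uv)$ follows from writing $u=\sum x_a a$ and $v=\sum b y_b$, and rotating inside $\mathcal P_{\widehat k,\widehat k}$.
\item $u,v\in e_k\mathcal P e_k$. This case reduces to the previous one by writing $u$ and $v$ through incoming and outgoing arrows.
\end{enumerate}
The hardest point is that the decomposition $e_kue_k=\lambda+\sum b\,x_{ab}\,a$ is not unique, so well-definedness must be checked. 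The plan is to avoid the decomposition altogether. We use the fact that the multiplication map
\[ e_k\mathcal P\epsilon\otimes_{\mathcal P_{\widehat k,\widehat k}}\epsilon\mathcal P e_k\to\rad(e_k\mathcal P e_k) \]
is surjective. We then define $\Psi$ on the tensor product as $y\otimes x\mapsto[xy]$; this is balanced, hence well defined on the tensor product. The remaining task is to check that this map kills the kernel of multiplication. For that we use the standard Hattori--Stallings fact that $\Cyc$ of $\epsilon\mathcal P\epsilon$ and of $\mathcal P$ agree when $\mathcal P\epsilon\mathcal P$ contains $\rad$ at $k$, that is, when $e_k$ lies in $\mathcal P\epsilon\mathcal P+K_k$. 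The $K_k$-part contributes exactly the $\Cyc(R)$ correction, which matches the quotient on both sides.

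Once $\Psi$ is well defined, it is a left inverse of the inclusion, and injectivity follows. Finally, we descend the argument back to $K$ by averaging over the Galois group.
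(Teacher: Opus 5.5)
Your surjectivity step is fine. The injectivity step has a genuine gap, exactly where you flag it. Well-definedness of $\Psi$ amounts to showing that the kernel of the multiplication map $e_k\mathcal P\epsilon\otimes_{\mathcal P_{\widehat k,\widehat k}}\epsilon\mathcal P e_k\to e_k\mathcal P e_k$ dies under $y\otimes x\mapsto[xy]$. That is essentially the entire content of the lemma.

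The Hattori--Stallings/Morita comparison you invoke needs $\epsilon$ to be a \emph{full} idempotent, i.e.\ $e_k\in\mathcal P\epsilon\mathcal P$, so that one can write $e_k=\sum_i y_ix_i$ and run the usual trick. The condition $e_k\in\mathcal P\epsilon\mathcal P+K_k$ is vacuous. The intended condition $\rad(e_k\mathcal P e_k)\subseteq\mathcal P\epsilon\mathcal P$ holds for every loop-free quiver with arbitrary relations, and there the conclusion is false.

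For example, take arrows $a:1\to k$ and $b:k\to 1$ with the single relation $ba=0$, and put $\epsilon=e_1$. This is a finite-dimensional loop-free algebra with $\mathcal P/\mathcal P\epsilon\mathcal P\simeq K$. Here $\epsilon\mathcal P\epsilon$ is the commutative algebra spanned by $e_1$ and $ab$, so $\Cyc(\epsilon\mathcal P\epsilon)/\Cyc(R_{\widehat k,\widehat k})$ is spanned by the nonzero class of $ab$. But $ab=[a,b]$ in $\mathcal P$, so $\Cyc(\mathcal P)/\Cyc(R)=0$. Hence no argument that ignores the potential can prove injectivity.

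The missing idea is the structure of the Jacobian ideal. By Lemma~\ref{alg:lem:closed-context}, the cyclic image of $J(S)$ is spanned by first variations $h_a\partial_aS$. Since no arrow is a loop at $k$, each $\partial_aS$ has at least one end outside $k$. Each such term can therefore be rotated to that end, where it becomes the class of an element of $\overline e_kJ(S)\overline e_k$. A relation with both ends at $k$, like $ba$ above, is exactly what this rules out.

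The paper also avoids your non-uniqueness problem by working in the completed tensor algebra $T$ rather than in $\mathcal P$. In $T$, positive closed paths at $k$ factor uniquely through outside vertices. Rotation to an outside vertex is then well defined on $\Cyc(T)$, because two outside base points differ by a commutator of paths with outside ends. The only remaining check is that this rotation carries the cyclic image of $J(S)$ into that of $\overline e_kJ(S)\overline e_k$, which is the argument above.

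Your route can be repaired the same way. Lift an element of the multiplication kernel to an element of $e_kJ(S)e_k$ in $T$, write its cyclic class as a sum of first variations, and rotate those. At that point, however, it is the paper's argument.
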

\begin{proof}
Over a split vertex algebra, represent a cyclic class by closed paths.
Every positive closed path based at $k$ visits an outside vertex, since
there is no arrow loop at $k$. Rotate the path to such a vertex. Two
choices of this vertex differ by a commutator of paths whose ends are
outside $k$, so the resulting class in the cyclic quotient of the outside subalgebra is independent of the
choice.

This rotation also respects the Jacobian relations. By
Lemma~\ref{alg:lem:closed-context}, a cyclic relation is a sum of classes
$h_a\partial_aS$. At least one end of $\partial_aS$ is outside
$k$. Rotate each term to that end; it is then the cyclic class of an element
of the outside ideal $\overline e_kJ(S)\overline e_k$. This proves
that rotation descends to a map on $\Def(A,S)$, inverse to inclusion.
All classes can be evaluated after a finite arrow truncation, since the Jacobian algebra is finite-dimensional. Descent by
a finite splitting extension proves the general separable case.
\end{proof}

\begin{proposition}\label{alg:thm:def-invariance}
Under the hypotheses of Proposition~\ref{alg:prop:jacobian-subalgebra}, assume the
Jacobian algebra is finite-dimensional. Mutation induces an isomorphism
\[ \beta_k:\Def(A,S)\xrightarrow{\sim}\Def(A',S'). \]
It carries the distinguished potential class $[S]$ to $[S']$.
In particular, rigidity is invariant under mutation.
\end{proposition}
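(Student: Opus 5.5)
The plan is to combine Lemma~\ref{alg:lem:outside-trace} with the outside-subalgebra isomorphism of Proposition~\ref{alg:prop:jacobian-subalgebra}. Write $\mathcal P=\mathcal P(A,S)$, $\widetilde{\mathcal P}=\mathcal P(\widetilde A,\widetilde S)$ and $\mathcal P'=\mathcal P(A',S')$. By Corollary~\ref{alg:cor:mutation-finiteness}, all three are finite-dimensional, so Lemma~\ref{alg:lem:outside-trace} applies to each. This gives isomorphisms
\[
\Def(A,S)\simeq \Cyc(\mathcal P_{\widehat k,\widehat k})/\Cyc(R_{\widehat k,\widehat k}),\qquad
\Def(A',S')\simeq \Cyc(\mathcal P'_{\widehat k,\widehat k})/\Cyc(R_{\widehat k,\widehat k}).
\]
Proposition~\ref{alg:prop:jacobian-subalgebra} supplies an algebra isomorphism $\mathcal P_{\widehat k,\widehat k}\simeq\mathcal P'_{\widehat k,\widehat k}$ fixing $R_{\widehat k,\widehat k}$. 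An algebra isomorphism carries commutators to commutators, so it induces an isomorphism of cyclic quotients preserving the image of $\Cyc(R_{\widehat k,\widehat k})$. Closures play no role here, since both algebras are finite-dimensional. Composing the three isomorphisms defines $\beta_k$.

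I will pass through the premutation $(\widetilde A,\widetilde S)$ rather than directly to the reduced part. There are two reasons. First, Proposition~\ref{alg:prop:jacobian-subalgebra} is stated explicitly for the premutation, and its maps are concrete. In one direction, an old outside path has each passage $ab$ through $k$ replaced by $c_{ab}$. In the other, the substitution $b^*a^*\mapsto -G_{ab}$ is used. Second, by Theorem~\ref{alg:thm:generic-splitting}, the passage from $\widetilde{\mathcal P}$ to $\mathcal P'$ is induced by a right-equivalence followed by projection. A right-equivalence maps cyclic classes to cyclic classes and potentials to potentials. The projection away from a trivial summand kills the trivial quadratic part, and that part is zero in $\Def$ because it lies in $J^2$ modulo cyclic equivalence: each trivial arrow lies in the Jacobian ideal. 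So $\Def(\widetilde A,\widetilde S)\simeq\Def(A',S')$, and this isomorphism carries $[\widetilde S]$ to $[S']$.

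For the distinguished class, represent $S$ by cycles rotated to a vertex other than $k$. Then $S$ defines an element of $\mathcal P_{\widehat k,\widehat k}$ whose image under the composite-substitution map is $w=[S]$. It remains to show that $[\widetilde S]=[w]+[\Delta_k]$ equals $[w]$ in $\Def(\widetilde A,\widetilde S)$. Rotate $\Delta_k=\sum c_{ab}b^*a^*$ to an outside vertex and apply \eqref{alg:eq:return-elimination}. Each term becomes $-c_{ab}G_{ab}$, so $[\Delta_k]=-\sum_{a,b}[c_{ab}\,\partial_{c_{ab}}w]$. This is exactly the partial Euler expression for the $C$-occurrences in $w$, and it is not automatically zero. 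The map therefore does \emph{not} simply send $[S]$ to $[\widetilde S]$ on the nose.

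This is the step I expect to be the main obstacle. I would try to fix it with a rescaling rather than by requiring $\beta_k$ to send $[S]$ to $[S']$ literally. The first attempt uses the first-variation description of $\Def$ at the end of Section~\ref{alg:subsec:cyclic-calculus}. In $\Def(\widetilde A,\widetilde S)$, any class $[h(x)\partial_x\widetilde S]$ vanishes. Taking $h$ to scale the reverse arrows $a^*$ by $1$ gives $[\sum a^*\partial_{a^*}\widetilde S]=[\Delta_k]=0$. So $[\widetilde S]=[w]$, and the images agree. The cyclic chain rule is needed to justify that the derivative ideal of $\widetilde S$ kills $[\Delta_k]$. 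The leftover term $-\sum[c_{ab}G_{ab}]$ computed above is then consistent, since it equals $-\sum[c\,\partial_c\widetilde S]+[\Delta_k]$ and the first sum also vanishes. Finally, bijectivity of $\beta_k$ gives invariance of rigidity, and descent from a splitting field follows as in the earlier proofs, because every map uses only tensor composition, duality and cyclic derivatives.
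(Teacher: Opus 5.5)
Your argument is correct and is essentially the paper's. You identify both deformation spaces with cyclic quotients of the outside subalgebras via Lemma~\ref{alg:lem:outside-trace} and transport through \eqref{alg:eq:outside-invariant}. You then kill $[\Delta_k]$ in $\Def(\widetilde A,\widetilde S)$ by summing $a^*\partial_{a^*}\widetilde S$, which is exactly $\Delta_k$ because $[S]$ contains no reversed arrows. Finally you pass to the reduced part by right-equivalence and trivial projection.

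The ``obstacle'' in your middle paragraph is not real. The expression $-\sum_{a,b}[c_{ab}G_{ab}]$ already vanishes in $\Def(\widetilde A,\widetilde S)$ by that same identity, so $\beta_k$ carries $[S]$ to $[S']$ literally. No rescaling and no appeal to the chain rule is needed.
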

\begin{proof}
Use Lemma~\ref{alg:lem:outside-trace} on both sides of
\eqref{alg:eq:outside-invariant}. For the potential class, replacing passages through $k$ by composite
arrows carries $[S]$ to $[[S]]$. In the premutated Jacobian algebra, the coevaluation term
has zero class: multiply the derivative in each reversed incoming arrow
by that arrow and sum. Since $[S]$ contains no reversed
arrows, the sum is exactly $\Delta_k$, which belongs to the Jacobian
ideal. Therefore $[\widetilde S]=[[S]]$. A right-equivalence and
projection of a trivial quadratic pair preserve these classes. This
proves the claim and then the equivalence of vanishing deformation
spaces. The cyclic-quotient assertion is \cite[Proposition 6.9]{DWZ1}.
\end{proof}

\begin{corollary}\label{alg:cor:rigid-nondegenerate}
A rigid reduced Jacobi-finite ordinary QP is two-acyclic and is nondegenerate under all
ordinary vertex mutations.
\end{corollary}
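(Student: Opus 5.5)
The plan is to reduce everything to two facts. First, rigidity is invariant under reduced mutation (Proposition~\ref{alg:thm:def-invariance}). Second, a rigid reduced ordinary QP has no oriented two-cycles. Jacobi-finiteness is carried along by Corollary~\ref{alg:cor:mutation-finiteness}, so every QP in the mutation tree is again reduced, rigid and Jacobi-finite. The mutation step also needs the local no-two-cycle hypothesis at the mutated vertex, which is supplied by two-acyclicity at the previous stage. So once two-acyclicity of a rigid reduced Jacobi-finite QP is established, an induction along any mutation sequence gives nondegeneracy.

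For two-acyclicity, I argue by contradiction in the split setting $R=K^{Q_0}$, following DWZ. Loops are excluded by hypothesis, since QPs are loop-free. Suppose there are arrows $a:i\to j$ and $c:j\to i$ with $i\ne j$. Then $ac$ is a closed path of length two, and its class in $\Def(A,S)$ must vanish by rigidity. The description of $\Def(S)$ after Lemma~\ref{alg:lem:closed-context} identifies $\Def(S)$ with potentials modulo the first variations $\sum_b h(b)\partial_bS$ with $h(b)\in\m$. Vanishing then means $ac\sim_{\cyc}\sum_b h(b)\partial_bS$ modulo $\Cyc(R)$. Because $S$ is reduced, each $\partial_bS$ lies in $\m^2$, so the right side lies in the cyclic image of $\m^3$. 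Comparing the components of arrow degree two, which are preserved by cyclic equivalence, gives a contradiction, because $ac$ is a nonzero cyclic word of length two. Hence $e_iAe_j$ and $e_jAe_i$ cannot both be nonzero.

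The step I expect to require care is exactly this degree comparison. Membership in $\Def(S)=0$ is a statement in the Jacobian algebra, not in $T$, so I must use Lemma~\ref{alg:lem:closed-context}, with the closure, to lift the relation to an honest cyclic identity in $\Cyc(T)$. The cyclic quotient is graded by arrow length, since commutators of homogeneous elements are homogeneous, and the closure respects the product decomposition. With that lift in hand, the argument reduces to the degree count above. Jacobi-finiteness is used only to apply Proposition~\ref{alg:thm:def-invariance} and to stay within its hypotheses at every mutation.
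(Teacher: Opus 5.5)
Your proof is correct and follows the paper's argument. A surviving opposite pair gives a nonzero degree-two cyclic class, whereas every positive cyclic Jacobian relation $\sum_b h(b)\partial_bS$ has degree at least three, contradicting rigidity; Proposition~\ref{alg:thm:def-invariance} then propagates this along every mutation sequence, and your explicit tracking of Jacobi-finiteness (Corollary~\ref{alg:cor:mutation-finiteness}) and of the local no-two-cycle hypothesis spells out what the paper compresses into ``apply this observation successively.''
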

\begin{proof}
If opposite arrows $a,b$ remain in a reduced QP, the nonzero cyclic
class of $ab$ has degree two. Every positive cyclic Jacobian relation has degree at least three:
a derivative has degree at least two, and the path that closes it has
positive length because there are no arrow loops.
Thus $[ab]$ is a nonzero deformation class, contradicting rigidity.
Apply this observation successively, using
Proposition~\ref{alg:thm:def-invariance}.
\end{proof}
The corollary concerns individual vertices of an ordinary quiver.
For species, goodness additionally requires compatible directions
between the orbits of split vertices; this is the subject of the next
section.

\section{Good ramified modulations}\label{sec:good-modulations}
This section isolates the obstruction carried by the arrow bimodules.
We first compute their simple types and tensor products, then formulate
goodness as sign coherence under type mutation. The final two
subsections turn this condition into a test along one reddening
sequence and a recursive construction by triangular extensions.

\subsection{Vertex fields and simple arrow bimodules}

Throughout this section let $K=\C((t))$, write
$\Kram_p=\C((t^{1/p}))$ and $\Oram_p=\C[[t^{1/p}]]$ with
compatible roots, and put $K_i=\Kram_{d_i}$ and $R=\prod_iK_i$. A modulation is a finite-dimensional
$K$-central $R$-bimodule $A$, with $e_iAe_i=0$. It is
\emph{two-acyclic at the original vertices} if, for $i\ne j$, at most
one of $e_iAe_j,e_jAe_i$ is nonzero. As in the introduction, isomorphisms fix $R$ pointwise. Changes of the
vertex-field identifications are considered separately in
Proposition~\ref{gm:prop:gauge}.

The matrix of a two-acyclic modulation is
\begin{equation}\label{gm:eq:Bconvention}
 b_{ij}=\frac{\dim_K e_iAe_j-\dim_K e_jAe_i}{d_j},\qquad
 B\diag(d_i)=-(B\diag(d_i))^{\mathsf T}.
\end{equation}
The matrix records total ranks. The coordinates below record the
individual simple bimodule types.

Let $E=\Kram_{\ell_0}$ with $\ell_0=\lcm(d_1,\ldots,d_n)$.
Index the factors of $E\otimes_KK_i\simeq E^{d_i}$, called
sheets, by $\alpha\in\Sigma_i=\Z/d_i$. A generator of $\Gal(E/K)$ simultaneously adds one to all sheet indices. Its orbits on the sheet pairs $(\alpha,\beta)$ over $i,j$ are indexed by
\[ s=\beta-\alpha\pmod {g_{ij}},\qquad g_{ij}=\gcd(d_i,d_j). \]

\begin{proposition}\label{gm:prop:types}
The finite-dimensional $K$-central $K_i$--$K_j$ bimodules form a semisimple category with $g_{ij}$ simple types $T_{ij}^{(s)}$, indexed by $s\in\Z/g_{ij}$. They satisfy
\[ \dim_K T_{ij}^{(s)}=\lcm(d_i,d_j),\qquad \rank_{K_j}T_{ij}^{(s)}=d_i/g_{ij},\qquad (T_{ij}^{(s)})^\vee\simeq T_{ji}^{(-s)}. \]
After extension to $E$, $T_{ij}^{(s)}$ has one arrow on every sheet pair with $\beta-\alpha=s\pmod {g_{ij}}$ and no other arrows.
\end{proposition}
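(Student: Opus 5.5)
The approach is to identify $K$-central $K_i$--$K_j$ bimodules with modules over the enveloping algebra $K_i\otimes_K K_j$. That algebra is a finite product of fields, which gives semisimplicity. The simple types are then its field factors, and we compute those factors directly. Put $g=g_{ij}$, $d_i=ga$, $d_j=gb$, so $\gcd(a,b)=1$. Write $K_i=\Kram_g(x)$ with $x^a=u$ and $K_j=\Kram_g(y)$ with $y^b=u$, where $u=t^{1/g}$. Then
\[ K_i\otimes_K K_j\simeq (\Kram_g\otimes_K\Kram_g)\otimes(\text{the two Kummer extensions}). \]
Since $\Kram_g/K$ is Galois with cyclic group $\Z/g$, we have $\Kram_g\otimes_K\Kram_g\simeq\prod_{s\in\Z/g}\Kram_g$. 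The factor indexed by $s$ is given by $u\otimes1\mapsto u$ and $1\otimes u\mapsto\zeta^s u$, where $\zeta=\exp(2\pi\sqrt{-1}/g)$.

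Tensoring factor $s$ with the two Kummer extensions gives $\Kram_g[x,y]/(x^a-u,\ y^b-\zeta^su)$. This is a field, namely $\Kram_{\lcm(d_i,d_j)}$. Indeed, since $\gcd(a,b)=1$, the compositum of a degree-$a$ and a degree-$b$ totally ramified extension of $\Kram_g$ has degree $ab$. This is the same plane-curve computation as in Proposition~\ref{alg:prop:GLSbasis}, now with a twist by $\zeta^s$. Separability in characteristic zero makes $K_i\otimes_K K_j$ reduced, so it is a product of exactly $g$ fields. Its modules form a semisimple category with $g$ simple objects $T^{(s)}_{ij}$, each equal to one factor field.

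The numerical claims follow from this description. We get $\dim_K T^{(s)}_{ij}=gab=\lcm(d_i,d_j)$, and $\rank_{K_j}=\lcm/d_j=a=d_i/g$.

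For the sheet description, extend scalars to $E$. Then $E\otimes_K K_i\otimes_K K_j\simeq E^{d_i}\times E^{d_j}$ decomposes over sheet pairs $(\alpha,\beta)$, and each pair carries one copy of $E$, that is, one arrow. The Galois generator acts by $(\alpha,\beta)\mapsto(\alpha+1,\beta+1)$. Its orbits on $\Z/d_i\times\Z/d_j$ are the fibres of $\beta-\alpha\bmod g$, each of size $\lcm(d_i,d_j)$. Galois descent identifies the simple factors over $K$ with these orbits. The main bookkeeping point is to match the labels: the orbit of $\beta-\alpha\equiv s$ should correspond to the factor where $1\otimes u=\zeta^s(u\otimes1)$. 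We fix the sheet indexing so that sheet $\alpha$ of $K_i$ sends $t^{1/d_i}\mapsto\zeta_{d_i}^{\alpha}t^{1/d_i}$, and then compare the two images of $u$ on sheet pair $(\alpha,\beta)$. Their ratio is $\zeta^{\beta-\alpha}$, which gives the match.

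For duality, note that $T^\vee=\Hom_K(T,K)$ with the actions exchanged. On sheets, this reverses each arrow $(\alpha,\beta)$ to $(\beta,\alpha)$, sending the difference $s$ to $-s$. Equivalently, the dual of a simple module is simple, and the swap $K_i\otimes K_j\to K_j\otimes K_i$ carries factor $s$ to factor $-s$. Hence $(T^{(s)}_{ij})^\vee\simeq T^{(-s)}_{ji}$. The only step needing care is this consistent normalization of roots of unity. The rest is routine.
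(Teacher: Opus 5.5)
Your proposal is correct and follows the paper's route. You identify the bimodules with modules over the commutative separable algebra $K_i\otimes_K K_j$, which splits into $g_{ij}$ copies of $\Kram_{\lcm(d_i,d_j)}$ matched with the simultaneous Galois orbits on sheet pairs, and reversing a sheet pair negates $\beta-\alpha$; your explicit Kummer computation of the factors and the root-of-unity label matching fill in what the paper states briefly and attributes to \cite[Proposition 2.15]{GL}. One small slip: after extension to $E$ the enveloping algebra is $E^{d_i}\otimes_E E^{d_j}\simeq E^{d_id_j}$, indexed by sheet pairs, not $E^{d_i}\times E^{d_j}$.
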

\begin{proof}
The commutative separable algebra $K_i\otimes_KK_j$ is a product of $g_{ij}$ copies of $\Kram_{\lcm(d_i,d_j)}$. Its factors are the simultaneous Galois orbits just described. Simple modules over its factors give all simple bimodules. Reversing a sheet pair negates the difference of its indices. The dimension and rank assertions follow immediately. This is the cyclic-field case of \cite[Proposition 2.15]{GL}.
\end{proof}

A two-acyclic modulation is thus encoded by signed vectors
\[ m_{ij}=(m_{ij}(s))_{s\in\Z/g_{ij}},\qquad m_{ji}(s)=-m_{ij}(-s). \]
Positive entries count arrows $i\to j$; negative entries count the dual types of arrows $j\to i$. A collection of signed multiplicity vectors is \emph{admissible} if every $m_{ij}$ is entirely nonnegative or entirely nonpositive. This is \emph{sign coherence of the type multiplicities};
zero entries are allowed.

The split skew-symmetric matrix is
\[ C_{i\alpha,j\beta}=m_{ij}(\beta-\alpha),\qquad C_{i\alpha,i\beta}=0. \]
Formula \eqref{gm:eq:Bconvention} becomes
\begin{equation}\label{gm:eq:augmentation}
 b_{ij}=\frac{d_i}{g_{ij}}\sum_{s\in\Z/g_{ij}}m_{ij}(s).
\end{equation}
Thus $B$ records total ranks, whereas the multiplicity vectors record the twists.

\begin{proposition}\label{gm:prop:finitecandidates}
For fixed $(B,d)$ satisfying \eqref{gm:eq:Bconvention}, the isomorphism classes of two-acyclic modulations form a finite set $\Cand(B,d)$ of cardinality
\[ |\Cand(B,d)|=\prod_{i<j}\binom{N_{ij}+g_{ij}-1}{g_{ij}-1}, \qquad N_{ij}=\frac{|b_{ij}|g_{ij}}{d_i}. \]
\end{proposition}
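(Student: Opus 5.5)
By Proposition~\ref{gm:prop:types}, every finite-dimensional $K$-central $R$-bimodule with $e_iAe_i=0$ decomposes uniquely up to isomorphism as $\bigoplus_{i\ne j}\bigoplus_{s}(T_{ij}^{(s)})^{\oplus n_{ij}(s)}$. This holds because the category of such bimodules is semisimple, and the simples are the $T_{ij}^{(s)}$ for ordered pairs $(i,j)$. Isomorphism classes of modulations fixing $R$ pointwise are therefore in bijection with multiplicity families $n_{ij}(s)\ge0$.

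Two-acyclicity at the original vertices means that, for each unordered pair $i<j$, at most one of the families $n_{ij}$, $n_{ji}$ is nonzero. Using $(T_{ij}^{(s)})^\vee\simeq T_{ji}^{(-s)}$, this is exactly the data of a signed vector $m_{ij}\in\Z^{g_{ij}}$ that is entirely nonnegative or entirely nonpositive. Set $m_{ij}=n_{ij}$ if the arrows go $i\to j$, and $m_{ij}(s)=-n_{ji}(-s)$ otherwise. The pairs $i<j$ are independent, so the count factors as a product over $i<j$.

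Next I fix a pair $i<j$ and impose the constraint coming from $(B,d)$. By \eqref{gm:eq:augmentation}, the entry $b_{ij}$ equals $\frac{d_i}{g_{ij}}\sum_s m_{ij}(s)$, so $\sum_s m_{ij}(s)=b_{ij}g_{ij}/d_i$. This quotient must be an integer, which is part of the hypothesis that the set is nonempty. If it is not, there is nothing to count, or one reads the binomial as zero; I would remark that integrality of $b_{ij}g_{ij}/d_i$ is necessary. Let me double-check the rank bookkeeping: $\rank_{K_j}T_{ij}^{(s)}=d_i/g_{ij}$ and $b_{ij}=\dim_K/d_j$ is the right $K_j$-rank, so this is consistent.

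Sign coherence then forces all entries to carry the sign of $b_{ij}$. When $b_{ij}\ne0$, the vector $|m_{ij}|$ is a weak composition of $N_{ij}=|b_{ij}|g_{ij}/d_i$ into $g_{ij}$ parts. There are $\binom{N_{ij}+g_{ij}-1}{g_{ij}-1}$ such compositions, by stars and bars. When $b_{ij}=0$, the only possibility is $m_{ij}=0$, which matches the binomial value $1$.

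The matrix entries $b_{ji}$ for $j>i$ are determined by skew-symmetrizability, and the constraint they impose is automatically the same as the one for $b_{ij}$. Indeed, $m_{ji}(s)=-m_{ij}(-s)$ gives $b_{ji}=-\frac{d_j}{g_{ij}}\sum m_{ij}$, which agrees with $b_{ji}=-b_{ij}d_j/d_i$. So no further conditions arise, and multiplying the counts over $i<j$ gives the formula.

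The main point requiring care is that isomorphisms fixing $R$ pointwise do not identify different types $s$. This is built into the statement that the $T^{(s)}$ are pairwise nonisomorphic simples. Gauge changes of the vertex fields are excluded here, as the paper notes. Given Proposition~\ref{gm:prop:types}, the rest is routine.
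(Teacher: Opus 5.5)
Your argument follows the paper's proof. Semisimplicity reduces isomorphism classes to multiplicity vectors indexed by $\Z/g_{ij}$, and \eqref{gm:eq:augmentation} together with sign coherence forces the sign on each pair to be that of $b_{ij}$. Stars and bars then counts the weak compositions.

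The one step you leave open is integrality of $N_{ij}=|b_{ij}|g_{ij}/d_i$. You treat it as an extra hypothesis (``part of the hypothesis that the set is nonempty'') or propose to read the binomial as zero when it fails. That hedge does not work. The formula itself asserts $|\Cand(B,d)|\ge1$, so nonemptiness cannot be assumed. The paper also later uses exactly this integrality to choose connecting multiplicities in Corollary~\ref{gm:cor:class-P}.

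The missing observation is that integrality follows from skew-symmetrizability, and the paper's proof begins there. Write $d_i=g_{ij}p$ and $d_j=g_{ij}q$ with $\gcd(p,q)=1$. Then $b_{ij}d_j=-b_{ji}d_i$ gives $b_{ij}q=-b_{ji}p$, so $p\mid b_{ij}$ and $N_{ij}=|b_{ij}|/p\in\Z_{\ge0}$. With this line added, your proof is complete.
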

\begin{proof}
Skew-symmetrizability makes $N_{ij}$ integral: writing $d_i=g_{ij}p$, $d_j=g_{ij}q$ with $\gcd(p,q)=1$ gives $p\mid b_{ij}$. On a nonzero pair, the sign is fixed by $B$, and the absolute multiplicities form a weak composition of $N_{ij}$ into $g_{ij}$ parts. A zero pair has the unique zero multiplicity vector. Semisimplicity shows that these multiplicities determine the isomorphism class.
\end{proof}

\subsection{Tensor products and mutation of type multiplicities}
For three distinct vertices $i,k,j$, put
$a=g_{ik}$, $b=g_{kj}$, $c=g_{ij}$, and
$h=\gcd(d_i,d_k,d_j)=\gcd(a,b)$.
For type vectors $x$ on $(i,k)$ and $y$ on $(k,j)$ define
\begin{equation}\label{gm:eq:convolution}
 (x\star_k y)(s)=\sum_{v=0}^{d_k-1}x(v\bmod a)\,y(s-v\bmod b),
 \qquad s\in\Z/c.
\end{equation}

\begin{proposition}\label{gm:prop:convolution}
For nonnegative multiplicity vectors, $x\star_k y$ is the multiplicity vector of the corresponding tensor product over $K_k$. Equivalently,
\begin{equation}\label{gm:eq:CRT}
 (x\star_k y)(s)=\frac{d_k}{\lcm(a,b)}
 \sum_{\substack{u\in\Z/a,\,v\in\Z/b\\u+v\equiv s\pmod h}}x(u)y(v).
\end{equation}
The expression is well defined modulo $c$, is bilinear, and is associative whenever three bimodule factors are composable.
\end{proposition}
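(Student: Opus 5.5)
The plan is to compute everything after extension of scalars to $E=\Kram_{\ell_0}$, where by Proposition~\ref{gm:prop:types} each simple type $T^{(s)}_{ik}$ becomes the direct sum of one arrow on each sheet pair $(\alpha,\gamma)\in\Sigma_i\times\Sigma_k$ with $\gamma-\alpha\equiv s\pmod a$. Extension to $E$ is faithful on isomorphism classes of semisimple bimodules: a $K$-central bimodule is determined by its Galois-equivariant split quiver, and types correspond to Galois orbits of sheet pairs. So it suffices to compare split multiplicities. Over $E$ the vertex algebra $K_k\otimes_KE\simeq E^{d_k}$ splits into the sheet idempotents $\gamma\in\Sigma_k$. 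Hence the tensor product over $K_k$ becomes the sum over $\gamma$ of composable arrow pairs. The number of arrows of $X\otimes_{K_k}Y$ from sheet $\alpha$ over $i$ to sheet $\beta$ over $j$ is therefore
\[\sum_{\gamma\in\Z/d_k}x(\gamma-\alpha\bmod a)\,y(\beta-\gamma\bmod b).\]

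Substituting $v=\gamma-\alpha$, which runs over $\Z/d_k$ as $\gamma$ does, gives the sum $\sum_v x(v\bmod a)\,y(\beta-\alpha-v\bmod b)$. I then check two points. First, this depends only on $s=\beta-\alpha$ modulo $c=g_{ij}$. Changing $\beta-\alpha$ by a multiple of $c$ is realised by a simultaneous Galois shift of $\alpha$ and $\beta$. Equivalently, the split quiver of the product is Galois-stable, so the count is constant on each orbit, which is a residue class modulo $c$. Second, each sheet pair of this orbit carries the same count. By Proposition~\ref{gm:prop:types}, that count is then the multiplicity of $T^{(s)}_{ij}$. This proves \eqref{gm:eq:convolution}. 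Bilinearity is immediate from the formula, or from distributivity of tensor over direct sums. Associativity follows from associativity of $\otimes$ over $K_k$ and then $K_l$, since multiplicities of isomorphic semisimple bimodules agree.

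For \eqref{gm:eq:CRT}, I group $v\in\Z/d_k$ by its residues $(u,w)=(v\bmod a,\ s-v\bmod b)$. The image of $\Z/d_k\to\Z/a\times\Z/b$, $v\mapsto(v,-v)$ shifted by $s$, consists of the pairs satisfying $u+w\equiv s\pmod h$, with $h=\gcd(a,b)$; this is the Chinese remainder theorem. Each fibre has size $d_k/\lcm(a,b)$, because $\lcm(a,b)$ divides $d_k$. That gives the stated formula.

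The main obstacle is the well-definedness modulo $c$ directly from the formula. I would rather derive it from Galois invariance than verify it combinatorially. I should also check that the restriction to nonnegative vectors is only needed for the bimodule interpretation; the formula extends bilinearly anyway.
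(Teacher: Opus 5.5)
Your proof is correct and follows essentially the paper's route: split over $E$, compute the product as a sum over intermediate sheets (matrix multiplication of sheet multiplicities), translate the sheet indices, and count the Chinese-remainder fibres. The only differences are minor. You get well-definedness modulo $c$ from Galois invariance of the split count and associativity from associativity of $\otimes$. The paper reads well-definedness off \eqref{gm:eq:CRT}, which depends only on $s\bmod h$ with $h\mid c$, and gets associativity from matrix multiplication. So the step you flag as the main obstacle is already immediate from your own CRT computation.
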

\begin{proof}
After splitting the vertex fields, tensor composition is multiplication
of the matrices of sheet multiplicities. Between fixed outside sheets
$\alpha,\gamma$, the resulting entry is
\[ \sum_{\beta\in\Z/d_k}x(\beta-\alpha)y(\gamma-\beta). \]
Simultaneous translation of the sheet indices gives
\eqref{gm:eq:convolution}. For fixed $u\in\Z/a$ and $v\in\Z/b$,
the map $\beta\mapsto(\beta,s-\beta)$ has a fibre exactly when
$u+v\equiv s\pmod h$, and each nonempty fibre has
$d_k/\lcm(a,b)$ elements. This proves \eqref{gm:eq:CRT} and,
since $h\mid c$, well-definedness. Bilinearity and associativity are
those of matrix multiplication. The simple-type decomposition is
\cite[Proposition 2.12]{GL}.
\end{proof}

Write $x^+(s)=\max(x(s),0)$ and $x^-(s)=\max(-x(s),0)$. For an admissible collection, define the signed type mutation by
\begin{equation}\label{gm:eq:typemutation}
 m'_{ij}=\begin{cases}
 -m_{ij},& k\in\{i,j\},\\
 m_{ij}+m_{ik}^+\star_km_{kj}^+
          -m_{ik}^-\star_km_{kj}^-,&i,j\ne k.
 \end{cases}
\end{equation}

\begin{proposition}\label{gm:thm:typemutation}
For an admissible collection, \eqref{gm:eq:typemutation} agrees with
simultaneous ordinary matrix mutation at all sheets above $k$.
No arrows occur between sheets above the same vertex. Applying
\eqref{gm:eq:augmentation} to the resulting signed multiplicities gives
$\mu_kB$, whether or not these multiplicities are sign-coherent.
They define a two-acyclic modulation precisely when every type vector
is sign-coherent. Mutation is an involution on admissible collections
whose mutated collections are also admissible.
\end{proposition}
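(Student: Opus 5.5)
The plan is to pass to the split picture over $E$ and compare \eqref{gm:eq:typemutation} with ordinary matrix mutation of the split skew-symmetric matrix $C$ at all sheets $k\beta$, $\beta\in\Z/d_k$. Since there are no arrows between sheets above $k$ ($C_{k\beta,k\beta'}=0$), these single-sheet mutations commute. Their composite is therefore given by one application of \eqref{alg:eq:FZmutation}, with the sum over all sheets above $k$:
\[ C'_{i\alpha,j\gamma}=C_{i\alpha,j\gamma}+\sum_{\beta}\bigl([C_{i\alpha,k\beta}]_+[C_{k\beta,j\gamma}]_+-[-C_{i\alpha,k\beta}]_+[-C_{k\beta,j\gamma}]_+\bigr), \]
with sign reversal on rows and columns indexed by sheets above $k$. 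First I would check the incident case. For $k\in\{i,j\}$, reversal gives $-m_{ij}$, which is the first line of \eqref{gm:eq:typemutation}.

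The key step is the case $i,j\ne k$. Here I use admissibility: each $m_{ik}$ and each $m_{kj}$ is sign-coherent. Hence $[C_{i\alpha,k\beta}]_+=m_{ik}^+(\beta-\alpha)$, and similarly for the other factors. The sum over $\beta$ is then exactly the sheet computation in the proof of Proposition~\ref{gm:prop:convolution}, now applied to $m_{ik}^\pm$ and $m_{kj}^\pm$. It gives $(m_{ik}^+\star_k m_{kj}^+)(\gamma-\alpha)-(m_{ik}^-\star_k m_{kj}^-)(\gamma-\alpha)$. This agrees with the second line of \eqref{gm:eq:typemutation}. Galois equivariance is preserved, so $C'$ is again of the form $C'_{i\alpha,j\gamma}=m'_{ij}(\gamma-\alpha)$.

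Next, blocks between sheets above $k$ stay zero, because reversal only negates zero entries. Blocks between sheets above the same vertex $i\ne k$ also stay zero. Indeed, $m_{ik}$ and $m_{ki}$ are negatives of each other under $s\mapsto -s$, so the positive part of one pairs with the negative part of the other and the $\beta$-sums cancel. This is the standard fact that skew-symmetry is preserved under mutation.

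For the matrix claim, summing $C'$ over sheets gives the total ranks. Concretely, $\sum_\gamma C'_{i\alpha,j\gamma}$ equals $\frac{d_i}{g_{ij}}\sum_s m'_{ij}(s)$ up to the normalization of \eqref{gm:eq:augmentation}, and this is the exchange entry of the folded matrix. The first-line verification is immediate. For the second line, $\sum_\gamma$ of each convolution factors as a product of row sums. Sign coherence of the incident vectors makes these row sums equal $[b_{ik}]_+[b_{kj}]_+$ after normalization, as in Proposition~\ref{prop:intrinsic-matrix}. No sign coherence of $m'_{ij}$ is used here, so the claim holds whether or not the outputs are coherent.

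The remaining claims follow. By Proposition~\ref{gm:prop:types} and semisimplicity, a signed vector defines a genuine two-acyclic modulation on the pair $(i,j)$ exactly when it is sign-coherent: positive entries become arrows $i\to j$ and negative entries become dual types $j\to i$. Involutivity is ordinary involutivity of matrix mutation at the commuting family of sheets above $k$. This applies once the mutated collection is admissible, so that \eqref{gm:eq:typemutation} is again given by the split rule.

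I expect the main obstacle to be the sign bookkeeping in the convolution identification. One has to confirm that the index conventions of \eqref{gm:eq:convolution} (reductions modulo $a$ and $b$, and the difference $\beta-\alpha$) match $[C]_\pm$ exactly. One also has to confirm that the negative part of $m_{ik}$ corresponds to arrows $k\to i$ with the correct twist $-s$. Everything else is formal.
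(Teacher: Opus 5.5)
Your route is the paper's own: commuting mutations at the sheets over $k$, admissibility to replace $[C]_\pm$ by $m^\pm$, the sheet sum producing $\star_k$, augmentation for $\mu_kB$, and involutivity from the split involutions. One step, however, is justified by the wrong mechanism.

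For distinct sheets $\alpha\ne\alpha'$ over $i\ne k$, the mutated entry is
\[
 C'_{i\alpha,i\alpha'}=\sum_{\beta\in\Z/d_k}\bigl(m_{ik}^+(\beta-\alpha)\,m_{ik}^-(\beta-\alpha')-m_{ik}^-(\beta-\alpha)\,m_{ik}^+(\beta-\alpha')\bigr).
\]
This is an off-diagonal entry, so preservation of skew-symmetry says nothing about it. The two sums also do not cancel in general. Take $d_i=d_k=3$ and the mixed vector $m_{ik}=(1,-1,0)$. The split paths $i\alpha\to k\alpha\to i(\alpha-1)$ then give $C'_{i\alpha,i(\alpha-1)}=1$, an uncancelled $3$-cycle on the sheets of $i$.

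What actually makes the entry vanish is admissibility: it forces $m_{ik}^+=0$ or $m_{ik}^-=0$, so every product above is zero termwise. This is the paper's remark that an arrow between two sheets over $i$ would require arrows in both directions between $i$ and $k$.

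The correction is not cosmetic. Two later steps depend on it: the equivalence with two-acyclicity (absence of loops), and your re-application of the split rule in the involutivity step.
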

\begin{proof}
There are initially no arrows between sheets over $k$, so their mutations commute. For fixed sheets above $i$ and $j$, each intermediate sheet above $k$ contributes the product of the two arrow multiplicities. Summing over the $d_k$ intermediate sheets gives \eqref{gm:eq:typemutation}. An arrow between two sheets of $\Sigma_i$ would require arrows in both directions between $i$ and $k$ before mutation. Admissibility excludes this. Summing the signed type multiplicities as in \eqref{gm:eq:augmentation} counts all composites and cancels dual pairs. It therefore gives the matrix-mutation formula \eqref{alg:eq:FZmutation}. Equivalently, augment \eqref{gm:eq:convolution}; because the source multiplicity vectors have uniform signs, it gives the products $[b_{ik}]_+[b_{kj}]_+$ and $[-b_{ik}]_+[-b_{kj}]_+$. The split matrix uniquely determines the multiplicity vectors, so involutivity follows from the commuting ordinary involutions at the sheets.
\end{proof}

\begin{lemma}\label{gm:lem:obstruction}
Suppose $u_s$ arrows of type $T_{ij}^{(s)}$ face $v_s$ arrows of dual type in the reverse direction in a premutation. If quadratic reduction cancels $r_s$ pairs, then
$0\le r_s\le\min(u_s,v_s)$ and $(u_s-r_s)-(v_s-r_s)=u_s-v_s$.
If the vector $(u_s-v_s)_s$ has both signs, no potential can give a reduction that is two-acyclic over $R$. If a reduction is two-acyclic over $R$, cancellation in every matching type is necessarily maximal.
\end{lemma}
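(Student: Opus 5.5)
The plan is to reduce everything to the structure of quadratic reduction over $R$, using the semisimple category of $K_i$--$K_j$ bimodules from Proposition~\ref{gm:prop:types}. Fix a premutation $(\widetilde A,\widetilde S)$ and an unordered pair $\{i,j\}$. Decompose $e_i\widetilde Ae_j\simeq\bigoplus_s (T^{(s)}_{ij})^{u_s}$ and $e_j\widetilde Ae_i\simeq\bigoplus_s (T^{(-s)}_{ji})^{v_s}$. By Theorem~\ref{alg:thm:generic-splitting}, the trivial part $A_{\mathrm{triv}}$ is the derivative image of the quadratic part $S^{(2)}$. Its $(i,j)$ and $(j,i)$ components are $R$-subbimodules. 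The pairing induced by $S^{(2)}$ between them is perfect and $R$-bilinear.

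First I show that an $R$-bimodule perfect pairing can only pair a summand of type $T^{(s)}_{ij}$ with the dual type. A pairing $X\otimes_{R} Y\to R$ (cyclically, i.e.\ a map $X\otimes_{K_j}Y\to K_i$ inducing $Y\simeq X^\vee$) identifies $e_jA_{\mathrm{triv}}e_i$ with the dual of $e_iA_{\mathrm{triv}}e_j$. The duality of Proposition~\ref{gm:prop:types} sends $T^{(s)}_{ij}$ to $T^{(-s)}_{ji}$, the type indexed by $s$ in the signed convention. Hence, if $e_iA_{\mathrm{triv}}e_j\simeq\bigoplus_s(T^{(s)}_{ij})^{r_s}$, then $e_jA_{\mathrm{triv}}e_i\simeq\bigoplus_s(T^{(-s)}_{ji})^{r_s}$ with the same $r_s$. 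This is the step I expect to need care. I would check it after extension to $E$: the quadratic term is a Galois-invariant sum of $2$-cycles on sheet pairs, and a $2$-cycle between sheets $\alpha,\beta$ pairs an arrow of difference $\beta-\alpha$ with a reverse arrow of difference $\alpha-\beta$. So the nondegenerate block decomposes orbitwise by $s$, as in the proof of Proposition~\ref{gm:prop:convolution}.

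Since $A_{\mathrm{triv}}$ is a subbimodule with complement in a semisimple category, Krull--Schmidt gives $r_s\le u_s$ and $r_s\le v_s$. The reduced module $A_{\red}=\widetilde A/A_{\mathrm{triv}}$ then has $u_s-r_s$ copies of $T^{(s)}_{ij}$ from $i$ to $j$ and $v_s-r_s$ dual copies from $j$ to $i$. The identity $(u_s-r_s)-(v_s-r_s)=u_s-v_s$ is immediate. The reduced part is well defined up to right-equivalence by Theorem~\ref{alg:thm:cancel}, so these multiplicities do not depend on the chosen splitting.

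For the obstruction, suppose the reduction is two-acyclic over $R$. Then at most one of $e_iA_{\red}e_j$ and $e_jA_{\red}e_i$ is nonzero. If the first vanishes, then $u_s=r_s$ for all $s$, so $u_s-v_s=-(v_s-r_s)\le0$ for every $s$. In the other case every difference is $\ge0$. Thus a vector $(u_s-v_s)_s$ with both signs rules out a two-acyclic reduction, whatever the potential.

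For maximality, two-acyclicity forces, for each $s$, either $u_s=r_s$ or $v_s=r_s$. Combined with $r_s\le\min(u_s,v_s)$, this gives $r_s=\min(u_s,v_s)$ in every type.
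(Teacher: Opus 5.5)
Your argument is correct and is essentially the paper's: the quadratic part gives an $R$-bimodule map (your perfect pairing) with no nonzero components between nonisomorphic simple types, so cancellation removes the same number $r_s$ of summands of type $T^{(s)}_{ij}$ and of its dual, and the sign obstruction and maximality $r_s=\min(u_s,v_s)$ then follow exactly as you say. The appeal to Theorem~\ref{alg:thm:cancel} is superfluous, and that theorem assumes Jacobi-finiteness, which the lemma does not; independence of the splitting is automatic because $A_{\red}=\widetilde A/A_{\mathrm{triv}}$ is determined by the quadratic part alone.
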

\begin{proof}
The quadratic derivative map preserves the decomposition into simple
bimodule types, since there are no maps between nonisomorphic types.
Its rank on type $s$ removes the same number of summands from the two
directions. Opposite strict signs force nonzero residual modules in opposite original directions, regardless of the ranks. Conversely any unmatched pair in the same type would already violate two-acyclicity of the species.
\end{proof}

Even for sign-coherent multiplicities, a particular potential may have
a quadratic derivative map of insufficient rank. A nondegenerate
potential cancels every possible opposite pair, but cannot make a
mixed-sign multiplicity vector sign-coherent. Ordinary split-quiver mutation
uses maximal cancellation on each sheet pair; hence it gives the same
criterion as typewise cancellation.

\subsection{Goodness and the mutation tree}
\begin{definition}\label{gm:def:good}
An admissible collection $m=(m_{ij})$ is \emph{good} if applying \eqref{gm:eq:typemutation} along any finite sequence of vertices gives an admissible collection after every step. Let $\Good(B,d)\subseteq\Cand(B,d)$ denote the good isomorphism classes.
\end{definition}

Equivalently, after every sequence of orbit mutations, the split quiver has no arrows between sheets above the same vertex, and all arrows between any two vertex orbits point in the same direction. An orbit mutation means the commuting mutations at the sheets above one vertex.

\begin{proposition}\label{gm:prop:bijections}
Every mutation induces a bijection
\[ \mu_k:\Good(B,d)\xrightarrow{\sim}\Good(\mu_kB,d). \]
In particular, the finite number $\nu(B,d)=|\Good(B,d)|$ is invariant
under matrix mutation with the vertex numbering fixed.
\end{proposition}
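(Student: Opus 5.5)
The plan is to show that type mutation \eqref{gm:eq:typemutation} at $k$ sends good collections for $(B,d)$ to good collections for $(\mu_kB,d)$. The inverse is then mutation at $k$ again, by the involutivity in Proposition~\ref{gm:thm:typemutation}.

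Let $m\in\Good(B,d)$ and put $m'=\mu_km$. Goodness applied to the one-step sequence $(k)$ shows that $m'$ is admissible. By Proposition~\ref{gm:thm:typemutation}, summing its type vectors via \eqref{gm:eq:augmentation} gives the matrix $\mu_kB$, so $m'\in\Cand(\mu_kB,d)$. To see that $m'$ is good, take any finite sequence $w$. The collections obtained from $m'$ along $w$ are exactly those obtained from $m$ along $(k)\cdot w$, after its first step. These are admissible at every step because $m$ is good. Hence $m'\in\Good(\mu_kB,d)$.

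The type mutation is defined purely on the signed vectors, so isomorphism classes of modulations go to isomorphism classes. Here we use Proposition~\ref{gm:prop:finitecandidates}: the multiplicity vectors determine the class. This makes $\mu_k$ a well-defined map on $\Good$.

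Applying the same argument to $\mu_kB$ gives a map $\mu_k:\Good(\mu_kB,d)\to\Good(\mu_k\mu_kB,d)=\Good(B,d)$, since matrix mutation is an involution. Proposition~\ref{gm:thm:typemutation} states that mutation is an involution on admissible collections whose mutation is admissible. Good collections have this property, so the two maps are mutually inverse, and the first is a bijection.

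Finiteness of $\nu(B,d)$ comes from $\Good\subseteq\Cand$ together with Proposition~\ref{gm:prop:finitecandidates}. Invariance under any finite sequence of matrix mutations follows by composing the bijections. Throughout, the vertex numbering and $d$ are kept fixed, so no relabeling issue arises.

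The only point needing care is involutivity: we must know that $\mu_k\mu_k m=m$ at the level of signed vectors. The plan is to verify this through the split skew-symmetric matrix $C$. By Proposition~\ref{gm:thm:typemutation}, type mutation is simultaneous ordinary mutation at the commuting sheets above $k$, and ordinary matrix mutation is an involution. Since $C$ determines $m$, involutivity for $m$ follows.
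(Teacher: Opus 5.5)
Your proposal is correct and follows the paper's proof: goodness of $\mu_k m$ comes from prefixing any mutation sequence with $k$, the inverse map comes from the involutivity in Proposition~\ref{gm:thm:typemutation}, and finiteness comes from $\Good(B,d)\subseteq\Cand(B,d)$ together with Proposition~\ref{gm:prop:finitecandidates}. Your extra checks simply unpack these same ingredients: that $m'$ lies in $\Cand(\mu_kB,d)$, that the map is well defined on isomorphism classes, and that involutivity holds via the split matrix.
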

\begin{proof}
Any mutation sequence starting at $\mu_kA$ can be preceded by the mutation at $k$ to give a sequence starting at $A$. Thus the target is good. Involutivity in
Proposition~\ref{gm:thm:typemutation} supplies the inverse, and
Proposition~\ref{gm:prop:finitecandidates} gives finiteness.
\end{proof}

\begin{proposition}\label{gm:prop:gauge}
For changes of sheet indices at the vertices $q_i\in\Z/d_i$, the rule
$m_{ij}(s)\mapsto m_{ij}(s+q_j-q_i)$
commutes with type mutation and preserves goodness. Restriction to any subset of original vertices preserves goodness. Every two-vertex admissible modulation is good.
\end{proposition}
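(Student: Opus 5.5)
The three assertions of Proposition~\ref{gm:prop:gauge} are independent, and I would prove them in the order stated, each by reduction to the split-quiver description in Proposition~\ref{gm:thm:typemutation}.

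\emph{Gauge invariance.} A change of sheet indices $\alpha\mapsto\alpha+q_i$ over each vertex $i$ relabels the vertices of the split quiver. It sends $C_{i\alpha,j\beta}=m_{ij}(\beta-\alpha)$ to the matrix with entries $m_{ij}(\beta-\alpha+q_j-q_i)$. It also carries the sheet orbit above each vertex to itself. Orbit mutation is simultaneous ordinary mutation at the sheets above $k$. Ordinary mutation commutes with relabelling the vertices. Hence the relabelled split matrix mutates to the relabelling of the mutated split matrix. By Proposition~\ref{gm:thm:typemutation}, the split matrix determines the multiplicity vectors. So the rule commutes with \eqref{gm:eq:typemutation}. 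The only point to check is that the shift respects the residue $s \bmod g_{ij}$, and it does, since the shift $q_j-q_i$ is read modulo $g_{ij}$. The shift is a bijection on each $\Z/g_{ij}$ and therefore preserves sign coherence. Along any mutation sequence, admissibility at each step therefore transfers, and goodness is preserved.

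\emph{Restriction.} Let $I'$ be a subset of the original vertices. Mutation at $k\in I'$ changes $m_{ij}$ with $i,j\in I'$ by \eqref{gm:eq:typemutation}. The correction terms involve only $m_{ik}$ and $m_{kj}$, which are again indexed within $I'$. So restricting the mutated collection equals mutating the restricted collection. Any sequence in $I'$ can be performed in the full collection. Admissibility of every full intermediate collection then gives admissibility of its restrictions.

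\emph{Two vertices.} Take vertices $1,2$. Mutation at $k\in\{1,2\}$ only applies the rule $m'_{ij}=-m_{ij}$ for $k\in\{i,j\}$. There is no third vertex, so the composite terms never occur. Negation preserves sign coherence, so every mutated collection stays admissible. I expect no real obstacle here. The only care needed is the commutation of relabelling with the sheet-level mutation formula in the first part, and this is immediate from the split description.
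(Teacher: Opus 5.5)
Your proof is correct and follows the paper's argument. The paper also notes that the changes of sheet indices conjugate the split matrices by permutation matrices, offering a change of summation variable in \eqref{gm:eq:convolution} as an equivalent route, and its restriction and two-vertex arguments are the ones you give.
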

\begin{proof}
Changing the summation variable in \eqref{gm:eq:convolution} proves compatibility with the changes of sheet indices. Equivalently, these changes conjugate the split matrices by permutation matrices. For a sequence using only a fixed subset of vertices, the matrix entries between those vertices are the same whether we restrict before or after mutation. For two vertices, each mutation merely reverses the sole multiplicity vector, so no mixed signs can appear.
\end{proof}
The number of classes modulo changes of the vertex-field identifications
is also invariant under mutation. Two modulations related by such a change
need not be isomorphic as $R$-bimodules.

An initial good modulation determines its type multiplicities at every
node of the mutation tree. At each step we mutate this modulation,
rather than choose a new one from the resulting exchange matrix.

To disprove goodness, it suffices to find a mutation sequence producing a
multiplicity vector with both positive and negative entries. To prove goodness, it
suffices to include the initial modulation in a set of admissible
modulations closed under every mutation. A bounded search alone need
not certify goodness. Section~\ref{gm:sec:reddening} gives a sufficient
finite test along a specified reddening sequence.

\begin{proposition}\label{gm:thm:finitealgorithm}
Suppose the matrix mutation class with fixed vertex numbering $\mathcal M(B)$ is finite. All good modulations in that class can be classified by a finite computation.
\end{proposition}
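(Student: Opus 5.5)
The plan is to turn goodness into reachability in a finite directed graph. The graph's nodes are pairs $(B',m')$, where $B'\in\mathcal M(B)$ and $m'\in\Cand(B',d)$. This set is finite: $\mathcal M(B)$ is finite by hypothesis, each $\Cand(B',d)$ is finite by Proposition~\ref{gm:prop:finitecandidates}, and $d$ is fixed because the vertex numbering is fixed.

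The edges come from signed type mutation. For each vertex $k$ we apply \eqref{gm:eq:typemutation} to $m'$. By Proposition~\ref{gm:thm:typemutation} the result is a signed collection whose augmentation \eqref{gm:eq:augmentation} equals $\mu_kB'$, and $\mu_kB'\in\mathcal M(B)$. If the result is admissible, it is a member of $\Cand(\mu_kB',d)$ and we draw an edge $(B',m')\to(\mu_kB',m'')$. Otherwise we draw an edge to a single absorbing node ``bad''. Each of these computations is a finite convolution \eqref{gm:eq:convolution}, so the whole graph is computed in finitely many steps.

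Next we show that a candidate $m\in\Cand(B,d)$ is good exactly when ``bad'' is not reachable from $(B,m)$. By Definition~\ref{gm:def:good}, $m$ is good when every finite mutation sequence keeps every step admissible. A finite sequence corresponds to a walk in the graph starting at $(B,m)$. The first inadmissible step of such a sequence is exactly an edge into ``bad''. Conversely, a path to ``bad'' gives a concrete sequence whose final step produces a mixed-sign vector, and the earlier steps along that path are admissible by construction. So goodness is equivalent to non-reachability.

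Reachability in a finite graph is decidable, for instance by breadth-first search. Running the search from every node $(B',m')$ classifies $\Good(B',d)$ for every $B'$ in the class at once. Alternatively, the good set is the largest subset of non-bad nodes closed under all outgoing edges. We can obtain it by iteratively deleting any node that has an edge into ``bad'' or into a node already deleted. This iteration terminates, and Proposition~\ref{gm:prop:bijections} gives a consistency check: mutation must permute the surviving sets.

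The main obstacle is the point that unbounded sequences are controlled by a finite state space. Intermediate collections could a priori be arbitrary. The argument needs that, as long as admissibility has held so far, every intermediate collection is a member of the finite set $\bigsqcup_{B'}\Cand(B',d)$. This follows because an admissible collection together with its matrix lies in $\Cand$, and because Proposition~\ref{gm:thm:typemutation} guarantees that the augmentation tracks matrix mutation. It follows that any infinite sequence eventually cycles through finitely many states, so checking single edges suffices.
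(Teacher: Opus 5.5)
Your proposal is correct and is essentially the paper's argument. The paper enumerates the finite set $\coprod_{B'\in\mathcal M(B)}\Cand(B',d)$, removes every candidate with an inadmissible mutation, and then repeatedly removes candidates whose mutation has already been removed; this is exactly your iterative-deletion formulation, equivalent to your non-reachability of the absorbing node, and recording the reason for each removal gives the explicit obstructing sequence, as you also note.
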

\begin{proof}
Enumerate the finite set
$\mathscr V=\coprod_{B'\in\mathcal M(B)}\Cand(B',d)$.
Remove every modulation having an inadmissible mutation. Then repeatedly
remove each modulation whose mutation has already been removed. The process
terminates because $\mathscr V$ is finite. Every remaining modulation has
all its mutations in the remaining set and is therefore good. Following the recorded reason for each removal gives a finite sequence that makes the corresponding modulation inadmissible.
\end{proof}
Section~\ref{sec:good-realization} addresses the separate existence of a Jacobi-finite potential.

\subsection{Full tensor modulations and further examples}\label{gm:sec:uniform}
Constant multiplicity vectors give a simple source of good modulations.
\begin{proposition}\label{gm:thm:fulltensor}
If $d_i\mid b_{ij}$ for every pair, put $c_{ij}=b_{ij}/d_i$.
The modulation $A_{ij}=(K_i\otimes_KK_j)^{\oplus c_{ij}}$ for
$c_{ij}>0$, with no opposite arrows, is good.
\end{proposition}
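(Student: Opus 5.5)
We would prove that the class of full tensor modulations is closed under type mutation, which gives goodness at once by the criterion following Definition~\ref{gm:def:good}. First we check the type vectors. The bimodule $K_i\otimes_KK_j$ is the sum of all $g_{ij}$ simple types, each with multiplicity one, by Proposition~\ref{gm:prop:types}. Its $K$-dimension is $d_id_j$ and its right $K_j$-rank is $d_i$. So the modulation has constant type vectors $m_{ij}(s)=c_{ij}$ for all $s\in\Z/g_{ij}$. It is admissible, and \eqref{gm:eq:augmentation} gives $b_{ij}=(d_i/g_{ij})\,g_{ij}c_{ij}=d_ic_{ij}$, as required. Skew-symmetrizability is compatible with this: $d_i\mid b_{ij}$ is equivalent to $d_j\mid b_{ji}$, with $c_{ji}=-c_{ij}$. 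The signed vectors are $m_{ij}=c_{ij}\mathbf 1$ and satisfy $m_{ji}(s)=-m_{ij}(-s)$.

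Next we compute the convolution of constant vectors. By \eqref{gm:eq:convolution}, $(x\star_k y)(s)=\sum_{v=0}^{d_k-1}x(v)y(s-v)$. If $x=c\mathbf 1$ and $y=c'\mathbf 1$, this equals $d_kcc'$ for every $s$, so the result is again constant. The positive and negative parts of a constant vector $c\mathbf 1$ are $[c]_+\mathbf 1$ and $[-c]_+\mathbf 1$. Substituting into \eqref{gm:eq:typemutation}, for $i,j\ne k$ we get
\[ m'_{ij}=\bigl(c_{ij}+d_k[c_{ik}]_+[c_{kj}]_+-d_k[-c_{ik}]_+[-c_{kj}]_+\bigr)\mathbf 1. \]
For $k\in\{i,j\}$ we get $m'_{ij}=-m_{ij}$, which is still constant. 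Every mutated vector is constant, hence sign-coherent, and the collection remains admissible.

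Finally we identify the constant with $b'_{ij}/d_i$. Multiplying by $d_i$ and using $d_ic_{ik}=b_{ik}$ and $d_kc_{kj}=b_{kj}$ reproduces \eqref{alg:eq:FZmutation}, because $d_i>0$ and $d_k>0$ commute with $[\cdot]_+$. This is also guaranteed by Proposition~\ref{gm:thm:typemutation}. So the mutated collection is again the full tensor modulation of $\mu_kB$, and $d_i\mid b'_{ij}$ continues to hold. Induction on the length of the mutation sequence then shows that every mutation stays in this closed family of admissible modulations, which proves goodness.

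The only point needing care is the claim that each simple type occurs in $K_i\otimes_KK_j$ exactly once. This follows from the proof of Proposition~\ref{gm:prop:types}: the algebra $K_i\otimes_KK_j$ splits as a product of $g_{ij}$ fields, one for each simple type.
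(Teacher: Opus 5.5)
Your proof is correct and follows essentially the same route as the paper's: constant type vectors stay constant under type mutation, constant vectors cannot have mixed signs, and induction finishes. The only difference is that you evaluate the convolution \eqref{gm:eq:convolution} directly, whereas the paper cites the isomorphism $(K_i\otimes_KK_k)\otimes_{K_k}(K_k\otimes_KK_j)\simeq(K_i\otimes_KK_j)^{\oplus d_k}$; your check $d_ic'_{ij}=b'_{ij}$ is a harmless confirmation that the family is closed under mutation.
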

\begin{proof}
Its type multiplicity vectors are constant, and
\[ (K_i\otimes_KK_k)\otimes_{K_k}(K_k\otimes_KK_j) \simeq(K_i\otimes_KK_j)^{\oplus d_k}. \]
Every entry in an outside target multiplicity vector is consequently
\[ c'_{ij}=c_{ij}+d_k\bigl([c_{ik}]_+[c_{kj}]_+ -[-c_{ik}]_+[-c_{kj}]_+\bigr), \]
while incident multiplicity vectors are negated. Constant multiplicity vectors cannot have
mixed signs, so induction proves the assertion.
\end{proof}
\begin{remark}\label{gm:thm:relative}
If $h\mid d_i$ for all $i$ and $d_i/h\mid b_{ij}$, replace $K$ by
$\Kram_h$ in the tensors and use $h b_{ij}/d_i$ copies. The same proof uses
$d_k/h$ in place of $d_k$; compatible vertex twists are allowed.
The full-tensor divisibility class appears in \cite[Corollary 3.6]{LA}, after transposing the matrix convention; the pairwise-coprime setting is treated in \cite{LZ}.
Theorem A also applies to modulations outside these tensor constructions.
\end{remark}

\begin{example}\label{ex:triangle-good-choice}
	Consider the exchange matrix obtained by reversing the arrows of the
	triangle in Example~\ref{run:ex:path-mutation}:
	\begin{equation}\label{gm:eq:cureB}
		d=(2,1,2),\qquad
		B=\begin{pmatrix}0&2&-2\\-1&0&1\\2&-2&0\end{pmatrix}.
	\end{equation}
	Let $L=\Kram_2$, with nontrivial automorphism $\tau$. The two edges
	incident with the scalar vertex have unique types, while the two
	reverse-arrow summands between the quadratic vertices can have
	multiplicity vectors $(2,0),\ (1,1)$, and $(0,2)$
	corresponding respectively to
	$L_{\mathrm{id}}^{\oplus2}$,
	$L_{\mathrm{id}}\oplus L_\tau$, and
	$L_\tau^{\oplus2}$.
	Mutation at vertex $2$ creates the composite
	$L\otimes_KL$, whose type vector is $(1,1)$. After cancellation the
	three possibilities therefore leave
	$(-1,1),\ (0,0)$, and $(1,-1)$.
	Hence only the middle choice is two-acyclic. It is the full-tensor
	choice, so Proposition~\ref{gm:thm:fulltensor} makes it good under
	every mutation sequence. Thus \eqref{gm:eq:cureB} has a unique good
	modulation. A concrete Jacobi-finite nondegenerate potential for it is
	recorded in Remark~\ref{gm:ex:cure-potential}.
\end{example}

\begin{example}\label{ex:four-vertex-type-obstruction}
The next matrix fails for every choice of simple arrow types. Take
\[
d=(6,2,6,3),\qquad
 B=\begin{pmatrix}
 0&3&0&-2\\-1&0&1&0\\0&-3&0&2\\1&0&-1&0
 \end{pmatrix}.
\]
This is the cyclic-species obstruction of
\cite[Examples 3.13 and 3.25]{GL}, written in our convention. Every
edge must be one simple bimodule. After mutation at $2$, the
$1\to3$ multiplicity vector is the indicator of one parity class in $\Z/6$,
with three entries. Mutation at $4$ subtracts the indicator of one
class modulo three, with two entries. The two classes intersect in
exactly one entry. Their difference therefore has two positive entries
and one negative entry, for every choice of the four twists. Thus
$\Good(B,d)$ is empty, and Proposition~\ref{gm:prop:bijections} excludes
its entire matrix mutation class for these degrees. The dual pair
will instead have a good realization and a character basis;
see Example~\ref{ex:nonunfoldable-basis} and
Remark~\ref{ex:four-vertex-duality}.

\end{example}

\begin{remark}\label{gm:rem:matrix-return}
Let $L=K(t^{1/2})$, with nontrivial automorphism $\tau$, and take
\[ d=(2,2,1),\qquad B=\begin{pmatrix}0&1&0\\-1&0&2\\0&-1&0\end{pmatrix}. \]
There are two isomorphism classes over $R=L\times L\times K$:
$A_{12}=L_{\mathrm{id}}$ or $L_\tau$, with
$A_{23}={}_LL_K$ and $A_{13}=0$. Both are good by
Corollary~\ref{gm:cor:class-P} and the acyclic observation following it. The mutation sequence
$w=(1,3,2,3,2)$ returns $B$ to itself but interchanges the two classes.
Indeed, formula~\eqref{gm:eq:typemutation} sends the triple
$(m_{12},m_{13},m_{23})=((1,0),0,1)$ to $((0,1),0,1)$.
The two resulting modulations become isomorphic after changing one
vertex-field identification, but not by an $R$-bimodule isomorphism.
Thus we follow the actual modulation along a mutation sequence, even
when its exchange matrix returns.
\end{remark}

\subsection{A test along a reddening sequence}\label{gm:sec:reddening}
Start matrix mutation with principal coefficients $(B\mid-I_n)$.
A vertex is \emph{green} when its coefficient row is nonpositive and
\emph{red} when that row is nonnegative. The rows are nonzero and
sign-coherent. A \emph{reddening sequence} is a finite mutation sequence
whose final coefficient rows are all nonnegative. Intermediate red
mutations are allowed; if every mutation is green, the sequence is
\emph{maximal green}. These conventions agree with successive sink
mutations for an acyclic matrix \cite[Section 2]{KellerGreen}.

\begin{theorem}\label{gm:thm:reddening}
Let $w$ be a reddening sequence of $B$. A two-acyclic ramified modulation
realizing $(B,d)$ is good if and only if its signed type multiplicities
remain sign-coherent after every prefix of $w$.
\end{theorem}
The sequence tests the actual bimodules by
\eqref{gm:eq:typemutation}, not only the exchange matrices. For
sufficiency, we compare the split and folded scattering diagrams.
The admissible lift of the prescribed reddening sequence identifies
their total wall-crossing elements. Positivity then ensures that
averaging cannot erase a nontrivial local factor. A first failure of
sign coherence along any other sequence would contradict the resulting
comparison of chamber interiors and walls. This argument does not
assume a potential on the ramified modulation.

\paragraph{Averaging the split data.}
Let $I_i=\Z/d_i$, $I=\coprod_iI_i$, and let $B^{\mathrm{spl}}$ be the
split matrix of the initial modulation. The cyclic group $\Gamma$
translates the sheets in every $I_i$. On the lattices
$N^{\mathrm{spl}}=\Z^I$ and $N=\Z^n$, define
\[ q(e_a)=e_i\quad(a\in I_i),\qquad s(e_i)=\frac1{d_i}\sum_{a\in I_i}e_a. \]
The map $s$ identifies $N_{\Q}$ with $(N^{\mathrm{spl}}_{\Q})^\Gamma$.
Put $\omega^{\mathrm{spl}}(e_a,e_b)=B^{\mathrm{spl}}_{ab}$ and
$\omega(u,v)=\omega^{\mathrm{spl}}(s(u),s(v))$. Then
\begin{equation}\label{gm:eq:scattering-form}
 \omega(e_i,e_j)=\frac{b_{ij}}{d_i}=\frac{H_{ij}}{d_j},
 \qquad H=D^{-1}BD=-B^{\mathsf T}.
\end{equation}
Here $H_{ij}=\sum_{b\in I_j}B^{\mathrm{spl}}_{ab}$ for $a\in I_i$:
$H$ uses row sums, whereas $B$ uses column sums. If $C_v$ denotes the
coefficient matrix after a sequence $v$ from $(B\mid-I_n)$, then the
coefficient matrix for $H$ is $D^{-1}C_vD$, by the mutation formula.
Thus the same sequence is reddening for $B$ and $H$.

For a based lattice with skew form $\omega$, use the complete graded
Lie algebra and its exponential group
\[ \widehat{\mathfrak g}_{\omega}=\prod_{0\ne u\in\Z_{\ge0}^n}\Q X_u, \qquad [X_u,X_v]=\omega(u,v)X_{u+v},\qquad G_{\omega}=\exp\widehat{\mathfrak g}_{\omega}. \]
The completion is by total degree, and multiplication in $G_\omega$
is given by the Baker--Campbell--Hausdorff formula. Use superscript
$\mathrm{spl}$ for the split versions. All positive degrees, including
central ones, are kept.

\begin{lemma}\label{gm:lem:scattering-average}
The degree-preserving coefficient-sum map
\begin{equation}\label{gm:eq:scattering-average}
 \Phi:(\widehat{\mathfrak g}_{\omega^{\mathrm{spl}}})^\Gamma
 \longrightarrow\widehat{\mathfrak g}_{\omega},\qquad
 \sum_u a_uX_u\longmapsto\sum_u a_uX_{q(u)}
\end{equation}
is a continuous Lie homomorphism.
\end{lemma}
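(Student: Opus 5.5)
The proof is a direct computation of brackets. I would take four steps in order.

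First, $\Phi$ is linear, preserves degrees and is continuous, and all of this is immediate. The map $q:\Z^I_{\ge0}\to\Z^n_{\ge0}$ is additive and sends nonzero vectors to nonzero vectors. It can only increase total degree when the split degree is measured by the sum of entries. Since $\sum_{a\in I_i}u_a$ is the $i$th entry of $q(u)$, it in fact preserves the total degree exactly. Each fibre of $q$ over a fixed degree is finite, so every coefficient $\sum_{q(u)=v}a_u$ is a finite sum. The map therefore respects the degree filtrations and is continuous.

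Second, I would check the bracket on $\Gamma$-orbit sums. The invariant subalgebra is topologically spanned by the orbit sums $O_u=\sum_{\gamma\in\Gamma/\mathrm{Stab}(u)}X_{\gamma u}$. The aim is to show $\Phi([O_u,O_v])=[\Phi O_u,\Phi O_v]$. We have $\Phi(O_u)=|\Gamma u|\,X_{q(u)}$, and the right-hand side equals
\[ |\Gamma u|\,|\Gamma v|\,\omega(q(u),q(v))\,X_{q(u)+q(v)}. \]
For the left-hand side, expand $[O_u,O_v]=\sum_{\gamma,\gamma'}\omega^{\mathrm{spl}}(\gamma u,\gamma' v)X_{\gamma u+\gamma' v}$. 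Because $q(\gamma u+\gamma'v)=q(u)+q(v)$, its image under $\Phi$ is
\[ \sum_{\gamma,\gamma'}\omega^{\mathrm{spl}}(\gamma u,\gamma'v)\,X_{q(u)+q(v)}. \]
The claim therefore reduces to the identity
\[ \sum_{\gamma,\gamma'}\omega^{\mathrm{spl}}(\gamma u,\gamma'v)=|\Gamma u|\,|\Gamma v|\,\omega(q(u),q(v)). \]

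Third, I would prove this identity by averaging, and this is the key step. Rewrite the left side as $\omega^{\mathrm{spl}}(O'_u,O'_v)$, where $O'_u=\sum_{\gamma}\gamma u$ in $N^{\mathrm{spl}}$. The vector $O'_u$ is $\Gamma$-invariant, so it equals $|\Gamma u|\,\pi(u)$, where $\pi$ is the averaging projection onto $(N^{\mathrm{spl}}_\Q)^\Gamma$. The sheet translations act transitively on each $I_i$, since a generator adds one to every sheet index. Hence $\pi(e_a)=s(e_i)$ for $a\in I_i$. By linearity this gives $\pi(u)=s(q(u))$. The definition $\omega(u,v)=\omega^{\mathrm{spl}}(s(u),s(v))$ then yields the identity. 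I am relying on the fact that $\Gamma$-invariance of $B^{\mathrm{spl}}$ is not needed for this step.

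Fourth, I would pass from orbit sums to general invariant series. Bilinearity and continuity extend the identity to arbitrary invariant series. Each homogeneous component of an invariant element is a finite combination of orbit sums, and the bracket of two series is computed degreewise by finite sums. The only thing to be careful about is the normalization of the orbit sums: whether one sums over $\Gamma$ or over $\Gamma u$ only rescales both sides equally.
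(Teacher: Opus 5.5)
Your argument is correct and is essentially the paper's proof. Both reduce the bracket computation to the observation that a $\Gamma$-invariant vector in $N^{\mathrm{spl}}_{\Q}$ equals $s$ of its image under $q$, so that $\omega^{\mathrm{spl}}$ of two such vectors is $\omega$ of their projections; the only difference is that the paper groups an invariant element by projected degree $q(u)=v$ rather than by $\Gamma$-orbit sums, and the cyclic symmetry $C_{i\alpha,j\beta}=m_{ij}(\beta-\alpha)$ of the split matrix, which you rightly note the numerical identity does not need, is what makes the invariants a Lie subalgebra, so that $[O_u,O_v]$ lies in the domain of $\Phi$.
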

\begin{proof}
Each nonnegative degree has finitely many lifts under $q$.
For an invariant sum $F=\sum_{q(u)=v}a_uX_u$, let $a=\sum a_u$.
The vector $\sum a_uu$ is invariant and projects to $av$, so it equals
$as(v)$. For another such sum of projected degree $v'$ and coefficient
sum $a'$, the coefficient of $X_{v+v'}$ in the projected bracket is
\[ \sum_{u,u'}a_ua'_{u'}\omega^{\mathrm{spl}}(u,u') =\omega^{\mathrm{spl}}(as(v),a's(v'))=aa'\omega(v,v'). \]
This proves the assertion degree by degree. Total degree is preserved,
so the map extends continuously to the completions.
\end{proof}
The map \eqref{gm:eq:scattering-average} induces a group homomorphism,
also denoted by $\Phi$. Its domain consists of invariant elements;
no homomorphism on the entire split Lie algebra is asserted.

\paragraph{Local scattering factors.}
We recall the scattering facts needed in the proof. For
$\theta\in N_{\mathbb R}^*$, every $g\in G_\omega$ has a unique
factorization
\begin{equation}\label{gm:eq:local-factor}
 g=g_{\theta,+}\,g_{\theta,0}\,g_{\theta,-},
\end{equation}
where the three logarithms have degrees of positive, zero, and negative
$\theta$-value, respectively \cite[Lemma 2.1]{MouScattering}.
This factorization is defined at every $\theta$, including intersections
of walls.

A \emph{scattering diagram} attaches group elements to rational
codimension-one cones, called \emph{walls}, each perpendicular to the
degrees of its attached logarithm. Only finitely many walls are nontrivial
in each total-degree quotient. Crossing a wall applies its element
or its inverse according to the crossing direction. The diagram is
\emph{consistent} if the ordered product depends only on the two ends
of a generic piecewise smooth path, transverse to the walls and avoiding
their intersections. Its total element $g$ is the product from the positive
orthant to the negative orthant; it determines the consistent diagram
up to equality of all ordered wall-crossing products \cite[Theorem 1.17]{GHKK}.
The factor $g_{\theta,0}$ is its
\emph{local factor} at $\theta$. At a finite degree truncation it is
computed by a sufficiently short transverse path from the positive to the negative
side of the walls through $\theta$; this also applies at their
intersections \cite[Sections 2.3--2.5]{MouScattering}.

In our Lie convention, the cluster diagram associated with
\eqref{gm:eq:scattering-form} has initial factors
\[ \mathcal K(e_i)^{d_i},\qquad \mathcal K(u)=\exp\!\left(\sum_{r\ge1}\frac{(-1)^{r-1}}{r^2}X_{ru}\right). \]
Equivalently, the GHKK convention uses the skew form $-\omega$ and
the sublattice with basis $d_ie_i$. We use its faithful
principal-coefficient realization, so no central degree is discarded.
The split initial factors have exponent one. A common divisor $h$ of
the $d_i$ is removed by replacing $(\omega,d)$ with $(h\omega,d/h)$:
the isomorphism $X_u\mapsto h^{-1}X_u$ carries the initial factors to
those for the new normalization. The following properties
follow from positivity, mutation invariance, and the facet description
in \cite[Theorems 1.13 and 1.24, Construction 1.30, Lemma 5.12]{GHKK}.
The cluster chamber at a seed has inequalities
$\theta(\beta_i)>0$, where the $\beta_i$ are the negatives of the
horizontal principal coefficient rows for $H$. Its local factor is
one in the interior and nontrivial on every proper face. Crossing
its $k$th facet has factor
$\mathcal K(\epsilon\beta_k)^{\epsilon d_k}$, where
$\epsilon\beta_k\in\Z_{\ge0}^n$.
The split diagram has a representative with positive wall functions
$(1+z^u)^c$, where $c>0$ and $z^u$ is a monomial of positive degree. Consequently the first nonzero logarithmic term
of each wall element is a positive rational multiple of a positive
degree $X_v$.

\begin{lemma}\label{gm:lem:local-nonvanishing}
Let $g^{\mathrm{spl}}$ be the total element of the split cluster diagram.
For every $\theta\in N_{\mathbb R}^*$,
\begin{equation}\label{gm:eq:local-nonvanishing}
 \Phi(g^{\mathrm{spl}}_{q^*\theta,0})
       =\Phi(g^{\mathrm{spl}})_{\theta,0},\qquad
 \Phi(g^{\mathrm{spl}}_{q^*\theta,0})=1
       \ \Longleftrightarrow\ g^{\mathrm{spl}}_{q^*\theta,0}=1.
\end{equation}
\end{lemma}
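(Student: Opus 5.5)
The plan has two parts: the first identity of \eqref{gm:eq:local-nonvanishing} is formal compatibility of $\Phi$ with the factorization \eqref{gm:eq:local-factor}, and the equivalence is a positivity argument on leading terms.

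\emph{Step 1 (compatibility).} Since $q^*\theta$ is $\Gamma$-invariant, the element $g^{\mathrm{spl}}$ is $\Gamma$-invariant. The invariant initial factors of the split diagram are permuted by $\Gamma$, and the consistent diagram they generate is unique, so $\Gamma$ fixes its total element. The $\Gamma$-action on degrees preserves $q^*\theta$-values, because $(q^*\theta)(\gamma u)=\theta(q\gamma u)=\theta(qu)$. Hence the three factors in the unique factorization of $g^{\mathrm{spl}}$ at $q^*\theta$ are each $\Gamma$-invariant, by uniqueness. Apply the group homomorphism $\Phi$ from Lemma~\ref{gm:lem:scattering-average}. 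It preserves degrees in the sense that $X_u\mapsto X_{q(u)}$, and $\theta(q(u))=(q^*\theta)(u)$. Therefore $\Phi$ carries the positive, zero and negative Lie subalgebras at $q^*\theta$ into those at $\theta$. The image $\Phi(g^{\mathrm{spl}}_{q^*\theta,+})\Phi(g^{\mathrm{spl}}_{q^*\theta,0})\Phi(g^{\mathrm{spl}}_{q^*\theta,-})$ is then a factorization of $\Phi(g^{\mathrm{spl}})$ of the required shape. By uniqueness in \eqref{gm:eq:local-factor}, this gives the first identity.

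\emph{Step 2 (nonvanishing).} One direction is trivial. For the converse, suppose $h=g^{\mathrm{spl}}_{q^*\theta,0}\ne1$. I will compute $h$ at a finite degree truncation by a short transverse path through a generic point near $q^*\theta$, as in \cite{MouScattering}. Thus $h$ is an ordered product of wall elements for walls containing $q^*\theta$. In the positive representative, each such element has logarithm whose leading term is a positive rational multiple of some $X_v$ with $v\in\Z^I_{\ge0}$. More precisely, I will use the stronger statement that these wall functions $(1+z^u)^c$ have nonnegative coefficients in every degree.

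Let $r$ be the minimal total degree occurring in $\log h$. The BCH formula shows that the degree-$r$ part of $\log h$ is the sum of the degree-$r$ parts of the logarithms of the wall elements whose leading degree is $r$. Brackets only raise the total degree, so they do not contribute in degree $r$. This sum has nonnegative coefficients and is nonzero. Nonnegativity excludes cancellation. The crossing-sign issue also does not arise, since the path goes from the positive to the negative side and the walls through $q^*\theta$ are crossed in a fixed direction. Applying $\Phi$ sums coefficients within fibres of $q$ and preserves total degree. Hence the degree-$r$ part of $\log\Phi(h)$ is a nonzero sum of nonnegative rationals, so $\Phi(h)\ne1$.

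\emph{Main obstacle.} The delicate point is justifying that the leading degree-$r$ part of $\log h$ is a nonnegative combination. This requires that all walls through $q^*\theta$ are crossed with the same orientation. It also requires that the transverse path to $q^*\theta$ can be chosen $\Gamma$-invariantly enough for $h$ itself to be $\Gamma$-invariant, so that $\Phi(h)$ is defined. If it proves hard to control the whole transverse path, my first fallback is to use only the local factor. I would write $\log h$ via the standard formula for the local part of a consistent diagram as a sum of the wall logarithms at $q^*\theta$, modulo higher brackets. The positivity from \cite[Theorem 1.13]{GHKK} then suffices for the leading degree.
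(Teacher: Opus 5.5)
Your proposal is correct and follows essentially the same route as the paper. Both arguments obtain $\Gamma$-invariance of $g^{\mathrm{spl}}$ and of its three local factors from uniqueness, use compatibility of $\Phi$ with the factorization, and then compute the local factor by a short path through $q^*\theta$, comparing least-degree terms: positivity prevents these terms from cancelling, and $\Phi$, which preserves total degree and adds coefficients, cannot annihilate them.

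The concerns in your last paragraph are settled as in the paper. First, $h$ is already invariant by Step~1, so the path is only used to compute it and need not be chosen invariantly. Second, a perturbed path kept strictly decreasing on every split basis vector crosses each wall through $q^*\theta$ at most once and in the same direction, because each wall's normal lies in $\Z^I_{\ge0}$.

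One correction: you only need positivity of the leading logarithmic term of each wall. Define $r$ as the least total degree among these leading terms; positivity then shows it is also the least degree of $\log h$. The stronger nonnegativity you mention does not carry over to the logarithms, since the full logarithms of $(1+z^u)^c$ have alternating signs.
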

\begin{proof}
Uniqueness of the cluster diagram \cite[Theorem 1.12]{GHKK} makes
$g^{\mathrm{spl}}$ invariant.
Since $q^*\theta$ is invariant, uniqueness of
\eqref{gm:eq:local-factor} makes its three factors invariant as well.
Their degrees have the same signs after applying $q$. Applying
$\Phi$ and using uniqueness proves the first equality.

For the second assertion, truncate at a degree that detects the local
factor. Only finitely many walls remain. Take a small ball about
$q^*\theta$ meeting no wall whose support does not contain that point.
Choose $\lambda$ strictly positive on every split basis vector and
perturb the segment from $q^*\theta+\varepsilon\lambda$ to
$q^*\theta-\varepsilon\lambda$ inside this ball. The perturbation
can avoid all intersections and be kept strictly decreasing on every
basis vector. It therefore crosses every wall with the same sign.
Its ordered product is the local factor: the zero-degree factor in
\eqref{gm:eq:local-factor} is computed by this path, also when
$q^*\theta$ is an intersection of several walls.

If the product is nontrivial, at least one crossed factor is nontrivial.
At the least degree of the crossed factors, their logarithms add and
commutators have higher degree. Positivity makes all coefficients of
this term have the same sign, positive or negative according to the
crossing convention. The map $\Phi$ preserves total degree and adds
coefficients, so it cannot annihilate that term. Passing through the
finite truncations proves the assertion.
\end{proof}

\begin{proof}[Proof of Theorem~\ref{gm:thm:reddening}]
\emph{Lifting the prescribed sequence.}
Only sufficiency needs proof. Let $g$ be the total element of the diagram
on $N$. At every admissible stage, the split normal vectors and those
for $H$ satisfy
\begin{equation}\label{gm:eq:normal-projection}
 q(\beta^{\mathrm{spl}}_a)=\beta_i\qquad(a\in I_i).
\end{equation}
Initially this is clear. If the common sign at the mutating orbit is
$\epsilon$, then, for $b\in I_i$, $i\ne k$,
\[ (\beta^{\mathrm{spl}}_b)'=\beta^{\mathrm{spl}}_b+ \sum_{a\in I_k}[-\epsilon B^{\mathrm{spl}}_{ba}]_+ \beta^{\mathrm{spl}}_a. \]
Admissibility allows the positive part to be taken after summation,
which gives $\beta_i'=\beta_i+[-\epsilon H_{ik}]_+\beta_k$.
The normals at $I_k$ change sign. This proves
\eqref{gm:eq:normal-projection}, including the first inadmissible
result if one occurs. Ordinary sign coherence and cyclic symmetry
give a common sign at each split orbit; projection by $q$ preserves it.
Thus the lift of the given reddening sequence ends in the negative
split chamber.

\emph{Comparing local factors.}
The normals at a mutating orbit have pairwise zero skew pairing, since
its vertices are mutually nonadjacent. Their wall factors therefore
commute, and the invariant product maps under $\Phi$ to
$\mathcal K(\epsilon\beta_k)^{\epsilon d_k}$. Multiplying these factors along the given sequence, with the indicated
negative exponent at a red step, yields
$\Phi(g^{\mathrm{spl}})=g$. Hence
\eqref{gm:eq:local-nonvanishing} gives
\begin{equation}\label{gm:eq:invariant-support}
 g^{\mathrm{spl}}_{q^*\theta,0}=1
 \quad\Longleftrightarrow\quad g_{\theta,0}=1.
\end{equation}

\emph{Excluding a first obstruction.}
Suppose another sequence first produces a mixed-sign pair at a seed
$t$. There are still no arrows within any split orbit: the preceding
seed was admissible, so a path between two sheets of one orbit through
the last mutating orbit was impossible. Formula
\eqref{gm:eq:normal-projection} still holds at $t$.
Choose mixed orbits $I_k,I_i$, and let $\epsilon$ be the common sign
of the normals at $I_k$. We may mutate its split vertices once more,
since they have no internal arrows. Write $B^{\mathrm{spl}}$ for the
split matrix at $t$. For a fixed
$b\in I_j$, $j\ne k$, put
\[ a_j=\sum_{a\in I_k}[\epsilon B^{\mathrm{spl}}_{ab}]_+, \qquad a_j^0=\left[\epsilon\sum_{a\in I_k}B^{\mathrm{spl}}_{ab}\right]_+ =[-\epsilon(H_t)_{jk}]_+. \]
These values are independent of $b$ by cyclic symmetry. We have
$a_j\ge a_j^0$ for every $j$, and
\begin{equation}\label{gm:eq:strict-excess}
 a_i-a_i^0=
 \min\!\left\{\sum_{a\in I_k}[B^{\mathrm{spl}}_{ab}]_+,
               \sum_{a\in I_k}[-B^{\mathrm{spl}}_{ab}]_+\right\}>0.
\end{equation}
Both signs occur at each fixed $b\in I_i$, since translating an arrow
can move its target to any sheet of $I_i$.

Use the independent coordinates $t_j=\theta(\beta_j)$ at $t$.
The neighboring chamber for $H_t$ has inequalities
$t_k<0$ and $t_j+a_j^0t_k>0$ for $j\ne k$.
The restriction of the actual split neighboring chamber instead has
$t_k<0$ and $t_j+a_jt_k>0$. Set $t_k=-1$, $t_i=a_i$, and
$t_j>a_j+1$ for $j\ne i,k$. By \eqref{gm:eq:strict-excess}, this
point is inside the first chamber, whereas its lift lies on a proper
face of the second. Their local factors are respectively trivial and
nontrivial, contradicting \eqref{gm:eq:invariant-support}.
Thus no inadmissible result can occur.
\end{proof}

A supplied reddening sequence therefore gives a finite classification
of $\Good(B,d)$: enumerate $\Cand(B,d)$ and discard a candidate at its
first inadmissible step. Unlike
Proposition~\ref{gm:thm:finitealgorithm}, this does not require a finite
matrix mutation class. The test concerns the modulation; it does not
assert nondegeneracy of a particular potential merely because that
potential can be mutated along the chosen sequence.

\subsection{Triangular extensions}\label{gm:sec:extensions}
A \emph{triangular extension} of matrices on disjoint vertex sets
$I,J$ is a matrix whose restrictions are the given matrices and whose
arrows between the two sets all point in one direction. A
\emph{sink--source extension} is the case in which one set consists of
one new vertex. Connecting bimodules are allowed to have any simple
types compatible with the prescribed matrix.

We first recall the coefficient calculation underlying triangular
extensions \cite[Theorem 3.2 and Corollary 3.3]{CaoLi}. In the next lemma
we use vertical principal coefficients, so the coefficient columns
start at the standard basis and are nonpositive at the end of a
reddening sequence.

\begin{lemma}\label{gm:lem:connecting-coefficients}
Let $Q=\begin{pmatrix}Q_I&M\\-M^{\mathsf T}&Q_J\end{pmatrix}$ be skew-symmetric with $M\geq0$.
Perform any mutation sequence using only vertices in $J$. After its
first $\ell$ steps, denote the full matrix by $Q^{(\ell)}$ and the
coefficient matrix for $\binom{Q_J}{I_J}$ by $C_J^{(\ell)}$, where
$I_J$ is the identity matrix indexed by $J$. Then
\begin{equation}\label{gm:eq:connecting-coefficients}
 Q^{(\ell)}_{I,J}=M C_J^{(\ell)},\qquad Q^{(\ell)}_{I,I}=Q_I.
\end{equation}
Each column of $M C_J^{(\ell)}$ has one weak sign. If the sequence is
reddening for $Q_J$, then $M C_J^{(\ell)}\leq0$ at its end.
\end{lemma}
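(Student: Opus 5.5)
The plan is induction on $\ell$, starting from the principal-coefficient matrix
\[
\widetilde Q=\begin{pmatrix}Q_I&M\\-M^{\mathsf T}&Q_J\\0&I_J\end{pmatrix},
\]
where the bottom block is the vertical coefficient block attached to $J$. The induction hypothesis is exactly \eqref{gm:eq:connecting-coefficients}. It holds at $\ell=0$ because $C_J^{(0)}=I_J$.

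For the inductive step, mutate at $k\in J$. Columns indexed by $j\in J$ are updated by \eqref{alg:eq:FZmutation}. An entry $Q_{ij}$ with $i\in I$ transforms by exactly the same rule as a coefficient entry $c_{pj}$ of row $p$. The entries of rows $I$ in $J$-columns therefore behave like extra coefficient rows.

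The extra rows $I$ are $M$ times the coefficient rows, $Q^{(\ell)}_{I,J}=MC_J^{(\ell)}$. Sign coherence of $c$-vectors says each column of $C_J^{(\ell)}$ has a weak sign. Since $M\ge0$, the $k$th column $MC^{(\ell)}_{J,k}$ has the same weak sign. So for $i\in I$,
\[
[Q_{ik}]_+=\sum_p M_{ip}[c_{pk}]_+ ,
\]
and the negative parts behave the same way. Substituting this into the update
\[
Q'_{ij}=Q_{ij}+[Q_{ik}]_+[Q_{kj}]_+-[-Q_{ik}]_+[-Q_{kj}]_+
\]
gives $Q'_{ij}=\sum_p M_{ip}c'_{pj}$. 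For $j=k$ the entry simply changes sign on both sides, so linearity persists: $Q^{(\ell+1)}_{I,J}=MC_J^{(\ell+1)}$.

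For $I\times I$ entries with $i,i'\in I$, the update adds $[Q_{ik}]_+[Q_{ki'}]_+$ minus the negative product. By skew-symmetry, $Q_{ki'}=-Q_{i'k}$. Column $k$ of $MC$ has a single weak sign, so $Q_{ik}$ and $Q_{i'k}$ have the same weak sign, hence $Q_{ik}$ and $Q_{ki'}$ have opposite signs. Both products vanish, and $Q_I$ is unchanged. The $J\times J$ block mutates as $Q_J$, consistent with the definition of $C_J$.

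The final claim follows from the definition of reddening in the vertical convention: at the end all columns of $C_J$ are nonpositive, and $M\ge0$ gives $MC_J\le0$.

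The main obstacle is the invocation of sign coherence of $c$-vectors for skew-symmetric matrices. I would cite it as the known theorem of Derksen--Weyman--Zelevinsky, which is also implicit in the paper's definition of green and red vertices. I also have to check carefully that the coefficient rows mutate with the same rule as the $I$-rows, given the paper's convention of which index is row and which is column. Throughout, this is the argument of \cite[Theorem 3.2]{CaoLi}.
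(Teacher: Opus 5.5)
Your proposal is correct and is essentially the paper's proof. You use sign coherence of the columns of $C_J^{(\ell)}$ and $M\geq0$ to make the rows $I$ mutate exactly like coefficient rows; the paper phrases this through the simplified recursion $c'_j=c_j+[\epsilon q_{kj}]_+c_k$, whereas you split the positive parts, which is equivalent. As in the paper, the common direction of the arrows between $I$ and $k$ kills every length-two path from $I$ back to $I$, reddening gives $C_J\leq0$, and the convention check you flag is fine because the $(I\cup J)\times J$ block mutates autonomously as an extended matrix with frozen rows $I$.
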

\begin{proof}
Write $c_k$ for the $k$th coefficient column and $\epsilon$ for its
sign. Ordinary sign coherence gives
\[ c'_k=-c_k,\qquad c'_j=c_j+[\epsilon q_{kj}]_+c_k\quad(j\ne k). \]
If $r_k=M c_k$, nonnegativity of $M$ implies $\epsilon r_k\geq0$,
even when $r_k=0$. The matrix mutation formula therefore gives
$r'_k=-r_k$ and $r'_j=r_j+[\epsilon q_{kj}]_+r_k$.
This proves the first identity in
\eqref{gm:eq:connecting-coefficients} by induction. All arrows joining
$I$ to the current vertex $k$ have one direction, so no path of length
two from $I$ back to $I$ passes through $k$. Hence $Q_I$ is unchanged.
The last assertion follows from $C_J^{(\ell)}\leq0$.
\end{proof}

For a split skew-symmetric matrix, vertical coefficients are the
negative transpose of our horizontal coefficients. For an original
matrix $B$, the corresponding vertical convention uses
$H=-B^{\mathsf T}=D^{-1}BD$. Positive diagonal conjugation preserves
the signs in the coefficient pattern. Thus these conventions use the
same reddening sequences; a triangular extension with arrows from $I$
to $J$ is treated by mutating $J$ first.

\begin{theorem}\label{gm:thm:triangular-extension}
Let $B$ be a triangular extension of $B_I$ and $B_J$, with fixed
positive integral degrees $d$ satisfying $BD=-(BD)^{\mathsf T}$.
Suppose both restrictions admit reddening sequences. A two-acyclic
modulation $A$ of $(B,d)$ is good if and only if its restrictions
$A_I,A_J$ are good. There is no further condition on the connecting
bimodules.
\end{theorem}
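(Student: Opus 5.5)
Necessity is immediate from Proposition~\ref{gm:prop:gauge}: restricting to a subset of original vertices preserves goodness, so if $A$ is good then $A_I$ and $A_J$ are good. For sufficiency the plan is to apply Theorem~\ref{gm:thm:reddening} to one explicit reddening sequence of $B$. Suppose the connecting arrows point from $I$ to $J$. Following the remark after Lemma~\ref{gm:lem:connecting-coefficients}, take $w=w_Jw_I$: first a reddening sequence $w_J$ of $B_J$, then a reddening sequence $w_I$ of $B_I$. It then suffices to check two things. First, $w$ is reddening for $B$. Second, the signed type multiplicities of $A$ stay sign-coherent after every prefix of $w$.

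\emph{Step 1 (the $J$-phase).} Mutating only at vertices of $J$ leaves the $I$-block unchanged, since no length-two path from $I$ to $I$ passes through $J$. The $J$-block evolves exactly as $A_J$ does, and $A_J$ is good, so type vectors inside $J$ stay coherent. For the connecting pairs $(i,j)$, I would prove a split analogue of Lemma~\ref{gm:lem:connecting-coefficients}. Work with the split skew-symmetric matrix $C$, on which orbit mutations are ordinary mutations at the commuting sheets. Each split vertex over $i\in I$ behaves like a frozen row of nonnegative connecting entries $M$. By the lemma applied to the split matrix, the split block $C^{(\ell)}_{I,J}$ equals $M^{\mathrm{spl}}C_J^{\mathrm{spl},(\ell)}$. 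This is legitimate because, at each step, orbit mutation is a sequence of split mutations at nonadjacent vertices, and the prefix is admissible inside $J$ by goodness of $A_J$.

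Each column of this product has one weak sign. By cyclic symmetry, the columns over a fixed orbit $I_j$ share that sign: sign coherence of the split $c$-vectors for the sheets of one orbit follows from the folded coefficient being sign-coherent, via \eqref{gm:eq:normal-projection}. Hence every entry of $m_{ij}$ has one weak sign. At the end of $w_J$ the $J$-part of the coefficients is red, and $M^{\mathrm{spl}}C_J\le 0$. So all connecting arrows now point from $J$ to $I$, and the coefficient rows of $J$ are nonnegative.

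\emph{Step 2 (the $I$-phase).} The matrix now has the form of a triangular extension in the reverse direction, with $I$ as the receiving side. Mutating along $w_I$ only uses vertices of $I$, so the same lemma applies with roles exchanged. The $I$-block evolves as $A_I$ does, which is good, and the connecting block stays single-signed columnwise by the same argument. The $J$-block is unchanged. I also expect the coefficient rows of $J$ to stay nonnegative. Since $w_J$ has made them red, during the $I$-mutations they only receive nonnegative multiples of red rows; this uses sign coherence of $c$-vectors exactly as in Lemma~\ref{gm:lem:connecting-coefficients}. The rows of $I$ end red because $w_I$ is reddening for $B_I$ and its principal part is unaffected by $J$. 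So $w$ is reddening for $B$, and Theorem~\ref{gm:thm:reddening} gives goodness.

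\emph{Main obstacle.} The delicate point is Step 1's claim that a split connecting column block inherits a single sign across all sheets of an orbit and all simple types, for arbitrary connecting types. My first attempt would run the proof of Lemma~\ref{gm:lem:connecting-coefficients} directly on $C$, with $M^{\mathrm{spl}}\ge0$. Arbitrary connecting types only change $M^{\mathrm{spl}}$, never its sign, which is why there is no further condition on the connecting bimodules. I would then combine this with the projection identity \eqref{gm:eq:normal-projection}, which shows that the split $c$-vectors over one orbit all have the folded sign. A second check is the bookkeeping that coefficient rows of $J$ remain red through the $I$-phase.
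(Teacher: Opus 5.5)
Your proposal is correct and follows the paper's proof. As in the paper, you get necessity by restriction, and for sufficiency you check admissibility phase by phase along $w_Jw_I$, apply Lemma~\ref{gm:lem:connecting-coefficients} to the split matrix, and then invoke Theorem~\ref{gm:thm:reddening}. There are two minor differences, and both work:
\begin{itemize}
\item The paper cites Cao--Li for the reddening property of $w_Jw_I$, while you verify it directly. Indeed, let $\epsilon$ be the sign of the mutating coefficient row $k\in I$; then the rows of $J$ receive the multiple $[\epsilon b_{jk}]_+=0$ throughout the $I$-phase.
\item You obtain the common sign over an orbit from \eqref{gm:eq:normal-projection}, whereas the paper uses cyclic equivariance of the split coefficient matrix.
\end{itemize}
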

\begin{proof}
Restriction preserves goodness by Proposition~\ref{gm:prop:gauge}.
Conversely, suppose the two restrictions are good and the connecting
arrows point from $I$ to $J$. Choose reddening sequences $w_I,w_J$.
Their concatenation $w_Jw_I$ is reddening for $B$ by
\cite[Theorem 4.5 and Remark 4.6]{CaoLi}, applied in the convention
just described.

Split all vertex fields over one common cyclic extension. Denote the
sheet sets by $\widetilde I,\widetilde J$ and their connecting matrix
by $M\geq0$. Goodness of $A_J$ makes every prefix of the orbit lift
of $w_J$ admissible inside $\widetilde J$. By \eqref{gm:eq:normal-projection}, each terminal split normal projects
to the corresponding folded normal. A nonzero sign-coherent vector
has the same sign as its projection, so the lifted sequence is
reddening. During its individual mutations,
Lemma~\ref{gm:lem:connecting-coefficients} gives the connecting
matrix $M C_J^{(\ell)}$ and keeps the quiver on $\widetilde I$ unchanged.

At the end of every complete orbit mutation, the principal coefficient
matrix is equivariant under the cyclic action on its mutable and frozen
sheet indices. Its columns above one original vertex are therefore
permutations of one another. They are nonzero and sign-coherent, so
they all have the same sign. Multiplication by $M\geq0$ preserves
this weak sign. Thus all connecting arrows between each pair of
original vertices have a common direction. There are no arrows within
an orbit in $\widetilde I$, since its whole quiver is unchanged.
Together with goodness on $J$, this proves admissibility of the entire
modulation after every prefix of $w_J$.

At the end of $w_J$, the connecting matrix is nonpositive, so all
connecting arrows point from $J$ to $I$. The restriction to $I$ is
still the original good modulation. Apply the same argument to $w_I$,
with $J$ now fixed and $I$ mutating. The entire modulation remains
admissible along $w_Jw_I$. Theorem~\ref{gm:thm:reddening} proves
that it is good.
\end{proof}

The theorem also counts the good isomorphism classes over the fixed
vertex algebra. Write $d_I,d_J$ for the restricted degree vectors and put $g_{ij}=\gcd(d_i,d_j)$ and
$N_{ij}=|b_{ij}|g_{ij}/d_i$, as in
Proposition~\ref{gm:prop:finitecandidates}. Restriction and the
independent choices of connecting types give
\begin{equation}\label{gm:eq:triangular-count}
 \nu(B,d)=\nu(B_I,d_I)\nu(B_J,d_J)
 \prod_{i\in I,\,j\in J}
 \binom{N_{ij}+g_{ij}-1}{g_{ij}-1}.
\end{equation}
Every factor counts a weak composition of $N_{ij}$ into $g_{ij}$
parts; a zero connecting pair contributes one. The count is not a
quotient by vertex-field automorphisms.

Let $\mathfrak P$ be the smallest class containing the one-vertex zero
matrix and closed under mutations and triangular extensions of two
members. Let $\mathfrak P'$ be defined using sink--source extensions
instead. These are the classes denoted $\mathcal P,\mathcal P'$ in
\cite[Section 2.2]{Cao}. We include the empty matrix with its zero
modulation. Thus $\mathfrak P'\subseteq\mathfrak P$.

\begin{corollary}\label{gm:cor:class-P}
For every $B\in\mathfrak P$ and every positive integral $d$ with
$BD$ skew-symmetric, $\Good(B,d)$ is nonempty. A construction of $B$
by extensions and mutations gives its good modulations recursively
and computes $\nu(B,d)$ by \eqref{gm:eq:triangular-count} and mutation
invariance. Every matrix in $\mathfrak P$ admits a reddening sequence.
\end{corollary}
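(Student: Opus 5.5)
We will prove the three assertions together by structural induction on a construction of $B\in\mathfrak P$ from the one-vertex zero matrix by triangular extensions and mutations. The inductive statement, for every $B\in\mathfrak P$ and every admissible $d$, is the following: $B$ admits a reddening sequence, $\Good(B,d)\neq\emptyset$, and $\nu(B,d)$ is computed recursively by the construction. The base cases are immediate. The empty matrix has its zero modulation. The one-vertex zero matrix has the unique zero modulation, which is trivially good, and the empty sequence (or the single mutation at that vertex) is reddening, since $(0\mid -1)$ becomes $(0\mid 1)$.

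For the mutation step, suppose $B$ has the three properties and $B'=\mu_kB$, with the same $d$. Then $d$ is still a symmetrizer, since $\mu_k$ preserves skew-symmetrizability by $D$. Proposition~\ref{gm:prop:bijections} gives a bijection $\Good(B,d)\to\Good(B',d)$. Hence $\Good(B',d)$ is nonempty with $\nu(B',d)=\nu(B,d)$, and the good modulations of $B'$ are the mutations of those of $B$. For the reddening sequence, we invoke the known mutation invariance of the existence of reddening sequences (Muller's theorem, in the skew-symmetrizable form via scattering diagrams). Concretely, if $w$ is reddening for $B$, then a reddening sequence for $\mu_kB$ is obtained from $(k)\cdot w$, possibly after accounting for the final permutation. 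Equivalently, the total wall-crossing element $g$ of the cluster scattering diagram is intrinsic to the seed pattern, and the existence of a finite chamber path from the positive to the negative chamber is unchanged by moving the base chamber across one wall.

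For the triangular-extension step, let $B$ be a triangular extension of $B_I,B_J\in\mathfrak P$, and let $d$ be admissible for $B$. The restrictions $d_I,d_J$ are admissible for $B_I,B_J$, since the principal submatrices of a skew-symmetric $BD$ are skew-symmetric. By induction both restrictions have reddening sequences $w_I,w_J$ and nonempty good sets. As in the proof of Theorem~\ref{gm:thm:triangular-extension}, \cite[Theorem 4.5]{CaoLi} shows that $w_Jw_I$ is reddening for $B$. Next we choose $A_I\in\Good(B_I,d_I)$ and $A_J\in\Good(B_J,d_J)$. For each pair $i\in I$, $j\in J$, we choose any connecting bimodule with $N_{ij}$ simple summands in the prescribed direction; one exists by Proposition~\ref{gm:prop:finitecandidates}. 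The resulting modulation is two-acyclic, because all connecting arrows point one way. Theorem~\ref{gm:thm:triangular-extension} then makes it good, and shows that every good modulation arises this way. This gives \eqref{gm:eq:triangular-count}.

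The main obstacle is the reddening-sequence claim under mutation, since the paper has not yet stated mutation invariance of reddening existence. We would cite it from the literature; alternatively, we would derive it from the scattering description recalled before Lemma~\ref{gm:lem:local-nonvanishing}, where a reddening sequence corresponds to a product of facet crossings equal to $g$, a condition that transports across one mutation. The remaining steps are direct applications of the stated results.
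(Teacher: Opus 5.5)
Your proposal is essentially the paper's proof: induction over the construction, using \cite[Theorem 4.5]{CaoLi} for triangular extensions and mutation invariance of the existence of reddening sequences (which the paper also simply asserts), Theorem~\ref{gm:thm:triangular-extension} with its converse for goodness and \eqref{gm:eq:triangular-count}, and Proposition~\ref{gm:prop:bijections} at mutation steps. Two small slips appear in your side remarks: for the one-vertex matrix the empty sequence is not reddening (only the single mutation is), and the explicit reddening sequence for $\mu_kB$ is $k$, then $w$, then one further mutation at the vertex matched with $k$ by the terminal permutation; already for one vertex, $(k)\cdot w=(1,1)$ returns $(0\mid-1)$ to itself.
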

\begin{proof}
Existence of a reddening sequence is preserved by triangular extension
\cite[Theorem 4.5 and Remark 4.6]{CaoLi} and by mutation. 
The one-vertex sequence is reddening, so induction proves the final assertion.

Now fix the degrees at the terminal matrix and follow a construction
backwards to assign their restrictions at each extension step.
Mutation keeps the same degrees. At a single vertex the modulation is
unique and good. At an extension step choose the previously constructed
good restrictions and any connecting multiplicities. Such choices
exist because every $N_{ij}$ is integral. Theorem~\ref{gm:thm:triangular-extension}
proves goodness and \eqref{gm:eq:triangular-count} gives the number of
classes. At a mutation step use the bijection of
Proposition~\ref{gm:prop:bijections}. The reverse implication in the
extension theorem shows that no good class is omitted.
\end{proof}

For an acyclic matrix, repeatedly removing a sink gives a construction
by sink--source extensions. The extension theorem then shows that
\emph{every} two-acyclic modulation of that matrix is good. In
contrast, a matrix in $\mathfrak P$ can have other candidates that are
not good: the triangle in Example~\ref{ex:triangle-good-choice} is
mutation-equivalent to an acyclic quiver, but only one of its three
candidates is good. More generally, the formula
\eqref{gm:eq:triangular-count} applies whenever the two restrictions
admit reddening sequences, whether or not they belong to $\mathfrak P$.

\section{Realizations of good modulations}\label{sec:good-realization}
We now construct nondegenerate potentials on a good modulation. Each
finite mutation sequence imposes open conditions on finitely many
coefficients. We show that these conditions can be satisfied
simultaneously, together with Jacobi-finiteness when a finite-dimensional
realization exists.

A \emph{finite jet} of a potential consists of its terms through a
fixed arrow degree. An \emph{open cylinder} is the inverse image of a
Zariski-open subset of a finite jet space. We call a countable
intersection of nonempty open cylinders a \emph{very-general locus}.

\subsection{Open conditions on finitely many coefficients}\label{alg:subsec:nondegeneracy}
We first show that nondegeneracy along a finite sequence and finite-dimensionality of the Jacobian algebra are open conditions on finite jets.

\begin{proposition}\label{alg:prop:wordchart}
Let $Q$ be a two-acyclic ordinary quiver over an infinite field. For each finite mutation sequence, there is a nonempty open subset of its potential space, determined by finitely many coefficients, on which all successive reduced mutations are two-acyclic. On suitable principal open subsets, every fixed finite jet of the resulting potential is a regular function of the initial coefficients.
\end{proposition}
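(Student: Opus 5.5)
We prove the statement by induction on the length $r$ of the mutation sequence $w=(k_1,\ldots,k_r)$. We follow the DWZ genericity argument \cite[Sections 7--8]{DWZ1}, but keep track of which coefficients enter and of the regularity of the mutated jets. For $r=0$ there is nothing to prove, since $Q$ is two-acyclic. The inductive step rests on one claim about a single mutation. Let $(Q,S)$ be a reduced QP with $Q$ two-acyclic. Then on a suitable principal open subset of a finite jet space, premutation at $k$ followed by quadratic splitting produces a reduced QP, and each fixed finite jet of the reduced potential is a regular function of finitely many initial coefficients.

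\emph{Step 1: premutation.} The premutation $\widetilde S=[S]+\sum_{a,b}[ab]b^*a^*$ depends linearly, hence regularly, on the coefficients of $S$. Its quadratic part $\widetilde S^{(2)}$ pairs the new composite arrows $[ab]$ with the old opposite arrows $c:j\to i$. It is given by a matrix whose entries are cubic coefficients of $S$ on the triangles $a b c$. This matrix is therefore a linear function of finitely many coefficients.

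\emph{Step 2: splitting with rational coordinates.} On the principal open set where a fixed maximal minor of this pairing matrix is nonzero, a change of basis by matrices whose entries are regular functions (polynomials divided by that minor) puts $\widetilde S^{(2)}$ in normal form. The set is nonempty because the generic rank equals the maximal rank, and for the ordinary quiver mutation the count of $2$-cycles is $\min$ of the two multiplicities. We then run the cancellation of Theorem~\ref{alg:thm:generic-splitting}. Each iteration is a substitution $c\mapsto c-h(c)$, where $h$ is obtained from the inverse quadratic pairing. Its coefficients are again regular on the same principal open set. The correction at step $n$ has depth at least $n$, by the estimate \eqref{alg:eq:unitriangular-estimate}. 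Hence every fixed jet of $S_{\red}$ stabilizes after finitely many iterations and is a regular function of a finite jet of $S$ on that principal open set. Maximal rank means all opposite pairs cancel. Then $\mu_k(Q,S)$ has the mutated two-acyclic quiver $\mu_kQ$, and this remains true whenever the rank is maximal.

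\emph{Step 3: induction.} The condition for the next mutation at $k_2$ is that a certain minor be nonzero. That minor is a polynomial in finitely many jet coefficients of $S_1=\mu_{k_1}S$. By Step 2, composing it with the regular map from a finite jet of $S$ gives a regular function on a principal open subset. Iterating, after $r$ steps we have finitely many regular functions whose nonvanishing defines the desired open set $U_w$. Each function depends on a jet of $S$ of bounded degree, bounded in terms of the cubic jets needed at each stage.

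\emph{Main obstacle: nonemptiness.} The hardest point is that $U_w$ is nonempty. Each function is nonzero on its own domain, but we must show that the composite conditions are not identically zero on the previous open set. We handle this with a nonzero witness at every stage, relying on two facts. First, the map from jets of $S$ to jets of $\mu_kS$ is dominant, in fact surjective onto reduced jets, modulo right-equivalence. Second, by involutivity (Proposition~\ref{alg:prop:double-premutation}), every reduced potential on $\mu_kQ$ arises from some potential on $Q$ up to right-equivalence. Rank conditions are invariant under right-equivalence, since it acts on quadratic parts by invertible linear maps. So if a condition at stage $m+1$ is nonempty on the potential space of $\mu_{k_m}\cdots\mu_{k_1}Q$, its pullback is nonempty. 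Each single condition is nonempty on its own potential space: taking generic cubic triangle coefficients gives a pairing of maximal rank. The base field is infinite, so over an irreducible finite jet space a finite intersection of nonempty principal opens is nonempty.
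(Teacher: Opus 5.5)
Your one-step analysis in Steps 1--3 is fine and agrees with the paper. The nonemptiness argument, which you correctly identify as the crux, rests on a false claim. It is not true that every reduced potential on $\mu_kQ$ arises, up to right-equivalence, from a potential on $Q$, and the jet map is not surjective onto reduced jets. Suppose $S'$ on $Q'=\mu_kQ$ were right-equivalent to $\mu_kS$ for some $S$ on $Q$. Involutivity would then give $\mu_k(Q',S')\simeq(Q,S)$, which is two-acyclic. So only potentials in the open set of potentials on $Q'$ whose mutation at $k$ is two-acyclic can arise.

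For instance, take $Q=1\xrightarrow{a}2\xrightarrow{b}3$ and $k=2$. Every potential coming from $Q$ has nonzero cubic coefficient on $[ab]b^*a^*$ on the oriented triangle $\mu_2Q$. The zero potential on that triangle, mutated back at $2$, leaves uncancelled the $2$-cycle formed by $[b^*a^*]$ and $[ab]$.

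Consequently your step ``if a condition at stage $m+1$ is nonempty on the potential space of $\mu_{k_m}\cdots\mu_{k_1}Q$, its pullback is nonempty'' is unjustified. The condition has to meet the set of stage-$m$ potentials that actually come from $Q$, namely those along which the reversed sequence $k_m,\ldots,k_1$ stays two-acyclic. Nonemptiness of that set is itself an instance of the proposition. Single-step dominance does not settle this for composites without further argument.

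The missing idea is to organize the induction so that it supplies exactly this, as the paper does following \cite[Proposition 7.3]{DWZ1}. Apply the induction hypothesis to $\mu_{k_1}Q$ and $(k_2,\ldots,k_r)$. Intersect that nonempty open with the nonempty one-step open on $\mu_{k_1}Q$ for mutating back at $k_1$. Take a point of the intersection and mutate it back at $k_1$; by involutivity this gives an initial potential $S$ along which the entire sequence is two-acyclic.

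Only then choose, at each stage, a maximal minor that is nonzero at the image of this particular $S$, and pull these conditions back. The resulting polynomials in the initial coefficients are nonzero because they do not vanish at $S$. This also repairs a smaller point in your Step 3. Nonvanishing of a fixed minor is not invariant under right-equivalence; only the rank is. So minors chosen merely to be not identically zero on each stage's own potential space are not guaranteed to be nonzero anywhere on the image of your particular jet maps.
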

\begin{proof}
For one mutation, the quadratic matrices pair arrows not incident with the mutated vertex with the newly created composites. Their entries are coefficients of cyclic paths of length three in the initial quiver. These coefficients are independent. Choose a maximal minor in each matrix that is not identically zero. Where these minors do not vanish, Gaussian elimination and the unitriangular substitutions express each fixed finite jet of the reduced potential by regular functions.
Proceed by induction on the length of the sequence. On the quiver
after the first mutation, intersect the open subset supplied by the
induction hypothesis with a one-step open subset for the inverse mutation.
Both are nonempty open subsets of the same affine jet space after passing
to a common finite jet, so their intersection is nonempty. Mutating a
point in this intersection back, and using involutivity, gives an initial
potential for which the entire sequence is defined. Choose minors that
do not vanish at this point and pull back the remaining open conditions.
Clearing denominators gives a nonzero polynomial in the initial
coefficients. This is the argument of \cite[Proposition 7.3]{DWZ1}.
\end{proof}

\begin{proposition}\label{alg:prop:finitenessopen}
Let $Q$ be a loop-free quiver over $K$. If $S_*$ has a finite-dimensional completed Jacobian algebra of dimension $D$, then an open cylinder containing $S_*$ consists of potentials whose completed Jacobian algebras have dimension at most $D$. Pulling back this cylinder gives the same assertion for any algebraic family of potentials containing $S_*$.
\end{proposition}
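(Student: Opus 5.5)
The plan is to reduce to a finite truncation, using the same mechanism as Lemma~\ref{alg:lem:toplayer} and Proposition~\ref{alg:prop:finite-algorithm}. Since $\mathcal P(Q,S_*)$ has dimension $D$, its arrow ideal is nilpotent of index at most $D$. Hence $\m_K^{D}\subseteq J(S_*)$, and in particular $\m_K^{N}\subseteq J(S_*)+\m_K^{N+1}$ with $N=D$. Work in the finite truncation $T_{\le N}=T_K/\m_K^{N+1}$. For any potential $S$, let $N_N(S)\subseteq T_{\le N}$ be the span of all products $u\,\partial_aS\,v$ of cyclic derivatives with paths. Only terms of $S$ of arrow degree at most $N+2$ contribute, since derivatives lose one degree and loops are absent. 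Thus $N_N(S)$ depends only on a finite jet of $S$.

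Choose a finite spanning family of the products $u\,\partial_aS\,v$ in $T_{\le N}$, indexed by paths $u,v$ of bounded length and arrows $a$. This family is independent of $S$, and the entries of its vectors are linear in the jet coefficients of $S$. The quotient $Q_S=T_{\le N}/N_N(S)$ then has dimension equal to $\dim T_{\le N}$ minus the rank of a matrix whose entries are polynomial in these coefficients. Let $r_*$ be the rank at $S_*$. The condition that this rank be at least $r_*$ is Zariski-open, given by the nonvanishing of some $r_*\times r_*$ minor, and it contains $S_*$. On this cylinder $\dim Q_S\le\dim Q_{S_*}$. At $S_*$, the quotient $Q_{S_*}$ is $\mathcal P(Q,S_*)/\m^{N+1}=\mathcal P(Q,S_*)$, so its dimension is $D$.

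I also want the condition ``the degree-$N$ layer vanishes in $Q_S$'' to be open. This condition says that the image of $\m^N$ in $Q_S$ is zero, i.e. that $N_N(S)+\m^N/\m^{N+1}$ is all of $T_{\le N}$. That is again a lower bound on the rank of an enlarged matrix formed by adding fixed basis vectors of degree $N$, and it holds at $S_*$. On the intersection of these two open sets, Proposition~\ref{alg:prop:finite-algorithm} (via Lemma~\ref{alg:lem:toplayer}) gives $\m^N\subseteq J(S)$. Then $\mathcal P(Q,S)=Q_S$, and its dimension is at most $D$.

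The step I expect to need care is the claim that the truncated span $N_N(S)$ equals the image of $J(S)$ in $T_{\le N}$. This should hold because closure plays no role in a finite-dimensional discrete quotient, and infinite sums reduce to finite ones modulo $\m^{N+1}$. The pullback statement is then immediate: an algebraic family gives regular maps to the finite jet space, and the preimage of the open cylinder is open and contains the parameter point of $S_*$.
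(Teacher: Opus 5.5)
Your setup matches the paper's proof. You truncate at $T_{\le N}$ with $N\ge D$, and you view the span $N_N(S)$ of the products $u\,\partial_aS\,v$ as the column space of a matrix that is polynomial in a finite jet of $S$. You then keep a maximal minor that is nonzero at $S_*$. Your identification of $N_N(S)$ with $(J(S)+\m^{N+1})/\m^{N+1}$ is also correct, for the reason you give.

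The gap is your second open condition. ``The image of $\m^N$ in $Q_S$ is zero'' means $\m^N/\m^{N+1}\subseteq N_N(S)$. It does not mean $N_N(S)+\m^N/\m^{N+1}=T_{\le N}$, which never holds, since $N_N(S)$ lies in the image of $\m$ and $R$ survives. Containment of a fixed subspace $W$ in a varying column space is the condition $\rank[M(S)\mid W]=\rank M(S)$. This is an equality, not a lower bound on the rank of the enlarged matrix, and such loci are not open in general. For example, the column space of $(s,1)^{\mathsf T}$ contains $e_2$ only at $s=0$, even though the rank is constant. So you have not shown that this condition holds on a neighbourhood of $S_*$.

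The missing observation is that the second condition is redundant. On your first open set, $\dim Q_S\le D\le N$. Let $\bar\m$ be the image of $\m$ in $Q_S$. Then $\bar\m^{N+1}=0$. If $\bar\m^j=\bar\m^{j+1}$, then $\bar\m^{j+1}=\bar\m\,\bar\m^{j+1}=\bar\m^{j+2}$, and inductively $\bar\m^j=\bar\m^{N+1}=0$. So the powers of $\bar\m$ strictly decrease until they vanish. Since $Q_S/\bar\m\cong R\ne0$, we have $\dim\bar\m\le D-1$, hence $\bar\m^D=0$. This gives
\[ \m^D\subseteq J(S)+\m^{N+1}\subseteq J(S)+\m^{D+1}, \]
and Lemma~\ref{alg:lem:toplayer} gives $\m^D\subseteq J(S)$. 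This is the numerical stopping condition $\dim_K(T_{\le n,K}/N_{n,K})\le n$ of Proposition~\ref{alg:prop:finite-algorithm}, and it is exactly how the paper concludes: the single rank condition at $S_*$ suffices. With this replacement your argument is complete, including the pullback statement.
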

\begin{proof}
Choose $n\ge D$ with $\m^n\subseteq J(S_*)$. In $KQ/\m^{n+1}$,
form the matrix whose columns are the cyclic derivatives multiplied by
paths on both sides. Its entries depend polynomially on the potential
coefficients through degree $n+1$. Choose a minor of size equal to the
rank of this matrix at $S_*$ that does not vanish there. On its
nonvanishing locus the truncated quotient has dimension $q\le D\le n$.
Its arrow ideal is nilpotent and has $q$th power zero. The argument of
Proposition~\ref{alg:prop:finite-algorithm} gives $\m^q\subseteq J(S)$,
so the truncated quotient equals the completed Jacobian algebra.
\end{proof}

On an irreducible finite-jet family in this open cylinder, restrict further to the open subset where the relation matrix in each $e_iT_{\le n,K}e_j$ has maximal rank. The dimensions $\dim_K e_i\mathcal P(A,S)e_j$ are constant there, and the corresponding quotients form locally free sheaves. At other points their dimensions can be larger.

\subsection{The fixed-modulation realization theorem}

Write $\Good_{\mathrm{fin}}(B,d)$ for the good modulations whose
ordinary split quivers admit a Jacobi-finite potential. Both conditions
refer to the same modulation. The general realization theorem treats
this finite-dimensionality condition separately. For matrices with a
reddening sequence, Corollary~\ref{gm:cor:reddening-finite} will show
that it follows from goodness. Corollary~\ref{gm:cor:class-P} then
supplies such modulations for every matrix in $\mathfrak P$.

\begin{proposition}\label{gm:prop:exact-opens}
For each finite mutation sequence $w$ of vertices of the species, the locus
$\mathcal N_w(A)$ of potentials nondegenerate along $w$ is a possibly
empty open cylinder depending on finitely many cyclic coefficients.
The nondegenerate locus is exactly
\[ \mathcal N(A)=\bigcap_w\mathcal N_w(A). \]
If $A$ is good, every $\mathcal N_w(A)$ contains a nonempty principal
open cylinder defined over $K$.
\end{proposition}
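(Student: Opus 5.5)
The plan is to reduce everything to the ordinary split quiver and then use Galois descent. Choose the common splitting field $E=\Kram_{\ell_0}$. A potential on $A$ over $R$ becomes a $\Gal(E/K)$-invariant potential on the split quiver $Q^{\mathrm{spl}}$ over $E$. Mutation at a vertex $k$ of the species is the commuting product of the ordinary mutations at the sheets above $k$. These sheets have no arrows among themselves, by admissibility and Proposition~\ref{gm:thm:typemutation}. Since the splitting and premutation constructions of Section~\ref{alg:sec:reduction} descend, two-acyclicity over $R$ of each successive reduced mutation along $w$ can be checked after extension to $E$. There it means that the quadratic pairing matrices have the maximal possible rank in each simple type. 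Here the maximal rank is $\min(u_s,v_s)$, as in Lemma~\ref{gm:lem:obstruction}, and the resulting type vectors are sign-coherent.

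\emph{Openness and finiteness.} I would proceed by induction on the length of $w$. At each step I would write the reduced potential's fixed finite jets as regular functions of finitely many initial cyclic coefficients, on principal opens where chosen minors of the quadratic matrices do not vanish. This is Proposition~\ref{alg:prop:wordchart}, applied to the split quiver but restricted to the invariant subspace of coefficients, that is, to the $K$-points of the potential space over $R$. The condition "nondegenerate along $w$" then becomes: at each step, the rank of the typewise quadratic matrix equals its generic maximum $\min(u_s,v_s)$. Since rank is lower semicontinuous, this is an open condition on a finite jet, so $\mathcal N_w(A)$ is an open cylinder, possibly empty. The equality $\mathcal N(A)=\bigcap_w\mathcal N_w(A)$ is then just the definition of nondegeneracy.

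\emph{Nonemptiness for good $A$.} This is the main step. I would again use induction on $w$, following the inverse-mutation trick of Proposition~\ref{alg:prop:wordchart}. For the first mutation at $k$, goodness says that the mutated type vectors are sign-coherent. It therefore suffices to exhibit one invariant potential whose quadratic part after premutation has full typewise rank. The quadratic terms come from cubic cycles $c\,(ab)$, where $c$ is an outside arrow of type $s$ opposite to a composite summand of the same type. I would take the cubic potential to be the sum of coevaluation-type pairings that identify each such arrow summand of $A^\circ$ with a matching summand of $C=M\otimes N$. Such an identification exists as an $R$-bimodule map because both are semisimple and the matching is typewise. This sum descends to $K$ and realizes the maximal rank, so the corresponding minor is a nonzero polynomial over $K$, and we get a nonempty principal open subset defined over $K$. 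For longer $w$, I would apply the induction hypothesis to $\mu_k A$, which is good by Proposition~\ref{gm:prop:bijections}. I would intersect the resulting open set with the one-step open set for the inverse mutation, within a common jet space of invariant potentials. Both sets are nonempty principal opens in an irreducible affine space over the infinite field $K$, so they intersect. Pulling back by involutivity (Proposition~\ref{alg:prop:double-premutation}) and clearing denominators then gives a nonzero defining polynomial over $K$.

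The main obstacle is making the pullback regular on invariant coefficients over $K$. The reduction uses Gaussian elimination and unitriangular substitutions that must be $R$-bimodule maps. I would handle this by averaging the linear corrections, as in Lemma~\ref{alg:lem:square} and Theorem~\ref{alg:thm:generic-splitting}, and by choosing minors typewise, so that the pivots are Galois-stable.
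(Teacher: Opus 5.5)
Your proof is correct. The openness part and the equality $\mathcal N(A)=\bigcap_w\mathcal N_w(A)$ match the paper: you use typewise maximal rank together with the potential-independent sign check, and cover the locus by finitely many minor-opens on which the finite jets are regular.

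For nonemptiness, however, you take a genuinely different and longer route. The paper runs no induction on the species. It applies Proposition~\ref{alg:prop:wordchart} once, to the lifted sequence of sheet mutations on the split quiver. This gives a single nonzero polynomial $f$ over $E$ whose nonvanishing forces ordinary nondegeneracy. Since $E\otimes_K\Pot_r(A)$ is the \emph{whole} split cyclic space (a semilinear $K$-form, not a constant-coefficient invariant subspace), $f$ stays nonzero when written in $K$-coordinates. The product of its coefficient Galois conjugates is then a nonzero $K$-polynomial defining the required principal open. Goodness enters only at the end, to turn sheetwise two-acyclicity into two-acyclicity over $R$.

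You instead redo the DWZ induction inside the $K$-forms. Your ingredients are an explicit one-step witness (the typewise coevaluation cubic), goodness of $\mu_kA$ from Proposition~\ref{gm:prop:bijections}, and the inverse-mutation intersection in the jet space of $\mu_kA$. This gives explicit witnesses and needs no appeal to density of $K$-rational potentials in the split space. But it is exactly what creates your ``main obstacle'' of equivariant, $K$-regular reduction charts.

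That obstacle can be bypassed even within your scheme. Mutating back a point of the intersection gives $S\in\mathcal N_w(A)$. The openness you already established then yields a principal $K$-open around $S$, once each $E$-defined condition, restricted to $K$-points, is replaced by its Galois norm. So no averaging of charts is needed.
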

\begin{proof}
For a single step, two-acyclic reduction is the maximal-rank condition
on the quadratic pairings in each simple type. If the resulting signed multiplicities have both signs on a vertex pair, Lemma~\ref{gm:lem:obstruction} makes this locus empty. Otherwise it is the union of the
opens on which an appropriate maximal minor is invertible.
On each such open subset, a finite jet of the reduced potential is
rational in finitely many source coefficients, with denominators given
by the chosen minors. Along a fixed finite sequence, only finitely many
jet degrees and choices of minors occur. Pulling back the rank
conditions therefore gives an open cylinder in a sufficiently large
source jet space.
This proves the first assertion and the asserted equality.

If $A$ is good, apply Proposition~\ref{alg:prop:wordchart} to the sequence of ordinary mutations at its split vertices. The full ramified cyclic coefficient
space is a semilinear $K$-form of the entire split coefficient space.
Write a nonzero polynomial defining a sufficient principal open in
$K$-coordinates after splitting, and take the product of its coefficient
Galois conjugates. It is nonzero over $K$. Goodness makes the resulting reduction two-acyclic at the vertices of the species as well,
so its nonvanishing is sufficient for membership in $\mathcal N_w(A)$.
\end{proof}

\begin{theorem}\label{gm:thm:realization}
For the separable cyclic calculus of Section~\ref{alg:subsec:cyclic-calculus},
the following are equivalent for an arbitrary two-acyclic modulation $A$:
\begin{enumerate}
\item $A$ is good;
\item $\mathcal N(A)\ne\varnothing$;
\item $\mathcal N_w(A)\ne\varnothing$ for every finite mutation sequence $w$ of vertices of the species;
\item the full ramified potential space contains a nonempty very-general
locus of nondegenerate potentials, obtained by avoiding
countably many proper Zariski-closed conditions, each depending on
finitely many coefficients.
\end{enumerate}
When these equivalent conditions hold, the locus contains a Jacobi-finite potential if and only if the split quiver of $A$ admits a Jacobi-finite potential.
\end{theorem}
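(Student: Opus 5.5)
The plan is to prove the cycle of implications (i)$\Rightarrow$(iii)$\Rightarrow$(iv)$\Rightarrow$(ii)$\Rightarrow$(i), and then to treat the Jacobi-finite addendum separately.

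\emph{(ii)$\Rightarrow$(i).} A nondegenerate potential makes every reduced mutation two-acyclic over $R$. By Lemma~\ref{gm:lem:obstruction}, the signed type multiplicities after premutation are then sign-coherent. By Proposition~\ref{gm:thm:typemutation}, these multiplicities are exactly those produced by \eqref{gm:eq:typemutation}. Induction along any sequence shows that every mutated collection is admissible, so $A$ is good.

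\emph{(i)$\Rightarrow$(iii).} This is the last assertion of Proposition~\ref{gm:prop:exact-opens}.

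\emph{(iii)$\Rightarrow$(iv).} Each $\mathcal N_w(A)$ is an open cylinder. It is nonempty, indeed it contains a nonempty principal open defined over $K$, by Proposition~\ref{gm:prop:exact-opens}. There are countably many finite sequences $w$. Hence $\mathcal N(A)=\bigcap_w\mathcal N_w(A)$ is the complement of countably many proper Zariski-closed conditions, each on finitely many coefficients.

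\emph{(iv)$\Rightarrow$(ii).} The real content is showing that this countable intersection is nonempty. The potential space is an inverse limit of affine jet spaces over the uncountable field $K=\C((t))$. I would build a point jet by jet. At degree $n$, only finitely many of the countably many nonzero polynomials involve coefficients of degree at most $n$ after a suitable enumeration. The difficulty is that conditions at higher degree also constrain lower coefficients. To handle this, I would fix the coefficients of one cyclic monomial at a time, in a countable enumeration, and choose each value in $K$ avoiding countably many proper closed subsets of $K$. Each such subset is finite, or is a root set of a polynomial specialization that is nonzero by genericity of the remaining variables. This is possible since $K$ is uncountable.

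The subtle point is that, after fixing earlier coordinates, each remaining polynomial must stay not identically zero. I would maintain this inductively: at each step, choose the value outside the countably many roots of the leading coefficients of the polynomials viewed in that variable. This diagonal argument is the main obstacle. It must also respect the semilinear $K$-form of the ramified cyclic space. I would therefore work directly in $K$-coordinates of $\Cyc$, using the Galois-norm polynomials from Proposition~\ref{gm:prop:exact-opens}.

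\emph{Jacobi-finite addendum.} The forward direction is immediate: a Jacobi-finite nondegenerate potential over $R$ splits to a Jacobi-finite ordinary potential. For the converse, suppose the split quiver admits a Jacobi-finite $S_*$. Proposition~\ref{alg:prop:finitenessopen} gives an open cylinder in the split space on which Jacobi-finiteness holds. I would show that its intersection with the $K$-form is nonempty, by taking norms of defining minors as before. This requires the invariant subspace to be Zariski-dense in the split coefficient space, which holds since it is a $K$-form. Intersecting this one extra nonempty open cylinder with the countably many $\mathcal N_w(A)$, the diagonal argument above again yields a point.
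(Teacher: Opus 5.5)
Your argument is correct and uses the same ingredients as the paper: Lemma~\ref{gm:lem:obstruction} for (ii)$\Rightarrow$(i), Galois norms of the DWZ polynomials, uncountability of $K$, and one extra finite-jet open from Proposition~\ref{alg:prop:finitenessopen} for the Jacobi-finite addendum. The routing and the avoidance lemma differ.

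\emph{Routing.} The paper proves (i)$\Rightarrow$(iv) directly, taking DWZ polynomials for \emph{every} finite mutation sequence of the ordinary split quiver. The potential it obtains is therefore geometrically nondegenerate, and goodness is used only at the end, to make the orbit mutations two-acyclic over $R$. You instead pass through (iii) via Proposition~\ref{gm:prop:exact-opens}, which controls only orbit sequences. That suffices for the theorem, though it gives a weaker locus.

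\emph{Avoidance lemma.} To show that the countable intersection is nonempty, the paper chooses all coordinates algebraically independent over the countable subfield $F_0\subset K$ generated by the coefficients of the defining polynomials. A single point generic over $F_0$ then avoids every zero set at once, with no bookkeeping. Your coordinate-by-coordinate avoidance is equally valid and needs only that $K$ is uncountable. However, the inductive invariant should be stated precisely. After specializing $z_1,\dots,z_{i-1}$, regard each condition as a polynomial in the later variables with coefficients in $K[z_i]$. Then choose $z_i$ outside the roots of one nonzero such coefficient for each condition.

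Two repairs of presentation:
\begin{enumerate}
\item Statement (iv) already asserts that the locus is \emph{nonempty}. Your diagonal argument is therefore part of (iii)$\Rightarrow$(iv), and (iv)$\Rightarrow$(ii) is then immediate.
\item Drop the jet-by-jet remark that only finitely many conditions involve coefficients of degree at most $n$. It is unjustified, and your final argument does not use it.
\end{enumerate}
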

\begin{proof}
For (ii)$\Rightarrow$(i), suppose a nondegenerate potential exists. Then every reduced stage is two-acyclic over $R$. Lemma~\ref{gm:lem:obstruction} forces maximal typewise cancellation at each stage. Its multiplicity vectors must therefore follow \eqref{gm:eq:typemutation}, and every sequence remains admissible.

For (i)$\Rightarrow$(iv), choose a finite Galois splitting extension $E/K$. Degreewise ramified cyclic potential spaces become the entire ordinary cyclic potential spaces after extension to $E$. They are semilinear $K$-forms, not just constant-coefficient invariant subspaces. For every finite sequence of mutations of the ordinary split quiver, DWZ supplies a nonzero polynomial in finitely many potential coefficients whose nonvanishing implies nondegeneracy along that sequence \cite[Proposition 7.3 and Corollary 7.4]{DWZ1}. Express it in degreewise $K$-coordinates and multiply all Galois conjugates. This gives a nonzero polynomial over $K$. There are countably many sequences. The coefficients of all these polynomials generate a countable subfield of $K$. Choose the countably many coordinates algebraically independent over it. The resulting ramified potential is geometrically nondegenerate. Goodness makes its orbit-mutated quivers two-acyclic over $R$.

The implications (iv)$\Rightarrow$(ii)$\Rightarrow$(iii) follow immediately.
For (iii)$\Rightarrow$(ii), choose a nonempty principal open cylinder inside each
$\mathcal N_w(A)$ using Proposition~\ref{gm:prop:exact-opens}. There
are countably many coefficients among their defining polynomials;
algebraically independent coordinates over the subfield they generate
avoid every zero set. This proves all the equivalences.

A Jacobi-finite potential gives one additional nonempty finite-jet open by Proposition~\ref{alg:prop:finitenessopen}. Include the Galois norm of a nonzero defining polynomial among the conditions above. Conversely a Jacobi-finite ramified potential remains Jacobi-finite after splitting.
\end{proof}

The construction first gives a geometrically nondegenerate potential;
goodness ensures that its orbit mutations are two-acyclic as species.
Jacobi-finiteness contributes one further nonempty open condition on a
finite jet, which the same Galois-norm argument incorporates.

\begin{corollary}\label{gm:cor:finite-bijections}
The bijections of Proposition~\ref{gm:prop:bijections} restrict to
$\mu_k:\Good_{\mathrm{fin}}(B,d) \xrightarrow{\sim}\Good_{\mathrm{fin}}(\mu_kB,d)$.
In particular, $|\Good_{\mathrm{fin}}(B,d)|$ is a finite
mutation invariant.
\end{corollary}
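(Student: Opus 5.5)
The plan is to combine the bijection of Proposition~\ref{gm:prop:bijections} with the invariance of Jacobi-finiteness under reduced mutation in Corollary~\ref{alg:cor:mutation-finiteness}. Let $A\in\Good_{\mathrm{fin}}(B,d)$. Since $A$ is good, Proposition~\ref{gm:prop:bijections} already gives $\mu_kA\in\Good(\mu_kB,d)$. It remains to show that the split quiver of $\mu_kA$ admits a Jacobi-finite potential.

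\textbf{Construction.} By the final assertion of Theorem~\ref{gm:thm:realization}, the nondegenerate locus $\mathcal N(A)$ contains a Jacobi-finite potential $S$. Nondegeneracy makes the reduced mutation $\mu_k(A,S)$ two-acyclic over $R$. By Lemma~\ref{gm:lem:obstruction}, cancellation is maximal in every type, so the underlying modulation of $\mu_k(A,S)$ is the type mutation $\mu_kA$ given by \eqref{gm:eq:typemutation}, up to $R$-bimodule isomorphism. Corollary~\ref{alg:cor:mutation-finiteness} shows that $\mu_k(A,S)$ is Jacobi-finite over $R$; the local no-two-cycle hypothesis holds because $A$ is two-acyclic.

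\textbf{Splitting.} Extending scalars to a finite splitting field $E$ preserves finite-dimensionality, since dimensions over $K$ and $E$ agree tensor degree by tensor degree. So the split quiver of $\mu_kA$ carries a Jacobi-finite potential, and $\mu_kA\in\Good_{\mathrm{fin}}(\mu_kB,d)$. One point needs care: the split quiver must be taken over $\overline K$ or $E$, matching the convention of $\Good_{\mathrm{fin}}$. I would note that Jacobi-finiteness over $E$ implies it over any larger field, and that splitting is compatible with premutation because premutation uses only tensor products and duality.

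\textbf{Inverse and invariance.} Applying the same argument to $\mu_kB$ and using involutivity, $\mu_k\mu_kA\simeq A$ by Proposition~\ref{gm:thm:typemutation}, gives the inverse map. Hence the restriction is a bijection. Finiteness follows from $\Good_{\mathrm{fin}}\subseteq\Cand(B,d)$, which is finite by Proposition~\ref{gm:prop:finitecandidates}. Iterating along sequences then yields invariance of $|\Good_{\mathrm{fin}}(B,d)|$ under matrix mutation with the vertex numbering fixed.

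The main obstacle is the identification of the modulation of the reduced QP with the type-mutated modulation, as an $R$-bimodule and not just as an exchange matrix; this is exactly where Lemma~\ref{gm:lem:obstruction} and maximal typewise cancellation are needed.
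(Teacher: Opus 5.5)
Your proposal is correct and follows essentially the same route as the paper. You choose a Jacobi-finite nondegenerate potential via Theorem~\ref{gm:thm:realization}, transport finiteness by Corollary~\ref{alg:cor:mutation-finiteness}, identify the target modulation with the type-mutated good one by maximal typewise cancellation, and apply the inverse mutation for the converse. The details you add (scalar extension to the split quiver, the no-two-cycle hypothesis, finiteness of $\Cand(B,d)$) are exactly what the paper's brief proof leaves implicit.
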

\begin{proof}
Choose a Jacobi-finite nondegenerate potential by
Theorem~\ref{gm:thm:realization}. Mutation preserves finiteness by
Corollary~\ref{alg:cor:mutation-finiteness}. Its target has the good
mutated type multiplicity vector, so it supplies the required Jacobi-finite potential.
Apply the inverse mutation for the converse.
\end{proof}

\begin{corollary}\label{gm:cor:reddening-finite}
Suppose $B$ admits a reddening sequence. Every good modulation realizing
$(B,d)$ admits a Jacobi-finite nondegenerate potential. In particular,
$\Good_{\mathrm{fin}}(B,d)=\Good(B,d)$.
\end{corollary}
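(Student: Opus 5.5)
By Theorem~\ref{gm:thm:realization}, the nondegenerate locus of a good modulation $A$ contains a Jacobi-finite potential exactly when the split quiver of $A$ admits one. The plan is therefore to produce a Jacobi-finite potential on the ordinary split quiver $Q^{\mathrm{spl}}$ of $A$, using the reddening sequence $w$ of $B$. The final equality $\Good_{\mathrm{fin}}(B,d)=\Good(B,d)$ is then immediate, since the inclusion $\Good_{\mathrm{fin}}\subseteq\Good$ holds by definition.

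\emph{Lifting the reddening sequence.} Since $A$ is good, every prefix of $w$ stays admissible. As in the proof of Theorem~\ref{gm:thm:reddening}, the orbit lift $w^{\mathrm{spl}}$ of $w$ is then an ordinary mutation sequence of $Q^{\mathrm{spl}}$; it mutates all sheets above the current vertex, in any order, since there are no arrows inside an orbit. By \eqref{gm:eq:normal-projection}, each terminal split normal projects to the corresponding folded normal. A nonzero sign-coherent vector has the same sign as its projection, so the terminal split coefficient rows are all nonnegative, and $w^{\mathrm{spl}}$ is a reddening sequence of the skew-symmetric matrix $B^{\mathrm{spl}}$. This is exactly the argument already used in the proof of Theorem~\ref{gm:thm:triangular-extension}.

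\emph{Jacobi-finiteness from a reddening sequence.} For an ordinary two-acyclic quiver with a reddening (or maximal green) sequence, I would invoke the known result that a nondegenerate potential is Jacobi-finite. The argument runs through categorification: the reddening sequence realizes the shift $\Sigma\Gamma$ of the Ginzburg dg algebra $\Gamma$ as a reachable silting object in the derived category. Hence the cluster category is Hom-finite, and $H^0\Gamma=\mathcal P(Q,S)$ is finite-dimensional (Keller; Br\"ustle--Dupont--P\'erotin). Only the existence of one Jacobi-finite potential on $Q^{\mathrm{spl}}$ is needed. A potential that is nondegenerate along $w^{\mathrm{spl}}$ exists by Proposition~\ref{alg:prop:wordchart}, and the cited result gives its Jacobi-finiteness.

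\emph{Main obstacle.} The main difficulty is quoting the ordinary result in a form that needs nondegeneracy only along the given sequence. I would try to argue this directly. If the reduced mutations along $w^{\mathrm{spl}}$ are all two-acyclic and end at the reddened seed, then the final QP $\mu_{w^{\mathrm{spl}}}(Q,S)$ is isomorphic to $(Q,S)$ up to a vertex permutation and right-equivalence. This is the statement that the terminal simples are the shifted initial simples, whose Jacobian data agree. The outside-subalgebra isomorphisms \eqref{alg:eq:outside-invariant} are not enough on their own; finiteness would instead come from the derived equivalence sending $\Gamma$ to $\Sigma\Gamma$, which bounds the $H^0$. If this proves too delicate, the fallback is to cite the general theorem that quivers admitting maximal green or reddening sequences are Jacobi-finite for nondegenerate potentials. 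After that, Theorem~\ref{gm:thm:realization} (together with the Galois-norm step for the extra finite-jet open condition, Proposition~\ref{alg:prop:finitenessopen}) transports Jacobi-finiteness back to a nondegenerate ramified potential on $A$.
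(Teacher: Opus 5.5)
Your proof is correct, but it is organized differently from the paper's. The paper lifts $w$ to a reddening sequence of the split quiver, exactly as you do. It then takes the geometrically nondegenerate ramified potential from the proof of (i)$\Rightarrow$(iv) of Theorem~\ref{gm:thm:realization}. After splitting, this is an ordinary potential that is nondegenerate along every sequence, so \cite[Corollary 5.2]{KellerGreen} applies to that very potential. Its Jacobi-finiteness then descends to $K$ by faithfully flat scalar extension.

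You instead construct a separate Jacobi-finite potential on $Q^{\mathrm{spl}}$, which need not be Galois-invariant or descend to $A$. You then use the final clause of Theorem~\ref{gm:thm:realization} (openness of Jacobi-finiteness on finite jets, Proposition~\ref{alg:prop:finitenessopen}, plus a Galois norm) to find a ramified potential that is both nondegenerate and Jacobi-finite. Both routes are valid. The paper's route is shorter and gives a stronger conclusion: every geometrically nondegenerate ramified potential on such a modulation is itself Jacobi-finite.

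It also removes your ``main obstacle''. You do not need to weaken Keller's hypothesis to nondegeneracy along $w^{\mathrm{spl}}$ alone, because Theorem~\ref{gm:thm:realization} already supplies full nondegeneracy after splitting. Your first attempt, via a potential from Proposition~\ref{alg:prop:wordchart}, is not justified as written, since the cited corollary assumes a nondegenerate potential. Your fallback closes this: take a fully nondegenerate split potential, for instance the splitting of the potential in Theorem~\ref{gm:thm:realization}(iv), or one from \cite[Corollary 7.4]{DWZ1}. Once you use the former, you can observe directly that it is Jacobi-finite, and the detour through the final clause becomes unnecessary.
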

\begin{proof}
The proof of Theorem~\ref{gm:thm:reddening} lifts the sequence to a
reddening sequence of the ordinary split quiver. Choose a geometrically
nondegenerate ramified potential as in
Theorem~\ref{gm:thm:realization}. A nondegenerate potential on an
ordinary quiver admitting a reddening sequence has finite-dimensional
completed Jacobian algebra \cite[Corollary 5.2]{KellerGreen}.
This gives finiteness after splitting, hence over $K$ by faithful
scalar extension.
\end{proof}

For every $B\in\mathfrak P$ and every compatible $d$,
Corollaries~\ref{gm:cor:class-P} and~\ref{gm:cor:reddening-finite}
therefore give
\[ \varnothing\ne\Good(B,d)=\Good_{\mathrm{fin}}(B,d). \]
Thus the componentwise construction of Section~\ref{gm:sec:extensions}
supplies both existence and the number of isomorphism classes admitting
Jacobi-finite nondegenerate realizations. It does not test an individual
potential for nondegeneracy.

\subsection{Integral potentials on a fixed arrow lattice}\label{gm:sec:integral}
Fix a full $\RR$-stable arrow lattice $\AA\subset A$ such that each $e_i\AA e_j$ is free over its two vertex rings. Put $\TT=\RR\langle\!\langle\AA\rangle\!\rangle$, with $\OO=\C[[t]]$ and $\OO_i=\C[[t^{1/d_i}]]$.

\begin{theorem}\label{gm:thm:integral}
Suppose $A$ is good. Fix any integral potential representative $S_0\in\mathscr T$ of arrow order at least three, and any integer $N\ge1$. There exists an integral nondegenerate potential $S$ on the same arrow lattice such that
$S-S_0\in t^N\mathscr T$.

If, additionally, the same modulation admits a Jacobi-finite potential after splitting, $S$ can be chosen Jacobi-finite. The congruence is coefficientwise in the arrow completion. It does not assert equality of contracted special Jacobian algebras.
\end{theorem}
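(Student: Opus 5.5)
The plan is to perturb $S_0$ by $t^N$ times a potential with algebraically independent coefficients, in the same spirit as the proof of Theorem~\ref{gm:thm:realization}.

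\emph{Parametrization.} In each arrow degree $r\ge3$, the integral cyclic potentials form a finite free $\OO$-module $\Cyc(\AA^{(r)})$, modulo the integral commutator span. I will choose $\OO$-bases $\{c_{r,\nu}\}$ of these modules, taken modulo torsion. They give $K$-coordinates on the ramified potential space in degree $r$; this is a lattice version of the degreewise coordinates in Theorem~\ref{gm:thm:realization}. I then write
\[ S=S_0+t^N\sum_{r,\nu}z_{r,\nu}c_{r,\nu},\qquad z_{r,\nu}\in\OO . \]
Every such $S$ lies in $\TT$, satisfies $S-S_0\in t^N\TT$ coefficientwise, and has arrow order at least three.

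\emph{Avoiding closed conditions.} Proposition~\ref{gm:prop:exact-opens} gives, for each finite sequence $w$, a nonzero polynomial $f_w$ over $K$ in finitely many of these coordinates. Its nonvanishing implies membership in $\mathcal N_w(A)$; goodness is used exactly here. For the Jacobi-finite case, I add the Galois norm of the polynomial from Proposition~\ref{alg:prop:finitenessopen}. Substituting $S$ into $f_w$ gives a polynomial $g_w(z)$ in finitely many $z$'s. This polynomial is still nonzero, because $z\mapsto S_0+t^Nz$ is an invertible affine change of the $K$-coordinates.

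The family $\{g_w\}$ is countable. So their coefficients, together with the coefficients of $S_0$, lie in a subfield $F\subset K$ that is countably generated over $\C$.

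\emph{Main obstacle: choosing the $z$'s in $\OO$.} I need values $z_{r,\nu}\in\OO=\C[[t]]$ that are algebraically independent over $F$. The transcendence degree of $\C[[t]]$ over any countably generated field is uncountable; for instance, the series $\sum_n t^{\lfloor \lambda n^2\rfloor}$ have this property for suitably independent $\lambda$, or one can use the cardinality of $\C[[t]]$ against the countable field $\overline F$. I will choose the $z_{r,\nu}$ one at a time. At each step the algebraic closure of the field generated so far is countable while $\OO$ is uncountable, so some element of $\OO$ lies outside it.

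With these choices, every $g_w(z)\ne0$. Hence $S\in\bigcap_w\mathcal N_w(A)=\mathcal N(A)$, and in the second case $S$ also lies in the Jacobi-finite cylinder. The congruence holds by construction.

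The only delicate point is to check that integrality of the coordinates does not restrict their algebraic independence. Scaling by $t^N$ is harmless, since $t\in F$.
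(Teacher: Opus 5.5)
Your strategy matches the paper's. You use an $\OO$-basis of the full lattice of integral cyclic tensors in each degree and the affine family $S_0+t^N\sum z_{r,\nu}c_{r,\nu}$. You then take the countable family of nonzero polynomials from Proposition~\ref{gm:prop:exact-opens}, together with the Galois norm of the finiteness condition, and choose algebraically independent values recursively.

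The step that fails as written is the choice step you flagged. You take $F$ countably generated \emph{over $\C$}. Such a field contains $\C$, so both $F$ and its algebraic closure are uncountable. Your claim that ``the algebraic closure of the field generated so far is countable'' is therefore false, and comparing it with the cardinality of $\C[[t]]$ proves nothing. Your alternative claim is that $\C[[t]]$ has uncountable transcendence degree over any field of countable transcendence degree over $\C$. That is true, but it needs a real argument; for instance, the series $e^{\lambda t}$ for $\mathbb Q$-linearly independent $\lambda\in\C$ are algebraically independent over $\C(t)$. The lacunary series you mention are not shown to have this property.

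The repair is to generate $F$ over $\mathbb Q$ instead of over $\C$. The coefficients of the countably many $g_w$ already absorb the coordinates of $S_0$ and the factor $t^N$. Over $\mathbb Q$ they generate a countable subfield of $K$, and the set of elements of $K$ algebraic over it is countable. Your recursion then works verbatim. In fact you may take every $z_{r,\nu}$ in $\C\subset\OO$, which is what the paper does. With constant coefficients, integrality and the congruence $S-S_0\in t^N\TT$ are automatic, so the ``delicate point'' about integral coordinates disappears.
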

\begin{proof}
For each arrow degree $r$, take the image $\Lambda_r$ of the integral cyclic tensors in the finite-dimensional $K$-space $\Pot_r(A)$ of cyclic tensors. The image is a finite torsion-free $\OO$-module spanning $\Pot_r(A)$ over $K$, hence a full free lattice. Choose an $\OO$-basis and integral tensor representatives $c_{r,a}$ of it.

Consider all potentials
\[ S(z)=S_0+t^N\sum_{r\ge3}\sum_a z_{r,a}c_{r,a}. \]
For each finite jet, this is an invertible affine change of potential coordinates over $K$. Thus every nonzero polynomial condition used in Theorem~\ref{gm:thm:realization} remains a nonzero polynomial in finitely many $z_{r,a}$, including the single finiteness condition when a Jacobi-finite potential is supplied. After Galois norms these have coefficients in $K$. The union of their coefficient sets is countable; let $F_0\subset K$ be the subfield they generate over $\mathbb Q$.

Choose the countably many $z_{r,a}$ in $\C$ algebraically independent over $F_0$. This is possible recursively: the elements of $K$ algebraic over any countable subfield form a countable set, while $\C$ is uncountable. Adjoining the previously chosen finitely many coordinates leaves that subfield countable. All required polynomials are nonzero at this tuple. The series $S(z)$ is integral and satisfies the congruence. Theorem~\ref{gm:thm:realization} then gives the desired properties.
\end{proof}

\begin{remark}
The approximation is coefficientwise $t$-adic. Prescribing the terms
of small arrow degree can obstruct nondegeneracy, whereas a cubic term
divisible by $t^N$ can still change the quadratic pairing after mutation
over $K$. Congruence modulo $t^N$ also need not identify the contracted
orders; see Example~\ref{alg:ex:completion}.
\end{remark}

If $B$ admits a reddening sequence, the additional finiteness
hypothesis of Theorem~\ref{gm:thm:integral} is automatic by
Corollary~\ref{gm:cor:reddening-finite}. In particular, for every good
modulation constructed in Section~\ref{gm:sec:extensions}, one may
choose the approximating potential both Jacobi-finite and nondegenerate,
without changing the prescribed arrow lattice or congruence.

Every simple $K_i$--$K_j$ bimodule type admits a lattice free over both vertex rings. With $g=g_{ij}$ and $\zeta_g$ a primitive $g$th root of unity, one may take
\[ \mathscr T_{ij}^{(s)}= \OO[\varpi_i,\varpi_j]/(\varpi_i^{d_i}-t,\ \varpi_j^{d_j/g}-\zeta_g^s\varpi_i^{d_i/g}). \]
The monic relations give bases over the left and right vertex rings.
Taking direct sums gives a lift of each admissible multiplicity vector.
Different integral lattices can have the same bimodule after tensoring with $K$.

For any chosen arrow lattice and Jacobi-finite potential $S$,
Theorem~\ref{alg:thm:contraction} gives a full order whose bimodules
$e_i\PP(S)e_j$ are free over both vertex rings.

\begin{remark}\label{gm:ex:cure-potential}
	The unique good modulation of Example~\ref{ex:triangle-good-choice}
	has an explicit Jacobi-finite nondegenerate potential. After splitting,
	write $a_\alpha:1_\alpha\to2$, $b_\beta:2\to3_\beta$ and
	$c_{\beta\alpha}:3_\beta\to1_\alpha$, with
	$\alpha,\beta\in\{+,-\}$. 
	The Galois-invariant potential
	$W=\sum_{\alpha,\beta}a_\alpha b_\beta c_{\beta\alpha}$
	descends to the ramified species. At vertex $2$, with
	$x_{\alpha\beta}=[a_\alpha b_\beta]$, its premutation is
	$\sum_{\alpha,\beta}	x_{\alpha\beta}
	\bigl(c_{\beta\alpha}+b_\beta^*a_\alpha^*\bigr)$.
	The change of arrows
	$c'_{\beta\alpha}=c_{\beta\alpha}+b_\beta^*a_\alpha^*$
	splits off the perfect quadratic part
	$\sum_{\alpha,\beta}x_{\alpha\beta}c'_{\beta\alpha}$,
	leaving the acyclic species $3\to2\to1$ with zero potential.
	Mutation back therefore shows that $W$ is Jacobi-finite and
	nondegenerate.
\end{remark}

\subsection{The mutation pattern}

\begin{hypothesis}\label{hyp:initial-realization}
Fix $(B_0,d)$ and a good modulation $A$ of $B_0$ whose split
quiver admits a Jacobi-finite potential. Choose a potential $S$
on $A$ which is Jacobi-finite and nondegenerate, as
provided by Theorem~\ref{gm:thm:realization}. Fix also a full initial
$\OO$-order $\PP_0\subseteq\mathcal P(A,S)$ containing $\RR$.
The contracted order from any specified arrow lattice free over the two vertex rings
is an available choice. Theorem~\ref{gm:thm:integral} permits an
integral potential on any such lattice, but integrality of a cyclic
derivative calculus is not assumed.
\end{hypothesis}

\begin{corollary}\label{thm:good-global-pattern}
Under Hypothesis~\ref{hyp:initial-realization}, for every finite
mutation sequence $w$ the reduced QP
$(A_w,S_w)=\mu_w(A,S)$ is defined up to $R$-right-equivalence.
Its type multiplicity vector and matrix are
\[ m(A_w)=\mu_w^{\mathrm{type}}m(A),\qquad B(A_w)=\mu_wB_0, \]
and $\mathcal P(A_w,S_w)$ is finite-dimensional. Every next
mutation of the species is allowed. The local representation and character formulas proved below can therefore be iterated whenever their remaining hypotheses hold.
\end{corollary}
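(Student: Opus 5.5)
The plan is induction on the length $r$ of $w=(k_1,\ldots,k_r)$. At every stage $(A_{w'},S_{w'})$, for a prefix $w'$, we carry three properties: it is reduced, it is two-acyclic over $R$, and it is Jacobi-finite. For $r=0$ these hold by Hypothesis~\ref{hyp:initial-realization}; reducedness can always be arranged by Theorem~\ref{alg:thm:generic-splitting}, and this does not change the Jacobian algebra.

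For the inductive step, suppose $(A_{w'},S_{w'})$ is reduced and two-acyclic. Then the vertex $k=k_{r}$ has no incident oriented two-cycle, so premutation at $k$ is defined. Theorem~\ref{alg:thm:generic-splitting} provides a reduced part. By Proposition~\ref{alg:prop:generic-premutation-covariance}, premutation respects right-equivalence, and the reduced part is independent of the splitting up to right-equivalence. Hence $\mu_w(A,S)$ is well defined up to $R$-right-equivalence. Jacobi-finiteness passes to the mutation by Corollary~\ref{alg:cor:mutation-finiteness}.

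Two-acyclicity of the mutated QP is exactly the definition of nondegeneracy of $S$, since every successive reduced mutation is two-acyclic over $R$. So every next mutation is allowed, and the induction continues.

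For the invariants, I would compare the types before and after one step. Two-acyclicity of the reduced result together with Lemma~\ref{gm:lem:obstruction} forces maximal typewise cancellation. The premutation multiplicities are computed by tensor products as in Proposition~\ref{gm:prop:convolution}, and reversal dualizes the incident types by Proposition~\ref{gm:prop:types}. Therefore the reduced multiplicity vector is given by \eqref{gm:eq:typemutation}, which yields $m(A_w)=\mu_w^{\mathrm{type}}m(A)$. This collection is admissible because $A$ is good. Proposition~\ref{prop:intrinsic-matrix}, or alternatively \eqref{gm:eq:augmentation} applied through Proposition~\ref{gm:thm:typemutation}, then gives $B(A_w)=\mu_wB_0$.

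The only delicate point I expect is the maximal-cancellation identification. A two-acyclic reduction could a priori cancel fewer pairs of some type. This is excluded by Lemma~\ref{gm:lem:obstruction}: any leftover matching pair would already violate two-acyclicity. The remaining assertion, that the local formulas can be iterated, is then immediate, because each stage satisfies the standing two-acyclicity and finiteness hypotheses.
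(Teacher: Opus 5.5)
Your proof is correct and follows the paper's argument. Nondegeneracy of $S$ together with Lemma~\ref{gm:lem:obstruction} forces maximal typewise cancellation, so the multiplicities follow \eqref{gm:eq:typemutation} and Proposition~\ref{prop:intrinsic-matrix} gives $\mu_wB_0$; Corollary~\ref{alg:cor:mutation-finiteness} gives finiteness, and the splitting and premutation covariance results of Section~\ref{alg:sec:reduction} give well-definedness up to right-equivalence. Your explicit induction only spells out the iteration the paper leaves implicit, and the initial reducedness you arrange is in fact automatic, since a two-acyclic modulation carries no nonzero quadratic cyclic class.
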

\begin{proof}
Since $S$ is nondegenerate, quadratic reduction is maximal in each simple type, by Lemma~\ref{gm:lem:obstruction}. Its arrow multiplicity vectors
follow Proposition~\ref{gm:thm:typemutation}; goodness keeps them
sign-coherent. Proposition~\ref{prop:intrinsic-matrix} identifies their matrices with $\mu_wB_0$.
Corollary~\ref{alg:cor:mutation-finiteness} preserves finite-dimensionality
at each step. The splitting and premutation covariance results in
Section~\ref{alg:sec:reduction} give the right-equivalence assertions.
\end{proof}

For assertions along arbitrary mutation sequences, we use Hypothesis~\ref{hyp:initial-realization}. The initial order defines the valued presentation families studied next. We mutate these representations together with the QP, using the subalgebra isomorphisms of Proposition~\ref{alg:prop:jacobian-subalgebra}.

We now study the decorated representations of these Jacobian algebras
and their presentation spaces. Generality under mutation will allow
Part~\ref{part:characters} to attach characters to the universal
presentations associated with an order.

\section{Decorated representations and mutation invariants}
\label{sec:local}\label{sec:rep-invariants}
We use Hypothesis~\ref{hyp:initial-realization} for statements about
arbitrary mutation sequences. Corollary~\ref{thm:good-global-pattern}
then gives a Jacobi-finite two-acyclic QP at every seed. For one local mutation, it is enough that the source QP be reduced
and Jacobi-finite and that the chosen vertex be incident to no oriented
two-cycle.

\subsection{The local construction}

Let $(A,S)$ be a Jacobi-finite QP over $R=\prod_iK_i$,
and write $\mathcal P=\mathcal P(A,S)$.
A \emph{decorated representation} is a pair $\mathcal M=(M,V)$, where
$M$ is a finite-dimensional right $\mathcal P$-module and
$V=\bigoplus_i V_i$ is a finite-dimensional right $R$-module.
Put $M_i=Me_i$, $\mathbf r_M=(\dim_{K_i}M_i)_i$,
$r_i(M)=(\mathbf r_M)_i$, and $v_i=\dim_{K_i}V_i$.
A morphism of decorated representations is a pair of module maps on
$M$ and on $V$; direct sums are taken componentwise.
The negative simple is $E_i^-=(0,K_i)$, supported at $i$.

A \emph{right-equivalence of decorated representations}
$(A,S,M,V)$ and $(A',S',M',V')$ consists of a QP right-equivalence
$\varphi$, an $R$-module isomorphism $\psi:M\to M'$ satisfying
\[ \psi(mp)=\psi(m)\,\overline\varphi(p)\qquad(p\in\mathcal P(A,S)), \]
and an $R$-module isomorphism $V\to V'$. Here
$\overline\varphi$ is the induced isomorphism of Jacobian algebras. In particular,
transport along $\varphi$ gives $M$ the target action
$m\cdot p'=m\,\overline\varphi^{-1}(p')$.

\paragraph{\em Reduction of a representation.}
Choose a splitting
\[ \Phi:(A,S)\xrightarrow{\sim}(A^{\red},S^{\red})\oplus(A^{\mathrm{triv}},S^{\mathrm{triv}}). \]
Projection of the trivial arrows induces an isomorphism
$r_\Phi:\mathcal P(A,S)\xrightarrow{\sim}\mathcal P(A^{\red},S^{\red})$.
The \emph{reduced part} of $(M,V)$ for this splitting is $(M^{\red},V)$,
where $M^{\red}=M$ as an $R$-module and
\[ m\cdot_{\red}p'=m\,r_\Phi^{-1}(p'). \]
After transport by $\Phi$, the trivial arrows act by zero because their
linear cyclic derivatives generate their arrow space. Reduction changes
the arrow actions; it keeps the vertex spaces and the decoration.
This is the right-module form of \cite[Definition 10.4]{DWZ1}.

For the local construction, assume now that $(A,S)$ is reduced and
that $k$ is incident to no oriented two-cycle. Set
\[ M_{\mathrm{in}}=\bigoplus_{i\ne k}M_i\otimes_{K_i}A_{ik}, \qquad M_{\mathrm{out}}=\bigoplus_{j\ne k}\Hom_{K_j}(A_{kj},M_j). \]
The right $K_k$-action on the Hom summand is $(f\lambda)(b)=f(\lambda b)$ for $\lambda\in K_k$.
The action of the incident arrows and its adjoint give
\[
\begin{aligned}
 \alpha:M_{\mathrm{in}}&\longrightarrow M_k,
 &\alpha(m\otimes a)&=ma,\\
 \beta:M_k&\longrightarrow M_{\mathrm{out}},
 &\beta(m)(b)&=mb.
\end{aligned}
\]
For composable arrows $a:i\to k$ and $b:k\to j$ after splitting the
vertex fields, write $\partial_{[ab]}S$ for the \emph{second cyclic derivative}:
it is the sum of the paths left after removing each consecutive occurrence
of $ab$ from a cyclic monomial of $S$, with its coefficient.
Equivalently, differentiate the substituted potential $[S]$ with respect
to $[ab]$, then replace every remaining composite arrow by its two-arrow path.
The resulting path goes from $j$ to $i$. Evaluating these derivatives on $M$
and using tensor--Hom adjunction gives
$\gamma:M_{\mathrm{out}}\to M_{\mathrm{in}}$.
After splitting, its $(a,b)$-entry is $(\partial_{[ab]}S)_M$.
These maps are defined by tensor contraction with the second cyclic
derivatives. They are compatible with the semilinear Galois action on
the split data and therefore descend to $K$.
The first-derivative relations give
\[ \alpha\gamma=0,\qquad\gamma\beta=0. \]
The composite map $\beta\alpha$ need not vanish. The local maps are
\[ \vcenter{\hbox{$\displaystyle \xymatrix@C=32pt@R=20pt{&M_k\ar[dr]^\beta&\\M_{\mathrm{in}}\ar[ur]^\alpha&&M_{\mathrm{out}}\ar[ll]_\gamma}$}}. \]

The premutated middle module and decoration are
\begin{equation}\label{eq:newmodules}
 \begin{split}
 M'_k&=\ker\gamma/\im\beta\ \oplus\ \im\gamma\ \oplus\
       \ker\alpha/\im\gamma\ \oplus\ V_k,\\
 V'_k&=\ker\beta/(\ker\beta\cap\im\alpha).
 \end{split}
\end{equation}
All terms are $K_k$-vector spaces. Choose a retraction
$\rho:M_{\mathrm{out}}\to\ker\gamma$ and a section
$\varsigma:\ker\alpha/\im\gamma\to\ker\alpha$, and define
\[
\alpha'=\begin{pmatrix}-\pi\rho\\-\gamma\\0\\0\end{pmatrix}:M_{\mathrm{out}}\to M'_k,
 \qquad
 \beta'=\begin{pmatrix}0&\iota&\iota\varsigma&0\end{pmatrix}:M'_k\to M_{\mathrm{in}},
 \qquad \gamma'=\beta\alpha.
\]
Here $\pi:\ker\gamma\to\ker\gamma/\im\beta$ is the quotient
map and each $\iota$ is the indicated inclusion. The second component
of $\alpha'$ has codomain $\im\gamma$.
Arrows not incident with $k$ keep their actions, and a new composite
$[ab]$ acts by $b_Ma_M$. The reverse-arrow actions are obtained from
$\alpha'$ and $\beta'$ by tensor--Hom adjunction. The representation
spaces and decorations at all other vertices are unchanged. We denote these data over the premutated QP by
$\widetilde\mu_k\mathcal M=(\widetilde M,\widetilde V)$.
Choose a reduced part $(A',S')$ of the premutated QP as in
Section~\ref{alg:sec:premutation}. We define the \emph{reduced mutation}
by the construction just given:
\[ \mu_k\mathcal M=(\widetilde\mu_k\mathcal M)^{\red}. \]
The next theorem verifies the relations and proves independence of the
choices up to right-equivalence.

\begin{theorem}\label{thm:representation-mutation}
The formulas above define a finite-dimensional representation of the
premutated QP. Its reduced right-equivalence class is independent of
$\rho,\varsigma$, of the splitting of the QP, and of the chosen
representative of the source right-equivalence class. Reduced mutation
is an involution on the classes for which the QP mutations are defined,
and commutes with direct sums. 
\end{theorem}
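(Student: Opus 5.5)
The plan is to reduce everything to the split case, where the statement is \cite[Propositions 10.7, 10.9 and Theorem 10.13]{DWZ1} in our right-module convention, and then to descend. First I would check the relations of the premutated QP directly. Choose a finite Galois extension $E/K$ splitting every $K_i$. After extension the local maps $\alpha,\beta,\gamma$ become block matrices indexed by the split incident arrows, and the relations $\alpha\gamma=0$, $\gamma\beta=0$ are the split ones. For the premutated potential $\widetilde S=[S]+\sum[ab]b^*a^*$ the cyclic derivatives are of three kinds. Derivatives in outside arrows agree with those of $S$ after replacing composites by $b_Ma_M$. Derivatives in $[ab]$ read $(\partial_{[ab]}S)_M+(b^*a^*)_{\widetilde M}=0$, which is the identity $\beta'\alpha'=-\gamma$ restricted appropriately. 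Derivatives in $a^*,b^*$ read $\alpha'\gamma'=0$ and $\gamma'\beta'=0$, that is, $\alpha'\beta\alpha=0$ and $\beta\alpha\beta'=0$. These follow from $\pi\rho\beta=\pi\beta=0$, from $\gamma\beta=0$, and from $\alpha\iota=0$ on $\im\gamma$ and $\ker\alpha$. Finite-dimensionality is clear, since each new term in \eqref{eq:newmodules} is a subquotient of finite $K_k$-spaces. Since all formulas are built from tensor contraction and duality, they are Galois-equivariant and descend.

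Second, independence of $\rho,\varsigma$ will be shown as in DWZ. Two retractions differ by a map $M_{\mathrm{out}}\to\ker\gamma$ vanishing on $\ker\gamma$, hence one factoring through $\im\gamma$. Two sections differ by a map into $\im\gamma$. In each case I will write down an explicit unipotent automorphism of $M'_k$, mixing the summands $\ker\gamma/\im\beta$, $\im\gamma$ and $\ker\alpha/\im\gamma$, that intertwines the two sets of actions and is the identity elsewhere. These are $K_k$-linear because the summands are $K_k$-spaces; here the vertex algebras being fields is used. Independence of the splitting follows from Theorem~\ref{alg:thm:cancel}, together with the definition of the reduced part, which transports actions along $r_\Phi$. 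Two splittings are compared by a right-equivalence inducing $r_2r_1^{-1}$, and that right-equivalence is by definition a right-equivalence of the reduced decorated representations. Independence of the source representative is the step I expect to be the main obstacle. Given a right-equivalence $\phi$ with $\phi(a)=C_\phi a$ and $\phi(b)=bD_\phi$, I will use the extension to reverse arrows constructed in Proposition~\ref{alg:prop:generic-premutation-covariance}. It induces the action-twisted isomorphisms $\alpha\mapsto\alpha (C_\phi)_M$ and $\beta\mapsto (D_\phi)_M\beta$, and $\gamma$ changes by the chain rule for second derivatives. I would then define the isomorphism of premutated representations on each summand of $M'_k$ through the induced maps on kernels and images. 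Verifying that it intertwines $\alpha',\beta'$ up to a change of $\rho,\varsigma$ reduces to the previous independence. The delicate point is controlling $\gamma$: the second-derivative matrix transforms with correction terms lying in the Jacobian ideal, and these act by zero on $M$. I would do this computation over $E$ and average.

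Finally, additivity is immediate: all constructions are functorial in direct sums, and the choices $\rho,\varsigma$ may be taken blockwise. For involutivity I will compute $\widetilde\mu_k\widetilde\mu_k\mathcal M$ following \cite[Theorem 10.13]{DWZ1}. The new triangle has $\alpha'$, $\beta'$ and $\gamma'=\beta\alpha$. Direct computation gives $\ker\gamma'/\im\beta'\simeq\ker\beta/(\ker\beta\cap\im\alpha)=V'_k$, together with $\im\gamma'\simeq\im\alpha/(\im\alpha\cap\ker\beta)$ and $\ker\alpha'/\im\gamma'\simeq\ker\alpha\cap\ldots$. I would assemble these into an isomorphism with $M_k\oplus V_k$ compatible with the original $\alpha,\beta$ after the sign change used in the proof of Proposition~\ref{alg:prop:generic-premutation-covariance}. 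Combining this with the double-premutation right-equivalence from that proof and the splitting independence gives the involution on reduced classes.
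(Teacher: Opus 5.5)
Your outline follows the paper's proof in its main steps: the three local identities, a unipotent change of $M'_k$ for $\rho,\varsigma$, Theorem~\ref{alg:thm:cancel} for the splitting, and the extension of $\phi$ to the reversed arrows from Proposition~\ref{alg:prop:generic-premutation-covariance} for the source representative. The involutivity computation you sketch, however, is wrong as written.

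The key identities are $\ker\alpha'=\im\beta$ and $\im\beta'=\ker\alpha$. Both follow directly from the formulas for $\alpha',\beta'$, using $\im\beta\subseteq\ker\gamma$ and $\im\gamma\subseteq\ker\alpha$. Together with $\gamma'=\beta\alpha$ they give
\[
 \ker\gamma'/\im\beta'=\alpha^{-1}(\ker\beta)/\ker\alpha\simeq\ker\beta\cap\im\alpha,
 \qquad
 \ker\alpha'/\im\gamma'=\im\beta/\im(\beta\alpha)\simeq M_k/(\ker\beta+\im\alpha).
\]
The first is not $\ker\beta/(\ker\beta\cap\im\alpha)$, and the second has nothing to do with $\ker\alpha$. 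For the simple module at $k$ one has $M_{\mathrm{in}}=M_{\mathrm{out}}=0$, so $\ker\gamma'/\im\beta'=0$, while $\ker\beta/(\ker\beta\cap\im\alpha)=M_k$.

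The remaining summands are $\im\gamma'\simeq\im\alpha/(\ker\beta\cap\im\alpha)$ and $V'_k\simeq(\ker\beta+\im\alpha)/\im\alpha$. All four together form the associated graded of the filtration $0\subseteq\ker\beta\cap\im\alpha\subseteq\im\alpha\subseteq\ker\beta+\im\alpha\subseteq M_k$. So the twice-mutated positive space is isomorphic to $M_k$ alone, not to $M_k\oplus V_k$. With your identifications the count already fails, since $V'_k$ would occur twice.

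The decoration has to be computed separately. One has $\ker\beta'=\ker\gamma/\im\beta\oplus V_k$ and $\ker\beta'\cap\im\alpha'=\ker\gamma/\im\beta$, hence $V''_k\simeq V_k$. Only with these identifications can you choose compatible sections of the filtration. Those sections match the incident actions with the originals, up to the sign change on the restored family in the double-premutation right-equivalence.

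There is also a smaller omission. Saying that finite-dimensionality is clear does not yet give a representation of the premutated QP. For that the action must extend to the completed algebra, so sufficiently long paths in $\widetilde A$ must act by zero. The paper obtains this from $\beta'\alpha'=-\gamma$: every return path $b^*a^*$ acts as a positive-order original path. A long premutated path therefore acts, up to at most two end pieces, as a long original path, which is zero on $M$. Your appeal to the split case of DWZ covers this implicitly, but the direct verification you describe should include it, since it is also what makes the formal series in the derivative relations act.
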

\begin{proof}
The local identities are
$\beta'\alpha'=-\gamma,\ \alpha'\beta=0$, and $\alpha\beta'=0$.
They are respectively the composite and reverse-arrow derivative
relations. The unchanged derivative relations are those of the original
representation after replacing each passage through $k$ by its composite arrow. Together these identities verify all cyclic derivative relations. A long path in the
premutation groups into outside arrows, composites, and reverse return
paths. By the first identity, return paths become positive-order
original paths. After at most two end pieces, a sufficiently long path
therefore acts by zero. The action extends to the completed algebra.

For two choices, the difference $\rho_1-\rho$ vanishes on
$\ker\gamma$ and therefore factors through $\gamma$. Similarly,
$\varsigma_1-\varsigma$ takes values in $\im\gamma$. Thus write
$\rho_1=\rho+\xi\gamma$ and $\varsigma_1=\varsigma+\iota\eta$, where
$\xi:\im\gamma\to\ker\gamma$ and
$\eta:\ker\alpha/\im\gamma\to\im\gamma$ are $K_k$-linear.
The change of sections is intertwined by
\[
\Psi=\begin{pmatrix}
 1&\pi\xi&0&0\\0&1&-\eta&0\\0&0&1&0\\0&0&0&1
 \end{pmatrix}.
\]
Indeed $\Psi\alpha'=\alpha'_1$ and $\beta'_1\Psi=\beta'$.
This is a $K_k$-linear isomorphism. The other vertex maps are identities.
For two splittings $\Phi_1,\Phi_2$, the map
$r_{\Phi_2}r_{\Phi_1}^{-1}$ identifies the two reduced module actions.
Theorem~\ref{alg:thm:cancel} lifts it to a right-equivalence of the
reduced QPs. The identity on the vertex spaces, together with the
unchanged decoration, gives the required right-equivalence of reduced
representations.

For the double local construction, put
$M_k^0=\ker\beta\cap\im\alpha$. The identities
\[ \ker\alpha'=\im\beta,\qquad \im\beta'=\ker\alpha, \qquad \gamma'=\beta\alpha \]
give the twice-mutated positive space
\[ M_k^0\oplus(\im\alpha/M_k^0)\oplus M_k/(\ker\beta+\im\alpha)\oplus (\ker\beta+\im\alpha)/\im\alpha. \]
After interchanging the last two summands, this is the associated
graded space of the filtration
$0\subseteq M_k^0\subseteq\im\alpha\subseteq\ker\beta+\im\alpha\subseteq M_k$,
and the twice-mutated decoration is $V_k$. Compatible sections of this
filtration identify the incident actions with the originals, with the
sign change on the restored outgoing family in the double-QP identity
of Proposition~\ref{alg:prop:double-premutation}.

Compatibility with a source right-equivalence follows by comparing the
incoming and outgoing actions and their duals, as in
\cite[Proposition 10.10]{DWZ1}. It is defined over the vertex algebra:
factor images of incident arrows at their first or last incident factor
and transform the reverse actions by the inverse adjoint matrices,
as in Proposition~\ref{alg:prop:generic-premutation-covariance}.
The chain rule transforms $\gamma$ accordingly; the remaining
changes are changes of the sections just treated. Equivalently the
calculation may be checked after finite separable splitting, where it
is the ordinary DWZ calculation, and the tensor and duality maps descend.
Finally kernels, images and the mutation construction commute with
finite direct sums.
\end{proof}

Taking dimensions, with all dimensions and ranks over $K_k$, gives
\[
\begin{aligned}
 \dim M'_k
 &=\dim M_{\mathrm{in}}+\dim M_{\mathrm{out}}
   -\rank\alpha-\rank\beta-\rank\gamma+v_k,\\
 \dim V'_k
 &=\dim M_k-\rank\alpha-\rank\beta+\rank(\beta\alpha).
\end{aligned}
\]

\subsection{Homomorphisms and the presentation invariant}

As in \cite[Propositions 6.1--6.2]{DWZ2}, mutation preserves Hom
spaces modulo the maps supported at the mutation vertex.

Let $\mathcal M=(M,V)$ and $\mathcal N=(N,W)$ be finite
decorated representations and suppose mutation at $k$ is defined.
Write $\Hom^{[k]}(M,N)$ for the module maps whose
components at all vertices other than $k$ vanish. Define the
\emph{$h$-vector} and its dual by
\[
\begin{aligned}
 \mathbf h(M)&=(h_i(M))_i,&h_i(M)&=-\dim_{K_i}\coker\alpha_{i;M},\\
 \check{\mathbf h}(M)&=(\check h_i(M))_i,&\check h_i(M)&=-\dim_{K_i}\ker\beta_{i;M}.
\end{aligned}
\]
Both vectors depend only on the positive representation. The minus signs
agree with the convention of \cite[(3.2)]{DWZ2}: because we count quotient
ranks, our $\mathbf h(M)$ is their $h$-vector of the dual representation
over the opposite algebra. In particular, $h_i(M)\leq0$ and
$\check h_i(M)\leq0$. We use
$N_{\mathrm{in}}$ and $N_{\mathrm{out}}$ for the spaces defined from $N$
by the same construction.

\begin{proposition}\label{thm:hom-mutation}
Restriction to the mutation vertex gives
\begin{equation}\label{eq:confined-hom}
 \Hom^{[k]}(M,N)\simeq
 \Hom_{K_k}(\coker\alpha_{k;M},\ker\beta_{k;N})
\end{equation}
as $K_k$-vector spaces. There is a $K$-linear isomorphism, compatible
with composition,
\begin{equation}\label{eq:hom-quotient-mutation}
 \frac{\Hom(M,N)}{\Hom^{[k]}(M,N)}
 \simeq
 \frac{\Hom(M',N')}{\Hom^{[k]}(M',N')},
 \qquad (M',V')=\mu_k\mathcal M,\ (N',W')=\mu_k\mathcal N.
\end{equation}
Consequently, with dimensions over the scalar field,
\begin{equation}\label{eq:hom-number-mutation}
 \hom(M',N')-\hom(M,N)
 =d_k\bigl(h_k(M')\check h_k(N')-h_k(M)\check h_k(N)\bigr).
\end{equation}
On full subcategories carried to each other by mutation, the induced
functor is an equivalence after quotienting morphisms by the ideal of
maps supported at $k$ and by all morphisms between the decorations.
\end{proposition}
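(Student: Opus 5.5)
The plan follows \cite[Propositions 6.1--6.2]{DWZ2}. We work over a finite Galois splitting field where needed, and descend at the end: every map below is built from tensor contraction and duality.

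For \eqref{eq:confined-hom}, let $f\in\Hom^{[k]}(M,N)$, so $f$ is zero away from $k$ and has a single component $f_k:M_k\to N_k$. The intertwining relations for arrows entering $k$ read $f_k\alpha_M=\alpha_N(f_{\mathrm{in}})=0$, so $f_k$ kills $\im\alpha_{k;M}$. The relations for arrows leaving $k$ read $\beta_N f_k=0$, so $f_k$ lands in $\ker\beta_{k;N}$. Conversely, any $K_k$-linear map $\coker\alpha\to\ker\beta$ defines a module map, because every arrow relation involving $k$ is satisfied and there are no loops at $k$. This gives the isomorphism of $K_k$-spaces.

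For \eqref{eq:hom-quotient-mutation}, we first pass to the subalgebra $\overline e_k\mathcal P\overline e_k$ using Proposition~\ref{alg:prop:jacobian-subalgebra}. Under that isomorphism the restrictions $\overline e_kM$ and $\overline e_kM'$ correspond, since the premutation leaves the outside spaces unchanged and composites act by $b_Ma_M$. This needs one check. On the outside spaces, the substitution \eqref{alg:eq:return-elimination} turns the action of $b^*a^*$ into that of $-G_{ab}$; this is the identity $\beta'\alpha'=-\gamma$ from Theorem~\ref{thm:representation-mutation}. The same holds after reduction, because reduction only retransports the action by $r_\Phi$.

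Next, a map in $\Hom(M,N)$ determines its outside components $\overline e_kf$, which lie in $\Hom_{\overline e_k\mathcal P\overline e_k}(\overline e_kM,\overline e_kN)$. Its kernel is exactly $\Hom^{[k]}$. The main step is to identify the image intrinsically, as those outside maps $g$ for which the induced map $g_{\mathrm{in}}\oplus g_{\mathrm{out}}$ carries the triangle $(\alpha,\beta,\gamma)$ of $M$ into that of $N$ compatibly. That is, $g_{\mathrm{in}}$ must map $\ker\alpha_M$ into $\ker\alpha_N$, and $g_{\mathrm{out}}$ must map $\im\beta_M$ into $\im\beta_N$. Whenever these hold, a $k$-component exists: $M_k$ is built from $\im\alpha$ and $M_{\mathrm{out}}$-data, and on the image side one can lift $M_k\to N_k$ by factoring through $M_k/\ker\beta\simeq\im\beta$ and choosing a section. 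I expect this characterization to be the main obstacle, because the lift is not canonical and the existence argument has to be made explicit.

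The conditions are symmetric under $\alpha\leftrightarrow\beta'$ and $\beta\leftrightarrow\alpha'$. By the identities $\ker\alpha'=\im\beta$ and $\im\beta'=\ker\alpha$ from the proof of Theorem~\ref{thm:representation-mutation}, the condition for $(M,N)$ coincides with the condition for $(M',N')$. So both quotients equal the same space of outside maps. Compatibility with composition is then clear, since both sides are realized inside outside Hom spaces. Decorations play no role: maps between decorations are quotiented out in the final clause, and $V_k$ enters $M'_k$ only as a direct summand handled by the same lifting argument.

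Formula \eqref{eq:hom-number-mutation} now follows by counting. The $K$-dimension of $\Hom_{K_k}(\coker\alpha_M,\ker\beta_N)$ is $d_k\,h_k(M)\check h_k(N)$, because the two negative signs cancel. Subtracting this count on each side of \eqref{eq:hom-quotient-mutation} gives the formula. The equivalence statement is the functorial form of the same identification, with full faithfulness coming from \eqref{eq:hom-quotient-mutation}. Essential surjectivity holds on the assumed full subcategories, since mutation is an involution there by Theorem~\ref{thm:representation-mutation}.
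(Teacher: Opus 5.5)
Your proposal is correct and follows the paper's proof essentially step by step. Maps supported at $k$ are exactly the $f_k$ with $f_k\alpha_M=0$ and $\beta_Nf_k=0$. Over the common subalgebra, the restriction image is cut out by the two flag conditions $f_{\mathrm{in}}(\ker\alpha_M)\subseteq\ker\alpha_N$ and $f_{\mathrm{out}}(\im\beta_M)\subseteq\im\beta_N$, and these are interchanged by $\ker\alpha'=\im\beta$, $\im\beta'=\ker\alpha$. The dimension count and essential surjectivity then follow as you say.

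The lift you flag is done in the paper as follows. On $\im\alpha_M$, set $f_k\alpha_M=\alpha_Nf_{\mathrm{in}}$. On a complement of $\im\alpha_M$, not on all of $M_k$ via $M_k/\ker\beta$, lift $f_{\mathrm{out}}\beta_M$ through the split surjection $N_k\to\im\beta_N$. The two pieces are compatible with $\beta$ because $f$ already commutes with the composite actions, $f_{\mathrm{out}}\beta_M\alpha_M=\beta_N\alpha_Nf_{\mathrm{in}}$.
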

\begin{proof}
A map supported at $k$ is a vertex-linear map $f_k$ satisfying
$f_k\alpha_M=0$ and $\beta_Nf_k=0$. This proves
\eqref{eq:confined-hom}.

Let $e=1-e_k$. A map $f:Me\to Ne$ of representations of the common
subalgebra induces maps
$f_{\mathrm{in}}:M_{\mathrm{in}}\to N_{\mathrm{in}}$ and
$f_{\mathrm{out}}:M_{\mathrm{out}}\to N_{\mathrm{out}}$.
It extends to a map $M\to N$ if and only if
\begin{equation}\label{eq:extension-flags}
 f_{\mathrm{in}}(\ker\alpha_M)\subseteq\ker\alpha_N,
 \qquad f_{\mathrm{out}}(\im\beta_M)\subseteq\im\beta_N.
\end{equation}
Here it already commutes with the composite actions, so
$f_{\mathrm{out}}\beta_M\alpha_M=\beta_N\alpha_Nf_{\mathrm{in}}$.
To prove sufficiency, first define $f_k$ on $\im\alpha_M$ by
$f_k\alpha_M=\alpha_Nf_{\mathrm{in}}$. On a complement of $\im\alpha_M$ in
$M_k$, lift $f_{\mathrm{out}}\beta_M$ through the split surjection
$N_k\to\im\beta_N$. The complements and lifts exist over $K_k$. The resulting $f_k$
shows that the image of restriction consists exactly of the maps
satisfying \eqref{eq:extension-flags}.

For the mutated diagram the two flags are interchanged:
$\ker\alpha'_M=\im\beta_M$ and
$\im\beta'_M=\ker\alpha_M$. The outside algebra and its actions are
identified by premutation. Thus the two restriction images are the
same, with the same composition. Their kernels are the supported maps,
which proves \eqref{eq:hom-quotient-mutation}.
Reduction transports this statement by the prescribed quotient
isomorphisms of Section~\ref{alg:sec:reduction}.

The two vector spaces on the right of \eqref{eq:confined-hom} have
$K_k$-dimensions $-h_k(M)$ and $-\check h_k(N)$. The space of maps
therefore has $K$-dimension $d_kh_k(M)\check h_k(N)$. This proves \eqref{eq:hom-number-mutation}.
The reverse construction gives essential surjectivity of this quotient
functor.
\end{proof}

\paragraph{\em The presentation invariant.}
Write $P_i=e_i\mathcal P$ and $P(a)=\bigoplus_iP_i^{a_i}$, and
let $I_i=\Hom_K(\mathcal P e_i,K)$ be the corresponding right
injective. Recall that $\rad\mathcal P$ is the image of the arrow ideal.
A \emph{projective cover} $Q\twoheadrightarrow N$ has kernel
contained in $Q\rad\mathcal P$. A minimal projective presentation uses
covers of $N$ and of the first kernel. Dually, a minimal injective
copresentation uses the \emph{injective envelope} of $N$ and of its cokernel;
an envelope is an essential embedding into an injective module, meaning
that every nonzero submodule of the latter meets the image of $N$. In a minimal
projective presentation of a decorated representation, put its decoration
in the direct summand $P(v)\xrightarrow{0}0$. Its index is
$\delta_{\mathcal M}=[P_+^{\mathcal M}]-[P_-^{\mathcal M}]$.
We also use the injective index
$\check\delta_{\mathcal M}=[I_+^{\mathcal M}]-[I_-^{\mathcal M}]$,
where the decoration is the direct summand $0\xrightarrow{0}I(v)$.
Brackets denote the vectors of indecomposable-projective or
indecomposable-injective multiplicities, called \emph{Betti multiplicities}
in a minimal (co)presentation.

For the decorated minimal presentation
$f_{\mathcal M}:P_-^{\mathcal M}\to P_+^{\mathcal M}$, define the
\emph{presentation invariant space} and its dimension by
\begin{equation}\label{eq:E-pres}
 \op{E}(\mathcal M,\mathcal N)
 =\coker\bigl(\Hom(P_+^{\mathcal M},N)
                  \longrightarrow\Hom(P_-^{\mathcal M},N)\bigr),
 \qquad \op{e}(\mathcal M,\mathcal N)=\dim_K\op{E}(\mathcal M,\mathcal N).
\end{equation}
The map is precomposition with $f_{\mathcal M}$. Thus $\op{E}$ denotes
 a $K$-vector space and $\op{e}$ its dimension. We omit the algebra
when it is understood, and write
$\op{E}(\mathcal M)=\op{E}(\mathcal M,\mathcal M)$ and
$\op{e}(\mathcal M)=\op{e}(\mathcal M,\mathcal M)$ for equal arguments.
Set
\[ \langle u,v\rangle_D=\sum_i d_i u_i v_i. \]
Since $\dim_K\Hom(P_i,N)=d_i r_i(N)$ and the kernel in
\eqref{eq:E-pres} is $\Hom(M,N)$, one has
\begin{equation}\label{eq:E-dimension}
 \op{e}(\mathcal M,\mathcal N)
 =\hom(M,N)-\langle\delta_{\mathcal M},\mathbf r_N\rangle_D.
\end{equation}
The sign follows from $\delta=[P_+]-[P_-]$.
There is also a description involving two presentations:
\begin{equation}\label{eq:E-two-presentations}
 \op{E}(\mathcal M,\mathcal N)\simeq
 \frac{\Hom(P_-^{\mathcal M},P_+^{\mathcal N})}
 {\{a f_{\mathcal M}-f_{\mathcal N}b:
 a:P_+^{\mathcal M}\to P_+^{\mathcal N},\
 b:P_-^{\mathcal M}\to P_-^{\mathcal N}\}}.
\end{equation}
Indeed, projectivity lifts maps $P_-^{\mathcal M}\to N$ through
$P_+^{\mathcal N}\to N$. The ambiguity consists of maps factoring
through $f_{\mathcal N}$; lifting maps from $P_+^{\mathcal M}$ gives
the other part of the denominator.
For projective presentations $f,g$, we also write $\op{E}(f,g)$ for
this quotient of the Hom space and put
$\op{e}(f,g)=\dim_K\op{E}(f,g)$.

\begin{proposition}\label{prop:opposite-duality}
Let $\mathbb D=\Hom_K(-,K)$. Reversing the arrow bimodules and the
words in the potential gives $(A^{\mathrm{op}},S^{\mathrm{op}})$,
whose Jacobian algebra is $\mathcal P^{\mathrm{op}}$ and whose exchange
matrix is $-B$, with the same symmetrizer. If
$g_{\mathcal N}:I_+^{\mathcal N}\to I_-^{\mathcal N}$ is the
decorated minimal injective copresentation, there is a natural
isomorphism
\begin{equation}\label{eq:E-opposite-duality}
 \coker\bigl(\Hom(M,I_+^{\mathcal N})\longrightarrow
                    \Hom(M,I_-^{\mathcal N})\bigr)
 \simeq
 \op{E}_{\mathcal P^{\mathrm{op}}}
                    (\mathbb D\mathcal N,\mathbb D\mathcal M).
\end{equation}
Moreover,
\begin{equation}\label{eq:opposite-indices}
 \delta^{\mathrm{op}}_{\mathbb D\mathcal M}
     =\check\delta_{\mathcal M},\qquad
 \check\delta^{\mathrm{op}}_{\mathbb D\mathcal M}
     =\delta_{\mathcal M}.
\end{equation}
Opposite duality commutes with mutation up to right-equivalence.
\end{proposition}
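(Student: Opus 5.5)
The proof has four parts: the opposite QP and its Jacobian algebra, the duality isomorphism \eqref{eq:E-opposite-duality}, the index identities \eqref{eq:opposite-indices}, and compatibility with mutation.

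\emph{The opposite QP.} Reversing every arrow bimodule means replacing $A$ by the $R^{\mathrm{op}}=R$-bimodule $A^{\mathrm{op}}$, with $e_jA^{\mathrm{op}}e_i=e_iAe_j$ and the actions exchanged. Writing words in reverse order gives an anti-isomorphism $\CT{R}{A}\to\CT{R}{A^{\mathrm{op}}}$ that is continuous for the arrow-adic topology. It sends commutators to commutators, so it induces a bijection on cyclic classes. The formula \eqref{alg:eq:cyclic-derivative}, read after splitting the vertex fields, shows that the cyclic derivative of $S^{\mathrm{op}}$ in $a^{\mathrm{op}}$ is the reversal of $\partial_aS$. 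This identification is Galois-equivariant, so it descends to $K$. Hence $J(S^{\mathrm{op}})$ is the reversed image of $J(S)$, and $\mathcal P(A^{\mathrm{op}},S^{\mathrm{op}})\simeq\mathcal P^{\mathrm{op}}$. The exchange matrix is $-B$ because $\dim_Ke_iA^{\mathrm{op}}e_j=\dim_Ke_jAe_i$; the symmetrizer $d$ is unchanged.

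\emph{The duality isomorphism.} The functor $\mathbb D$ is an exact contravariant equivalence between finite right $\mathcal P$-modules and finite right $\mathcal P^{\mathrm{op}}$-modules. It sends $I_i=\mathbb D(\mathcal Pe_i)$ to $\mathcal Pe_i$, which is the indecomposable projective $e_i\mathcal P^{\mathrm{op}}$. It carries injective envelopes to projective covers, since essential embeddings dualize to superfluous epimorphisms. It therefore carries the decorated minimal injective copresentation $g_{\mathcal N}$ to the decorated minimal projective presentation $\mathbb Dg_{\mathcal N}:\mathbb DI_-^{\mathcal N}\to\mathbb DI_+^{\mathcal N}$ of $\mathbb D\mathcal N$. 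The decoration summand $0\to I(v)$ goes to $P(v)\to0$, since $\mathbb D$ acts on vertex spaces. Naturality of $\Hom(X,Y)\simeq\Hom(\mathbb DY,\mathbb DX)$ turns the map $\Hom(M,I_+^{\mathcal N})\to\Hom(M,I_-^{\mathcal N})$ into precomposition by $\mathbb Dg_{\mathcal N}$ on $\Hom(\mathbb DI_{\pm}^{\mathcal N},\mathbb DM)$. Its cokernel is, by definition \eqref{eq:E-pres}, $\op{E}_{\mathcal P^{\mathrm{op}}}(\mathbb D\mathcal N,\mathbb D\mathcal M)$.

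\emph{The indices.} Comparing Betti multiplicities under this correspondence gives $\delta^{\mathrm{op}}_{\mathbb D\mathcal M}=[\mathbb DI_+]-[\mathbb DI_-]=\check\delta_{\mathcal M}$. Applying the same argument to $\mathbb D$ on the opposite side, with $\mathbb D^2\simeq\id$, gives the second identity.

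\emph{Compatibility with mutation.} I expect this to be the main obstacle. The route is to compare the local data at $k$. For $\mathbb DM$, the incoming space is $\mathbb D(M_{\mathrm{out}})$ and the outgoing space is $\mathbb D(M_{\mathrm{in}})$. These identifications use the Frobenius forms, which identify $\Hom_{K_j}(A_{kj},M_j)$ with the appropriate tensor dual. Under them the maps satisfy $\alpha^{\mathrm{op}}=\mathbb D\beta$, $\beta^{\mathrm{op}}=\mathbb D\alpha$ and $\gamma^{\mathrm{op}}=\mathbb D\gamma$; the last holds because the second cyclic derivatives reverse along with the potential. Dualizing \eqref{eq:newmodules} exchanges kernels and cokernels, and hence exchanges the summands $\ker\gamma/\im\beta$ and $\ker\alpha/\im\gamma$. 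The decoration $\ker\beta/(\ker\beta\cap\im\alpha)$ dualizes to the corresponding expression computed from the swapped maps; checking this is a small linear-algebra verification. On the QP side, premutation of the opposite QP is the opposite of premutation, because the composite $C^{\mathrm{op}}=N^{\mathrm{op}}\otimes M^{\mathrm{op}}$ and the coevaluation term $\Delta_k$ is carried to itself. A splitting of one QP reverses to a splitting of the other, so Theorem~\ref{alg:thm:cancel} and Theorem~\ref{thm:representation-mutation} give independence of choices, and the identification holds up to right-equivalence. The delicate point is a possible sign: one incident family in $\alpha'$ carries a sign, and this must be absorbed by a rescaling of reversed arrows, as in Proposition~\ref{alg:prop:double-premutation}.
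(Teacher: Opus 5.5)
Your proposal is correct and follows essentially the same route as the paper: dualize $g_{\mathcal N}$ to the decorated minimal projective presentation of $\mathbb D\mathcal N$, use the natural identifications $\Hom_{\mathcal P}(M,I_\pm^{\mathcal N})\simeq\Hom_{\mathcal P^{\mathrm{op}}}(\mathbb DI_\pm^{\mathcal N},\mathbb DM)$ and the same multiplicities for the indices, reverse words for the Jacobian ideal and $-B$, and for mutation exchange the incoming and outgoing maps and the kernel--quotient summands of \eqref{eq:newmodules}, using dual sections. On the sign you flag: with dual sections one gets $\alpha'_{\mathbb DM}=-\mathbb D\beta'$ and $\beta'_{\mathbb DM}=-\mathbb D\alpha'$, so both reversed families flip, not just one; negating both families (which fixes $\sum[ab]b^*a^*$), or simply the representation automorphism acting by $-1$ on the space at $k$, absorbs it.
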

\begin{proof}
Dualizing $I_+^{\mathcal N}\to I_-^{\mathcal N}$ gives the
projective presentation
$\mathbb DI_-^{\mathcal N}\to\mathbb DI_+^{\mathcal N}$ of
$\mathbb DN$. For $\varepsilon\in\{+,-\}$, the natural identifications
$\Hom_{\mathcal P}(M,I_\varepsilon^{\mathcal N}) \simeq\Hom_{\mathcal P^{\mathrm{op}}} (\mathbb DI_\varepsilon^{\mathcal N},\mathbb DM)$
identify the two cokernels, including their decoration summands.
The same multiplicities give \eqref{eq:opposite-indices}.
Reversing the words identifies the two Jacobian ideals and changes
$B$ to $-B$. Finally, duality exchanges the incoming and outgoing
maps and interchanges the kernel and quotient sequences in
\eqref{eq:newmodules}. Dual sections give the opposite mutation;
the resulting sign changes on the reversed arrows define right-equivalences.
\end{proof}

Thus the cokernel defining the injective presentation invariant is
$\op{E}$ over the opposite algebra, with the two arguments dualized and
interchanged.
Its dimension need not equal $\op{e}(\mathcal M,\mathcal N)$ for the same
ordered pair. We use \eqref{eq:E-opposite-duality} rather than introduce
a second pairing. In particular,
\begin{equation}\label{eq:E-dual-dimension}
 \op{e}_{\mathcal P^{\mathrm{op}}}(\mathbb D\mathcal N,\mathbb D\mathcal M)
 =\hom(M,N)-\langle\mathbf r_M,\check\delta_{\mathcal N}\rangle_D.
\end{equation}

The local Betti calculation extends to finite-dimensional completed
Jacobian algebras without the extra hypothesis on the uncompleted derivative
ideal imposed in \cite[(10.3), Proposition 10.4]{DWZ2}. We give the
completed-algebra argument, since this distinction is essential in
Example~\ref{alg:ex:completion}.

\begin{lemma}\label{lem:completed-betti}
Let $\mathcal P$ be the completed Jacobian algebra of a reduced Jacobi-finite
ramified QP, and let $M$ be a finite right $\mathcal P$-module.
For the local maps
$M_{\rm out}\xrightarrow{\gamma_i}M_{\rm in}
 \xrightarrow{\alpha_i}M_i$, the multiplicities in a minimal
projective presentation of $M$ satisfy
\[ \beta_{0,i}(M)=\dim_{K_i}\coker\alpha_i,\qquad \beta_{1,i}(M)=\dim_{K_i}\bigl(\ker\alpha_i/\im\gamma_i\bigr). \]
\end{lemma}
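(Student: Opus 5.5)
We compute the minimal presentation through the two lowest layers, $\Hom_{\mathcal P}(M,S_i)$ and $\op{Ext}^1_{\mathcal P}(M,S_i)$, where $S_i=K_i$ is the simple at $i$. Since $\mathcal P$ is finite-dimensional with $\rad\mathcal P$ the image of the arrow ideal and $\mathcal P/\rad\mathcal P=R$, the standard semiperfect argument gives
\[ \beta_{0,i}(M)=\dim_{K_i}\bigl(M/M\rad\mathcal P\bigr)e_i, \qquad \beta_{1,i}(M)=\dim_{K_i}\bigl(\Omega M/\Omega M\,\rad\mathcal P\bigr)e_i, \]
where $\Omega M$ is the kernel of a projective cover.

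\emph{Degree zero.} The completed algebra is spanned, modulo arrow-length truncation, by $R$ and paths ending in an arrow. Hence $M\rad\mathcal P\, e_i=\sum_a Ma$, taken over arrows $a$ entering $i$. This is exactly $\im\alpha_i$, so $\beta_{0,i}=\dim\coker\alpha_i$.

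\emph{Degree one.} Write $P_0=\bigoplus_j M_j^{\mathrm{top}}\otimes_{K_j}P_j$ for the cover after choosing sections of the tops. To identify the top of $\Omega M$ at $i$, I would build the first two terms of a presentation explicitly, in the style of DWZ~II, Proposition~10.4, but using the completed algebra throughout. Over a splitting field $E$ (Galois descent of kernels and images is exact), $\mathcal P_E$ is a completed path algebra modulo the closed ideal generated by the derivatives $\partial_a S$. For the right module $e_i\mathcal P$ there is the standard exact sequence
\[ \bigoplus_{b:j\to i}P_j\xrightarrow{(\partial_{[\,\cdot\,]})}\bigoplus_{a:i\to l}P_l\xrightarrow{(a)}e_i\mathcal P\to S_i\to0, \]
and it remains exact in the completed Jacobi-finite setting. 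The key point is that membership in the closed ideal $J(S)$ is detected at a finite arrow truncation by Lemma~\ref{alg:lem:toplayer}. Consequently every element of $e_iJ(S)$ can be written as $\sum_b(\partial_bS)\,p_b$ with convergent coefficients. Here one uses the closed-image argument of Lemma~\ref{alg:lem:closed-image}, or rather its right-ideal analogue, applied to the map $\bigoplus_b P_j\to e_i\mathcal P$; this replaces the uncompleted-ideal hypothesis of DWZ.

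Applying $\Hom_{\mathcal P}(-,M)$ and using $\Hom(P_l,M)=M_l$ then gives the complex
\[ M_i\to M_{\mathrm{out}}^{(i)}\xrightarrow{\ \gamma\ } \dots \]
Dually, applying $M\otimes_{\mathcal P}-$ to the left version computes $\op{Tor}_1(M,S_i)\cong\ker\alpha_i/\im\gamma_i$. Indeed, $M\otimes_{\mathcal P}(\text{resolution of the left simple } \mathcal P e_i/\rad)$ is the complex
\[ M_{\mathrm{out}}\xrightarrow{\gamma_i}M_{\mathrm{in}}\xrightarrow{\alpha_i}M_i\to0, \]
after the tensor–Hom identifications of the arrow bimodules. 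The differential coming from the derivative relations is exactly $\gamma_i$, with entries the second cyclic derivatives. Since $\beta_{1,i}=\dim_{K_i}\op{Tor}_1^{\mathcal P}(M,S_i)$ for a minimal presentation over a semiperfect algebra, the formula follows.

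\emph{Main obstacle.} The hard step is exactness of the resolution of the simple at the middle term in the completed setting. One must show that the kernel of $\bigoplus_a\mathcal Pe_l\to \mathcal Pe_i$ is generated by the vectors of cyclic derivatives, not merely topologically generated. I would first prove this in the completed tensor algebra, where $\m$ is free on the arrows. I would then pass to the quotient using that $J(S)$ is the closure of $\sum T(\partial S)T$, and that by Lemma~\ref{alg:lem:toplayer} only finitely many degrees matter. This is exactly the point at which Example~\ref{alg:ex:completion} shows that the uncompleted ideal would fail. Finally, I would check that the identifications are compatible with the semilinear Galois action, so that the dimensions descend to $K_i$.
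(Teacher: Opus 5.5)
Your route is the paper's: split the vertex fields, resolve the left simple $L_i=\mathcal Pe_i/(\rad\mathcal P)e_i$ by $\bigoplus_{b:i\to k}\mathcal Pe_k\xrightarrow{d_2}\bigoplus_{a:j\to i}\mathcal Pe_j\xrightarrow{d_1}\mathcal Pe_i$, with $d_2$ given by the second cyclic derivatives, and tensor with $M$ to obtain $M_{\rm out}\xrightarrow{\gamma_i}M_{\rm in}\xrightarrow{\alpha_i}M_i$. You then read $\beta_{0,i},\beta_{1,i}$ off $M\otimes_{\mathcal P}L_i$ and $\op{Tor}_1^{\mathcal P}(M,L_i)$, with middle exactness coming from a cutoff $\mathfrak m^N\subseteq J$ rather than from the uncompleted ideal. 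Drop the $\Hom_{\mathcal P}(-,M)$ detour: applied to a resolution of the right simple, it computes $\op{Ext}^*(S_i,M)$, i.e.\ socle and injective data, not projective Betti numbers.

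The one justification you actually give for middle exactness is false as stated. Not every element of $e_iJ(S)$ has the form $\sum_b(\partial_bS)p_b$. Suppose no arrow ends at $i$, but an arrow $a:i\to l$ is followed by a triangle $l\xrightarrow{b}m\xrightarrow{c}n\xrightarrow{d}l$ with $S=bcd$. Then $abc=a\,\partial_dS$ is a nonzero element of $e_iJ(S)$, while no derivative starts at $i$.

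What is true, and is all you need, is
$e_iJ\subseteq\sum_b(\partial_bS)T+\sum_a aJ$,
with $b$ ending at $i$ and $a$ starting at $i$. Modulo $\mathfrak m^{N+1}$, a lifted relation $\sum_a a\widetilde p_a$ is a finite sum of terms $u(\partial_cS)v$. If $u$ has positive length, cutting its first arrow leaves a coefficient in $J$, which vanishes in $\mathcal P$. Terms with $u=e_i$ give images of $d_2$. The remainder in $e_i\mathfrak m^{N+1}=\sum_a a\,\mathfrak m^N$ has coefficients in $\mathfrak m^N\subseteq J$. Since $e_i\mathfrak m$ is free on the arrows, you can then compare coefficients.

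This is exactly the paper's argument; in its left convention it splits by the length of the trailing path $v$. In particular, no closed-image lemma is needed, because the cutoff already reduces every step to a finite computation.
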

\begin{proof}
First pass to a finite separable splitting field $F$. We work with
one vertex of the resulting ordinary quiver, still denoted by $i$.
Write $T=F\langle\!\langle Q\rangle\!\rangle$, let $\mathfrak m$
be its arrow ideal, let $J$ be the closed Jacobian ideal, and put
$P=T/J$. Choose $N$ with $\mathfrak m^N\subseteq J$; this is possible
because the completed Jacobian algebra is finite-dimensional. Denote the left
simple at $i$ by $L_i=P e_i/(\rad P)e_i$.

With paths multiplied from left to right, there is a complex of left
projectives
\begin{equation}\label{eq:left-simple-complex}
 \bigoplus_{b:i\to k}P e_k
 \xrightarrow{d_2}
 \bigoplus_{a:j\to i}P e_j
 \xrightarrow{d_1}P e_i\longrightarrow L_i\longrightarrow0.
\end{equation}
Here $d_1((p_a)_a)=\sum_a p_a a$. The second cyclic derivatives defined
in Section~\ref{sec:local} give
\[ \bigl(d_2((p_b)_b)\bigr)_a=\sum_{b:i\to k}p_b\,\partial_{[ab]}S. \]
Indeed, cutting the last arrow in each first derivative gives
\[ \partial_bS=\sum_{a:j\to i}(\partial_{[ab]}S)a\qquad(b:i\to k). \]
All expressions are evaluated in $P$, so the formal series have finite
images. In particular, the Jacobian relations give $d_1d_2=0$.

We verify exactness at the middle term, keeping the closed ideal $J$.
Lift $(p_a)_a\in\ker d_1$ to a tuple in $T$ and put
$r=\sum_a\widetilde p_a a\in J$. Since $J$ is the closed ideal generated by the cyclic derivatives,
modulo $\mathfrak m^{N+1}$ the element $r$ is a finite sum of products
$u(\partial_c S)v$, with $u,v$ paths, all ending at vertex $i$.
Separate the terms where $v$ has length zero from those where it has
positive length. Removing the last arrow of a positive-length $v$
produces a coefficient in
$J$, hence zero in $P$. When $v$ has length zero, the derivative contributes only
when $c$ starts at $i$, and removing its last arrow gives an image
of $d_2$. Finally, cutting off the last arrow of the remainder in
$\mathfrak m^{N+1}$ gives coefficients in $\mathfrak m^N$, which
also vanish in $P$. Thus $\ker d_1=\im d_2$.
Exactness at $P e_i$ follows by factoring every positive-length path
ending at $i$ by its last arrow. In particular,
\eqref{eq:left-simple-complex} is the beginning of a projective
resolution; no injectivity of $d_2$ is asserted.

Tensoring this resolution with the right module $M$ identifies its
last two differentials with $\gamma_i$ and $\alpha_i$. Therefore
\[ M\otimes_P L_i\simeq\coker\alpha_i,\qquad \op{Tor}_1^P(M,L_i)\simeq\ker\alpha_i/\im\gamma_i. \]
For a minimal right-projective resolution of $M$, tensoring with
$L_i$ kills its radical differentials. The dimensions of these two
spaces are consequently $\beta_{0,i}(M)$ and $\beta_{1,i}(M)$.

Finite separable scalar extension is faithfully flat and commutes with
these kernels, images, and finite initial segments of projective
resolutions. Each vertex projective splits into the projectives at its
sheets, with the same multiplicity at each sheet. The ordinary calculation
therefore gives the stated dimensions over $K_i$.
\end{proof}

Adding the summand $P(v)\xrightarrow{0}0$ for the decoration gives
\begin{equation}\label{eq:generic-index-local}
 \delta_i=r_i-v_i-\dim_{K_i}\coker\gamma_i,
 \qquad
 \check\delta_i=r_i-v_i-\dim_{K_i}\ker\gamma_i.
\end{equation}
The first equality follows by subtracting the two Betti
multiplicities in Lemma~\ref{lem:completed-betti}; the second
follows by Proposition~\ref{prop:opposite-duality}. These identities
hold for every finite decorated representation, not only for a
general presentation. Since the arrow ideal is nilpotent in
$\mathcal P$, a finite jet of the potential suffices to evaluate
these matrices in the completed Jacobian algebra.
Consequently
\begin{equation}\label{eq:indices-E}
 \check\delta_{\mathcal M}=\delta_{\mathcal M}+\mathbf r_M B.
\end{equation}
We call $\mathcal M$ \emph{$E$-rigid} if $\op{E}(\mathcal M)=0$, or equivalently
$\op{e}(\mathcal M)=0$. This condition on a
representation is distinct from rigidity of its potential. A finite
collection is \emph{$E$-compatible} if its direct sum is $E$-rigid. Put
$\op{e}^{\mathrm{sym}}(\mathcal M,\mathcal N) =\op{e}(\mathcal M,\mathcal N)+\op{e}(\mathcal N,\mathcal M)$.
Equations~\eqref{eq:E-dimension}, \eqref{eq:E-dual-dimension} and
\eqref{eq:indices-E}, together with skew-symmetry of $BD$, show that
$\op{e}(\mathcal M)$ and $\op{e}^{\mathrm{sym}}(\mathcal M,\mathcal N)$
are unchanged by opposite duality.

\begin{theorem}\label{thm:E-mutation}
For mutation of finite decorated representations at $k$,
\begin{equation}\label{eq:Ep-mutation}
 \op{e}(\mathcal M',\mathcal N')-\op{e}(\mathcal M,\mathcal N)
   =d_k\bigl(h_k(N')h_k(M)-h_k(N)h_k(M')\bigr).
\end{equation}
Thus $\op{e}(\mathcal M)$ and $\op{e}^{\mathrm{sym}}(\mathcal M,\mathcal N)$
are mutation invariant. The directed invariant need not be.
\end{theorem}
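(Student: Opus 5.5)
The plan is to reduce \eqref{eq:Ep-mutation} to the Hom-number formula \eqref{eq:hom-number-mutation} together with the dimension identity \eqref{eq:E-dimension}. By \eqref{eq:E-dimension},
\[ \op{e}(\mathcal M',\mathcal N')-\op{e}(\mathcal M,\mathcal N) =\bigl(\hom(M',N')-\hom(M,N)\bigr) -\bigl(\langle\delta_{\mathcal M'},\mathbf r_{N'}\rangle_D-\langle\delta_{\mathcal M},\mathbf r_N\rangle_D\bigr). \]
Proposition~\ref{thm:hom-mutation} evaluates the first bracket as $d_k(h_k(M')\check h_k(N')-h_k(M)\check h_k(N))$.

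It remains to compute the pairing difference. Only coordinates change at $k$ and at vertices $i\neq k$ through $\delta$. For the $k$-coordinate I would use \eqref{eq:newmodules}, the dimension formulas after Theorem~\ref{thm:representation-mutation}, and \eqref{eq:generic-index-local}. From these I expect to get $\delta'_k=-\delta_k$, the index mutation rule $\delta'_i=\delta_i+[b_{ki}]_+\delta_k$-type correction plus a term involving $h_k$ for $i\ne k$, and $r_k(N')$ expressed via $r_{\rm in},r_{\rm out}$ and ranks.

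A cleaner route avoids explicit index mutation. Express $\langle\delta_{\mathcal M},\mathbf r_N\rangle_D$ as $\hom(P_+,N)-\hom(P_-,N)$. Then use that $\hom(P_i,N)=\hom(P'_i,N')$ for $i\ne k$, by the subalgebra isomorphism of Proposition~\ref{alg:prop:jacobian-subalgebra}. Lemma~\ref{lem:completed-betti} supplies the Betti numbers at $i\ne k$ of $M$ and $M'$ in terms of the local maps. Together these reduce everything to the vertex $k$.

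At $k$, combine $\delta_k=r_k-v_k-\dim\coker\gamma$ with $\dim\coker\alpha=-h_k$ and $\dim\ker\beta=-\check h_k$. The identities $\ker\alpha'=\im\beta$ and $\im\beta'=\ker\alpha$ give
\[ h_k(M')=-\dim\ker\beta_M+\dim(\ker\beta\cap\im\alpha)-v_k\ \text{type expressions}, \]
and similarly $\check h_k(N')$ in terms of $h_k(N)$ and $r$'s. I expect $\check h_k(N)$ terms to rearrange via $\check h_k(N)=h_k(N)+(\text{index at }k)$, which is where the bilinear terms become $h_k(N')h_k(M)-h_k(N)h_k(M')$. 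This bookkeeping of decorations, with $V'_k$ and the summand $V_k$ inside $M'_k$, is the main obstacle. I would first check it on negative simples and simples $S_k$ to fix signs.

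Invariance then follows formally. Setting $\mathcal N=\mathcal M$ kills the right side of \eqref{eq:Ep-mutation}. Symmetrizing also gives zero, since the right side is antisymmetric in $(\mathcal M,\mathcal N)$. A rank-two-type example with $h_k(M)\ne h_k(M')$ would show the directed invariant need not be preserved.
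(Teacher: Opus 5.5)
Your first step is the paper's: expand with \eqref{eq:E-dimension} and take the Hom difference from \eqref{eq:hom-number-mutation}. There is a genuine gap in the pairing difference, which carries all the remaining content, and your proposed shortcut for it does not work.

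\textbf{The shortcut misses the change of the index away from $k$.} It is true that $\hom(P_i,N)=d_ir_i(N)=\hom(P'_i,N')$ for $i\ne k$. But the multiplicities of $P_i$ in the minimal presentation change. Lemma~\ref{lem:completed-betti} computes them from the local maps $\gamma_i,\alpha_i$ \emph{at $i$}. For $i$ adjacent to $k$, these maps change under mutation at $k$: incident arrows are reversed and composites through $k$ appear. So nothing reduces to vertex $k$ unless you prove the index rule \eqref{eq:index-unsimplified},
\[
 \delta'_k=-\delta_k=h_k-h'_k,\qquad
 \delta'_j=\delta_j-[b_{kj}]_+h_k+[-b_{kj}]_+h'_k\quad(j\ne k),
\]
with $h_k=h_k(M)$ and $h'_k=h_k(M')$. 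For general decorated representations the correction depends on $h_k$ and $h'_k$ separately, not only on $\delta_k=h'_k-h_k$ as your ``$[b_{ki}]_+\delta_k$-type correction'' suggests. This is why \eqref{eq:Ep-mutation} is stated in $h$-coordinates. The paper obtains the rule from the local calculation of \cite[Lemma 5.2]{DWZ2}, summed over the sheets above $k$.

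\textbf{Your vertex-$k$ identities are not the right ones.} From the explicit $\alpha'$, one has $\coker\alpha'\simeq\ker\alpha/\im\gamma\oplus V_k$, so $h_k(M')=-\dim(\ker\alpha/\im\gamma)-v_k$. The expression you wrote equals $-\dim V'_k-v_k$ instead. There is also no same-side relation of the form $\check h_k(N)=h_k(N)+(\text{index at }k)$. For $N$ the simple representation at $k$, $\check h_k(N)=h_k(N)=-1$, while $\delta_k=\check\delta_k=1$. What closes the computation are the cross identities from \eqref{eq:newmodules}:
\[
 h_k(N)=\dim N_{\mathrm{in}}-r_k(N)-r_k(N')-\check h_k(N'),\qquad
 h_k(N')=\dim N_{\mathrm{out}}-r_k(N)-r_k(N')-\check h_k(N).
\]

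\textbf{How the computation closes.} Use $d_j[\pm b_{kj}]_+=d_k[\mp b_{jk}]_+$, together with $\dim_{K_k}N_{\mathrm{in}}=\sum_j[b_{jk}]_+r_j(N)$ and $\dim_{K_k}N_{\mathrm{out}}=\sum_j[-b_{jk}]_+r_j(N)$. Then the index rule turns the pairing difference into
\[
 d_k\bigl(h'_k(\dim N_{\mathrm{in}}-r_k(N)-r_k(N'))-h_k(\dim N_{\mathrm{out}}-r_k(N)-r_k(N'))\bigr).
\]
By the cross identities, this equals
\[
 d_k\bigl(h'_k(h_k(N)+\check h_k(N'))-h_k(h_k(N')+\check h_k(N))\bigr).
\]
Subtracting it from the Hom difference $d_k\bigl(h'_k\check h_k(N')-h_k\check h_k(N)\bigr)$ cancels every $\check h$ term and leaves \eqref{eq:Ep-mutation}.

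Your deduction of the two invariance statements from antisymmetry is fine. For the directed invariant, the relevant condition is nonvanishing of the right side, not merely $h_k(M)\ne h_k(M')$. For example, $(\mathcal M,\mathcal N)=(P_k,E_k^-)$ gives a change of $d_k$.
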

\begin{proof}
Lemma~\ref{lem:completed-betti} identifies the presentation index
with the local expression \eqref{eq:generic-index-local}. The local
mutation calculation of \cite[Lemma 5.2]{DWZ2}, applied to that expression
in our conventions, gives
\begin{equation}\label{eq:index-unsimplified}
 \delta'_k=-\delta_k,\qquad
 \delta'_j=\delta_j-[b_{kj}]_+h_k+[-b_{kj}]_+h'_k\quad(j\ne k).
\end{equation}
After a splitting extension, mutate successively at the sheets above
$k$. There are no arrows between them. Summing their ordinary index
formulas gives \eqref{eq:index-unsimplified}, since the incident
multiplicities at any other sheet are the indicated bimodule ranks.
Here $h_k=h_k(M)$ and $h'_k=h_k(M')$; no disjoint-support assumption is used.

The local construction gives
\[
\begin{aligned}
 h_k(N)&=\dim_{K_k}N_{\mathrm{in}}-r_k(N)-r_k(N')-\check h_k(N'),\\
 h_k(N')&=\dim_{K_k}N_{\mathrm{out}}-r_k(N)-r_k(N')-\check h_k(N).
\end{aligned}
\]
Substitute these identities, \eqref{eq:index-unsimplified}, and the
Hom comparison \eqref{eq:hom-number-mutation} into
\eqref{eq:E-dimension}. Using $d_jb_{kj}=-d_kb_{jk}$ gives
\eqref{eq:Ep-mutation}. Its right side is antisymmetric in the two
representations, proving both invariance statements.
\end{proof}

For the injective mutation statement, apply \eqref{eq:Ep-mutation}
over $\mathcal P^{\mathrm{op}}$ to
$(\mathbb D\mathcal N,\mathbb D\mathcal M)$. Opposite duality replaces
$\mathbf h$ by $\check{\mathbf h}$ and reverses the two arguments.
For presentations with disjoint supports, $h_k(M)=-[\delta_{\mathcal M,k}]_+$
and $h_k(M')=-[-\delta_{\mathcal M,k}]_+$, so
\[ \op{e}(\mathcal M',\mathcal N')-\op{e}(\mathcal M,\mathcal N) =d_k\bigl([\delta_{\mathcal M,k}]_+[-\delta_{\mathcal N,k}]_+ -[-\delta_{\mathcal M,k}]_+[\delta_{\mathcal N,k}]_+\bigr). \]
Thus the directed invariant is unchanged when the two $k$th weight
coordinates have the same weak sign. No rigidity assumption on the
potential is used.

\paragraph{\em Sign coherence.}
Write $\beta_{0,i},\beta_{1,i}$ for the multiplicities in a decorated
minimal projective presentation. Its differential is radical, so every map in the denominator of
\eqref{eq:E-two-presentations}, with $\mathcal N=\mathcal M$, has image
in $P_+^{\mathcal M}\rad\mathcal P$. There is therefore a surjection
\[ \op{E}(\mathcal M)\twoheadrightarrow \Hom_R\bigl(P_-^{\mathcal M}/P_-^{\mathcal M}\rad\mathcal P, P_+^{\mathcal M}/P_+^{\mathcal M}\rad\mathcal P\bigr). \]
The dimension of the right-hand side gives
\begin{equation}\label{eq:E-lower-bound}
 \op{e}(\mathcal M)\geq\sum_i d_i\beta_{0,i}\beta_{1,i}.
\end{equation}

\begin{corollary}\label{thm:sign-coherence}
If $\op{e}(\mathcal M)=0$, then at every allowed mutation vertex $i$,
\begin{equation}\label{eq:E0-betti}
 h_i(M)=-[\delta_i]_+,\qquad h_i(M')=-[-\delta_i]_+.
\end{equation}
If $\op{e}(\bigoplus_j\mathcal M_j)=0$, the presentation weights of the
summands are coordinatewise sign-coherent. Under Hypothesis~\ref{hyp:initial-realization}, these conclusions hold
after every common finite sequence of species mutations; locally they hold
whenever the successive mutations are defined. Opposite duality
gives the same assertions for injective weights and outgoing kernels.
\end{corollary}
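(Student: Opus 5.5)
The argument has two parts. The first part reads off the Betti multiplicities of an $E$-rigid object from the lower bound \eqref{eq:E-lower-bound}. The second part transports $E$-rigidity along mutation sequences using Theorem~\ref{thm:E-mutation}.

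\emph{Local identity.} Assume $\op{e}(\mathcal M)=0$. Then \eqref{eq:E-lower-bound} gives $\beta_{0,i}\beta_{1,i}=0$ for every $i$, since $d_i>0$ and each term is nonnegative. The same bound applies to the decorated presentation. There the decoration sits in the summand $P(v)\xrightarrow{0}0$, which contributes to $P_-$ at $i$. So the decorated multiplicities $\beta_{0,i}$ and $\beta_{1,i}+v_i$ have disjoint supports. By Lemma~\ref{lem:completed-betti} we have $\beta_{0,i}=\dim\coker\alpha_i=-h_i(M)$, and $\delta_i=\beta_{0,i}-\beta_{1,i}-v_i$. Disjointness of supports then gives $-h_i(M)=[\delta_i]_+$. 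For the second identity we use $h_i(M')=h_i$ of the mutated object and $\delta'_i=-\delta_i$ from \eqref{eq:index-unsimplified}. By Theorem~\ref{thm:E-mutation}, $\op{e}(\mathcal M')=\op{e}(\mathcal M)=0$. Applying the first identity to $\mathcal M'$ gives $h_i(M')=-[\delta'_i]_+=-[-\delta_i]_+$.

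\emph{Sign coherence of summands.} Let $\mathcal M=\bigoplus_j\mathcal M_j$ be $E$-rigid. The minimal presentation of the sum is the direct sum of the minimal presentations, so Betti vectors add. The total multiplicities $\sum_j\beta_{0,i}(\mathcal M_j)$ and $\sum_j(\beta_{1,i}+v_i)(\mathcal M_j)$ cannot both be nonzero. Hence at each coordinate either all $\delta_{j,i}\ge0$ or all $\delta_{j,i}\le0$.

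\emph{Iteration.} Under Hypothesis~\ref{hyp:initial-realization}, Corollary~\ref{thm:good-global-pattern} ensures that every finite sequence of species mutations is defined. Mutation commutes with direct sums by Theorem~\ref{thm:representation-mutation}. By Theorem~\ref{thm:E-mutation}, $\op{e}$ of the mutated sum is again zero. So the preceding two paragraphs apply at every stage. Locally, the same holds as long as each successive mutation is defined.

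\emph{Injective version.} Apply everything over $\mathcal P^{\mathrm{op}}$ to $\mathbb D\mathcal M$. Proposition~\ref{prop:opposite-duality} and \eqref{eq:opposite-indices} turn $\delta$ into $\check\delta$. The incoming cokernels become the outgoing kernels, so $\mathbf h$ becomes $\check{\mathbf h}$. Moreover $\op{e}(\mathcal M)$ is invariant under opposite duality, as noted after \eqref{eq:indices-E}, so the hypothesis $\op{e}=0$ carries over.

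The main point requiring care is the decoration bookkeeping in the local identity: one must check that the lower bound \eqref{eq:E-lower-bound} also covers the decoration summand. This holds because the zero map $P(v)\to0$ is radical, and the surjection used to derive \eqref{eq:E-lower-bound} includes the $P(v)$ factor in its source.
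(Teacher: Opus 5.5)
Your proof is correct and follows the paper's route: the lower bound \eqref{eq:E-lower-bound} with the Betti identification of Lemma~\ref{lem:completed-betti}, additivity of decorated Betti multiplicities under direct sums, Theorem~\ref{thm:E-mutation} for iteration, and opposite duality. The one variation is in the second identity. You derive $h_i(M')=-[-\delta_i]_+$ by applying the first identity to $\mathcal M'$, using $\op{e}(\mathcal M')=0$ and $\delta'_i=-\delta_i$. The paper instead reads off $-h_i(M')$ directly as the decorated relation multiplicity, via $\coker\alpha'_i\simeq\ker\alpha_i/\im\gamma_i\oplus V_i$ from \eqref{eq:newmodules}, so it needs no mutation invariance at that step.
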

\begin{proof}
The multiplicities of the decorated minimal projective presentation
at $i$ are $-h_i(M)$ and $-h_i(M')$, by
Lemma~\ref{lem:completed-betti}, including the decoration,
and \eqref{eq:newmodules}.
Their difference is $\delta_i$. Inequality~\eqref{eq:E-lower-bound}
forces their product to vanish when $\op{e}(\mathcal M)=0$, proving
\eqref{eq:E0-betti}. Minimal multiplicities add under direct sums,
so opposite signs cannot occur at one coordinate among compatible
summands. Apply Theorem~\ref{thm:E-mutation} after mutation.
Opposite duality preserves $\op{e}(\mathcal M)$, which proves the
injective assertions.
\end{proof}

The direct sum of the negative simples has $\op{e}=0$. Its successive
mutations therefore form sign-coherent collections, whose characters
will give cluster monomials in Section~\ref{sec:cluster-application}.

\section{General presentations and valued families}\label{sec:weights}
We separate two steps. The algebraic step proves that mutation
preserves general represented objects, up to changes of presentation
bases; it uses the fixed vertex fields and does not require an
integral order. The valued step uses the full initial order in
Hypothesis~\ref{hyp:initial-realization} to select a universal
presentation. The algebraic result then applies to this one valued
object along every finite mutation sequence.

\subsection{Presentation weights and mutation}\label{subsec:presentation-spaces}

For a finite algebra $P$ with the fixed vertex idempotents, put $P_i=e_iP$ and
$P(\eta)=\bigoplus_iP_i^{\eta_i}$ for $\eta\in\mathbb Z_{\geq0}^I$.
Our projective convention is
\begin{equation}\label{eq:presweight}
 \delta=[P_+]-[P_-],\qquad
 \PHom(\delta)=\Hom_P(P([-\delta]_+),P([\delta]_+)).
\end{equation}
We write $\PHom_P(\delta)$ when the algebra needs to be specified.
We call $\delta$ the \emph{presentation weight}, or the \emph{$\delta$-vector},
of the presentation.
A \emph{contractible summand} is an isomorphism $Q\xrightarrow{\sim}Q$
appearing in the two degrees; it has zero cokernel and zero weight.
After removing these summands, write a presentation $f$ as a direct sum
of a minimal projective presentation and $P(v)\xrightarrow{0}0$.
Its \emph{decorated cokernel} is $(\coker f,\bigoplus_iK_i^{v_i})$.
Thus a projective summand on which the differential vanishes contributes
decoration, not a positive module.
The source and target in \eqref{eq:presweight} have \emph{disjoint supports}:
no indecomposable projective summand occurs in both.
An assertion about a \emph{general presentation} means that it holds on
a nonempty Zariski-open subset of this affine space. A general decorated representation of weight $\delta$ is its decorated
cokernel, considered up to isomorphism. On a suitable nonempty open,
the cokernel ranks and decoration dimensions are constant. We abbreviate
$X_\delta=\PHom(\delta)$ and write
\[ K_\delta^{\mathrm{pres}}:=K(X_\delta) \]
for its rational function field. If $X_\delta$ is a point, this field is
$K$. A $K$-basis $f_1,\ldots,f_q$ identifies $X_\delta$ with
$\mathbb A_K^q$; its \emph{universal presentation} is the tautological
map $f_{\mathrm{univ}}=\sum_jz_jf_j$ over $K[z_1,\ldots,z_q]$.
Its pullback to $K_\delta^{\mathrm{pres}}=K(z_1,\ldots,z_q)$ is the
algebraically generic presentation.
The group of changes of presentation bases
$G_\delta=\Aut_P(P([\delta]_+))\times\Aut_P(P([-\delta]_+))$
acts by $(g_+,g_-)f=g_+fg_-^{-1}$. The \emph{orbit saturation} of a
subset is the union of the orbits meeting it. On an open subset with constant cokernel and decoration ranks, choose
bases at the vertices. The closure of the union of all vertex-basis
orbits of the resulting decorated cokernels is the \emph{presentation
family} of weight $\delta$; it need not be a single orbit. This is the presentation-family construction of \cite{DF}. An
\emph{algebraically generic presentation} over a
field $F/K$ means an embedding $K_\delta^{\mathrm{pres}}\hookrightarrow F$
and the pullback of this universal map. It lies in every nonempty
$K$-defined open of the presentation space, not just in one prescribed
open.

In particular $P_i$ has weight $\mathbf e_i$ and $E_i^-$ has weight $-\mathbf e_i$.
The injective convention is the dual one: an injective copresentation
$M\to I_+\to I_-$ has weight $[I_+]-[I_-]$, with the same negative
decoration convention. We denote this weight by $\check\delta$.
Neither weight is a dimension or rank vector.

For a finite-dimensional Jacobian algebra $P=\mathcal P(A,S)$, the
local Betti calculation and opposite duality give
\[ \check\delta=\delta+\mathbf r_MB,\qquad \delta_k=h_k(M')-h_k(M),\qquad \check\delta_k=\check h_k(M')-\check h_k(M). \]
We now identify these $h$-coordinates for presentations with disjoint supports.

\begin{proposition}\label{prop:disjoint-support-indices}
Let $\mathcal P=\mathcal P(A,S)$ be a finite Jacobian algebra over
$R=\prod_iK_i$, and let $f:P_-\to P_+$ be a projective
presentation in $\PHom(\delta)$: the multiplicities of $P_i$ in
$P_+$ and $P_-$ are $[\delta_i]_+$ and $[-\delta_i]_+$.
Let $\mathcal M$ be its decorated cokernel. For every allowed mutation
at $k$, the $h$-coordinates satisfy
$h_k(M)=-[\delta_k]_+$ and $h_k(M')=-[-\delta_k]_+$.
For an injective copresentation with disjoint supports, the corresponding
identities are $\check h_k(M)=-[\check\delta_k]_+$ and
$\check h_k(M')=-[-\check\delta_k]_+$.
\end{proposition}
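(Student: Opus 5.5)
The plan is to read the two $h$-coordinates off the decorated presentation $f$ itself, using the Betti identification of Lemma~\ref{lem:completed-betti}. As in the proof of Corollary~\ref{thm:sign-coherence}, that lemma, together with the kernel--image formulas \eqref{eq:newmodules}, identifies $-h_k(M)$ and $-h_k(M')$ with the multiplicities of $P_k$ in $P_+^{\mathcal M}$ and $P_-^{\mathcal M}$. Here $P_\pm^{\mathcal M}$ denote the two terms of the decorated minimal projective presentation of $\mathcal M$, with the decoration placed in degree $-1$. The proof therefore reduces to comparing that minimal presentation with $f$.

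The first step is to reduce $f$. We remove contractible summands and split off $P(v)\xrightarrow{0}0$, as in Section~\ref{subsec:presentation-spaces}. This turns $f$ into a direct sum of the minimal presentation of $\coker f$ and the decoration summand. The decorated minimal presentation of $\mathcal M$ is then obtained from $f$ by deleting some number $c_i\geq0$ of pairs $P_i\xrightarrow{\sim}P_i$ at each vertex $i$. This is the point where disjoint supports enters: $P_k$ cannot occur in both $P_+$ and $P_-$, so no contractible summand $P_k\to P_k$ can be split off, and hence $c_k=0$. I expect this step to be the main obstacle. One has to justify that removing contractible summands only deletes isomorphic copies present in both degrees, which uses Krull--Schmidt for the finite algebra $\mathcal P$ (local endomorphism rings of the $P_i$, since $e_i\mathcal P e_i/\rad$ is the field $K_i$). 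One also has to check that the decoration summand $P(v)\xrightarrow{0}0$ is counted in degree $-1$, consistent with Lemma~\ref{lem:completed-betti} and \eqref{eq:newmodules}.

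With $c_k=0$, the multiplicities of $P_k$ in $P_+^{\mathcal M}$ and $P_-^{\mathcal M}$ are exactly $[\delta_k]_+$ and $[-\delta_k]_+$. The Betti identification above then gives $h_k(M)=-[\delta_k]_+$ and $h_k(M')=-[-\delta_k]_+$. As a consistency check, the difference of these two values matches $\delta_k=h_k(M')-h_k(M)$.

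For the injective statement, apply Proposition~\ref{prop:opposite-duality}. The functor $\mathbb D$ carries an injective copresentation with disjoint supports to a projective presentation over $\mathcal P^{\mathrm{op}}$ with disjoint supports and weight $\check\delta$, by \eqref{eq:opposite-indices}. Duality exchanges $\alpha$ and $\beta$, so it carries $\check{\mathbf h}$ to the $\mathbf h$-vector over the opposite algebra, and it commutes with mutation. The projective case over $\mathcal P^{\mathrm{op}}$ then gives $\check h_k(M)=-[\check\delta_k]_+$ and $\check h_k(M')=-[-\check\delta_k]_+$.
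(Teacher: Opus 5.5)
Your proposal is correct and follows the paper's proof. You establish $\beta_0=[\delta]_+$ and $\beta_1+v=[-\delta]_+$, then read off both $h$-coordinates from Lemma~\ref{lem:completed-betti} together with $\coker\alpha'_k\simeq\ker\alpha_k/\im\gamma_k\oplus V_k$ coming from \eqref{eq:newmodules}, and obtain the injective case by opposite duality. The only difference is cosmetic: you exclude contractible summands by Krull--Schmidt, whereas the paper notes directly that disjoint supports make every entry of $f$ radical. Hence $P_+\to M$ is already a projective cover, and $P_-$ splits into a relation cover plus the decoration summand.
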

\begin{proof}
We first note that every entry of $f$ is radical: its source and target have no
common indecomposable projective. Thus $P_+\to M$ is a projective
cover. Since $P_-\to\im f$ is surjective, a change of basis in $P_-$ separates
its projective cover from a complementary summand on which $f$ vanishes.
That complementary summand is precisely the decoration. If $\beta_0,\beta_1$ are the minimal projective
multiplicities of $M$ and $v$ is its decoration, then
$\beta_0=[\delta]_+,\ \beta_1+v=[-\delta]_+$.
Lemma~\ref{lem:completed-betti} identifies the dimensions of
$\coker\alpha_k$ and $\ker\alpha_k/\im\gamma_k$ with
$\beta_{0,k}$ and $\beta_{1,k}$, respectively. The local mutation formula gives
$\coker\alpha'_k\simeq \ker\alpha_k/\im\gamma_k\oplus V_k$.
Taking $K_k$-dimensions proves the assertion. The injective identities
follow from Proposition~\ref{prop:opposite-duality}.
\end{proof}

Let $B=(b_{ij})$ have $b_{ij}>0$ for arrows $i\to j$ and
$B\diag(d_i)$ skew-symmetric. The right $K_j$-ranks
of the outgoing arrow module and incoming dual module at $k$ are
$[b_{kj}]_+$ and $[-b_{kj}]_+$, respectively.

\begin{proposition}\label{prop:weight-rule}
Let $\mathcal M$ be the decorated cokernel of a projective presentation
with disjoint supports and $\delta$-vector $\delta$. Its mutation at $k$
has $\delta$-vector
\begin{equation}\label{eq:weight-p}
 (\mu_k\delta)_j=
 \begin{cases}
 -\delta_k,&j=k,\\
 \delta_j+[b_{kj}]_+[\delta_k]_+
       -[-b_{kj}]_+[-\delta_k]_+,&j\ne k.
 \end{cases}
\end{equation}
For an object given by an injective copresentation with disjoint supports, write
$\check\mu_{k,B}=\mu_{k,-B}$ for its injective weight transformation.
Equivalently,
\[ \check\mu_{k,B}(\check\delta)=\mu_{k,-B}(\check\delta). \]
This is \eqref{eq:weight-p} for the dual representation over the
opposite QP.
These statements specify the index of the mutated object;
its generality is established below.
\end{proposition}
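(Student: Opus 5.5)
The proof uses the index formula \eqref{eq:index-unsimplified} from the proof of Theorem~\ref{thm:E-mutation}, together with the disjoint-support values of the $h$-coordinates from Proposition~\ref{prop:disjoint-support-indices}. Formula \eqref{eq:index-unsimplified} holds for every finite decorated representation: it comes from Lemma~\ref{lem:completed-betti}, the local expression \eqref{eq:generic-index-local}, and summing the ordinary DWZ index formulas over the sheets above $k$. It reads
\[ \delta'_k=-\delta_k,\qquad \delta'_j=\delta_j-[b_{kj}]_+h_k+[-b_{kj}]_+h'_k\quad(j\ne k), \]
with $h_k=h_k(M)$ and $h'_k=h_k(M')$. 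The mutated object $\mu_k\mathcal M$ is a finite decorated representation, so its $\delta$-vector is well defined through its own decorated minimal presentation. This remains true even though we do not yet know that it is the cokernel of a disjoint-support presentation.

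Under the disjoint-support hypothesis, Proposition~\ref{prop:disjoint-support-indices} gives $h_k=-[\delta_k]_+$ and $h'_k=-[-\delta_k]_+$. Substituting these values yields
\[ \delta'_j=\delta_j+[b_{kj}]_+[\delta_k]_+-[-b_{kj}]_+[-\delta_k]_+ , \]
which is \eqref{eq:weight-p}. The only point to verify is the identification of incident ranks. Here $[b_{kj}]_+$ is the right $K_j$-rank of $A_{kj}$, and $[-b_{kj}]_+$ is the right $K_j$-rank of the dual incoming module. Both follow from the sign convention stated just before the proposition and from Proposition~\ref{prop:intrinsic-matrix}. The sign bookkeeping in this step is where the main care lies. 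To settle it, I would check the formula on $P_k$, with $\delta=\mathbf e_k$, and on $E_k^-$, with $\delta=-\mathbf e_k$, after splitting the vertex fields; there the local construction \eqref{eq:newmodules} is explicit.

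For the injective statement, apply the projective statement over the opposite QP $(A^{\mathrm{op}},S^{\mathrm{op}})$, whose exchange matrix is $-B$ by Proposition~\ref{prop:opposite-duality}. By \eqref{eq:opposite-indices}, the projective weight of $\mathbb D\mathcal M$ over $\mathcal P^{\mathrm{op}}$ is $\check\delta_{\mathcal M}$. An injective copresentation with disjoint supports dualizes to a projective presentation with disjoint supports. Opposite duality commutes with mutation up to right-equivalence, and right-equivalence preserves indices. Therefore $\check\delta_{\mu_k\mathcal M}=\delta^{\mathrm{op}}_{\mu_k\mathbb D\mathcal M}=\mu_{k,-B}(\check\delta_{\mathcal M})$. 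Equivalently, one can use the injective identities $\check h_k(M)=-[\check\delta_k]_+$ and $\check h_k(M')=-[-\check\delta_k]_+$ directly in the dual index formula.
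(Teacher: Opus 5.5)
Your proposal is correct and is essentially the paper's proof. Like the paper, you substitute $h_k=-[\delta_k]_+$ and $h'_k=-[-\delta_k]_+$ from Proposition~\ref{prop:disjoint-support-indices} into \eqref{eq:index-unsimplified}, and you obtain the injective rule through opposite duality (Proposition~\ref{prop:opposite-duality}) together with the injective $\check h$-identities; the sanity checks on $P_k$ and $E_k^-$ are harmless but not needed.
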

\begin{proof}
For projective, respectively injective, presentations with disjoint
supports, Proposition~\ref{prop:disjoint-support-indices} gives
\begin{equation}\label{eq:disjoint-support-betti}
 \begin{aligned}
 h_k=-[\delta_k]_+,\quad h'_k=-[-\delta_k]_+
     &\quad\text{(projective presentation)},\\
 \check h_k=-[\check\delta_k]_+,\quad \check h'_k=-[-\check\delta_k]_+
     &\quad\text{(injective copresentation)}.
 \end{aligned}
\end{equation}
Insert the first pair in \eqref{eq:index-unsimplified} for the projective
formula. Apply Proposition~\ref{prop:opposite-duality} and the second pair
for the injective formula. 
\end{proof}

The coefficients $b_{kj}$ already include the vertex degrees through
the bimodule ranks. Both transformations are involutive when the matrix
is mutated as well. For a projective
presentation with disjoint supports the rank identities of Section~\ref{sec:local} read
\[ r'_k=\sum_i[-b_{ik}]_+r_i-r_k-\check h_k(M)+[-\delta_k]_+, \qquad r'_i=r_i\quad(i\ne k). \]
When the same decorated object also has an injective
copresentation with disjoint supports, then
$-\check h_k(M)=[\check\delta_k]_+$. Without that assumption we use
$-\check h_k(M)=\dim_{K_k}\ker\beta_k$.

\subsection{Rigid and reachable presentations}
\label{subsec:rigid}\label{subsec:reachable}
Recall that $\mathcal M$ is $E$-rigid when
$\op{E}(\mathcal M)=0$. 

\begin{lemma}\label{lem:isomorphism-descent}
Let $P$ be a finite-dimensional algebra over an infinite field $F$. If
finite $P$-modules $M,N$ become isomorphic after a field extension, they
are already isomorphic over $F$. The same holds for decorated modules,
provided the decoration dimensions agree. The resulting isomorphism need not be the specified isomorphism over
the larger field.
\end{lemma}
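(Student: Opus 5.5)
The plan is the standard Noether--Deuring argument: isomorphism is the existence of an invertible module map, which is a Zariski-open condition on the finite-dimensional $F$-space of homomorphisms, and that space does not change under field extension.

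Let $F'/F$ be the field extension over which $M\otimes_FF'\simeq N\otimes_FF'$. First, $\Hom_P(M,N)$ is the kernel of an $F$-linear map between finite-dimensional $F$-spaces: the map $\Hom_F(M,N)\to\bigoplus_{x}\Hom_F(M,N)$, $\phi\mapsto(\phi x-x\phi)_x$, where $x$ runs over a finite set of algebra generators of $P$. Kernels commute with flat base change, so
\[ \Hom_{P\otimes_FF'}(M\otimes_FF',N\otimes_FF')=\Hom_P(M,N)\otimes_FF'. \]
Choose an $F$-basis $\phi_1,\ldots,\phi_q$ of $\Hom_P(M,N)$. Since an isomorphism exists over $F'$, we have $\dim_FM=\dim_FN$. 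Fix $F$-bases of $M$ and $N$, so that $\det(\sum_jz_j\phi_j)$ is a polynomial $D\in F[z_1,\ldots,z_q]$. The given isomorphism over $F'$ is $\sum_jc_j\phi_j$ for some $c_j\in F'$, and it satisfies $D(c)\ne0$. Hence $D$ is a nonzero polynomial with coefficients in $F$.

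Because $F$ is infinite, a nonzero polynomial over $F$ has a point $a\in F^q$ with $D(a)\ne0$. Then $\sum_ja_j\phi_j$ is a $P$-linear bijection $M\to N$ defined over $F$. This isomorphism is generally different from the given one over $F'$, which accounts for the final clause of the statement.

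For decorated modules $(M,V)$ and $(N,W)$, the positive parts are handled as above. The decorations are $R$-modules, that is, tuples of $K_i$-vector spaces for the vertex fields, so they are determined up to isomorphism by the dimensions $\dim V_i$. This is why the statement requires the decoration dimensions to agree; when they do, $V\simeq W$ over the base. If $P$ is regarded over $K$ with vertex fields $K_i$, the same argument applies with $F=K$ and $\Hom$ taken over $P$, which is $K$-linear.

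There is no serious obstacle here. The one point needing care is the base-change identity for $\Hom$, which holds because $P$ is finitely generated and $\Hom_P$ is the kernel of a linear map between finite-dimensional spaces.
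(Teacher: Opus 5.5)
Your proposal is correct and follows essentially the same route as the paper. Both arguments use base-change compatibility of the linear intertwining equations defining $\Hom_P(M,N)$. The determinant of $\sum_j z_j\phi_j$ in an $F$-basis is nonzero because of the isomorphism over the extension, and since $F$ is infinite it has a nonvanishing $F$-point. The decorations are then compared by their dimensions over the vertex fields.
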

\begin{proof}
The intertwining conditions define the finite-dimensional $F$-space
$\Hom_P(M,N)$, and its defining linear equations commute with scalar
extension. In an $F$-basis $T_1,\ldots,T_q$ of that space, the determinant
of $\sum_j u_jT_j$ on the underlying coefficient spaces is a polynomial.
The given geometric isomorphism shows that it is not zero. Since $F$ is
infinite, it has a nonzero value at a tuple in $F^q$. This is an invertible
$P$-linear map. Decorations with the same dimensions can be compared
independently over their vertex fields.
\end{proof}

\begin{proposition}\label{prop:rigid-weight}
For a fixed finite ramified Jacobian algebra $P$, an $E$-rigid decorated
representation is determined up to $P$-isomorphism by its projective
weight. Its presentation in $\PHom(\delta)$ lies in the unique geometric open
orbit of the corresponding full presentation space. In particular, the
universal presentation of weight $\delta$ associated with any full
initial order has a decorated cokernel isomorphic to the scalar
extension of this representation. Their monodromy $F$-polynomials agree.
\end{proposition}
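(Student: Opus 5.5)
The plan is the standard open-orbit argument of Derksen--Fei, run over the presentation space $X_\delta=\PHom(\delta)$ with the group $G_\delta$ of Section~\ref{subsec:presentation-spaces}, with Lemma~\ref{lem:isomorphism-descent} used to descend isomorphisms back to $K$. Let $\mathcal M$ be $E$-rigid of weight $\delta$. After removing contractible summands, its decorated minimal presentation $f$ lies in $X_\delta$; the decoration sits in the summand $P(v)\xrightarrow{0}0$, which has disjoint support from the rest.

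\emph{Step 1: the orbit of $f$ is open.} The tangent space to $X_\delta$ at $f$ is $\Hom(P_-,P_+)$. The tangent map of the orbit map $G_\delta\to X_\delta$ is $(a,b)\mapsto af-fb$. By \eqref{eq:E-two-presentations} with $\mathcal N=\mathcal M$, its cokernel is $\op{E}(\mathcal M)$, which is zero. Hence the orbit map is smooth and dominant, and $G_\delta f$ is Zariski open in the affine space $X_\delta$. The same argument applies over $\overline K$: $\op{E}$ commutes with field extension, since it is the cokernel of a linear map defined over $K$. So the geometric orbit is open.

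\emph{Step 2: uniqueness.} Let $\mathcal N$ be another $E$-rigid representation of weight $\delta$. Its geometric orbit is also open. Since $X_{\delta,\overline K}$ is irreducible, the two open orbits meet, so they coincide. Thus $f_{\mathcal M}$ and $f_{\mathcal N}$ are $G_\delta(\overline K)$-conjugate, and their decorated cokernels are isomorphic over $\overline K$.

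\emph{Descent.} I will apply Lemma~\ref{lem:isomorphism-descent}. The decoration dimensions agree because they are read off from the presentations. The lemma then gives an isomorphism over $K$, or over any intermediate field. Working over a finite extension first is harmless, since $K$ is infinite. This proves that $\mathcal M$ is determined by $\delta$, and that its presentation lies in the unique geometric open orbit.

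\emph{Step 3: the universal presentation.} Fix a full initial order. The presentation $f_{\mathrm{univ}}$ over $L_\delta$ is algebraically generic, so it lies in every nonempty $K$-defined open subset. In particular it lies in the $K$-defined open orbit $G_\delta f$: this orbit is open, and it is defined over $K$ because $f$ is $K$-rational. So $f_{\mathrm{univ}}=g f$ for some $g\in G_\delta(\overline{L_\delta})$. This gives an isomorphism between the decorated cokernel and $\mathcal M\otimes_K L_\delta$ over the algebraic closure.

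Lemma~\ref{lem:isomorphism-descent}, applied over the infinite field $L_\delta$, descends this isomorphism to $L_\delta$ itself. Scalar extension commutes with cokernels, so the decorated cokernel of $f_{\mathrm{univ}}$ is $\mathcal M_{L_\delta}$.

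Finally, the monodromy $F$-polynomial depends only on the isomorphism class of the module. Its representation Grassmannians are the base changes of those of $\mathcal M$ along $K\subset L_\delta$. The main obstacle is checking that $\chi_\sigma$ is unchanged under this base change: one needs that the tame generator $\sigma$ on $L_\delta$ restricts to the chosen $\sigma$ on $K$. This holds because $L_\delta=\overline{\C(z)}((t))$ has the same tame quotient generated by $t^{1/m}\mapsto \zeta_m t^{1/m}$. One also needs that $\ell$-adic cohomology with compact support is invariant under the extension of algebraically closed fields $\overline K\subset\overline{L_\delta}$. I would cite smooth/proper base change for this, or the invariance of $H^*_c$ under extension of algebraically closed base fields. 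Granting this, the two polynomials agree coefficient by coefficient.
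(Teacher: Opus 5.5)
There is a gap at the first step. You assert that, after removing contractible summands, the decorated minimal presentation of $\mathcal M$ lies in $X_\delta=\PHom(\delta)$, and that the decoration summand $P(v)\xrightarrow{0}0$ has support disjoint from the rest. Neither follows from minimality. A minimal presentation can contain the same indecomposable $P_i$ in both terms. This happens through a nonzero radical component $P_i\to P_i$, or because $v_i>0$ while $M$ has top at $i$. Its multiplicities are then $[\delta]_++c$ and $[-\delta]_++c$ with $c\neq0$, so it is not a point of $\PHom(\delta)$ at all.

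Without this, Step~2 has no common space in which to compare two $E$-rigid objects of the same weight. Step~3 likewise has nothing in $X_\delta$ against which to compare $f_{\mathrm{univ}}$. Disjointness is exactly where $E$-rigidity must be used first, and the argument is as follows. The differential of the decorated minimal presentation is radical, so every map $af-fb$ in the denominator of \eqref{eq:E-two-presentations} (with $\mathcal N=\mathcal M$) lands in $P_+^{\mathcal M}\rad\mathcal P$. Hence $\op{E}(\mathcal M)$ surjects onto $\Hom_R$ between the tops of $P_-^{\mathcal M}$ and $P_+^{\mathcal M}$. This gives \eqref{eq:E-lower-bound}, $\op{e}(\mathcal M)\ge\sum_i d_i\beta_{0,i}\beta_{1,i}$, with the decoration counted in $\beta_1$. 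Vanishing of $\op{e}(\mathcal M)$ forces $\beta_{0,i}\beta_{1,i}=0$ for every $i$. So the multiplicities are $[\delta]_+$ and $[-\delta]_+$, and the presentation does lie in $X_\delta$.

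With that inserted, the rest of your argument is the paper's proof:
\begin{itemize}
\item the tangent map $(a,b)\mapsto af-fb$ has cokernel $\op{E}(\mathcal M)=0$, which stays zero after scalar extension, so the orbit is open in the irreducible affine space;
\item two open orbits coincide;
\item Lemma~\ref{lem:isomorphism-descent} descends the geometric isomorphism;
\item the algebraically generic universal point lies in the $K$-defined open orbit;
\item Proposition~\ref{prop:coefficient-extension} (extension from $\C((t))$ to $\overline{\C(\mathbf z)}((t))$ with compatible tame generators) gives equality of all traces, which is what your base-change remark amounts to.
\end{itemize}
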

\begin{proof}
Write $\beta_0,\beta_1$ for the minimal projective multiplicities with
the decoration as the summand $P(v)\xrightarrow{0}0$. The lower bound
\eqref{eq:E-lower-bound} implies $\beta_{0,i}\beta_{1,i}=0$ for all $i$.
Thus these multiplicities are $[\delta_i]_+$ and $[-\delta_i]_+$.
Two $E$-rigid objects of weight $\delta$ therefore have presentations
in the same affine space $X_\delta$.

At a presentation $f$, the tangent map of the action by changes of
presentation bases is $(a,b)\longmapsto af-fb$.
Its cokernel is $\op{E}(\mathcal M,\mathcal M)$ by
\eqref{eq:E-two-presentations}. This cokernel vanishes after scalar
extension as well. In characteristic zero the orbit is therefore open
in the whole geometrically irreducible affine presentation space. Two
such open orbits coincide. The decorated cokernels are geometrically
isomorphic; Lemma~\ref{lem:isomorphism-descent} descends their
isomorphism over the fixed valued field.

The universal point with its Gauss valuation from any initial full order is algebraically
generic in $X_\delta$ by Proposition~\ref{prop:universal-invariance}; it
lies in that same open orbit. Compare it to the scalar extension of the
fixed rigid presentation using the same lemma. Extension from $K$ to
$\overline{\C(\mathbf z)}((t))$ preserves $t$ and its tame generator,
so the representation Grassmannians have the same inertia traces. This proves equality of the entire monodromy $F$-polynomial at $E$-rigid presentation
weights. Theorem~\ref{newton:thm:order-independence} will give equality
of Newton polytopes at every weight.
\end{proof}

We apply the preceding uniqueness statement to the collections obtained
from negative simples.
Fix the mutation tree of the chosen good Jacobi-finite nondegenerate realization.
For a node $\mathbf s$ and its sequence $w$ from the root, pull the negative
simples of its Jacobian algebra back to the root:
\[ \mathcal T_{i;\mathbf s}=\mu_w^{-1}E_i^-(\mathbf s),\qquad \mathcal T_{\mathbf s}=\bigoplus_{i=1}^n\mathcal T_{i;\mathbf s}. \]
These are representations over the fixed ramified vertex fields,
up to the prescribed reduction comparisons. A decorated representation is
\emph{reachable} here if it is a summand of a finite direct sum of one
such collection. 

\begin{proposition}\label{thm:reachable}
Every collection $\mathcal T_{\mathbf s}$ has the following properties.
\begin{enumerate}
\item Its $n$ summands are pairwise nonisomorphic and indecomposable, and
$\op{e}(\mathcal T_{\mathbf s})=0$. In particular, $\op{E}(\mathcal T_{i;\mathbf s},\mathcal T_{j;\mathbf s})=0$
for every ordered pair $(i,j)$.
\item The mutable presentation weights of its summands form a $\Z$-basis
of $\Z^n$. At every common mutation vertex the weights of a compatible
collection have a common sign, with zeros allowed.
\item It is maximal: if $\op{e}(\mathcal T_{\mathbf s}\oplus\mathcal N)=0$,
then $\mathcal N$ belongs to $\add\mathcal T_{\mathbf s}$,
the class of direct summands of finite direct sums of $\mathcal T_{\mathbf s}$.
\item Each reachable isomorphism class is the general decorated
cokernel of its weight. Two reachable objects with the same mutable
presentation weight are isomorphic over the fixed Jacobian algebra over $K$.
\end{enumerate}
These conclusions require neither rigidity of $S$ nor the seed reachability condition. 
\end{proposition}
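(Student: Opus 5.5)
The plan is to transport everything from the node $\mathbf s$, where $\mathcal T_{\mathbf s}$ becomes the direct sum of negative simples, back to the root. The tools are the mutation invariance of $\op{e}$ (Theorem~\ref{thm:E-mutation}), the weight rule (Proposition~\ref{prop:weight-rule}) and the Hom comparison (Proposition~\ref{thm:hom-mutation}). Corollary~\ref{thm:good-global-pattern} guarantees that every mutation along $w$ is defined, and Theorem~\ref{thm:representation-mutation} gives that reduced mutation is an involution commuting with direct sums. Hence $\mu_w$ is a bijection, on right-equivalence classes, between decorated representations at the root and at $\mathbf s$, compatible with sums and summands.

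For (i), note that at $\mathbf s$ the object $\bigoplus_iE_i^-(\mathbf s)$ has $\op{e}=0$: its minimal presentation is $P(\mathbf 1)\to0$ and the cokernel in \eqref{eq:E-pres} is $\Hom(P,0)=0$. Theorem~\ref{thm:E-mutation} then gives $\op{e}(\mathcal T_{\mathbf s})=0$. Each $\op{E}(\mathcal T_i,\mathcal T_j)$ is a direct summand of $\op{E}(\mathcal T_{\mathbf s})$, because $\op{E}$ is additive in both arguments, so each vanishes. Indecomposability and pairwise nonisomorphism hold at $\mathbf s$ and transfer by involutivity, since mutation commutes with direct sums and preserves isomorphism classes.

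For (ii), the weights at $\mathbf s$ are $-\mathbf e_i$. By Corollary~\ref{thm:sign-coherence}, every intermediate compatible collection has presentations with disjoint supports and sign-coherent weights. Proposition~\ref{prop:weight-rule} therefore applies at each step, and the inverse mutation acts on the weight vectors by the piecewise-linear map \eqref{eq:weight-p}. On a sign-coherent collection this map is linear and unimodular: with the sign $\epsilon$ fixed, it is the identity except in row $k$, where it is an elementary transformation with $-1$ on the diagonal. So a $\Z$-basis is carried to a $\Z$-basis. The common-sign statement is Corollary~\ref{thm:sign-coherence} again.

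For (iii), suppose $\op{e}(\mathcal T_{\mathbf s}\oplus\mathcal N)=0$. Mutating to $\mathbf s$ gives $\op{e}(\bigoplus_iE_i^-\oplus\mathcal N')=0$. By \eqref{eq:E-dimension}, $\op{e}(\mathcal N',E_i^-)=\hom(N',0)-\langle\delta_{\mathcal N'},0\rangle_D=0$, which gives nothing. Instead I would use $\op{e}(E_i^-,\mathcal N')=\hom(0,N')-\langle-\mathbf e_i,\mathbf r_{N'}\rangle_D=d_ir_i(N')$. Its vanishing forces $N'=0$, so $\mathcal N'$ is a sum of negative simples, and mutating back gives $\mathcal N\in\add\mathcal T_{\mathbf s}$.

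For (iv), a reachable object is $\mathcal T_{\mathbf s}$-additive, hence $E$-rigid, since $\op{e}$ of a finite direct sum of copies of summands is a sum of vanishing terms. Proposition~\ref{prop:rigid-weight} then says it is determined by its weight and lies in the open orbit of $X_\delta$, so it is the general decorated cokernel. The only delicate point is that two reachable objects may come from different nodes. Their weights must still lie in one $X_\delta$, and this holds because $E$-rigidity alone, through \eqref{eq:E-lower-bound}, fixes the Betti multiplicities. So Proposition~\ref{prop:rigid-weight} applies directly, with no comparison of nodes needed.

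I expect the main obstacle to be the bookkeeping in (ii). One must check that every intermediate collection $\mu_{w'}^{-1}$ of the negative simples really has disjoint-support presentations, so that the linear weight rule applies, and that the sign $\epsilon$ at each step is common to all $n$ summands. Both should follow from Corollary~\ref{thm:sign-coherence} applied at each node along $w$, but it needs verifying.
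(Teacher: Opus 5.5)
Your proposal is correct and follows the paper's proof essentially step for step. Part (i) uses $\op{e}=0$ for $\bigoplus_iE_i^-$ together with mutation invariance and involutivity; (ii) uses Corollary~\ref{thm:sign-coherence} to make \eqref{eq:weight-p} a determinant $-1$ integral linear map on the collection; (iii) uses the computation $\op{e}(E_i^-,\mathcal N')=d_i\dim_{K_i}N'_i$; and (iv) uses Proposition~\ref{prop:rigid-weight}. The bookkeeping you flag in (ii) is settled just as you suggest: each intermediate collection is a mutation of $\bigoplus_iE_i^-$ and so has $\op{e}=0$, and then Corollary~\ref{thm:sign-coherence} (through \eqref{eq:E-lower-bound}) supplies both the disjoint-support Betti identities and a common weak sign at the mutating coordinate.
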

\begin{proof}
The sum of all negative simples has $\op{e}=0$. Mutation preserves
$\op{e}(\mathcal M)$, direct sums, and nonzero indecomposables, and is involutive.
This proves (i), including nonisomorphism. Additivity of the nonnegative
dimensions of the spaces $\op{E}$ shows that every ordered cross term vanishes.

At each stage, Corollary~\ref{thm:sign-coherence} gives sign coherence
of the $k$th coordinates of the weights in the collection. On that collection
the piecewise-linear rule \eqref{eq:weight-p} is consequently one
integral linear transformation: it negates coordinate $k$ and adds
$[b_{kj}]_+$ times that coordinate when its sign is positive, or
$[-b_{kj}]_+$ times it when its sign is negative. Its determinant is
$-1$. Starting with the negative unit weights and composing these
unimodular transformations proves (ii).

For (iii), transport $\mathcal T_{\mathbf s}\oplus\mathcal N$ to
$\mathbf s$. It becomes $(\bigoplus_iE_i^-)\oplus\mathcal N'$, still
with $\op{e}=0$. Formula~\eqref{eq:E-pres} gives
$\op{e}(E_i^-,\mathcal N') =\dim_K\Hom(P_i,N')=d_i\dim_{K_i}N'_i$.
All these terms vanish, so the positive part of $\mathcal N'$ is zero.
It is a sum of negative simples. Mutating back proves the assertion.
Finally apply Proposition~\ref{prop:rigid-weight} to each reachable
object and to sums from a common collection. This proves (iv).
\end{proof}

Theorem~\ref{thm:cluster-model} identifies the summands of each such
collection with the variables of one cluster, and their direct sums
with cluster monomials.

\subsection{Generality under mutation}
To apply the character formula to every $\delta$-vector, we need
mutation to preserve generality, not only individual isomorphism
classes. We first describe how rational choices of presentations and
changes of bases recover a dense family. Theorem~\ref{thm:generic-species-generality}
then applies these facts to mutation.

A \emph{constructible subset} is a finite union of locally closed
subsets. A \emph{constructible stratification} is a finite partition
into locally closed subsets; we may refine a constructible partition
to obtain one. 

All parameter varieties below are of finite type over $K$.
An \emph{integral variety} is irreducible and reduced; it is
\emph{geometrically integral} if this remains true after extension
to an algebraic closure.
A rational map is \emph{dominant} if its image is dense; in that case
it induces an injection of the target function field into the source
function field. A rational choice of presentation need not be dominant.
We therefore also consider the map obtained by adjoining a general
change of bases in its two projective terms.

\begin{lemma}\label{lem:dominance}
Let $G_X,G_Y$ act on irreducible presentation spaces $X,Y$, respectively,
and let $X^\circ\subseteq X$ be a nonempty open. Suppose a constructible
correspondence $\Gamma\subseteq X^\circ\times Y$ has the following
geometric fibres. For every $x\in X^\circ$, the fibre $\Gamma_x$ is
one entire $G_Y$-orbit of codimension $e$ in $Y$. For every $y$ in the
second projection, the reverse fibre $\Gamma^y$ is the intersection
with $X^\circ$ of one $G_X$-orbit of codimension $e$ in $X$.
Then the second projection of $\Gamma$, which is already orbit-saturated,
is dense in $Y$ and contains a nonempty open.
\end{lemma}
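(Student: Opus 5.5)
The plan is a dimension count on $\Gamma$, using Chevalley's theorem for constructibility of images and the fibre-dimension theorem for constructible maps. Write $p:\Gamma\to X^\circ$ and $q:\Gamma\to Y$ for the two projections, and $Z=q(\Gamma)$. First, $Z$ is orbit-saturated. If $y\in Z$, pick $x$ with $(x,y)\in\Gamma$. Then $\Gamma_x$ is an entire $G_Y$-orbit containing $y$, so $G_Yy=\Gamma_x\subseteq Z$. By Chevalley, $Z$ is constructible. It therefore suffices to show $\dim\overline Z=\dim Y$: then, since $Y$ is irreducible, the constructible set $Z$ is dense and contains a nonempty open subset of its closure, which is $Y$.

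Next I would compute $\dim\Gamma$ from the first projection. Every fibre of $p$ is a $G_Y$-orbit of codimension $e$, hence equidimensional of dimension $\dim Y-e$, and $p$ is surjective onto $X^\circ$. After passing to a finite constructible stratification of $\Gamma$ into irreducible locally closed pieces, the fibre-dimension theorem applies on each piece. Some piece dominates $X^\circ$; the union of pieces not dominating $X^\circ$ maps into a proper closed subset. Over a dense open of $X^\circ$ the fibres of the dominating pieces cover the whole orbit $\Gamma_x$. Thus some irreducible component $\Gamma_1$ of $\overline\Gamma$ has $\dim\Gamma_1=\dim X+\dim Y-e$. For the bound $\le$, apply the same count to every component.

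Then I would compute the same dimension through $q$. On any irreducible piece $\Gamma'\subseteq\Gamma$, with closure of image $Z'=\overline{q(\Gamma')}$, a general fibre of $q|_{\Gamma'}$ lies in $\Gamma^y$. This fibre is an open part of a $G_X$-orbit of codimension $e$, so it has dimension at most $\dim X-e$. Hence $\dim\Gamma'\le\dim Z'+\dim X-e$. Applied to $\Gamma_1$, this gives $\dim X+\dim Y-e\le\dim Z'+\dim X-e$, so $\dim Z'\ge\dim Y$. Therefore $\overline{q(\Gamma_1)}=Y$, and $Z$ is dense.

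The main obstacle I expect is bookkeeping over a non-closed field $K$. The hypotheses are stated on geometric fibres, and orbits are only defined geometrically. I would run the whole argument after base change to $\overline K$. This is legitimate because dimension, density and constructibility of the image descend, $q(\Gamma)_{\overline K}=q(\Gamma_{\overline K})$, and the open subset can be taken Galois-stable or $K$-defined. A second point is that $\Gamma^y$ is only the intersection of an orbit with $X^\circ$, which may be empty off the image. The upper bound $\dim X-e$ on fibres is all that is used there, so this causes no problem. Codimension here means $\dim$ of the ambient space minus $\dim$ of the orbit, and orbits are equidimensional because they are homogeneous.
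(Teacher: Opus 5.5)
Your proposal is correct and follows the paper's argument. You compute $\dim\Gamma=\dim X+\dim Y-e$ from the first projection, bound the fibres of the second projection by $\dim X-e$, and conclude that the constructible image has full dimension in the irreducible $Y$. The differences are minor: you use only the upper bound on the reverse fibres, where the paper notes they have dimension exactly $\dim X-e$, and you spell out the orbit-saturation and the component bookkeeping that the paper leaves implicit.
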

\begin{proof}
The first projection is surjective with fibres of dimension $\dim Y-e$,
so $\dim\Gamma=\dim X+\dim Y-e$. A nonempty intersection of a source
orbit with $X^\circ$ is open in that orbit; hence every nonempty fibre
of the second projection has dimension $\dim X-e$. The fibre-dimension
formula gives a target image of dimension $\dim Y$. The image is
constructible, so irreducibility of $Y$ implies that it contains a
nonempty open.
\end{proof}

For $f:P_-\to P_+$, the orbit codimension is
\[ \op{e}(f,f)=\dim_K\coker\bigl( \End(P_+)\oplus\End(P_-)\longrightarrow\Hom(P_-,P_+) \bigr),\qquad (a,b)\longmapsto af-fb. \]
Thus inverse orbit correspondences preserving $\op{e}(f,f)$ give dominance.
For an irreducible component of an ordinary decorated representation
variety, write $e_{\rm orb}$ for the generic codimension of a vertex-basis
orbit inside that component. Such a component is \emph{generically
$\tau$-reduced} when $e_{\rm orb}$ equals the generic value of $\op{e}(\mathcal M,\mathcal M)$ in the
projective convention; \emph{generically $\tau^{-}$-reduced} uses the
injective convention, or the opposite dual. Here $\tau$ denotes Auslander--Reiten translation, recalled in
Appendix~\ref{app:AR}. The term concerns the stated dimension equality,
rather than scheme-theoretic reducedness. The ordinary component mutation results
\cite[Theorems 5.18 and 5.20]{GLFS} use the latter convention.

\begin{lemma}\label{lem:descent}
Let $P$ be a finite $K$-algebra with separable vertex algebra
$R=\prod_iK_i$, and let $E/K$ be a finite Galois splitting field.
For $\delta\in\Z^n$, put $\delta^{\rm sp}_{i,a}=\delta_i$ on every
sheet above $i$. Then
\[ E\otimes_K\PHom_P(\delta) =\PHom_{E\otimes_KP}(\delta^{\rm sp}), \qquad E\otimes_K\End_P(P_\pm)=\End_{P_E}((P_\pm)_E). \]
The corresponding unit groups base-change as well. If a rational choice of target presentations defined over $K$ has
a dense union of target orbits after scalar extension, that union is
already dense over $K$. Finitely many required rank opens, defined over the chosen finite
splitting field, can be replaced by nonempty $K$-defined opens.
\end{lemma}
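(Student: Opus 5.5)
The plan is to prove the two base-change identities first, then the density descent, and finally the replacement of opens.

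\emph{Base-change identities.} Since $P$ is finite over $K$ and $E/K$ is flat, $\Hom_P(Q,Q')\otimes_KE\simeq\Hom_{P_E}(Q_E,Q'_E)$ for finite projectives $Q,Q'$. Indeed, both sides are kernels of the same finite linear intertwining systems, and kernels commute with flat scalar extension. Next we compute the projectives. Separability gives $K_i\otimes_KE\simeq E^{d_i}$, one factor per sheet, so $e_i$ splits as a sum of $d_i$ orthogonal primitive idempotents $e_{i,a}$, and $(e_iP)_E=\bigoplus_ae_{i,a}P_E$. Hence $P([\pm\delta]_+)_E=P_E([\pm\delta^{\rm sp}]_+)$, because each sheet above $i$ receives multiplicity $[\pm\delta_i]_+$. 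This gives the asserted equalities for $\PHom$ and for $\End$. The unit groups are the loci where the determinant, a polynomial on the finite-dimensional space $\End$, does not vanish. These loci therefore base-change as open subschemes, so $G_\delta\times_KE$ is the split group.

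\emph{Descent of density.} Let $\phi:U\dashrightarrow X_\delta$ be the rational choice over $K$, and consider the action map $\Psi:G\times U\to X_\delta$, $(g,u)\mapsto g\cdot\phi(u)$, where $G$ is the relevant group of changes of bases. The image of $\Psi$ is constructible, and its closure $Z$ is a $K$-closed subset. Formation of the closure of the image of a morphism of finite-type $K$-schemes commutes with the flat extension $K\to E$, since the scheme-theoretic image commutes with flat base change. Thus $Z_E$ is the closure of the image of $\Psi_E$, which is the union of target orbits after extension. By hypothesis $Z_E=(X_\delta)_E$. Hence $Z=X_\delta$, because $Z_E\to Z$ is surjective and $X_\delta$ is an affine space.

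\emph{Replacing opens.} Let $W_1,\dots,W_r$ be the nonempty opens of $(X_\delta)_E$. Shrink each one to a principal open $D(h_j)$ with $h_j\in E[z]$ nonzero. Taking the norm $N(h_j)=\prod_{\sigma\in\Gal(E/K)}\sigma(h_j)$, computed in $K$-coordinates of the presentation space, gives a Galois-invariant polynomial. It therefore lies in $K[z]$, and it is nonzero because $E[z]$ is a domain. Its nonvanishing locus is contained in $D(h_j)$. Intersecting these finitely many loci yields a nonempty $K$-defined open, since $X_\delta$ is irreducible.

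The only delicate point is to make sure that the rank opens are Galois-stable, or at least contained in their norms. The norm trick handles this without stability, because we only need an inclusion. The remaining steps are routine flatness statements.
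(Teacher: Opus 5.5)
Your proposal is correct and follows essentially the same route as the paper. You use base change of Hom out of finite projectives together with the sheet splitting of $(e_iP)_E$, invertibility as an open determinant condition, flat-base-change descent of dominance for the orbit map $G\times U\dashrightarrow X$, and the Galois-norm product for the rank opens; your conclusion $Z=X_\delta$ uses only that $Z_E$ is the preimage of $Z$ under the surjection $(X_\delta)_E\to X_\delta$, so affineness plays no role there.
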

\begin{proof}
Hom out of a finite projective commutes with scalar extension. Also
$(e_iP)_E$ is the direct sum of the indecomposable projectives over all
sheets above $i$, proving the first assertions; invertibility is an open
condition in the endomorphism algebra. For density, adjoin a general element of the product of the two target
automorphism groups. The resulting rational map is defined over $K$.
Dominance is preserved and reflected by field extension, so dominance
after extension to $E$ proves the assertion over $K$. For a required principal rank open over $E$, multiply its nonzero
defining polynomial by all Galois conjugates. The product is a nonzero
$K$-polynomial. Intersect the finitely many resulting opens.
\end{proof}

We use this descent for the presentation spaces and their rank opens.
The represented objects are subsequently evaluated over the fixed
valued fields of Section~\ref{subsec:valued-families}.

\begin{lemma}\label{lem:cover-charts}
Let $P$ be a fixed finite ramified Jacobian algebra. A rational family of finite
$P$-modules in fixed free vertex modules over a geometrically integral
coefficient chart admits, on a nonempty open where its first two Betti
multiplicities are constant, a rational family of minimal projective
presentations. A decoration of rank $v$ is represented by adjoining $P(v)\xrightarrow{0}0$. When the generic decorated presentation has disjoint supports, this
family takes values in its fixed full presentation space.
\end{lemma}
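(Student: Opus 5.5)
The plan is to build the presentations directly from a rational family of projective covers, working first over the function field of the chart and then spreading out over a nonempty open. Let $U$ be the geometrically integral chart with function field $F=K(U)$, and let $M_U$ be the family, given by the arrow actions on fixed free vertex modules $\bigoplus_i K_i^{r_i}$ with coefficients in $\mathcal O(U)$. Over the generic point we get a finite $P_F$-module $M_F$, where $P_F=F\otimes_KP$ is still finite with separable vertex algebra $R\otimes_KF$. By Lemma~\ref{lem:completed-betti}, applied after the finite separable splitting of Lemma~\ref{lem:descent}, the Betti multiplicities at vertex $i$ are the $K_i$-dimensions of $\coker\alpha_i$ and of $\ker\alpha_i/\im\gamma_i$. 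These maps have entries regular on $U$, since a finite jet of $S$ suffices because the arrow ideal is nilpotent. Their ranks are therefore lower semicontinuous, and constant on a nonempty open $U_1$ where suitable maximal minors do not vanish. I would take the Galois norms of these minors, as in Lemma~\ref{lem:descent}, so that $U_1$ is defined over $K$.

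On a principal open $U_2\subseteq U_1$ I choose regular sections representing a complement of $\im\alpha_i$ in $M_i$. Cramer's rule on the nonvanishing minor gives sections of the vertex spaces whose images span $\coker\alpha_i$ at every point. These define a map $P(\beta_0)\to M_U$ of families of right modules, sending the generator $e_i$ to the chosen vector. At each point it is surjective, since the image contains a complement of $M\rad P$, and by Nakayama its kernel lies in $P(\beta_0)\rad P$. Its kernel $\Omega$ is a family of modules in free vertex modules of constant rank $\dim P(\beta_0)e_j-r_j$. Constant rank is exactly what lets it be trivialized on a further principal open by choosing a Cramer basis of the kernel of a constant-rank matrix.

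I then repeat the cover construction for $\Omega$. Its top has dimension $\beta_1$, identified with $\ker\alpha/\im\gamma$, so this dimension is constant on $U_1$. This gives a radical map $P(\beta_1)\to P(\beta_0)$ with cokernel $M$, which is a minimal presentation pointwise and whose entries are rational functions on $U$. Adding $P(v)\xrightarrow{0}0$ accounts for the decoration. When the generic decorated presentation has disjoint supports, we have $\beta_0=[\delta]_+$ and $\beta_1+v=[-\delta]_+$, as in the proof of Proposition~\ref{prop:disjoint-support-indices}. The family therefore lands in $\PHom(\delta)$, after identifying $\Hom_P(P(\beta_1),P(\beta_0))$ with its matrix coordinates in $e_iPe_j$.

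The main obstacle is making these choices rational and regular on a single open, rather than merely pointwise. I would handle it by expressing every choice through fixed minors and Cramer's rule in finite-dimensional $K$-spaces, namely the vertex spaces and $e_iPe_j$. I would also check that pointwise minimality holds on the whole open, not just at the generic point. This follows because Betti numbers are determined by the constant ranks, and a surjection onto the top with the right number of generators is automatically a cover.
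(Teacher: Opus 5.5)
Your proposal is correct and follows essentially the paper's route: a nonvanishing maximal minor gives rational bases of $M\rad P$ and of the top, lifts of the top give the cover, a rational basis of its kernel is chosen, and the construction is repeated for the relation cover. As in the paper, Galois norms of the minors make the open $K$-defined, and disjoint supports give $\beta_0=[\delta]_+$, $\beta_1+v=[-\delta]_+$. The only cosmetic difference is that you compute the top through the arrow maps $\alpha_i$ and Lemma~\ref{lem:completed-betti}, whereas the paper uses the radical action $M\otimes_R\rad P\to M$ directly; this map has the same image and does not need $\gamma$, since constancy of the Betti multiplicities is already part of the hypothesis.
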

\begin{proof}
Let $J=\rad P$, and choose a finite multiplication table and bases of
each bimodule $e_iJe_j$. The action map $M\otimes_RJ\to M$ has image $MJ$. On the open
subset where a chosen maximal minor is nonzero, its matrix gives
rational bases of $MJ$ and $M/MJ$. Lifts of a basis of $M/MJ$ define
the projective cover $P(\beta_0)\to M$, which is surjective by
Nakayama's lemma. Its kernel also has a rational basis on this open
subset. Applying the same construction to the top of that kernel gives
the minimal relation cover $P(\beta_1)\to P(\beta_0)$.

These operations are vertexwise over the finite separable fields $K_i$.
The norms of the finitely many chosen nonzero minors define a
nonempty open subset over $K$. Add the decoration as the zero map from its
projective source summand. The source and target multiplicities obtained this way are the minimal
Betti multiplicities; the construction gives no further disjointness assertion. Finally, for the local mutation family, finite
dimension of the Jacobian algebra truncates all arrow evaluations, so these are finite
rational matrices even when $S$ is a formal potential.
\end{proof}

\begin{lemma}\label{lem:orbit-open}
Let $X,Y$ be geometrically integral varieties over $K$, let $G$ act on
$Y$, and let $q:X\dashrightarrow Y$ be rational. If
$\Phi:X\times G\dashrightarrow Y$, $(x,g)\mapsto gq(x)$, is dominant,
then $q^{-1}(U)$ is a nonempty open in the domain of $q$ for every
nonempty $G$-invariant open $U\subseteq Y$. For an arbitrary nonempty
open $U$, the source meets its saturation $GU$. Thus finitely many successive invariant open conditions may be pulled
back to the initial presentation space.
\end{lemma}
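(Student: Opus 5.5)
The plan is to translate dominance of $\Phi$ into a statement about the generic point of $X$, and then use $G$-invariance of $U$ to remove the group factor. Since $X$ and $G$ are geometrically integral, $X\times G$ is irreducible. Dominance of $\Phi$ therefore means that $\Phi$ carries the generic point of $X\times G$ to the generic point of $Y$.

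Let $U\subseteq Y$ be a nonempty $G$-invariant open. Then $\Phi^{-1}(U)$ is a nonempty open in the domain of $\Phi$, because a dominant rational map pulls back nonempty opens to nonempty opens. It consists of pairs $(x,g)$ with $x$ in the domain of $q$ and $gq(x)\in U$. Invariance gives $gq(x)\in U$ if and only if $q(x)\in U$, so $\Phi^{-1}(U)$ is $(q^{-1}(U)\times G)$ intersected with the domain. Its projection to $X$ is nonempty, and it lies in $q^{-1}(U)$. Finally, $q^{-1}(U)$ is open in the domain of $q$ by continuity, so it is a nonempty open.

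For an arbitrary nonempty open $U$, I apply the same argument to $\Phi^{-1}(U)$. A point $(x,g)$ in it satisfies $gq(x)\in U$, so $q(x)\in g^{-1}U\subseteq GU$. Hence the image of the source meets $GU$. If the action is by a group scheme, I regard $GU$ as the image of the action map, which is open because translations are isomorphisms. Then $q^{-1}(GU)$ is a nonempty open by the invariant case already proved.

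For the last sentence I proceed by induction on the number of conditions. Given invariant opens $U_1,\dots,U_r$ on the successive target presentation spaces, each with a dominant map from the previous stage obtained by adjoining a general change of bases (Lemma~\ref{lem:dominance} and Lemma~\ref{lem:descent}), I pull back one stage at a time. Each pullback is a nonempty open, and the intersection of finitely many nonempty opens in the irreducible initial space is nonempty.

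The only delicate point is to work with generic points rather than $K$-points, since $K$-rational points need not be dense in a nonsplit setting. Geometric integrality is what guarantees irreducibility of $X\times G$, and I expect this to be the main obstacle to state carefully.
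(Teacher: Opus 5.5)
Your argument for the first two assertions is the paper's: invariance turns $\Phi^{-1}(U)$ into $q^{-1}(U)\times G$ on the domain of $q$, dominance makes it nonempty, and a non-invariant $U$ is replaced by its saturation $GU$. You do not need irreducibility of $X\times G$, nor geometric integrality of $G$, which the statement does not assume. A dominant map already meets every nonempty open of $Y$.

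\textbf{The gap is in the iteration.} Generic points versus $K$-points is not the delicate issue; invariance of the iterated pullbacks is.

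\begin{itemize}
\item After one step, $q_r^{-1}(U_r)$ is a nonempty open of $Y_{r-1}$, but nothing you say makes it $G_{r-1}$-invariant.
\item The intermediate chart $q_{r-1}$ is not itself dominant; only $q_{r-1}$ combined with a change of bases is. So the invariant case of the lemma cannot be used to pull $q_r^{-1}(U_r)$ back again.
\item The non-invariant case only gives an $x$ with $q_{r-1}(x)\in g\,q_r^{-1}(U_r)$, that is $q_r(g^{-1}q_{r-1}(x))\in U_r$. This is not the condition $q_r(q_{r-1}(x))\in U_r$ that you need.
\item Pulling each $U_j$ directly back to the initial space has the same problem. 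It requires dominance of $(x,g)\mapsto g\,q_j\cdots q_1(x)$, which you do not prove. Moreover, the domains of the charts $q_j$ are not invariant, so the composite need not be defined at the points you obtain.
\end{itemize}

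\textbf{The missing idea.} The successive open conditions must be conditions on the isomorphism class of the represented decorated object: generic ranks and Betti multiplicities, or the orbit saturation of the union of the chart domains. Their pullbacks are then again invariant. This holds because:
\begin{itemize}
\item a change of presentation bases preserves the decorated cokernel;
\item mutation respects isomorphism classes (Theorem~\ref{thm:representation-mutation});
\item minimal presentations of isomorphic objects form a single orbit.
\end{itemize}
Hence $q_j$ carries $G_{j-1}$-orbits into $G_j$-orbits. With this, each pullback is invariant and the lemma applies at every stage. The charts are chosen only at the end, on the intersection of the pulled-back opens, which is how the paper argues.
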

\begin{proof}
For an invariant open,
$\Phi^{-1}(U)=q^{-1}(U)\times G$ on the domain of $q$.
Dominance makes this inverse image nonempty; rationality makes it open.
For a non-invariant open replace it by $GU$, a nonempty invariant open.
For iteration, take the invariant open subsets on which the ranks and
Betti multiplicities have their generic values, or the orbit saturation
of the union of the rational presentation charts. Pulling them back
successively gives finitely many dense open subsets of the source.
Their intersection is nonempty; on it the required charts may be chosen.
\end{proof}

\begin{lemma}\label{lem:finite-data}
Fix a finite mutation sequence through Jacobi-finite QPs and a presentation
weight. After a finite separable splitting, the finite Jacobian algebras,
local mutation matrices, and any chosen finite collection of rational
projective-presentation charts admit models over a finitely generated
characteristic-zero subfield. This includes the finite expressions of the relations
and the nonvanishing minors used by the charts. Nonemptiness of their
geometric rank conditions and dominance of the maps obtained by
adjoining changes of presentation bases may be tested after embedding
that subfield in $\C$. All quotients defining the Jacobian algebras
in this statement are completed quotients.
\end{lemma}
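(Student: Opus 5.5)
The proof is a finite-generation argument followed by a Lefschetz-principle comparison. First I would list the finitely many algebraic inputs. Each seed along the sequence has a finite-dimensional completed Jacobian algebra, so by Lemma~\ref{alg:lem:toplayer} there is an $N$ with $\m^N\subseteq J(S_w)$. Hence each $\mathcal P(A_w,S_w)$, after splitting over a finite Galois extension $E$, is the finite quotient of $E Q_w/\m^N$ by finitely many relations. These relations are the truncated cyclic derivatives multiplied by paths, as in Proposition~\ref{alg:prop:finite-algorithm}. Each algebra is therefore determined by a finite structure-constant table over $E$. The same holds for the following data.

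\begin{enumerate}
\item The comparison isomorphisms $r_2r_1^{-1}$ of Proposition~\ref{alg:thm:nonsplit}, which are determined by finitely many images of basis paths.
\item The identifications of outside subalgebras in Proposition~\ref{alg:prop:jacobian-subalgebra}.
\item The local maps $\alpha,\beta,\gamma$, which are finite matrices built from finitely many second derivatives in a finite jet.
\item The chosen charts of Lemma~\ref{lem:cover-charts}, which are given by finitely many rational expressions with finitely many nonvanishing minors.
\end{enumerate}

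Let $F_0\subseteq E$ be the subfield generated over $\Q$ by all these constants, together with the coefficients of the finitely many defining polynomials. It is finitely generated of characteristic zero. Every object in the list then has an $F_0$-model whose base change to $E$ recovers it. The completed-quotient point needs care: I would note that the completed Jacobian ideal truncated modulo $\m^N$ equals the ideal generated by the finitely many truncated derivative products. This follows from Lemma~\ref{alg:lem:toplayer} and Proposition~\ref{alg:prop:finite-algorithm}. As a result, the model is the genuine completed quotient and not the uncompleted derivative quotient, which Example~\ref{alg:ex:completion} shows can differ.

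Next I would check that the required properties can be tested after embedding $F_0$ into $\C$. Fix an embedding $F_0\hookrightarrow\C$; it exists because $F_0$ has countable transcendence degree over $\Q$. Nonemptiness of a rank condition means that some minor is not identically zero on a geometrically irreducible affine space. For such a polynomial over $F_0$, being nonzero is independent of the algebraically closed field used, so we may pass to $\overline{F_0}$ and then to either $\overline E$ or $\C$. For dominance of $X\times G\dashrightarrow Y$, where the changes of presentation bases have been adjoined, I would use the generic rank of the Jacobian of the map. Equivalently, dominance is injectivity of the induced map of function fields, which is preserved and reflected by field extension. Lemma~\ref{lem:descent} already uses this for finite extensions, and the same argument gives it in general.

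The main obstacle is to ensure that no infinite data sneak in. The potential is a formal series, and right-equivalences may be infinite products. I would handle this by working only with their images in the finite Jacobian algebras and in fixed truncations. Those images are finite matrices, and the remaining terms are irrelevant by nilpotence of the arrow ideal.
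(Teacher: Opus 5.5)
\textbf{There is a genuine gap: you descend abstract algebras, but the lemma needs descended QPs.}

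Your field-extension step is fine. Nonvanishing of a minor, and dominance read as injectivity of function fields, are both insensitive to enlarging an algebraically closed base field. The problem is what you descend. Your models are only abstract $F_0$-forms of the finite algebras: structure constants, with relations taken modulo $\mathfrak m^N$. Your last paragraph explicitly discards the potential by working only with images in the Jacobian algebras.

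The lemma is used in Theorem~\ref{thm:generic-species-generality} to apply the ordinary complex results of Plamondon and GLFS to ``the same finite calculation.'' Those results concern Jacobi-finite QPs over $\C$ and their DWZ mutations. So two things are required:
\begin{itemize}
\item the model over $F_0$ must be the completed Jacobian algebra of an actual potential over $F_0$, with nilpotent arrow ideal there;
\item the models at successive seeds must be related by QP mutation.
\end{itemize}
Your construction gives neither. Relations modulo $\mathfrak m^N$ see only the $N$-jet of $S$, and that jet need not be Jacobi-finite. For $S_0=\tfrac12U^2$ in Example~\ref{alg:ex:completion}, one has $N=5$, all derivatives vanish modulo $\mathfrak m^5$, and $S_{\le5}=0$. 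The identity you attribute to Lemma~\ref{alg:lem:toplayer}, that $J(S)$ and the truncated derivative ideal agree modulo $\mathfrak m^N$, holds automatically. It certifies nothing over a smaller field.

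\textbf{The missing idea is truncation one degree higher.} Choose $r$ with $\mathfrak m^r\subseteq J(S)$ and take $d\ge r+1$.
\begin{itemize}
\item The derivatives of $S-S_{\le d}$ lie in $\mathfrak m^{d}\subseteq\mathfrak m^{r+1}$, so $\mathfrak m^r\subseteq J(S_{\le d})+\mathfrak m^{r+1}$.
\item Lemma~\ref{alg:lem:toplayer} upgrades this to $\mathfrak m^r\subseteq J(S_{\le d})$.
\item Both closed ideals then contain the derivative tails, so $J(S_{\le d})=J(S)$.
\end{itemize}
The inclusion modulo $\mathfrak m^{r+1}$ amounts to finitely many linear identities. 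Adjoin their coefficients and those of $S_{\le d}$ to $F_0$. The same closed-ideal argument then gives nilpotence over $F_0$, and again after embedding $F_0$ in $\C$. So the model really is the completed Jacobian algebra of the polynomial QP $(Q,S_{\le d})$.

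Along the mutation sequence you need two further facts:
\begin{itemize}
\item each required jet of a reduced mutation depends on finitely many coefficients, coming from composite substitutions, changes of arrows, and inverses of maps with invertible linear part;
\item the quadratic splittings over $F_0$ can be completed without changing those jets.
\end{itemize}
Then the truncation argument at the next seed identifies the $F_0$-model there with the next Jacobian algebra. With these additions, the rest of your argument goes through.
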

\begin{proof}
At one seed write $T=F\langle\!\langle Q\rangle\!\rangle$ and
choose $r$ with $\m^r\subseteq J(S)$. Let $S_{\le d}$ be the potential
truncated at degree $d\ge r+1$. The derivative tails lie in $\m^d$,
so the two derivative ideals have the same image modulo $\m^{r+1}$.
In particular,
$\m^r\subseteq J(S_{\le d})+\m^{r+1}$.
Lemma~\ref{alg:lem:toplayer} gives $\m^r\subseteq J(S_{\le d})$.
The derivative tails now lie in both closed ideals, and hence
$J(S_{\le d})=J(S)$.
The inclusion modulo $\m^{r+1}$ is expressed by finitely many linear
identities in the truncated tensor algebra. Those identities and a
finite potential jet describe the completed finite algebra over a
field generated by finitely many coefficients. 

For the fixed sequence choose a nilpotence cutoff at each of its finitely
many seeds. Evaluation of arrow series and their local derivative
matrices then uses only finite jets. Each prescribed finite reduction
jet likewise uses finitely many coefficients; substitution of composite arrows, changes of arrows,
and inversion of a map with invertible linear part require only a
sufficiently high jet of the original map. The partial quadratic splittings can
be completed without changing these selected jets. Projective-cover and
kernel charts use a finite multiplication table and finitely many
Gaussian eliminations, as in Lemma~\ref{lem:cover-charts}.
Adjoin all these coefficients, the coefficients expressing the relations, and the
chosen nonzero minors to $\Q$. This gives a finitely generated
field $F_0\subseteq F$ over which the finite calculation is defined.
The nilpotence relations remain valid over $F_0$ by the same
closed-ideal argument.

Representation and presentation spaces, their basis-change groups, and
the rational charts with their changes of bases are defined by these
finite algebraic data. After
passing to algebraic closures, dimensions, ranks, and dominance are
preserved by scalar extension. Embedding $F_0$ in $\C$ therefore permits
application of the ordinary component and presentation theorems to the
same finite calculation. Only the finite algebraic data are extended; the valued coefficient
field remains fixed.
\end{proof}

The next theorem concerns general represented objects, rather than the
coordinates of a particular normal form. We organize its proof by the
diagram
\[
\vcenter{\hbox{$\displaystyle \xymatrix@C=30pt@R=22pt{
 X_\delta\ar@{-->}[r]^q\ar[d]&X_{\mu_k\delta}\ar[d]\\
 \{\text{represented source classes}\}\ar[r]_{\mu_k}&
 \{\text{represented target classes}\}
}$}}.
\]
Here $q$ chooses a presentation rationally, and the vertical
maps take its decorated cokernel up to isomorphism. The lower horizontal
map is mutation of these isomorphism classes; no geometric quotient is
needed. The dominant map is
\[ X_\delta\times G_{\mu_k\delta}\dashrightarrow X_{\mu_k\delta}, \qquad(f,g)\longmapsto gq(f). \]
The proof first applies ordinary component mutation to the full
presentation space after splitting. Lemma~\ref{lem:finite-data}
justifies the characteristic-zero field comparison, and
Lemma~\ref{lem:descent} descends orbit-saturated dominance to $K$.
Finally, Lemma~\ref{lem:orbit-open} pulls the invariant rank conditions
back to the initial presentation space. These are statements about
represented objects; no dominance of a chosen normal form is asserted.

\begin{theorem}\label{thm:generic-species-generality}
Let $(A,S)$ be a Jacobi-finite QP over $R=\prod_iK_i$. Fix a finite sequence
whose successive reduced mutations are defined and two-acyclic at the
original vertices. Mutation takes a general projective
presentation of weight $\delta$, up to changes of presentation bases,
to a general presentation of weight $\mu_w\delta$ given by
\eqref{eq:weight-p}. The dual assertion holds for injective weights.
The requisite Betti and rank conditions hold on a nonempty $K$-defined
open of the initial presentation space.  Under
Hypothesis~\ref{hyp:initial-realization}, this applies to every finite
mutation sequence.
\end{theorem}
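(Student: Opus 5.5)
The proof reduces to a single mutation at $k$ and then iterates. For one step, I would follow the diagram preceding the statement. On a nonempty $K$-defined open $X_\delta^\circ\subseteq X_\delta$ the ranks of $\alpha_k,\beta_k,\gamma_k$, the Betti multiplicities and the decoration dimensions are constant. On this open, Lemma~\ref{lem:cover-charts} supplies a rational chart: it takes $f$ to its decorated cokernel $\mathcal M$, then to $\mu_k\mathcal M$ by the local construction \eqref{eq:newmodules}, and then to a minimal presentation $q(f)$ of $\mu_k\mathcal M$. All of this involves finite rational matrices, since the Jacobian algebras are finite and the choices of $\rho,\varsigma$ can be made rationally on the rank open. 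Proposition~\ref{prop:disjoint-support-indices} gives $h_k=-[\delta_k]_+$ and $h'_k=-[-\delta_k]_+$. Proposition~\ref{prop:weight-rule} then shows that $\mu_k\mathcal M$ has weight $\mu_k\delta$. The one thing still needed for $q(f)\in X_{\mu_k\delta}$ is disjointness of supports of the target presentation, and I would obtain it from generality, as explained below.

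The core step is dominance of $(f,g)\mapsto g\,q(f)$ from $X_\delta\times G_{\mu_k\delta}$ to $X_{\mu_k\delta}$. I would prove it with Lemma~\ref{lem:dominance}, taking $\Gamma$ to be the correspondence of pairs whose decorated cokernels are related by mutation. Theorem~\ref{thm:representation-mutation} shows that mutation is a well-defined involution on isomorphism classes, so the fibres of $\Gamma$ are single basis-change orbits in both directions. Theorem~\ref{thm:E-mutation} shows that $\op{e}(\mathcal M)$, which is the orbit codimension $\op{e}(f,f)$, is preserved. It remains to check that the reverse fibres land in the open $X^\circ$ and fill entire orbits there. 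To do this I would apply the inverse mutation $\mu_k$ on the target side and use involutivity, which gives the symmetric hypothesis.

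I expect the main obstacle to be the disjointness of supports of the target presentation, that is, showing that the general mutated object has minimal Betti vectors $[\mu_k\delta]_\pm$. At vertex $k$ this already follows from the local formulas above. At the other vertices I would first try to deduce it from the ordinary theory. After a finite splitting (Lemma~\ref{lem:descent}), and after passing to a finitely generated field embedded in $\C$ (Lemma~\ref{lem:finite-data}), the split sequence is a composite of ordinary DWZ mutations on full presentation spaces. The component mutation theorems \cite[Theorems 5.18 and 5.20]{GLFS} then say that generically $\tau$-reduced components go to generically $\tau$-reduced components, with general presentations going to general presentations. This yields dominance over $E$, and Lemma~\ref{lem:descent} descends it to $K$. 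The fallback, if this route fails, is to take disjointness from the generic $E$-orbit comparison.

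Finally, for iteration along $w$, Lemma~\ref{lem:orbit-open} pulls each invariant rank and Betti open at stage $j$ back to a nonempty open of $X_\delta$. Intersecting the finitely many pullbacks gives the required $K$-defined open. The injective statement follows by applying the projective one over $(A^{\mathrm{op}},S^{\mathrm{op}})$ using Proposition~\ref{prop:opposite-duality}. Under Hypothesis~\ref{hyp:initial-realization}, Corollary~\ref{thm:good-global-pattern} ensures that every finite sequence meets the hypotheses.
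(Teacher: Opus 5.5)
You have the paper's architecture: rank opens and rational minimal-presentation charts (Lemma~\ref{lem:cover-charts}), the index from Propositions~\ref{prop:disjoint-support-indices} and~\ref{prop:weight-rule}, descent by Lemma~\ref{lem:descent}, iteration by Lemma~\ref{lem:orbit-open}, and opposite duality. You also isolate the right crux. Write $\mathcal M_x$ for the decorated cokernel of $x$ and $\mathcal M'_x=\mu_k\mathcal M_x$. Unless the minimal projective terms $P'(\beta'_0)$ and $P'(\beta'_1+v')$ of the general $\mathcal M'_x$ have disjoint supports, decoration included, $\mathcal M'_x$ is the cokernel of no point of $X_{\mu_k\delta}$. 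Your correspondence $\Gamma\subseteq X^\circ\times X_{\mu_k\delta}$ then has empty fibres.

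Your main route to disjointness is the paper's, but as written it skips the step that produces it. \cite[Theorems 5.18 and 5.20]{GLFS} concern generically $\tau^-$-reduced decorated components, not presentation spaces. To reach your statement, the paper does three things:
\begin{enumerate}
\item it applies those theorems to the opposite QP;
\item it uses the converse presentation--component correspondence \cite[Corollary 2.17]{Plamondon} for the positive part;
\item it adds a separate argument for the decoration. By \eqref{eq:E-dimension}, $\op{e}((M',V'))=\op{e}((M',0))+\langle v',\mathbf r_{M'}\rangle_D$ equals $e_{\rm orb}$ on the target component, while $e_{\rm orb}\le\op{e}((M',0))$ by \cite[Remark 5.6]{GLFS}. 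Hence $v'_i>0$ forces $M'_i=0$, and only then does adjoining $P'(v')\xrightarrow{0}0$ preserve disjointness.
\end{enumerate}
Your fallback is not an argument as it stands. Inequality \eqref{eq:E-lower-bound} forces $\beta_{0,i}\beta_{1,i}=0$ only when $\op{e}=0$, whereas the theorem concerns arbitrary, typically nonrigid, weights.

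Your own dimension count closes the gap if you run it in the right space. On $X^\circ$ the target data $(\beta'_0,\beta'_1,v')$ are constant, so $q$ takes values in $Y'=\Hom_{P'}\bigl(P'(\beta'_1+v'),P'(\beta'_0)\bigr)$ with no disjointness assumption. On geometric points, Lemma~\ref{lem:dominance} applies to $\Gamma=\{(x,gq(x))\}\subseteq X^\circ\times Y'$ for exactly the reasons you give:
\begin{itemize}
\item the fibre over $x$ is the orbit $G_{Y'}q(x)$, of codimension $\op{e}(\mathcal M'_x)=\op{e}(\mathcal M_x)=e$ by \eqref{eq:E-two-presentations} and Theorem~\ref{thm:E-mutation};
\item involutivity (Theorem~\ref{thm:representation-mutation}) makes each reverse fibre equal to $G_Xx\cap X^\circ$.
\end{itemize}
So $G_{Y'}q(X^\circ)$ is dense in $Y'$. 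But every point of it is a decorated minimal presentation, so all its components lie in the radical. That is a proper closed linear condition on $Y'$ as soon as some $P'_i$ occurs in both terms, since $e_i(\rad P')e_i\ne e_iP'e_i$. Thus the supports are disjoint, $Y'=X_{\mu_k\delta}$ by Proposition~\ref{prop:weight-rule}, and density follows at the same time.

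This gives disjointness, decoration included, and dominance without \cite{GLFS}, \cite{Plamondon} or Lemma~\ref{lem:finite-data}. It is a more self-contained route than the paper's, which takes both facts from the ordinary component theory over a splitting field and then descends.
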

\begin{proof}
\emph{Ordinary presentations and component mutation.}
Over an algebraically closed field of characteristic zero, the full
projective-presentation space $\PHom(\delta)$, with projective source summands on which the differential vanishes recorded as decoration, parametrizes a generically
$\tau$-reduced decorated component. For the positive part without
decoration, this is the \emph{dually strongly reduced} convention of
\cite{Plamondon}. The converse passage uses generic minimal projective
covers with disjoint supports; the decoration is treated below. These are the
presentation--component correspondence in \cite[Section 2.6 and
Corollary 2.17]{Plamondon}, with the presentation framework of \cite{DF}.
Regular local mutation charts are supplied by \cite[Theorem 5.18]{GLFS};
\cite[Theorem 5.20]{GLFS} identifies the closure of their target
orbit saturation as a generically $\tau^{-}$-reduced component
and makes component mutation involutive. In that theorem, density
concerns the union of target orbits, not the chosen map of representation
spaces alone.

GLFS uses the injective convention. Apply its statements to the opposite
QP and dualize. A projective presentation $P_-\to P_+\to M$ becomes
$\mathbb DM\to\mathbb DP_+\to\mathbb DP_-$, and the injective weight
in our convention is $\delta$; the usual injective $g$-vector is
$-\delta$. The decoration is carried to the opposite algebra as well. The
injective copresentation used here is that of $\mathbb DM$, obtained
by dualizing the projective presentation of $M$.

At the target, the converse presentation--component correspondence gives
disjoint projective supports. To include decoration, observe that it
does not change orbit dimensions. On a generically reduced decorated
component we have
\[ e_{\rm orb}=\op{e}((M,V))=\op{e}((M,0))+\sum_i v_i\dim M_i,\qquad e_{\rm orb}\le \op{e}((M,0)). \]
The last inequality is the component inequality of
\cite[Remark 5.6]{GLFS}, applied to the positive part and, when needed,
to the opposite algebra. It follows that $v_i\dim M_i=0$ and
$e_{\rm orb}=\op{e}((M,0))$. The positive component therefore has disjoint
minimal projective supports by \cite[Corollary 2.17]{Plamondon}.
If $v_i>0$, then $M_i=0$, so its projective cover has no summand $P_i$.
Adjoining $P(v)\xrightarrow{0}0$ therefore preserves disjointness of the source and target supports. The index is
computed by Proposition~\ref{prop:weight-rule}.

Lemma~\ref{lem:finite-data} transfers these ordinary results to an
algebraic closure of $K$: for the fixed sequence and weight, descend the
finite completed-algebra calculation to a finitely generated
characteristic-zero field, embed that field in $\C$, and extend back.
Only these finite data are descended; the formal valued field and its
tame generator are not replaced.

\emph{Scalar extension and descent.}
Choose a finite Galois splitting field $E/K$. Lemma~\ref{lem:descent}
gives the entire ordinary presentation space, not a space of
Galois-invariant matrices:
$E\otimes_K\PHom_P(\delta)=\PHom_{P_E}(\delta^{\rm sp})$ with $\delta^{\rm sp}_{i,a}=\delta_i$.
It also identifies the groups of changes of presentation bases.
The mutations at the sheets over $k$ commute. There are no arrows
between those sheets, and mutation at one creates none incident to
another sheet of the same orbit. Their product is the species
kernel--image construction after scalar extension. The source weight is constant on the sheets over each vertex. Since
there are no arrows between the sheets over $k$, mutation at one does
not change the weights at the others. The common direction of the
incident arrows therefore lets us sum their ordinary contributions:
we obtain $[b_{kj}]_+$ and $[-b_{kj}]_+$ at every sheet over $j$.
The resulting weight is constant on each original orbit and obeys
\eqref{eq:weight-p}.

Lemma~\ref{lem:cover-charts} constructs rational projective
presentations of the mutated modules over $K$. Over the splitting
field, the union of the target orbits meeting their image is dense by
ordinary mutation theory. Lemma~\ref{lem:descent} gives the same density over $K$. Products of
the nonvanishing minors over the finite Galois group define the required
rank opens over $K$.

\emph{Iteration.}
Lemma~\ref{lem:orbit-open} pulls the invariant rank and Betti
conditions back through each successive mutation. The intersection of
the resulting dense source opens proves the assertion for the entire
sequence. Adjoining a general change of target presentation bases gives
algebraically generic target coordinates, as in
Section~\ref{sec:arithmetic-transport}. For a valued source presentation, its mutation matrices can be evaluated
in the same valued field. Adjoining independent changes of presentation
bases is needed only when a representative with algebraically generic
target coordinates is desired. Opposite duality proves the injective statement.
\end{proof}

\subsection{Initial orders and valued universal presentations}
\label{subsec:valued-families}

A \emph{discrete valuation} on a field $F$ is a surjective map
$v:F^\times\to\Z$ with $v(ab)=v(a)+v(b)$ and
$v(a+b)\geq\min(v(a),v(b))$, with $v(0)=+\infty$.
Its valuation ring is $\OO_v=\{v\geq0\}$, its maximal ideal is
$\mathfrak m_v=\{v>0\}$, and its residue field is
$\kappa(v)=\OO_v/\mathfrak m_v$. An element of value one is a
uniformizer. We normalize $v_t(t)=1$.

Fix a full order $\PP_0\subset\mathcal P$ containing $\RR$.
For a nonnegative vector $a$, write
$\PP_0(a)=\bigoplus_i(e_i\PP_0)^{a_i}$. For a weight $\delta$, put
\[ \PHOM(\delta)=\Hom_{\PP_0} \bigl(\PP_0([-\delta]_+),\PP_0([\delta]_+)\bigr). \]
We write $\PHOM_{\PP_0}(\delta)$ when the order must be specified.
The calligraphic $\mathscr P$ indicates the integral lattice; its
generic fibre is the presentation space $\PHom(\delta)$.
This lattice is a finite free $\OO$-module, and
\begin{equation}\label{eq:full-presentation-lattice}
 \PHOM(\delta)\otimes_{\OO}K=\PHom(\delta).
\end{equation}
Both assertions follow by writing the projectives as sums of
$e_i\PP_0$: the Hom modules are finite direct sums of the free
$\OO$-modules $e_i\PP_0e_j$.

Choose an integral basis $f_1,\ldots,f_q$ of $\PHOM(\delta)$
and form $f_{\mathrm{univ}}=\sum_jz_jf_j$. On $K(z_1,\ldots,z_q)$
use the Gauss valuation
\begin{equation}\label{eq:Gauss}
 v\left(\sum_\alpha c_\alpha z^\alpha\right)
       =\min_\alpha v_t(c_\alpha).
\end{equation}
Extend to rational functions by $v(a/b)=v(a)-v(b)$.
Its residue field is $\C(z_1,\ldots,z_q)$; the images of the $z_i$
there are algebraically independent.  After completion and
extension of the residue field to an algebraic closure, we work over
$L_\delta=\overline{\C(z_1,\ldots,z_q)}((t))$.
The decorated cokernel of $f_{\mathrm{univ}}$ there is denoted
$\mathcal M_{\delta,\mathrm{univ}}$. Thus $P$ and $\delta$ determine the
function field $K_\delta^{\mathrm{pres}}$, while the order selects its
Gauss valuation through the integral lattice $\PHOM(\delta)$.
Proposition~\ref{prop:coefficient-extension} proves that the monodromy
$F$-polynomial is unchanged by these compatible unramified scalar
extensions.

\begin{proposition}\label{prop:universal-invariance}\label{lem:generic-valued-embedding}
The valued universal presentation is independent of the integral basis,
up to the induced valued-field isomorphism, and is preserved by a
prescribed isomorphism of orders and QPs over $K$. Moreover,
$K(z_1,\ldots,z_q)\to L_\delta$ is injective. The induced algebraically generic point belongs to every nonempty
$K$-defined open in the presentation space.
\end{proposition}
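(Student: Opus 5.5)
The plan is to prove the three assertions of Proposition~\ref{prop:universal-invariance} separately: independence of the integral basis, injectivity of the field embedding, and genericity of the induced point.

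\emph{Basis independence.} Let $f'_1,\ldots,f'_q$ be another $\OO$-basis of $\PHOM(\delta)$. The two bases differ by a matrix $g\in\GL_q(\OO)$. The tautological presentations correspond under the linear substitution $z'=g^{-1}z$, which is an automorphism of $\OO[z_1,\ldots,z_q]$. An invertible $\OO$-linear change of variables preserves the Gauss valuation \eqref{eq:Gauss}. Indeed, it maps $\OO[z]$ onto itself and $t\OO[z]$ onto itself, so it preserves the minimal $t$-adic valuation of the coefficients. It therefore extends to an isometry of $K(z)$. Its residue map is the $\C$-linear substitution given by $g\bmod t$, which lies in $\GL_q(\C)$. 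Hence it induces a valued isomorphism of completions, and after choosing extensions to algebraic closures of the residue fields, an isomorphism $L_\delta\simeq L'_\delta$ that carries $f_{\mathrm{univ}}$ to $f'_{\mathrm{univ}}$. The decorated cokernels correspond under this isomorphism.

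For a prescribed isomorphism of orders and QPs over $K$, take the map of Proposition~\ref{alg:thm:nonsplit}, which carries $\PP_0$ onto its image. It induces an $\OO$-isomorphism of the lattices $\PHOM(\delta)$, because the lattice is built from the components $e_i\PP_0 e_j$. The same argument then applies to this map.

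\emph{Injectivity.} Any valued field homomorphism is injective. To make this concrete, $K(z)\to K(z)^\wedge$ is injective, and the completion with respect to the Gauss valuation embeds into $\C(z)((t))$. The latter embeds into $\overline{\C(z)}((t))=L_\delta$. To see that $K(z)$ lands in $L_\delta$ as written, note that a polynomial in $\OO[z]$ is a power series in $t$ with coefficients in $\C[z]$. The Laurent series in $t$ over $\C(z)$ form a field containing these polynomials, so their fractions embed as well. This direct check is the main point needing care, since $\OO=\C[[t]]$ has infinitely many coefficients. The expansion is nonetheless coefficientwise in $t$, and a nonzero polynomial has a nonzero leading $t$-coefficient in $\C[z]$, so it is invertible in $\C(z)((t))$.

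\emph{Genericity.} A nonempty $K$-defined open set in $X_\delta\simeq\mathbb A^q_K$ contains a principal open set $\{h\ne0\}$ with $0\ne h\in K[z]$. The induced point is the image of $z$ under $K[z]\hookrightarrow L_\delta$. The value of $h$ at this point is the image of $h$ itself, which is nonzero by injectivity. So the point lies in every nonempty $K$-defined open set, that is, it is algebraically generic in the sense defined before Lemma~\ref{lem:dominance}.

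The step I expect to be the main obstacle is the compatibility of the choices of algebraic closure of the residue field in the basis-independence argument. My first attempt would be to extend the residue isomorphism $\C(z)\simeq\C(z')$ to an isomorphism of the algebraic closures, which is possible but not unique. I would then observe that monodromy invariants such as the monodromy $F$-polynomial do not depend on this choice, by Proposition~\ref{prop:coefficient-extension}. With that, the assertion holds up to the induced valued-field isomorphism, as stated.
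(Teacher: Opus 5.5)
Your proposal is correct and follows essentially the same route as the paper: invariance of the Gauss valuation under an integral invertible coordinate change, the normalize-and-reduce argument for injectivity into $L_\delta$, a principal open for genericity, and identification of the integral Hom lattices under a prescribed order isomorphism. For the first step you argue via preservation of each $t^n\OO[z]$, whereas the paper argues via invertibility of the residue substitution; these are equivalent. Your concern about choosing algebraic closures of the residue field is harmless, since the statement is only claimed up to the induced valued-field isomorphism.
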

\begin{proof}
Normalize a nonzero polynomial by the least valuation of its
coefficients. Its residue is a nonzero polynomial in algebraically
independent variables. It remains nonzero after an integral invertible
linear coordinate change, since the residue change is invertible.
Applying the inverse coordinate change gives equality of the two Gauss
valuations. The same argument proves injectivity into $L_\delta$.
Every nonempty open in the affine presentation space contains a
principal open defined by the nonvanishing of a nonzero polynomial.
The algebraically generic point belongs to that open. An order isomorphism identifies
the two integral Hom lattices and their universal maps.
\end{proof}

A different Hom lattice can select a different valuation; for example
$\OO f_1\oplus\OO f_2$ and $\OO f_1\oplus t\OO f_2$ do so.
Section~\ref{sec:order-sensitivity} studies this dependence and the
valuation transported under mutation.

\begin{corollary}\label{thm:fixed-field-generality}
Under Hypothesis~\ref{hyp:initial-realization}, keep $L_\delta$ and its
ramified vertex extensions fixed. Along every finite mutation sequence, mutation of $\mathcal M_{\delta,\mathrm{univ}}$ is general
of weight $\mu_w\delta$, in the orbit-saturated sense of
Theorem~\ref{thm:generic-species-generality}.
\end{corollary}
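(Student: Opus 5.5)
The plan is to reduce the statement to Theorem~\ref{thm:generic-species-generality} and Proposition~\ref{prop:universal-invariance}, applied once to the universal point. Fix a finite mutation sequence $w$. Under Hypothesis~\ref{hyp:initial-realization}, Corollary~\ref{thm:good-global-pattern} makes every successive reduced mutation along $w$ defined, Jacobi-finite and two-acyclic at the original vertices. Hence Theorem~\ref{thm:generic-species-generality} gives a nonempty $K$-defined open $U_w\subseteq X_\delta$ on which the following holds. The Betti and rank conditions needed at each step are satisfied, and the decorated cokernel is carried by $\mu_w$ to an object whose presentation class lies in the orbit-saturated dense family of weight $\mu_w\delta$. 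Concretely, $U_w$ is the intersection of the finitely many pulled-back invariant opens produced by Lemma~\ref{lem:orbit-open}.

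Next I place the universal presentation in $U_w$. By Proposition~\ref{prop:universal-invariance}, the point of $X_\delta$ over $K(z_1,\dots,z_q)\hookrightarrow L_\delta$ is algebraically generic, so it lies in every nonempty $K$-defined open, in particular in $U_w$. This uses only the injectivity of the Gauss-valued embedding; the order $\PP_0$ enters solely through the choice of integral basis, and the basis choice is immaterial by the same proposition. Since $U_w$ is defined over $K$ by nonvanishing of finitely many polynomials (norms of minors, Lemma~\ref{lem:descent}), membership is tested by evaluating those polynomials at the generic point, where they are nonzero.

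Then I evaluate the mutation inside the fixed field. The local mutation maps $\alpha,\beta,\gamma$, the kernels and images in \eqref{eq:newmodules}, and the reduction comparisons are all finite linear-algebra operations over the vertex extensions $L_\delta\otimes_K K_i$. Finite jets suffice by Lemma~\ref{lem:finite-data}, and the rank conditions hold at the chosen point. Therefore $\mu_w\mathcal M_{\delta,\mathrm{univ}}$ is computed over the same field with its ramified extensions, with no enlargement of $L_\delta$.

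The main obstacle is to make precise that the target object is \emph{general} even though the point is fixed and its rationally chosen presentation $q(f_{\mathrm{univ}})$ need not itself be algebraically generic in $X_{\mu_w\delta}$. I would handle this with the orbit-saturated formulation. By Lemma~\ref{lem:cover-charts}, the mutated module admits a rational minimal presentation of weight $\mu_w\delta$ with disjoint supports at the chosen point. The dominance statement of Theorem~\ref{thm:generic-species-generality}, for $(f,g)\mapsto gq(f)$, then shows that its orbit meets every nonempty invariant open of $X_{\mu_w\delta}$, because $q^{-1}(U)$ is a $K$-defined open containing the generic point. I would check this compatibility step by step, iterating through the prefixes of $w$.
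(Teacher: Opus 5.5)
Your proposal is correct and follows the paper's proof. The paper uses the same four ingredients: Theorem~\ref{thm:generic-species-generality} and Lemma~\ref{lem:orbit-open} give a nonempty $K$-defined open of $X_\delta$ on which all prefix rank and Betti conditions hold, Proposition~\ref{prop:universal-invariance} puts the universal point in it, rational presentation charts are chosen successively and evaluated in $L_\delta$ and its vertex extensions, and Corollary~\ref{thm:good-global-pattern} supplies the global QP condition. Your discussion of the orbit-saturated target only spells out what the paper leaves implicit: the pulled-back invariant opens contain the same algebraically generic point for every sequence $w$.
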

\begin{proof}
For a fixed sequence, that theorem and Lemma~\ref{lem:orbit-open} give
a nonempty open defined over $K$ of the initial presentation space on which
the required invariant rank and Betti conditions hold at every prefix.
It contains the initial universal point by
Proposition~\ref{prop:universal-invariance}. Choose rational presentation charts successively on this open subset.
Their maps and the prescribed QP reductions are evaluated in
$L_\delta$ and its fixed vertex extensions. The global QP condition is
Corollary~\ref{thm:good-global-pattern}. Every finite sequence has such an
initial open, and the same algebraic universal point lies in all of
them, so the assertion holds for every finite sequence.
\end{proof}

\paragraph{\em Auxiliary changes and the initial order.}
The initial order is fixed throughout the comparisons below. With
the prescribed comparison maps and compatible tame generators, the
listed auxiliary changes preserve the indicated data. Changing the
initial order is a different operation.
\begin{center}
\small\renewcommand{\arraystretch}{1.12}
\begin{tabular}{@{}>{\raggedright\arraybackslash}p{.26\linewidth}>{\raggedright\arraybackslash}p{\dimexpr.74\linewidth-2\tabcolsep\relax}@{}}
\toprule
Auxiliary change & Data preserved \\\midrule
$R$-right-equivalence & The vertex algebra $R$; the potential up to cyclic equivalence
and the representation through the induced isomorphism of Jacobian algebras. \\
Reduction & The initial order through its image in the reduced Jacobian
algebra, with the prescribed comparison map.\\
Integral coordinate change & The integral Hom lattice and its Gauss valuation.\\
Unramified scalar extension & The normalization $v(t)=1$, the compatible
tame generator, and the monodromy traces.\\\bottomrule
\end{tabular}
\end{center}

\part{Monodromy Characters and Cluster Bases}\label{part:characters}
The valued representations of Part~\ref{part:ramified} now give
monodromy $F$-polynomials. Their mutation formula leads to the cluster
basis theorem. We then study factorization, geometric orders, and the
arithmetic data carried by mutation.

\section{Ramified \texorpdfstring{$F$}{F}-polynomials and mutation}\label{sec:monodromy}
We attach a polynomial to each finite ramified representation, with
quotient ranks as exponents and monodromy traces as coefficients.
The mutation formula is followed by Example~\ref{run:ex:one-arrow},
the Newton-polytope results, and fixed-field transport.

\subsection{Monodromy traces and ramified \texorpdfstring{$F$}{F}-polynomials}

Let $K=\C((t))$ and $R=\prod_iK_i$, with
$K_i=K[\varpi_i]/(\varpi_i^{d_i}-t)$. Fix compatible roots of $t$ in
$\overline K$ and the tame generator $\sigma$ sending $t^{1/m}$
to $\exp(2\pi\sqrt{-1}/m)t^{1/m}$. It cyclically permutes the
$d_i$ embeddings of $K_i$. We use these choices throughout the construction for a finite-dimensional
decorated representation $\mathcal M=(M,V)$.

For a prime $\ell$, let $\Q_\ell$ denote the $\ell$-adic coefficient
field and $H_c^j$ compactly supported \'etale cohomology in degree $j$.
For a separated finite-type $K$-scheme $X$, put
\[ \chis(X)=\sum_j(-1)^j \Tr\bigl(\sigma\mid H_c^j(X_{\overline K},\mathbb Q_\ell)\bigr). \]
The cohomology localization sequence gives trace
additivity for Galois-invariant decompositions. The K\"unneth formula,
which identifies the cohomology of a product with the graded tensor
product, gives multiplicativity.
The function $\chis$ is integer-valued and independent of $\ell$ by
\cite[Corollary 6.5]{Nicaise}. For convergent algebraic families it
is the alternating trace of ordinary topological monodromy; the
cohomological definition also allows formal coefficients.

For a finite field extension $E/F$, restriction of scalars is the
functor $(\Res_{E/F}X)(T)=X(T\times_F E)$ on $F$-schemes $T$.
It is represented for the Grassmannians used here.
Set $\mathbf r_M=(r_i)_i$, where $r_i=\dim_{K_i}M_i$.
For $0\leq\mathbf a\leq\mathbf r_M$, the
\emph{representation Grassmannian} $\Gr^R_{\mathbf a}(M)$ parametrizes
subrepresentations $N\subseteq M$ with $\dim_{K_i}N_i=a_i$. It is
the closed subscheme of
\[ \prod_i\Res_{K_i/K}\Gr_{a_i}(M_i) \]
cut out by preservation under every arrow action. Over a base scheme,
the $N_i$ are subbundles with locally free quotients; arrow stability
means that each action map $N_i\otimes_{K_i}A_{ij}\to M_j/N_j$ vanishes,
after extension of scalars to that base.
This construction gives a projective scheme. For an ordinary quiver,
it is the usual \emph{quiver Grassmannian}, with its natural scheme
structure.

The index $\mathbf a$ in $\Gr^R_{\mathbf a}(M)$ records submodule ranks.
For the $F$-polynomial we use the complementary quotient ranks
$\mathbf e=\mathbf r_M-\mathbf a$. Its coefficient at
$\mathbf y^{\mathbf e}$ is therefore the trace on
\[ \Gr^R_{\mathbf r_M-\mathbf e}(M)\subseteq\prod_i\Res_{K_i/K}\Gr_{r_i-e_i}(M_i). \]
After splitting the vertex fields, the submodule has dimension
$r_i-e_i$ and the quotient has dimension $e_i$ on every sheet over
$i$. The Grassmannians in the definition of $F_M$ have these balanced ranks.
When we discuss their points as quotients, we use this complementary-rank
identification. In \eqref{eq:Fp-mon}, the sum is over
$0\leq\mathbf e\leq\mathbf r_M$ componentwise.

\begin{definition}\label{def:monodromy-F}
We define the \emph{ramified $F$-polynomial} by
\begin{equation}\label{eq:Fp-mon}
 F_{\mathcal M}(\mathbf y)=F_M(\mathbf y)
   =\sum_{\mathbf e}\chis\bigl(\Gr^R_{\mathbf r_M-\mathbf e}(M)\bigr)
                           \mathbf y^{\mathbf e}.
\end{equation}
The polynomial depends only on $M$. The decoration affects the mutated
positive module and hence the target $h$-vector and $F$-polynomial.
\end{definition}

This is a finite polynomial with integer coefficients. Its constant
coefficient and the coefficient of $\mathbf y^{\mathbf r_M}$ are one;
positivity of the other coefficients is not assumed. The algebra may
be indicated by a subscript when several algebras are in use. We
write $F^{\mathrm{op}}_{\mathbb DM}$ for the same construction on
the dual representation over $\mathcal P^{\mathrm{op}}$.
Opposite duality gives
\begin{equation}\label{eq:F-opposite-duality}
 F^{\mathrm{op}}_{\mathbb DM}(\mathbf y)
   =\sum_{\mathbf e}\chis\bigl(\Gr^R_{\mathbf e}(M)\bigr)
                             \mathbf y^{\mathbf e},
\end{equation}
because $N\subseteq M$ dualizes to the quotient
$\mathbb DM\twoheadrightarrow\mathbb DN$. This is an isomorphism
of schemes over $K$, hence respects monodromy. Complementary ranks
then give
\begin{equation}\label{eq:mon-reciprocity}
 F_M(\mathbf y)=\mathbf y^{\mathbf r_M}
                F^{\mathrm{op}}_{\mathbb DM}(\mathbf y^{-1}).
\end{equation}
Thus submodule counting is the quotient construction on the opposite
algebra. A right-equivalence carrying the represented module identifies
the representation Grassmannians over $K$, hence their Galois actions and monodromy $F$-polynomials.

A valuation ring is \emph{henselian} if every simple root of a polynomial
modulo its maximal ideal lifts to a root in the ring. It is
\emph{strictly henselian} if, in addition, its residue field is separably closed. A completed strict henselization first
adjoins the unramified residue extensions and then completes. In residue
characteristic zero the resulting residue field is algebraically closed.

\begin{proposition}\label{prop:coefficient-extension}
Let $\Omega\subseteq\Omega'$ be algebraically closed extensions of
$\C$, and put $K_\Omega=\Omega((t))$ and
$K_{\Omega'}=\Omega'((t))$. Choose compatible roots of $t$ and the
generators $\sigma_\Omega,\sigma_{\Omega'}$ sending $t^{1/e}$ to
$\zeta_e t^{1/e}$, where $\zeta_e=\exp(2\pi\sqrt{-1}/e)$. For every separated finite-type $K_\Omega$-scheme $X$,
$\chi_{\sigma_\Omega}(X) =\chi_{\sigma_{\Omega'}}(X_{K_{\Omega'}})$.
Formation of representation Grassmannians with balanced ranks commutes with this extension,
so their monodromy $F$-polynomials agree. The same comparison holds for
valuation-preserving embeddings of completed strictly henselian
discretely valued fields of residue characteristic zero when $t$ remains
a common uniformizer and the compatible roots of $t$ are fixed.
\end{proposition}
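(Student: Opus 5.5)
The plan is to reduce to a comparison of $\ell$-adic cohomology with its Galois action, using that both tame generators act through the same tame inertia quotient. First I would identify the absolute Galois groups. Since $\Omega$ and $\Omega'$ are algebraically closed of characteristic zero, every finite extension of $K_\Omega=\Omega((t))$ is $\Omega((t^{1/e}))$, and the same holds over $\Omega'$. The same is true after replacing $\overline K$ by the union of the fields $\Omega((t^{1/e}))$; the full algebraic closure is the perfect Puiseux union in characteristic zero. Hence
\[ \Gal(\overline{K_\Omega}/K_\Omega)\simeq\widehat{\Z}\simeq\Gal(\overline{K_{\Omega'}}/K_{\Omega'}), \]
and restriction along the compatible embedding $\overline{K_\Omega}\hookrightarrow\overline{K_{\Omega'}}$ is an isomorphism. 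Because the roots of $t$ are chosen compatibly, this restriction carries $\sigma_{\Omega'}$ to $\sigma_\Omega$.

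Second, I would invoke invariance of étale cohomology under extension of algebraically closed base fields in characteristic zero. For a separated finite-type $X$ over $\overline{K_\Omega}$, the base-change map $H_c^j(X_{\overline{K_\Omega}},\Q_\ell)\to H_c^j(X_{\overline{K_{\Omega'}}},\Q_\ell)$ is an isomorphism; this is the standard result for compactly supported cohomology with torsion coefficients, passed to the limit. The map is equivariant for the Galois groups via the restriction isomorphism above, so it intertwines $\sigma_{\Omega'}$ with $\sigma_\Omega$. Taking alternating traces gives $\chi_{\sigma_\Omega}(X)=\chi_{\sigma_{\Omega'}}(X_{K_{\Omega'}})$.

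Third, I would check that representation Grassmannians commute with this extension. Each factor $\Res_{K_i/K}\Gr_{a_i}(M_i)$ commutes with flat base change because $K_i\otimes_{K_\Omega}K_{\Omega'}$ is the corresponding vertex field over $K_{\Omega'}$; this uses that $\Omega'((t))$ contains no new ramification. The arrow-stability conditions are closed conditions given by the same matrices, so the closed subscheme base-changes. The balanced rank condition is a statement about sheets, and the sheet decomposition is preserved by the compatible roots. Equality of all coefficients of $F_M$ then follows from the second step.

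Finally, for the henselian variant, let $F\subseteq F'$ be a valuation-preserving embedding of completed strictly henselian discretely valued fields with common uniformizer $t$ and residue fields $\kappa\subseteq\kappa'$ of characteristic zero. These residue fields are algebraically closed, so by Cohen structure theory $F\simeq\kappa((t))$ and $F'\simeq\kappa'((t))$. This reduces the variant to the case already treated, except that the embedding need not be the coefficientwise one. I expect this to be the main obstacle. My approach is to avoid choosing coefficient fields: tame ramification theory gives the tame Galois group directly as $\varprojlim\mu_e$ of the residue field, acting on $t^{1/e}$. An embedding preserving $t$ and the compatible roots therefore induces an isomorphism of absolute Galois groups matching the two generators. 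The cohomological base-change argument of the second step then applies verbatim between the algebraic closures, which is invariance under extension of separably closed fields.
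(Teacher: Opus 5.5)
Your proposal is correct and follows essentially the same route as the paper. All finite extensions are tame and obtained by adjoining roots of $t$, so the larger generator restricts to the smaller one; equivariant invariance of geometric compactly supported cohomology under algebraically closed extension then gives the trace equality; and representation Grassmannians commute with base change. Your coefficient-field-free treatment of the strictly henselian case is exactly the paper's ``identical argument''. Matching the generators there also requires the embedding to carry the chosen $\zeta_e$ to $\zeta_e$, which is automatic for the $K$-embeddings the paper uses.
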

\begin{proof}
Over each such field, all finite extensions are tame and are obtained by
adjoining roots of $t$: units have all roots by Hensel's lemma and the
residue field is algebraically closed. The generator on the larger
field therefore restricts to the prescribed generator on the smaller
one. Choose compatible algebraic closures. Invariance of geometric
compact-support \'etale cohomology under algebraically closed field
extension identifies the cohomology spaces equivariantly for these
generators, which proves the trace equality. Representation Grassmannians commute
with field extension by their moduli description, or directly by their
Grassmannian stability equations. Apply the same argument to each
balanced rank. The argument for completed strictly henselian fields is
identical. A ramified scalar extension with $t=u^a$, $a>1$, would instead
restrict the generator to $\sigma^a$ and is not covered by the statement.
\end{proof}

The proposition applies in particular to every Gauss field $L_\delta$.

\begin{lemma}\label{lem:trace-strata}
Let a separated finite-type $K$-scheme have a finite geometric constructible
stratification, defined over a finite extension and permuted by $\sigma$.
Its alternating compact-support trace is the sum of the traces of the
strata fixed by $\sigma$. A nontrivial orbit of strata contributes zero.
A fixed locally closed stratum, with its reduced induced structure,
descends to $K$. The assertion applies to stratifications
of relative representation Grassmannians by their ranks on the split vertex sheets.
\end{lemma}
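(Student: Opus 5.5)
The plan is to reduce everything to the standard properties of compactly supported $\ell$-adic cohomology on the geometric fibre, regarded as a $\langle\sigma\rangle$-equivariant object. Let $X$ be the scheme, $L/K$ a finite Galois extension (tame, generated by a root of $t$) over which the stratification $X_L=\bigsqcup_\lambda Z_\lambda$ is defined, and $\sigma$ acting on $X_{\overline K}$ through its action on $\overline K$. Since $\sigma$ permutes the strata, I would first refine the stratification so that it is ordered compatibly with $\sigma$. Concretely, I would group the strata into $\sigma$-orbits and take the union $Y_O$ of each orbit. Then I would replace the finite stratification by a filtration by closed $\sigma$-stable unions, choosing strata of maximal dimension at each step so that each $Y_O$ is locally closed in the remaining closed set. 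This is the usual argument for constructible stratifications: refine first to a stratification where the closure of each stratum is a union of strata, which can be done $\sigma$-equivariantly by taking all $\sigma$-translates of the boundary pieces. The long exact localization sequences for $H_c$ are $\sigma$-equivariant, so additivity of alternating traces, which the paper has already recorded, gives
\[
\chis(X)=\sum_O \Tr\bigl(\sigma\mid H_c^\bullet(Y_{O,\overline K})\bigr).
\]

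Next I would treat a nontrivial orbit $O=\{Z,\sigma Z,\dots,\sigma^{m-1}Z\}$ with $m>1$. The strata in $O$ are pairwise disjoint and each is locally closed in $Y_O$. Refining once more by dimension, each becomes open and closed in $Y_O$, or the orbit is again peeled off by additivity. In either case $H_c^j(Y_{O,\overline K})=\bigoplus_{r}H_c^j(\sigma^rZ)$, and $\sigma$ maps the $r$-th summand to the $(r+1)$-st. A block-permutation matrix with no fixed block has zero trace, so the orbit contributes zero. For a fixed stratum ($m=1$), $Z$ is a $\sigma$-stable, hence $\Gal(L/K)$-stable, locally closed subset, because $\sigma$ topologically generates the tame Galois group. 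With its reduced structure it therefore descends to a locally closed subscheme $Z_0\subseteq X$, by Galois descent of stable closed subsets and reduced structures (open subsets descend trivially). Its contribution is exactly $\chis(Z_0)$.

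For the final assertion, I would apply this to the relative Grassmannians $\prod_i\Res_{K_i/K}\Gr_{a_i}(M_i)$, or to a representation Grassmannian inside them, over the splitting field $E$. Over $E$, $M_i\otimes_K E$ splits into sheets indexed by $\Z/d_i$, and the sheet ranks of a subspace define locally closed strata, given by rank conditions on the projections. The generator $\sigma$ cyclically translates the sheets and hence permutes rank tuples. Fixed strata are exactly the constant-rank tuples, so the lemma applies directly.

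The main obstacle is the equivariant refinement step: making sure the stratification can be replaced by a $\sigma$-stable one whose pieces fit into a filtration by closed unions, so that the localization sequences apply orbit by orbit. I would handle this by noetherian induction on the $\sigma$-stable closed set. At each step I would take the union of the top-dimensional generic points of the strata meeting it, which is a $\sigma$-stable open dense set that is a disjoint union of open pieces of the strata.
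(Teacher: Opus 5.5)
Your proposal is correct and follows essentially the same route as the paper. Both arguments refine equivariantly into a filtration by $\sigma$-stable closed subsets and use additivity of alternating traces through the localization sequences. Both observe that a nontrivial orbit gives a cyclic block permutation of trace zero, descend a fixed stratum because $\sigma$ topologically generates the Galois group, and treat the sheet-rank strata as determinantal conditions.

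The one point to keep in order is that the refinement, or your noetherian peeling of $\sigma$-stable dense opens, must come before you group strata into orbit unions $Y_O$. Without it the strata need not fit into a filtration, and the translates $\sigma^rZ$ need not be open and closed in $Y_O$. Your last paragraph already supplies this step.
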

\begin{proof}
Taking the underlying reduced strata leaves their \'etale cohomology unchanged. Refine the
stratification by the finitely many Galois translates of its closures,
using invariant closed subsets and their complements. Localization
triangles give an equivariant finite filtration for computing the
alternating compact-support class. For an orbit of $d>1$ pieces its
associated summand is a direct sum $W_0\oplus\cdots\oplus W_{d-1}$ in
each cohomological degree, with $\sigma$ carrying $W_i$ to $W_{i+1}$.
Each $W_i$ is carried into a different summand, so its projection
back to $W_i$ is zero. Hence the trace is zero, regardless of the action
of $\sigma^d$ on $W_i$. Trace additivity through
the filtration proves the formula for the original stratification.
For a fixed piece the stabilizer is open and contains the topological
generator, hence is the whole Galois group; its defining constructible
conditions descend. Rank strata are obtained by finite determinantal
conditions, so they satisfy these hypotheses.
\end{proof}

We use additivity of alternating cohomology classes, not a decomposition
of the cohomology of the union. The fixed-stratum formula is for the
chosen generator; a power of $\sigma$ can fix additional strata.

\begin{lemma}\label{lem:relative-grassmannian}
Let $\Kram_d=K(t^{1/d})$, let $Z$ be a separated finite-type $K$-scheme,
and let $\mathcal U$ be a rank-$m$ vector bundle on $Z_{\Kram_d}$.
For $0\le e\le m$, we have
\begin{equation}\label{eq:relative-trace}
 \chis\bigl(\Res_{Z_{\Kram_d}/Z}\Gr_e(\mathcal U)\bigr)
       =\binom me\chis(Z).
\end{equation}
\end{lemma}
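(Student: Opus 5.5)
The plan is to compute the trace geometrically after base change to $\overline K$, and to use Lemma~\ref{lem:trace-strata} to discard everything that is not $\sigma$-fixed. Write $X=\Res_{Z_{\Kram_d}/Z}\Gr_e(\mathcal U)$, with structure map $\pi:X\to Z$.

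Since $\Kram_d\otimes_K\overline K\simeq\overline K^{d}$, the base change $Z_{\Kram_d}\times_K\overline K$ is the disjoint union of $d$ copies $Z^{(0)},\dots,Z^{(d-1)}$ of $Z_{\overline K}$, one for each sheet. The bundle $\mathcal U$ restricts to bundles $\mathcal U_0,\dots,\mathcal U_{d-1}$ of rank $m$ on $Z_{\overline K}$. Thus
\[
 X_{\overline K}\simeq\Gr_e(\mathcal U_0)\times_{Z_{\overline K}}\cdots\times_{Z_{\overline K}}\Gr_e(\mathcal U_{d-1}),
\]
and $\sigma$ acts semilinearly, covering its action on $Z_{\overline K}$ and cyclically permuting the factors: $\sigma^*\mathcal U_{j+1}\simeq\mathcal U_j$ up to the Galois twist.

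First I will trivialize enough structure to compute fibrewise. Choose a finite $\sigma$-stable stratification of $Z_{\overline K}$ by locally closed pieces $Z_\lambda$, defined over a finite extension, on which all the $\mathcal U_j$ are trivial. This can be done by refining any trivializing cover by its Galois translates. Over each stratum, $X_{\overline K}$ becomes $Z_\lambda\times\Gr_e(\overline K^m)^d$. By additivity through Lemma~\ref{lem:trace-strata}, the non-fixed strata contribute zero on both sides of \eqref{eq:relative-trace}, so it suffices to treat a $\sigma$-stable stratum.

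On such a stratum I stratify $\Gr_e(\overline K^m)$ by Schubert cells relative to a flag. Using the trivialization, this gives a stratification of $X|_{Z_\lambda}$ by products of cells indexed by tuples $(w_0,\dots,w_{d-1})$. The action of $\sigma$ permutes these tuples cyclically, possibly composed with a fibrewise change of trivialization, which I absorb by choosing the trivializations compatibly along the $\sigma$-orbit. By Lemma~\ref{lem:trace-strata}, only the constant tuples $w_j=w$ contribute. Each such stratum is $Z_\lambda\times(\mathbb A^{\ell(w)})^d$, with $\sigma$ acting on the affine factors by cyclically permuting them, twisted by the cocycle. Its compactly supported cohomology is $H_c(Z_\lambda)\otimes H_c(\mathbb A^{\ell(w)})^{\otimes d}$, a single line in the top degree tensored over the base. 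The permutation acts on this tensor power of a one-dimensional space of even degree $2\ell(w)$ with trace $1$, since no Koszul sign arises. Hence each constant tuple contributes $\chis(Z_\lambda)$, and summing over the $\binom me$ cells gives \eqref{eq:relative-trace} stratumwise. The Tate twists need care here: $\sigma$ is inertia and acts trivially on $\Q_\ell(1)$ because $\mu_{\ell^\infty}\subset\C$, so they contribute no factor.

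The main obstacle is making the cell stratification genuinely $\sigma$-compatible. I need flags $F_j$ on $\mathcal U_j|_{Z_\lambda}$ with $\sigma$ carrying $F_j$ to $F_{j+1}$, and, for $\sigma^d$, returning $F_0$ to itself. I would handle this by working with the bundle $\mathcal U$ over $Z_{\Kram_d}$ itself: choose a flag of subbundles of $\mathcal U$ over an open piece of $Z_{\Kram_d}$ and stratify $Z$ so that such flags exist. Its conjugates then automatically give a $\sigma$-compatible family. The Schubert cells of $\Gr_e(\mathcal U)$ relative to this flag are relative affine spaces over $Z_{\Kram_d}$, defined over $\Kram_d$, and their restrictions of scalars are $\Res\mathbb A^{\ell}$, an affine space over $Z$. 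With this choice, the required case is $\chis(\Res_{Z_{\Kram_d}/Z}\mathbb A^{\ell}_{Z_{\Kram_d}})=\chis(\mathbb A^{d\ell}_Z)=\chis(Z)$, which is immediate from Künneth and the triviality of $\sigma$ on $H_c^{2}(\mathbb A^1)$.
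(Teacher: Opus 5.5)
Your argument is correct, and it takes a different route from the paper.

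\emph{The paper's route.} It works globally on cohomology. The Schur classes of the dual tautological bundles give a relative Schubert basis that does not depend on local trivializations. The proper Leray filtration then gives the class identity $[\mathrm R\Gamma_c(W)]=[\mathrm R\Gamma_c(Z)\otimes\mathsf V_{e,m}^{\otimes d}]$, in which $\sigma$ cyclically permutes the $d$ Schubert factors. The trace is read off from the $\binom me$ constant tuples of basis classes.

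\emph{Your route.} You work with strata instead. You choose a full split flag of $\mathcal U$, defined over $\Kram_d$, on the preimage of each stratum of $Z$. Its sheetwise conjugates are then automatically permuted by $\sigma$, and $\sigma^d$ returns each to itself. Hence the tuple Schubert-cell stratification is permuted by the cyclic shift, and Lemma~\ref{lem:trace-strata} kills the nonconstant tuples. Each constant tuple descends to $\Res_{(Z_\lambda)_{\Kram_d}/Z_\lambda}(C_w)\simeq Z_\lambda\times\mathbb A^{d\ell(w)}$, whose trace is $\chis(Z_\lambda)$.

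\emph{Comparison.} The combinatorial core is the same in both: constant versus nonconstant tuples under the cyclic shift. Your version uses only additivity, the stratum lemma already in the paper, and Künneth for affine space, so it avoids Chern classes and Leray--Hirsch. The paper's version avoids stratifying $Z$ and choosing flags altogether.

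\emph{Two points to tidy.}
\begin{enumerate}
\item Drop the earlier sheetwise trivializations over strata of $Z_{\overline K}$ and the ``cocycle'' discussion. Separate trivializations of the $\mathcal U_j$ give flags that $\sigma^d$ need not preserve. Only the choice in your last paragraph is rigorous, so start from it directly.
\item Say explicitly that you stratify $Z$ over $K$, with trivializations of $\mathcal U$ on the full preimages $(Z_\lambda)_{\Kram_d}$. Such a stratification exists by noetherian induction, because $Z_{\Kram_d}\to Z$ is finite. A trivialization near the finite fibre over a generic point therefore spreads to the preimage of an open subset of $Z$.
\end{enumerate}
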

\begin{proof}
Put $W=\Res_{Z_{\Kram_d}/Z}\Gr_e(\mathcal U)$. After splitting $\Kram_d/K$,
the proper morphism $W\to Z$ is a product of $d$ Grassmannian bundles.
Let
\[ \mathsf V_{e,m} =\bigoplus_{\lambda\subseteq e\times(m-e)} \mathbb Q_\ell(-|\lambda|)[-2|\lambda|], \]
where $\lambda\subseteq e\times(m-e)$ is a partition with at most
$e$ parts, each at most $m-e$, and $|\lambda|$ is their sum.
The associated \emph{Schubert classes} form a basis of Grassmannian cohomology
in even degrees $2|\lambda|$. The shift $[-2a]$ puts a class in degree
$2a$. The \emph{Tate twist} $\Q_\ell(-a)$ is the one-dimensional
representation defined by the inverse $a$th power of the cyclotomic
character; it is trivial on inertia over our fields.
The \emph{tautological bundle} has fibre the subspace parametrized by a
Grassmannian point. The \emph{Schur classes} of its dual, indexed by the
partitions above, give the Schubert cohomology basis. They are polynomials
in the Chern classes of that dual bundle and are independent of local
trivializations. They give the following equality of alternating
cohomology classes:
\begin{equation}\label{eq:relative-class}
 \bigl[\mathrm R\Gamma_c(W_{\overline K},\mathbb Q_\ell)\bigr]
 =\bigl[\mathrm R\Gamma_c(Z_{\overline K},\mathbb Q_\ell)
                   \otimes\mathsf V_{e,m}^{\otimes d}\bigr].
\end{equation}
Here $\mathrm R\Gamma_c$ is the compact-support cohomology complex.
Its class means the alternating sum of its cohomology representations;
classes add in exact sequences, without asserting a direct-sum splitting.
On the right, $\sigma$ acts on the base cohomology and cyclically
permutes the $d$ Schubert factors. This follows equally from the
proper Leray filtration: the relative Schur classes identify its
cohomology sheaves and are compatible with local trivializations and
permutations of the embeddings. The resulting cohomology-class identity
therefore descends to $K$.

In the tensor product of the Schubert cohomology bases, every
nonconstant tuple belongs to a nontrivial cyclic orbit. The span of
such an orbit has trace zero under the cyclic permutation. There are
$\binom me$ constant tuples.
All Schubert degrees are even, so permuting factors introduces no sign;
the Tate twists have trivial inertia. Each constant tuple contributes $\chis(Z)$ to
\eqref{eq:relative-class}. Taking alternating traces proves the
formula.
\end{proof}

For the representation supported only at vertex $i$, with space $K_i^m$,
this gives $F_{K_i^m}=(1+y_i)^m$.
Ordinary geometric Euler characteristic would give the rank-$e$
coefficient $\binom me^{d_i}$. The cyclic trace gives $\binom me$.

\begin{example}\label{run:ex:first-exchange}
For the initial quiver of Example~\ref{run:ex:path}, the positive simple
at vertex 1 has presentation weight $\delta=(1,-2,0)$: its projective
cover is $P_1$ and its relation cover is $P_2^{\oplus2}$. Its quotient
polynomial is $1+y_1$. Thus the associated Laurent expression, defined as a cluster character in
Section~\ref{sec:cluster-application}, is
$x_1^{-1}x_2^2(1+x_2^{-2})=\frac{x_2^2+1}{x_1}$.
This is the exchange relation $x_1x_1'=1+x_2^2$. The exponent two
is the right $K_2$-dimension of the arrow bimodule.
\end{example}

\begin{proposition}\label{prop:mon-directsum}
For finite ramified representations,
$F_{M\oplus N}=F_MF_N$.
\end{proposition}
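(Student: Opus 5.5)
The plan is to reduce the identity to a coefficientwise statement about representation Grassmannians and then apply the stratification trace lemma. Fix quotient ranks $\mathbf e$. For each pair $\mathbf e'+\mathbf e''=\mathbf e$, the quotient-rank vectors on $M$ and $N$, I use the morphism
\[ \pi:\Gr^R_{\mathbf r_{M\oplus N}-\mathbf e}(M\oplus N)\longrightarrow\coprod_{\mathbf e'+\mathbf e''=\mathbf e}\Gr^R_{\mathbf r_M-\mathbf e'}(M)\times\Gr^R_{\mathbf r_N-\mathbf e''}(N), \]
which sends $L\subseteq M\oplus N$ to $(L\cap M,\ \mathrm{pr}_N(L))$. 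This is not a morphism of schemes globally, since the ranks of $L\cap M$ jump. So I first stratify the source by the sheetwise ranks of $L\cap M$ after splitting the vertex fields, using the finite determinantal conditions of Lemma~\ref{lem:trace-strata}. The tame generator $\sigma$ permutes these sheet-rank strata. By that lemma, only strata with constant rank on the sheets over each vertex contribute, and those descend to $K$. On such a stratum the rank data is a balanced pair $(\mathbf e',\mathbf e'')$, and $\pi$ becomes a genuine morphism to $\Gr^R_{\mathbf r_M-\mathbf e'}(M)\times\Gr^R_{\mathbf r_N-\mathbf e''}(N)$. Additivity then gives $\chis$ of the source as the sum over balanced pairs of $\chis$ of these strata.

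Next comes the fibre analysis over a point $(L',L'')$. As in the classical Caldero--Chapoton argument, the subrepresentations $L$ with $L\cap M=L'$ and $\mathrm{pr}_N L=L''$ are graphs of $R$-linear maps $\phi:L''\to M/L'$ that are compatible with the arrow actions up to the extension data. Concretely, the fibre is an affine space modelled on $\Hom_{\mathcal P}(L'',M/L')$, a torsor for a $K$-vector space. Over the stratum it is a Zariski-locally-trivial affine bundle over a further constructible stratification by the dimension of this Hom space.

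The main obstacle is controlling the trace on these affine fibres, since the fibres are not uniform in dimension and need not carry a free torus action. To avoid fibre-dimension issues I would use the torus trick instead. Let $\mathbb G_m$ act on $M\oplus N$ with weight $0$ on $M$ and weight $1$ on $N$. This action commutes with $R$, with the arrows, and with $\sigma$, since it is defined over $K$. It therefore acts on each representation Grassmannian compatibly with the Galois action. The fixed locus is exactly the set of $L=L'\oplus L''$, namely $\coprod_{\mathbf e'+\mathbf e''=\mathbf e}\Gr(M)\times\Gr(N)$ with balanced ranks on the total Grassmannian.

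For the required equality $\chis(X)=\chis(X^{\mathbb G_m})$ I would use the Bia\l ynicki-Birula decomposition of the projective scheme $X$. The decomposition is defined over $K$ because the action is. Its attracting cells are affine bundles over components of the fixed locus, and they are $\sigma$-stable. An affine bundle has the same compact-support class as its base up to Tate twists, and these twists are trivial on inertia, just as in Lemma~\ref{lem:relative-grassmannian}. Alternatively, the nonfixed $\mathbb G_m$-orbits contribute zero because $\chis(\mathbb G_m)=0$, with the quotient stratified $\sigma$-equivariantly.

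Finally, Künneth multiplicativity of $\chis$ for products over $K$ gives
\[ \chis\bigl(\Gr^R_{\mathbf r_M-\mathbf e'}(M)\times\Gr^R_{\mathbf r_N-\mathbf e''}(N)\bigr)=\chis\bigl(\Gr^R_{\mathbf r_M-\mathbf e'}(M)\bigr)\,\chis\bigl(\Gr^R_{\mathbf r_N-\mathbf e''}(N)\bigr). \]
Summing over $\mathbf e'+\mathbf e''=\mathbf e$ then yields $F_{M\oplus N}=F_MF_N$.
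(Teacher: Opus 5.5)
Your first two paragraphs are the paper's proof, and they already resolve the ``obstacle'' you then try to avoid. The gap is in the Bia\l ynicki-Birula detour.

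The claim that the attracting cells are affine bundles over the fixed components requires $X$ to be smooth, and representation Grassmannians are singular in general. For your weight-$(0,1)$ action the decomposition gives you nothing new. Indeed, $\lambda\cdot L$ is the graph of $\lambda^{-1}\phi$, so the cells flowing to $\{L'\oplus L''\}$ as $\lambda\to\infty$ are exactly the $\pi$-preimages. Their limit map is $\pi$ and their fibre is $\Hom_{\mathcal P}(L'',M/L')$, whose dimension jumps. So the cells are the fibration you wanted to avoid, not affine bundles.

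The step you had already written closes the argument. Stratify further by $a=\dim\Hom_{\mathcal P}(L'',M/L')$. These strata are $\sigma$-stable, since $\sigma$ carries fibres to fibres semilinearly, so Lemma~\ref{lem:trace-strata} applies. On each stratum the fibres are kernels of a constant-rank linear map and form a rank-$a$ vector bundle. Its compactly supported cohomology is that of the base tensored with $\Q_\ell(-a)[-2a]$, which has inertia trace one. Additivity over these finitely many strata and K\"unneth then give the product, exactly as in the paper. Note also that the fibre is a vector space, not merely a torsor: a split direct sum carries no extension data, and $\phi=0$ gives $L'\oplus L''$.

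Your one-sentence alternative can be made rigorous, but it needs an argument you did not give.
\begin{itemize}
\item \emph{Freeness.} If $\lambda\cdot L=L$ with $\lambda\ne1$, then $(0,(\lambda-1)n)\in L$ for every $(m,n)\in L$. This forces $L=(L\cap M)\oplus(L\cap N)$, so $\mathbb G_m$ acts freely off the fixed locus.
\item \emph{Vanishing.} Rosenlicht's theorem, Hilbert~90 and noetherian induction then stratify the nonfixed locus over $K$ into pieces $U\times\mathbb G_m$. Each piece has trace $\chi_\sigma(U)\chi_\sigma(\mathbb G_m)=0$.
\item \emph{Fixed locus.} The fixed locus of the full Grassmannian is not the coproduct of $\Gr(M)\times\Gr(N)$ over balanced pairs. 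Geometrically it contains every $L'\oplus L''$ whose sheet ranks are unbalanced but have balanced sum. Those components are permuted nontrivially by $\sigma$ and contribute zero by Lemma~\ref{lem:trace-strata}. Alternatively, run the torus argument inside your balanced strata, which are $\mathbb G_m$-stable because the action preserves $L\cap M$.
\end{itemize}
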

\begin{proof}
Stratify kernels in $M\oplus N$ by their intersection with $M$
and projection to $N$. Nonconstant sheet-rank patterns have zero
alternating trace by Lemma~\ref{lem:trace-strata}. For balanced
patterns, the base is a product
of kernel Grassmannians. Over fixed kernels, the possible graphs
form the vector space of representation maps from the projected
submodule to the quotient of $M$. On constant-rank strata these
are vector bundles. A rank-$a$ affine fibre has compact-support
cohomology $\Q_\ell(-a)[-2a]$, whose alternating inertia trace is one.
The quotient rank is the sum of the two complementary ranks.
Additivity gives this product.
\end{proof}

If every $d_i=1$ and the QP and representation are extended from
$\C$, inertia acts trivially and $F$ is the ordinary quotient
Euler-characteristic polynomial. The condition $d_i=1$ alone is
insufficient. The variety $\Spec K[s]/(s^2-t)$ has two geometric
points but trace zero; it is the $(1,1)$-Grassmannian of the
Kronecker representation
$a=I_2$, $b=\left(\begin{smallmatrix}0&t\\1&0\end{smallmatrix}\right)$.
This distinguishes ramification of the representation from that
of the vertex fields.

\subsection{The mutation formula}
The source and target representations have the same restriction to
the common subalgebra $P_{\widehat k,\widehat k}$. This restriction
includes paths passing through $k$, not only arrows away from $k$.
Over a fixed quotient of it, the possible kernels at $k$ form an
interval of subspaces. We compare the two intervals after splitting
the vertex fields. Nonconstant sheet-rank tuples occur in nontrivial
monodromy orbits and contribute trace zero; constant tuples give
relative Grassmannians whose trace is the binomial coefficient in
\eqref{eq:relative-trace}. This is the point at which monodromy
replaces geometric Euler characteristic in the ordinary argument.

Let $\mathcal M'$ be the mutation of $\mathcal M$ at $k$, followed
by reduction inducing the prescribed isomorphism of Jacobian algebras. Assume the source and target are
two-acyclic at the original vertices; in the global pattern this is
Corollary~\ref{thm:good-global-pattern}. Write
$h_k=h_k(M)$ and $h'_k=h_k(M')$. Thus $-h_k$ and $-h'_k$ are the
source and target cokernel dimensions; the latter includes the source
decoration. Trivial arrows act by zero, so these dimensions may be
computed before reduction. Set
\begin{equation}\label{eq:Yproj}
 Y_i=
 \begin{cases}
 (y_k')^{-1},& i=k,\\
 y_i'(y_k')^{[b_{ik}]_+}(1+y_k')^{-b_{ik}},& i\ne k.
 \end{cases}
\end{equation}

\begin{lemma}\label{lem:interval}
Put $\widehat e=1-e_k$ and fix quotient ranks away from $k$ for the
restriction $M\widehat e$ over the common algebra
$P_{\widehat k,\widehat k}=\widehat eP\widehat e$.
At a geometric point of its balanced representation Grassmannian,
write $N_i\subseteq M_i$ for the kernels after scalar extension.
Each sheet above $i$ has quotient dimension $e_i$. They induce
\[ N_{\mathrm{in}}=\bigoplus_{i\ne k}N_i\otimes_{K_i}A_{ik}\subseteq M_{\mathrm{in}},\qquad N_{\mathrm{out}}=\bigoplus_{j\ne k}\Hom_{K_j}(A_{kj},N_j)\subseteq M_{\mathrm{out}}. \]
Write
\[
\begin{gathered}
 r_{\mathrm{in}}=\dim_{K_k}M_{\mathrm{in}},\quad
 r_{\mathrm{out}}=\dim_{K_k}M_{\mathrm{out}},\quad
 r=\dim_{K_k}M_k,\quad r'=\dim_{K_k}M'_k,\\
 u=\sum_{i\ne k}[b_{ik}]_+e_i,\qquad
 w=\sum_{i\ne k}[-b_{ik}]_+e_i.
\end{gathered}
\]
At one geometric sheet above $k$, all dimensions below are ordinary
vector-space dimensions, and the possible kernels satisfy
\[
\begin{aligned}
 L=\alpha N_{\mathrm{in}}&\subseteq N_k\subseteq U=\beta^{-1}N_{\mathrm{out}},\\
 L'=\alpha'N_{\mathrm{out}}&\subseteq N'_k\subseteq U'=(\beta')^{-1}N_{\mathrm{in}}.
\end{aligned}
\]
If $q_-=r-\dim U$ and $q_+=r-\dim L$ are the least and greatest source
quotient ranks, then the corresponding target ranks satisfy
\begin{equation}\label{eq:field-interval-bounds}
 q'_-=u-h_k-q_+,\qquad q'_+=w-h'_k-q_-.
\end{equation}
\end{lemma}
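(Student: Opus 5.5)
The plan is to reduce the lemma to the ordinary DWZ interval lemma at a single geometric sheet above $k$, and then compute the endpoints with the kernel--image formulas \eqref{eq:newmodules}. After a finite splitting extension, fix one sheet above $k$; the mutations at the other sheets of the orbit do not interact with it, since no arrows join sheets over $k$. Because $M$ and $M'$ agree on $P_{\widehat k,\widehat k}$ by Proposition~\ref{alg:prop:jacobian-subalgebra}, a fixed subrepresentation $N\widehat e$ determines subspaces $N_{\mathrm{in}}\subseteq M_{\mathrm{in}}$ and $N_{\mathrm{out}}\subseteq M_{\mathrm{out}}$ at that sheet. The first step is to show that $N_k$ extends $N\widehat e$ to a subrepresentation exactly when $\alpha N_{\mathrm{in}}\subseteq N_k$ and $\beta N_k\subseteq N_{\mathrm{out}}$. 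This is arrow stability at the incident arrows, and the outside relations already hold. The same argument for $M'$ uses $\alpha',\beta'$, and the roles of in and out are exchanged. Here one checks that $N_{\mathrm{out}}$, regarded through $\gamma$ and $\gamma'=\beta\alpha$, is stable in the required way: $\gamma N_{\mathrm{out}}\subseteq N_{\mathrm{in}}$ because $\gamma$ is given by paths in the common subalgebra.

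The second step computes the dimensions. For the lower target bound,
\[ q'_-=r'-\dim U'=\dim M'_k-\dim(\beta')^{-1}N_{\mathrm{in}}. \]
Since $\beta'$ vanishes on the first and last summands of $M'_k$ and maps the middle two injectively onto $\ker\alpha$, we get
\[ \dim(\beta')^{-1}N_{\mathrm{in}}=r'-\rank\beta'+\dim(N_{\mathrm{in}}\cap\ker\alpha). \]
This gives $q'_-=\dim\ker\alpha-\dim(N_{\mathrm{in}}\cap\ker\alpha)$. On the source side, $\dim L=\dim N_{\mathrm{in}}-\dim(N_{\mathrm{in}}\cap\ker\alpha)$, so $q_+=r-\dim N_{\mathrm{in}}+\dim(N_{\mathrm{in}}\cap\ker\alpha)$. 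Adding the two expressions gives
\[ q'_-+q_+=\dim\ker\alpha+r-\dim N_{\mathrm{in}}=r_{\mathrm{in}}-\rank\alpha+r-\dim N_{\mathrm{in}}. \]
We have $r-\rank\alpha=\dim\coker\alpha=-h_k$. Balanced quotient ranks give $r_{\mathrm{in}}-\dim N_{\mathrm{in}}=\sum_i[b_{ik}]_+e_i=u$, because the quotient $M_i/N_i$ has dimension $e_i$ on every sheet and $A_{ik}$ contributes $[b_{ik}]_+$ copies at each sheet of $k$. This is the first identity in \eqref{eq:field-interval-bounds}.

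The second identity follows by the symmetric computation with $U=\beta^{-1}N_{\mathrm{out}}$ and $L'=\alpha'N_{\mathrm{out}}$. One uses that $\ker\alpha'=\im\beta$, which comes from the construction of $\alpha'$ through $\rho$, $\pi$ and $-\gamma$. One also uses $\dim\coker\alpha'=-h'_k$, which includes $V_k$, and $r_{\mathrm{out}}-\dim N_{\mathrm{out}}=w$. Alternatively, this identity is the first one applied to the involutive mutation $\mathcal M'\mapsto\mathcal M$ of Theorem~\ref{thm:representation-mutation}.

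The main obstacle is the first step, namely checking that $\alpha'N_{\mathrm{out}}$ and $(\beta')^{-1}N_{\mathrm{in}}$ are the exact constraints in the target. The incident arrows of the premutation are the reversed duals, and the new composite arrows $[ab]$ act by $b_Ma_M$. The composite constraint is automatic because $N\widehat e$ is a representation of $P_{\widehat k,\widehat k}$. The reverse arrows must be translated through tensor--Hom adjunction at a split sheet. I would first verify this after splitting, where it reduces to the DWZ computation \cite[Lemma 5.2]{DWZ2} adapted to quotient ranks.
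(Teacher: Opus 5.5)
Your proposal is correct and follows essentially the paper's argument: both compute the endpoints directly from $\ker\alpha'=\im\beta$, $\im\beta'=\ker\alpha$ and the balanced-rank counts $\dim N_{\mathrm{in}}=r_{\mathrm{in}}-u$, $\dim N_{\mathrm{out}}=r_{\mathrm{out}}-w$ at one sheet. The only difference is bookkeeping. The paper writes $\dim U'$ and $\dim L'$ in terms of $\check h_k(M')$ and $\check h_k(M)$, then substitutes the local identities $h_k=r_{\mathrm{in}}-r-r'-\check h_k(M')$ and $h'_k=r_{\mathrm{out}}-r-r'-\check h_k(M)$ from the proof of Theorem~\ref{thm:E-mutation}. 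You instead pair the endpoints through $N_{\mathrm{in}}\cap\ker\alpha$ and $N_{\mathrm{out}}\cap\im\beta$ and use $\rank\beta'=\dim\ker\alpha$ and $\rank\alpha'=r_{\mathrm{out}}-\rank\beta$ directly, so $r'$ and the $\check h$-terms never appear.
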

\begin{proof}
The fixed quotient is a module over $P_{\widehat k,\widehat k}$.
Its stability gives
$\beta\alpha N_{\mathrm{in}}\subseteq N_{\mathrm{out}}$ and
$\gamma N_{\mathrm{out}}\subseteq N_{\mathrm{in}}$.
Together with $\beta'\alpha'=-\gamma$, these inclusions show that
both intervals are nonempty.
Use
$\ker\alpha'=\im\beta$, $\im\beta'=\ker\alpha$, and
$\beta'\alpha'=-\gamma$. At each sheet we have
$\dim N_{\mathrm{in}}=r_{\mathrm{in}}-u$ and
$\dim N_{\mathrm{out}}=r_{\mathrm{out}}-w$. It follows that
\[
\begin{aligned}
 \dim L'&=r_{\mathrm{out}}-w-\check h_k(M)-\dim U,\\
 \dim U'&=r_{\mathrm{in}}-u-\check h_k(M')-\dim L.
\end{aligned}
\]
The local dimension identities in the proof of
Theorem~\ref{thm:E-mutation} give
\[ h_k=r_{\mathrm{in}}-r-r'-\check h_k(M'),\qquad h'_k=r_{\mathrm{out}}-r-r'-\check h_k(M). \]
Substituting into $q'_-=r'-\dim U'$ and $q'_+=r'-\dim L'$ proves
\eqref{eq:field-interval-bounds}.
\end{proof}

\begin{theorem}\label{thm:mon-mutation}
The $F$-polynomial of the mutation satisfies
\begin{equation}\label{eq:mon-mut-p}
 F_{M'}(\mathbf y')
 =\frac{(1+(y_k')^{-1})^{h_k}}{(1+y_k')^{h'_k}}F_M(\mathbf Y).
\end{equation}

\end{theorem}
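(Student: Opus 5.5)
The plan follows the DWZ proof of the $F$-polynomial mutation formula \cite[Lemma 5.2 and Section 5]{DWZ2}. We fibre both representation Grassmannians over the common quotient data away from $k$, and replace Euler characteristics by monodromy traces using Lemmas~\ref{lem:trace-strata} and~\ref{lem:relative-grassmannian}. Both $M$ and $M'$ restrict to the same module over $P_{\widehat k,\widehat k}=\widehat eP\widehat e$, by Proposition~\ref{alg:prop:jacobian-subalgebra} together with the prescribed reduction comparison. Consider the forgetful maps
\[ \Gr^R_{\mathbf r_M-\mathbf e}(M)\to\Gr^{\widehat eR}(M\widehat e),\qquad \Gr^R_{\mathbf r_{M'}-\mathbf e'}(M')\to\Gr^{\widehat eR}(M\widehat e). \]
The target parametrizes $P_{\widehat k,\widehat k}$-stable kernels away from $k$ with quotient ranks $(e_i)_{i\ne k}$. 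A kernel $N\widehat e$ extends to a subrepresentation of $M$ exactly when $N_k$ lies in the interval $[L,U]$ of Lemma~\ref{lem:interval}. Stability under arrows into and out of $k$ gives precisely $\alpha N_{\mathrm{in}}\subseteq N_k\subseteq\beta^{-1}N_{\mathrm{out}}$; paths through $k$ are already accounted for in $P_{\widehat k,\widehat k}$. The same holds for $M'$ with $[L',U']$.

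First, we stratify the base $\Gr^{\widehat eR}(M\widehat e)$ after splitting by the sheet ranks of $L$ and $U$ above $k$, which are determinantal conditions. By Lemma~\ref{lem:trace-strata}, only the strata with constant sheet ranks contribute. Such a stratum descends to $K$, and on it $L\subseteq U$ are subbundles over $K_k$. The fibre of the source Grassmannian with quotient rank $e_k$ at $k$ is then $\Res\Gr_{e_k-q_-}(U/L)$, the relative Grassmannian of the rank-$(q_+-q_-)$ bundle $U/L$. By Lemma~\ref{lem:relative-grassmannian}, its trace is $\binom{q_+-q_-}{e_k-q_-}\chi_\sigma(\text{stratum})$. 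Here we must also stratify within a fibre by sheet ranks of $N_k$, but Lemma~\ref{lem:relative-grassmannian} already incorporates the vanishing of the unbalanced parts, and we need only check that balanced $N_k$ corresponds to the $\Res$ of a single Grassmannian. The same computation on the target side gives $\binom{q'_+-q'_-}{e'_k-q'_-}$ over the same stratum.

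Second, we sum the fibre contributions over each stratum. The source contributes $\mathbf y^{\mathbf e_{\widehat k}}\,y_k^{q_-}(1+y_k)^{q_+-q_-}$ and the target contributes $\mathbf y'^{\mathbf e_{\widehat k}}(y'_k)^{q'_-}(1+y'_k)^{q'_+-q'_-}$. By \eqref{eq:field-interval-bounds}, $q'_+-q'_-=(w-h'_k)-(u-h_k)-(q_+-q_-)$ and $q'_-=u-h_k-q_+$, and $q_\pm,u,w$ are constant on the stratum. We substitute $Y$ from \eqref{eq:Yproj} into the source term, with $u-w=\sum_i b_{ik}e_i$ and $[b_{ik}]_+-b_{ik}=[-b_{ik}]_+$. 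A direct monomial computation then shows that the source term, multiplied by the prefactor $(1+(y'_k)^{-1})^{h_k}(1+y'_k)^{-h'_k}$, equals the target term. The identity holds stratum by stratum because only $q_-$, $q_+$ and the fixed outside ranks enter, so additivity of $\chi_\sigma$ over the common base gives \eqref{eq:mon-mut-p}.

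The main obstacle is the first step. We must verify that the stratification of the base by sheet ranks of $L$ and $U$ is Galois-permuted and constructible over $K$. We must also verify that, over a constant-rank stratum, the subspaces $L$ and $U$ (and $L'$, $U'$) really form $K_k$-subbundles of a bundle on $Z_{K_k}$, so that Lemma~\ref{lem:relative-grassmannian} applies to the reduced structure. In addition, the reduction comparison must identify the two bases compatibly with $\sigma$. To handle this, I would first prove constancy of the sheet ranks of $L$ and $U$ on the stratum; flatness of the image and preimage bundles over the reduced stratum then follows from constant rank. The balancing on the outside vertices is taken from the base stratification.
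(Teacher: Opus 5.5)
Your proposal is correct and is essentially the paper's proof. The steps match: the common base over $P_{\widehat k,\widehat k}$, the stratification by the sheet ranks of $L$ and $U$ with nonconstant strata killed by Lemma~\ref{lem:trace-strata}, the relative Grassmannian of $U/L$ evaluated by Lemma~\ref{lem:relative-grassmannian}, and the stratumwise monomial matching via \eqref{eq:field-interval-bounds}. One slip to fix: \eqref{eq:field-interval-bounds} gives $q'_+-q'_-=(w-h'_k)-(u-h_k)+(q_+-q_-)$, not $-(q_+-q_-)$, and only with this sign does your monomial computation close up.
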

\begin{proof}
\emph{The common base and its rank strata.}
Let $\widehat e=1-e_k$. The isomorphism
$\widehat eP\widehat e\simeq\widehat eP'\widehat e$ identifies the
restricted modules and their balanced representation Grassmannians.
Denote the common
base at fixed outside ranks by $\mathcal Z$. Its stability conditions include all
outside paths, not only arrows not incident to $k$.

Stratify $\mathcal Z_{\overline K}$ by the tuple $(q_{-,a},q_{+,a})$ of extreme ranks in
Lemma~\ref{lem:interval}, for the sheets $a$ above $k$.
These are finitely many strata defined by matrix ranks, and $\sigma$
permutes them.
A nonconstant tuple is in a nontrivial cyclic orbit. Both its source
and target inverse-image strata therefore contribute zero alternating
trace by Lemma~\ref{lem:trace-strata}. Only strata on which the extreme ranks agree across all sheets
$q_{-,a}=q_-$, $q_{+,a}=q_+$ contribute. Their strata descend to $K$, and
\eqref{eq:field-interval-bounds} bijectively matches the constant source
and target tuples.

\emph{The relative Grassmannians.}
Write $\mathcal Z_{q_-,q_+}$ for the stratum with these extreme ranks.
On it, $L$ and $U$ are subbundles after passage to $K_k$, and $U/L$
is a bundle of rank $q_+-q_-$. The middle-kernel choices
form its relative Grassmannian, followed by Weil restriction to the
base. Lemma~\ref{lem:relative-grassmannian} gives the generating function
\[ y_k^{q_-}(1+y_k)^{q_+-q_-}\chis(\mathcal Z_{q_-,q_+}). \]
Finite invariant stratification and trace additivity supply the sum of
these terms.

\emph{The change of variables.}
The outside monomial under \eqref{eq:Yproj} contributes
$(y_k')^u(1+y_k')^{w-u}$. Including the prefactor in \eqref{eq:mon-mut-p}, the source fibre is
\[ (y_k')^{u-h_k-q_+}(1+y_k')^{w-u-h'_k+h_k+q_+-q_-} =(y_k')^{q'_-}(1+y_k')^{q'_+-q'_-}, \]
which is the target fibre by \eqref{eq:field-interval-bounds}.
Sum over the constant rank strata and outside quotient ranks.
The prescribed reduction identifies the representation Grassmannians
and their inertia actions. This is the quotient form of \cite[Lemma 5.2]{DWZ2}, with the relative
trace identity \eqref{eq:relative-trace} giving the coefficients.
\end{proof}

\begin{corollary}\label{cor:weight-character}
On a projective presentation with disjoint supports, the mutation identity is
\[ F_{M'}(\mathbf y') =(y_k')^{[\delta_k]_+}(1+y_k')^{-\delta_k}F_M(\mathbf Y). \]
\end{corollary}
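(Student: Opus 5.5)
The plan is to specialize Theorem~\ref{thm:mon-mutation} by computing its two exponents $h_k$ and $h'_k$ for the decorated cokernel of a presentation with disjoint supports. Let $f\in\PHom(\delta)$ be such a presentation and $\mathcal M$ its decorated cokernel. Proposition~\ref{prop:disjoint-support-indices} gives
\[ h_k=h_k(M)=-[\delta_k]_+,\qquad h'_k=h_k(M')=-[-\delta_k]_+ \]
at every allowed mutation vertex $k$. The source of this identity is Lemma~\ref{lem:completed-betti}, together with the fact that the decoration enters $\coker\alpha'_k$ through \eqref{eq:newmodules}. I will check explicitly that $h'_k$ as used in \eqref{eq:mon-mut-p} is indeed $-\dim_{K_k}\coker\alpha'$ for the mutated object including $V_k$, which is the convention fixed just before \eqref{eq:Yproj}.

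Substituting into \eqref{eq:mon-mut-p}, the prefactor becomes
\[ \frac{(1+(y_k')^{-1})^{-[\delta_k]_+}}{(1+y_k')^{-[-\delta_k]_+}}. \]
Next I rewrite $1+(y_k')^{-1}=(y_k')^{-1}(1+y_k')$, so that
\[ (1+(y_k')^{-1})^{-[\delta_k]_+}=(y_k')^{[\delta_k]_+}(1+y_k')^{-[\delta_k]_+}. \]
Multiplying by $(1+y_k')^{[-\delta_k]_+}$ and using $[\delta_k]_+-[-\delta_k]_+=\delta_k$, the prefactor equals $(y_k')^{[\delta_k]_+}(1+y_k')^{-\delta_k}$. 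This is the claimed formula; the change of variables $\mathbf Y$ is unchanged from \eqref{eq:Yproj}.

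No step is a real obstacle, since the substantive work is already contained in Theorem~\ref{thm:mon-mutation} and Proposition~\ref{prop:disjoint-support-indices}. The only point needing care is the hypothesis check: the corollary must be read under the same local hypotheses as the theorem, namely a reduced Jacobi-finite QP with source and target two-acyclic at the original vertices. Disjoint supports refers to the source presentation only, and this is exactly what Proposition~\ref{prop:disjoint-support-indices} requires. So the statement needs no assumption that the mutated object is again presented with disjoint supports.
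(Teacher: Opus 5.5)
Your proposal is correct and is essentially the paper's own proof: substitute $h_k=-[\delta_k]_+$ and $h'_k=-[-\delta_k]_+$ from Proposition~\ref{prop:disjoint-support-indices} (recorded as \eqref{eq:disjoint-support-betti}) into \eqref{eq:mon-mut-p}, then simplify using $1+(y_k')^{-1}=(1+y_k')/y_k'$ and $[\delta_k]_+-[-\delta_k]_+=\delta_k$. Your remark that $h'_k$ counts the decoration through $\coker\alpha'_k\simeq\ker\alpha_k/\im\gamma_k\oplus V_k$ matches the convention fixed before \eqref{eq:Yproj}.
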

\begin{proof}
Use $h_k=-[\delta_k]_+$ and $h'_k=-[-\delta_k]_+$ from
\eqref{eq:disjoint-support-betti}, and $1+(y_k')^{-1}=(1+y_k')/y_k'$.
\end{proof}

Apply the theorem to the opposite QP and $\mathbb DM$ for the injective
statement. This replaces $B$ by $-B$, $\delta$ by $\check\delta$, and
$\mathbf h(M)$ by $\check{\mathbf h}(M)$. Equation
\eqref{eq:F-opposite-duality} converts its coefficients to subrepresentation
counts. The substitution and prefactor are both taken in the injective
convention.

\begin{example}[A degree-$(1,4)$ family]
\label{run:ex:one-arrow}\label{run:lem:one-arrow-generality}
\label{run:prop:one-arrow-trace}\label{ex:monodromy-two-points}
We construct general presentations of one species whose integral
presentation matrices have the same reduction modulo $t$, but whose
monodromy $F$-polynomials have different coefficients.

Let $E=K(s)$, $s^4=t$, and take one arrow $1\to2$ with
\[
R=K\times E,\quad A_{12}={}_KE_E,\quad S=0,
 \qquad
 B=\begin{pmatrix}0&1\\-4&0\end{pmatrix},\quad d=(1,4).
\]
Write $P$ for its path algebra. Then $\dim_KP=9$, and the modulation is good.
Over $L=\Omega((t))$, $E_L=\Omega((s))$, where $\Omega\supseteq\C$ is an
algebraically closed residue field, choose four independent residue
parameters and put
\[
C=\begin{pmatrix}\alpha&\gamma\\\beta&\varepsilon\end{pmatrix},\quad
 J=\begin{pmatrix}0&0\\1&0\end{pmatrix},\quad
 T_m=J-t^mC,\quad Z_m=sI_2+s^2T_m\quad(m\ge1).
\]
The representation $M_m$ has ranks $(4,2)$, with arrow action
$[I_2\;Z_m]:E_L^4\longrightarrow E_L^2$.
For $P_i=e_iP$ it has the presentation
\begin{equation}\label{run:eq:one-arrow-presentation}
 P_2^{\oplus2}\xrightarrow{\binom{-Z_m}{I_2}}
 P_1^{\oplus4}\longrightarrow M_m,
 \qquad\delta=(4,-2).
\end{equation}
In \eqref{run:eq:one-arrow-presentation}, projection to the last two
coordinates is the identity on the source, so the integral cokernel
is free at both vertices. For every $m\ge1$, reduction modulo $t$ gives the arrow matrix
$[I_2\;sI_2+s^2J] \quad\text{over }(\Omega,\Omega[s]/(s^4))$.
For $m\ge N$, the presentations agree modulo $t^N$. Their weight is $2\eta$, where $\eta=(2,-1)$.

\emph{Generality.} We show that the union of the orbits of
$\binom{-sI_2-s^2T}{I_2}$, $T\in\op{Mat}_2(K)$, is dense
in $\PHom(4,-2)$. We then adjoin independent integral changes of
presentation bases to obtain algebraically generic coordinates.

After splitting $E/K$, a presentation of maximal rank is a quadruple
of two-dimensional subspaces, each with a chosen basis, in one
four-dimensional vector space. Generically the
first two planes are complementary, the third is normalized to the graph
of the identity, and the fourth is the graph of a two-by-two matrix up to
conjugacy. Its unordered pair of eigenvalues determines the orbit on a dense open.
For our family the four planes split according to the two eigenlines of
$T$. The four line coordinates for an eigenvalue $\lambda$ are
$z_a(\lambda)=i^as+(i^as)^2\lambda$, $a=0,1,2,3$. Their cross ratio is
$\frac{(z_0-z_2)(z_1-z_3)}{(z_0-z_3)(z_1-z_2)} =\frac{-2i}{2s^2\lambda^2-i}$.
This is nonconstant. The two eigenvalues of a generic $T$ give two
independent general cross ratios, so the union of their orbits is dense. The affine
change $C\mapsto J-t^mC$ is invertible over $K$. Extend the residue field and choose matrices in
$\GL_4(\Omega[[t]])$ and $\GL_2(\Omega[[s]])$ whose coefficients in
integral vertex-ring bases have independent residues. Their
determinants are units in the vertex rings, so they give integral
changes of presentation bases. The resulting map is dominant, and the same changes may be used for
every $m$. They give isomorphic integral presentations and hence
isomorphic special fibres.

\emph{The coefficient at quotient rank $(2,1)$.} Keep the valued
field and the representation $M_m$ just constructed.
The representation Grassmannian $\Gr^R_{(2,1)}(M_m)$ is finite
\'etale of degree two and
\[
[y_1^2y_2]F_{M_m}
   =\begin{cases}2,&m\text{ even},\\0,&m\text{ odd}.
     \end{cases}
\]

To compute this coefficient, let $\lambda_1,\lambda_2$ be the eigenvalues of $T_m$. Its discriminant is
\[ \Delta_m=-4t^m\gamma+ t^{2m}\bigl((\alpha-\varepsilon)^2+4\beta\gamma\bigr). \]
Both $\lambda_1-\lambda_2$ and $\lambda_1+\lambda_2$ are nonzero
generically. At the four sheets of $E$, given by $s_a=i^as$, the arrow matrices
are $[I_2\;s_aI_2+s_a^2T_m]$.

A quotient of ranks $(2,1)$ has a two-dimensional kernel $U$ at the
first vertex and a one-dimensional kernel at each sheet of the second
vertex. Diagonalize $T_m$ and use the corresponding eigenvector basis in each
of the two summands of $L^2\oplus L^2$. The four rank-at-most-one conditions
on $U$, in its Pl\"ucker coordinates, are
\[ p_{12}+z_a(\lambda_2)p_{14}-z_a(\lambda_1)p_{23} +z_a(\lambda_1)z_a(\lambda_2)p_{34}=0, \quad z_a(\lambda)=s_a+s_a^2\lambda. \]
Their four-by-four coefficient determinant is
$16i\,s^6(\lambda_1-\lambda_2)(\lambda_1+\lambda_2)\ne0$.
Thus $p_{12}=p_{14}=p_{23}=p_{34}=0$. The Pl\"ucker equation reduces
to $p_{13}p_{24}=0$ on the projective line $(p_{13}:p_{24})$,
giving exactly two reduced points in the Grassmannian of planes.
Their images at each sheet of the second vertex are one-dimensional,
so the corresponding kernels are unique. The identification
with the two eigenlines is equivariant: an eigenline $\ell$ gives
$U=\ell\oplus\ell$ and its extension to the sheets of the second vertex.

Every unit in $\Omega[[t]]$ has a square root. Since
$v_t(\Delta_m)=m$, the eigenlines are individually defined over $L$
when $m$ is even and are exchanged over $L(t^{1/2})$ when $m$ is odd.
Only $H^0$ contributes, giving traces two and zero, respectively.
Integral changes of presentation bases preserve this Grassmannian and
its Galois action.

Thus the specified integral representations have isomorphic reductions
modulo $t$, but their general valued families have different monodromy
$F$-polynomials, for the same QP over $K$ and the same $\delta$-vector. Section~\ref{app:subsec:rank-two-basis} uses
this same species with the Gauss valuation of its integral tensor
algebra, starting at $\eta$ and then treating its multiples.
\end{example}

\subsection{Newton polytopes and tropical \texorpdfstring{$F$}{F}-polynomials}
For a nonzero polynomial $F=\sum_{\mathbf e}a_{\mathbf e}\mathbf y^{\mathbf e}$,
its \emph{support} is $\{\mathbf e:a_{\mathbf e}\ne0\}$ and its \emph{Newton polytope} $\Newt(F)$ is the convex hull of that support. A polytope is
\emph{integral} if all its vertices belong to $\Z^n$.
Let $\overline M=\overline K\otimes_KM$; its vertex-field factors
are the geometric sheets. For a sheet dimension vector put
\[ \pi(\mathbf e)_i=\frac1{d_i}\sum_{a\mid i}e_a,\qquad \mathcal N_R(M)=\op{conv} \{\pi(\underline{\dim}(\overline M/L)):L\subseteq\overline M\}. \]
Thus $\mathcal N_R(M)$ is the projected \emph{quotient} polytope;
its defining quotients need not have balanced sheet dimensions.
Only finitely many dimension vectors occur.

\begin{theorem}\label{thm:Newton-monodromy}
For every finite ramified representation $M$,
\[ \Newt(F_M)=\mathcal N_R(M). \]
This polytope is integral and every vertex coefficient is one.
More precisely, every vertex is represented by a unique geometric
quotient, where quotients are parametrized by their kernels. That
kernel descends to $K$, and the quotient has balanced sheet dimensions.

\end{theorem}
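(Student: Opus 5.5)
We compare the ramified polynomial with the ordinary quotient polytope of the split representation $\overline M$ over $\overline K$. The plan has three steps: show that vertices of $\mathcal N_R(M)$ come from unique extremal kernels, show that these kernels are Galois-fixed and balanced, and then show that the support of $F_M$ lies in $\mathcal N_R(M)$.

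\emph{Step 1: extremal kernels.} Let $\lambda$ be a real linear functional on the rank lattice $\Z^n$. Pull it back by $\pi$ to a functional $\widetilde\lambda$ on sheet dimension vectors; since $\pi$ averages over sheets, $\widetilde\lambda$ is constant on each orbit of sheet coordinates. For a submodule $L\subseteq\overline M$, the function $L\mapsto\widetilde\lambda(\underline{\dim}(\overline M/L))$ is modular with respect to sums and intersections:
\[ \underline{\dim}(\overline M/(L+L'))+\underline{\dim}(\overline M/(L\cap L'))=\underline{\dim}(\overline M/L)+\underline{\dim}(\overline M/L'). \]
Hence the submodules minimizing $\widetilde\lambda$ of the quotient dimension form a sublattice, closed under $+$ and $\cap$. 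Choose $\lambda$ generic in the open normal cone of a vertex $v$. Then all minimizers have quotient dimension vectors projecting to $v$. I would next argue that generic $\lambda$ separates distinct sheet dimension vectors in the same fibre of $\pi$: perturb $\widetilde\lambda$ off the invariant functionals by a small non-invariant term and average back. This step is delicate. If it fails, I would instead take $L_{\min}$ to be the sum of all minimizers; it is again a minimizer. The vertex then corresponds to a unique \emph{largest} minimizing kernel, and uniqueness is recovered by showing that $L\subsetneq L_{\min}$ both minimizing forces $\underline{\dim}(L_{\min}/L)$ to have $\widetilde\lambda$-value zero. By genericity such a dimension vector lies on a face of positive dimension, contradicting that $v$ is a vertex.

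\emph{Step 2: descent and balance.} The Galois group, in particular $\sigma$, permutes sheets within each orbit and preserves $\widetilde\lambda$. So it carries the unique extremal kernel to an extremal kernel, which must be the same one. A Galois-stable subspace of $\overline M$ descends to $K$ as an $R$-submodule. Because it is defined over $K_i$ at each vertex, its sheet dimensions above $i$ are equal, so it is balanced. The vertex is therefore $\mathbf r_M-\mathbf a$ for a $K$-point of $\Gr^R_{\mathbf a}(M)$, and in particular integral. The same uniqueness shows that the fibre of the balanced Grassmannian over that rank is a single reduced point. I would verify reducedness through the tangent space $\Hom_P(L,M/L)$: a nonzero tangent vector would deform $L$ within the same dimension vector, and an argument of modular type shows that for extremal $L$ this space vanishes. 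Its trace is then $1$, so $v$ is a vertex coefficient equal to one.

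\emph{Step 3: support containment.} Any nonzero coefficient requires a nonempty balanced Grassmannian, and a geometric point of it gives a quotient whose projection is that exponent. So $\mathrm{supp}(F_M)\subseteq\mathcal N_R(M)$. Together with Step 2, which puts every vertex of $\mathcal N_R(M)$ in the support, the two convex hulls coincide.

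The main obstacle is the uniqueness in Step 1 together with the reducedness in Step 2. Non-balanced quotients can achieve the same projected value, so the lattice argument must be run on $\widetilde\lambda$ itself and not on an auxiliary split functional.
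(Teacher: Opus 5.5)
Your outline follows the paper's proof: modularity of the averaged functional, closure of the minimizers under sums and intersections, Galois invariance giving descent and balanced ranks, a single geometric point of trace one, and containment of the support. However, the step you flag as the main obstacle, uniqueness of the extremal kernel, is not actually closed.

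Your first idea cannot work. By construction $\widetilde\lambda=\lambda\circ\pi$ is constant on the fibres of $\pi$. A non-invariant perturbation would destroy the Galois invariance needed in Step~2, and averaging back simply undoes the perturbation. The fallback has the right shape, but its justification (``by genericity such a dimension vector lies on a face of positive dimension'') does not reach the case you are worried about. You had already observed that every minimizer has projected quotient dimension $v$. So for minimizers $L\subseteq L_{\min}$ one has $\pi(\underline{\dim}(L_{\min}/L))=0$, and no face of the polytope is involved at all.

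The missing idea is positivity. Since $L\subseteq L_{\min}$, the sheet vector $w=\underline{\dim}(L_{\min}/L)$ is coordinatewise nonnegative. The map $\pi$ averages nonnegative entries over each set of sheets, so $\pi(w)=0$ forces $w=0$, and hence $L=L_{\min}$. This is the paper's line ``an inclusion between two of them is therefore equality, since sheet dimensions can only increase''. Your concern that non-balanced quotients can share a projected value is legitimate for unrelated submodules. But the lattice property reduces everything to nested pairs: compare any minimizer with the sum of all of them, or two minimizers with their intersection. For nested pairs, positivity excludes it.

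Separately, the reducedness argument in Step~2 is unnecessary. Compactly supported \'etale cohomology depends only on the underlying reduced scheme. So a $K$-scheme whose only geometric point is a $K$-point has trace one whatever its nilpotent structure, which is how the paper concludes. Your tangent-space claim is nonetheless true. For nonzero $\phi\in\Hom(L,\overline M/L)$, let $L'$ be its kernel and $L''$ the preimage of its image. Then $\underline{\dim}L'+\underline{\dim}L''=2\,\underline{\dim}L$ with $L'\ne L\ne L''$, which contradicts unique minimality. It is simply extra work.
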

\begin{proof}
The function $d_u(L)=u\cdot\pi(\underline{\dim}L)$ is \emph{modular} on
the submodule lattice:
\[ d_u(L_1)+d_u(L_2)=d_u(L_1\cap L_2)+d_u(L_1+L_2). \]
Choose $u$ separating the finitely many projected quotient dimensions.
Its maximizing quotients correspond to minimizers of $d_u$.
Modularity shows that their intersections and sums also minimize.
All minimizers have the same projected dimension; an inclusion between
two of them is therefore equality, since sheet dimensions can only
increase. Thus the minimizing kernel is unique.

The functional is Galois-invariant, so the kernel descends to $K$.
Transitivity on each set of sheets makes the quotient ranks balanced.
The corresponding representation Grassmannian has one geometric point defined over
$K$; its compact-support cohomology has trace one, irrespective of
nilpotent structure. Every vertex can be exposed by such a $u$, while
every exponent of $F_M$ already lies in $\mathcal N_R(M)$. This proves
both equality of the polytopes and the coefficient assertion. Compare
the subrepresentation argument in \cite[Theorem 4.2]{FeiCombinatorics}.
\end{proof}

The \emph{support function} of a polytope $N$ is $u\mapsto\max_{q\in N}u\cdot q$.
Define the \emph{tropical $F$-polynomial} by
\[ f_M(u)=\max_{a_{\mathbf e}\ne0}u\cdot\mathbf e \quad\text{when }F_M=\sum_{\mathbf e}a_{\mathbf e}\mathbf y^{\mathbf e}. \]
The theorem gives
\[ f_M(u)=\max_{L\subseteq\overline M} u\cdot\pi(\underline{\dim}(\overline M/L)). \]
It is enough to maximize over quotients defined over $K$. Indeed,
the intersection of minimizing kernels remains minimizing by modularity.
In a finite-dimensional module, the intersection of the whole family
already equals a finite intersection. It is therefore minimizing and
Galois-stable. Thus $f_M$ is an
integral convex piecewise-linear function. The submodule support
function is recovered, without a new definition, from
\begin{equation}\label{eq:tropical-submodule}
 \max_{L\subseteq\overline M}u\cdot\pi(\underline{\dim}L)
     =u\cdot\mathbf r_M+f_M(-u)
     =f^{\mathrm{op}}_{\mathbb DM}(u).
\end{equation}
Here and below the superscript $\mathrm{op}$ only indicates that the
same construction is taken over the opposite algebra.

For $u\in\mathbb R^n$ put
$(T_{k,B}u)_i=\begin{cases} -u_k & (i=k)\\ u_i+[b_{ik}]_+u_k-b_{ik}[u_k]_+ & (i\ne k) \end{cases}$.
This is a transformation of variable weights, not of presentation
indices.

\begin{corollary}\label{cor:tropical-mutation}
For the mutation in Theorem~\ref{thm:mon-mutation}, we have
\[ f_{M'}(u)=f_M(T_{k,B}u)-h'_k[u_k]_++h_k[-u_k]_+. \]
On a projective presentation with disjoint supports the correction is
$[\delta_k]_+u_k-\delta_k[u_k]_+$. Opposite duality gives the
subrepresentation formula, with $B$ replaced by $-B$.
\end{corollary}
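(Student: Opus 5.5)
The plan is to apply the support-function operation $f(u)=\max_{\mathbf e\in\Newt}u\cdot\mathbf e$ to both sides of the mutation identity \eqref{eq:mon-mut-p} of Theorem~\ref{thm:mon-mutation}. Three inputs make this legitimate. First, Theorem~\ref{thm:Newton-monodromy} says that every vertex coefficient of $F_M$ and of $F_{M'}$ equals one. Second, the factors $1+y_k'$ and $y_k'$ have positive vertex coefficients. Third, the Newton polytope of a product of Laurent polynomials is the Minkowski sum of the Newton polytopes, and for a quotient it is the difference whenever the quotient is known to be a Laurent polynomial. Since support functions are additive under Minkowski sum, $f$ is additive on products.

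The key point is that a vertex coefficient of a product is a product of vertex coefficients, and these are all nonzero. So no cancellation can occur at vertices, and the identity $\Newt(FG)=\Newt F+\Newt G$ holds for arbitrary integer coefficients. Writing \eqref{eq:mon-mut-p} as $F_{M'}(\mathbf y')(1+y_k')^{h'_k}=(1+(y_k')^{-1})^{h_k}F_M(\mathbf Y)$, I clear denominators by passing to Laurent polynomials in $\mathbf y'$. The exponents $h_k,h'_k$ may be negative, so I first multiply both sides by the appropriate powers of $1+y_k'$ so that only polynomials appear. The support-function identity then reads $f_{M'}(u)+h'_k[u_k]_+=h_k[-u_k]_+ + f_{F_M(\mathbf Y)}(u)$, after checking the term coming from the factor $(1+(y_k')^{-1})^{h_k}$: its support function is $h_k\max(0,-u_k)=h_k[-u_k]_+$.

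The main step is to compute the support function of $F_M(\mathbf Y)$ as $f_M(T_{k,B}u)$. For each monomial $\mathbf y^{\mathbf e}$, the substitution gives $(y_k')^{-e_k+\sum_{i\ne k}[b_{ik}]_+e_i}\prod_{i\ne k}(y_i')^{e_i}(1+y_k')^{-\sum_i b_{ik}e_i}$. Pairing its Newton polytope with $u$ and maximizing gives $\mathbf e\cdot T_{k,B}u$: the linear part is $\sum_{i\ne k}e_i(u_i+[b_{ik}]_+u_k)-e_ku_k$, and the binomial contributes $-\sum_i b_{ik}e_i[u_k]_+$ when the exponent is positive. I must also handle negative exponents, where the factor is not a polynomial. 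There I would argue on the whole expression rather than termwise: multiply by $(1+y_k')^{N}$ for large $N$ and use that $\Newt$ of $F_M(\mathbf Y)(1+y_k')^N$ equals the image of $\Newt F_M$ under the piecewise-linear map, fibred over the binomial segment. The cleanest route is probably tropicalization. Evaluate at $y_k'$ replaced by $\epsilon^{-u_k}$ and $y_i'$ replaced by $\epsilon^{-u_i}$ with generic positive real parameters, and take the leading exponent as $\epsilon\to0$. Here positivity of vertex coefficients ensures that the tropical evaluation of the maximizing term is not cancelled, so the order of vanishing equals the max-plus value $f_M(T_{k,B}u)$.

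I expect this no-cancellation argument to be the main obstacle: it requires that the extremal monomial for $T_{k,B}u$ be unique (true for generic $u$, then extend by continuity since both sides are piecewise linear) and have coefficient one. The disjoint-support specialization then follows by substituting $h_k=-[\delta_k]_+$ and $h'_k=-[-\delta_k]_+$ and using $[-u_k]_+=[u_k]_+-u_k$, which gives the correction $[\delta_k]_+u_k-\delta_k[u_k]_+$. The subrepresentation version follows from \eqref{eq:tropical-submodule} together with the injective mutation statement over the opposite QP, with $B$ replaced by $-B$.
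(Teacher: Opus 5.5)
Your proposal is correct, and once you settle on the tropical evaluation it is essentially the paper's proof. The paper compares greatest $u$-weights in \eqref{eq:mon-mut-p} for generic $u$ in each half-space $u_k>0$ or $u_k<0$, where $T_{k,B}$ is an invertible linear map, so a unique extremal exponent of $F_M$ survives; it then extends by continuity and reads off the disjoint-support correction from \eqref{eq:disjoint-support-betti}. Two remarks: the Minkowski-sum and denominator-clearing detour is unnecessary, and what rules out cancellation is uniqueness of the maximizing exponent (which lies in the support, so its coefficient is nonzero), not its coefficient being one.
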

\begin{proof}
On either half-space $u_k>0$ or $u_k<0$, $T_{k,B}$ is an
invertible linear map. For a general $u$ in either half-space, the
distinct exponents of the old polynomial have distinct weights after
substitution. In particular, there is a unique term of greatest weight. Its coefficient is one by
Theorem~\ref{thm:Newton-monodromy}. The factors $1+y_k'$ and
$1+(y_k')^{-1}$ likewise have a unique term of greatest weight on
each half-space. These extremal terms cannot cancel. Taking greatest
weights in \eqref{eq:mon-mut-p} proves the formula on a dense set;
continuity proves it everywhere. Equation~\eqref{eq:disjoint-support-betti}
gives the simplified correction.
\end{proof}

\subsection{Generic Newton polytopes and fixed-field transport}
For the valued family of Section~\ref{subsec:valued-families}, with
$\kk=\C$, define
\begin{equation}\label{eq:universal-F}
 F_{\delta;\PP_0}=F_{\mathcal M_{\delta,\mathrm{univ}}}.
\end{equation}
All trace arguments apply over any algebraically closed
characteristic-zero residue field, including
$\overline{\C(z_1,\ldots,z_q)}$. The initial order selects the
valuation.

Fix a reduced Jacobi-finite QP and put $P=\mathcal P(A,S)$.
Use the presentation space $X_\delta=\PHom(\delta)$ and its function
field $K_\delta^{\mathrm{pres}}$ from Section~\ref{subsec:presentation-spaces}.
Let $M_\delta^{\rm gen}$ be the cokernel of the universal presentation
over $K_\delta^{\mathrm{pres}}$, and let $\overline M_\delta^{\rm gen}$
be its extension to an algebraic closure. Decoration is omitted here
because it does not affect the representation Grassmannians. Define
\begin{equation}\label{newton:eq:generic-polytope}
 \mathcal N_\delta(P)=\op{conv}
 \left\{\pi\bigl(\underline{\dim}(\overline M_\delta^{\rm gen}/L)\bigr):
      L\subseteq\overline M_\delta^{\rm gen}\right\}.
\end{equation}
The average $\pi$ uses the same $d_i$ sheets as above. We call this the \emph{generic Newton polytope} at presentation weight
$\delta$. It uses only $P$, its distinguished vertex fields, and $\delta$; it uses
neither an integral order nor a valuation on $K_\delta^{\mathrm{pres}}$.

\begin{theorem}\label{newton:thm:order-independence}
For any two full initial orders $\PP_0,\mathscr Q_0\subseteq P$
containing $\RR$, and every weight $\delta\in\Z^n$,
\begin{equation}\label{newton:eq:order-independent}
 \Newt(F_{\delta;\PP_0})
   =\mathcal N_\delta(P)
   =\Newt(F_{\delta;\mathscr Q_0}).
\end{equation}
The polytope is integral and every vertex has coefficient one in
both polynomials. More generally, the same conclusion holds for
any algebraically generic valued presentation of weight $\delta$
over $L=\Omega((t))$, with $\Omega\supseteq\C$ algebraically closed
and vertex fields $L(t^{1/d_i})$. Consequently, its tropical
$F$-polynomial is the order-independent function
$f_\delta^P(u)=\max_{q\in\mathcal N_\delta(P)}u\cdot q$.
This assertion needs neither goodness, nondegeneracy, full matrix rank,
nor the seed reachability condition.
\end{theorem}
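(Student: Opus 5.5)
The plan is to reduce everything to Theorem~\ref{thm:Newton-monodromy}, applied to the valued universal cokernel $\mathcal M_{\delta,\mathrm{univ}}$ over $L_\delta$. That theorem gives $\Newt(F_{\delta;\PP_0})=\mathcal N_R(\mathcal M_{\delta,\mathrm{univ}})$, integrality, and vertex coefficients one. What remains is to show that the projected quotient polytope of this valued representation equals $\mathcal N_\delta(P)$. That polytope depends only on the geometric module after extension to an algebraic closure, and it is blind to the Gauss valuation. Hence it suffices to identify geometric models of the two cokernels. The same argument then applies to $\mathscr Q_0$, and to any algebraically generic valued presentation over $\Omega((t))$.

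\textbf{Step 1: the two modules share a geometric model.} By Proposition~\ref{prop:universal-invariance}, the map $K(z_1,\ldots,z_q)\to L_\delta$ is injective, and it is a $K$-embedding of $K_\delta^{\mathrm{pres}}$. The universal presentation over $L_\delta$ is therefore the base change of the universal presentation over $K_\delta^{\mathrm{pres}}$ along this embedding. The only input from the order is that an integral basis of $\PHOM(\delta)$ is also a $K$-basis of $X_\delta$, by \eqref{eq:full-presentation-lattice}; the order otherwise only selects the valuation. Cokernels commute with field extension, so $\mathcal M_{\delta,\mathrm{univ}}\simeq M_\delta^{\rm gen}\otimes_{K_\delta^{\mathrm{pres}}}L_\delta$ as modules over $P\otimes_K L_\delta$. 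Choose an algebraic closure $\overline{L_\delta}$ containing $\overline{K_\delta^{\mathrm{pres}}}$. Then the geometric sheet modules coincide after extending $\overline M_\delta^{\rm gen}$ to $\overline{L_\delta}$. The sheets match because the compatible roots of $t$ are fixed on both sides.

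\textbf{Step 2: submodule dimension vectors are unchanged under extension of algebraically closed fields.} For a fixed sheet dimension vector, the submodules of that vector form a projective quiver Grassmannian over $\overline{K_\delta^{\mathrm{pres}}}$. It is nonempty exactly when it has a point over any larger algebraically closed field, by the Nullstellensatz. Consequently the finite sets $\{\underline{\dim}(\overline M/L)\}$ agree, and so do their projections by $\pi$. This yields $\Newt(F_{\delta;\PP_0})=\mathcal N_\delta(P)$.

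\textbf{Step 3: the general case and the tropical function.} For an arbitrary algebraically generic valued presentation, the definition supplies an embedding $K_\delta^{\mathrm{pres}}\hookrightarrow L$, and the same two steps apply; in particular they apply to $\mathscr Q_0$. The tropical statement is then the support-function description following Theorem~\ref{thm:Newton-monodromy}. None of the steps uses goodness, nondegeneracy, rank conditions or reachability, since only the fixed finite algebra $P$ and base change enter.

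The step I expect to be the main obstacle is Step 1, specifically the bookkeeping of sheets and vertex fields. The projection $\pi$ averages over sheets indexed through the chosen roots $t^{1/d_i}$. One must check that the embedding of $K_\delta^{\mathrm{pres}}\otimes_K K_i$ into $L(t^{1/d_i})$ respects these identifications, so that $\pi$ is computed identically on both sides. The danger is that the residue-field extension hides a ramified twist. I would rule this out as in Proposition~\ref{prop:coefficient-extension}: $L=\Omega((t))$ keeps $t$ as a uniformizer and adjoins the same roots, so $K_i\otimes_K L=L(t^{1/d_i})$ is a field and its sheets are in bijection with those of $K_i$.
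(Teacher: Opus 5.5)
Your proposal is correct and follows the paper's proof essentially step for step. You identify the valued universal cokernel as the scalar extension of $M_\delta^{\rm gen}$ along $K_\delta^{\mathrm{pres}}\hookrightarrow L_\delta$, check that nonemptiness of each representation Grassmannian is unchanged by field extension (the paper cites faithful flatness where you cite the Nullstellensatz), and conclude with Theorem~\ref{thm:Newton-monodromy}. The sheet bookkeeping you flag as the main obstacle is handled in the paper more cheaply: $\pi$ averages over all sheets above each vertex, so permuting sheets has no effect on it, and no compatibility of the chosen roots of $t$ needs to be checked.
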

\begin{proof}
The full Hom lattice of any order spans the same affine space
$X_\delta$, by \eqref{eq:full-presentation-lattice}. Its universal map
therefore induces an embedding $K_\delta^{\mathrm{pres}}\hookrightarrow L_\delta$
(Proposition~\ref{prop:universal-invariance}), and its positive module
is the scalar extension of $M_\delta^{\rm gen}$. Different orders
change that embedding as valued data, not algebraic genericity.
The same observation holds for any algebraically generic presentation
by its definition.

Extend each embedding to algebraic closures. For every possible sheet
dimension vector, the representation Grassmannian of the extended module is the
corresponding representation Grassmannian of $\overline M_\delta^{\rm gen}$ after
field extension. A scheme is empty if and only if its extension to a
larger field is empty: field extension is faithfully flat. Hence precisely
the same geometric quotient dimension vectors occur. Their projected
convex hull is \eqref{newton:eq:generic-polytope}. Permuting the sheets
permutes the coordinates within each average and has no effect on it.
Theorem~\ref{thm:Newton-monodromy}, applied over the valued coefficient
field, identifies this hull with the Newton polytope of the trace
polynomial and gives the vertex coefficients. Equality of support
functions proves the final assertion.
\end{proof}

\begin{corollary}\label{newton:cor:transport}
Under Hypothesis~\ref{hyp:initial-realization}, fix a QP
mutation sequence $w$. For every initial full order, its transported
$F$-polynomial at that sequence has Newton polytope
$\mathcal N_{\mu_w\delta}(P_w)$, where $P_w$ is the target
Jacobian algebra. The tropical $F$-polynomials of these polytopes obey
Corollary~\ref{cor:tropical-mutation} with the positive and negative parts of the weight.
\end{corollary}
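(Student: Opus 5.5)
The corollary combines three earlier results: fixed-field generality along the sequence (Corollary~\ref{thm:fixed-field-generality}), order independence of Newton polytopes at a single seed (Theorem~\ref{newton:thm:order-independence}), and the tropical mutation rule (Corollary~\ref{cor:tropical-mutation}).

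\emph{Step 1: the transported object.} Fix the sequence $w$ and an initial full order $\PP_0$. The transported object is $\mu_w\mathcal M_{\delta,\mathrm{univ}}$, evaluated over the fixed field $L_\delta$ and its vertex extensions, and its $F$-polynomial is $F_{\mu_w\mathcal M_{\delta,\mathrm{univ}}}$ over $P_w$. By Corollary~\ref{thm:fixed-field-generality}, this object is the decorated cokernel of a presentation of weight $\mu_w\delta$. That presentation is general in the orbit-saturated sense, so it lies in the invariant opens of Theorem~\ref{thm:generic-species-generality}.

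\emph{Step 2: upgrade to algebraic genericity.} Theorem~\ref{newton:thm:order-independence} applies to an algebraically generic valued presentation over $L=\Omega((t))$ with $\Omega$ algebraically closed. The transported presentation itself need not be algebraically generic. I would therefore adjoin a general change of presentation bases in $G_{\mu_w\delta}$, with independent transcendental residue coordinates, enlarging $\Omega$ as needed. This is the device already used at the end of the proof of Theorem~\ref{thm:generic-species-generality}.

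This change does not alter the isomorphism class of the cokernel. Proposition~\ref{prop:coefficient-extension} shows that enlarging $\Omega$ does not change the monodromy $F$-polynomial. The map $X_\delta\times G_{\mu_w\delta}\dashrightarrow X_{\mu_w\delta}$ is dominant, and dominance is preserved after composing with the rational charts. Hence the resulting point is an algebraically generic point of $X_{\mu_w\delta}$ defined over the enlarged field.

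\emph{Step 3: apply order independence at the target.} Apply Theorem~\ref{newton:thm:order-independence} at the target seed, with algebra $P_w$ and weight $\mu_w\delta$. That theorem needs no goodness or other global hypotheses, only a finite Jacobian algebra, and this is supplied by Corollary~\ref{thm:good-global-pattern}. It gives Newton polytope $\mathcal N_{\mu_w\delta}(P_w)$ with vertex coefficients one, whatever initial order was used.

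\emph{Step 4: tropical mutation.} For the tropical statement, apply Corollary~\ref{cor:tropical-mutation} at each step of $w$. The corrections $h_k,h'_k$ must be expressed through the weight. At each prefix the represented object is a general presentation with disjoint supports, so Proposition~\ref{prop:disjoint-support-indices} gives $h_k=-[\delta_k]_+$ and $h'_k=-[-\delta_k]_+$ at the current weight. The tropical functions are support functions of the polytopes, so they are the order-independent functions $f^{P_w}_{\mu_w\delta}$.

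\emph{Main obstacle.} The delicate point is Step 2: ensuring that orbit-saturated generality, rather than genericity of a chosen normal form, suffices. I would handle it by pulling back the principal opens through the dominant map with independent base changes, as in Lemma~\ref{lem:orbit-open}. This shows that the point avoids every nonempty $K$-defined proper closed subset.
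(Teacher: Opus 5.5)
Your argument is correct, but it reaches the polytope by a different mechanism from the paper's.

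The paper never makes the transported presentation algebraically generic. It first builds an open subset of $X_{\mu_w\delta}$, defined over $K$ and invariant under changes of presentation bases, on which every geometric quotient dimension vector takes its generic value. To do so it restricts to the locus where the universal cokernel is a vector bundle. Each sheet representation Grassmannian is then projective over the base, so its image is either the whole base or a proper closed set that can be removed. The paper then uses Corollary~\ref{thm:fixed-field-generality} and Lemma~\ref{lem:orbit-open} to place the transported object itself, still over $L_\delta$, in that open. Finally it reads off the polytope from Theorem~\ref{thm:Newton-monodromy}.

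You instead adjoin a general base change with new residue coordinates. Then $g_u\,q_w(f_{\mathrm{univ}})$ is the image of the generic point of $X_\delta\times G_{\mu_w\delta}$ under a dominant map. You quote the general clause of Theorem~\ref{newton:thm:order-independence} and transfer back along $L_\delta\subseteq\Omega'((t))$ by Proposition~\ref{prop:coefficient-extension}.

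Your route is shorter and reuses the order-independence theorem verbatim. It needs two facts, both available:
\begin{itemize}
\item the new residue coordinates are algebraically independent over $L_\delta$, not just over its residue field, which follows by comparing coefficients of powers of $t$;
\item the single framed map $(f,g)\mapsto g\,q_w(f)$ is dominant, which is the orbit-saturated density in Theorem~\ref{thm:generic-species-generality} for the whole sequence.
\end{itemize}

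The paper's route stays inside the fixed valued field, in keeping with fixed-field transport. It also proves a stronger, pointwise statement: every presentation in the explicit invariant open has the generic set of quotient dimension vectors, and hence the generic Newton polytope, whether or not it is algebraically generic.
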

\begin{proof}
There is a nonempty open in $X_\eta$, invariant under changes of
presentation bases, on which all geometric quotient dimension vectors
equal their generic values. Indeed, first restrict so the universal cokernel is a vector
bundle of fixed ranks. After finite vertex-field splitting, each of
the finitely many possible sheet-representation Grassmannians is projective
over this open. Its image is closed. If it meets the generic fibre,
its image is the whole base; otherwise remove that proper closed image.
The intersection for all ranks descends to $K$ and is invariant under
changes of presentation bases. Corollary~\ref{thm:fixed-field-generality}
and Lemma~\ref{lem:orbit-open} put the transported object
in this open at every fixed sequence. Apply Theorem~\ref{thm:Newton-monodromy} and
Corollary~\ref{cor:tropical-mutation}. 
\end{proof}

Coefficients at nonvertices may differ, even on the boundary. Thus
\eqref{newton:eq:order-independent} is weaker than equality of supports
or of polynomials, but stronger than mere agreement of their leading
terms. The Jacobian algebra over $K$ and vertex data are fixed throughout.

\begin{corollary}\label{thm:fixed-field-transport}
Under Hypothesis~\ref{hyp:initial-realization}, keep the valued field
and tame generator of $\mathcal M_{\delta,\mathrm{univ}}$ fixed.
For every finite mutation sequence $w$, set
$\mathcal M_{\delta,w}=\mu_w\mathcal M_{\delta,\mathrm{univ}}$
and $\delta_w=\mu_w\delta$. Write $wk$ for the sequence obtained by
appending $k$ to $w$. Then
\begin{equation}\label{eq:transported-universal-mutation}
 F_{\mathcal M_{\delta,wk}}(\mathbf y')
   =(y_k')^{[\delta_{w,k}]_+}(1+y_k')^{-\delta_{w,k}}
             F_{\mathcal M_{\delta,w}}(\mathbf Y).
\end{equation}
The injective general-weight assertion is this corollary for the
opposite initial order and the dual copresentation.
\end{corollary}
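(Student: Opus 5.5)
The plan is to argue by induction on the length of $w$, keeping the valued field $L_\delta$, its vertex extensions and the tame generator $\sigma$ fixed throughout. The identity \eqref{eq:transported-universal-mutation} is one step of the mutation formula. It should therefore follow from Theorem~\ref{thm:mon-mutation}, applied to the single mutation $\mathcal M_{\delta,w}\mapsto\mathcal M_{\delta,wk}$ over this fixed field, once two things are established. First, the hypotheses of that theorem hold at the seed reached by $w$. Second, the exponents $h_k,h'_k$ of $\mathcal M_{\delta,w}$ equal $-[\delta_{w,k}]_+$ and $-[-\delta_{w,k}]_+$.

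For the first point I would invoke Corollary~\ref{thm:good-global-pattern}. Under Hypothesis~\ref{hyp:initial-realization} the reduced QP $\mu_w(A,S)$ is Jacobi-finite and two-acyclic over $R$, and $\mu_k$ of it is again two-acyclic. The representation $\mathcal M_{\delta,w}$ is a finite decorated representation over $P_w\otimes_K L_\delta$. Its mutation and reduction are defined over $L_\delta$, because the local maps $\alpha,\beta,\gamma$ and the reduction comparisons are finite matrices after Jacobi-finite truncation. The base changes involved are unramified and preserve $\sigma$, so Proposition~\ref{prop:coefficient-extension} lets Theorem~\ref{thm:mon-mutation} be applied over $L_\delta$ exactly as over $K$.

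For the second point, the key input is Corollary~\ref{thm:fixed-field-generality}. It places $\mathcal M_{\delta,w}$ in the general family of weight $\delta_w=\mu_w\delta$, with $\delta_w$ computed recursively by Proposition~\ref{prop:weight-rule}. More precisely, $\mathcal M_{\delta,w}$ is the decorated cokernel of a presentation lying in the invariant open of $X_{\delta_w}$ supplied by Theorem~\ref{thm:generic-species-generality}. On that open the presentation has disjoint supports with multiplicities $[\pm\delta_w]_+$. The decoration is exactly the summand on which the differential vanishes, and it is preserved up to change of presentation bases. Proposition~\ref{prop:disjoint-support-indices} then gives $h_k=-[\delta_{w,k}]_+$ and $h'_k=-[-\delta_{w,k}]_+$. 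Corollary~\ref{cor:weight-character} converts the prefactor into $(y'_k)^{[\delta_{w,k}]_+}(1+y'_k)^{-\delta_{w,k}}$.

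The main obstacle is to justify that generality in the orbit-saturated sense really yields a disjoint-support presentation of the specific object $\mathcal M_{\delta,w}$ over $L_\delta$, and not only of a geometric representative. I would handle this with Lemma~\ref{lem:orbit-open}. The Betti and rank conditions form invariant opens, and the initial universal point lies in their pullbacks. Hence the minimal Betti multiplicities of $\mathcal M_{\delta,w}$, which are computed by Lemma~\ref{lem:completed-betti} from $\coker\alpha$ and $\ker\alpha/\im\gamma$ over $L_\delta$, take their generic values $[\pm\delta_w]_+$. For the injective statement, I would apply the same argument to the opposite QP, the opposite initial order and $\mathbb D\mathcal M$, using Proposition~\ref{prop:opposite-duality} together with \eqref{eq:F-opposite-duality}.
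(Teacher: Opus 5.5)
Your proposal is correct and follows the paper's argument. Generality of the transported object comes from Corollary~\ref{thm:fixed-field-generality}, the local cokernel ranks $h_k=-[\delta_{w,k}]_+$ and $h'_k=-[-\delta_{w,k}]_+$ come from Proposition~\ref{prop:disjoint-support-indices}, and Corollary~\ref{cor:weight-character} then gives the identity; your extra steps (the QP hypotheses via Corollary~\ref{thm:good-global-pattern}, and the orbit-open pullback) only unpack what those cited results already provide. One small correction: Theorem~\ref{thm:mon-mutation} is applied directly over $L_\delta=\Omega((t))$, where its proof works verbatim, rather than being transferred from $K$ by Proposition~\ref{prop:coefficient-extension}.
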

\begin{proof}
Corollary~\ref{thm:fixed-field-generality} preserves generality.
Proposition~\ref{prop:disjoint-support-indices} supplies the local cokernel
ranks, and Corollary~\ref{cor:weight-character} gives the identity.
\end{proof}

The field comparisons in Proposition~\ref{prop:coefficient-extension}
preserve the entire polynomial. The weaker Newton-polytope comparison
of Theorem~\ref{newton:thm:order-independence} also allows different
valuations defined by orders.

\section{From monodromy characters to cluster bases}
\label{sec:cluster-application}
We now apply the mutation formula to the valued universal presentations.
Their fixed-field transport gives Laurentness and compatible
pointedness. Under the seed reachability condition, Qin's criterion
then gives a basis with full-rank coefficients. The passage to
arbitrary geometric coefficients uses linear independence of specialized
theta functions and yields a basis of the middle algebra.
We keep this step separate from the conditions identifying the middle
and upper algebras.

\subsection{An initial realization and transport of characters}

Fix $(A,S,\PP_0)$ as in Hypothesis~\ref{hyp:initial-realization}.
A seed is a pair $(\mathbf x,\widetilde B)$, where
$\mathbf x=(x_1,\ldots,x_m)$ is a transcendence basis generating the
ambient rational function field over $\Q$ and
$\widetilde B=(B\mid C)$ is an $n\times m$ integer matrix.
The variables $x_1,\ldots,x_n$ are mutable, and
$x_{n+1},\ldots,x_m$ are frozen. The entries $b_{ij}$ are indexed by
$1\leq i\leq n$ and $1\leq j\leq m$. Mutation at $1\leq k\leq n$
is defined by the exchange relation
\begin{equation}\label{eq:cluster-exchange-row}
 x_kx'_k=\prod_{j=1}^m x_j^{[b_{kj}]_+}
              +\prod_{j=1}^m x_j^{[-b_{kj}]_+},
 \qquad x'_j=x_j\quad(j\ne k),
\end{equation}
and the matrix rule \eqref{alg:eq:FZmutation}, with $1\leq j\leq m$. A seed pattern assigns these data to the $n$-regular tree with mutation directions assigned to its edges:
every vertex has one incident edge for each mutable index,
and crossing that edge applies its mutation. The coefficient ring
is $\Z[x_{n+1}^{\pm1},\ldots,x_m^{\pm1}]$. The cluster algebra is
the algebra over this ring generated by all mutable cluster variables;
the upper cluster algebra $\mathcal U$ is the intersection of the
Laurent rings of all seeds in the ambient field. A cluster monomial is
a product of nonnegative powers of mutable variables from one seed
(with a frozen Laurent monomial allowed). \emph{Principal coefficients}
mean $m=2n$ and $\widetilde B_0=(B_0\mid-I_n)$ in our convention;
\emph{coefficient-free} means $m=n$.

For each initial mutable weight $\delta$, choose the universal
representation over its single valued field $L_\delta$, as in
\eqref{eq:universal-F}. Let $\mathcal M_{\delta,w}$ be its transport along a sequence $w$, and let $\delta_w=\mu_w\delta$.
Corollary~\ref{thm:fixed-field-transport} applies to every such sequence.

An extended weight $\widetilde\delta\in\mathbb Z^m$ has mutable
part $\delta$ and arbitrary frozen coordinates, which record a
Laurent coefficient monomial. All its coordinates transform by
\eqref{eq:weight-p}.
Put
\[ \widehat y_i=x^{-\widetilde b_i}\qquad(1\leq i\leq n), \]
and define the initial character by
\begin{equation}\label{eq:generic-character}
 \mathsf G^{\PP_0}_{\widetilde\delta}
  =x^{-\widetilde\delta}
        F_{\delta;\PP_0}(\widehat{\mathbf y}).
\end{equation}
At the seed reached by $w$, its expression uses
$F_{\mathcal M_{\delta,w}}$. The injective character is the same
construction on the opposite QP and the dual representation. Indeed,
\eqref{eq:mon-reciprocity} gives the alternative expression
\[ x^{-\widetilde{\check\delta}} F^{\mathrm{op}}_{\mathbb DM}(x^{\widetilde b_i})_{i=1}^n, \qquad \widetilde{\check\delta}=\widetilde\delta+\mathbf r_M\widetilde B. \]
This expresses the same character in the dual convention.

For the full-rank argument below, put
$p^*(\mathbf e)=-\mathbf e\widetilde B$ and assume
$\rank\widetilde B=n$. The \emph{dominance order} is defined by
$g'\preceq_{\mathbf s}g$ when $g'=g+p^*(\mathbf e)$ for some
$\mathbf e\in\Z_{\ge0}^n$. Injectivity of $p^*$ makes this relation
antisymmetric. A Laurent polynomial is \emph{pointed} of degree $g$
if it has the form
\[ x^g\left(1+\sum_{0\ne\mathbf e\in\Z_{\ge0}^n}a_{\mathbf e}x^{-\mathbf e\widetilde B}\right),\qquad a_{\mathbf e}\in\Z, \]
with a finite sum. Its leading monomial is $x^g$, with coefficient one.
A collection is \emph{compatibly pointed} along a sequence
\cite[Definition 3.4.2]{Qin} if its degrees
at adjacent seeds transform as $g'=-\mu_k(-g)$, using
\eqref{eq:weight-p} on all coordinates.

\begin{proposition}\label{prop:generic-character-mutation}
The initial character \eqref{eq:generic-character} is a Laurent
polynomial in every seed. More precisely, at the seed $\mathbf s_w$
it is
\[ x(\mathbf s_w)^{-\widetilde\delta_w} F_{\mathcal M_{\delta,w}} \bigl(x(\mathbf s_w)^{-\widetilde b_i(\mathbf s_w)}\bigr)_{i=1}^n. \]
It therefore belongs to the upper cluster algebra. If
$\rank\widetilde B=n$, this expression is pointed with degree
$-\widetilde\delta_w$.
\end{proposition}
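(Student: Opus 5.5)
The plan is an induction on the length of $w$. The claim is that the Laurent expression at $\mathbf s_w$, namely
\[ \Lambda_w:=x(\mathbf s_w)^{-\widetilde\delta_w} F_{\mathcal M_{\delta,w}}\bigl(x(\mathbf s_w)^{-\widetilde b_i(\mathbf s_w)}\bigr)_{i=1}^n, \]
equals the initial character once we substitute the cluster variables of $\mathbf s_w$ as rational functions of the initial ones. For $w$ empty this is the definition \eqref{eq:generic-character}. For the step $w\mapsto wk$ it suffices to show $\Lambda_{wk}=\Lambda_w$ in the ambient field under the exchange relation \eqref{eq:cluster-exchange-row} at $k$.

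First, I will rewrite the exchange relation in terms of the $\widehat y$-variables. Write $\widehat y_i=x^{-\widetilde b_i}$ at $\mathbf s_w$ and $\widehat y_i'$ at $\mathbf s_{wk}$. The standard computation then gives $\widehat y_k'=\widehat y_k^{-1}$ and $\widehat y_i=\widehat y_i'(\widehat y_k')^{[b_{ik}]_+}(1+\widehat y_k')^{-b_{ik}}$ for $i\ne k$. This is exactly the substitution $\mathbf Y$ of \eqref{eq:Yproj}, with $b_{ik}$ taken at $\mathbf s_w$; its sign convention follows from $\widehat y_i=x^{-\widetilde b_i}$ and the matrix rule \eqref{alg:eq:FZmutation}. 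The relation itself becomes $x_k'=x_k^{-1}x^{[-\widetilde b_k]_+}(1+\widehat y_k^{-1})$, or the equivalent form with $[\widetilde b_k]_+$.

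Next, I will insert \eqref{eq:transported-universal-mutation}: $F_{\mathcal M_{\delta,wk}}(\widehat{\mathbf y}')=(\widehat y_k')^{[\delta_{w,k}]_+}(1+\widehat y_k')^{-\delta_{w,k}}F_{\mathcal M_{\delta,w}}(\widehat{\mathbf y})$. It remains to check the monomial identity
\[ x'^{-\widetilde\delta_{wk}}(\widehat y_k')^{[\delta_k]_+}(1+\widehat y_k')^{-\delta_k}=x^{-\widetilde\delta_w}, \]
using the weight rule \eqref{eq:weight-p} on all $m$ coordinates. I will split into the cases $\delta_k\ge0$ and $\delta_k<0$. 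In each case the factor $(1+\widehat y_k)^{\pm\delta_k}$ coming from $x_k'^{\delta_k}$ cancels the prefactor, and the leftover monomials in $x_j$ match the terms $[b_{kj}]_+[\delta_k]_+$ and $[-b_{kj}]_+[-\delta_k]_+$. This case check is the main obstacle, since the row/column and sign conventions (rows $\widetilde b_i$, $\widehat B=-\widetilde B^{\mathsf T}$) must all be aligned. I would verify them first on Example~\ref{run:ex:first-exchange}, which gives $x_1x_1'=1+x_2^2$.

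Laurentness then follows, because $F_{\mathcal M_{\delta,w}}$ is a polynomial and $\widehat y_i(\mathbf s_w)$ are Laurent monomials at $\mathbf s_w$. Since $w$ is arbitrary, the character lies in every seed Laurent ring, hence in $\mathcal U$. For pointedness when $\rank\widetilde B=n$, I will use the facts stated after Definition~\ref{def:monodromy-F}: $F_{\mathcal M_{\delta,w}}$ has constant term $1$ and integer coefficients. Each of its monomials $\mathbf y^{\mathbf e}$ contributes $x^{-\widetilde\delta_w}x^{-\mathbf e\widetilde B(\mathbf s_w)}$ with $\mathbf e\ge0$. This is precisely the pointed form of degree $-\widetilde\delta_w$, and injectivity of $p^*$ keeps the leading term unique.
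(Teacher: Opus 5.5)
Your proposal is correct and follows the paper's proof essentially step for step. Like the paper, you derive the $\widehat y$-mutation rule matching \eqref{eq:Yproj}, verify the monomial identity \eqref{eq:character-monomial-identity} on all $m$ coordinates by splitting on the sign of $\delta_k$, and then apply \eqref{eq:transported-universal-mutation} and iterate. Pointedness likewise comes from the constant term one together with injectivity of $p^*$.
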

\begin{proof}
Write $\widehat y_i'=\widehat y_i(\mathbf s')$ and
$M_{\pm}=\prod_j(x'_j)^{[\pm b_{kj}]_+}$. Then $\widehat y_k'=M_+/M_-$. Matrix mutation and
\eqref{eq:cluster-exchange-row} give
\[ \widehat y_k(\mathbf s)=(\widehat y_k')^{-1},\qquad \widehat y_i(\mathbf s) =\widehat y_i(\mathbf s')(\widehat y_k')^{[b_{ik}]_+}(1+\widehat y_k')^{-b_{ik}}. \]
The extended weight satisfies
\begin{equation}\label{eq:character-monomial-identity}
 x^{-\widetilde\delta}
 =(x')^{-\mu_k\widetilde\delta}
           (\widehat y_k')^{[\delta_k]_+}(1+\widehat y_k')^{-\delta_k}.
\end{equation}
For $\delta_k\geq0$, the exchange relation contributes
$M_-^{-\delta_k}(1+\widehat y_k')^{-\delta_k}$, while the weight change
contributes $M_+^{-\delta_k}(\widehat y_k')^{\delta_k}$, which is the same
factor. For $\delta_k\leq0$ use $M_-$ instead of $M_+$.
Now apply \eqref{eq:transported-universal-mutation}. This proves
the one-step identity, including every frozen coordinate, and
iteration proves the assertion in every seed.

The polynomials have integer coefficients and constant term one.
Every exponent besides the leading exponent is
$-\widetilde\delta_w-\mathbf e\widetilde B$, with
$\mathbf e\geq0$ and $\mathbf e\ne0$. Full rank excludes zero from
these displacement vectors and makes their cone pointed. Thus the leading monomial has coefficient one and is the unique
maximal monomial. Every edge is evaluated in the initially fixed valued field.
\end{proof}

When $\rank\widetilde B=n$, the monomial substitution is injective. Equal
Laurent characters in the same seed and extended weight therefore have
equal $F$-polynomials, even without identifying their target QPs or orders.

\begin{corollary}\label{cor:tropical-denominator}
Suppose $\rank\widetilde B=n$. For the character of a
specified representation in this section, let
$\op{den}_j(\mathsf G)$ be minus the smallest $x_j$-exponent
in its Laurent expansion. This is the signed denominator convention;
in particular $\op{den}_j(x_j)=-1$. Then
\[ \op{den}_j(\mathsf G) =\widetilde\delta_j+f_M(\widetilde B_{\bullet j}). \]
Here $\widetilde B_{\bullet j}=(b_{1j},\ldots,b_{nj})^{\mathsf T}$
acts on a quotient-rank vector $\mathbf e$ by
$\sum_i e_i b_{ij}$. If $\rank\widetilde B<n$ and $\mathsf G\ne0$, the right-hand side
is an upper bound for this signed denominator;
equality holds whenever the minimizing Laurent terms do not cancel.
\end{corollary}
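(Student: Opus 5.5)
The plan is to compute the $x_j$-exponents of the Laurent expression in \eqref{eq:generic-character} directly. I would then use Theorem~\ref{thm:Newton-monodromy} to show that the extreme exponent is carried by a single term with coefficient one.

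Write the character as
\[ \mathsf G=\sum_{\mathbf e}a_{\mathbf e}\,x^{-\widetilde\delta-\mathbf e\widetilde B}, \qquad F_M=\sum_{\mathbf e}a_{\mathbf e}\mathbf y^{\mathbf e}, \]
using $\widehat y_i=x^{-\widetilde b_i}$. The $x_j$-exponent of the term indexed by $\mathbf e$ is
\[ -\widetilde\delta_j-\sum_ie_ib_{ij}. \]
Hence the smallest exponent occurring with nonzero coefficient, before any cancellation between different $\mathbf e$, is
\[ -\widetilde\delta_j-\max_{a_{\mathbf e}\ne0}\mathbf e\cdot\widetilde B_{\bullet j} =-\widetilde\delta_j-f_M(\widetilde B_{\bullet j}). \]
Negating gives the candidate value $\widetilde\delta_j+f_M(\widetilde B_{\bullet j})$. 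Without further information, cancellation among terms with the same $x_j$-exponent could only raise the true minimum. This already gives the inequality
\[ \op{den}_j(\mathsf G)\le\widetilde\delta_j+f_M(\widetilde B_{\bullet j}), \]
which is the upper bound in the rank-deficient case. Equality holds exactly when the minimizing terms do not cancel.

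For $\rank\widetilde B=n$, the map $\mathbf e\mapsto\mathbf e\widetilde B$ is injective, so distinct exponents $\mathbf e$ give distinct Laurent monomials and no cancellation occurs at all. The coefficient of the monomial indexed by $\mathbf e$ is then exactly $a_{\mathbf e}$. The minimum over the nonvanishing monomials is therefore attained by some $\mathbf e$ in the support that maximizes $\mathbf e\cdot\widetilde B_{\bullet j}$. That face of $\Newt(F_M)$ contains a vertex, and Theorem~\ref{thm:Newton-monodromy} shows the vertex coefficient is one. Together these facts prove the equality.

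The one point needing care is the rank-deficient case. There, terms on the maximizing face of $\Newt(F_M)$ can merge into a single monomial, and the merged coefficients may cancel; this is why the statement includes the noncancellation hypothesis. I would only remark that the unique extremal vertex argument does not survive the noncanonical identification of different $\mathbf e$, so only the inequality is claimed there.
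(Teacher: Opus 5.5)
Your proof is correct and is essentially the paper's argument. Under full rank the exponent map $\mathbf e\mapsto-\widetilde\delta-\mathbf e\widetilde B$ is injective, so nothing cancels and the minimal $x_j$-exponent is $-\widetilde\delta_j-f_M(\widetilde B_{\bullet j})$; without injectivity, cancellation can only raise the minimum, which gives the upper bound. Your appeal to the vertex coefficients of Theorem~\ref{thm:Newton-monodromy} is harmless but unnecessary: under injectivity, every $\mathbf e$ in the support already contributes its nonzero coefficient $a_{\mathbf e}$ unchanged.
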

\begin{proof}
A quotient exponent $\mathbf e$ contributes the Laurent exponent
$-\widetilde\delta-\mathbf e\widetilde B$. If $\widetilde B$ has
full row rank, this affine map is injective, and the smallest $j$th
coordinate in its image is
\[ -\widetilde\delta_j-\max_{\mathbf e\in\Newt(F_M)}\mathbf e\cdot\widetilde B_{\bullet j}=-\widetilde\delta_j-f_M(\widetilde B_{\bullet j}). \]
Negating this minimum gives the asserted signed denominator. Without
injectivity, cancellation can only raise the minimum, giving the upper
bound and the stated equality condition.
\end{proof}

In particular, for fixed $(P,\widetilde B,\widetilde\delta)$ with
$\rank\widetilde B=n$, Theorem~\ref{newton:thm:order-independence} gives
\begin{equation}\label{newton:eq:character-polytope}
 \Newt(\mathsf G^{\PP_0}_{\widetilde\delta})
   =-\widetilde\delta-\mathcal N_\delta(P)\widetilde B.
\end{equation}
Here $N\widetilde B=\{q\widetilde B:q\in N\}$.
There is no cancellation under the injective exponent map. The signed
denominator vector is consequently independent of $\PP_0$ at the
initial seed and, by Corollary~\ref{newton:cor:transport}, after every
fixed mutation sequence.

\subsection{Cluster variables and monomials from reachable objects}
\label{subsec:cluster-model}
For $a\in\Z_{\ge0}^n$, let
$\mathcal T_{a;\mathbf s}=\bigoplus_i\mathcal T_{i;\mathbf s}^{a_i}$.
Its extended weight at the initial seed is obtained by transporting $(-a,0)$ back
from $\mathbf s$; the frozen coordinates record only Laurent
coefficient factors, not frozen modules.

\begin{theorem}\label{thm:cluster-model}
For every node $\mathbf s$, the extended character of
$\mathcal T_{i;\mathbf s}$ is the mutable cluster variable $x_i(\mathbf s)$.
The extended character of $\mathcal T_{a;\mathbf s}$ is
$\prod_i x_i(\mathbf s)^{a_i}$. Allowing frozen Laurent monomial factors gives every cluster monomial.
Multiplication within a common reachable collection agrees with direct sum.

If $\rank\widetilde B=n$, equality of two such extended
characters forces equality of their extended weights at the initial seed and
isomorphism of their decorated representations at the initial seed. Modulo Laurent
monomials in the frozen variables, this gives a bijection between
reachable decorated isomorphism classes and cluster monomials modulo
such coefficient factors. The seed reachability condition is not needed for these assertions.
\end{theorem}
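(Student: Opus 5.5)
The plan is to run the whole proof at the seed $\mathbf s$ and only then transport back to the root. At $\mathbf s$ the object $\mathcal T_{i;\mathbf s}$ is the negative simple $E_i^-(\mathbf s)$, and the monodromy $F$-polynomial of its zero positive part is $1$. We first check that its extended weight at $\mathbf s$ is $(-\mathbf e_i,0)$. This uses the convention that the decoration is the summand $P_i\xrightarrow{0}0$ and that frozen coordinates carry no module. The local character at $\mathbf s$ is then $x(\mathbf s)^{\mathbf e_i}\cdot 1=x_i(\mathbf s)$.

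To reach the initial seed, mutate back along $w^{-1}$. Since $\mathcal T_{i;\mathbf s}$ is reachable, Proposition~\ref{thm:reachable}(iv) makes it the general decorated cokernel of its weight. Proposition~\ref{prop:rigid-weight} identifies it with the universal presentation of that weight, with equal $F$-polynomials, for any full initial order. The one-step identity of Proposition~\ref{prop:generic-character-mutation} then shows that the character computed at the root, re-expressed at $\mathbf s$, has the form $x(\mathbf s)^{-\widetilde\delta_w}F_{\mathcal M_{\delta,w}}$. Here we take Theorem~\ref{thm:representation-mutation} involutive, so $\mathcal M_{\delta,w}\simeq E_i^-(\mathbf s)$. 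By Proposition~\ref{prop:disjoint-support-indices} the $h$-coordinates along the path are $-[\pm\delta_k]_+$. The frozen weights transform by \eqref{eq:weight-p}, and \eqref{eq:character-monomial-identity} keeps the frozen coordinates consistent. Hence the character equals $x_i(\mathbf s)$ in the ambient field.

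For the monomials $\mathcal T_{a;\mathbf s}$, Proposition~\ref{prop:mon-directsum} gives multiplicativity of $F$. Compatible summands have sign-coherent weights at each mutation vertex, by Proposition~\ref{thm:reachable}(ii) and Corollary~\ref{thm:sign-coherence}. On such a collection \eqref{eq:weight-p} is therefore linear, so weights add along the whole path. The character of the sum is then the product $\prod_i x_i(\mathbf s)^{a_i}$, and this is the statement that multiplication within a collection agrees with direct sum. Frozen Laurent factors are handled by shifting $\delta_{\rm fr}$.

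For injectivity when $\rank\widetilde B=n$, we use pointedness from Proposition~\ref{prop:generic-character-mutation}: the degree $-\widetilde\delta$ is the unique maximal monomial, so equal characters have equal extended weights. Equal mutable weights give isomorphic reachable objects by Proposition~\ref{thm:reachable}(iv) and Lemma~\ref{lem:isomorphism-descent}. This yields the bijection modulo frozen factors, and surjectivity onto cluster monomials is the first part. The main obstacle is the transport step: the general object produced by Corollary~\ref{thm:fixed-field-generality} must be isomorphic, and not merely generic in a family, to the rigid $\mathcal T$ at every node. I would get this from openness of the rigid orbit (Proposition~\ref{prop:rigid-weight}) together with Lemma~\ref{lem:orbit-open}.
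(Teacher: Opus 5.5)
Your proof is correct, but it takes a longer route than the paper's for the first part.

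\textbf{How the paper argues.} The paper never passes to universal presentations. It works with the specific object $\mathcal T_{a;\mathbf s}$ itself, which at $\mathbf s$ is pure decoration with $F=1$ and weight $(-a,0)$. This object stays $E$-rigid along the reverse sequence, so \eqref{eq:E0-betti} gives $h_k=-[\delta_k]_+$ and $h'_k=-[-\delta_k]_+$ at every step. The paper then applies Theorem~\ref{thm:mon-mutation} together with \eqref{eq:character-monomial-identity} edge by edge to show that the Laurent function is preserved. This uses only the rigid statements. It is explicitly independent of the full initial order, the Gauss-valued universal objects, and the generality transport of Corollary~\ref{thm:fixed-field-generality}. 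It also treats all $a$ at once, so the paper needs no linearity of the weight transport on sign-coherent collections. Your linearity argument is nevertheless fine.

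\textbf{How your route differs.} You first identify $\mathcal T_{i;\mathbf s}$ with the universal presentation via Proposition~\ref{prop:rigid-weight}, and then invoke Proposition~\ref{prop:generic-character-mutation}. That proposition rests on Corollary~\ref{thm:fixed-field-transport}, and hence on the generality-under-mutation machinery. What this buys you is that the identity $\mathsf G^{\PP_0}_{\widetilde\delta}=x_i(\mathbf s)$, for every full initial order, comes out in the same stroke. The paper records this only afterwards, in the remark following the theorem and in the cluster-monomial step of Theorem~\ref{thm:generic-basis}.

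\textbf{On your ``main obstacle''.} It is not really an obstacle. Once $\mathcal M_{\delta,\mathrm{univ}}$ is isomorphic to the scalar extension of $\mathcal T_{i;\mathbf s}$ at the root, Theorem~\ref{thm:representation-mutation} transports that isomorphism class through every mutation. The local construction (kernels, images, quotients) commutes with scalar extension. So no openness argument at intermediate nodes, and no appeal to Lemma~\ref{lem:orbit-open}, is needed. This is exactly the involutivity step you already wrote.

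\textbf{Injectivity.} Your argument via pointedness and Proposition~\ref{thm:reachable}(iv) matches the paper's use of Proposition~\ref{prop:rigid-weight}.
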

\begin{proof}
At $\mathbf s$, the object $\mathcal T_{a;\mathbf s}$ is the negative
decoration $(0,\bigoplus_iK_i^{a_i})$, with $F=1$ and extended weight
$(-a,0)$, giving the stated monomial. Along the reverse sequence it stays
$E$-rigid; thus \eqref{eq:E0-betti} supplies the positive and negative parts of the weight.
Theorem~\ref{thm:mon-mutation} and
\eqref{eq:character-monomial-identity} preserve the Laurent function
on every edge. This uses only the rigid statements, independently of
general-presentation transport and the basis theorem.

Every cluster variable and monomial occurs at a node. Multiplicativity for direct sums proves the multiplication assertion. When $\rank\widetilde B=n$, the initial character is pointed with leading exponent minus the
extended weight, so equal functions have the same extended weight.
Proposition~\ref{prop:rigid-weight} identifies the corresponding
modules at the initial seed. Allowing a frozen Laurent factor changes only the frozen
part of the leading exponent, which proves the final formulation.
\end{proof}

At a reachable weight every universal presentation associated with the order lies in the
rigid open orbit (Proposition~\ref{prop:rigid-weight}), so its
character is order-independent. The basis theorem extends this
construction to all extended weights.

\subsection{The middle cluster algebra and the basis theorem}

The notation in Qin's criterion is related to ours by
\[ \widehat B=-\widetilde B^{\mathsf T},\qquad g=-\widetilde\delta. \]
We define $g$-vectors using principal coefficients. Give their initial
variables the degrees
\[ \deg x_i=\mathbf e_i,\qquad \deg u_i=(b_{i1},\ldots,b_{in}). \]
The $g$-vector of a cluster variable is its degree for this grading.
When the extended matrix has full rank, it is also the mutable part of
the pointed degree above. A seed satisfies the \emph{seed reachability condition}
if a mutation sequence gives terminal cluster variables whose principal
$g$-vectors are $-\mathbf e_{\pi(i)}$ for a permutation $\pi$.
We impose this condition on the principal-coefficient pattern; it is
independent of the coefficient extension subsequently chosen.
Equivalently, there is a predecessor seed $\mathbf s[-1]$ reaching
$\mathbf s$ in this way \cite[Definition 2.3.1]{Qin}.

We first recall the coefficient construction of the middle cluster
algebra. Write $\widetilde B=(B\mid C)$, with $C=(c_{ij})$ of size
$n\times f$, and put
\[ z_j=x_{n+j},\qquad R_{\mathrm f}=\Z[z_1^{\pm1},\ldots,z_f^{\pm1}],\qquad R_{\mathrm{prin}}=\Z[u_1^{\pm1},\ldots,u_n^{\pm1}]. \]
When $f=0$, take $R_{\mathrm f}=\Z$. Principal coefficients have
matrix $(B\mid-I_n)$. The coefficient homomorphism is
\begin{equation}\label{eq:coefficient-substitution}
 \rho:R_{\mathrm{prin}}\longrightarrow R_{\mathrm f},
 \qquad u_i\longmapsto\prod_{j=1}^f z_j^{-c_{ij}}.
\end{equation}
We also write $\rho$ for the induced homomorphism of initial Laurent
rings, which fixes $x_1,\ldots,x_n$.

The \emph{theta functions} of Gross--Hacking--Keel--Kontsevich are
canonical Laurent series attached to the cluster pattern
\cite{GHKK}. For principal coefficients, let
$\vartheta_g^{\mathrm{prin}}$ be the function with initial degree
$(g,0)$, where $g\in\Z^n$; arbitrary frozen degrees are obtained by
multiplying by Laurent monomials in the $u_i$. Under the seed reachability condition, the initial and reached clusters have mutable $g$-vectors
$\mathbf e_i$ and $-\mathbf e_i$, up to permutation. Their cluster
complex is therefore not contained in a half-space. By
\cite[Proposition 8.28]{GHKK}, the theta series are Laurent polynomials
and form a basis of the upper cluster algebra with principal
coefficients. The integer broken-line coefficients and the
leading-coefficient-one expansions give its integral form. In particular,
\begin{equation}\label{eq:principal-theta-basis}
 \Mid^{\mathrm{prin}}=\mathcal U^{\mathrm{prin}}
   =\bigoplus_{g\in\Z^n}R_{\mathrm{prin}}\,
       \vartheta_g^{\mathrm{prin}}.
\end{equation}
Here $\Mid^{\mathrm{prin}}$ is the algebra spanned by the polynomial
theta functions. These functions and their multiplication depend only
on the cluster pattern, not on the ramified realization.

The \emph{middle cluster algebra} for the chosen coefficients is the
coefficient extension of this theta algebra
\cite[Definition--Lemma 7.14]{GHKK}. Theorem 3.10 and Corollary 3.11
of \cite{CMMM} show that the specialized polynomial theta functions
are linearly independent over $R_{\mathrm f}$. Their coefficient
hypothesis holds for \eqref{eq:coefficient-substitution}: every nonzero
integer multiple of a coefficient monomial maps to a nonzero
nonzerodivisor in $R_{\mathrm f}$. We therefore identify the middle
algebra with its image in the initial Laurent ring:
\begin{equation}\label{eq:middle-algebra}
 \Mid(\widetilde B)
 =\bigoplus_{g\in\Z^n}R_{\mathrm f}\,
    \rho(\vartheta_g^{\mathrm{prin}})
 \subseteq R_{\mathrm f}[x_1^{\pm1},\ldots,x_n^{\pm1}].
\end{equation}
The multiplication is the specialization of theta multiplication.
Choosing different frozen degrees for the representatives multiplies
the functions by invertible coefficient monomials, without changing
the algebra.

We first prove the basis theorem with full-rank coefficients, then use
principal coefficients to pass to arbitrary geometric coefficients.
For the first step we use \cite[Theorem 4.3.1]{Qin}: with full rank and the seed reachability condition,
a family of elements of the upper cluster algebra, with one element
in every degree, is a basis if it is compatibly pointed along the reaching
sequence. The criterion is skew-symmetrizable and does not require
nonnegative Laurent coefficients. Its finiteness argument compares
leading-term expansions at the two end seeds. Compatibility identifies
their coefficients; one seed bounds the support above and the other
bounds it below. The support lies in finitely many dominance intervals.
Each interval is finite because the two nonnegative vectors expressing
its inequalities have a fixed sum under the injective map $p^*$.
Thus the expansions are finite. We use this argument before specializing
coefficients.

\begin{theorem}\label{thm:generic-basis}
Fix a good ramified modulation, a Jacobi-finite nondegenerate potential,
and a full initial order $\PP_0\supseteq\RR$ as in
Hypothesis~\ref{hyp:initial-realization}. Assume the seed reachability condition for the principal-coefficient pattern. For every geometric
coefficient extension $\widetilde B=(B\mid C)$, the characters satisfy
\[ \Mid(\widetilde B)=\bigoplus_{\delta\in\Z^n}R_{\mathrm f}\,\mathsf G^{\PP_0}_{(\delta,0)}. \]
The characters with all extended weights form a $\Z$-basis of this
algebra and include all cluster monomials. Moreover,
\[ \mathcal A(\widetilde B)\subseteq\Mid(\widetilde B)\subseteq\mathcal U(\widetilde B). \]
\end{theorem}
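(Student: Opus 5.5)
The argument runs in two stages: first principal coefficients, where Qin's criterion applies, and then arbitrary geometric coefficients by specialization through $\rho$.

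\emph{Step 1 (principal coefficients).} Take $\widetilde B_{\mathrm{prin}}=(B\mid-I_n)$, which has rank $n$. By Proposition~\ref{prop:generic-character-mutation}, every character $\mathsf G^{\PP_0}_{\widetilde\delta}$ lies in $\mathcal U^{\mathrm{prin}}$. At every seed $\mathbf s_w$ it is pointed of degree $-\widetilde\delta_w$. The degrees at adjacent seeds are related by the weight rule \eqref{eq:weight-p} on all coordinates, which is exactly the compatibility $g'=-\mu_k(-g)$ under $g=-\widetilde\delta$. The transport is evaluated in the single fixed valued field $L_\delta$, by Corollary~\ref{thm:fixed-field-transport}, so one family serves for every seed along the reaching sequence. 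Every degree in $\Z^{2n}$ occurs exactly once. Qin's criterion \cite[Theorem 4.3.1]{Qin}, which uses the seed reachability condition, then shows that the family is a $\Z$-basis of $\mathcal U^{\mathrm{prin}}$. Combined with \eqref{eq:principal-theta-basis}, this gives $\mathcal U^{\mathrm{prin}}=\Mid^{\mathrm{prin}}=\bigoplus_\delta R_{\mathrm{prin}}\mathsf G^{\mathrm{prin}}_{(\delta,0)}$.

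\emph{Step 2 (specialization).} For general $\widetilde B=(B\mid C)$, observe that $\rho(\widehat y_i^{\mathrm{prin}})=\widehat y_i$. The reason is that $\widehat y_i=x^{-\widetilde b_i}$, while $u_i\mapsto z^{-c_i}$ matches the frozen row of $\widetilde B$ against the row $-I_n$. Since $F_{\delta;\PP_0}$ is the same polynomial in both cases, we get $\rho(\mathsf G^{\mathrm{prin}}_{(\delta,0)})=\mathsf G^{\PP_0}_{(\delta,0)}$. Next, $\rho$ is a ring homomorphism $R_{\mathrm{prin}}[x^{\pm}]\to R_{\mathrm f}[x^{\pm}]$. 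Writing each principal theta function as a finite $R_{\mathrm{prin}}$-combination of principal characters, and conversely, and then applying $\rho$, shows that the $R_{\mathrm f}$-span of $\{\mathsf G_{(\delta,0)}\}$ equals the span in \eqref{eq:middle-algebra}. It remains to prove linear independence over $R_{\mathrm f}$. The change-of-basis matrix between the principal theta functions and the principal characters is invertible over $R_{\mathrm{prin}}$, and $\rho$ carries it to an invertible matrix over $R_{\mathrm f}$. Independence of the specialized theta functions, from \cite[Theorem 3.10]{CMMM}, therefore transfers to the characters.

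This transfer is the main obstacle. The change-of-basis matrix is infinite, so I must check that each character is a \emph{finite} combination of theta functions and conversely; pointedness alone does not guarantee this. For finiteness I would use the finite dominance-interval argument recalled before the theorem, which is valid at principal coefficients. Both triangular expansions are finite and unitriangular with respect to the dominance order, so their specializations remain mutually inverse and finite.

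\emph{Step 3 (remaining assertions).} Multiplying the basis $\mathsf G_{(\delta,0)}$ by frozen monomials $x^{-\delta_{\mathrm{fr}}}$ gives the basis $\{\mathsf G_{\widetilde\delta}\}$ over $\Z$, since $x^{-\widetilde\delta}$ factors. Theorem~\ref{thm:cluster-model} supplies all cluster monomials, which contain the generators of $\mathcal A(\widetilde B)$, so $\mathcal A\subseteq\Mid$. Finally, Proposition~\ref{prop:generic-character-mutation} holds for any $\widetilde B$ and places every character in every seed Laurent ring, so $\Mid\subseteq\mathcal U$.
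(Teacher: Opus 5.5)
Your proposal is correct and follows essentially the paper's argument: Qin's criterion at principal coefficients, then specialization through $\rho$ using the independence of specialized theta functions from \cite{CMMM}. Your mutually inverse change-of-basis matrices are a coordinate form of the paper's injective map $R_{\mathrm f}\otimes_{R_{\mathrm{prin}}}\mathcal U^{\mathrm{prin}}\to R_{\mathrm f}[x_1^{\pm1},\ldots,x_n^{\pm1}]$. The finiteness you flag as the main obstacle is automatic, because Step~1 already makes both families bases of the same free $R_{\mathrm{prin}}$-module, so no dominance-interval argument is needed.

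One link is missing in Step~3. Theorem~\ref{thm:cluster-model} computes the character of the specific reachable object $\mathcal T_{a;\mathbf s}$, not the universal character $\mathsf G^{\PP_0}_{\widetilde\delta}$. To conclude that each cluster monomial is a member of the family, and hence that $\mathcal A(\widetilde B)\subseteq\Mid(\widetilde B)$, you need Proposition~\ref{prop:rigid-weight}. It says that an $E$-rigid object is determined by its weight and that the order's universal presentation lies in its open orbit, so the two $F$-polynomials agree; the paper uses exactly this step.
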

\begin{proof}
\emph{Full-rank coefficients and the principal case.}
Suppose first that $\rank\widetilde B=n$. For every mutable weight $\delta$,
take the universal presentation of
\eqref{eq:full-presentation-lattice}--\eqref{eq:Gauss} over its fixed
valued field $L_\delta$. Corollary~\ref{thm:fixed-field-generality} keeps
its transport general, so the projective cokernel identities for
disjoint supports hold along the reaching sequence.

By Proposition~\ref{prop:generic-character-mutation}, each character
belongs to the upper cluster algebra. At every seed, the constant
$F$-coefficient is one and the other Laurent exponents differ from
$-\widetilde\delta$ by elements of
$-\Z_{\ge0}^n\widetilde B\setminus\{0\}$. Full rank makes this cone
pointed and the exponent map injective. The characters are therefore
compatibly pointed, and the bijective tropical weight transformations
give one character in every degree. Qin's criterion proves that they
form a $\Z$-basis of the upper cluster algebra. Changing only the
initial frozen weight multiplies a character by a frozen Laurent
monomial. Thus the members with zero initial frozen weight form a basis over the
Laurent coefficient ring.

Apply this result to principal coefficients and write
\[ b_\delta^{\mathrm{prin}}=x^{-\delta}F_{\delta;\PP_0}(x^{-\mathbf b_i}u_i)_{i=1}^n,\qquad \mathcal U^{\mathrm{prin}}=\bigoplus_{\delta\in\Z^n}R_{\mathrm{prin}}\,b_\delta^{\mathrm{prin}}, \]
where $x^{-\mathbf b_i}=\prod_{j=1}^n x_j^{-b_{ij}}$.
Together with \eqref{eq:principal-theta-basis}, this gives two bases of
the same free $R_{\mathrm{prin}}$-module.

\emph{Specialization to arbitrary geometric coefficients.}
For arbitrary $C$, coefficient substitution induces the homomorphism
\[ \psi:R_{\mathrm f}\otimes_{R_{\mathrm{prin}}}\mathcal U^{\mathrm{prin}}\longrightarrow R_{\mathrm f}[x_1^{\pm1},\ldots,x_n^{\pm1}],\qquad r\otimes f\longmapsto r\rho(f). \]
The tensors $1\otimes\vartheta_g^{\mathrm{prin}}$ form an
$R_{\mathrm f}$-basis of its domain. Their images are linearly
independent by \cite[Theorem 3.10]{CMMM}, so $\psi$ is injective.
By \eqref{eq:middle-algebra}, its image is $\Mid(\widetilde B)$.
The tensors $1\otimes b_\delta^{\mathrm{prin}}$ form another basis of
the domain: tensoring a free-module decomposition gives a direct sum of
copies of $R_{\mathrm f}$, without a flatness assumption. Their images
are exactly $\mathsf G^{\PP_0}_{(\delta,0)}$, since the universal
$F$-polynomial depends on the mutable weight and the order, not on the
coefficient extension. They give the asserted basis of the middle cluster algebra.

\emph{Cluster monomials and extended weights.}
The transported-character formula places every member in
$\mathcal U(\widetilde B)$. For a cluster monomial, use
Theorem~\ref{thm:cluster-model} to obtain its reachable decorated
representation and extended weight. This object is $E$-rigid, so
Proposition~\ref{prop:rigid-weight} identifies it with the universal
decorated cokernel of its mutable weight. Its character is the cluster
monomial. This proves $\mathcal A(\widetilde B)\subseteq\Mid(\widetilde B)$.
Finally,
\[ \mathsf G^{\PP_0}_{(\delta,\gamma)}=z^{-\gamma}\mathsf G^{\PP_0}_{(\delta,0)} \]
for $\gamma\in\Z^f$. The Laurent monomials in the $z_j$ form a
$\Z$-basis of $R_{\mathrm f}$, proving the statement for all extended
weights.
\end{proof}

\begin{corollary}\label{cor:upper-character-basis}
Under the hypotheses of Theorem~\ref{thm:generic-basis}, the character
family is a basis of $\mathcal U(\widetilde B)$ whenever
$\Mid(\widetilde B)=\mathcal U(\widetilde B)$. In particular, this
holds if $\rank\widetilde B=n$, or if
$\mathcal A(\widetilde B)=\mathcal U(\widetilde B)$.
\end{corollary}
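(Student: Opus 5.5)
The first assertion is formal once Theorem~\ref{thm:generic-basis} is in hand. That theorem gives a $\Z$-basis of $\Mid(\widetilde B)$ consisting of the characters $\mathsf G^{\PP_0}_{\widetilde\delta}$. It also gives the inclusion $\Mid(\widetilde B)\subseteq\mathcal U(\widetilde B)$. So if $\Mid(\widetilde B)=\mathcal U(\widetilde B)$ as subsets of the common ambient Laurent ring, the same family is a basis of $\mathcal U(\widetilde B)$. The only point to check is that both algebras are identified inside the same initial Laurent ring $R_{\mathrm f}[x_1^{\pm1},\ldots,x_n^{\pm1}]$. This holds by \eqref{eq:middle-algebra} and by the definition of $\mathcal U$ as an intersection of seed Laurent rings, with the initial seed among them. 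I will therefore reduce to verifying $\Mid=\mathcal U$ in the two listed cases.

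For the full-rank case, I would not invoke theta-function theory directly. Instead I reuse the first step of the proof of Theorem~\ref{thm:generic-basis}. There, with $\rank\widetilde B=n$ and seed reachability, Qin's criterion \cite[Theorem 4.3.1]{Qin} was applied to the compatibly pointed family of characters. It showed that this family is a $\Z$-basis of $\mathcal U(\widetilde B)$ itself, and that the zero-frozen-weight members give an $R_{\mathrm f}$-basis. The second step of that proof shows that the same family spans $\Mid(\widetilde B)$. The family is identical in both steps because the universal $F$-polynomial depends only on the mutable weight and the order. Two algebras with a common basis coincide, so $\Mid=\mathcal U$. This is the step I expect to need the most care. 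One must confirm that the specialization map $\psi$ and the direct full-rank construction produce the same elements $\mathsf G^{\PP_0}_{(\delta,0)}$, not merely elements with the same leading term. Both are given by the defining formula \eqref{eq:generic-character} with the same $F_{\delta;\PP_0}$, which settles it.

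For the case $\mathcal A(\widetilde B)=\mathcal U(\widetilde B)$, I would sandwich. Theorem~\ref{thm:generic-basis} gives $\mathcal A\subseteq\Mid\subseteq\mathcal U$, and the hypothesis $\mathcal A=\mathcal U$ forces all three to be equal. Hence the character family is a basis of $\mathcal U$ by the first paragraph. No further obstacle arises here. The only thing to note is that the inclusion $\mathcal A\subseteq\Mid$ came from cluster monomials being characters of reachable $E$-rigid objects (Theorem~\ref{thm:cluster-model} and Proposition~\ref{prop:rigid-weight}). Together with $R_{\mathrm f}$-linearity of $\Mid$, this gives that the algebra they generate lies in $\Mid$.
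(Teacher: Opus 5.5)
Your proposal is correct and follows the paper's proof: the full-rank case is exactly the Qin-criterion step in the proof of Theorem~\ref{thm:generic-basis}, and the case $\mathcal A(\widetilde B)=\mathcal U(\widetilde B)$ follows from the sandwich $\mathcal A\subseteq\Mid\subseteq\mathcal U$. One small correction to your side remark: passing from cluster monomials to all of $\mathcal A(\widetilde B)$ uses that $\Mid(\widetilde B)$ is closed under multiplication (it is the image of the ring homomorphism $\psi$), not merely $R_{\mathrm f}$-linearity, since $\mathcal A$ contains products of cluster variables from different seeds.
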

\begin{proof}
The full-rank case was proved in the first part of the basis argument. If $\mathcal A(\widetilde B)=\mathcal U(\widetilde B)$,
the two inclusions in Theorem~\ref{thm:generic-basis} force equality
with the middle algebra.
\end{proof}

\begin{corollary}\label{cor:character-span}
For a fixed exchange pattern satisfying the seed reachability condition,
the algebra spanned by these characters is independent of the full
initial order and of the chosen good Jacobi-finite nondegenerate
ramified realization. It is $\Mid(\widetilde B)$.
\end{corollary}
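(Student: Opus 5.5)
The plan is to read the corollary off Theorem~\ref{thm:generic-basis}, using the fact that the middle cluster algebra is defined independently of any realization. By \eqref{eq:middle-algebra}, $\Mid(\widetilde B)$ is the $R_{\mathrm f}$-span of the specialized principal theta functions $\rho(\vartheta_g^{\mathrm{prin}})$ inside $R_{\mathrm f}[x_1^{\pm1},\ldots,x_n^{\pm1}]$. These functions, their multiplication, and the coefficient map \eqref{eq:coefficient-substitution} depend only on the exchange pattern $\widetilde B$, through \cite{GHKK} and \cite{CMMM}. They do not involve the modulation, the potential, or the order.

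First I would fix any admissible data: a good modulation realizing $(B,d)$, a Jacobi-finite nondegenerate potential $S$ on it, and a full initial order $\PP_0\supseteq\RR$, as in Hypothesis~\ref{hyp:initial-realization}. The seed reachability condition is assumed for the principal-coefficient pattern of $B$, so Theorem~\ref{thm:generic-basis} applies. It gives
\[
\bigoplus_{\delta}R_{\mathrm f}\,\mathsf G^{\PP_0}_{(\delta,0)}=\Mid(\widetilde B).
\]
It also shows that the characters with arbitrary extended weights span this same algebra over $\Z$, since $\mathsf G_{(\delta,\gamma)}=z^{-\gamma}\mathsf G_{(\delta,0)}$. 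Hence the $\Z$-span, which equals the $R_{\mathrm f}$-span, of the character family equals $\Mid(\widetilde B)$. That span is closed under multiplication, so it is also the algebra generated by the characters.

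Second, the right-hand side $\Mid(\widetilde B)$ is the same subring of the fixed initial Laurent ring for every choice of data, so the spans for any two choices coincide. Here the characters from different realizations must be compared in the same ambient Laurent ring with the same initial seed. This holds because each $\mathsf G^{\PP_0}_{\widetilde\delta}$ is defined by \eqref{eq:generic-character} in the initial variables $x_1,\ldots,x_m$ with $\widehat y_i=x^{-\widetilde b_i}$. That definition depends on the realization only through the polynomial $F_{\delta;\PP_0}$.

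The only point needing care is that the exchange pattern is genuinely fixed while the realization varies. All realizations must share the same $B$, and hence the same principal pattern in which reachability is tested. Different symmetrizers $d$, or different good modulations, only change the polynomials $F_{\delta;\PP_0}$, which Theorem~\ref{thm:generic-basis} absorbs. I expect no real obstacle beyond noting this; the individual basis elements may differ, as Example~\ref{run:ex:one-arrow} shows, but their span does not.
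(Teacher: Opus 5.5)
Your proposal is correct and is the paper's argument: the paper's proof simply applies Theorem~\ref{thm:generic-basis} to each realization and order, and relies, as you do, on $\Mid(\widetilde B)$ being the specialized theta algebra, which depends only on the exchange pattern. One small correction to your closing aside: Example~\ref{run:ex:one-arrow} does not show that different full orders give different basis elements, since the paper notes that its odd members cannot come from any full order in that algebra and Proposition~\ref{app:thm:ray} shows that all full orders there give the same characters; the correct illustration of order dependence is Proposition~\ref{arith:prop:order-dependence}.
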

\begin{proof}
Apply Theorem~\ref{thm:generic-basis} to each realization and order.
\end{proof}

The comparison uses independence of specialized theta functions to
prove injectivity of $\psi$. It identifies the two spanned algebras,
not their individual basis elements. In particular, the order may
change the monodromy coefficients while the middle algebra stays fixed.
For full-rank coefficients, the Newton polytope and signed denominator
of each character also depend only on the Jacobian algebra over $K$ and
its weight, by \eqref{newton:eq:character-polytope}.

Suppose the hypotheses of Theorem~\ref{thm:generic-basis} hold and
$P_0$ is a Jacobian algebra over $\C$. For the constant order
$\PP_0=\C[[t]]\otimes_\C P_0$ with $d_i=1$, the universal presentation is defined over the residue
function field. Inertia is trivial, so the initial polynomials are
the ordinary generic quotient $F$-polynomials. Theorem~\ref{thm:generic-basis}
identifies their span with the middle algebra. Under the hypotheses of
Corollary~\ref{cor:upper-character-basis}, they form the usual generic
basis of the upper cluster algebra.

\subsection{Bases from extensions and a four-vertex cycle}
The realization theorem for $\mathfrak P$ gives a corresponding family
of bases without a restriction on the geometric coefficients.

\begin{corollary}\label{cor:class-P-basis}
Let $B\in\mathfrak P$, and fix positive integral degrees $d$ such that
$BD$ is skew-symmetric. There is a good Jacobi-finite nondegenerate
ramified realization. For every such realization, every full initial
order containing $\RR$, and every geometric coefficient extension,
the monodromy characters form a basis of the middle cluster algebra
containing all cluster monomials.
\end{corollary}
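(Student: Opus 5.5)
The plan is to assemble the corollary from three earlier inputs: the existence results for $\mathfrak P$, the reddening-sequence finiteness result, and the basis theorem. The only extra ingredient is the seed reachability condition, which is not an explicit hypothesis here.

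\emph{Step 1 (existence of a realization).} By Corollary~\ref{gm:cor:class-P}, $\Good(B,d)$ is nonempty and $B$ admits a reddening sequence. Corollary~\ref{gm:cor:reddening-finite} then gives $\Good_{\mathrm{fin}}(B,d)=\Good(B,d)$. So any good modulation carries a Jacobi-finite potential after splitting, and Theorem~\ref{gm:thm:realization} supplies a Jacobi-finite nondegenerate potential on it. Pairing this with any full order containing $\RR$, for instance the contracted order of Theorem~\ref{alg:thm:contraction} on a lattice from Section~\ref{gm:sec:integral}, places us in Hypothesis~\ref{hyp:initial-realization}. This proves the first assertion.

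\emph{Step 2 (seed reachability).} This is the main obstacle. The plan is to show that the reddening sequence $w$ of $B$ yields the seed reachability condition for the principal-coefficient pattern $(B\mid -I_n)$. At the end of a reddening sequence all coefficient rows are nonnegative, so they are sign-coherent in the opposite sign to the start. By the standard argument of Keller and of Qin (a reddening sequence ends at a seed whose $c$-matrix is minus a permutation matrix), tropical duality between $c$- and $g$-vectors then makes the terminal $g$-vectors equal to $-\mathbf e_{\pi(i)}$. I would cite this for skew-symmetrizable matrices, using sign coherence of $c$-vectors from \cite{GHKK}. Convention-matching is needed: our coefficients are $-I_n$ and $g=-\widetilde\delta$. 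An alternative route is Theorem~\ref{thm:cluster-model}: the negative simples transported back along $w$ have weights $\mu_w^{-1}(-\mathbf e_i)$, so one would show these are $+\mathbf e_{\pi(i)}$, that is, that the reached collection is the projectives. The first route is the cleaner one.

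\emph{Step 3 (conclusion).} Now Theorem~\ref{thm:generic-basis} applies to every such realization, every full initial order $\PP_0\supseteq\RR$, and every geometric coefficient extension $\widetilde B$. It gives that the characters $\mathsf G^{\PP_0}_{\widetilde\delta}$ form a $\Z$-basis of $\Mid(\widetilde B)$ containing all cluster monomials. Since the conclusion is asserted for every realization rather than a constructed one, the argument should use only goodness, Jacobi-finiteness and nondegeneracy of the given realization. It must not rely on the specific construction of Step 1.
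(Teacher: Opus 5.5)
Your proposal is correct and follows the paper's proof: Corollary~\ref{gm:cor:class-P} and Corollary~\ref{gm:cor:reddening-finite} give the realization, a reddening sequence gives the seed reachability condition, and Theorem~\ref{thm:generic-basis} finishes the argument. The paper handles your Step~2, the ``main obstacle'', simply by citing \cite[Corollary 2.3.4]{Qin}, which is exactly the implication you reconstruct, and Corollary~\ref{gm:cor:reddening-finite} already supplies the Jacobi-finite nondegenerate potential, so your separate appeal to Theorem~\ref{gm:thm:realization} is not needed.
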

\begin{proof}
Corollary~\ref{gm:cor:class-P} gives a good modulation and a reddening
sequence, and Corollary~\ref{gm:cor:reddening-finite} gives the potential.
A reddening sequence implies the seed reachability condition
\cite[Corollary 2.3.4]{Qin}. Apply
Theorem~\ref{thm:generic-basis}.
\end{proof}

In particular, for an acyclic matrix every two-acyclic modulation is
good, and its zero potential is rigid, nondegenerate, and Jacobi-finite
after splitting. Thus its integral tensor algebra, or any other full initial
order containing $\RR$, gives this basis. Principal coefficients also
give a basis of the upper cluster algebra by
Corollary~\ref{cor:upper-character-basis}.

\paragraph{Cartan string diagrams.}
Cao proves that the exchange matrices attached to Cartan string
diagrams for arbitrary double words and triangulations belong to
$\mathfrak P'$ \cite[Theorem 4.8]{Cao}. His matrix $\widehat B$ has
a left symmetrizer, whereas we use $B=-\widehat B^{\mathsf T}$.
Negative transpose commutes with mutation and preserves triangular
and sink--source extensions. Consequently
Corollary~\ref{cor:class-P-basis} gives the same conclusion for these
matrices, with every compatible positive integral choice of degrees.

For the natural coefficients of a decorated double Bott--Samelson
cell, the cluster algebra equals its upper cluster algebra over $\C$
and is the coordinate ring of the cell
\cite[Theorems 1.1 and 4.13]{ShenWeng}. Since
$\mathcal A\subseteq\Mid\subseteq\mathcal U$, the monodromy characters
therefore give a $\C$-basis of that coordinate ring, with the prescribed
frozen variables inverted.

For an acyclic modulation, the chosen integral arrow lattice $\AA$
gives the finite tensor algebra $T_{\RR}(\AA)$, a full order in
$T_R(A)$. For a nonzero potential,
Theorem~\ref{alg:thm:contraction} gives the contracted order.
Section~\ref{sec:geometric-orders} constructs such orders from coherent
sheaves on curves.

The next example separates the ramified character construction from
categorification by finite-group orbits. The matrix without a global unfolding
is classical \cite[Example 14.4 and Remark 14.5]{LZ}; its explicit
reddening sequence gives the required realization by
Corollary~\ref{gm:cor:reddening-finite}.

\Needspace{11\baselineskip}
\begin{example}[A basis without a global unfolding]
\label{ex:nonunfoldable-basis}
Take the vertex fields
\[ K_1=K_3=K,\qquad K_2=K(u),\quad u^3=t,\qquad K_4=K(v),\quad v^2=t, \]
and the full tensor arrow bimodules on the cycle
\[
\xymatrix@C=34mm@R=11mm{
 1\ (K)\ar[r]^a & 2\ (K_2)\ar[d]^b\\
 4\ (K_4)\ar[u]^d & 3\ (K)\ar[l]^c
}
\]
Thus $A_{12}={}_KK_2{}_{K_2}$, $A_{23}={}_{K_2}K_2{}_K$,
$A_{34}={}_KK_4{}_{K_4}$, and $A_{41}={}_{K_4}K_4{}_K$.
The arrow matrix and vertex degrees are
\begin{equation}\label{ex:eq:nonunfoldable-matrix}
 B=\begin{pmatrix}
 0&1&0&-1\\-3&0&3&0\\0&-1&0&1\\2&0&-2&0
 \end{pmatrix},\qquad d=(1,3,1,2).
\end{equation}
This modulation is good by Proposition~\ref{gm:thm:fulltensor}.

\paragraph{\emph{A finite-dimensional realization and the basis.}}
For principal coefficients, set $\widetilde B_0=(B\mid-I_4)$.
Applying \eqref{alg:eq:FZmutation} and \eqref{eq:weight-p} along
$w=(1,3,4,2,1,3,2,1,2,4)$ gives
\begin{equation}\label{ex:eq:nonunfoldable-reaching}
 \mu_w\widetilde B_0=(B\mid I_4),\qquad
 \mu_w(\mathbf e_i,0)=(-\mathbf e_i,0)\quad(1\le i\le4).
\end{equation}
Equation~\eqref{ex:eq:nonunfoldable-reaching} shows that $w$ is a
reddening sequence and verifies the seed reachability condition. Since $A$ is good, Corollary~\ref{gm:cor:reddening-finite}
supplies a Jacobi-finite nondegenerate potential $S$ on $A$.
The finiteness assertion uses \cite[Corollary 5.2]{KellerGreen}:
a nondegenerate potential on an ordinary quiver admitting a reddening
sequence has finite-dimensional completed Jacobian algebra. We apply
this after lifting $w$ to the split quiver, as in
Theorem~\ref{gm:thm:reddening}.

Theorem~\ref{thm:generic-basis} therefore gives a basis of the middle
cluster algebra for every geometric coefficient extension and every
full initial order in $\mathcal P(A,S)$ containing $\RR$.
For principal coefficients, Corollary~\ref{cor:upper-character-basis}
gives a basis of the upper cluster algebra as well.

\paragraph{\emph{The obstruction to an unfolding.}}
The corresponding matrix in the column-exchange convention is
\begin{equation}\label{ex:eq:nonunfoldable-column}
 \widehat B=-B^{\mathsf T}=
 \begin{pmatrix}
 0&3&0&-2\\-1&0&1&0\\0&-3&0&2\\1&0&-1&0
 \end{pmatrix}.
\end{equation}
After interchanging vertices $2$ and $4$, this is the matrix of
\cite[Example 14.4]{LZ} for $(a,b)=(2,3)$. That result excludes finite
global unfoldings; \cite[Remark 14.5]{LZ} attributes this case to
Speyer and Zelevinsky.
The coefficient-free exchanges, in either convention, are
\[
\begin{aligned}
 x_1x_1'=x_2+x_4,\quad x_2x_2'=x_1^3+x_3^3,\quad x_3x_3'=x_2+x_4,\quad x_4x_4'=x_1^2+x_3^2.
\end{aligned}
\]

The same obstruction excludes locally finite unfoldings with infinite
orbit sets in the sense of \cite[Definition 2.4]{HuangLi}. Such an
unfolding is a locally finite skew-symmetric matrix $\widetilde C$ on
disjoint nonempty sets $\Sigma_i$, with no arrows inside a set, satisfying
$\widehat b_{ij}=\sum_{x\in\Sigma_i}\widetilde c_{xy}\quad(y\in\Sigma_j)$.
Between each pair of sets, all nonzero entries have one sign, and these
conditions must persist under simultaneous mutations in the sets.
For $i=1,3$, the entries of \eqref{ex:eq:nonunfoldable-column} give neighbour maps
$f_i:\Sigma_i\to\Sigma_2$ and $g_i:\Sigma_i\to\Sigma_4$, with fibres
of sizes three and two, respectively. After mutating at $2$ and $4$,
the signed number of arrows between $x\in\Sigma_1$ and $y\in\Sigma_3$
is
\[ p_{xy}-q_{xy},\qquad p_{xy}=\mathbf1_{\{f_1(x)=f_3(y)\}},\quad q_{xy}=\mathbf1_{\{g_1(x)=g_3(y)\}}. \]
For fixed $x$, the sum over $y$ is $3-2=1$. The common-sign condition
therefore forces $q_{xy}\le p_{xy}$. Choosing $y$ in a fibre of $g_3$
shows that $f_1$ is constant on the corresponding two-element fibre
of $g_1$. Each three-element fibre of $f_1$ would consequently be a
union of two-element fibres, a contradiction. Only these finite fibres,
not finiteness of the orbit sets, are used.

Hence the exchange pattern cannot arise from the orbit constructions
of \cite[Definition 3.36, Proposition 3.40 and Corollary 3.51]{Demonet}
or \cite[Proposition 6.18]{Wu}. Frozen vertices do not affect the
obstruction, since the principal matrix mutates independently of them.

The cyclic splitting used for the ramified modulation is consistent
with this obstruction. Its orbit sizes are $(1,3,1,2)$ and its orbit
matrix in Demonet's formula is $B$, not $\widehat B$.
Identifying its cluster variables along orbits would give
$x_1x_1'=x_2^3+x_4^2$, whereas the ramified character gives
$x_1x_1'=x_2+x_4$.
\end{example}

\Needspace{9\baselineskip}
\begin{remark}[The dual examples]\label{ex:four-vertex-duality}
Let $B$ and $D=\diag(1,3,1,2)$ be as in
\eqref{ex:eq:nonunfoldable-matrix}, and put
\[ B^\vee=-B^{\mathsf T},\qquad D^\vee=6D^{-1}=\diag(6,2,6,3). \]
Both $BD$ and $B^\vee D^\vee$ are skew-symmetric. The second pair is
the Langlands dual of the first \cite[Appendix A]{GHKK}.
Both matrices use our row convention. The pair
$(B,D)$ has the full tensor realization just constructed, whereas
$(B^\vee,D^\vee)$ is excluded by
Example~\ref{ex:four-vertex-type-obstruction}. Thus existence of a good
modulation with the specified vertex degrees is not preserved by
Langlands duality. For the single exchange pattern of
Example~\ref{ex:nonunfoldable-basis}, the column matrix is instead
$\widehat B=-B^{\mathsf T}$. Its numerical equality with the excluded
arrow matrix does not identify the two exchange patterns.

Neither matrix belongs to $\mathfrak P$. Indeed,
Corollary~\ref{gm:cor:class-P} and the obstruction at degrees
$(6,2,6,3)$ exclude $B^\vee$, and negative transpose preserves
$\mathfrak P$. Thus the good realization of $B$ is not a consequence
of the extension theorem.
\end{remark}

\section{Factorization and geometric orders}\label{sec:factorization}
The basis theorem completes the main construction. This section and
the next two study the arithmetic information retained by its characters:
factorization and geometric orders, nonzero-potential examples, and
order dependence with valued transport.

For representations over the same valued field, characters multiply
under direct sums. Universal presentations initialized separately at
different weights require an additional comparison: the valuations on
the summands must agree with the valuation induced from the total
presentation space. We give a finite criterion for this comparison,
apply it to the one-arrow species, and then realize its order by
coherent sheaves.
\subsection{A finite criterion for arithmetic factorization}
\label{fac:subsec:criterion}
Generic dominance of a direct-sum map is weaker than preservation of
the Gauss valuations on its source and target. The following criterion
characterizes the latter property for an integral direct-sum map.

Fix a coordinatewise sign-coherent sum $\delta=\delta_1+\cdots+\delta_r$.
Write $P_{a,+}=\PP_0([\delta_a]_+)$ and
$P_{a,-}=\PP_0([-\delta_a]_+)$. Take independent universal presentations
$f_a:P_{a,-}\to P_{a,+}$ over $\OO[\mathbf z]$. For $a\ne b$, form
\begin{equation}\label{fac:eq:linear-map}
 \begin{split}
 D_{ab}:\Hom(P_{a,+},P_{b,+})\oplus\Hom(P_{a,-},P_{b,-})
       &\longrightarrow\Hom(P_{a,-},P_{b,+}),\\
 (u,v)&\longmapsto uf_a-f_bv.
 \end{split}
\end{equation}
When the codomain is zero, this map is automatically surjective.
All Hom spaces here are integral projective Hom lattices, extended to
$\OO[\mathbf z]$. Put
$\mathcal V=\OO[\mathbf z]_{(t)},\qquad \mathcal E_{ab}=\coker(D_{ab}\otimes\mathcal V)$.
The ring $\mathcal V$ is a discrete valuation ring with uniformizer $t$
and residue field $\C(\mathbf z)$.

Let $G$ be the product of the automorphism groups of the total source
and target projectives, and let $\mathscr X_\eta$ be the affine space
of the integral presentation lattice of weight $\eta$. Consider
\[ \Phi:G\times\prod_a\mathscr X_{\delta_a}\longrightarrow\mathscr X_\delta, \qquad (g_+,g_-,(f_a))\longmapsto g_+\Bigl(\bigoplus_a f_a\Bigr)g_-^{-1}. \]
Both the source and target are smooth over $\OO$; a unit group is the
principal open given by the determinant of multiplication in its finite
free endomorphism algebra.

\begin{theorem}\label{prop:integral-direct-sum}\label{fac:thm:criterion}
The following conditions are equivalent.
\begin{enumerate}
\item Every $\mathcal E_{ab}$, $a\ne b$, is zero.
\item Each reduction $\overline D_{ab}$ is surjective over
$\C(\mathbf z)$; equivalently, its minors of order equal to the
dimension of its codomain do not all vanish.
\item The reduction $\overline\Phi$ is dominant. Equivalently,
$\Phi$ is smooth on an open subset containing the generic point of
its source special fibre.
\item Give the presentation coefficients and the coordinates of the two
changes of bases their Gauss valuation. Then $\Phi^*$ embeds the target function field in the source
function field and its valuation restricts to the target Gauss valuation.
\end{enumerate}
When these conditions hold, the total universal decorated
representation becomes the direct sum of the universal
representations of the summands over a common valued field of ramification index one.
Consequently
\begin{equation}\label{fac:eq:factorization}
 F_{\delta;\PP_0}=\prod_a F_{\delta_a;\PP_0}.
\end{equation}
For each fixed decomposition, the criterion involves finitely many
Hom maps over the chosen order. The equivalence concerns preservation
of the Gauss valuation by this direct-sum morphism.
\end{theorem}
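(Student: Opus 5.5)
The plan is to prove the cycle of implications (i)$\Leftrightarrow$(ii)$\Rightarrow$(iii)$\Rightarrow$(iv)$\Rightarrow$(ii), and then derive the factorization from (iv).

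\emph{(i)$\Leftrightarrow$(ii).} Since $\mathcal V$ is a DVR with residue field $\C(\mathbf z)$ and the codomain of $D_{ab}\otimes\mathcal V$ is finite free, Nakayama's lemma shows that the cokernel vanishes exactly when the reduction $\overline D_{ab}$ is surjective. Surjectivity of a linear map over a field is the nonvanishing of some maximal minor of codomain size. This settles the equivalence, including the case of a zero codomain.

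\emph{(ii)$\Rightarrow$(iii).} Compute the tangent map of $\overline\Phi$ at a point $(1,1,(f_a))$ of the special fibre. Its image contains the diagonal blocks $\Hom(P_{a,-},P_{a,+})$ from varying $f_a$. For $a\ne b$, the off-diagonal block of the tangent of $(g_+,g_-)$ is exactly $\overline D_{ab}$: the tangent map is $(u,v)\mapsto uf-fv$ with $f=\bigoplus f_a$, restricted to the $(a,b)$-blocks. So (ii) makes the differential surjective at the generic point of the special fibre, and hence $\Phi$ is smooth on an open subset meeting the special fibre there; this gives the smoothness version of (iii). Smoothness, hence flatness, then gives dominance of the reduction. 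For (iii)$\Rightarrow$(ii), dominance in characteristic zero gives generic surjectivity of the differential, by generic smoothness. By $G$-equivariance, surjectivity at a general point of $\overline G\times\prod\overline{\mathscr X}_{\delta_a}$ can be moved to a point with $g=1$, where the off-diagonal components give $\overline D_{ab}$ surjective.

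\emph{(iii)$\Leftrightarrow$(iv).} This step is the valuation-theoretic heart. Its key fact is that for a morphism of affine spaces over $\OO$, the pullback of the target Gauss valuation equals the restriction of the source Gauss valuation exactly when the map on residue fields is defined and injective. Equivalently, the generic point of the source special fibre maps to the generic point of the target special fibre, which is dominance of $\overline\Phi$. The argument runs as follows. A polynomial $h$ on the target with Gauss valuation $0$ has nonzero residue $\bar h$. Then $\Phi^*h$ has valuation $0$ if and only if $\bar h\circ\overline\Phi\ne0$, and this holds for all such $\bar h$ precisely when $\overline\Phi$ is dominant. Injectivity of $\Phi^*$ on function fields follows from dominance of $\Phi$ over $K$, which is implied by (iii) because flat morphisms are open. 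One point needs a check: the Gauss valuation on $G$ is defined using coordinates of the finite free endomorphism lattice, whose unit group is a principal open containing the generic point of the special fibre.

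\emph{Factorization.} Assuming (iv), work over the completion of the source function field with respect to its Gauss valuation, then extend to an algebraic closure of the residue field. This gives a field $L$ with uniformizer $t$, so the ramification index is one. It contains $L_\delta$ via $\Phi^*$ and each $L_{\delta_a}$ via the projections. Over $L$ the total universal presentation equals $g_+(\bigoplus f_a)g_-^{-1}$, so its decorated cokernel is isomorphic to the direct sum of the cokernels of the $f_a$. By Proposition~\ref{prop:coefficient-extension}, the monodromy $F$-polynomials do not change under these valuation-preserving unramified extensions with $t$ fixed. Proposition~\ref{prop:mon-directsum} then gives \eqref{fac:eq:factorization}. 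The independence of the $f_a$, meaning independent coordinates, makes each summand's valuation the Gauss valuation of its own lattice.

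The main obstacle is (iii)$\Leftrightarrow$(iv). The care needed is in stating precisely that preservation of the Gauss valuation is equivalent to the generic special-fibre point mapping to the generic special-fibre point, and in handling the group factor $G$, which is an open subscheme and not an affine space.
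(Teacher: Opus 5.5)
Your proposal is correct and follows essentially the same route as the paper: Nakayama's lemma for (i)$\Leftrightarrow$(ii), the block decomposition of the differential at $\bigoplus_a f_a$ together with invariance of its rank under changes of presentation bases for (ii)$\Leftrightarrow$(iii), the residue-polynomial argument (with $\det$ of valuation zero on $G$) for (iii)$\Leftrightarrow$(iv), and Proposition~\ref{prop:mon-directsum} over a common completed field with uniformizer $t$ for the factorization. One small imprecision: the point $(1,1,(f_a))$ is not the generic point of the source special fibre, but openness of the smooth locus in the irreducible special fibre, or the equivariance you already use for the converse, closes this at once.
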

\begin{proof}
Nakayama's lemma gives (i)$\Leftrightarrow$(ii). At the direct sum
$f=\bigoplus_a f_a$, the tangent space of the target decomposes as
\[ \Hom\!\left(\bigoplus_aP_{a,-},\bigoplus_bP_{b,+}\right) =\bigoplus_{a,b}\Hom(P_{a,-},P_{b,+}). \]
Varying $f_a$ supplies the summands with $a=b$. In the summand indexed
by $a\ne b$, the differential of the changes of bases is
$D_{ab}$ in \eqref{fac:eq:linear-map}. This description also holds
modulo $t$. In characteristic zero, a morphism between smooth
irreducible varieties is dominant if and only if its differential is
surjective on a nonempty open subset. Changes of presentation bases
preserve the rank of this differential, proving
(ii)$\Leftrightarrow$(iii). The relative Jacobian criterion gives
the asserted smooth open subset over $\OO$.

For (iii)$\Rightarrow$(iv), let $H$ be a nonzero target polynomial.
Multiply $H$ by a power of $t$ so that its coefficients are integral
and at least one has valuation zero.
Its residue is nonzero and remains nonzero under the dominant
$\overline\Phi$. Thus
$v_G(\Phi^*H)=v_{\rm Gauss}(H)$.
The inverses in $\Phi$ have denominators with nonzero generic residue;
the argument extends to rational functions. Conversely, equality of the
valuations prevents a nonzero residue polynomial from vanishing after
pullback, so the residue function-field map is injective and
$\overline\Phi$ is dominant.

Over the source field, the pullback of the universal target map is
the direct sum of the $f_a$, up to the two changes of presentation bases. The embeddings of the target and summand
fields preserve the parameter $t$, so completion with geometric residue
fields preserves the chosen tame generator. Proposition~\ref{prop:mon-directsum} therefore gives
\eqref{fac:eq:factorization}.
\end{proof}

If the corresponding spaces $\op{E}$ for independently general presentations vanish, $\mathcal E_{ab}$ is
finite torsion over $\mathcal V$. After choosing bases in its source and target, a map of free
$\mathcal V$-modules has diagonal Smith normal form, whose nonzero
entries are units times powers of $t$. Its nonunit factors are an
arithmetic obstruction to this comparison: their absence is equivalent
to the theorem's conditions. More generally, if the nonzero invariant
factors of $D_{ab}$ are $t^{a_1},\ldots,t^{a_q}$, then
\[ \dim\coker\overline D_{ab} =\dim\coker(D_{ab}\otimes\Frac\mathcal V) +\#\{j:a_j>0\}. \]
The rank defect counts the nonunit invariant factors, rather than the
length of the torsion module. The ordinary criterion for the vanishing of $\op{E}$ on general presentations
\cite[Theorem 4.4]{DF} therefore controls a different condition: generic
dominance. Equality of the monodromy $F$-polynomials does not by itself establish
preservation of the Gauss valuation by this direct-sum map.

\begin{corollary}\label{fac:cor:powers}
Fix $\eta$. If the off-diagonal cokernels in
Theorem~\ref{fac:thm:criterion} vanish for two independently general
presentations of weight
$\eta$, then, for every $r\ge1$,
$F_{r\eta;\PP_0}=F_{\eta;\PP_0}^{\,r}$.
The extended characters with zero initial frozen weights satisfy the
same power identity. 
\end{corollary}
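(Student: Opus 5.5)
The plan is to apply Theorem~\ref{fac:thm:criterion} to the decomposition $r\eta=\eta+\cdots+\eta$ with $r$ summands. This sum is trivially coordinatewise sign-coherent. The hypothesis concerns only two independently general presentations of weight $\eta$. The task is therefore to show that the off-diagonal cokernels $\mathcal E_{ab}$, for $a\ne b$ among the $r$ summands, are exactly the cokernel for such a pair.

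Concretely, the $r$ universal presentations $f_1,\ldots,f_r$ of weight $\eta$ use disjoint sets of variables $\mathbf z^{(1)},\ldots,\mathbf z^{(r)}$. The map $D_{ab}$ of \eqref{fac:eq:linear-map} involves only $f_a$ and $f_b$, hence only $\mathbf z^{(a)},\mathbf z^{(b)}$. Its cokernel over $\mathcal V=\OO[\mathbf z]_{(t)}$ is obtained from the two-variable-set cokernel over $\OO[\mathbf z^{(a)},\mathbf z^{(b)}]_{(t)}$ by the flat local extension to $\mathcal V$. This extension is faithfully flat, since both rings are DVRs with uniformizer $t$. So $\mathcal E_{ab}=0$ exactly when the two-summand cokernel vanishes, which is the hypothesis. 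The cleanest route is condition (ii): surjectivity of $\overline D_{ab}$ over $\C(\mathbf z)$ is a nonvanishing minor. That minor is a polynomial in $\mathbf z^{(a)},\mathbf z^{(b)}$ only, and it is nonzero by the hypothesis after renaming variables.

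Theorem~\ref{fac:thm:criterion} then gives $F_{r\eta;\PP_0}=\prod_{a=1}^rF_{\eta;\PP_0}=F_{\eta;\PP_0}^{\,r}$. Two points need checking here. First, each factor $F_{\delta_a;\PP_0}$ computed with the variables $\mathbf z^{(a)}$ equals the standard $F_{\eta;\PP_0}$. This follows from Proposition~\ref{prop:universal-invariance} (renaming of the integral basis coordinates) and Proposition~\ref{prop:coefficient-extension}, because the common valued field is an unramified extension of each $L_\eta$ with $t$ and the tame generator preserved. Second, $\PP_0([r\eta]_\pm)=\bigoplus_a\PP_0([\eta]_\pm)$, so the total presentation space is the one of weight $r\eta$.

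For the characters, insert the identity into \eqref{eq:generic-character} with $\widetilde\delta=(r\eta,0)$:
\[
\mathsf G^{\PP_0}_{(r\eta,0)}=x^{-r(\eta,0)}F_{\eta;\PP_0}(\widehat{\mathbf y})^r=\bigl(\mathsf G^{\PP_0}_{(\eta,0)}\bigr)^r .
\]
The only delicate step is the flat base-change identification of $\mathcal E_{ab}$ with the pairwise cokernel. Even that is immediate through the minor formulation (ii).
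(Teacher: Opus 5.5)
Your proposal is correct and follows the paper's argument. You apply Theorem~\ref{fac:thm:criterion} to $r\eta=\eta+\cdots+\eta$, note that each off-diagonal minor involves only the variables of its two summands (so the two-summand hypothesis makes it nonzero), and multiply the leading monomials to get the character identity; the paper instead multiplies all the pairwise minors to obtain a single nonempty residue open, which amounts to the same thing as your pairwise check at the generic point.
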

\begin{proof}
For $r$ independently general presentations, choose a nonzero maximal
minor for each ordered pair. Their product is nonzero in the polynomial
ring of the presentation coefficients. All the required surjectivity
conditions therefore hold on one nonempty residue open. Apply the theorem and add the leading weights. The same argument
handles any finite sign-coherent collection of weight types satisfying
the cokernel conditions for every ordered pair.
\end{proof}

\paragraph{\em Factorization and canonical decomposition.}
A \emph{canonical decomposition} of a presentation weight is the
multiset of weights of the indecomposable summands of a general
presentation. The ordinary criterion is vanishing of the directed
spaces $\op{E}$ between independently general summands \cite[Theorem 4.4]{DF}.
Theorem~\ref{fac:thm:criterion} adds the valuation condition needed
for characters defined by the order on the separate presentation spaces.

\begin{corollary}\label{cor:canonical-character}
Let $\delta=\delta_1+\cdots+\delta_s$ be a sign-coherent decomposition
satisfying Theorem~\ref{prop:integral-direct-sum}. Then
\[ F_{\delta;\PP_0}= \prod_{a=1}^sF_{\delta_a;\PP_0}. \]
For any extended weights with
$\widetilde\delta=\sum_a\widetilde\delta_a$,
\[ \mathsf G^{\PP_0}_{\widetilde\delta} =\prod_{a=1}^s\mathsf G^{\PP_0}_{\widetilde\delta_a}. \]
The Newton polytopes add by Minkowski sum
$N_1+N_2=\{u+v:u\in N_1,\ v\in N_2\}$, and their support
functions add. The injective assertions follow by opposite duality.
\end{corollary}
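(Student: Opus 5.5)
The plan is to reduce Corollary~\ref{cor:canonical-character} to Theorem~\ref{fac:thm:criterion} and Proposition~\ref{prop:mon-directsum}, and then pass from $F$-polynomials to characters, polytopes and support functions by elementary bookkeeping.

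\textbf{Factorization of $F$.} The decomposition is sign-coherent, so coordinatewise $[\delta]_+=\sum_a[\delta_a]_+$ and $[-\delta]_+=\sum_a[-\delta_a]_+$. Hence the total projectives $\PP_0([\pm\delta]_+)$ are the direct sums of the $P_{a,\pm}$, and the direct-sum morphism $\Phi$ of the theorem is defined. By hypothesis the equivalent conditions (i)--(iv) of Theorem~\ref{fac:thm:criterion} hold. The conclusion of that theorem then gives $F_{\delta;\PP_0}=\prod_aF_{\delta_a;\PP_0}$ directly; I would simply cite it rather than repeat its Gauss-valuation argument.

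\textbf{Characters.} Put $\widetilde\delta=\sum_a\widetilde\delta_a$ and take the mutable parts, which sum to $\delta$. The monomial factors satisfy $x^{-\widetilde\delta}=\prod_ax^{-\widetilde\delta_a}$, and substituting $\widehat y_i=x^{-\widetilde b_i}$ is a ring homomorphism. Multiplying the substituted $F$-identity by these monomials therefore gives $\mathsf G^{\PP_0}_{\widetilde\delta}=\prod_a\mathsf G^{\PP_0}_{\widetilde\delta_a}$, by the definition \eqref{eq:generic-character}. The frozen coordinates need no compatibility with sign coherence, since they only contribute Laurent monomials.

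\textbf{Newton polytopes and support functions.} The Newton polytope of a product of Laurent polynomials is the Minkowski sum of the factors' polytopes, because vertices of a Minkowski sum arise from unique pairs of vertices. Cancellation cannot occur at those extremal terms: by Theorem~\ref{thm:Newton-monodromy} every vertex coefficient of each $F_{\delta_a;\PP_0}$ equals one. Support functions of Minkowski sums add, which is the tropical statement.

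\textbf{Injective assertions.} For these I would apply the same argument over $\mathcal P^{\mathrm{op}}$ with the dual copresentations, as allowed by Proposition~\ref{prop:opposite-duality} and \eqref{eq:mon-reciprocity}.

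\textbf{Main obstacle.} The only delicate point is that Theorem~\ref{fac:thm:criterion} as stated really covers an $s$-fold decomposition, with all ordered pairs $a\ne b$ treated simultaneously. Its conditions are stated for all $a\ne b$ at once, so no induction on $s$ is needed. If one wanted induction instead, it would require checking that partial sums still satisfy the criterion. I would avoid that route.
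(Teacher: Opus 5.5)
Your proposal is correct and follows essentially the same route as the paper: the criterion theorem, through Proposition~\ref{prop:mon-directsum}, gives the factorization of $F$; multiplying the leading monomials gives the character identity; and the Minkowski-sum and support-function statements are the standard facts about Newton polytopes of products, with the injective case by opposite duality. The only cosmetic difference is that you invoke vertex coefficients equal to one to rule out cancellation, whereas the paper needs only that the product is nonzero, since over $\mathbb Z$ the product of the two nonzero extremal coefficients is already nonzero.
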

\begin{proof}
Theorem~\ref{prop:integral-direct-sum} realizes the total universal
presentation as the sum of the summand universal presentations over a
common valued extension with uniformizer $t$. The inertia generator
is unchanged. Apply Proposition~\ref{prop:mon-directsum} and multiply the
leading monomials. The Newton polytope of a nonzero product is the
Minkowski sum of the Newton polytopes; its support function is their sum.

\end{proof}

\subsection{The one-arrow species}
\label{app:subsec:rank-two-basis}
We now compute the basis selected by the integral tensor algebra of
Example~\ref{run:ex:one-arrow}. The species is unchanged:
\[ \vcenter{\hbox{$\displaystyle \xymatrix@C=45pt{1\ar[r]^{a}&2}$}}, \qquad (K_1,K_2)=(K,E). \]
The coefficient valuation is the Gauss valuation of the order,
rather than one chosen from the preceding valued family. We begin
at $\eta=(2,-1)$ and then pass to its multiples. The data are
\begin{equation}\label{app:eq:rank-two-data}
 K=\C((t)),\quad E=K(s),\ s^4=t,\quad
 R=K\times E,\quad A_{12}={}_KE_E,\quad S=0,
 \qquad B=\begin{pmatrix}0&1\\-4&0\end{pmatrix}.
\end{equation}
Let $P=R\oplus A_{12}$ and take
$\PP_*=\OO e_1\oplus\OO_Ee_2\oplus a\OO_E$.
The algebra $P$ has dimension nine over $K$. Mutation reverses its single arrow,
so the modulation is good and the zero potential is Jacobi-finite and
nondegenerate. After the universal coefficient
extension, write $L=\Omega((t))$ and $E_L=\Omega((s))$, with $\Omega$
algebraically closed. Representations and endomorphism dimensions in the
calculation below are over this valued field $L$.

\paragraph{\em The primitive nonrigid weight.}
Put $\eta=(2,-1)$. The universal presentation $P_2\to P_1^2$ is
given by two independently general elements of $\OO_E$. The second has Gauss valuation zero, hence is a unit in the Gauss
valuation ring. Changing the source basis by its inverse
gives
\begin{equation}\label{app:eq:normalized-column}
 f_z=\binom{-z}{1},\qquad
 z=z_0+z_1s+z_2s^2+z_3s^3.
\end{equation}
The $z_i$ have independent residues. Indeed, replacing the two original
entries by their ratio and the second entry is a birational change over
$\OO$ on this unit open, and its reduction is also birational. These
coordinates therefore describe the same universal presentation on a
nonempty open subset.
Its decorated cokernel $M_z$ has no decoration, ranks $(2,1)$, and arrow
matrix $[1\;z]:E_L^2\to E_L$.

For $e_2=0$, every subspace of $L^2$ can be the scalar kernel. At
quotient rank $e_1$ its Grassmannian is $\Gr_{2-e_1}(L^2)$, and its
inertia trace is $\binom2{e_1}$.
For $e_2=1$, the kernels of the four conjugate maps at the second vertex
have zero intersection, since the conjugates of $z$ are distinct.
Thus $e_1=2$, and there is one quotient. Consequently
\begin{equation}\label{app:eq:Feta}
 F_{\eta;\PP_*}(y_1,y_2)=(1+y_1)^2+y_1^2y_2.
\end{equation}

The representation is nonrigid. An endomorphism is given by
$A\in\op{Mat}_2(L)$ and multiplication by $\lambda\in E_L$,
with $\lambda(1,z)=(1,z)A$. Hence $\lambda=a+bz$ and
$b z^2+(a-d)z-c=0$ for some $a,b,c,d\in L$. Generic $z$ has degree four over $L$,
so $b=c=0$ and $a=d$; therefore $\End(M_z)=L$.
The presentation is injective, and
$\langle\eta,(2,1)\rangle_D=0$. Equation~\eqref{eq:E-dimension} gives
$\op{e}(M_z)=1$, proving that $M_z$ is not $E$-rigid.

\paragraph{\em The integral direct-sum calculation.}
For two independent normalized presentations $f_z,f_w$, quotienting
$\Hom(P_{2,L},P_{1,L}^2)=E_L^2$ by $f_wE_L$ identifies the remaining off-diagonal
map with
\[ \op{Mat}_2(L)\longrightarrow E_L, \qquad A\longmapsto(1,w)A\binom{-z}{1}. \]
Its image is the $L$-span of $1,z,w,zw$. The integral calculation is the
same. To apply Theorem~\ref{fac:thm:criterion}, reduce $1,z,w,zw$ modulo $t$ and express them in the basis
$(1,s,s^2,s^3)$ of $\C[s]/(s^4)$. The determinant of their coefficient
matrix gives the required minor. Writing $z=\sum u_is^i$ and $w=\sum v_is^i$ gives
\begin{equation}\label{app:eq:pairdet}
 \det[1,z,w,zw]
 =u_1^2(v_2^2-v_1v_3)+v_1^2(u_1u_3-u_2^2).
\end{equation}
This determinant polynomial is nonzero: at $z=s$ and $w=s^2$ it
has value one. The cokernels for both ordered pairs therefore vanish
on a common dense residue open. For any finite number
of summands one can even use the simultaneous residue specializations
$z_j=s+j s^2$, $j=1,\ldots,r$, for which the pair determinant is
$j^2-i^2\ne0$. These specializations give a nonzero minor; the universal
families still have independent parameters.

\begin{proposition}\label{app:thm:ray}
For every integer $r\ge0$,
\begin{equation}\label{app:eq:ray-F}
 F_{r\eta;\PP_*}(y_1,y_2)
       =\bigl((1+y_1)^2+y_1^2y_2\bigr)^r.
\end{equation}
After a coefficient-field extension of ramification index one, the
universal representation is a direct sum of $r$ independently general
rank-$(2,1)$ representations. In particular
\begin{equation}\label{app:eq:ray-coeff}
 [y_1^a y_2^b]F_{r\eta;\PP_*}
 =\binom rb\binom{2(r-b)}{a-2b},
\end{equation}
with a binomial coefficient interpreted as zero outside its usual range.
For the coefficient-free cluster algebra, its character is $Z^r$, where
\begin{equation}\label{app:eq:Z}
 Z=\frac{(x_2+1)^2+x_1^4}{x_1^2x_2}.
\end{equation}
All these assertions hold for every full initial order in $P$ containing
the distinguished vertex order.
\end{proposition}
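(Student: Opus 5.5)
The plan has three steps: factorize the universal presentation using Theorem~\ref{fac:thm:criterion}, extract coefficients from the product, and compute the character. Order independence is handled at the end.

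\emph{Factorization for $\PP_*$.} Corollary~\ref{fac:cor:powers} applies with $\eta=(2,-1)$. Its hypothesis is the vanishing of the off-diagonal cokernels $\mathcal E_{ab}$ for independently general presentations of weight $\eta$. By the computation preceding the statement, the reduced map $\overline D_{ab}$ has image spanned by the residues of $1,z,w,zw$ in $\C[s]/(s^4)$. Its maximal minor is \eqref{app:eq:pairdet}, which is nonzero at $z=s$, $w=s^2$. For $r$ summands I would use the simultaneous specializations $z_j=s+js^2$, which make every pair determinant $j^2-i^2\ne0$. Then condition (ii) of Theorem~\ref{fac:thm:criterion} holds for all ordered pairs on one residue open. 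That theorem gives the direct-sum decomposition over a field of ramification index one, and \eqref{app:eq:ray-F} follows from \eqref{app:eq:Feta}. The case $r=0$ is trivial.

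\emph{Coefficients and character.} Expand $\bigl((1+y_1)^2+y_1^2y_2\bigr)^r$ by the binomial theorem. The term $(y_1^2y_2)^b$ is chosen $b$ times, with multiplicity $\binom rb$, and the remaining factor $(1+y_1)^{2(r-b)}$ supplies $y_1^{a-2b}$; this gives \eqref{app:eq:ray-coeff}. For the character, $\mathsf G_\eta=x^{-\eta}F_\eta(\widehat y)$ with $\widehat y_i=x^{-b_i}$. Using the rows of $B$ in \eqref{app:eq:rank-two-data}, $\widehat y_1=x_2^{-1}$ and $\widehat y_2=x_1^{4}$, and $x^{-\eta}=x_1^{-2}x_2$. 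Substituting gives $x_1^{-2}x_2\bigl((1+x_2^{-1})^2+x_1^4x_2^{-2}\bigr)=Z$. Multiplicativity of $x^{-r\eta}$ then gives $Z^r$.

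\emph{Order independence.} This is the main obstacle, since the criterion is stated for one fixed order. I would argue that every full order $\PP_0\supseteq\RR$ in $P=R\oplus A_{12}$ satisfies $e_1\PP_0e_1=\OO$ and $e_2\PP_0e_2=\OO_E$, and that $e_1\PP_0e_2=a\Lambda$ for some $\OO_E$-lattice $\Lambda\subset E$. Since $\OO_E$ is a DVR, $\Lambda=s^m\OO_E$, so $\PP_0$ is obtained from $\PP_*$ by rescaling the arrow by $s^m$. This rescaling is an isomorphism of orders induced by an automorphism of $P$ fixing $R$. By Proposition~\ref{prop:universal-invariance}, it identifies the valued universal presentations and hence the $F$-polynomials. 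If other orders fail to be of this form, the fallback is to redo the normalization \eqref{app:eq:normalized-column} and the determinant check \eqref{app:eq:pairdet} directly for the arrow lattice $\Lambda$.
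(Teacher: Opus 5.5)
Your proposal is correct and follows the paper's proof essentially step for step: the pair determinant \eqref{app:eq:pairdet} with Corollary~\ref{fac:cor:powers} (or the simultaneous specializations $z_j=s+js^2$) gives the unramified decomposition and \eqref{app:eq:ray-F}; binomial expansion and the substitution $\widehat y_1=x_2^{-1}$, $\widehat y_2=x_1^4$ give \eqref{app:eq:ray-coeff} and $Z^r$; and order independence comes from rescaling the arrow by a power of $s$. The diagonal identities $e_1\PP_0e_1=\OO$ and $e_2\PP_0e_2=\OO_E$, which you assert without justification, hold because an order in $K_i$ containing $\OO_i$ is integral over $\OO$ and $\OO_i$ is integrally closed, exactly as the paper notes, so your fallback is never needed.
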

\begin{proof}
The determinant computation and Corollary~\ref{fac:cor:powers} give the
unramified decomposition and \eqref{app:eq:ray-F}. Expanding by the
number $b$ of factors contributing $y_1^2y_2$ gives
\eqref{app:eq:ray-coeff}. Substitute
$\widehat y_1=x_2^{-1}$ and $\widehat y_2=x_1^4$ and multiply by
$x^{-\eta}=x_1^{-2}x_2$ to obtain \eqref{app:eq:Z} and its powers.

For the last assertion, let $\Lambda$ be a full order. Integral
closedness gives $e_1\Lambda e_1=\OO$ and $e_2\Lambda e_2=\OO_E$.
The module $e_1\Lambda e_2$ is a full $\OO_E$-lattice in $E$, hence
is $s^q\OO_E$
for some $q\in\Z$. Scaling the arrow over $K$ by $s^q$ is an $R$-algebra
automorphism carrying $\PP_*$ to this order. It identifies the integral
presentation lattices and the representation Grassmannians with their inertia
actions. Thus $\PP_*$ determines the same character as every full
initial order in this particular algebra.
\end{proof}

For $r=2$, the formula reads
\begin{equation}\label{app:eq:Fdouble}
 F_{2\eta;\PP_*}
 =1+4y_1+6y_1^2+4y_1^3+y_1^4
   +2y_1^2y_2+4y_1^3y_2+2y_1^4y_2+y_1^4y_2^2.
\end{equation}
The coefficient at $y_1^4y_2$ illustrates ramification directly. That
representation Grassmannian is $\op{Res}_{E_L/L}\mathbb P^1$, geometrically
$(\mathbb P^1)^4$. Its ordinary geometric Euler characteristic is $16$,
but the tame generator cyclically permutes the four factors, leaving
exactly two Schubert classes fixed. Its trace is $2$, as in
\eqref{app:eq:Fdouble}.

At the different coefficient $y_1^2y_2$, the value is also $2$.
Example~\ref{run:ex:one-arrow} has the same presentation weight $2\eta$, but its
valued presentations with odd discriminant valuation give zero there.
Thus the odd members cannot be universal presentations of a full
initial order in this one-arrow algebra.

\paragraph{\em Mutation at the sink.}
At vertex $2$ the incoming map is $[1\;z]$, with kernel spanned by
$(-z,1)$. The reflected rank remains $(2,1)$, and the weight is
$-\eta=(-2,1)$ for the matrix $-B$. The reversed arrow action is adjoint
to this inclusion. Its images span $L^2$: this follows
from the nondegenerate field pairing and linear independence of $1,z$.
A nonzero quotient must therefore have $e_2=1$; then its quotient at the first vertex
is arbitrary. Directly,
\[ F_{\mu_2M_z}(y'_1,y'_2)=1+y'_2(1+y'_1)^2. \]
The two polynomials satisfy
$(1+y'_2)F_{M_z}\left( \frac{y'_1y'_2}{1+y'_2},(y'_2)^{-1}\right) =1+y'_2(1+y'_1)^2$.
Direct sums give its $r$-th power as well. Mutation at $1$
gives the same reflected polynomial by the character formula.
With coefficient-free exchange $x_2x'_2=1+x_1^4$, the equality of
Laurent characters also follows directly from \eqref{app:eq:Z}.

For principal extension $(B\mid-I_2)$ and frozen variables $p,q$, the
initial ray character has the explicit form
\[ \mathsf G_{(r\eta,0,0)}^{\PP_*} =\left(\frac{(x_2+p)^2+p^2q x_1^4}{x_1^2x_2}\right)^r. \]
Here $p,q$ are coefficient variables. Setting them equal to one gives
the coefficient-free formula; the basis assertion below is proved
directly for $B$.

\paragraph{\em Cluster monomials.}
We next describe the characters at all remaining weights.
\begin{lemma}\label{app:lem:rank-two}
For the matrix in \eqref{app:eq:rank-two-data}, every
$\delta\in\Z^2$ except $r(2,-1)$, $r>0$, has a finite mutation sequence
with terminal weight coordinatewise nonpositive. The exceptional ray
never has such a sequence.
\end{lemma}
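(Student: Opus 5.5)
We will work with the rank-two tropical mutation rule \eqref{eq:weight-p} for $B=\begin{pmatrix}0&1\\-4&0\end{pmatrix}$ and its mutations. In rank two, mutations alternate between vertices $1$ and $2$, so there are two infinite mutation sequences, $(1,2,1,\ldots)$ and $(2,1,2,\ldots)$. At each step the matrix is $\pm B$. The plan is to compute the weight map $\mu_k$ as explicit piecewise-linear maps of $\Z^2$ and to track the orbit of $\delta$ along both sequences. We first record the two steps: $\mu_1(\delta_1,\delta_2)=(-\delta_1,\delta_2+[b_{12}]_+[\delta_1]_+-[-b_{12}]_+[-\delta_1]_+)$, and similarly for $\mu_2$ with coefficient $4$, with the signs of $b$ flipping after each step. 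Since $\delta$ transforms like a $g$-vector with the sign convention $g=-\delta$, the question becomes which $g$-vectors lie in some cluster cone.

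\textbf{Chambers.} The exchange matrix is of affine type ($|b_{12}b_{21}|=4$), type $A_2^{(2)}$ or $C_1^{(1)}$. Its $g$-vector fan covers everything except one ray, the imaginary direction. A weight becomes coordinatewise nonpositive at some seed exactly when it lies in the cone, transported back, of the negative orthant at that seed. That cone is the $g$-vector cone of a cluster, in our convention $\mu_w(-\mathbb R_{\ge0}^2)$ pulled back. We therefore compute the chamber walls explicitly by iterating the rule from the negative orthant. This gives two infinite families of rays converging from both sides to the limit ray $\mathbb R_{>0}(2,-1)$. For example, $(1,-2,\ldots)$ type computations, in the style of Example~\ref{run:ex:first-exchange}, give $\mu_1(\mathbf e_1)=\ldots$. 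We verify the limit by checking that $(2,-1)$ is fixed, up to the sign of the matrix, by the composite $\mu_2\mu_1$ acting on the relevant linear region. This can be checked directly: $\mu_1(2,-1)=(-2,-1+2)=(-2,1)$, and then $\mu_2$ with the flipped matrix returns $(2,-1)$, consistent with the sink computation in the text showing weight $-\eta$ after mutation at $2$.

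\textbf{Covering.} Next we show that the cones of consecutive seeds tile $\mathbb R^2$ minus the open ray $\mathbb R_{>0}(2,-1)$. The key step is a linearization: on each side of the ray, the composite $\mu_2\mu_1$ (or $\mu_1\mu_2$) acts by a unipotent integral matrix with eigenvector $(2,-1)$. Its iterates push every lattice vector not on the ray toward the ray, while keeping it in a fixed half-plane. The vector eventually lands in the region where the linear piece carries it across a wall into the negative orthant. We plan to make this precise with the invariant linear form $\ell(\delta)=\delta_1+2\delta_2$. This form is preserved by the unipotent map, and its sign determines the side of the ray. The transverse coordinate then grows linearly, so after finitely many steps one coordinate is nonpositive and the next step makes both nonpositive.

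\textbf{Exceptional ray.} For $\delta=r(2,-1)$ the orbit under both sequences is $\{\pm r(2,-1)\}$, up to sign changes matching the computation above. Its coordinates always have mixed signs, so no terminal weight is nonpositive. This handles the second assertion directly, and $r=0$ is trivially excluded from the ray. The main obstacle is the covering step: getting the piecewise-linear bookkeeping right, including the matrix sign flips and the $[\cdot]_+$ parts, and confirming that the unipotent map really is linear on a full half-plane near the ray. We will first try computing $\mu_2\mu_1$ symbolically on the four sign quadrants and reading off $\ell$.
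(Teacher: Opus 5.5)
Your invariant $\ell(\delta)=\delta_1+2\delta_2$ is exactly the paper's $h=a-2c$ for $\delta=(a,-c)$, and your two-step composites are the paper's moves, so the architecture matches. However, the covering step as you state it is wrong in a way that matters.

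On the cone $\{\delta_1>0,\ \delta_1+\delta_2>0\}$, the composite $\mu_2\mu_1$ (mutate at $1$ with $B$, then at $2$ with $-B$) is $v\mapsto v+\ell(v)(2,-1)$. On the cone $\{\delta_2<0,\ \delta_1+4\delta_2<0\}$, the composite $\mu_1\mu_2$ is $v\mapsto v-\ell(v)(2,-1)$. On each side of the ray only one of them helps: $\mu_2\mu_1$ when $\ell<0$, and $\mu_1\mu_2$ when $\ell>0$. These move in the direction $(-2,1)$, \emph{away} from the ray, and lower $c=-\delta_2$ by $|\ell|$ at each step; this is precisely the paper's descent on $c$ with $h$ fixed.

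The other composite never leaves its cone and drifts projectively \emph{toward} the ray forever. For example, when $\ell>0$, $\mu_2\mu_1$ sends $(a,-c)$ to $(a+2h,-(c+h))$, with mixed signs at every iteration. So your sentence ``its iterates push every lattice vector toward the ray'' describes the non-terminating choice. The rule selecting the composite by the sign of $\ell$ is the missing content; writing ``$\mu_2\mu_1$ (or $\mu_1\mu_2$)'' leaves it out.

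Second, neither composite is linear on a half-plane, so the verification you single out as the main obstacle would fail. For instance $\ell(-5,1)<0$, yet $\mu_2\mu_1(-5,1)=(9,-1)$, whereas the linear formula gives $(-11,4)$. What replaces it is an exit analysis at the boundary of the cone.

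There, your claim that ``the next step makes both nonpositive'' holds only for $\ell<0$. In that case either $a\le c$, and then $\mu_1$ gives $(-a,a-c)\le0$, or the step itself already yields a nonpositive weight because $a+2h\le0$. For $\ell>0$ the two exits are $a\ge4c$ (apply $\mu_2$) and $3c\le a<4c$ (apply $\mu_1\mu_2$). Both land on a \emph{nonnegative} weight, which needs the two sink mutations to become nonpositive.

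The cases $\delta_2\ge0$ and $\delta\le0$ must also be treated separately, as in the paper. Once these points are fixed, the appeal to the affine $g$-vector fan in your ``Chambers'' paragraph is unnecessary, since the descent itself proves the covering on lattice points. Your treatment of the exceptional ray is correct.
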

\begin{proof}
Every mutation negates $B$. If the second coordinate is nonnegative,
mutation at the sink $2$, followed when needed by the sink $1$, gives
a nonpositive weight. We may therefore assume
$\delta=(a,-c)$ with $a,c>0$; weights with both coordinates nonpositive
need no mutation. Put $h=a-2c$.

If $h<0$ and $a\le c$, mutation at $1$ already gives
$(-a,a-c)\le0$. If $h<0$ and $a>c$, mutate at $1$ and then $2$.
The matrix returns to $B$ and the weight is
$(3a-4c,c-a)=(a+2h,-(c+h))$.
It is nonpositive when its first coordinate is nonpositive. Otherwise
it is again of the same mixed form, with the same $h$ and strictly
smaller positive $c$.

If $h>0$ and $a\ge4c$, mutation at $2$ gives a nonnegative weight;
the successive sink mutations for the resulting orientation make it
nonpositive. If $2c<a<4c$, mutate at $2$ and then $1$. The resulting
weight for $B$ is
$(4c-a,a-3c)$.
For $a\ge3c$ it is nonnegative and is handled by the sink sequence. For
$a<3c$ it has the original mixed form with
$c'=3c-a=c-h<c$ and the same $h$. Descent of the positive integer
$c$ proves termination in both cases.

If $h=0$, either mutation carries $(B,r\eta)$ to $(-B,-r\eta)$,
and either next mutation carries it back. Both weights have mixed
signs. This proves the exceptional assertion for every sequence.
\end{proof}

\begin{proposition}\label{app:thm:complete-basis}
Let $\mathrm{CM}(B)$ denote the set of distinct coefficient-free cluster
monomials, including $1$. For the one-arrow species
\eqref{app:eq:rank-two-data} and any full initial order containing the
vertex order, the basis of Theorem~\ref{thm:generic-basis} is exactly
\[ \mathrm{CM}(B)\ \sqcup\ \{Z^r:r\ge1\}. \]
None of the $Z^r$, $r>0$, is a cluster monomial.
\end{proposition}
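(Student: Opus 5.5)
The basis of Theorem~\ref{thm:generic-basis} for coefficient-free $\widetilde B=B$ is $\{\mathsf G^{\PP_*}_{\delta}:\delta\in\Z^2\}$. First we check that its hypotheses hold here. The matrix is acyclic, so Corollary~\ref{cor:class-P-basis} gives the seed reachability condition and a good Jacobi-finite nondegenerate realization; the zero potential on the path algebra is one such realization. By Proposition~\ref{app:thm:ray}, every full initial order containing the vertex order gives the same characters as $\PP_*$. It therefore suffices to identify $\mathsf G_\delta$ for each $\delta$ and split the index set as in Lemma~\ref{app:lem:rank-two}.

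For $\delta$ off the exceptional ray, Lemma~\ref{app:lem:rank-two} gives a sequence $w$ with $\mu_w\delta=-a\le0$ coordinatewise. At the seed reached by $w$, the decorated representation of weight $-a$ is the negative decoration $\bigoplus_iK_i^{a_i}$, which is $E$-rigid. Pulling it back, it is the reachable object $\mathcal T_{a;\mathbf s}$. This object is $E$-rigid of weight $\delta$, so Proposition~\ref{prop:rigid-weight} identifies it with the universal decorated cokernel of weight $\delta$. Theorem~\ref{thm:cluster-model} then gives $\mathsf G_\delta=\prod_ix_i(\mathbf s)^{a_i}$, a cluster monomial.

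Conversely, Theorem~\ref{thm:cluster-model} realizes every cluster monomial as $\mathsf G_\delta$ for some $\delta$. Such a $\delta$ transports to a nonpositive weight at the seed of that monomial, so $\delta$ lies off the exceptional ray. Distinct weights give distinct basis elements, so the map $\delta\mapsto\mathsf G_\delta$ is a bijection from the complement of the ray onto $\mathrm{CM}(B)$; in particular, identical monomials occur only once. On the ray, Proposition~\ref{app:thm:ray} gives $\mathsf G_{r\eta}=Z^r$.

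Disjointness follows because the basis elements are indexed injectively by $\delta$: if $Z^r$ were a cluster monomial, it would equal $\mathsf G_{\delta'}$ for some $\delta'$ off the ray, while $Z^r=\mathsf G_{r\eta}$ is a different member of the same basis. The same argument shows $Z^r$ is not a cluster monomial. As an independent check, by Lemma~\ref{app:lem:rank-two} the weight $r\eta$ never becomes nonpositive, and the rigid objects of Proposition~\ref{thm:reachable}(iv) are classified by their weights.

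The main subtlety is injectivity of $\delta\mapsto\mathsf G_\delta$ when $\rank B=2$ and there are no frozen variables. Here $B$ is invertible, so each character is pointed with leading degree $-\delta$, and the basis statement already gives injectivity. No further computation is needed, and the coefficient-free case is covered by Theorem~\ref{thm:generic-basis} with $f=0$.
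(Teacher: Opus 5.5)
Your proof is correct and takes essentially the same route as the paper: you use Lemma~\ref{app:lem:rank-two} to send off-ray weights to nonpositive ones and to keep the ray mixed, Proposition~\ref{app:thm:ray} on the ray, and full-rank pointedness (here $\det B=4$) for injectivity and hence disjointness. The only cosmetic difference is in the off-ray step: you pass through the reachable object $\mathcal T_{a;\mathbf s}$ and Proposition~\ref{prop:rigid-weight}, whereas the paper reads off the transported universal presentation as pure decoration at the terminal seed.
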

\begin{proof}
The modulation is acyclic and good, and the sink sequence $(2,1)$
verifies the seed reachability condition.
Theorem~\ref{thm:generic-basis} and
Corollary~\ref{cor:upper-character-basis} therefore give a basis of the
upper cluster algebra, since $\det B=4$.

By Lemma~\ref{app:lem:rank-two}, a weight off the exceptional ray
transports to a nonpositive weight. Its generic presentation there has
zero positive representation and only decoration; its character is a
cluster monomial. Conversely the general basis theorem contains every
cluster monomial. On the exceptional ray Proposition~\ref{app:thm:ray}
gives exactly $Z^r$. Along every sequence its weight remains mixed, so it
cannot be a cluster monomial: at a seed containing such a monomial its
pointed mutable degree would correspond to a nonpositive presentation
weight. Full rank identifies that degree uniquely. The same argument
shows the union is disjoint.
\end{proof}

\paragraph{\em Comparison with the classical rank-two basis.}
The basis can also be obtained from the classical rank-two description;
the calculation above identifies the initial order that selects it. In the notation
of \cite[Theorem 2.8 and equation (2.8)]{SZ}, set their consecutive
variables $(y_1,y_2)=(x_2,x_1)$. Their element
$z=y_0^2y_3-(y_1+2)y_2^2$ becomes exactly \eqref{app:eq:Z}, using
$y_0=(x_2+1)/x_1$ and $y_3=(x_1^4+1)/x_2$. Their basis is
$\mathrm{CM}(B)\ \sqcup\ \{T_r(Z):r\ge1\}$,
where $T_r$ is the normalized Chebyshev polynomial of the first kind,
monic of degree $r$; for example $T_1(Z)=Z$ and $T_2(Z)=Z^2-2$.
Replacing these monic polynomials successively by $Z^r$ is an invertible
integral triangular basis change, allowing the constant $1$, already a
cluster monomial. Together with $\mathcal A=\mathcal U$ in this rank-two
case \cite[equation (2.2)]{SZ}, this gives another proof that the
power family is a basis.

\subsection{Endomorphism orders of coherent sheaves}
\label{sec:geometric-orders}
Let $X$ be proper over $\OO=\C[[t]]$, and let
$\mathcal T=\bigoplus_{i\in I}\mathcal T_i$ be a \emph{coherent sheaf},
flat over $\OO$. Here coherent means locally finitely presented, and
flatness over this discrete valuation ring means that multiplication by
$t$ is injective. We take the opposite endomorphism algebra, so that
$e_i\End_X(\mathcal T)^{\mathrm{op}}e_j=\Hom_X(\mathcal T_i,\mathcal T_j)$,
in agreement with our convention for multiplying arrows.

\begin{lemma}\label{lem:geometric-order}We put
$\mathscr P_X=\End_X(\mathcal T)^{\mathrm{op}},\ P_X=\End_{X_K}(\mathcal T_K)^{\mathrm{op}}$, and $X_K=X\times_{\OO}\Spec K$.
Then $\mathscr P_X$ is a full finite free $\OO$-order in $P_X$.
For $a=(a_i)\in\Z_{\geq0}^{I}$ write
$\mathcal T(a)=\bigoplus_i\mathcal T_i^{a_i}$ and
$\mathscr P_X(a)=\bigoplus_i(e_i\mathscr P_X)^{a_i}$. There is a natural
identification
\begin{equation}\label{eq:geometric-presentation-lattice}
 \Hom_{\mathscr P_X}\bigl(\mathscr P_X(b),\mathscr P_X(a)\bigr)
 \simeq \Hom_X\bigl(\mathcal T(a),\mathcal T(b)\bigr).
\end{equation}
In particular, a decomposition $\mathcal T=\bigoplus_i\mathcal T_i$
whose opposite endomorphism algebra after tensoring with $K$ is $P$ and whose diagonal
endomorphism rings are the prescribed $\OO_i$ determines an initial
order and all its presentation lattices.
\end{lemma}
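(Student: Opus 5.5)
The plan is to prove the three assertions of Lemma~\ref{lem:geometric-order} in turn: first that $\mathscr P_X$ is finite, free and full, then the identification \eqref{eq:geometric-presentation-lattice}, and finally the statement about initial orders. All three rest on standard facts about coherent sheaves over the discrete valuation ring $\OO$.

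\emph{Finiteness and freeness.} Since $X$ is proper over $\OO$ and each sheaf $\mathcal Hom(\mathcal T_i,\mathcal T_j)$ is coherent, its global sections $\Hom_X(\mathcal T_i,\mathcal T_j)$ form a finite $\OO$-module. Flatness of $\mathcal T_j$ makes multiplication by $t$ injective on $\mathcal T_j$. Hence it is injective on $\mathcal Hom(\mathcal T_i,\mathcal T_j)$ and on its global sections. A finite torsion-free module over a DVR is free \cite[Tag 0AUW]{Stacks}, so $\mathscr P_X$ is finite and free.

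\emph{Fullness.} Because $\mathcal T_i$ is locally finitely presented, forming $\mathcal Hom$ commutes with the flat localization $\OO\to K$. Cohomology also commutes with flat base change, giving $\Hom_X(\mathcal T_i,\mathcal T_j)\otimes_\OO K\simeq\Hom_{X_K}(\mathcal T_{i,K},\mathcal T_{j,K})$. This is the only step with any content; over a proper but possibly non-noetherian base I would note that $X$ is of finite type over the noetherian ring $\OO$, so quasi-compactness and finite presentation suffice. Thus $\mathscr P_X\subseteq P_X$ spans it over $K$, and $\mathscr P_X$ is a unital subalgebra, hence a full order.

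\emph{Presentation lattices.} For the identification \eqref{eq:geometric-presentation-lattice}, I would use that a map of right modules $e_j\mathscr P_X\to e_i\mathscr P_X$ is left multiplication by an element of $e_i\mathscr P_X e_j=\Hom_X(\mathcal T_i,\mathcal T_j)$. With the opposite-algebra convention, composition matches in the correct order. Taking direct sums gives
\[ \Hom_{\mathscr P_X}\bigl(\mathscr P_X(b),\mathscr P_X(a)\bigr)\simeq\bigoplus_{i,j}\Hom_X(\mathcal T_i,\mathcal T_j)^{a_ib_j}\simeq\Hom_X\bigl(\mathcal T(a),\mathcal T(b)\bigr). \]
Naturality is the Yoneda identification $\Hom(e_j\mathscr P,M)=Me_j$.

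\emph{The final assertion.} Under the stated hypotheses $\mathscr P_X$ is a full order in $P$ containing $\prod_i\OO_i=\RR$, so it is an admissible initial order. Its presentation lattices $\PHOM(\delta)$ are then read off from \eqref{eq:geometric-presentation-lattice} with $a=[\delta]_+$ and $b=[-\delta]_+$.
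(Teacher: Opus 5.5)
Your proposal is correct and follows the paper's proof essentially step for step. The steps are: coherence of $\mathcal{H}\!om$ plus properness for finiteness; flatness of $\mathcal T$ for torsion-freeness, hence freeness over the DVR $\OO$; compatibility of $\mathcal{H}\!om$ and of global sections with the flat extension $\OO\to K$ for fullness; and the identification $e_i\mathscr P_Xe_j=\Hom_X(\mathcal T_i,\mathcal T_j)$, extended over finite direct sums, for \eqref{eq:geometric-presentation-lattice}. The paper phrases this last step as the contravariant equivalence $\Hom_X(-,\mathcal T)$ onto finite projective right $\mathscr P_X$-modules, which is the same Yoneda argument you give.
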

\begin{proof}
The sheaf $\mathcal{H}\!om_X(\mathcal T,\mathcal T)$ is coherent.
Properness makes its global sections a finite $\OO$-module
\cite[Tags 01XY and 02O6]{Stacks}. If $tf=0$ for an endomorphism $f$,
flatness of $\mathcal T$ gives $f=0$. Thus $\mathscr P_X$ is torsion-free
and hence free over $\OO$. A local finite presentation shows that
$\mathcal{H}\!om$ commutes with the flat extension $\OO\to K$; flat base
change for global sections then identifies
$\mathscr P_X\otimes_{\OO}K$ with $P_X$
\cite[Tag 02KH]{Stacks}. This proves fullness.

The functor $\Hom_X(-,\mathcal T)$ is a contravariant equivalence
from the additive category of summands of finite sums of $\mathcal T$
to finite projective right $\mathscr P_X$-modules.
For $\mathcal T_i,\mathcal T_j$ this is the identity
$e_i\mathscr P_Xe_j=\Hom_X(\mathcal T_i,\mathcal T_j)$;
finite sums and idempotent summands give
\eqref{eq:geometric-presentation-lattice}.
\end{proof}

Thus the morphisms of the specified sheaves supply the integral
presentation coefficients. Write
\[ H=\Hom_X(\mathcal T([\delta]_+),\mathcal T([-\delta]_+)),\qquad H^\vee=\Hom_{\OO}(H,\OO). \]
The associated affine space is $\Spec\op{Sym}_{\OO}H^\vee$.
The local ring at the generic point of its special fibre defines the
Gauss valuation. In any basis of $H$, that ring is
$\OO[z_1,\ldots,z_q]_{(t)}$. Tensoring every $\mathcal T_i$ by the same
line bundle gives a canonical isomorphism of the endomorphism orders.

\subsection{Ramified divisors and star-shaped species}
\label{sec:divisor-orders}
The preceding construction is explicit on a curve. Let $C$ be a smooth
projective connected curve over $\C$, put $X=C\times\Spec\OO$, and let
$j_i:D_i\hookrightarrow X$, $1\leq i\leq r$, be pairwise disjoint
effective Cartier divisors, finite and flat over $\OO$, with
\[ \Gamma(D_i,\OO_{D_i})=\OO_i=\C[[s_i]],\qquad s_i^{d_i}=t. \]
Here $\Gamma$ denotes global sections. An effective Cartier divisor
is a closed subscheme whose ideal is locally generated by one
non-zero-divisor. A line bundle is a locally free sheaf of rank one.
Choose a line bundle $\mathcal L$ on $X$ and a line bundle $\mathcal N_i$
on each $D_i$. The direct image $(j_i)_*\mathcal N_i$ is the sheaf on
$X$ obtained by extending $\mathcal N_i$ by zero outside $D_i$.
Set
\[ \mathcal T_0=\mathcal L,\qquad \mathcal T_i=(j_i)_*\mathcal N_i\ (i\geq1),\qquad \mathcal T=\bigoplus_{i=0}^r\mathcal T_i. \]
These data exist for all $d_i\geq1$. For instance, take
$X=\mathbb P^1_{\OO}$, choose distinct $c_i\in\C$, and define
$D_i$ in the affine coordinate $z$ by $(z-c_i)^{d_i}=t$.
The homogeneous equation has no point at infinity. Its coordinate ring
is $\OO[s_i]/(s_i^{d_i}-t)=\C[[s_i]]$, with $s_i=z-c_i$.
The divisors are disjoint because their defining polynomials are
relatively prime modulo $t$.

\begin{theorem}\label{thm:divisor-order}
For $\mathscr P_X=\End_X(\mathcal T)^{\mathrm{op}}$, we have
$e_i\mathscr P_Xe_i=\OO_i$ for $0\le i\le r$, where $\OO_0=\OO$.
The only nonzero off-diagonal components are
\begin{equation}\label{eq:divisor-hom-modules}
 e_0\mathscr P_Xe_i=\Lambda_i,
 \qquad
 \Lambda_i=\Gamma(D_i,\mathcal N_i\otimes j_i^*\mathcal L^{-1}),
 \qquad 1\leq i\leq r,
\end{equation}
and each $\Lambda_i$ is free of rank one over $\OO_i$. The algebra $P_X=\mathscr P_X\otimes_{\OO}K$ is the path algebra of the
species $0\to i$ with arrow bimodule ${}_KK_i{}_{K_i}$, and its matrix is
\[ b_{0i}=1,\qquad b_{i0}=-d_i,\qquad b_{ij}=0\quad(i,j\ne0). \]
For this $K$-algebra with its fixed vertex order, every
full initial order gives the same polynomial $F_{\delta}$ at every
weight $\delta$. For each geometric coefficient extension, these
characters form one basis of the middle cluster algebra; with principal
coefficients, it is also a basis of the upper cluster algebra. These
bases are independent of the curve, divisors, and line bundles used in
the construction, after the vertex fields have been identified.
\end{theorem}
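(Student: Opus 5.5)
The argument has three steps: compute the Hom modules of the sheaves $\mathcal T_i$ directly, reduce the order-independence claim to a rescaling of arrows as in the last part of the proof of Proposition~\ref{app:thm:ray}, and quote Theorem~\ref{thm:generic-basis} and Corollary~\ref{cor:upper-character-basis} for the basis assertions.

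\emph{The Hom modules.} Lemma~\ref{lem:geometric-order} identifies the components: $e_i\mathscr P_Xe_j=\Hom_X(\mathcal T_i,\mathcal T_j)$.
\begin{itemize}
\item For $i=0$: $\End(\mathcal L)=\Gamma(X,\OO_X)=\OO$, because $C$ is proper, connected and geometrically reduced.
\item For $i\ge1$: $\End((j_i)_*\mathcal N_i)=\Gamma(D_i,\OO_{D_i})=\OO_i$.
\item $\Hom(\mathcal L,(j_i)_*\mathcal N_i)=\Gamma(D_i,j_i^*\mathcal L^{-1}\otimes\mathcal N_i)$, by adjunction $j_i^*\dashv (j_i)_*$. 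Since $D_i$ is the spectrum of the local ring $\OO_i$, every line bundle on it is trivial, so $\Lambda_i$ is free of rank one over $\OO_i$.
\item $\Hom((j_i)_*\mathcal N_i,\mathcal L)=0$: the source is $t$-torsion-free but supported on a divisor, while $\mathcal L$ is locally free, hence torsion-free on the integral scheme $X$.
\item $\Hom(\mathcal T_i,\mathcal T_j)=0$ for distinct $i,j\ge1$, since the supports are disjoint.
\end{itemize}

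\emph{The $K$-algebra and matrix.} After tensoring with $K$, each arrow bimodule is $\Lambda_i\otimes K\simeq{}_KK_i{}_{K_i}$, with the left $K$-action through $\OO\subset\OO_i$. There are no paths of length two, so $P_X$ is the hereditary species path algebra. Formula \eqref{gm:eq:Bconvention} gives $b_{0i}=d_i/d_i=1$ and $b_{i0}=-d_i/1$.

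\emph{Order independence.} Let $\Lambda$ be any full order containing $\RR$. Its diagonal parts must be the maximal orders $\OO_i$, as in Proposition~\ref{app:thm:ray}. Each $e_0\Lambda e_i$ is a full $\OO_i$-lattice in $K_i$, hence equals $s_i^{q_i}\OO_i$. No path has length two, so $\Lambda$ is the direct sum of its components. Rescaling each arrow by $s_i^{q_i}$ is an $R$-algebra automorphism of $P_X$ carrying one such order onto any other; for $\Lambda=\mathscr P_X$ this takes $\Lambda_i$ to $\OO_i$. The automorphism identifies the integral presentation lattices, hence the Gauss valuations (Proposition~\ref{prop:universal-invariance}). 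It also identifies the representation Grassmannians together with their inertia actions. Therefore $F_\delta$ is independent of the order, and in particular of the choice of curve, divisors and line bundles once the vertex fields are identified.

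\emph{The basis assertions.} The species is acyclic, so every two-acyclic modulation is good. The zero potential is Jacobi-finite and nondegenerate, since mutation at a sink or a source only reverses arrows. Repeated sink mutation gives a reddening sequence, hence the seed reachability condition by \cite[Corollary 2.3.4]{Qin}. Theorem~\ref{thm:generic-basis} then gives a basis of the middle cluster algebra for every geometric coefficient extension, and Corollary~\ref{cor:upper-character-basis} with $\rank\widetilde B=n$ gives the upper-algebra basis for principal coefficients.

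The step that needs the most care is order independence. It requires that every full order containing $\RR$ really is a direct sum of lattice components with maximal diagonal orders, so that one rescaling automorphism relates any two orders. Once that is checked, the remaining steps are direct computation or citation.
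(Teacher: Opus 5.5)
Your proposal follows the paper's proof essentially step by step. The Hom computations, the identification of $\mathscr P_X$ with the integral tensor algebra, the argument that any full order has diagonal parts $\OO_i$ and arrow lattices $s_i^{q_i}\OO_i$, and the arrow rescaling all match. The only differences are citations: you get goodness from acyclicity rather than from Proposition~\ref{gm:thm:fulltensor}, and you invoke Theorem~\ref{thm:generic-basis} via a sink sequence and \cite[Corollary 2.3.4]{Qin} instead of Corollary~\ref{cor:class-P-basis}. (A small point: the decomposition of $\Lambda$ into components comes from the idempotents of $\RR\subseteq\Lambda$. The vanishing of products of two arrows is what makes the rescaling multiplicative.)

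One justification does need repair. Nondegeneracy of the zero potential does not follow from ``mutation at a sink or a source only reverses arrows''; that covers only the first mutation, and nondegeneracy concerns all finite sequences. For $r\ge2$, after mutating at a sink $i$, vertex $0$ has an incoming arrow from $i$ and outgoing arrows to the other vertices $j$. Mutating at $0$ then creates composites $i\to j$ with bimodule $K_i\otimes_KK_j$ and the cubic coevaluation potential. A further mutation at $j$ turns that cubic term into a quadratic pairing between a new composite $i\to0$ and the arrow $0\to i$, whose maximal-rank cancellation has to be verified.

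The paper handles this by rigidity, as in the remark after Corollary~\ref{cor:class-P-basis}. The split quiver is acyclic, so $\Def=0$, and Corollary~\ref{alg:cor:rigid-nondegenerate} gives nondegeneracy under ordinary mutations. Goodness then passes two-acyclicity to the species. More directly, there are no oriented cycles, so the potential space is the single point $\{0\}$. Implication (i)$\Rightarrow$(ii) of Theorem~\ref{gm:thm:realization} then shows that goodness forces $0$ to be nondegenerate. With that replacement your proof is complete.
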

\begin{proof}
Since $C$ is connected and projective,
$\End_X(\mathcal L)=\Gamma(X,\OO_X)=\OO$. Full faithfulness of the
pushforward along $j_i$ gives
$\End_X((j_i)_*\mathcal N_i)=\OO_i$. A morphism from $(j_i)_*\mathcal N_i$
to $\mathcal L$ is locally annihilated by an equation of $D_i$, whereas
multiplication by that equation is injective on $\mathcal L$; hence the
morphism is zero. Disjoint supports give zero Hom spaces between
distinct divisors. The remaining Hom space is
\eqref{eq:divisor-hom-modules}. Since $\OO_i$ is local, the module $\Lambda_i$ is free of rank one.
Choosing one generator for each $\Lambda_i$ identifies $\mathscr P_X$
with the integral tensor algebra of this species.

Let $\mathscr Q\subset P_X$ be any full order containing the vertex
order. The algebra $e_i\mathscr Qe_i$ is finite over $\OO$, contained
in $K_i$, and contains $\OO_i$. Integral closedness therefore gives
$e_i\mathscr Qe_i=\OO_i$. The remaining nonzero components are full
$\OO_i$-lattices in the one-dimensional $K_i$-spaces
$e_0P_Xe_i$. Each is $s_i^{q_i}\OO_i$ for some integer $q_i$.
Independent multiplication of the arrows by $s_i^{q_i}$ extends to an
$R$-algebra automorphism of $P_X$, since every product of two arrows is
zero. It carries the integral tensor algebra to $\mathscr Q$ and identifies
their integral presentation lattices. Proposition~\ref{prop:universal-invariance}
and the induced isomorphisms of representation Grassmannians give equality of all
$F$-polynomials.

Each arrow is the full tensor $K\otimes_KK_i$, so the modulation is good
by Proposition~\ref{gm:thm:fulltensor}. It is acyclic and has zero
potential. Corollary~\ref{cor:class-P-basis} gives a basis of the middle
cluster algebra for every geometric coefficient extension;
Corollary~\ref{cor:upper-character-basis} gives the upper-algebra conclusion
with principal coefficients. The preceding order comparison shows that,
for fixed coefficients, this basis depends only on the species and
the identified vertex fields.
\end{proof}

\begin{example}\label{sec:elementary-transformations}\label{cor:divisor-basis}
	Let $j:D\hookrightarrow X=\mathbb P^1_{\OO}$ be a ramified divisor of
	degree four and put $\mathcal T=\OO_X\oplus j_*\OO_D$.
	Writing $E=K(s)$, $s^4=t$, we have
	\[
	\mathscr P_X:=\End_X(\mathcal T)^{\mathrm{op}}
	\simeq
	\begin{pmatrix}\OO&\OO_E\\0&\OO_E\end{pmatrix},
	\qquad
	B=\begin{pmatrix}0&1\\-4&0\end{pmatrix}.
	\]
	Thus the geometry supplies precisely the standard order for the
	degree-$(1,4)$ species.
	
	At the primitive weight $\eta=(2,-1)$, presentations correspond to
	morphisms $\OO_X^2\to j_*\OO_D$. After a scalar extension
	$L=\Omega((t))$, a general one has the form $(-\lambda,1)$, where
	$\lambda=\lambda_0+\lambda_1s+\lambda_2s^2+\lambda_3s^3$
	and the $\lambda_i$ have independent residues. It gives the elementary
	transformation
	\begin{equation}\label{eq:elementary-transformation}
		0\longrightarrow\mathcal V_\lambda\longrightarrow\OO_{X_L}^{\,2}
		\xrightarrow{(-\lambda,1)}j_*\OO_{D_L}\longrightarrow0.
	\end{equation}
	The kernel in \eqref{eq:elementary-transformation} is locally free;
if $h$ cuts out $D_L$ and
	$\widetilde\lambda$ lifts $\lambda$, it is locally generated by
	$(1,\widetilde\lambda)$ and $(0,h)$.
	
	Under \eqref{eq:geometric-presentation-lattice}, this morphism is the
	presentation $P_2\to P_1^2$ with arrow matrix $[1\ \lambda]$, and its
	cluster character is $Z=\frac{(x_2+1)^2+x_1^4}{x_1^2x_2}$
	by \eqref{app:eq:Feta}. For independent elementary transformations,
	the determinant \eqref{app:eq:pairdet} has nonzero reduction, so
	Theorem~\ref{fac:thm:criterion} and multiplicativity give the character
	$Z^r$ at weight $r\eta$. Together with
	Proposition~\ref{app:thm:complete-basis}, this yields
	\begin{equation}\label{eq:divisor-basis}
		\mathrm{CM}(B)\ \sqcup\ \{Z^r:r\geq1\}.
	\end{equation}
	Equivalently, among families agreeing with cluster monomials on
	reachable weights, this basis is determined by $b_\eta=Z$ and
	$b_{r\eta}b_{s\eta}=b_{(r+s)\eta}$.
	
	The divisor therefore determines the order and the basis
\eqref{eq:divisor-basis} before any presentation coordinates are chosen. The same
	construction works on an arbitrary smooth projective curve, with the
	line bundles allowed in Theorem~\ref{thm:divisor-order}.
\end{example}

\section{Examples with nonzero potentials}\label{sec:nonzero}
These examples distinguish rigidity of the potential from rigidity
of its representations. Both use a nonzero second-derivative map.
The cubic example extends the degree-four order calculation: its
potential is rigid, but its general representations are not. The
surface example has a nonrigid potential and illustrates character
mutation without the reachability hypothesis needed for the basis
theorem.

\subsection{A cubic mutation with nonrigid general presentations}
We extend the order construction of
Section~\ref{app:subsec:rank-two-basis} by inserting a second scalar
vertex and the identity map on its representation space. The same
presentation parameters now give an example with nonzero potential.
Keep $E=K(s)$, $s^4=t$, and begin with
\begin{equation}\label{eq:cubic-root}
 \begin{gathered}
 \vcenter{\hbox{$\displaystyle \xymatrix@C=30pt{1\ar[r]^a&2\ar[r]^b&3}$}}\\
 (K_1,K_2,K_3)=(K,K,E),\qquad S_0=0
 \end{gathered}
 \qquad
 B_0=\begin{pmatrix}0&1&0\\-1&0&1\\0&-4&0\end{pmatrix}.
\end{equation}
The two arrows have bimodules $K$ and ${}_KE_E$. Every multiplicity vector has
length one, so the modulation is good. The integral tensor algebra and principal
coefficients satisfy Corollaries~\ref{cor:class-P-basis} and~\ref{cor:upper-character-basis}.
Mutation at $2$ gives the triangle
\[
\vcenter{\hbox{$\displaystyle \xymatrix@C=30pt@R=20pt{&2\ar[dl]_{a^*}&\\1\ar[rr]_c&&3\ar[ul]_{b^*}}$}}
 \qquad
 B_1=\begin{pmatrix}0&-1&1\\1&0&-1\\-4&4&0\end{pmatrix}.
\]
Its potential is the canonical cubic coevaluation term. Explicitly, on
splitting $E/K$, it is
\[ S_1=\sum_{j=0}^3 c_j b_j^*a^*. \]
The sheets and arrows in this expression are cyclically permuted by
Galois, so the expression descends to the specified ramified modulation.
There is no quadratic pair in this first mutation. The potential is
Jacobi-finite and nondegenerate, being the mutation of
\eqref{eq:cubic-root}.

Let $L=\Omega((t))$ be the valued field of the universal presentation and write
$E_L=\Omega((s))$. Use the same expression $z=z_0+z_1s+z_2s^2+z_3s^3$ as in
\eqref{app:eq:normalized-column}, with four independent residue
coordinates, and take
\[ M_z:\quad\vcenter{\hbox{$\displaystyle \xymatrix@C=45pt{L^2\ar[r]^I&L^2\ar[r]^{[1\ z]}&E_L}$}}. \]
The second map denotes its $E_L$-linear extension. This is the cokernel
of the map $P_3\to P_1^2$ given by $(-z,1)^{\mathsf T}$, with weight
$\xi=(2,0,-1)$. Its integral presentation matrices are exactly those in
\eqref{app:eq:normalized-column}.
Its endomorphism algebra is $L$, and
$\langle\xi,(2,2,1)\rangle_D=0$; hence $\op{e}(M_z)=1$.

Set $U_z=\op{span}_L\{1,z\}\subset E_L$ and
$W_z=E_L/U_z$, and let $q:E_L\twoheadrightarrow W_z$ be the quotient map.
The original local triangle at $2$ has
$\alpha=I$, $\beta(x,y)=x+zy$, and $\gamma=0$.
The mutated triangle on $N_z=\mu_2M_z$ is
\begin{equation}\label{eq:cubic-triangle}
 \vcenter{\hbox{$\displaystyle \xymatrix@C=34pt@R=20pt{&W_z\ar[dr]^0&\\E_L\ar[ur]^{-q}&&L^2\ar[ll]_\beta}$}}.
\end{equation}
In particular, the second-derivative map $\gamma$ at the cubic seed
has rank two over $L$. Vertex
$2$ is neither a source nor a sink. Its action $a^*$ is zero, $c$ acts
by $[1\ z]$, and $b^*$ is adjoint to $-q$.

\begin{proposition}\label{prop:cubic-character}
For the integral tensor algebra at the initial seed and every $r\ge0$, the
universal family of weight $r\xi$ and its mutation have quotient
polynomials
\begin{align}
 F_{M_r}&=\bigl((1+y_1+y_1y_2)^2+y_1^2y_2^2y_3\bigr)^r,
       \label{eq:cubic-source-F}\\
 F_{N_r}&=\bigl((1+y_1)^2+y_1^2y_3(1+y_2)^2\bigr)^r.
       \label{eq:cubic-target-F}
\end{align}
For $r>0$ these are nonrigid general presentations, and the target
second-derivative map has rank $2r$. The corresponding characters are
members of the basis with principal coefficients.
\end{proposition}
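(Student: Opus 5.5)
The plan is to reduce everything to the case $r=1$ and then pass to powers by the factorization criterion, in the same way as Proposition~\ref{app:thm:ray}.

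\emph{Step 1: the single representation $M_z$.} Compute $F_{M_z}$ by stratifying quotients according to $e_3\in\{0,1\}$.
\begin{itemize}
\item If $e_3=0$, the kernel at $3$ is all of $E_L$. The kernels $N_1\subseteq N_2$ (recall $\alpha=I$) are arbitrary subspaces of $L^2$ with $N_1\subseteq N_2$. Summing traces of these flag varieties gives $(1+y_1+y_1y_2)^2$. One way to see this is to stratify by the ranks and apply Lemma~\ref{lem:relative-grassmannian} with $d=1$.
\item If $e_3=1$, the kernel at $3$ is zero on every sheet. Stability then forces $N_2\subseteq\ker[1\ z]$ on each of the four sheets. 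As in the computation of \eqref{app:eq:Feta}, distinct conjugates of $z$ give $N_2=0$, hence $N_1=0$, and this stratum contributes the single term $y_1^2y_2^2y_3$.
\end{itemize}
Together these give \eqref{eq:cubic-source-F} for $r=1$.

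\emph{Step 2: the mutation $N_z$.} Rather than counting quotients of $N_z$ directly, apply Corollary~\ref{cor:weight-character} at $k=2$ with $\delta_2=0$, so the prefactor is $1$. The substitution is $Y_2=(y_2')^{-1}$, $Y_1=y_1'(1+y_2')$ and $Y_3=y_3'(y_2')^{4}(1+y_2')^{-4}$, using $b_{12}=1$ and $b_{32}=-4$. Substituting into the $r=1$ formula should collapse to $(1+y_1)^2+y_1^2y_3(1+y_2)^2$. As a sanity check, compare with the direct description \eqref{eq:cubic-triangle}, where $a^*=0$, $c=[1\ z]$ and $b^*$ is adjoint to $-q$. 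The rank statement for $\gamma'=\beta\alpha$ is immediate: $\beta$ is injective on $L^2$ since $1,z$ are independent, so $\gamma'$ has rank $2$.

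\emph{Step 3: nonrigidity.} For $r=1$, nonrigidity was already noted: $\End(M_z)=L$ together with $\langle\xi,(2,2,1)\rangle_D=0$ gives $\op{e}=1$. By Theorem~\ref{thm:E-mutation}, $\op{e}$ of the mutation is also $1$.

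\emph{Step 4: powers.} Apply Theorem~\ref{fac:thm:criterion} and Corollary~\ref{fac:cor:powers}. The only nonzero off-diagonal Hom codomain is $\Hom(P_3,P_1^2)$, taken modulo $f_w$. Since vertex $2$ acts through the identity, the map $D_{ab}$ reduces to the same map $A\mapsto(1,w)A(-z,1)^{\mathsf T}$ as in the one-arrow case. Its surjectivity modulo $t$ is therefore the nonvanishing of \eqref{app:eq:pairdet}. This identification of the integral direct-sum map with the degree-four calculation is the step I expect to be the main obstacle: it requires checking that the extra Hom lattices through vertex $2$ contribute nothing new to the cokernel.

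Once the identification holds, the universal family of weight $r\xi$ splits over an unramified extension as $M_z^{\oplus r}$. Proposition~\ref{prop:mon-directsum} then gives \eqref{eq:cubic-source-F}. Since mutation commutes with direct sums (Theorem~\ref{thm:representation-mutation}), it also gives \eqref{eq:cubic-target-F}, rank $2r$ for $\gamma'$, and $\op{e}=r^2>0$.

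\emph{Step 5: basis membership.} The characters lie in the basis by Theorem~\ref{thm:generic-basis} and Corollary~\ref{thm:fixed-field-transport}.
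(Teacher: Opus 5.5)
Your overall route is the paper's: enumerate the quotients of $M_z$ by $e_3$, pass to $r\xi$ through the pair determinant \eqref{app:eq:pairdet} and multiplicativity, use that mutation commutes with direct sums, and identify basis members by transport. The one real difference is Step~2. There you obtain $F_{N_z}$ from Corollary~\ref{cor:weight-character}. The paper instead enumerates the quotients of \eqref{eq:cubic-triangle} directly: for $e_3=0$ the images of the sheet maps of $b^*$ span $W_z$, forcing $e_2=0$; for $e_3=1$ injectivity of $[1\ z]$ kills the kernel at vertex~$1$. It then displays the mutation identity as a check.

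Your route is legitimate, but the substitution you wrote is wrong, so Step~2 fails as written. In \eqref{eq:Yproj} the entries are those of the source matrix $B_0$. Thus $b_{12}=1$ gives $Y_1=y_1'y_2'(1+y_2')^{-1}$, and $b_{32}=-4$ gives $Y_3=y_3'(1+y_2')^4$. Your $Y_1=y_1'(1+y_2')$ and $Y_3=y_3'(y_2')^4(1+y_2')^{-4}$ are what the entries of $B_1=\mu_2B_0$ would give. With them, $1+Y_1+Y_1Y_2=1+y_1'(1+y_2')^2/y_2'$, and the result is not even a polynomial. With the correct substitution, $1+Y_1+Y_1Y_2=1+y_1'$ and $Y_1^2Y_2^2Y_3=(y_1')^2y_3'(1+y_2')^2$, which gives \eqref{eq:cubic-target-F} for $r=1$.

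In Step~4, the universal family of weight $r\xi$ does not split as $M_z^{\oplus r}$ for a single $z$. It splits as $\bigoplus_{j=1}^rM_{z_j}$ with independently general $z_j$, and its value of $\op{e}$ is $r$, not $r^2$. To see this, note that $\langle\xi,(2,2,1)\rangle_D=0$ gives $\op{e}(M_{z_i},M_{z_j})=\hom(M_{z_i},M_{z_j})$. A homomorphism has equal components $A\in\op{Mat}_2(L)$ at vertices $1,2$ and $\lambda\in E_L$ at vertex $3$, with $\lambda\binom{1}{z_i}=A\binom{1}{z_j}$. This forces $A=0$ and $\lambda=0$ once $1,z_i,z_j,z_iz_j$ are $L$-independent, which is the same nonvanishing of \eqref{app:eq:pairdet}. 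For $r\ge2$, $M_z^{\oplus r}$ has $\op{e}=r^2$ and is not general. The error does not affect the polynomials, since each $F_{M_{z_j}}$ is the same, nor nonrigidity, since $r>0$.

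The identification you flagged as the main obstacle is in fact immediate. Because $\xi_2=0$, the maps $D_{ab}$ involve only $e_1T_{\RR}(\AA)e_1=\OO$, $e_3T_{\RR}(\AA)e_3=\OO_E$ and $e_1T_{\RR}(\AA)e_3=e_1\AA e_2\otimes_{\OO}e_2\AA e_3\simeq\OO_E$. These are exactly the lattices of the one-arrow calculation. Finally, the claim that the mutated families are general presentations needs Corollary~\ref{thm:fixed-field-generality}, which you should cite explicitly.
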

\begin{proof}
For $r=1$ and $e_3=0$, source quotient kernels are flags in $L^2$;
their generating polynomial is $(1+y_1+y_1y_2)^2$. For $e_3=1$,
the four conjugates of $[1\ z]$ have zero common kernel, forcing
$e_1=e_2=2$. This gives \eqref{eq:cubic-source-F}.

At the target, if $e_3=0$, the images of the four maps from the
sheets of $E_L$ span $W_z$. The kernel at vertex $2$ is therefore
$W_z$, so $e_2=0$, while the quotient at vertex $1$ is arbitrary. If $e_3=1$, the four maps $c_j$
force the kernel at $1$ to be zero; the quotient of $W_z$ is arbitrary.
These two cases give \eqref{eq:cubic-target-F} for $r=1$ directly.

For general $r$, the integral presentation space
$\Hom(P_3^r,P_1^{2r})$ and its off-diagonal direct-sum matrices are the
same as those of the one-arrow calculation. The determinant
\eqref{app:eq:pairdet} therefore gives a direct sum of $r$ independently general primitive
presentations after a coefficient-field extension of ramification index one. Mutation commutes with direct sums,
so both polynomials are these powers. The value of $\op{e}$ on this direct sum is $r$. Its local maps are
the direct sums of $r$ copies of the maps in
\eqref{eq:cubic-triangle}. Generality at the target follows from
Corollary~\ref{thm:fixed-field-generality}.

Since $\xi_2=0$, the mutation prefactor is one, and the identity is
\[ F_{N_r}(\mathbf y') =F_{M_r}\left(\frac{y_1'y_2'}{1+y_2'},(y_2')^{-1},y_3'(1+y_2')^4\right). \]
Together with the good acyclic initial seed and principal coefficients, this
identifies the basis members by the transported-character formula.
\end{proof}

For $r=2$, the target coefficient at $(e_1,e_2,e_3)=(4,0,1)$ is two.
Its representation Grassmannian is $\Res_{E_L/L}\mathbb P^1$: the
kernel at vertex $1$ is zero and the kernel at vertex $2$ is the whole
representation space.
The kernel at vertex $3$ is any $E_L$-line in $E_L^2$. This representation Grassmannian has geometric Euler characteristic $16$ and tame trace $2$.

To see nontrivial permutation of cohomology at the same cubic seed,
use the valued family of Example~\ref{run:ex:one-arrow} on vertices
$2,3$, give vertex $1$ the same space as vertex $2$, and let
$1\to2$ act by the identity. Mutate this representation at $2$. The quotient of target ranks $(2,2,1)$ is again
the two-eigenline scheme calculated there.
An outside kernel consists of a plane $U$ at vertex $1$ and one line
at each sheet of vertex $3$, containing the corresponding image of
$[I\ Z_m]U$. These are precisely the two-point Grassmannian conditions in that example.
The kernel at vertex $3$, of dimension four over $L$,
meets the image of $L^4\to E_L^2$ in dimension two. Its image in the
mutated middle space therefore has dimension two and is the unique
possible middle kernel. All the maps in this construction are defined over $L$, so the
resulting scheme isomorphism respects inertia. Consequently
\[
[y_1^2y_2^2y_3]F_{\mu_2M_m}
 =\begin{cases}2,&m\text{ even},\\0,&m\text{ odd}.\end{cases}
\]
The parity calculation uses the specified valued family, whereas the
power formulas use the universal presentations of that integral tensor algebra.
Both concern the same cubic potential, which is rigid because it is
mutation-equivalent to an acyclic zero potential.

\paragraph{\em An endomorphism order on a curve.}
\label{para:elliptic-order}
We now realize the initial order geometrically, including the
composition of its two arrow spaces. Let $C$ be a smooth projective
curve of genus one over $\C$, choose distinct points $p,q\in C$, and
put $X=C\times\Spec\OO$.
Choose a ramified divisor $D\subset X$ supported at $q$ on the special
fibre, disjoint from the constant section $p$, with
$\Gamma(D,\OO_D)=\OO_E=\C[[s]]$, $s^4=t$.
Such a divisor is obtained from an \emph{\'etale local coordinate} at
$q$, namely a regular parameter inducing an \'etale map to the affine
line, by taking the branch of $u^4=t$ through $q$. The completed local
ring is $\C[[u,t]]$, and this branch has ring $\C[[s]]$.
The resulting map $\Spec\OO_E\to X$ is a closed immersion near $q$:
on an affine neighbourhood, its ring map is surjective because the
image contains $\OO$ and $s$. Its image is closed in $X$, since
$\Spec\OO_E$ is proper over $\OO$ and $X$ is separated. The image
is Cartier, since $X$ is regular and the image has codimension one.

Let $\OO_X(-p)$ be the ideal line bundle of the section and set
$\mathcal T=\OO_X(-p)\oplus\OO_X\oplus j_*\OO_D$.
Riemann--Roch on $C$ gives $H^0(C,\OO_C(p))=\C$ and
$H^0(C,\OO_C(-p))=0$ \cite[Tag 0B5B]{Stacks}.
The canonical section vanishing at $p$ spans the first space and
trivializes $\OO_X(p)$ along $D$. Consequently
\[
\End_X(\mathcal T)^{\mathrm{op}}\simeq
 \begin{pmatrix}
 \OO&\OO&\OO_E\\
 0&\OO&\OO_E\\
 0&0&\OO_E
 \end{pmatrix}.
\]
Composition induces an isomorphism
\[ \Hom_X(\mathcal O_X(-p),\mathcal O_X)\otimes_{\OO} \Hom_X(\mathcal O_X,j_*\mathcal O_D) \simeq\Hom_X(\mathcal O_X(-p),j_*\mathcal O_D). \] All reverse Hom spaces vanish, by negative degree or by
the Cartier-divisor argument in Theorem~\ref{thm:divisor-order}.
Thus this is precisely the initial integral tensor algebra used in the cubic
calculation. General morphisms
$\OO_X(-p)^{2r}\to j_*\OO_D^{r}$ determine its presentations at
$r(2,0,-1)$. Their characters, and their mutations to the cubic
potential, are those computed above.

\subsection{A nonrigid surface potential}
Let $E=K(t^{1/2})$, take $R=E^3$, and put two diagonal $E$-arrows
on each side of the oriented triangle
\[
\vcenter{\hbox{$\displaystyle \xymatrix@C=48pt@R=34pt{
 &2\ar@<.7ex>[dr]^{b_1}\ar@<-.7ex>[dr]_{b_2}&\\
 1\ar@<.7ex>[ur]^{a_1}\ar@<-.7ex>[ur]_{a_2}&&
 3\ar@<.7ex>[ll]^{c_1}\ar@<-.7ex>[ll]_{c_2}
}$}}.
\]
Consider
\begin{equation}\label{eq:markov-potential}
 S=a_1b_1c_1+a_2b_2c_2-a_1b_2c_1a_2b_1c_2.
\end{equation}
This is the standard once-punctured-torus potential. Its completed
Jacobian algebra has dimension $36$ over $E$, and $S$ is not rigid
\cite[Propositions 2.4, 4.2, 4.4 and 4.8]{LadkaniClosed}.
The multiplicity vector at each edge is supported at the diagonal type. Tensor
composition preserves that support, so the modulation is good.

Nondegeneracy can be seen directly in these coordinates. On mutating
at $2$, put $x_{ij}=[a_i b_j]$ and $r_{ij}=b_j^*a_i^*$.
The premutated potential is
\[ x_{11}c_1+x_{22}c_2-x_{12}c_1x_{21}c_2 +\sum_{i,j}x_{ij}r_{ij}. \]
Put $C_i=c_i+r_{ii}$ and make the further unitriangular changes
\[ \widehat x_{11}=x_{11}-x_{21}C_2x_{12}+x_{21}r_{22}x_{12}, \qquad \widehat x_{22}=x_{22}+x_{12}r_{11}x_{21}. \]
Cyclically the premutation becomes
$\widehat x_{11}C_1+\widehat x_{22}C_2$ plus the reduced potential
\[ x_{12}r_{12}+x_{21}r_{21}-x_{12}r_{11}x_{21}r_{22}. \]
It has the form \eqref{eq:markov-potential} after renaming the arrows.
Cyclic symmetry covers the other two vertices, so iteration proves
nondegeneracy. We next compute a character for this nonrigid potential. The
finite-dimensionality statement above permits use of the completed
Betti formula.

Let $L=\Omega((t))$, where $\Omega$ is an algebraically closed
extension of $\C$, and put $E_L=\Omega((t^{1/2}))$. Choose
$\lambda=\lambda_0+\lambda_1t^{1/2}\in E_L^\times$, with
$\lambda_0,\lambda_1$ algebraically independent over $K$, and define
\[ M_1=E_L,\quad M_2=0,\quad M_3=E_L,\qquad c_1=1,\quad c_2=\lambda, \]
with all $a_i,b_j$ zero and no decoration. Every cyclic derivative of
$S$ acts by zero. At vertex $2$,
\[
M_{\rm in}=E_L^2,
 \quad M_{\rm out}=E_L^2,
 \quad \alpha=\beta=0,
 \quad \gamma=\begin{pmatrix}1&0\\0&\lambda\end{pmatrix}.
\]
The sextic term makes no contribution on this representation, whereas
the two cubic terms make $\gamma$ invertible. The completed Betti
formula gives $\delta=(-1,0,1)$. Since $\End(M)=E_L$ and
$\langle\delta,(1,0,1)\rangle_D=0$, one has $\op{e}(M)=2$.
This family represents general presentations of that weight, up to
changes of presentation bases:
the two-dimensional $K$-parameter $\lambda$ distinguishes the
isomorphism classes, while their presentation orbits have codimension
two by the formula for $\op{e}$. The rational minimal-cover
charts of Lemma~\ref{lem:cover-charts} lift this family to
presentations. Its orbit saturation has dimension
$2+(\dim\PHom(-1,0,1)-2)$: distinct parameters give distinct orbits,
and changes of presentation bases supply each orbit. It is therefore dense in the full
presentation space.

The mutated middle space is $E_L^2$, with
$\alpha'=-\diag(1,\lambda)$ and $\beta'=I_2$.
The surviving $x_{12},x_{21}$ act by zero. The maps $b_j^*$ are nonzero
scalar multiples of the two coordinate inclusions $E_L\to E_L^2$,
and the maps $a_i^*$ are the coordinate projections. Direct quotient enumeration gives
\begin{equation}\label{eq:markov-F}
 F_M=1+y_3+y_1y_3,
 \qquad
 F_{M'}=1+y_3'(1+y_2')^2+y_1'y_3'(y_2')^2.
\end{equation}
For the second formula, quotient ranks $(e_1,e_3)=(0,0)$ force
$e_2=0$; ranks $(0,1)$ allow every quotient of $E_L^2$; ranks $(1,1)$
force $e_2=2$; and ranks $(1,0)$ are impossible. In the middle case
Lemma~\ref{lem:relative-grassmannian} gives coefficients $1,2,1$.
In particular the representation Grassmannian corresponding to quotient ranks $(0,1,1)$ is
$\Res_{E_L/L}\mathbb P^1$, with geometric Euler characteristic four
and tame trace two.

The exchange matrix is
$B=\begin{pmatrix}0&2&-2\\-2&0&2\\2&-2&0\end{pmatrix}$.
Both local $h$-coordinates at $2$ are zero. Substituting
\[ Y_1=\frac{y_1'(y_2')^2}{(1+y_2')^2},\qquad Y_3=y_3'(1+y_2')^2 \]
into $F_M$ gives the second polynomial in \eqref{eq:markov-F}.
This verifies the trace mutation law with both a nonrigid potential
and nonrigid general representations. It is a character application:
the reachability hypothesis of the basis theorem fails, every mutation negates this matrix, and the
sum of the mutable
presentation-weight coordinates is preserved: $\sum_j[b_{ij}]_+=\sum_j[-b_{ij}]_+=2$ for each $i$. A unit weight therefore cannot mutate to a
negative unit weight. Adjoining principal coefficients does not change this obstruction.

\Needspace{8\baselineskip}
\section{Order dependence and valued transport}
\label{sec:order-sensitivity}\label{sec:arithmetic-transport}
The auxiliary changes summarized at the end of
Section~\ref{sec:weights} leave the initial order unchanged.
We now vary that order, study the resulting characters, and trace the
induced valuation under mutation without choosing a new target order. The
examples lead to an intrinsic mutation operation on stable equivalence
classes of valued presentations; this refines, but is not required by,
the fixed-field construction used in the basis theorem.

\subsection{An order-dependent family and its mutation}
\label{arith:subsec:counterexample}
The valued family in Example~\ref{run:ex:one-arrow} showed that
the monodromy trace can vary while the reductions modulo $t$ of the
specified integral representations remain isomorphic.
We now obtain such variation from actual full orders in one path
algebra. After mutation, one of the resulting characters is obtained
from no full order in the target algebra.

Let
\[ Q:\quad\vcenter{\hbox{$\displaystyle \xymatrix@C=42pt{1\ar@<.6ex>[r]^{a_0}\ar@<-.6ex>[r]_{a_1}&2\ar@<.6ex>[r]^{b_0}\ar@<-.6ex>[r]_{b_1}&3}$}}, \]
and fix $P=KQ=\mathcal P(Q,0)$, $R=K^3$. Thus the modulation is good, the
potential is fixed and rigid, and $\dim_KP=11$. Put
\[ u_i=a_ib_0,\quad v_i=a_ib_1,\quad h_0=v_0-u_1,\quad h_1=v_1. \]
For $m\ge0$ define
\[ \mathscr P_m=\OO^3\oplus\bigoplus_{i=0}^1t^m\OO a_i \oplus\bigoplus_{i=0}^1\OO b_i \oplus\OO h_0\oplus\OO h_1 \oplus t^m\OO u_0\oplus t^m\OO u_1. \]
Writing $A_i=t^ma_i$, $U_i=t^mu_i$, the nonzero products of positive
basis elements are
\[ A_0b_0=U_0,\quad A_1b_0=U_1,\quad A_0b_1=U_1+t^mh_0,\quad A_1b_1=t^mh_1. \]
All products of three arrows vanish. Thus these are nested augmented
finite full orders, with $\mathscr P_0=\OO Q$.

For $\delta=(1,0,-2)$ the universal presentation is
$P_3^2\to P_1$. Its two components belong to
$L_m=\OO h_0+\OO h_1+t^m\OO u_0+t^m\OO u_1$.
With two independent generic $2$-by-$2$ coefficient matrices $Z,W$,
the map is $(h_0,h_1)Z+t^m(u_0,u_1)W$.
On the open $\det Z\ne0$, a source basis change gives
\[
(h_0,h_1)+t^m(u_0,u_1)C,\qquad
 C=WZ^{-1}=\begin{pmatrix}\alpha&\gamma\\\beta&\varepsilon\end{pmatrix}.
\]
The coordinates of $C$ are algebraically independent. Its decorated
cokernel has no negative decoration, dimension $(1,2,2)$, the two
$a_i$ images a basis, and
\[
b_0=I_2,\qquad
 b_1=T_m=\begin{pmatrix}
 -t^m\alpha&-t^m\gamma\\1-t^m\beta&-t^m\varepsilon
 \end{pmatrix}.
\]
Here $J_2=\left(\begin{smallmatrix}0&0\\1&0\end{smallmatrix}\right)$.
The map $C\mapsto J_2-t^mC$ is an invertible affine map over $K$,
so these modules are represented by general presentations of weight
$\delta$.

\begin{proposition}\label{arith:prop:order-dependence}
For the fixed QP and weight above, we have
\[
[y_1y_2y_3]F_{\delta;\mathscr P_m}
 =\begin{cases}2,&m\text{ even},\\0,&m\text{ odd}.\end{cases}
\]
In particular arbitrary full orders in a fixed Jacobian algebra do not give the
same monodromy $F$-polynomial.
\end{proposition}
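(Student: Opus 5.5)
The plan is to reduce the computation to the two-eigenline calculation of Example~\ref{run:ex:one-arrow}, with vertices $2,3$ of $Q$ playing the roles of the split sheets there.

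First, the representation. By Proposition~\ref{prop:universal-invariance}, the universal presentation of $\mathscr P_m$ at weight $\delta=(1,0,-2)$ may be replaced, on the open $\det Z\ne0$, by the normalized presentation described before the statement. Its cokernel $M$ has $M_1=L$ and $M_2=M_3=L^2$. The arrows $a_0,a_1$ send $M_1$ onto a basis of $M_2$, and $b_0=I_2$, $b_1=T_m=J_2-t^mC$. The entries of $C$ are algebraically independent with Gauss valuation zero, so they have independent residues. Hence the coefficient of $y_1y_2y_3$ is $\chis$ of the representation Grassmannian of quotients of rank $(1,1,1)$. Since $e_1=1$, the kernel at vertex $1$ is $0$. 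Its image under $a_0,a_1$ is zero, so no condition is imposed at vertex $2$ and $N_2$ is any line $\ell\subseteq L^2$. The remaining conditions are $N_3\supseteq\ell b_0=\ell$ and $N_3\supseteq \ell T_m$, with $\dim N_3=1$. Since $b_0=I$, this forces $N_3=\ell$ and $\ell T_m\subseteq\ell$.

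Second, identify this scheme. The conditions say that $\ell$ is an eigenline of $T_m$, acting on row vectors. The Grassmannian is therefore the scheme of eigenlines of $T_m$, cut out by $\ell\wedge\ell T_m=0$ in $\mathbb P^1_L$. Its discriminant is
\[ \Delta_m=\operatorname{tr}(T_m)^2-4\det T_m=-4t^m\gamma+t^{2m}\bigl((\alpha-\varepsilon)^2+4\beta\gamma\bigr), \]
which agrees with the formula in Example~\ref{run:ex:one-arrow}. Because the residue of $\gamma$ is a nonzero generic residue, $v_t(\Delta_m)=m$ and $\Delta_m\ne0$. The scheme is thus finite étale of degree two, $\Spec L[x]/(x^2-\Delta_m)$ up to a linear change. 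Nonvanishing of the relevant $2\times2$ determinant shows reducedness, so there are exactly two geometric points.

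Third, compute the trace. Write $\Delta_m=t^m\cdot(\text{unit})$; every unit of $\Omega[[t]]$ is a square. If $m$ is even, both eigenlines are defined over $L$, the tame generator fixes $H^0=\Q_\ell^2$ pointwise, and the trace is $2$. If $m$ is odd, the eigenlines are defined over $L(t^{1/2})$ and are exchanged by $\sigma$, so the trace is $0$. Equivalently, Lemma~\ref{lem:trace-strata} applies to a nontrivial orbit of two points. Only $H^0$ contributes. This gives the claimed values, and comparing $m=0$ with $m=1$ shows that different full orders give different polynomials.

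I expect the main obstacle to be the first step: justifying that the normalized family really computes $F_{\delta;\mathscr P_m}$ over the Gauss field. One has to check that the source basis change by $Z^{-1}$ is integral for the Gauss valuation, since $\det Z$ is a Gauss unit, so the residues of the entries of $C=WZ^{-1}$ remain independent. One also has to check that $\ell$ ranges over all of $\mathbb P^1$, using that the images $a_0,a_1$ form a basis of $M_2$. After this reduction, the eigenline computation parallels Example~\ref{run:ex:one-arrow}.
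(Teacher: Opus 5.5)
Your proposal is correct and is essentially the paper's own proof: you use $b_0=I_2$ to force equal line kernels at vertices $2,3$, reduce to the eigenline scheme of $T_m$, and then read the trace off the parity of $v_t(\Delta_m)$, exactly as in Example~\ref{run:ex:one-arrow}. Two small points. First, at $m=0$ the nonvanishing of the constant $\Delta_0=(\alpha-\varepsilon)^2+4\beta\gamma-4\gamma$ comes from genericity of all four entries of $C$, not from $\gamma\neq0$; this is why the paper treats $m=0$ separately, as a constant matrix over $\Omega$ with two inertia-fixed eigenlines. Second, the ``obstacle'' you flag is harmless: a source basis change over the field does not alter the cokernel, and $C=WZ^{-1}$ already has entries in the residue field $\Omega$.
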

\begin{proof}
A rank-$(1,1,1)$ quotient has a zero kernel at vertex $1$ and line
kernels at vertices $2,3$. The map $b_0=I_2$ makes these lines equal;
$b_1$ requires an eigenline of $T_m$. The discriminant is
\[ \Delta_m=t^m\left[-4\gamma+ t^m\bigl((\alpha-\varepsilon)^2+4\beta\gamma\bigr)\right]. \]
For $m\ge1$ its valuation is $m$. Over $L=\Omega((t))$, with
$\Omega$ algebraically closed and containing the universal parameters,
the parity argument of Example~\ref{run:prop:one-arrow-trace} gives
trace two for even $m$ and zero for odd $m$. For $m=0$, the generic
matrix is constant over $\Omega$ and has two distinct eigenlines,
both fixed by inertia. In every case the Grassmannian is finite
\'etale of degree two, so only $H^0$ contributes.
\end{proof}

The whole polynomial in this example is
\[
F_{\delta;\mathscr P_m}
 =1+y_1(1+y_2+y_2y_3)^2
   -\begin{cases}0,&m\text{ even},\\2y_1y_2y_3,&m\text{ odd}.
     \end{cases}
\]
This follows by enumerating $0\le e_3\le e_2\le2$ for the nonzero
quotients, which all have $e_1=1$: apart from the eigenline scheme,
the relevant schemes are points or $\mathbb P^1$.
Both polynomials have the tetrahedron
$\op{conv}\{(0,0,0),(1,0,0),(1,2,0),(1,2,2)\}$ as Newton
polytope, as predicted by Theorem~\ref{newton:thm:order-independence}.
The changing exponent $(1,1,1)$ is the midpoint of the edge joining
$(1,0,0)$ and $(1,2,2)$. Order dependence can therefore occur on the
boundary, not only in the interior.
For any geometric coefficient extension with $\rank\widetilde B=n$, these
different polynomials give different cluster characters.

\paragraph{\em After mutation.}
We now keep the valued family and mutate it at the sink $3$.

\begin{proposition}\label{arith:thm:no-target-order}
Mutate the preceding QP at the sink $3$. For odd $m$ there is no full
order $\mathscr Q$ in the target Jacobian algebra, containing its vertex order,
whose monodromy $F$-polynomial at weight $\delta'=(1,-4,2)$ equals
that of the transported family. Their cluster characters differ after
any geometric coefficient substitution.
\end{proposition}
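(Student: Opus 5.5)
The plan is to compare every full order in the target algebra with the transport of the $m=0$ family, where inertia acts trivially, and then to use the mutation formula to detect the odd-$m$ discrepancy. First I would identify the target QP explicitly. Since $3$ is a sink of $Q$, the mutation at $3$ merely reverses $b_0,b_1$. It therefore gives the acyclic quiver $Q'$ with the two arrows $a_0,a_1\colon 1\to2$ and the two reversed arrows $b_0^*,b_1^*\colon 3\to2$, with zero potential. No path of length two survives in $Q'$, so $\mathcal P'=KQ'=R\oplus A'$ with $(A')^2=0$. Formula \eqref{eq:weight-p} with $b_{32}=-2$ gives $\delta'=\mu_3(1,0,-2)=(1,-4,2)$, as stated.

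\emph{Step 1: all target orders are equivalent.} Let $\mathscr Q\supseteq\OO^3$ be a full order in $\mathcal P'$. Every product of two arrows vanishes, so $\mathscr Q=\OO^3\oplus\Lambda_{12}\oplus\Lambda_{32}$ with arbitrary full lattices $\Lambda_{12}\subset e_1A'e_2\simeq K^2$ and $\Lambda_{32}\subset e_3A'e_2\simeq K^2$. Because $(A')^2=0$, every pair in $\GL_2(K)\times\GL_2(K)$ acting on the two arrow spaces extends to an $R$-algebra automorphism. A suitable pair carries $\mathscr Q$ onto $\OO Q'$. As in the proof of Proposition~\ref{app:thm:ray}, this automorphism identifies the integral presentation lattices and the representation Grassmannians together with their inertia actions. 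Proposition~\ref{prop:universal-invariance} then gives $F_{\delta';\mathscr Q}=F_{\delta';\OO Q'}$ for every such $\mathscr Q$.

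\emph{Step 2: this common polynomial is the transport of the $m=0$ family.} For $m=0$ the universal presentation of $\mathscr P_0=\OO Q$ has coefficients in the residue field $\Omega$. Its mutation at the sink is again defined over $\Omega$. By Corollary~\ref{thm:fixed-field-generality} this mutation is general of weight $\delta'$, and inertia acts trivially on it. The universal presentation of $\OO Q'$ at weight $\delta'$ is likewise defined over its residue function field. Both objects are therefore scalar extensions of ordinary generic presentations of the same weight over algebraically closed fields of characteristic zero. The geometric Euler characteristics of their quiver Grassmannians agree generically, and Proposition~\ref{prop:coefficient-extension} shows that they equal the traces. This is the step I expect to be most delicate: the equality of the two generic ordinary $F$-polynomials needs an open-condition argument, via Lemma~\ref{lem:orbit-open}, on the constructible loci where the Euler characteristics are constant. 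Granting it, $F_{\delta';\OO Q'}=F_{\mu_3\mathcal M_0}$.

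\emph{Step 3: the odd transported polynomial differs.} Theorem~\ref{thm:mon-mutation}, in the form of Corollary~\ref{cor:weight-character}, expresses $F_{\mu_3\mathcal M_m}(\mathbf y')$ as $(y_3')^{[\delta_3]_+}(1+y_3')^{-\delta_3}F_{\mathcal M_m}(\mathbf Y)$. The same prefactor and the same substitution occur for every $m$. The substitution $\mathbf y\mapsto\mathbf Y$ is an invertible birational change of variables, so Proposition~\ref{arith:prop:order-dependence} gives
\[ F_{\mu_3\mathcal M_m}-F_{\mu_3\mathcal M_0}=-2\,(y_3')^{[\delta_3]_+}(1+y_3')^{-\delta_3}\,Y_1Y_2Y_3\ne0\qquad(m\text{ odd}). \]
Together with Steps 1 and 2, this shows that no full order $\mathscr Q$ reproduces the odd transported polynomial. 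For the characters I would not compare them at the target seed, where a rank-deficient $\widetilde B$ might cause cancellation. Instead I would use Proposition~\ref{prop:generic-character-mutation}: both characters are the same Laurent functions as their expressions at the initial seed. There their difference is $-2\,x^{-\widetilde\delta}\,\widehat y_1\widehat y_2\widehat y_3$, a single nonzero Laurent monomial for every geometric coefficient matrix. Hence the characters differ after any geometric coefficient substitution.
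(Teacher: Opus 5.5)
Your argument is correct, but it reaches the target polynomials by a different route from the paper.

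\textbf{The paper's route.} It works entirely at the target seed. After the same reduction of every target order to $\OO Q'$ (your Step~1), it computes both objects there directly:
\begin{itemize}
\item For the sink-reflected family, with reversed maps $-T_m,I_2$, the rank-$(1,1,1)$ quotients form the same two-eigenline scheme, which has trace $0$ for odd $m$.
\item For the constant order, the ratio of the two reversed maps is a generic constant $2\times2$ matrix whose eigenlines are fixed by inertia, giving trace $2$.
\end{itemize}
This yields the explicit polynomial $(1+y_3')^2+y_1'(1+y_3'+y_2'y_3')^2$ and the difference $-2y_1'y_2'y_3'$.

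\textbf{Your route.} You use the $m=0$ member of the source family as a bridge and let Theorem~\ref{thm:mon-mutation} carry the source discrepancy to the target.
\begin{itemize}
\item What it buys: no eigenline computation at the target seed is needed, and the target discrepancy appears as the image of the source one.
\item What it costs: your Step~2, a comparison the paper never makes. You need that the mutation of the generic $m=0$ presentation, after adjoining basis-change parameters, lies over the generic point of $\PHom_{\C Q'}(\delta')$. You also need that Euler characteristics of the quiver Grassmannians are constant on an invariant dense open.
\item The fact that trace equals Euler characteristic for objects extended from an algebraically closed residue field is the trivial-inertia remark following Proposition~\ref{prop:mon-directsum}. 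Proposition~\ref{prop:coefficient-extension} alone does not give it, since it only compares traces under extension of the residue field.
\end{itemize}

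\textbf{Two smaller points.}
\begin{itemize}
\item The exact source difference $-2y_1y_2y_3$ comes from the full polynomial displayed after Proposition~\ref{arith:prop:order-dependence}. The statement of that proposition gives only one coefficient. This exactness is what your character argument needs.
\item The detour to the initial seed for the characters is unnecessary. With $\delta_3=-2$, $b_{13}=0$ and $b_{23}=2$, your own target difference $-2(1+y_3')^2Y_1Y_2Y_3$ simplifies to $-2y_1'y_2'y_3'$. This is a single monomial, so no coefficient substitution can cancel it, and it is exactly the paper's difference.
\end{itemize}
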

\begin{proof}
The target quiver is
\[
Q':\quad\vcenter{\hbox{$\displaystyle \xymatrix@C=46pt{
 1\ar@<.7ex>[r]^{a_0}\ar@<-.7ex>[r]_{a_1}&2&
 3\ar@<.7ex>[l]^{b_0^*}\ar@<-.7ex>[l]_{b_1^*}
}$}},
\]
with zero potential and radical-square-zero Jacobian algebra. Sink reflection replaces
$M_3$ by $\ker(I_2\ T_m)$, identified with $L^2$ by
$w\mapsto(-T_mw,w)$. The reversed maps are $-T_m,I_2$.
The two line kernels for a rank-$(1,1,1)$ quotient must again be the
same eigenline. Its trace is zero for odd $m$. The weight
rule gives $\delta'=(1,-4,2)$.

Every full order in the radical-square-zero target is the sum of the
vertex order and arbitrary full lattices in its two arrow spaces. All products of two arrows vanish. Independent $K$-linear changes
of bases in the two arrow spaces carry the standard integral algebra $\OO Q'$ to any
such order and extend to an $R$-algebra automorphism. This identifies their universal
presentation lattices and representation Grassmannians over the same valued field.
Their universal presentations therefore have the monodromy
$F$-polynomial of the constant order. The generic
representation at $\delta'$ has dimension $(1,2,2)$; on the chart where
one reversed map is invertible, their ratio is a generic constant
$2$-by-$2$ matrix. Its two eigenlines are fixed by inertia, giving
coefficient two. More precisely, the polynomial for every target
order is
\[ (1+y_3')^2+y_1'(1+y_3'+y_2'y_3')^2, \]
whereas the transported polynomial subtracts $2y_1'y_2'y_3'$.
After any geometric coefficient substitution, their characters
therefore differ by a nonzero Laurent monomial with coefficient two.
They cannot be equal.
\end{proof}

For odd $m$, the source order $\mathscr P_m$ cannot be extended to a
choice of full orders at every seed giving the same characters at every weight.
The existence of another such global choice remains a separate question.

\begin{remark}\label{arith:rem:finite-jets}
Using the bases above to identify their underlying free modules,
$\PP_1/t\PP_1$ and $\PP_2/t\PP_2$ have identical multiplication
tables. Their universal presentations agree under these identifications. The preceding coefficient nevertheless differs.
Thus the reduction of the order and presentation modulo $t$ does not
determine the monodromy $F$-polynomial.
\end{remark}

The transported valuation has a geometric model on the target
presentation space. In the coordinates given by the ratio of the two
reversed arrow maps, it is defined by
$T=J_2-t^mC$, where the entries of $C$ have algebraically independent
residues. Adjoining the coordinates $(J_2-T)/t^m$ gives an affine modification
of the target parameter space over $\OO$. For $m=1$, it is the chart
of the blowup of $(t,(T-J_2)_{ab})$ on which these ratios are regular.
For larger $m$, repeat this construction $m$ times. The resulting model
carries the induced valuation on the function field of the target
presentation space. We next construct this valuation for an arbitrary
algebraically generic valued presentation.

\subsection{Valued presentations and the target valuation}
\label{arith:subsec:valued-presentations}
The arithmetic data include a valuation on the function field
$K(X_\delta)$ of the source presentation space. By
Theorem~\ref{thm:generic-species-generality}, a dominant map obtained
by adjoining changes of target presentation bases embeds the target
function field in a valued extension of the source field. Restricting
that valuation does not require a new target order.

Let $F/K$ be a finitely generated field extension. We consider discrete
valuations $v$ satisfying
\begin{equation}\label{arith:eq:geometric}
 v|_K=v_t,\qquad v(F^\times)=\Z,\qquad v(t)=1,\qquad
 \op{trdeg}_{\C}\kappa(v)=\op{trdeg}_KF.
\end{equation}
The notation $\op{trdeg}_F E$ means transcendence degree.
An $\OO$-model of $F/K$ is an integral finite-type $\OO$-scheme
with function field $F$. A \emph{divisorial valuation} is the valuation
of the discrete valuation ring at a prime divisor, that is, a
codimension-one integral subvariety, on a normal model.
A divisor is \emph{vertical} if it lies over $t=0$; its multiplicity
is the valuation of $t$. Lemma~\ref{arith:lem:restriction} identifies
the valuations in \eqref{arith:eq:geometric} with such divisors of
multiplicity one. In particular, $v(t)=1$ gives ramification index one
over $K$. In fact $X^{d_i}-t$ is Eisenstein over
$\mathcal O_v$, so $F\otimes_KK_i$ is still a field of degree $d_i$.
The initial Gauss field satisfies \eqref{arith:eq:geometric}.

Put $X_\delta=\PHom(\delta)$. A \emph{valued presentation of weight $\delta$}
is a triple $(F,v,f)$ with $(F,v)$ satisfying
\eqref{arith:eq:geometric} and a $K$-embedding $K(X_\delta)\hookrightarrow F$;
$f$ is the universal presentation pulled back by this embedding. Its coefficients are algebraically generic but
need not have independent residues in the chosen coordinates. We associate to it the decorated cokernel $\mathcal M_f$.

Two valued presentations are \emph{stably equivalent} if they are connected by a
finite chain of the following comparisons: valuation-preserving $K$-embeddings
of their fields into one field satisfying \eqref{arith:eq:geometric},
with isomorphic decorated cokernels after scalar extension. In particular, one may adjoin independent Gauss parameters for
changes of presentation bases. Write $\mathfrak A_\delta(P)$ for the set of these equivalence classes.
When the presentation space is a point, its function field is $K$.

\begin{lemma}\label{arith:lem:trace}
The polynomial of a valued presentation is invariant under stable equivalence, and
hence defines $F_{\mathfrak a}$ for $\mathfrak a\in\mathfrak A_\delta(P)$.
\end{lemma}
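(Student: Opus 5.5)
Since stable equivalence is generated by elementary comparisons, it suffices to show that a single comparison leaves the polynomial unchanged. So fix $(F,v,f)$ and $(F',v',f')$ together with valuation-preserving $K$-embeddings into one field $(F'',v'')$ satisfying \eqref{arith:eq:geometric}, such that the decorated cokernels become isomorphic after scalar extension to $F''$. We must show $F_{\mathcal M_f}=F_{\mathcal M_{f'}}$. The decoration plays no role, since by Definition~\ref{def:monodromy-F} the polynomial depends only on the positive module. First we fix what $F_{\mathcal M_f}$ means for a valued presentation: we evaluate it, as for $L_\delta$, after completing $F$ at $v$ and passing to a completed strict henselization $\widehat F^{\mathrm{sh}}$. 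Its residue field is algebraically closed of characteristic zero and $t$ remains a uniformizer because $v(t)=1$; the vertex extensions are $\widehat F^{\mathrm{sh}}(t^{1/d_i})$, which are fields by the Eisenstein remark following \eqref{arith:eq:geometric}.

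Next, a valuation-preserving embedding $(F,v)\hookrightarrow(F'',v'')$ with $v''(t)=1$ on both sides extends to an embedding of these completed strict henselizations. It preserves $t$ and the compatible roots of $t$, so it is unramified, of ramification index one. Proposition~\ref{prop:coefficient-extension}, in its henselian form, then says that the monodromy traces of all balanced representation Grassmannians, and hence the whole $F$-polynomial, are unchanged by this base change. Formation of representation Grassmannians commutes with field extension, so the polynomial of $\mathcal M_f$ computed over $F$ equals that of $\mathcal M_f\otimes F''$ computed over $F''$. The same holds for $f'$.

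Over $F''$ the two extended positive modules are isomorphic as modules over $P\otimes_KF''$. Lemma~\ref{lem:isomorphism-descent} allows us to take this isomorphism over $F''$ itself, or over its henselization, rather than only over a larger field. Such an isomorphism identifies the representation Grassmannians as schemes over that field, compatibly with the Galois action and hence with $\sigma$, so the traces agree. Chaining the resulting equalities along a stable-equivalence chain proves the lemma. The case where the presentation space is a point is included, since then $F$ is an extension of $K$ carrying the same argument.

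The main obstacle is the first step: checking that the completed strict henselization of an arbitrary $(F,v)$ satisfying \eqref{arith:eq:geometric} fits the hypotheses of Proposition~\ref{prop:coefficient-extension}, and that the tame generators are compatible under the embeddings. Here we will use the condition $v(t)=1$ essentially, noting, as that proposition remarks, that a ramified extension would replace $\sigma$ by a power of itself.
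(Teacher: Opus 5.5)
Your proposal is correct and follows the paper's own argument. You reduce to a single generating comparison, pass to compatible completed strict henselizations in which $t$ stays a uniformizer and the fixed roots of $t$ match the tame generators, apply the completed strictly henselian case of Proposition~\ref{prop:coefficient-extension}, and then let the cokernel isomorphism identify the representation Grassmannians. Your extra appeal to Lemma~\ref{lem:isomorphism-descent}, which realizes that isomorphism over the common field itself, is a harmless refinement of a point the paper leaves implicit.
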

\begin{proof}
Choose compatible strict henselizations for the valuation-preserving
embeddings in a generating comparison, and complete them. The induced
embeddings are isometric and keep $t$ as uniformizer. Their residue fields
are separably closed and, in characteristic zero, algebraically closed.
Compatible roots of $t$ identify the tame generators. The general
completed-strictly-henselian case of
Proposition~\ref{prop:coefficient-extension} therefore identifies the
geometric cohomology equivariantly. No identification of chosen
coefficient subfields is needed. The isomorphism of
the decorated cokernels identifies the representation Grassmannians, so every
coefficient trace agrees. Equality then holds along the transitive
closure defining stable equivalence. 
\end{proof}

For fixed $P$ and $\delta$, these algebraically generic valued
presentations have the same Newton polytope by
Theorem~\ref{newton:thm:order-independence}. Their $F$-polynomial coefficients
can nevertheless differ, as in Proposition~\ref{arith:prop:order-dependence}.

\paragraph{\em Restriction of the valuation.}
\label{arith:subsec:divisorial}
We first verify that passing to a finitely generated subfield preserves
\eqref{arith:eq:geometric}.

\begin{lemma}\label{arith:lem:restriction}
If $(F,v)$ satisfies \eqref{arith:eq:geometric} and $K\subset E\subset F$
is finitely generated over $K$, then $(E,w=v|_E)$ satisfies the same
conditions. Its valuation ring is the local ring at a multiplicity-one
vertical divisor on a normal finite-type $\OO$-model of $E$.
\end{lemma}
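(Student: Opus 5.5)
The plan is to verify the four conditions in \eqref{arith:eq:geometric} for $w=v|_E$ one at a time, and then to realize $w$ geometrically by a normal model.

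The first three conditions restrict directly. Since $K\subseteq E$, we have $w|_K=v_t$ and $w(t)=1$. The value group $w(E^\times)$ is a subgroup of $\Z$ containing $w(t)=1$, so it equals $\Z$.

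The transcendence-degree condition is the substantive one. For a subfield $E$ of a finitely generated extension $F$, Abhyankar's inequality applied to $w$ on $E/K$ gives
\[
\op{trdeg}_{\C}\kappa(w)\le \op{trdeg}_KE+\op{trdeg}_{\C}\kappa(v_t)=\op{trdeg}_KE,
\]
because $\kappa(v_t)=\C$ and the value groups of $w$ and $v_t$ are both $\Z$, so their rational ranks agree. Applying the same inequality to $v$ on the extension $F/E$ gives
\[
\op{trdeg}_{\kappa(w)}\kappa(v)\le\op{trdeg}_EF.
\]
Adding the two inequalities and using additivity of transcendence degree on both sides gives
\[
\op{trdeg}_{\C}\kappa(v)\le\op{trdeg}_KF.
\]
By hypothesis this last inequality is an equality, so both intermediate inequalities must be equalities. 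In particular $\op{trdeg}_{\C}\kappa(w)=\op{trdeg}_KE$. The point needing care is that Abhyankar's inequality is used for a possibly nonalgebraic extension of valued fields $E\subseteq F$; it holds in that generality.

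For the divisorial description, I choose residues $\bar g_1,\ldots,\bar g_r\in\kappa(w)$ forming a transcendence basis over $\C$, where $r=\op{trdeg}_KE$, and lift them to $g_i\in\OO_w$. I then take generators $h_j$ of $E$ over $K$, rescaled by powers of $t$ so that they lie in $\OO_w$. Let $A\subseteq\OO_w$ be the $\OO$-algebra generated by the $g_i$ and the $h_j$, and let $\mathfrak p=\mathfrak m_w\cap A$. Then $t\in\mathfrak p$, and the residue field $\kappa(\mathfrak p)\subseteq\kappa(w)$ has transcendence degree $r$ over $\C$. Since $\dim A=r+1$, the prime $\mathfrak p$ has height one, and it lies over $t=0$, so $V(\mathfrak p)$ is a vertical divisor.

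Let $A'$ be the normalization of $A$; it is finite over $A$ because excellent rings are Nagata. Since $\OO_w$ is integrally closed, $A'\subseteq\OO_w$. The prime $\mathfrak p'=\mathfrak m_w\cap A'$ again has height one. Because $A'$ is normal, its localization $A'_{\mathfrak p'}$ is a discrete valuation ring with fraction field $E$, and it is dominated by $\OO_w$. A discrete valuation ring is a maximal subring for domination, so $A'_{\mathfrak p'}=\OO_w$. The multiplicity of this divisor is $w(t)=1$. This gives the normal finite-type model of $E$ asserted in the lemma.
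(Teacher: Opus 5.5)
Your proof is correct. The transcendence-degree part is the paper's argument. The paper proves the two inequalities $\op{trdeg}_{\C}\kappa(w)\le\op{trdeg}_KE$ and $\op{trdeg}_{\kappa(w)}\kappa(v)\le\op{trdeg}_EF$ directly, by lifting algebraically independent residues and clearing a power of $t$ from a polynomial relation. That is the proof of the half of Abhyankar's inequality you cite, and the paper then compares sums exactly as you do. Your displayed form of Abhyankar's inequality is oddly written: the rational-rank term belongs on the left, and $\op{trdeg}_{\C}\kappa(v_t)=0$. The inequality you actually use is nevertheless right.

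The divisorial part takes a genuinely different route. The paper works with the lifts $u_1,\ldots,u_m$ themselves. Because their residues are independent, $w$ restricts on the purely transcendental field $K(\mathbf u)$ to the Gauss valuation, with ring $\OO[\mathbf u]_{(t)}$. Since $E/K(\mathbf u)$ is finite, $\OO_w$ is one of the extensions of this DVR to $E$. It is therefore a localization of the finite normalization of $\OO[\mathbf u]$ in $E$ at a prime lying over $(t)$.

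You instead enlarge to an algebra $A$ containing generators of $E$. You get height one from dimension theory, and identify $A'_{\mathfrak p'}$ with $\OO_w$ by maximality of valuation rings under domination. Your route avoids identifying the restricted valuation and the extension theory of a DVR in a finite extension, at the cost of dimension theory. The paper's route needs no dimension theory.

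One step to tighten: ``$\dim A=r+1$'' is not automatic for a finitely generated $\OO$-domain. For example, $\OO[x]/(tx-1)\simeq K$ has dimension zero. What you really need is the dimension inequality
\[
\op{ht}\mathfrak p\le\op{ht}(t\OO)+\op{trdeg}_KE-\op{trdeg}_{\C}\kappa(\mathfrak p)=1+r-r=1,
\]
valid for finitely generated domains over the Noetherian ring $\OO$. Together with $t\in\mathfrak p$, this gives height exactly one.
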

\begin{proof}
Set $n=\op{trdeg}_KF$, $m=\op{trdeg}_KE$, and
$r=\op{trdeg}_{\C}\kappa(w)$. The value group is a subgroup
of $\Z$ containing $w(t)=1$, hence equals $\Z$. Lifting independent
residues and reducing a polynomial relation after normalization by a
power of $t$ gives $r\le m$. Applying the same independence argument to residues that are
algebraically independent over $\kappa(w)$ gives
$\op{trdeg}_{\kappa(w)}\kappa(v)\le
\op{trdeg}_EF$. Thus $n-r\le n-m$, and $r=m$.

Choose valuation-zero lifts $u_1,\ldots,u_m$ of a residue transcendence
basis. Their field is purely transcendental over $K$, its valuation
ring is $\OO[\mathbf u]_{(t)}$, and $E/K(\mathbf u)$ is finite.
Normalize $\OO[\mathbf u]$ in $E$. This normalization is finite:
$\OO[\mathbf u]$ is a finite-type algebra over a complete Noetherian
local ring, so its integral closure in a finite extension of its fraction
field is finite \cite[Tag 0335]{Stacks}. The selected extension valuation is
the DVR at a height-one prime above $(t)$. Its vertical multiplicity
is one because $w(t)=1$. The residue extension is finite by the
valuation degree inequality \cite[Tag 0ASH]{Stacks}.
\end{proof}

\paragraph{\em The target presentation.}\label{arith:subsec:framing}
Let $P'=\mathcal P(\mu_k(A,S))$, $\delta'=\mu_k\delta$, and
$Y=\PHom_{P'}(\delta')$. On a dense source chart let $q_k(x)$ be a
rational presentation with disjoint supports of the mutated object, supplied
by Lemma~\ref{lem:cover-charts} and
Theorem~\ref{thm:generic-species-generality}. Put
$G'=\Aut_{P'}(P'([\delta']_+))\times \Aut_{P'}(P'([-\delta']_+))$.
Each automorphism group is open in its finite-dimensional endomorphism
space, so the function field of $G'$ is purely transcendental over $K$.

\begin{lemma}\label{arith:lem:framing}
The map
\begin{equation}\label{arith:eq:framedmap}
 \Phi_k:X_\delta\times G'\dashrightarrow Y,\qquad
 (x,g_+,g_-)\longmapsto g_+q_k(x)g_-^{-1}
\end{equation}
is dominant. For a valued presentation $(F,v,f)$, adjoin independent Gauss
coordinates $\mathbf u$ for $G'$, giving $(F(\mathbf u),v_G)$. Then
$\Phi_k^*:K(Y)\hookrightarrow F(\mathbf u)$ is injective and the
universal target cokernel pulled back there is the mutation of
$\mathcal M_f$.
\end{lemma}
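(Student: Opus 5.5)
The plan has three parts: dominance of $\Phi_k$ from the generality theorem, injectivity of $\Phi_k^*$ from the algebraic genericity of the valued point, and identification of the pulled-back cokernel using the definition of $q_k$.

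\emph{Dominance.} By Theorem~\ref{thm:generic-species-generality}, the union of the $G'$-orbits meeting the image of $q_k$ contains a dense subset of $Y$. Concretely, the mutated objects of a general source presentation are general of weight $\delta'$ in the orbit-saturated sense. The image of $\Phi_k$ is exactly this orbit saturation. It is constructible, since $\Phi_k$ is a rational map of varieties of finite type. Its closure is therefore $Y$, so $\Phi_k$ is dominant. Lemma~\ref{lem:descent} guarantees that this holds over $K$, not only after splitting. Dominance gives an injection $K(Y)\hookrightarrow K(X_\delta\times G')=K(X_\delta)(\mathbf u)$. The latter field is purely transcendental over $K(X_\delta)$, because each automorphism group is open in an affine endomorphism space.

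\emph{Injectivity into the valued field.} The valued presentation supplies a $K$-embedding $K(X_\delta)\hookrightarrow F$. Extending by independent variables gives $K(X_\delta)(\mathbf u)\hookrightarrow F(\mathbf u)$. I will compose this with the injection above to get $\Phi_k^*$. Next I will check that $(F(\mathbf u),v_G)$ again satisfies \eqref{arith:eq:geometric}. The Gauss extension keeps the value group $\Z$ and $v_G(t)=1$. Its residue field is $\kappa(v)(\bar{\mathbf u})$, which raises the residue transcendence degree by exactly $|\mathbf u|$; this is the same computation as in Proposition~\ref{prop:universal-invariance}.

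\emph{Identification of the cokernel.} The pulled-back target presentation is $g_+q_k(f)g_-^{-1}$ with $(g_+,g_-)$ the universal point of $G'$. Its decorated cokernel is isomorphic, over $F(\mathbf u)$, to the cokernel of $q_k(f)$. By construction (Lemma~\ref{lem:cover-charts}), $q_k(f)$ is a minimal presentation with disjoint supports of $\mu_k\mathcal M_f$, with the decoration placed as the zero summand. This requires $f$ to lie in the domain of the chart and in the Betti and rank opens of Theorem~\ref{thm:generic-species-generality}. These are nonempty $K$-defined opens of $X_\delta$. The generic point of $X_\delta$ lies in each of them, and $f$ is the image of that generic point. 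The mutation formulas \eqref{eq:newmodules} are rational in the coefficients, so they may be evaluated directly in $F$ and commute with the embedding.

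\emph{Main obstacle.} The delicate step is the last one: showing that the chart $q_k$, defined on a dense open of $X_\delta$ through finite jets (Lemma~\ref{lem:finite-data}), can be evaluated at the valued point without leaving its domain of definition. I would handle this by exhibiting the finitely many denominators of the chart, namely the minors used in Lemma~\ref{lem:cover-charts} and the reduction jets. Each is a nonzero element of $K(X_\delta)$, so each remains nonzero in $F$ by injectivity of the embedding.
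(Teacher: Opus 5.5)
Your proposal is correct and follows the paper's own argument. The paper likewise gets dominance from Theorem~\ref{thm:generic-species-generality} and injectivity from the algebraic independence of the source coefficients and the $G'$-parameters, which is your composite with $K(X_\delta)(\mathbf u)\hookrightarrow F(\mathbf u)$. It also uses invariance of the decorated cokernel under changes of presentation bases, and notes that adjoining Gauss parameters preserves \eqref{arith:eq:geometric}. Your explicit check that the chart's finitely many denominators stay nonzero at the valued point expands the paper's brief remark that finite-dimensionality reduces the evaluation to finite matrices.
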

\begin{proof}
Theorem~\ref{thm:generic-species-generality} makes the image of
\eqref{arith:eq:framedmap} dense. Algebraic independence of the source
coefficients and of the parameters in $G'$ therefore makes pullback
injective. The two automorphisms change the presentation bases while
preserving its decorated cokernel. Adjoining Gauss parameters preserves
\eqref{arith:eq:geometric}, and the finite-dimensionality of the
Jacobian algebra reduces evaluation to finite matrices.
\end{proof}

The comparison of valued fields is
\[ (K(Y),w)\xrightarrow{\ \Phi_k^*\ }(F(\mathbf u),v_G) \longleftarrow(F,v),\qquad w=v_G|_{K(Y)}. \]
Both embeddings preserve the uniformizer $t$ and therefore the chosen
tame generator under the comparisons of Proposition~\ref{prop:coefficient-extension}.

\subsection{Mutation on equivalence classes}
\label{arith:subsec:theorem}
\begin{theorem}\label{arith:thm:transport}
For the chosen good Jacobi-finite nondegenerate ramified realization, mutation
induces a well-defined bijection
$\mu_k^{\mathrm{ar}}:\mathfrak A_\delta(P) \xrightarrow{\sim}\mathfrak A_{\mu_k\delta}(P')$. 
A target representative is defined over the function field $K(Y)$
of $Y=\PHom_{P'}(\mu_k\delta)$, with valuation
$w(H)=v_G(\Phi_k^*H)$ for $H\in K(Y)^\times$. 
It is divisorial and satisfies $w(t)=1$. After pullback to $F(\mathbf u)$, its decorated cokernel is the
mutated object and has its monodromy $F$-polynomial. The class
is independent of kernel bases, sections, projective-cover charts,
changes of presentation bases, and
QP reductions inducing the prescribed isomorphisms of Jacobian algebras. Mutation is involutive under the
prescribed double-mutation comparison and composes along every finite
mutation sequence.
\end{theorem}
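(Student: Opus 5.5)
The plan has three parts: build $\mu_k^{\mathrm{ar}}$ on representatives using Lemma~\ref{arith:lem:framing}, prove that stably equivalent inputs give stably equivalent outputs, and then use involutivity to get a bijection.

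\emph{Construction.} Given $(F,v,f)$, form $(F(\mathbf u),v_G)$ and the embedding $\Phi_k^*:K(Y)\hookrightarrow F(\mathbf u)$ of Lemma~\ref{arith:lem:framing}. Set $w=v_G|_{K(Y)}$. The pair $(F(\mathbf u),v_G)$ satisfies \eqref{arith:eq:geometric}, because adjoining independent Gauss parameters raises the residue transcendence degree and the field transcendence degree by the same amount. Lemma~\ref{arith:lem:restriction} then shows that $(K(Y),w)$ satisfies \eqref{arith:eq:geometric}, that $w$ is divisorial, and that $w(t)=1$. The target representative is $(F(\mathbf u),v_G,\Phi_k^*f_{\mathrm{univ}})$. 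By Lemma~\ref{arith:lem:framing} its decorated cokernel is $\mu_k\mathcal M_f$ after scalar extension. Lemma~\ref{arith:lem:trace} and Proposition~\ref{prop:coefficient-extension} then identify its monodromy $F$-polynomial with that of the mutated object over the fixed field. The representative defined over $K(Y)$ is stably equivalent to this one, since the inclusion $(K(Y),w)\to(F(\mathbf u),v_G)$ is valuation preserving by definition and carries the universal target cokernel to the mutated object.

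\emph{Well-definedness.} Each auxiliary choice should change the output only by a generating comparison of stable equivalence. Different kernel bases, sections $\rho,\varsigma$, or projective-cover charts $q_k$ change the mutated object only by isomorphism (Theorem~\ref{thm:representation-mutation}, Lemma~\ref{lem:cover-charts}). Over a common field obtained by adjoining both sets of Gauss parameters, the two target presentations then have isomorphic decorated cokernels, which is a generating comparison. Changes of presentation bases are absorbed into the $G'$ parameters. QP reductions inducing the prescribed Jacobian isomorphisms are handled by Theorem~\ref{alg:thm:cancel}, which identifies the presentation spaces compatibly. Next I check that a generating comparison $(F_1,v_1)\to(F_3,v_3)\leftarrow(F_2,v_2)$ on the source side is carried to one on the target side. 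For this I adjoin the same Gauss parameters $\mathbf u$ to all three fields; the Gauss extension of a valuation-preserving embedding is again valuation preserving. Mutation commutes with scalar extension and sends isomorphic cokernels to isomorphic cokernels, so the images are comparable.

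\emph{Bijectivity.} I apply the same construction at $k$ to $P'$ and compare with Theorem~\ref{thm:representation-mutation} and the double-premutation right-equivalence of Proposition~\ref{alg:prop:double-premutation}. Twice-mutated objects are isomorphic to the originals over $F(\mathbf u)(\mathbf u')$, and that field valuation-preservingly contains $F$. Hence $\mu_k^{\mathrm{ar}}\mu_k^{\mathrm{ar}}$ is the identity on classes, which gives the bijection. Composition along a sequence follows by iterating, using Corollary~\ref{thm:good-global-pattern} for the global QP conditions.

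The main obstacle I expect is showing that the chains defining stable equivalence are preserved. A comparison requires a common field satisfying \eqref{arith:eq:geometric}, and the mutation charts $q_k$ must be defined at the relevant points in each field. I would handle this with Lemma~\ref{lem:orbit-open}: every valued presentation is algebraically generic, so it lies in the $K$-defined open where the charts and their rank conditions hold. Because that open is defined over $K$, membership is independent of which field in the chain we work over.
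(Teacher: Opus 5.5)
Your proposal is correct and follows essentially the same route as the paper. You build the target with Lemma~\ref{arith:lem:framing} and restrict the Gauss valuation via Lemma~\ref{arith:lem:restriction}. You get independence of choices by adjoining both sets of basis-change parameters and using the independence part of Theorem~\ref{thm:representation-mutation}, you handle a generating comparison by first passing to its common field and then adjoining Gauss parameters, and you obtain bijectivity from the double-mutation comparison. Your remark that algebraic genericity puts every valued presentation in the $K$-defined domain of the charts makes explicit a point the paper leaves implicit.
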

\begin{proof}
Lemmas~\ref{arith:lem:framing} and~\ref{arith:lem:restriction} construct
the target valued presentation and its divisorial model. Its pulled-back presentation
is $g_+q_k(f)g_-^{-1}$, so Lemma~\ref{arith:lem:trace} identifies its
monodromy $F$-polynomial with that of the mutated representation.

For two choices, adjoin independent parameters for both pairs of
changes of presentation bases to the source valued field. The
independence statement in Theorem~\ref{thm:representation-mutation}
identifies the target decorated cokernels there, and both target fields
embed valuation-preservingly. Thus the two valued presentations are
stably equivalent.
For a generating comparison of valued presentations, first use its common field
and then adjoin the parameters for the changes of bases. Tensor products, kernels, images and their
finite-dimensional quotients commute with field extension, so the same
argument proves well-definedness on the generated equivalence relation.

Applying mutation twice identifies the resulting decorated representation
with the source over a common valued field, through the
prescribed right-equivalence of the twice-mutated QP. The $\delta$-vector returns
to $\delta$ and every comparison keeps $t$ as uniformizer. This proves
the inverse assertion; iteration proves composition along finite mutation sequences and
covariance under the prescribed reductions.
\end{proof}

We may now repeat the construction using the target presentation and
its induced valuation.

\section{Further questions}\label{sec:outlook}
\paragraph{\em Goodness without a reddening sequence.}
A supplied reddening sequence gives a finite test by
Theorem~\ref{gm:thm:reddening}, while
Proposition~\ref{gm:thm:finitealgorithm} treats finite matrix mutation
classes. When the mutation class is infinite and no reddening sequence
is available, can goodness be decided by a finite computation? Does
triangular extension preserve goodness without the reddening hypotheses
of Theorem~\ref{gm:thm:triangular-extension}?

\paragraph{\em Geometric orders for other Jacobian algebras.}
Theorem~\ref{thm:divisor-order} determines a basis from ramified divisors
on a curve, and the genus-one construction gives the initial order for
the cubic example. Can comparable sheaf constructions realize
modulations with nonconstant type multiplicities or Jacobian algebras of
nonrigid potentials? Which geometric property characterizes their bases?

\paragraph{\em Multiplication.}
For the degree-four divisor, independent elementary transformations
explain multiplication along the exceptional ray. A formula for arbitrary
products should involve extensions and the monodromy of their parameter
spaces. Can a correspondence of extensions compute the structure
constants of a basis associated with an endomorphism order?

\part*{Appendices}
\appendix
\section[Decorated Auslander--Reiten translation (Part I)]{Decorated Auslander--Reiten translation}
\label{app:AR}
This appendix supplements the representation theory of
Part~\ref{part:representations} by treating decorated
Auslander--Reiten translation. We first describe its effect on
Hom spaces and their dimensions, then deduce the corresponding
statements for the presentation invariant.

Let $\nu=\mathbb D\Hom_{\mathcal P}(-,\mathcal P)$ be the
\emph{Nakayama equivalence} from finite projectives to finite injectives.
For a module $N$ with no projective summand, its
\emph{Auslander--Reiten translate} is
$\tau N=\ker(\nu Q_-\to\nu Q_+)$, where
$Q_-\to Q_+\to N$ is a minimal projective presentation.
The inverse translate is defined dually using an injective copresentation.
Write $\mathcal M=(M,V)$, $v_i=\dim_{K_i}V_i$, and
$M=M_{\mathrm{np}}\oplus P(a)$, where $M_{\mathrm{np}}$ has no
projective summand. Define
\[ \tau_*\mathcal M =\bigl(\tau M_{\mathrm{np}}\oplus I(v),\ \bigoplus_iK_i^{a_i}\bigr). \]
Its inverse uses $\tau^{-1}$ on the part without injective summands,
sends $I_i$ to $E_i^-$, and sends $E_i^-$ to $P_i$.
Thus $P_i\mapsto E_i^-\mapsto I_i$. These formulas define mutually inverse transformations of decorated
isomorphism classes. Applying $\nu$ to the decorated projective presentation
and reading the resulting map as an injective copresentation gives
\begin{equation}\label{eq:tau-weights}
 \check\delta_{\tau_*\mathcal M}=-\delta_{\mathcal M},\qquad
 \delta_{\tau_*^{-1}\mathcal M}=-\check\delta_{\mathcal M}.
\end{equation}
Also $\mathbb D\tau_*=\tau_*^{-1}\mathbb D$, with the translate
on the opposite algebra on the right.

\begin{proposition}\label{thm:AR-invariants}
Write $M^+=(\tau_*\mathcal M)_+$ and
$N^+=(\tau_*\mathcal N)_+$. The Hom dimensions satisfy
\begin{equation}\label{eq:AR-Hom-plus}
 \hom(M^+,N^+)=\hom(M,N)
  -\langle\delta_{\mathcal M},\mathbf r_N\rangle_D
  -\langle\mathbf r_{M^+},\delta_{\mathcal N}\rangle_D.
\end{equation}
There is also a natural isomorphism
\begin{equation}\label{eq:AR-Hom-E}
 \Hom_{\mathcal P}(N,M^+)
     \simeq\mathbb D\op{E}_{\mathcal P}(\mathcal M,\mathcal N).
\end{equation}
For the presentation invariant, there is an isomorphism
\begin{equation}\label{eq:AR-E-switch}
 \op{E}_{\mathcal P}(\mathcal M,\mathcal N)
     \simeq\op{E}_{\mathcal P^{\mathrm{op}}}
       (\mathbb D\tau_*\mathcal N,\mathbb D\tau_*\mathcal M).
\end{equation}
Consequently, $\op{e}(\mathcal M)$ and
$\op{e}(\mathcal M,\mathcal N)+\op{e}(\mathcal N,\mathcal M)$
are preserved by simultaneous translation under $\tau_*$ or
$\tau_*^{-1}$. The formulas for inverse translation follow by
opposite duality.
\end{proposition}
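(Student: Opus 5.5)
The plan is to reduce everything to the classical Auslander--Reiten formula for finite-dimensional algebras, and to track the decorations and projective summands separately. I will work over $K$ with the finite algebra $\mathcal P$. Where the vertex fields matter, I will pass to a finite Galois splitting field as in Lemma~\ref{lem:descent}; all Hom spaces, kernels and cokernels commute with this extension.

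\emph{Step 1: the duality \eqref{eq:AR-Hom-E}.} Let $f:P_-\to P_+$ be the decorated minimal projective presentation of $\mathcal M$, with the decoration summand $P(v)\xrightarrow{0}0$. For a projective $Q$ the Nakayama functor gives a natural isomorphism
\[ \Hom_{\mathcal P}(Q,N)\simeq\mathbb D\Hom_{\mathcal P}(N,\nu Q). \]
Applying it to both terms of \eqref{eq:E-pres} identifies $\mathbb D\op{E}(\mathcal M,\mathcal N)$ with
\[ \ker\bigl(\Hom(N,\nu P_-)\to\Hom(N,\nu P_+)\bigr)=\Hom\bigl(N,\ker(\nu f)\bigr). \]
It remains to check that $\ker\nu f=M^+$. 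On the part of $f$ presenting $M_{\mathrm{np}}$ this kernel is $\tau M_{\mathrm{np}}$, by definition. A projective summand $P_i\to 0$-free target piece $0\to P_i$ contributes nothing to the kernel, and it becomes decoration. The decoration summand $P(v)\to 0$ contributes $\nu P(v)=I(v)$. This proves \eqref{eq:AR-Hom-E}, and the equality $\check\delta_{\tau_*\mathcal M}=-\delta_{\mathcal M}$ in \eqref{eq:tau-weights} comes out of the same computation.

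\emph{Step 2: the dimension formula \eqref{eq:AR-Hom-plus}.} Combining Step~1 with \eqref{eq:E-dimension} gives
\[ \hom(N,M^+)=\hom(M,N)-\langle\delta_{\mathcal M},\mathbf r_N\rangle_D. \]
I then apply the same identity with $N$ replaced by $N^+$, or equivalently use the injective version \eqref{eq:E-dual-dimension} for $\tau_*\mathcal N$. Together with \eqref{eq:tau-weights} and \eqref{eq:indices-E}, this converts $\hom(M^+,N^+)$ into $\hom(N,M^+)$ minus the appropriate term. The bookkeeping needs care: I have to check the sign of $\langle\mathbf r_{M^+},\delta_{\mathcal N}\rangle_D$ and confirm that the decoration terms cancel. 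I expect this bookkeeping to be the main obstacle, especially where the two decorations meet projective or injective summands. I would handle it by splitting $\mathcal M$ and $\mathcal N$ into their nonprojective parts, projective parts and negative simples, and checking the nine pairings separately; each is one of $\hom(P_i,-)$, $\hom(-,I_i)$ or zero.

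\emph{Step 3: the switch \eqref{eq:AR-E-switch} and the invariance statements.} Apply \eqref{eq:E-opposite-duality} to the pair $(\tau_*\mathcal N,\tau_*\mathcal M)$. This requires identifying the injective copresentation of $\tau_*\mathcal M$ with $\nu f$; that identification is exactly the content of Step~1. With it, the cokernel of $\Hom(N^+,\nu P_+)\to\Hom(N^+,\nu P_-)$ is expressed through the projective presentation of $\mathcal M$. Using the Nakayama adjunction a second time should give $\op{E}(\mathcal M,\mathcal N)$, via the two-presentation form \eqref{eq:E-two-presentations} applied to $f_{\mathcal M}$ and $f_{\mathcal N}$.

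Finally, opposite duality preserves $\op{e}(\mathcal M)$ and $\op{e}^{\mathrm{sym}}$, as noted after \eqref{eq:indices-E}. So \eqref{eq:AR-E-switch} yields invariance of these quantities under $\tau_*$. The statements for $\tau_*^{-1}$ follow from $\mathbb D\tau_*=\tau_*^{-1}\mathbb D$.
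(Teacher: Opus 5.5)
Your Step~1 is the paper's argument, and the ingredients you list are the right ones. However, Step~3 has its two arguments interchanged, and the identity Step~3 should supply is exactly what Step~2 is missing.

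You apply \eqref{eq:E-opposite-duality} with $\tau_*\mathcal N$ in the first slot and the copresentation $\nu f_{\mathcal M}$ of $\tau_*\mathcal M$ in the second. That produces $\op{E}_{\mathcal P^{\mathrm{op}}}(\mathbb D\tau_*\mathcal M,\mathbb D\tau_*\mathcal N)$, not the right side of \eqref{eq:AR-E-switch}. Your cokernel is that of $\Hom(N^+,\nu P^{\mathcal M}_-)\to\Hom(N^+,\nu P^{\mathcal M}_+)$; the map does not go the other way. Nakayama duality turns it into $\mathbb D\Hom(M,N^+)\simeq\op{E}(\mathcal N,\mathcal M)$, not $\op{E}(\mathcal M,\mathcal N)$. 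The directed invariant is not symmetric, so your chain proves \eqref{eq:AR-E-switch} for the pair $(\mathcal N,\mathcal M)$, and the claim that it yields $\op{E}(\mathcal M,\mathcal N)$ is wrong. Relabelling would rescue it, but the useful orientation uses the module $M^+$ and the copresentation $\nu f_{\mathcal N}$ of $\tau_*\mathcal N$:
\[
\op{E}_{\mathcal P^{\mathrm{op}}}(\mathbb D\tau_*\mathcal N,\mathbb D\tau_*\mathcal M)
\simeq\coker\bigl(\Hom(M^+,\nu P^{\mathcal N}_-)\to\Hom(M^+,\nu P^{\mathcal N}_+)\bigr)
\simeq\mathbb D\Hom(N,M^+)\simeq\op{E}(\mathcal M,\mathcal N).
\]
The three isomorphisms come from \eqref{eq:E-opposite-duality}; then $\Hom(X,\nu Q)\simeq\mathbb D\Hom(Q,X)$ together with $\coker f_{\mathcal N}=N$; then your Step~1.

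This display is what Step~2 needs. Your first option there, replacing $N$ by $N^+$, computes $\hom(N^+,M^+)$ and does not help. Also, \eqref{eq:indices-E} is not needed for \eqref{eq:AR-Hom-plus}. Take dimensions in the display: \eqref{eq:E-dual-dimension} and $\check\delta_{\tau_*\mathcal N}=-\delta_{\mathcal N}$ give
\[
\op{e}(\mathcal M,\mathcal N)=\hom(M^+,N^+)+\langle\mathbf r_{M^+},\delta_{\mathcal N}\rangle_D .
\]
Comparing with \eqref{eq:E-dimension} gives \eqref{eq:AR-Hom-plus}.

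No nine-case check of projective and decoration summands is required. The functor $\nu$ sends the summand $0\to P(a)$ of $f_{\mathcal M}$ to the decoration summand $0\to I(a)$, and sends $P(v)\to 0$ to the injective summand $I(v)$ of $M^+$. Thus $\nu f_{\mathcal M}$ is the decorated copresentation of $\tau_*\mathcal M$, and the dimension formulas apply uniformly.

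With these repairs your proof is the paper's. The only difference is that the paper obtains the first isomorphism in the display by applying $\nu$ to the two-presentation quotient \eqref{eq:E-two-presentations}, rather than dualizing twice through $\Hom(N,M^+)$. Your deduction of the invariance statements from \eqref{eq:AR-E-switch} and opposite duality is then correct.
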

\begin{proof}
We first compute $\Hom(N,M^+)$. For a finite projective $Q$, the
natural duality
$\mathbb D\Hom(Q,N)\simeq\Hom(N,\nu Q)$ identifies the dual of
\eqref{eq:E-pres} with $\Hom(N,\ker\nu f_{\mathcal M})$.
That kernel is $M^+$, including the injective contribution of the
decoration. This proves \eqref{eq:AR-Hom-E}; it is the transpose
argument of \cite[Corollary 10.9]{DWZ2}.

To obtain \eqref{eq:AR-Hom-plus}, apply the fully faithful equivalence
$\nu$ to the quotient \eqref{eq:E-two-presentations}. By injectivity,
the resulting quotient is the cokernel computed from the injective
copresentations of $\tau_*\mathcal M$ and $\tau_*\mathcal N$.
Proposition~\ref{prop:opposite-duality} identifies it with the right
side of \eqref{eq:AR-E-switch}. Taking dimensions in
\eqref{eq:E-dual-dimension} and using \eqref{eq:tau-weights} gives
\[ \op{e}(\mathcal M,\mathcal N)=\hom(M^+,N^+)+\langle\mathbf r_{M^+},\delta_{\mathcal N}\rangle_D. \]
On the other hand, \eqref{eq:E-dimension} expresses the same number
as $\hom(M,N)-\langle\delta_{\mathcal M},\mathbf r_N\rangle_D$.
Equating these expressions proves \eqref{eq:AR-Hom-plus}.
The self and symmetric presentation invariants are unchanged by
opposite duality, so \eqref{eq:AR-E-switch} proves their translation
invariance. Apply the same argument over the opposite algebra to
obtain the inverse statements.
\end{proof}

The decoration keeps track of the projective summands that ordinary
Auslander--Reiten translation sends to zero.

We expect the Auslander--Reiten transformation formula for
$F$-polynomials in \cite[Theorem 2.1]{FeiSyzygy} to extend to these
monodromy characters. The tropical transformation formulas in
\cite[Section 4.2]{FeiSyzygy} should likewise extend under the
corresponding homeomorphism hypothesis.

\section[Hom dimensions and tropical F-polynomials (Part II)]{Hom dimensions and tropical \texorpdfstring{$F$}{F}-polynomials}
\label{app:tropical-consequences}
This appendix supplements Part~\ref{part:characters}. We express
Hom dimensions involving a representation that mutates to pure
decoration in terms of tropical $F$-polynomials. The corresponding
evaluation at an arbitrary presentation weight may require a positive
multiple of that weight.

Recall that $\langle\eta,\mathbf r\rangle_D=\eta D\cdot\mathbf r$.
The duality formula \eqref{eq:tropical-submodule} reads
\[ f^{\mathrm{op}}_{\mathbb DN}(\eta D)=\langle\eta,\mathbf r_N\rangle_D+f_N(-\eta D). \]
Here $f^{\mathrm{op}}_{\mathbb DN}$ is the tropical $F$-polynomial of
$\mathbb DN$ over the opposite Jacobian algebra.

\begin{proposition}\label{prop:reachable-tropical-Hom}\label{cor:Hom-tropical}
Let $\mathcal M=(M,V)$ and $\mathcal N=(N,W)$ be finite decorated
representations of a good Jacobi-finite nondegenerate ramified QP.
Suppose a finite mutation sequence sends $\mathcal M$ to a decorated
representation with zero positive part. Then
\begin{equation}\label{eq:reachable-tropical-Hom}
\begin{aligned}
 \hom(M,N)&=f^{\mathrm{op}}_{\mathbb DN}(\delta_{\mathcal M}D),\qquad
 \op{e}(\mathcal M,\mathcal N)&=f_N(-\delta_{\mathcal M}D).
\end{aligned}
\end{equation}
There is no generality or rigidity assumption on $\mathcal N$.
Opposite duality gives the injective formulas.
\end{proposition}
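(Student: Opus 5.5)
The plan is to prove the two identities at the terminal seed, where they are trivial, and then carry them back along the mutation sequence one step at a time. Both sides should obey the same mutation rule. The first identity follows from the second, because \eqref{eq:E-dimension} gives $\hom(M,N)=\op{e}(\mathcal M,\mathcal N)+\langle\delta_{\mathcal M},\mathbf r_N\rangle_D$, and the duality formula displayed just before the proposition gives $f^{\mathrm{op}}_{\mathbb DN}(\delta_{\mathcal M}D)=\langle\delta_{\mathcal M},\mathbf r_N\rangle_D+f_N(-\delta_{\mathcal M}D)$. So it suffices to prove $\op{e}(\mathcal M,\mathcal N)=f_N(-\delta_{\mathcal M}D)$. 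The proof goes by induction on the length of the sequence sending $\mathcal M$ to pure decoration.

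\emph{Base case.} Suppose $\mathcal M=(0,V)$ is pure decoration. Its presentation is $P(v)\to0$, so $\delta_{\mathcal M}=-v$. By \eqref{eq:E-pres}, $\op{e}(\mathcal M,\mathcal N)=\sum_i d_iv_ir_i(N)=\langle v,\mathbf r_N\rangle_D$. On the other side, $f_N(vD)=\max_{\mathbf e\in\Newt(F_N)}vD\cdot\mathbf e$. Since $vD\ge0$, the maximum is attained at the top exponent $\mathbf e=\mathbf r_N$, whose coefficient is one by Theorem~\ref{thm:Newton-monodromy}. This gives $f_N(vD)=\langle v,\mathbf r_N\rangle_D$, as required.

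\emph{Inductive step.} Let $\mathcal M'=\mu_k\mathcal M$ and $\mathcal N'=\mu_k\mathcal N$, using Hypothesis~\ref{hyp:initial-realization} so that all mutations are defined, and assume the identity holds for $(\mathcal M',\mathcal N')$ over $P'$. First, Theorem~\ref{thm:E-mutation} gives
\[
\op{e}(\mathcal M',\mathcal N')-\op{e}(\mathcal M,\mathcal N)=d_k\bigl(h_k(N')h_k(M)-h_k(N)h_k(M')\bigr).
\]
Since $\mathcal M$ mutates to an $E$-rigid object, it is $E$-rigid, because $\op{e}(\mathcal M)$ is mutation invariant. Corollary~\ref{thm:sign-coherence} then gives $h_k(M)=-[\delta_k]_+$ and $h_k(M')=-[-\delta_k]_+$, with $\delta=\delta_{\mathcal M}$. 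The right-hand side therefore becomes $d_k\bigl(-[\delta_k]_+h_k(N')+[-\delta_k]_+h_k(N)\bigr)$. Second, Corollary~\ref{cor:tropical-mutation} applied to $N$ at $u'=-\delta'D$ gives
\[
f_{N'}(u')=f_N(T_{k,B}u')-h_k(N')[u'_k]_++h_k(N)[-u'_k]_+ .
\]
Here $u'_k=-\delta'_kd_k=\delta_kd_k$. The correction is thus $-h_k(N')d_k[\delta_k]_++h_k(N)d_k[-\delta_k]_+$, which is exactly the $\op{e}$-difference above.

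It remains to check the linear-algebra identity $T_{k,B}(-\delta'D)=-\delta D$, with $\delta'=\mu_k\delta$ given by \eqref{eq:weight-p}. At coordinate $k$ this is immediate. For $i\ne k$, using $d_ib_{ki}=-d_kb_{ik}$, the required identity reduces to
\[
-d_i\delta'_i+[b_{ik}]_+d_k\delta_k-b_{ik}d_k[\delta_k]_+=-d_i\delta_i .
\]
This follows from $d_i[b_{ki}]_+=d_k[-b_{ik}]_+$ and $d_i[-b_{ki}]_+=d_k[b_{ik}]_+$, together with splitting into the cases $\delta_k\ge0$ and $\delta_k\le0$. With this identity both sides change by the same amount, and the induction closes.

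I expect this bookkeeping of signs between $T_{k,B}$ and the weight rule to be the main obstacle, along with matching $h$ and $h'$ with the right factors. I would check the two sign cases of $\delta_k$ separately. No generality is needed for $\mathcal N$, since Theorem~\ref{thm:E-mutation} and Corollary~\ref{cor:tropical-mutation} hold for arbitrary finite decorated representations.
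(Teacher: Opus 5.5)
Your proposal is correct and follows essentially the same route as the paper. The paper packages your inductive step as invariance, under simultaneous mutation, of the defect $\hom(M,N)-f^{\mathrm{op}}_{\mathbb DN}(\delta_{\mathcal M}D)=\op{e}(\mathcal M,\mathcal N)-f_N(-\delta_{\mathcal M}D)$, proved with the same ingredients ($T_{k,B}(-\delta'D)=-\delta D$, Corollary~\ref{cor:tropical-mutation}, \eqref{eq:Ep-mutation}, and the sign-coherence Betti identities). It then checks vanishing at the terminal seed on the $\hom$ side, via $\hom(0,N_*)=0$ and $f^{\mathrm{op}}_{\mathbb DN_*}(-vD)=0$, rather than on the $\op{e}$ side as you do; the two checks are equivalent by the duality formula you already invoke.
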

\begin{proof}
For a projective presentation $f$ of $\mathcal M$ and any submodule
$L\subseteq N$, the presentation exact sequence gives
\[ \langle\delta_{\mathcal M},\mathbf r_L\rangle_D=\hom(M,L)-\dim_K\op{E}(f,L)\leq\hom(M,N). \]
Maximizing over $L$ shows that the difference
\begin{equation}\label{eq:Hom-tropical-defects}
\begin{aligned}
 \mathfrak d(\mathcal M,\mathcal N)
 &=\hom(M,N)-f^{\mathrm{op}}_{\mathbb DN}(\delta_{\mathcal M}D)\\
 &=\op{e}(\mathcal M,\mathcal N)-f_N(-\delta_{\mathcal M}D)
\end{aligned}
\end{equation}
is nonnegative. The second equality follows from
\eqref{eq:E-dimension} and \eqref{eq:tropical-submodule}.

This difference is unchanged by simultaneous mutation whenever the
projective Betti identities for disjoint supports hold for $\mathcal M$.
Indeed, write $\delta=\delta_{\mathcal M}$ and $\delta'=\mu_k\delta$.
Skew-symmetry of $BD$ gives
$T_{k,B}(-\delta'D)=-\delta D$. By
Corollary~\ref{cor:tropical-mutation}, the change in the second tropical
term in \eqref{eq:Hom-tropical-defects} is
\[ d_k\bigl(-h_k(N')[\delta_k]_+ +h_k(N)[-\delta_k]_+\bigr). \]
For disjoint projective supports this is precisely the change in
$\op{e}(\mathcal M,\mathcal N)$ given by \eqref{eq:Ep-mutation}.
Subtracting proves invariance.

The representation $\mathcal M$ is $E$-rigid because its final mutation
has zero positive part and $\op{e}(\mathcal M)$ is invariant under mutation.
Corollary~\ref{thm:sign-coherence} therefore supplies the Betti identities
throughout the sequence. At the final seed, write
$\mathcal M_*=(0,V_*)$ and let $v$ be the rank vector of $V_*$.
Then $\delta_{\mathcal M_*}=-v$, $\hom(0,N_*)=0$, and
$f^{\mathrm{op}}_{\mathbb DN_*}(-vD)=0$: the zero submodule maximizes
this nonpositive functional. Thus \eqref{eq:Hom-tropical-defects}
vanishes at the final seed and hence at the initial seed. This proves
\eqref{eq:reachable-tropical-Hom}. Apply the same argument to the
opposite QP for the injective assertions.
\end{proof}

\begin{remark}\label{prop:saturated-tropical}
For a finite module $N$ and an integral presentation weight $\eta$,
write $\hom(\eta,N)$ for the generic value of
$\dim_K\Hom(\coker f,N)$ as $f$ varies in $\PHom(\eta)$, and put
\[ \op{e}(\eta,N)=\hom(\eta,N)-\langle\eta,\mathbf r_N\rangle_D. \]
Taking a general presentation in the first inequality of the preceding
proof gives
\begin{equation}\label{eq:tropical-Hom-bound}
 f^{\mathrm{op}}_{\mathbb DN}(\eta D)\leq\hom(\eta,N).
\end{equation}
For some integer $q>0$, depending on $\eta$ and $N$,
\begin{equation}\label{eq:saturated-tropical}
\begin{aligned}
 \hom(q\eta,N)=qf^{\mathrm{op}}_{\mathbb DN}(\eta D),\qquad
 \op{e}(q\eta,N)=qf_N(-\eta D).
\end{aligned}
\end{equation}
The same equalities hold for every positive multiple of $q$.
For the evaluation, split the vertex fields and assign the weight
$\eta_i$ to each of the $d_i$ sheets above $i$.
The whole presentation space becomes the ordinary one, generic Hom
dimensions are unchanged, and its submodule support function is
$f^{\mathrm{op}}_{\mathbb DN}(\eta D)$. Theorem~3.6 of
\cite{FeiTropical} gives \eqref{eq:saturated-tropical}; subtracting
$q\langle\eta,\mathbf r_N\rangle_D$ gives the second equality.
If $\op{e}(\eta,\eta)=0$, one can take $q=1$: canonical decomposition
\cite[Theorem 4.4]{DF} makes generic Hom additive on repeated independently
general presentations. Here the two arguments in $\op{e}(\eta,\eta)$
are independently general; this is not
$\op{e}(\mathcal M,\mathcal M)$ for one general representation.

The inequality \eqref{eq:tropical-Hom-bound} and the exact evaluations
at positive multiples also give
\begin{equation}\label{eq:Newton-Hom-presentation}
 \mathcal N_R(N)=
 \{r\in\mathbb R^n:\langle\eta,\mathbf r_N-r\rangle_D\leq\hom(\eta,N)
                    \text{ for every }\eta\in\Z^n\}.
\end{equation}
One inclusion follows from \eqref{eq:tropical-Hom-bound}. For the other,
if $r$ is outside the quotient polytope, an integral functional strictly
separates $\mathbf r_N-r$ from the submodule polytope. Apply
\eqref{eq:saturated-tropical} to a positive multiple of that functional;
its half-space in \eqref{eq:Newton-Hom-presentation} excludes $r$.
\end{remark}

\section*{Acknowledgment}
\addcontentsline{toc}{section}{Acknowledgment}
The paper was conceived in 2022; its main ideas and overall structure took shape around 2024.
ChatGPT has helped me overcome many technical difficulties since July 2026, notably (but not limited to) Sections 4.5, 5.2, 7.3, 12.2, and 12.3.
Another helpful contribution that may not be reflected in the current paper is that it generated a large number of examples and counterexamples for me.


\begin{thebibliography}{Stacks}

\bibitem[CC]{CC}
P.~Caldero and F.~Chapoton,
\emph{Cluster algebras as Hall algebras of quiver representations},
Comment. Math. Helv. \textbf{81} (2006), 595--616.
\href{https://arxiv.org/abs/math/0410187}{arXiv:math/0410187}.

\bibitem[Cao]{Cao}
P.~Cao, \emph{On exchange matrices from string diagrams},
\href{https://arxiv.org/abs/2203.07822v2}{arXiv:2203.07822v2} (2022).

\bibitem[CL]{CaoLi}
P.~Cao and F.~Li, \emph{Uniform column sign-coherence and the existence
of maximal green sequences}, J. Algebraic Combin. \textbf{50} (2019),
403--417. \href{https://arxiv.org/abs/1712.00973}{arXiv:1712.00973}.

\bibitem[CMMM]{CMMM}
M.-W.~Cheung, T.~Magee, T.~Mandel and G.~Muller,
\emph{Valuative independence and cluster theta reciprocity},
\href{https://arxiv.org/abs/2505.09585v2}{arXiv:2505.09585v2} (2026).

\bibitem[Dem]{Demonet}
L.~Demonet, \emph{Categorification of skew-symmetrizable cluster algebras},
Algebr. Represent. Theory \textbf{14} (2011), 1087--1162.
\href{https://arxiv.org/abs/0909.1633v1}{arXiv:0909.1633v1}.

\bibitem[DF]{DF}
H.~Derksen and J.~Fei, \emph{General presentations of algebras},
Adv. Math. \textbf{278} (2015), 210--237.

\bibitem[DWZa]{DWZ1}
H.~Derksen, J.~Weyman and A.~Zelevinsky,
\emph{Quivers with potentials and their representations I: Mutations},
Selecta Math. (N.S.) \textbf{14} (2008), 59--119.
\href{https://arxiv.org/abs/0704.0649v4}{arXiv:0704.0649v4}.

\bibitem[DWZb]{DWZ2}
H.~Derksen, J.~Weyman and A.~Zelevinsky,
\emph{Quivers with potentials and their representations II: Applications
to cluster algebras}, J. Amer. Math. Soc. \textbf{23} (2010), 749--790.
\href{https://arxiv.org/abs/0904.0676v2}{arXiv:0904.0676v2}.

\bibitem[Feia]{FeiCombinatorics}
J.~Fei, \emph{Combinatorics of $F$-polynomials},
Int. Math. Res. Not. IMRN (2023), no.~9, 7578--7615.
\href{https://arxiv.org/abs/1909.10151}{arXiv:1909.10151}.

\bibitem[Feib]{FeiTropical}
J.~Fei, \emph{Tropical $F$-polynomials and general presentations},
J. Lond. Math. Soc. (2) \textbf{107} (2023), 2079--2120.
\href{https://arxiv.org/abs/1911.10513v6}{arXiv:1911.10513v6}.

\bibitem[FeiS]{FeiSyzygy}
J.~Fei, \emph{Syzygy transformations of cluster characters and geometric twists},
\href{https://arxiv.org/abs/2609.10090v1}{arXiv:2609.10090v1} (2026).


\bibitem[GLFS]{GLFS}
C.~Geiss, D.~Labardini-Fragoso and J.~Schr\"oer,
\emph{Generic Caldero--Chapoton functions with coefficients and
applications to surface cluster algebras},
\href{https://arxiv.org/abs/2007.05483v1}{arXiv:2007.05483v1} (2020).

\bibitem[GLSa]{GLS1}
C.~Geiss, B.~Leclerc and J.~Schr\"oer,
\emph{Quivers with relations for symmetrizable Cartan matrices I: Foundations},
Invent. Math. \textbf{209} (2017), 61--158.
\href{https://arxiv.org/abs/1410.1403}{arXiv:1410.1403}.

\bibitem[GLSb]{GLS5}
C.~Geiss, B.~Leclerc and J.~Schr\"oer,
\emph{Quivers with relations for symmetrizable Cartan matrices V:
Caldero--Chapoton formulas},
Proc. Lond. Math. Soc. (3) \textbf{117} (2018), 125--148.
\href{https://arxiv.org/abs/1704.06438v3}{arXiv:1704.06438v3}.

\bibitem[GLSc]{GLS2}
C.~Geiss, B.~Leclerc and J.~Schr\"oer,
\emph{Rigid modules and Schur roots},
Math. Z. \textbf{295} (2020), 1245--1277.
\href{https://arxiv.org/abs/1812.09663}{arXiv:1812.09663}.

\bibitem[GL]{GL}
J.~Geuenich and D.~Labardini-Fragoso, \emph{Species with potential arising from surfaces with orbifold points of order 2, Part I: one choice of weights}, Math. Z. \textbf{286} (2017), 1065--1143. \href{https://arxiv.org/abs/1507.04304}{arXiv:1507.04304}.

\bibitem[GHKK]{GHKK}
M.~Gross, P.~Hacking, S.~Keel and M.~Kontsevich,
\emph{Canonical bases for cluster algebras},
J. Amer. Math. Soc. \textbf{31} (2018), 497--608.
\href{https://arxiv.org/abs/1411.1394v2}{arXiv:1411.1394v2}.

\bibitem[HL]{HuangLi}
M.~Huang and F.~Li,
\emph{Unfolding of sign-skew-symmetric cluster algebras and its applications
to positivity and $F$-polynomials}, Adv. Math. \textbf{340} (2018), 221--283.
\href{https://doi.org/10.1016/j.aim.2018.10.008}{doi:10.1016/j.aim.2018.10.008}.
\href{https://arxiv.org/abs/1609.05981v2}{arXiv:1609.05981v2}.

\bibitem[Kel]{KellerGreen}
B.~Keller, \emph{A survey on maximal green sequences},
with an appendix by L.~Demonet.
\href{https://arxiv.org/abs/1904.09247v7}{arXiv:\allowbreak1904.09247v7} (2020).

\bibitem[LFM]{LabardiniMou}
D.~Labardini-Fragoso and L.~Mou,
\emph{Gentle algebras arising from surfaces with orbifold points,
Part II: Locally free Caldero--Chapoton functions},
Adv. Math. \textbf{492} (2026), article
\href{https://doi.org/10.1016/j.aim.2026.110905}{110905}.
\href{https://arxiv.org/abs/2309.16061v3}{arXiv:2309.16061v3}.

\bibitem[LZ]{LZ}
D.~Labardini-Fragoso and A.~Zelevinsky, \emph{Strongly primitive species with potentials I: Mutations}, Bol. Soc. Mat. Mex. \textbf{22} (2016), 47--115. \href{https://arxiv.org/abs/1306.3495}{arXiv:1306.3495}.

\bibitem[Lad]{LadkaniClosed}
S.~Ladkani, \emph{On Jacobian algebras from closed surfaces},
preprint (2012).
\href{https://arxiv.org/abs/1207.3778}{arXiv:1207.3778}.

\bibitem[LA]{LA}
D.~L\'opez-Aguayo, \emph{A note on species realizations and nondegeneracy of potentials}, J. Algebra Appl. \textbf{18} (2019), 1950024. \href{https://arxiv.org/abs/1703.09413}{arXiv:1703.09413}.


\bibitem[MouS]{MouScattering}
L.~Mou, \emph{Scattering diagrams of quivers with potentials and mutations},
\href{https://arxiv.org/abs/1910.13714v1}{arXiv:1910.13714v1} (2019).

\bibitem[Mou]{MouReflections}
L.~Mou, \emph{Locally free Caldero--Chapoton functions via reflections},
Math. Z. \textbf{307} (2024), article
\href{https://doi.org/10.1007/s00209-024-03483-y}{11}.
\href{https://arxiv.org/abs/2206.02289v2}{arXiv:2206.02289v2}.

\bibitem[MS]{MouSu}
L.~Mou and X.~Su,
\emph{Generic bases of skew-symmetrizable affine type cluster algebras},
Selecta Math. (N.S.) \textbf{32} (2026), article
\href{https://doi.org/10.1007/s00029-026-01159-5}{55}.
\href{https://arxiv.org/abs/2409.03954v2}{arXiv:2409.03954v2}.

\bibitem[Ngu]{Nguefack}
B.~Nguefack,
\emph{Potentials and Jacobian algebras for tensor algebras of bimodules},
\href{https://arxiv.org/abs/1004.2213v3}{arXiv:1004.2213v3} (2012).

\bibitem[Nic]{Nicaise}
J.~Nicaise, \emph{A trace formula for varieties over a discretely valued
field}, J. Reine Angew. Math. \textbf{650} (2011), 193--238.
\href{https://arxiv.org/abs/0805.1323}{arXiv:0805.1323}.

\bibitem[Palu]{Palu}
Y.~Palu, \emph{Cluster characters for triangulated $2$-Calabi--Yau categories},
Ann. Inst. Fourier (Grenoble) \textbf{58} (2008), 2221--2248.
\href{https://arxiv.org/abs/math/0703540v3}{arXiv:math/0703540v3}.

\bibitem[Pla]{Plamondon}
P.-G.~Plamondon, \emph{Generic bases for cluster algebras from the cluster
category}, Int. Math. Res. Not. IMRN (2013), no.~10, 2368--2420.
\href{https://arxiv.org/abs/1111.4431v2}{arXiv:1111.4431v2}.

\bibitem[Qin]{Qin}
F.~Qin, \emph{Bases for upper cluster algebras and tropical points},
J. Eur. Math. Soc. \textbf{26} (2024), 1255--1312.
\href{https://arxiv.org/abs/1902.09507v4}{arXiv:1902.09507v4}.

\bibitem[Rup]{Rupel}
D.~Rupel, \emph{Quantum cluster characters for valued quivers},
Trans. Amer. Math. Soc. \textbf{367} (2015), 7061--7102. 
\href{https://arxiv.org/abs/1109.6694v2}{arXiv:1109.6694v2}.

\bibitem[SW]{ShenWeng}
L.~Shen and D.~Weng, \emph{Cluster structures on double Bott--Samelson cells},
Forum Math. Sigma \textbf{9} (2021),
\href{https://doi.org/10.1017/fms.2021.59}{e66}.
\href{https://arxiv.org/abs/1904.07992}{arXiv:1904.07992}.

\bibitem[SZ]{SZ}
P.~Sherman and A.~Zelevinsky, \emph{Positivity and canonical bases in
rank 2 cluster algebras of finite and affine types}, Moscow Math. J.
\textbf{4} (2004), no.~4, 947--974.
\href{https://arxiv.org/abs/math/0307082}{arXiv:math/0307082}.

\bibitem[Stacks]{Stacks}
The Stacks Project Authors, \emph{The Stacks Project},
\href{https://stacks.math.columbia.edu}{stacks.math.columbia.edu};
the individual tags are specified where used.

\bibitem[Wu]{Wu}
Y.~Wu, \emph{Group actions on relative cluster categories and Higgs categories},
Nagoya Math. J. \textbf{261} (2026), article
\href{https://doi.org/10.1017/nmj.2026.10112}{e32}.
\href{https://arxiv.org/abs/2502.10670v3}{arXiv:2502.10670v3}.

\end{thebibliography}
\end{document}